\documentclass[12pt]{amsart}

\usepackage{amsmath,amsthm,amscd,amsfonts,wasysym,amssymb,epic,eepic,bbm,tikz-cd,mathtools,mathdots,old-arrows,stmaryrd,multicol}
\usepackage[pagebackref,colorlinks=true,linkcolor=blue,citecolor=blue]{hyperref}
\usepackage[noabbrev]{cleveref}
\usepackage{MnSymbol}
\usepackage{enumitem}



\allowdisplaybreaks
\newtheorem{thm}{Theorem}[section]
\newtheorem{cor}[thm]{Corollary}

\newtheorem{lem}[thm]{Lemma}

\newtheorem{prop}[thm]{Proposition}
\theoremstyle{remark}
\newtheorem*{rem}{Remark}

\newcounter{remarkscounter}

\numberwithin{equation}{section}
\newcommand{\A}{\mathbb{A}}
\newcommand{\GL}{\mathrm{GL}}

\newcommand{\ZZ}{\mathbb{Z}}

\newcommand{\Ind}{\mathrm{Ind}}
\newcommand{\QQ}{\mathbb{Q}}

\newcommand{\lto}{\longrightarrow}

\newcommand{\OO}{\mathcal{O}}
\newcommand{\CC}{\mathbb{C}}
\newcommand{\RR}{\mathbb{R}}
\newcommand{\GG}{\mathbb{G}}

\newcommand{\quash}[1]{}

\theoremstyle{definition}
\newtheorem{defn}[thm]{Definition}

\newenvironment{psmatrix}
  {\left(\begin{smallmatrix}}
  {\end{smallmatrix}\right)}

\renewcommand{\bar}{\overline}
\numberwithin{equation}{subsection}

\renewcommand{\hat}{\widehat}

\newcommand{\one}{\mathbbm{1}}

\newcommand{\norm}[1]{\lVert#1\rVert}

\renewcommand{\indexname}{Index of Symbols}\usepackage[columns=3]{idxlayout}

\allowdisplaybreaks

\makeindex

\begin{document}

\title[Plancherel and Poisson summation formulae for affine $\Psi$-bundles]{Plancherel and Poisson summation formulae for a family of affine $\Psi$-bundles}

\author{Jayce R. Getz}
\address{Department of Mathematics\\
Duke University\\
Durham, NC 27708}
\email{jgetz@math.duke.edu}

\author{Miao (Pam) Gu}
\address{Department of Mathematics\\
University of Michigan\\
Ann Arbor, MI 48109-1043}
\email{pmgu@umich.edu}

\author{Chun-Hsien Hsu}
\address{Department of Mathematics\\
University of Chicago\\
Chicago, IL 60637}
\email{chunhsien@uchicago.edu}

\subjclass[2020]{Primary 11F70, Secondary 11F66, 43A85}

\thanks{The first author is thankful for partial support provided by NSF grant DMS-2400550 and NSF RTG grant DMS-2231514. The second author appreciates partial support provided by an AMS-Simons Travel Grant. Any opinions, findings, and conclusions or recommendations expressed in this material are those of the authors and do not necessarily reflect the views of the National Science Foundation. 
}
\begin{abstract}
We prove a Plancherel formula for a family of affine $\Psi$-bundles.  As an application, we construct Fourier transforms and asymptotic Schwartz spaces for the family.  We then prove the corresponding Poisson summation formula under suitable assumptions.  The choice of affine $\Psi$-bundles we consider is motivated by applications to triple product $L$-functions explored in another paper of the authors and Leslie.
\end{abstract}

\maketitle

\setcounter{tocdepth}{2}
{\small
\begin{multicols}{2}
\tableofcontents
\end{multicols}
}

\section{Introduction}

For  spherical varieties $X$ over a number field satisfying natural desiderata there should be a Schwartz space, a Fourier transform satisfying an appropriate twisted equivariance property under the group action, and a Poisson summation formula.   We refer to this collection of conjectures collectively as the Poisson summation conjecture for $X.$  It was originally posed 
by Braverman and Kazhdan \cite{BK-lifting,BK:normalized} in the case of reductive monoids, refined by Ng\^o \cite{NgoSums,Ngo:Hankel}, and partially extended to the setting of spherical varieties by Sakellaridis \cite{SakellaridisSph}.  We also mention related work of L.~Lafforgue  \cite{LafforgueJJM}.   

In this paper we prove a Plancherel formula for certain Hermitian line bundles over a family of spherical varieties.
 We then use it to prove the Poisson summation conjecture in this setting.  
 This is the first case of the Poisson summation conjecture proved for singular spaces whose Plancherel decomposition involves admissible representations of arbitrarily high rank. 

Our primary motivation for the particular choice of spaces in this paper is the work in 
\cite{GGHL}.  In loc.~cit.~the authors construct an integral representation related to triple product $L$-functions.  Then they propose a technique, the fiber bundle method, for proving the analytic continuation of these $L$-functions.  The global requirements for the technique are a pair of Poisson summation formulae.  One is the Poisson summation formula for a vector space (which is known) and the other is the Poisson summation formula proved under suitable assumptions in this paper.  We refer
 to loc.~cit.~for more details.

\subsection{The setting of this paper}

Let us formulate the Poisson summation conjecture more precisely in the case relevant to the current paper.  Let $r\ge 3$ be an integer and let $F$ be a number field. For an $F$-algebra $R,$ let \index{$N$}
\begin{align*}
N(R):=\left\{\begin{psmatrix} I_{r-2} & z \\ & I_2 \end{psmatrix},z \in  M_{r-2,2}(R)\right\},
\end{align*}
let $P \leq \GL_{r}$ be the standard parabolic subgroup with unipotent radical $N,$
and let \index{$X^\circ_r$}
\begin{align} \label{XP}
    X^\circ_r:=N \backslash \GL_{r}.
\end{align}
Let \index{$\mathcal{M}$}
\begin{align} \label{Mvee}  
    \mathcal{M}(R):=\{(u_1,u_2) \in M_{r-2,2}(R): u_1 \wedge u_2=0\}\quad\textrm{and}\quad   \mathcal{M}^{ \circ}:=\mathcal{M}-\{0\}. 
\end{align}
Thus $\mathcal{M}^\circ(F)$  is the set of elements in $M_{r-2,2}(F)$ of rank $1.$    To orient the reader, 
in this paper we often work with pairs $(Y,Y^{\circ})$ where
$Y$ is a scheme and $Y^{\circ}$ is an open dense orbit under a particular group action.
Let \index{$X^\circ$}
\begin{align*}
X^\circ:=X_r^\circ \times \mathcal{M}^\circ.
\end{align*}
Write
\begin{align} \label{GLr1r2}
\GL_{r_1,r_2}:=\GL_{r_1} \times \GL_{r_2}, \quad\textrm{and}\quad \GL_{r_1,r_2,r_3}:=\GL_{r_2} \times \GL_{r_2} \times \GL_{r_3}.
\end{align}
We have an action 
\begin{align*} \begin{split}
 \mathcal{R}:   X^\circ(R) \times \GL_{r,r-2,2}(R) &\lto X^\circ(R)\\
    ((x,m),g,g',h) &\longmapsto \left( \begin{psmatrix} g' & \\ & h \end{psmatrix}^{-1}xg,g'^tmh^{\iota}\right). \end{split}
\end{align*}
Here $\iota$ is defined as in \eqref{iota}.
There is a natural map
$$
 X^\circ \lto \mathbb{P}^1
$$
(see \eqref{Plperp}).
For $\ell \in \mathbb{P}^1(F)$ we let \index{$X^\circ_\ell$} $X^{\circ}_{\ell}$ be the fiber over $\ell.$
This paper is devoted to studying the Poisson summation conjecture for certain Hermitian line bundles $\mathcal{H}$ over the family of spaces $X^\circ_\ell(\A_F) $
as $\ell \in \mathbb{P}^1(F)$ varies.   The line bundles $\mathcal{H}$ are the affine $\Psi$-bundles mentioned in the title of this paper at the level of points (see \cite{GGHL} for the definition of an affine $\Psi$-bundle).  They are similar to the Whittaker inductions discussed in \cite{SV}.

Let $v$ be a place of $F.$ Our main technical achievement in this paper is a Plancherel formula for the Harish-Chandra space $\mathcal{C}(X_\ell^\circ(F_v),\mathcal{H}).$  Let $K_{r,r-2}$ be a maximal compact subgroup of $\GL_{r,r-2}(F_v)$ and let $\mathfrak{F}$ be a finite set of $K_{r,r-2}$-types.
\begin{thm} \label{Planch:intro}
There is a $\CC$-linear topological isomorphism
\begin{align*}
\mathrm{HP}_{h_\ell}:    \mathcal{C}(X^\circ_{\ell}(F_v),\mathcal{H})_{\mathfrak{F}} \,\tilde{\lto}\, \mathcal{C}(\mathrm{Tp}_{r,r-2,1},\mathcal{Z})_{\mathfrak{F}}.
\end{align*}
\end{thm}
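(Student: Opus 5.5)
The plan is to reduce the statement to the Harish-Chandra Plancherel theorem for the Whittaker space of $\GL_r\times\GL_{r-2}$, in the form of Wallach's Whittaker Plancherel theorem together with the Paley--Wiener type description of the Harish-Chandra Schwartz space of Whittaker functions on the spectral side.

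\textbf{Step 1: unfold the fiber.} Fix $\ell\in\mathbb{P}^1(F)$. After translating by an element of $\GL_2(F)$ we may take $\ell$ to be a coordinate line. Then the stabilizer of $\ell$ in $\mathcal{M}^\circ$, together with the unipotent radical $N$, combines so that $X^\circ_\ell(F_v)$ becomes a homogeneous space. Its stabilizer is a unipotent group $U_\ell$ times a torus or mirabolic factor, and it lies inside $\GL_{r,r-2}(F_v)$ up to the residual $\GL_1$-action. The character defining the Hermitian line bundle $\mathcal{H}$ restricts to a generic character $\psi_\ell$ of $U_\ell$, which is the basic feature of an affine $\Psi$-bundle. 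So I expect
\begin{align*}
\mathcal{C}(X^\circ_\ell(F_v),\mathcal{H})\cong \mathcal{C}\bigl(U_\ell\backslash (\GL_r\times \GL_{r-2})(F_v)\times F_v^\times,\psi_\ell\bigr).
\end{align*}
This is a Rankin--Selberg type Whittaker model: $U_\ell$ should be $N_r\times N_{r-2}$ twisted by the Gelfand--Graev embedding of $\GL_{r-2}$. The extra $\GL_1$-variable comes from the scaling on the rank-one matrix $m=u\otimes\ell$.

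\textbf{Step 2: Whittaker-type Plancherel.} Apply the Harish-Chandra Plancherel theorem to the space $\mathcal{C}(N\backslash G,\psi)_{\mathfrak F}$. Here:
\begin{itemize}
\item for $v$ archimedean, use Wallach's Whittaker Plancherel theorem;
\item for $v$ nonarchimedean, use Delorme's result or the Harish-Chandra Schwartz analogue.
\end{itemize}
This gives a topological isomorphism onto smooth sections, valued in $\mathfrak F$-isotypic parts, over the tempered dual of $\GL_r\times\GL_{r-2}$, with the $\GL_1$-factor contributing a Mellin variable. I interpret $\mathrm{Tp}_{r,r-2,1}$ as the tempered dual of $\GL_r\times\GL_{r-2}\times\GL_1$. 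I interpret $\mathcal{Z}$ as the bundle whose fiber is built from the Whittaker models, $\pi\otimes\pi'^\vee$ or $\mathrm{End}$ of the $\mathfrak F$-types. The map $\mathrm{HP}_{h_\ell}$ is then the composite of the transform in Step 1 with the Harish-Chandra Fourier transform. The fact that $\psi_\ell$ is generic on each factor is what makes the spectral side multiplicity one, so that $\mathcal{Z}$ is a line bundle over the tempered dual tensored with the finite-dimensional $\mathfrak{F}$-data.

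\textbf{Step 3: topology.} Check that the identification in Step 1 is a topological isomorphism of Harish-Chandra Schwartz spaces. This amounts to comparing the weight functions (the height $\Xi$-type functions) on $X^\circ_\ell$ with those on the Whittaker space.

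The main obstacle is Step 3. The orbit map involves the factor $m\in\mathcal{M}^\circ$, and the natural norm on $X^\circ_\ell$ need not match the Harish-Chandra weights on the Whittaker space: the $\GL_1$-scaling mixes with the center of $\GL_r$. I would first try to write an explicit section of the orbit map via the Iwasawa decomposition and bound both weight functions in terms of the torus coordinates, to show they are comparable up to polynomial factors. If that fails, I would modify the definition of $\mathcal{C}$ on the spectral side by the corresponding twist.
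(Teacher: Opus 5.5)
Your plan breaks down at Step~1, and everything after it depends on that step. The space $X^\circ_\ell$ is already a single $\GL_{r,r-2}$-orbit, so there is no separate $F^\times$ factor. The stabilizer of the base point $\bigl(\begin{psmatrix} I_{r-2} & \\ & h_\ell^{-1}\end{psmatrix},m_\ell\bigr)$ is
\[
H=\left\{\left(\begin{psmatrix} p & z\\ & I_2\end{psmatrix},p\right):p\in\mathcal{P}_{r-2},\ z\in M_{r-2,2}\right\}.
\]
For $r\ge 4$ this group is not unipotent: it contains the diagonally embedded mirabolic $\mathcal{P}_{r-2}\supset\GL_{r-3}$. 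Moreover, the character of $H$ defining $\mathcal{H}$ is $\overline{\psi}(\langle (e_{r-2},0),z\rangle)$. This is trivial on the diagonal $\mathcal{P}_{r-2}$, in particular on the diagonal $U_{r-2}$, and it sees only one entry of $z$. Even for $r=3$, where $H$ is unipotent, $H$ is only the unipotent radical of a maximal parabolic of $\GL_3$, not a maximal unipotent subgroup of $\GL_3\times\GL_1$.

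So $(X^\circ_\ell,\mathcal{H})$ is not of the form $(U\backslash G,\psi)$ with $\psi$ generic. There is no ``Gelfand--Graev twisted'' $U_\ell$, and Wallach's or Delorme's Whittaker Plancherel theorem cannot be applied to it directly. Correspondingly, the space of $H$-equivariant functionals on $\pi\otimes\pi'$ for this character is infinite-dimensional. The $\mathrm{Tp}_1$-variable comes from decomposing that space under the torus $\{\mathrm{diag}(1,a)\}\subset(\GL_2)_\ell$, which normalizes $H$. The resulting functionals are the Rankin--Selberg type integrals $Z(W,\chi)$, which integrate $W$ over $U_{r-2}\backslash\mathcal{P}_{r-2}$ and over $c\in F^\times$ against $\chi$.

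Your heuristic does get the spectrum right: one can read off from the theorem that $L^2(X^\circ_\ell,\mathcal{H})$ decomposes like $L^2(U_{r,r-2}\backslash\GL_{r,r-2},\psi\otimes\overline{\psi})\widehat{\otimes}L^2(F^\times)$. But that is an output, realized through the zeta integrals, not a geometric identification you can start from. For the same reason Step~3 targets the wrong difficulty. There is no orbit isomorphism whose weight functions could be compared, and both spaces are fixed by the statement, so ``modifying $\mathcal{C}$ on the spectral side'' is not available.

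The paper's proof of Theorem~\ref{thm:PW:Z} needs four ingredients.
\begin{enumerate}
\item[(i)] Define $\mathrm{HP}_{h_\ell}(f)$ by $|\det h_\ell|^{r-2}\sum_{\underline{\varphi}}Z(f,W_{\underline{\varphi}},\chi)\overline{Z(W_{\underline{\varphi}},\chi)}$ and prove it lands continuously in $\mathcal{C}(\mathrm{Tp}_{r,r-2,1},\mathcal{Z})_{\mathfrak{F}}$ (Lemma~\ref{lem:is:cont:for:HP2}).
\item[(ii)] Show the candidate inverse $T\mapsto\int T\,d\mu/\zeta(1)$ is continuous into $\mathcal{C}(X^\circ_\ell,\mathcal{H})_{\mathfrak{F}}$. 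This uses continuity of Whittaker wave packets together with the key estimate, Proposition~\ref{prop:J} and Lemma~\ref{lem:map2X}, which bound the $\mathcal{P}_{r-2}$-integral $J(W)$ and the $c$-Mellin integral by $\Xi_{X^\circ_\ell}$.
\item[(iii)] Prove inversion on the dense subspace $\mathcal{S}(X^\circ_\ell,\mathcal{H})_{\mathfrak{F}}$ (Theorem~\ref{thm:Plancherel}). One writes $f=I_{\widetilde{f}}$ and unfolds with the mirabolic identity of Lemma~\ref{lem:RS} (Lemma~\ref{lem:unfold}). Only then does one apply the Whittaker Plancherel formula \eqref{Whitt:Planch} for $\GL_{r,r-2}$, followed by Mellin inversion in $c$; this is the only place the Whittaker theory enters.
\item[(iv)] Obtain surjectivity from density of the image. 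This uses the commutative diagram with Harish-Chandra's theorem for the group (Theorem~\ref{thm:HC}) and the nonvanishing of local zeta integrals (Lemmas~\ref{lem:nonvanish} and~\ref{lem:dense}).
\end{enumerate}
Your proposal contains none of (i)--(iv). In particular, nothing in it yields injectivity, or surjectivity onto the specific target $\mathcal{C}(\mathrm{Tp}_{r,r-2,1},\mathcal{Z})_{\mathfrak{F}}$.
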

Here the subscript $_{\mathfrak{F}}$ denotes functions transforming according $K_{r,r-2}$-types in $\mathfrak{F},$ and
$\mathcal{C}(\mathrm{Tp}_{r,r-2,1},\mathcal{Z})$ is a space of sections of a bundle $\mathcal{Z}$ over the tempered spectrum $\mathrm{Tp}_{r,r-2,1}$ of $\GL_{r,r-2,1}(F_v).$  It is defined in \eqref{bundle:Z}.  
\Cref{Planch:intro} is proved as 
 Theorem \ref{thm:PW:Z} below.  There are theorems similar to \Cref{Planch:intro} in the literature. In the $p$-adic case perhaps the closest is in \cite{Delorme:Harinck:Sakellaridis}.  However, we must prove a result for all places of the number field, and our setting is not a special case of loc.~cit., even over $p$-adic fields.  Moreover, our result is more precise; it does not involve an inexplicit description of the discrete spectrum of auxiliary spaces.

Let $\det \times \mathrm{Pl}_{X_r^\circ}: X_{r}^\circ \lto X_{r}$ be the open immersion of \eqref{embed0}, and let \index{$X_\ell$}
\begin{align*}
X_\ell:=X_{r} \times \mathcal{M}_\ell \supset X_\ell^\circ.
\end{align*}
Here $\mathcal{M}_\ell$ is defined as in \eqref{Mell}. Using Theorem \ref{Planch:intro} we construct asymptotic Schwartz spaces 
\begin{align*}
\mathcal{S}^{\mathrm{as}}(X_\ell(F_v),\mathcal{H}) <C^\infty(X^\circ_\ell(F_v),\mathcal{H})
\end{align*}
in \S \ref{sec:As}. One can then form the restricted direct product
\begin{align*}
    \mathcal{S}^{\mathrm{as}}(X_\ell(\A_F) ,\mathcal{H}) &=  \left(\widehat{\otimes}_{v|\infty}\mathcal{S}^{\mathrm{as}}(X_\ell(F_v) ,\mathcal{H})\right) \otimes \bigotimes_{v\nmid\infty}{}'\mathcal{S}^{\mathrm{as}}(X_{\ell}(F_v),\mathcal{H}) 
\end{align*}
with respect to the basic functions defined in \eqref{bell}.  Here and below $\widehat{\otimes}$ denotes the completed projective tensor product.   This is well-defined because
$\mathcal{S}^{\mathrm{as}}(X_\ell(F_v) ,\mathcal{H})$ is a Fr\'echet space for $v|\infty.$   We construct local Fourier transforms in Theorem \ref{thm:FT}. They induce a global Fourier transform
\begin{align*}
\mathcal{F}:    \mathcal{S}^{\mathrm{as}}(X_{\ell}(\A_F),\mathcal{H}) \tilde{\lto}  \mathcal{S}^{\mathrm{as}}(X_{\ell}(\A_F),\mathcal{H})
\end{align*}
by Theorem \ref{thm:As:FT} and Corollary \ref{cor:basic:fixed}.

We prove a weak Poisson summation formula for these Schwartz spaces.  
Let us prepare the notation necessary to state it.   
For an affine algebraic group $G$ over $F,$ we let $[G]:=G(F) \backslash G(\A_F)$ and let $A_G \leq G(F_\infty)$ be the usual central subgroup (see \eqref{AG}).
Let $\mathcal{S}(G(F_\infty)):=\widehat{\otimes}_{v|\infty}\mathcal{S}(G(F_v))$ (see \S \ref{sec:HC} for notation).
The adelic Schwartz space
\begin{align*}
    \mathcal{S}(G(\A_F))= \mathcal{S}(G(F_\infty))\otimes C^\infty_c(G(\A_F^\infty))
\end{align*}
is then defined. 

For $f \in \mathcal{S}(\GL_{r,r-2}(\A_F))$ we have the usual convolution operator
\begin{align*}
    \mathcal{R}(f):L^2([\GL_{r,r-2}]) &\lto L^2([\GL_{r,r-2}]).
\end{align*}
We say that $f$ has cuspidal image if $\mathcal{R}(f)$ has cuspidal image. For $(f_0,\Phi) \in \mathcal{S}^{\mathrm{as}}(X_{\ell}(\A_F),\mathcal{H}) \times  \mathcal{S}(\GL_{r,r-2}(\A_F)) $ let 
\begin{align*}
    \mathcal{R}_u(\Phi)f_0(x,m):=\int_{\GL_{r,r-2}(\A_F) }f_0\left(\begin{psmatrix} g' & \\ & I_2 \end{psmatrix}^{-1}xg,g'^{t}m \right)\Phi(g,g')|\det g'|^{3/2}dgdg'.
\end{align*}
Thus $\mathcal{R}_u(\Phi)f_0 \in \mathcal{S}^{\mathrm{as}}(X_{\ell}(\A_F),\mathcal{H}).$  The subscript $u$ is short for ``unitary.''

We consider the following assumptions on $f \in \mathcal{S}^{\mathrm{as}}(X_{\ell}(\A_F),\mathcal{H}):$ 
\begin{enumerate}[label=(A\text{\arabic*}), ref=A\text{\arabic*}]
    \item One has $f=\mathcal{R}_u(\Phi \otimes \Phi')f_0$ for some 
    $$
    (f_0,\Phi,\Phi') \in \mathcal{S}^{\mathrm{as}}(X_{\ell}(\A_F),\mathcal{H}) \times \mathcal{S}(\GL_{r-2}(\A_F)) \times  \mathcal{S}(\GL_{r}(\A_F)),
    $$
    where $\Phi \otimes \Phi'$ has cuspidal image and is finite under a maximal compact subgroup of $\GL_{r,r-2}(\A_F).$ 
    \label{a}
    \item \label{b} The functions $f$ and $\mathcal{F}(f)$ are $\Xi$-rapidly decreasing in the sense of Definition \ref{defn:Xi:rapid:loc}.  
\end{enumerate}  
\begin{thm} \label{thm:PS}
Assume that $f \in \mathcal{S}^{\mathrm{as}}(X_{\ell}(\A_F),\mathcal{H})$ satisfies \eqref{a} and \eqref{b}. Then
\begin{align*}
    &
    \sum_{x\in X^\circ_{\ell}(F)}f(x)=\sum_{x \in X^\circ_{\ell}(F)}\mathcal{F}(f)(x).
\end{align*}
\end{thm}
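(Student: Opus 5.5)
Since $f=\mathcal{R}_u(\Phi\otimes\Phi')f_0$ by \eqref{a}, the plan is to write the sum over $X^\circ_\ell(F)$ as an automorphic integral on $[\GL_{r,r-2}]$, expand it spectrally, and identify the two sides of the Poisson summation formula term by term on each cuspidal constituent.

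\emph{Step 1: unfolding.} The points $X^\circ_\ell(F)$ form a single $\GL_{r,r-2}(F)$-orbit, up to a finite union that I would make explicit; the stabilizer is an $F$-form of a subgroup $H_\ell$ of mirabolic/unipotent type, on which $\mathcal{H}$ restricts to a character $\psi_\ell$. Fix a base point $x_\ell$. Then
\[
\sum_{x\in X^\circ_\ell(F)} f(x)=\int_{[\GL_{r,r-2}]}\ \sum_{\gamma\in H_\ell(F)\backslash \GL_{r,r-2}(F)} f_0\bigl(\mathcal{R}(\gamma g)x_\ell\bigr)\,\cdots ,
\]
and folding in $\Phi\otimes\Phi'$ expresses the left-hand side as $\langle K_{\Phi\otimes\Phi'},\,\Theta_{f_0}\rangle$. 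Here $K_{\Phi\otimes\Phi'}$ is the automorphic kernel and $\Theta_{f_0}$ is the theta-series built from $f_0$ restricted to $H_\ell$-periods. Because $\Phi\otimes\Phi'$ has cuspidal image and is $K$-finite, the kernel is a finite sum $\sum_{\pi}\sum_{\varphi}\mathcal{R}(\Phi\otimes\Phi')\varphi\otimes\bar\varphi$ over cuspidal $\pi=\pi_r\otimes\pi_{r-2}$. This gives
\[
\sum_{x} f(x)=\sum_{\pi,\varphi} Z\bigl(f_0,\mathcal{R}(\Phi\otimes\Phi')\varphi\bigr),
\]
where $Z$ is a global period integral. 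I expect this to be a Rankin--Selberg-type integral of $\varphi$ against an $H_\ell$-period twisted by $\psi_\ell$; unfolding the Whittaker expansion turns it into an Euler product of local zeta integrals with $\GL_r\times\GL_{r-2}$ Whittaker functions, in the spirit of the local model behind \Cref{Planch:intro}. Absolute convergence, which justifies the interchanges, comes from \eqref{b} ($\Xi$-rapid decay) together with the rapid decay of cusp forms.

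\emph{Step 2: local functional equation.} Theorem \ref{thm:FT} constructs the local Fourier transform through $\mathrm{HP}_{h_\ell}$. On the spectral side, $\mathcal{F}$ therefore acts on the fiber of $\mathcal{Z}$ over $\pi_v\otimes\chi$ by an explicit gamma factor, presumably $\gamma(\tfrac12,\pi_{r,v}\times\pi_{r-2,v}^\vee,\psi)$ times an abelian factor coming from the $\GL_1$ direction. This gives the local identity
\[
Z_v(\mathcal{F}_v f_{0,v},W)=\gamma_v\, Z_v(f_{0,v},W)
\]
after meromorphic continuation in the auxiliary $\GL_1$ parameter. At unramified places the basic function is fixed (Corollary \ref{cor:basic:fixed}) and the local integrals equal normalized $L$-factors. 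The product of the $\gamma_v$ is $1$ by the global functional equation of Rankin--Selberg $L$-functions for cuspidal $\pi$ (Jacquet--Piatetski-Shapiro--Shalika). Hence $Z(\mathcal{F}f_0,\cdot)=Z(f_0,\cdot)$ on each cuspidal term, and since $\mathcal{F}$ commutes with $\mathcal{R}_u$, running Step 1 in reverse for $\mathcal{F}(f)$ gives the right-hand side.

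\emph{Main obstacle.} The delicate step is handling the extra $\GL_1$/Tate variable in Steps 1--2. The global integral likely involves an integral over $[\GL_1]$, that is, a Mellin transform in the "$\mathrm{Tp}_{r,r-2,1}$'' direction. It has possible poles, which are exactly the boundary terms that would normally appear in a Poisson formula. I would first show these contributions vanish because $\pi$ is cuspidal, so that the $L$-function $L(s,\pi_r\times\pi_{r-2}^\vee)$ is entire. I would then shift contours, using \eqref{b} to control growth on vertical lines so that the Mellin inversion on both sides matches. Only after this would I justify the interchange of the orbit sum and the spectral sum.
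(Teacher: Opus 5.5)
Your skeleton matches the paper's proof: fold $\sum_x f(x)$ against the kernel of $\Phi\otimes\Phi'$, expand in the cuspidal spectrum, unfold each global period into $\sum_\chi\int\prod_v Z(f_v,W_v,\chi_s)$ (single orbit, Whittaker expansion, Mellin inversion in the $\GL_1$-variable), then compare via local and global functional equations. \textbf{The gap is the comparison step: there is no termwise identity $Z(\mathcal{F}f_0,\cdot)=Z(f_0,\cdot)$.} The local functional equation (Corollary~\ref{cor:temp:func:eqn}, Proposition~\ref{zetafuncequa}) is $Z(\mathcal{F}f,W,\chi)=\gamma(\tfrac12,\pi^\vee\times\pi'^\vee,\psi)\gamma(\tfrac12,\pi\otimes\chi^{-1},\psi)Z(f,\widetilde W,\chi^{-1})$. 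It changes three things. It replaces $W$ by $\widetilde W$, which is a Whittaker function of $\pi^\vee\otimes\pi'^\vee$. It replaces $\chi$ by $\chi^{-1}$. Its second factor is the $\GL_r\times\GL_1$ factor of $\pi$, not an abelian one. Now combine this with Lemma~\ref{unramifiedcomp} and the two global functional equations (Rankin--Selberg for $\pi\times\pi'$, standard for $\pi^\vee\otimes\chi$). The $(\pi,\pi',\chi)$-term of $\mathcal{F}(f)$ at $(s_0,s',s)$ then becomes the $(\pi^\vee,\pi'^\vee,\chi^{-1})$-term of $f$ at $(-s_0,-s',-s)$. Your ``product of the $\gamma_v$ is $1$'' only encodes the global functional equation, which relates $L^S(s,\pi\times\pi')$ to $L^S(1-s,\pi^\vee\times\pi'^\vee)$: different points and different representations. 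To finish you need two steps you do not have. First, shift the $(s',s)$-contours across the critical strips. Second, re-index the spectral sum by $(\pi,\pi',\varphi,\varphi')\mapsto(\pi^\vee,\pi'^\vee,\varphi^\vee,\varphi'^\vee)$.

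\textbf{The contour shift is forced by convergence, which your Step~1 also omits.} Since $\Phi,\Phi'$ are Schwartz rather than compactly supported modulo $A_{\GL}$, the kernel on $[\GL_{r,r-2}]$ is $\int_{\mathrm{Re}(s)=t}\sum_\pi K_{\pi_s(\Phi)}\,ds$ (Lemma~\ref{lem:rap:dec}). This is an infinite sum over $\pi$ ($K$-finiteness only makes the sum over $\varphi\in\mathcal{B}_\pi$ finite), together with integrals over the twists $|\det|^{s_0}$ and $|\det|^{s'}$. The pairing with $\Theta_{f_0}$ is controlled only on $\mathrm{Re}(s')=t'\gg1$ (Proposition~\ref{prop:mod:growth}). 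Cusp forms are invariant along $A_{\GL_{r-2}}$, and \eqref{b} gives no decay as $m\to 0$, which is exactly that direction. Likewise the Mellin inversion of the sum over $\alpha\in F^\times$ is done on $\mathrm{Re}(s)=t\gg1$ (Lemma~\ref{lem:unfold:prep}). After the functional equation, the $f$-side zeta integrals are evaluated at real parts $(-t',-t)$, where they exist only by meromorphic continuation. Hypothesis \eqref{b} gives absolute convergence only to the right (Lemmas~\ref{lem:ZfW2}, \ref{lem:uniform:ZfW}), so it cannot justify the shift. Three things do:
\begin{itemize}
\item the definition of $\mathcal{S}^{\mathrm{as}}$: the local zeta integral divided by the local $L$-factors is polynomial, resp.\ controlled on vertical strips by the seminorms $\nu_{D,p,\mathcal{K}}$;
\item preconvex bounds for the partial $L$-functions;
\item entireness of both partial $L$-functions.
\end{itemize}
Regarding entireness, $L^S(\cdot,\pi^\vee\times\pi'^\vee)$ governs the $s'$-shift and is entire because $r\neq r-2$. $L^S(\cdot,\pi\otimes\chi^{-1})$ governs the $s$-shift and is entire because $\pi$ is cuspidal on $\GL_r$ with $r\ge 3$. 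You attributed the $\GL_1$-direction to the Rankin--Selberg function instead. Finally, $\mathcal{F}$ does not commute with $\mathcal{R}_u$; it intertwines $\mathcal{R}_u(g,g')$ with $\mathcal{R}_u(g^{-t},g'^{-t})$ (Lemma~\ref{lem:FT}). This is harmless, since $g\mapsto g^{-t}$ preserves cuspidal image, but it is what lets you apply the spectral expansion to $\mathcal{F}(f)$.
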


 For any cuspidal automorphic representation $\pi$ of $\GL_{r,r-2}(\A_F)$ one can always find $f \in \mathcal{S}(\GL_{r,r-2}(\A_F))$ with cuspidal image such that $\pi(f) \neq 0$ \cite{BPLZZ}.  Thus assumption \eqref{a}, though strong, is not any loss of generality for our purposes.  We use it because it simplifies the proof significantly.  

There should be many functions in $\mathcal{S}^{\mathrm{as}}(X_\ell(\A_F),\mathcal{H})$ that satisfy \eqref{b}.  In order to construct them, it would be helpful to develop a Paley-Weiner theorem for $\mathcal{S}(X_\ell^{\circ}(F_v),\mathcal{H}),$ the Schwartz space of the open orbit.  This requires a different set of ideas than those developed in the current paper. We hope to prove it on another occasion.  We do check that the basic function is $\Xi$-rapidly decreasing in \Cref{lem:basic:bound}.

We use \textbf{the spectral method} to prove Theorem \ref{thm:PS}.  
This is an umbrella term for any method that uses known properties of automorphic representations (spectral data) to deduce geometric results.  In the case at hand, we use known results on $L$-functions of automorphic representations  to deduce our Poisson summation formula.  At first glance this may seem circular, because one usually wants to deduce properties of $L$-functions from Poisson summation formulae.  However, the technique of passing back and forth between spectral and geometric data to ultimately deduce a result about spectral data result is ubiquitous in automorphic representation theory.  In the setting at hand, our intended application is discussed in \cite{GGHL}.  

We do not know how to construct the Fourier transform in our setting without using the Plancherel formula, even though the $\gamma$-factors that appear in its definition correspond to $L$-functions whose analytic properties are known.  Likewise we do not know a proof of the Poisson summation formula without using the spectral method.  It would be interesting if there was a more direct approach.

\subsection{Outline}  In \S \ref{sec:AG:prelim} we collect preliminaries on algebraic groups and norms.  To fix notation we recall the Harish-Chandra Plancherel formula in \S \ref{sec:HC}, and related theorems for Whittaker models in \S \ref{sec:gen}.  We then define the Harish-Chandra space $\mathcal{C}(X_\ell^\circ(F_v),\mathcal{H})$ of $(X_\ell^\circ(F_v),\mathcal{H})$
in \S \ref{sec:HC-X_ell}.  The definition is motivated by the work in \cite{Bernstein:Planch}, but it is slightly different from that of loc.~cit.

Theorem \ref{thm:PW:Z}, our matrical Harish-Chandra Plancherel theorem for $\mathcal{C}(X_\ell^\circ(F_v),\mathcal{H}),$ is the subject of \S \ref{sec:loc:zeta}, \S \ref{sec:Pl:Xcir}, and \S \ref{sec:Matric}.  We formulate the Harish-Chandra Plancherel formula in terms of zeta integrals built from Whittaker models.  This is to facilitate comparison with the global unfolding arguments we use to prove Theorem \ref{thm:PS}.  After defining local zeta integrals in \S \ref{sec:loc:zeta}, we prove a Plancherel inversion formula (Theorem \ref{thm:Plancherel}) for elements of $\mathcal{S}(X^{\circ}_{\ell}(F_v),\mathcal{H})$ in \S \ref{sec:Pl:Xcir}.  The notation $\mathcal{S}(X^{\circ}_{\ell}(F_v),\mathcal{H})$ denotes Schwartz functions on the open orbit; this is just compactly supported and smooth functions in the non-Archimedean case.  The matrical Harish-Chandra Plancherel theorem mentioned earlier then ultimately follows from a continuity argument executed in \S \ref{sec:Matric}.  All of this requires careful estimates.  After proving Theorem \ref{thm:PW:Z} we apply it to construct the Fourier transform in \S \ref{sec:fourier}.  The argument is similar to that employed in \cite{DRS:Schwartz}; we define the transform by describing its image under the matrical Harish-Chandra Plancherel formula in terms of $\gamma$-factors.

  Local asymptotic Schwartz spaces $\mathcal{S}^{\mathrm{as}}(X_\ell(F_v),\mathcal{H})$ are defined in \S \ref{sec:As}.  The definition is an adaptation of the definition in \cite{DRS:Schwartz}.   
The definition of the basic function in $\mathcal{S}^{\mathrm{as}}(X_\ell(F_v),\mathcal{H})$ is given in \S \ref{sec:unramified}.  We then turn to the derivation of the Poisson summation formula in \S \ref{sec:the:sum}.  As mentioned earlier, the proof involves using known properties of $L$-functions to study  $\mathcal{S}^{\mathrm{as}}(X_\ell(F_v),\mathcal{H}).$  

We have included an index of symbols for the reader's convenience.

\subsection*{Acknowledgments} The first author thanks P.~Humphries for pointing out \cite[Appendix A]{FLO} and H.~Hahn for her constant encouragement and help with editing.  This work was finalized at the Postech Mathematics Institute; the first author thanks the institute for their support under the auspices of the Global Research Scholar program.

\section{Preliminaries}
 \label{sec:AG:prelim}

\subsection{Analytic number theory conventions}
Let $f,g:X \to \CC$ be functions on a set $X$ and let $Q$ be a set of parameters.  We write $f \ll_{Q} g$ if there exists a $c_Q \in \RR_{>0}$ depending on $Q$ such that $|f(x)|\leq c_Q|g(x)|$ for all $x \in X.$  If $f \ll_Q g$ we say $g$ \textbf{dominates} $f.$  We write $f \asymp_Q g$ if $f \ll_Q g$ and $g \ll_Q f.$

\subsection{Algebraic groups}

Let $G$ be a connected affine algebraic group over a characteristic zero field $F$ with maximal split torus $A_0.$  Let $M_0$ be the centralizer of $A_0$ in $G.$ If $F$ is Archimedean, let $\mathfrak{g}:=\mathrm{Lie}\,\mathrm{Res_{F/\RR}}(G)$ and $U(\mathfrak{g})$ be the universal enveloping algebra of the complexification of $\mathfrak{g}.$

For a parabolic subgroup $P\leq G$ we let $N_P$ denote the unipotent radical of $P.$   As usual $P$ is said to be semistandard if it contains $A_0.$  A semistandard Levi subgroup is the unique Levi subgroup of a semistandard parabolic subgroup that contains $A_0.$  If we write ``let $P=MN_P$ be a semistandard parabolic subgroup of $G$'' we always assume that $M$ is the semistandard Levi subgroup of $P$.
Let $W(G,M)=N_{G}(M)(F)/M(F).$  

Let $F$ be a local field. We let $K \leq G(F)$ be a maximal compact subgroup such that $G(F)=P(F)K$ for any semistandard parabolic subgroup $P$. When $G=\GL_r,$ we take $A_0=T_r$ to be the torus of diagonal matrices, and let $K_r$ denote such a choice of maximal compact subgroup. When $F$ is non-Archimedean, we can take $K_r=\GL_r(\OO_F)$. Let $B_r$ be the group of upper triangular matrices. Set \index{$T_r(F)^+$}
\begin{align} \label{+}
    T_r(F)^+:=\left\{t \in T_r(F):|\alpha_i(t)| \leq 1 \textrm{ for all simple roots }\alpha_i \textrm{ with respect to }B_r\right\}.
\end{align} 
We let \index{$U_r$} $U_r$ be the unipotent radical 
of $B_r$. Let \index{$\mathcal{P}_r$}
\begin{align} \label{mira}
\mathcal{P}_r(R):=\left\{\begin{psmatrix} g & x\\ & 1 \end{psmatrix}: (g,x) \in \GL_{r-1}(R) \times R^{r-2}\right\}.
\end{align}
It is the usual mirabolic subgroup.  We let $N_{\mathcal{P}_r}$ be its unipotent radical. We write $U_{r_1,r_2}:=U_{r_1} \times U_{r_2},$ etc. as in \eqref{GLr1r2} to ease notation.

 Let $P=MN_P$ be a semistandard parabolic subgroup of $G.$  Define the map
\begin{align*}
    H_P:G(F) \lto \mathfrak{a}_M:=\mathrm{Hom}(X^*(M),\RR)
\end{align*}
by $\langle H_P(mnk),\lambda \rangle=\log |\lambda(m)|$ for $(m,n,k) \in M(F) \times N_P(F) \times K.$  
When restricted to $P(F)$ the map $H_P$ is a homomorphism. Let $
    M(F)^1:=\mathrm{ker} (H_P:M(F) \to \mathfrak{a}_M).$
Set
\begin{align} \label{A:gps} 
\mathfrak{a}_{M,F}:&=H_{P}(M(F)), 
    \quad \mathfrak{a}_M^*:=X^*(M) \otimes_{\ZZ} \RR, \quad
\mathfrak{a}_{M,F}^\vee:=\mathrm{Hom}(\mathfrak{a}_{M,F},2\pi \ZZ),\quad 
\mathfrak{a}_{M,F}^*:=\mathfrak{a}_{M}^*/\mathfrak{a}_{M,F}^\vee. 
\end{align}
Thus $\mathfrak{a}_{M,F}=\mathfrak{a}_M$ if $F$ is Archimedean, and $\mathfrak{a}_{M,F}$ is a lattice of full rank inside $\mathfrak{a}_{M}$ if $F$ is non-Archimedean.
Moreover $\mathfrak{a}_{M,F}^*=\mathfrak{a}_{M}^*$
 when $F$ is Archimedean.

\subsection{Norms and log norms}

Let $F$ be a local field of characteristic zero. For a finite dimensional $F$-vector space $V,$ let $\norm{\,\cdot\,}=\norm{\,\cdot\,}_V$ be the norm on $V(F)$ which is the Euclidean norm if $F=\RR$, the square of the Euclidean norm if $F=\CC$, and is the box norm if $F$ is non-Archimedean.  We use analogous conventions for operator norms of elements of $\GL_V(F).$

For any integral separated scheme $Z$ of finite type over $F,$ we will make use of the notion of norms $\norm{\,\cdot\,}_Z$ and log norms $\sigma_Z$ on $Z$ \cite[\S 18]{Kottwitz:Clay} \cite[\S 1.2]{BP:Ast}.  Our constructions will only depend on norms and log norms up to the standard notion of equivalence discussed in loc.~cit.  
We point out that if $Z_1$ and $Z_2$ are integral separated schemes of finite type over $F$ and $(z_1,z_2) \in Z_1(F) \times Z_2(F)$ then
\begin{align} \label{prod:log}
\sigma_{Z_1}(z_1)^{1/2}\sigma_{Z_2}(z_2)^{1/2} \ll \sigma_{Z_1 \times Z_2}(z_1,z_2) \ll \sigma_{Z_1}(z_1)\sigma_{Z_2}(z_2).
\end{align}
By \cite[Lemma 1.2.1]{BP:Ast}, if $\phi:Z_1 \to Z_2$ is a morphism then
\begin{align} \label{pullback}
\sigma_{Z_1}(z_1) \gg \sigma_{Z_2}(\phi(z_1)).
\end{align}
If, moreover, $\phi$ is finite, then
\begin{align} \label{iso:inv}
\sigma_{Z_1}(z_1) \asymp \sigma_{Z_2}(\phi(z_1)).
\end{align}

For convenience we make certain choices of log norms explicit.
For $g\in \GL_n(F),$
\begin{align*}
    \sigma_{\GL_n}(g)=1+\log(\max(\norm{\,g\,}_{M_n},\norm{g^{-1}}_{M_n})).
\end{align*}
For an affine algebraic group $G$ over $F$, choose a closed $F$-embedding $G\hookrightarrow \GL_n.$ Let $\sigma_G$ be the restriction of $\sigma_{\GL_n}.$ Two different choices of embeddings give rise to equivalent log norms. Therefore, we will drop the subscript $G$ and simply write \index{$\sigma$} $\sigma$ whenever we are using log norms on affine algebraic groups. One has that
\begin{align} \label{sig:ineq}
    \sigma(xy)\ll \sigma(x)+\sigma(y)\ll \sigma(x)\sigma(y).
\end{align}
Note that $\sigma(x)=\sigma(xyy^{-1}) \ll \sigma(xy)\sigma(y^{-1})=\sigma(xy)\sigma(y),$ or in other words
\begin{align} \label{rev:sig:ineq}
   \sigma(x)\sigma(y)^{-1} \ll \sigma(xy). 
\end{align}

We assume that the embedding $G \hookrightarrow \GL_n$ is chosen so that for $x\in G(F)$ and $k_1,k_2\in K$ we have $\sigma(k_1xk_2)=\sigma(x).$
We let $\norm{\,\cdot\,}:=\norm{\,\cdot\,}_G:=e^{\sigma(\cdot)}.$
\quash{
Let $p:X \to Y$ be a morphism of integral separated schemes of finite type over $F.$ For simplicity assume $p$ is surjective on $F$-points. One says that $p$ satisfies the \textbf{norm descent property} if
$$
\sigma_Y(y) \asymp \mathrm{inf}_{p^{-1}(y)}\sigma_X(x).
$$
By (\cite[Propostion 18.2]{Kottwitz:Clay} \cite[Lemma 1.2.2]{BP:Ast}), if $f$ admits a section then it satisfies the norm descent property.  The following lemma is an immediate consequence of this:
\begin{lem} \label{lem:norm:descent}
Let $X \times G \to X$ be an action of an affine algebraic group $G$ on an integral separated scheme $X$ of finite type over $F.$  Then
$\sigma_X(x)\sigma_G(g) \gg \sigma_{X}(xg)$ for $(x,g) \in X(F) \times G(F).$   \qed
\end{lem}}

We normalize $\sigma_{U_r \backslash \GL_r}$ so that for $n,a,k \in U_r(F) \times T_r(F) \times K_r$ one has 
\begin{align*}
    \sigma_{U_r\backslash \GL_r}(nak)=\sigma_{T_r}(a).
\end{align*}
Let $\norm{\,\cdot\,}_{U_r\backslash \GL_r}:=e^{\sigma_{U_r\backslash \GL_r}(\cdot)}.$


\subsection{$\Xi$-functions}\label{ssec:HS}

Let $G$ be a connected reductive group over $F.$ For $F=\CC,$ we always view $G(F)=\mathrm{Res}_{\CC/\RR} G(\RR)$ as a real Lie group.  Fix a minimal parabolic subgroup $P_0=M_0N_0$ of $G$ containing $A_0$. When $G=\GL_{r},$ let $P_0=B_{r}.$ A parabolic subgroup $P=MN_P$ of $G$ is said to be standard if $P\ge P_0.$ 

We let \index{$\Xi_G$}
$$
\Xi_G:G(F) \lto \RR_{>0}
$$
be the Harish-Chandra function defined with respect to $P_0$. 
For readers' convenience, we restate some properties of $\Xi_G$ from \cite[Proposition 1.5.1]{BP:Ast} that will be used later.

\begin{prop}\label{prop:HCfunction}
 One has the following: 
    \begin{enumerate}[label=(\roman*)]
\item \label{delta:asymp} There is a $d>0$ such that if $m \in M_0(F)$ satisfies $|\alpha(m)| \leq 1$ for all simple roots $\alpha$ of $M_0$ in $P_0$ then
$$
\delta_{P_0}^{1/2}(m) \ll \Xi_G(m) \ll \delta_{P_0}^{1/2}(m)\sigma(m)^d.
$$

    
        \item \label{descent} Let $P=MN_P$ be a semistandard parabolic subgroup of $G$. For any $d>0$, there exists $d'>0$ such that for all $m\in M(F)$
        \begin{align*}
            \delta_P^{1/2}(m)\int_{N_P(F)} \Xi_G(mu)\sigma(mu)^{-d'}du\ll \Xi_M(m)\sigma(m)^{-d}.
        \end{align*}
        
        \item \label{convergence}There exists $d>0$ such that $\int_{G(F)} \Xi_G(g)^2\sigma(g)^{-d}dg<\infty.$

        \item \label{doubling} One has $           \frac{1}{\mathrm{meas}_{dk}(K)}\int_{K} \Xi_G(g_1kg_2)dk=\Xi_G(g_1)\Xi_G(g_2).$
    \end{enumerate}\qed
\end{prop}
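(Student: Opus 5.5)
This statement is quoted from \cite[Proposition 1.5.1]{BP:Ast}, so the plan is to reduce each item to standard facts about the Harish-Chandra function rather than argue from scratch. Throughout we use the definition $\Xi_G(g)=\int_K \delta_{P_0}^{1/2}(\mathrm{m}(kg))\,dk$, where $\mathrm{m}(\cdot)$ is the $M_0$-component in the Iwasawa decomposition $G(F)=P_0(F)K$. Equivalently, $\Xi_G(g)=\langle \pi(g)e,e\rangle$, where $\pi=\Ind_{P_0}^G\one$ is the normalized induction and $e$ is its $K$-fixed unit vector. We normalize $dk$ so that $\mathrm{meas}_{dk}(K)=1$ in what follows, and treat the general normalization by rescaling.

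\emph{Item \ref{doubling}.} This follows from the integral formula. Write $g_1k=p\,k'$ with $p\in P_0(F)$ and $k'\in K$, use left $P_0$-equivariance of $\delta_{P_0}^{1/2}(\mathrm{m}(\cdot))$, and apply Fubini. The double integral over $K\times K$ factors into $\Xi_G(g_1)\Xi_G(g_2)$. This is the classical functional equation of spherical functions.

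\emph{Item \ref{delta:asymp}.} Take $m$ in the closed positive chamber. For the lower bound, restrict the $k$-integral to a small neighborhood of $1$ in $K$ and show that $\delta_{P_0}^{1/2}(\mathrm{m}(km))\gg\delta_{P_0}^{1/2}(m)$ there. This holds because conjugation by $m^{-1}$ contracts $N_0^-$ when $m$ is dominant in the sense used here. For the upper bound, decompose $\pi|_{A_0}$ via the Bruhat decomposition, or equivalently use Harish-Chandra's estimate $\Xi_G(m)\ll\delta_{P_0}^{1/2}(m)(1+\sigma(m))^{d}$, with $d$ the number of positive roots. In the $p$-adic case one can instead use Macdonald's explicit formula.

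\emph{Item \ref{descent}.} This is the main obstacle. First use the integral formula and the Iwasawa decomposition with respect to $P$, obtaining the transitivity $\Xi_G(g)=\int_K \delta_P^{1/2}(\mathrm{m}_P(kg))\,\Xi_M(\mathrm{m}_P(kg))\,dk$. The key input is the identity $\delta_P^{1/2}(m)\int_{N_P}\Xi_G(mu)\,du=\Xi_M(m)$ in the unweighted case. Where that integral converges, it is the constant term along $P$ of the matrix coefficient $\Xi_G$, and it comes from the standard intertwining-integral computation. Inserting the weight $\sigma(mu)^{-d'}$ requires bounding $\sigma(mu)$ below by a combination of $\sigma(m)$ and $\sigma(u)$ via \eqref{rev:sig:ineq}. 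Then take $d'$ large enough that the leftover $\sigma(u)^{-d'+d}$ makes the $N_P$-integral of $\Xi_G(mu)$ converge uniformly. I would follow Waldspurger's argument for the $p$-adic case and Harish-Chandra's for the real case.

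\emph{Item \ref{convergence}.} Apply the Cartan decomposition $G=KA^+K$ with the Haar measure density $\ll\delta_{P_0}^{-1}(a)$ on the positive chamber $A^+$. Combined with item \ref{delta:asymp}, the integrand becomes $\ll\sigma(a)^{2d_0-d}$, where $d_0$ is the exponent from item \ref{delta:asymp}. This is summable, respectively integrable, over $A^+$ once $d$ exceeds $2d_0+\dim A_0$.
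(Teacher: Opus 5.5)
The paper does not prove this proposition. It quotes it from \cite[Proposition 1.5.1]{BP:Ast} without argument, so citing is the intended route. Your sketches of (i), (iii) and (iv) are the standard arguments and are fine. One small slip in (iv): the decomposition you want is $k'g_1=pk''$, where $k'$ is the variable in $\Xi_G(g)=\int_K\delta_{P_0}^{1/2}(\mathrm{m}(k'g))\,dk'$, not $g_1k=pk'$. The sketch of (ii), however, rests on a step that fails.

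\emph{The unweighted identity is not available.} For $P\neq G$ the integral $\int_{N_P}\Xi_G(mu)\,du$ diverges for every $m$, so $\delta_P^{1/2}(m)\int_{N_P}\Xi_G(mu)\,du=\Xi_M(m)$ holds in no range. Already for $\GL_2(\RR)$ and $P=B_2$, with $u_x=\begin{psmatrix}1&x\\&1\end{psmatrix}$, one has $\Xi_{\GL_2}(u_x)\asymp(1+\log\max(1,|x|))/\max(1,|x|)$, which is not integrable. This is exactly why the weight $\sigma(mu)^{-d'}$ appears in the statement. There is no unweighted identity to perturb.

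\emph{The weight cannot be split as you propose.} \eqref{rev:sig:ineq} only gives $\sigma(mu)\gg\sigma(m)\sigma(u)^{-1}$ or $\sigma(mu)\gg\sigma(u)\sigma(m)^{-1}$. Each buys decay in one variable at the price of growth in the other. The correct inequality is $\sigma(mu)\gg\max(\sigma(m),\sigma(u))$, which follows from \eqref{pullback} applied to $P\to M$ together with \eqref{sig:ineq}.

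\emph{Even after that split, the bound you would need is false.} You would need $\delta_P^{1/2}(m)\int_{N_P}\Xi_G(mu)\sigma(u)^{-D}\,du\ll\Xi_M(m)$. Take the same example with $m=\mathrm{diag}(e^t,e^{-t})$, $t\geq0$. Then $\Xi_{\GL_2}(mu_x)\asymp(1+t+\log\max(1,|x|))\,e^{-t}\max(1,|x|)^{-1}$. So for any fixed $D>2$ the left side is $\asymp 1+t$, while $\Xi_{T_2}\equiv1$.

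The real content of (ii) is therefore a uniform-in-$m$ estimate for the weighted integral that absorbs a genuine polynomial loss in $\sigma(m)$. The transitivity formula $\Xi_G(g)=\int_K\delta_P^{1/2}\Xi_M(\mathrm{m}_P(kg))\,dk$ you wrote down is a correct starting point, but it does not produce this estimate by itself. The estimate is Harish-Chandra's lemma in the real case and Waldspurger's in the $p$-adic case. Your closing sentence deferring to their arguments is the only part of your treatment of (ii) that actually carries the weight. Either follow those arguments or cite them, as the paper does via \cite{BP:Ast}.
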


\begin{cor} \label{cor:xi:bounded}
    The function $\Xi_G$ is bounded above.
\end{cor}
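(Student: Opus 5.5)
The plan is to combine the doubling identity \ref{doubling} with the Cartan decomposition, which reduces the bound to the dominant part of $M_0(F)$, where \ref{delta:asymp} gives control. The first step is to specialize \ref{doubling} to $g_1=g_2=1$. Since $\Xi_G(k)$ for $k \in K$ is a value at a compact element, this gives
$$
\frac{1}{\mathrm{meas}_{dk}(K)}\int_K \Xi_G(k)\,dk=\Xi_G(1)^2 .
$$
Applying \ref{doubling} with $g_1=k_1$ and $g_2=g k_2$, and using the invariance of the Haar measure on $K$, shows that $\Xi_G(k_1 g k_2)$ is independent of $k_1,k_2 \in K$. In particular $\Xi_G(k)=\Xi_G(1)$ for all $k\in K$, which is consistent with the displayed identity because $\Xi_G(1)>0$. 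Consequently $\Xi_G$ is bi-$K$-invariant.

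The second step uses the Cartan decomposition $G(F)=K M_0(F)^+ K$. Here $M_0(F)^+$ is the set of $m\in M_0(F)$ with $|\alpha(m)|\le 1$ for all simple roots $\alpha$ of $M_0$ in $P_0$, up to a compact subset of $M_0(F)$ in the non-Archimedean case. By bi-$K$-invariance it then suffices to bound $\Xi_G$ on this dominant set. Any compact fudge factor is harmless, either because $\Xi_G$ is continuous and $\sigma$ is bounded on compacta, or because the fudge factor can be absorbed into $K$.

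The third step applies the upper bound in \ref{delta:asymp} to get $\Xi_G(m)\ll \delta_{P_0}^{1/2}(m)\sigma(m)^d$ for dominant $m$. This alone is not bounded, so the argument needs a polynomial-versus-exponential comparison.

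This comparison is the main obstacle, and there are two routes.
\begin{itemize}
\item For dominant $m$, $\delta_{P_0}^{1/2}(m)$ decays exponentially in $\sigma$ only transversally to the center, so a direct comparison fails when $m$ has a large central component.
\item The cleaner route avoids this entirely by using the doubling formula once more, in the form $\Xi_G(g)^2=\frac{1}{\mathrm{meas}(K)}\int_K \Xi_G(gkg^{-1}\cdot g)\,dk$. That is not obviously helpful, so instead I would use the classical realization $\Xi_G(g)=\int_K \delta_{P_0}^{1/2}(kg)\,dk$, normalized to total mass one, as a matrix coefficient $\langle \pi(g)v,v\rangle$ of the unitary representation $\mathrm{Ind}_{P_0}^{G}(1)$ with $v$ the normalized spherical vector.
\end{itemize}
Cauchy–Schwarz then gives $\Xi_G(g)\le \norm{v}^2=\Xi_G(1)$. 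I expect this to be the intended argument, with the boundedness following immediately from unitarity.

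If one must rely only on the cited properties, the fallback is to observe that $\Xi_G$ is invariant under the center, since central elements act trivially in $\mathrm{Ind}_{P_0}^G(1)$ up to a unitary character of absolute value one. One then reduces to the derived part, where $\delta_{P_0}^{-1/2}(m)\gg e^{c\sigma(m)}$ on dominant elements. On that part $\delta_{P_0}^{1/2}(m)\sigma(m)^d\ll e^{-c\sigma(m)}\sigma(m)^d\ll 1$.
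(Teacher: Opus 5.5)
Your proposal is correct, but your main argument differs from the paper's. The paper combines the Cartan decomposition with the upper bound in Proposition \ref{prop:HCfunction}\ref{delta:asymp}. You instead realize $\Xi_G(g)=\langle \pi(g)v,v\rangle$ as the spherical matrix coefficient of the unitary representation $\mathrm{Ind}_{P_0}^{G}(1)$ and apply Cauchy--Schwarz.

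What your route buys is the sharp bound $\Xi_G\le \Xi_G(1)=1$ with no asymptotic input. It costs a return to the definition of $\Xi_G$, rather than treating Proposition \ref{prop:HCfunction} as a black box. It also makes your Steps 1--3 unnecessary. (In Step 1, the single substitution $g_1=k_1$, $g_2=gk_2$ only shows that $\Xi_G$ is constant on $K$; for right invariance you also need, e.g., $g_1=g$, $g_2=k_2$. Bi-$K$-invariance is part of the definition anyway.)

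Your concern about the center is well founded, and it exposes a step the paper leaves implicit. For central $z$ one has $|\alpha(z)|=1$ and $\delta_{P_0}(z)=1$. So \ref{delta:asymp} by itself only gives $\Xi_G(z)\ll\sigma(z)^d$, which is unbounded on, e.g., the scalars in $\GL_r$. The one-line proof therefore tacitly uses invariance of $\Xi_G$ under the center.

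Contrary to what you say, this invariance follows from the cited properties alone. Apply Proposition \ref{prop:HCfunction}\ref{doubling} with $g_2=z$ central. Since $gkz=zgk$, bi-$K$-invariance gives $\Xi_G(zg)=\Xi_G(z)\Xi_G(g)$. Hence $\Xi_G$ restricts to a positive character of the center. By \ref{delta:asymp}, $\Xi_G(z)^n=\Xi_G(z^n)\ll_z(1+|n|)^d$ for all $n\in\ZZ$, which forces the character to be trivial.

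With central invariance established, your fallback is precisely the completed form of the paper's argument. You reduce modulo the center to the part of the dominant chamber where $\delta_{P_0}^{1/2}$ decays exponentially in $\sigma$ and so beats $\sigma^d$.
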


\begin{proof}
  This follows from Proposition \ref{prop:HCfunction}\ref{delta:asymp} and the Cartan decomposition.
\end{proof}

\begin{lem} \label{lem:Levi}
Let $g\in \GL_{r-1}(F).$ Suppose $g\in K_{r-1}tK_{r-1},$ where $t\in T_{r-1}(F)^+.$ Write 
\begin{align*}
    t=\begin{psmatrix}
        t'\\
         & t''
    \end{psmatrix}=\left(\begin{matrix}\begin{psmatrix}
        t_1 & & \\&\ddots & \\ & & t_{j}
    \end{psmatrix}& \\
    & \begin{psmatrix}
        t_{j+1} & & \\&\ddots & \\ & & t_{r-1}
    \end{psmatrix}\end{matrix}\right),
\end{align*}
where
$$
j:=j(t):=\begin{cases} 0 &\textrm{if } |t_1| \geq 1,\\
\textrm{the unique integer $1 \leq i < r-1$ such that }|t_i|\leq  1 <|t_{i+1}| &\textrm{if }|t_{r-1}|>1>|t_1|,\\
r-1 &\textrm{if }|t_{r-1}|\leq 1.
\end{cases}
$$ 
There is a $d>0$ independent of $g$ such that
$$
\frac{\Xi_{\GL_{r-1}}(g)\sigma(g)^{d}|\det t'|}{|\det g|^{1/2}}\gg \Xi_{\GL_{r}}\begin{psmatrix} g & \\ & 1 \end{psmatrix} \gg \frac{\Xi_{\GL_{r-1}}(g)|\det t'|}{|\det g|^{1/2}\sigma(g)^{d}}.
$$
\end{lem}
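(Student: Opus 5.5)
Since $\Xi_{\GL_{r-1}}$, $\Xi_{\GL_r}$, $\sigma$, $|\det|$ and the element $t$ depend only on the double coset $K_{r-1}gK_{r-1}$, we may assume $g=t\in T_{r-1}(F)^+$. Indeed, for $k_1,k_2\in K_{r-1}$ the matrices $\begin{psmatrix} k_i & \\ & 1\end{psmatrix}$ lie in $K_r$, so $\Xi_{\GL_r}\begin{psmatrix} g & \\ & 1\end{psmatrix}=\Xi_{\GL_r}\begin{psmatrix} t & \\ & 1\end{psmatrix}$. The plan is to move $\begin{psmatrix} t & \\ & 1\end{psmatrix}$ into the positive chamber $T_r(F)^+$ by a Weyl element. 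Then we apply Proposition \ref{prop:HCfunction}\ref{delta:asymp} on both $\GL_r$ and $\GL_{r-1}$ and compare the two modular characters explicitly.

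\textbf{Step 1 (positive representative).} Recall that $t\in T_{r-1}(F)^+$ means $|t_1|\le |t_2|\le\cdots\le|t_{r-1}|$. By the definition of $j=j(t)$, the element
$$t^*:=\mathrm{diag}(t_1,\dots,t_j,1,t_{j+1},\dots,t_{r-1})$$
lies in $T_r(F)^+$. It is obtained from $\begin{psmatrix} t & \\ & 1\end{psmatrix}$ by conjugating with a permutation matrix in $K_r$. Hence
$$\Xi_{\GL_r}\begin{psmatrix} t & \\ & 1\end{psmatrix}=\Xi_{\GL_r}(t^*).$$
Moreover $\sigma(t^*)\asymp\sigma(t)\asymp \sigma(g)$, by the $K$-biinvariance of $\sigma$ and the explicit formula for $\sigma_{\GL_n}$; note that the inserted entry $1$ does not change the operator norms.

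\textbf{Step 2 (apply Proposition \ref{prop:HCfunction}\ref{delta:asymp} twice).} This gives
$$\delta_{B_r}^{1/2}(t^*)\ll\Xi_{\GL_r}(t^*)\ll\delta_{B_r}^{1/2}(t^*)\sigma(t^*)^d$$
and
$$\delta_{B_{r-1}}^{1/2}(t)\sigma(t)^{-d}\ll\Xi_{\GL_{r-1}}(t)\ll\delta_{B_{r-1}}^{1/2}(t)\sigma(t)^d.$$
Here the second chain is obtained by rearranging the lower and upper bounds. It therefore suffices to prove the exact identity
$$\delta_{B_r}^{1/2}(t^*)=\delta_{B_{r-1}}^{1/2}(t)\,|\det t'|\,|\det t|^{-1/2}.$$
Using $\delta_{B_n}(a)=\prod_i|a_i|^{n+1-2i}$, we compare exponents entry by entry.
\begin{itemize}
\item An entry $t_i$ with $i\le j$ sits in position $i$ of $t^*$. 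Its exponent changes from $(r-2i)/2$ to $(r+1-2i)/2$, a gain of $1/2$.
\item An entry $t_i$ with $i>j$ sits in position $i+1$ of $t^*$. Its exponent changes from $(r-2i)/2$ to $(r-1-2i)/2$, a loss of $1/2$.
\item The inserted entry $1$ contributes nothing.
\end{itemize}
The ratio is therefore $|\det t'|^{1/2}|\det t''|^{-1/2}=|\det t'|\,|\det t|^{-1/2}$. Finally $|\det t|=|\det g|$, since $g$ and $t$ differ by elements of $K_{r-1}$.

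Combining the two chains of inequalities with this identity gives both bounds of the lemma. A single $d$ suffices after enlarging it, using $\sigma(t)\asymp\sigma(g)\ge 1$. The only point requiring care is the bookkeeping in Step 2. One must check that the sign convention for $\delta_{B_n}$ matches the chamber $T_n(F)^+$ used in Proposition \ref{prop:HCfunction}\ref{delta:asymp}, namely the chamber where $|\alpha_i|\le 1$. One must also handle the edge cases $j=0$ and $j=r-1$, where $t'$ or $t''$ is empty and the same computation applies verbatim.
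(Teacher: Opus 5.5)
Your proof is correct and is essentially the paper's argument. Both reduce to $g=t$, identify $\begin{psmatrix} t & \\ & 1\end{psmatrix}$ up to $K_r$ with $\mathrm{diag}(t',1,t'')\in T_r(F)^+$, apply Proposition \ref{prop:HCfunction}\ref{delta:asymp} on both $\GL_r$ and $\GL_{r-1}$, and use the identity $\delta_{B_r}^{1/2}(\mathrm{diag}(t',1,t''))=\delta_{B_{r-1}}^{1/2}(t)|\det t'|^{1/2}|\det t''|^{-1/2}$, which your exponent bookkeeping verifies explicitly where the paper simply asserts it.
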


\begin{proof} 
We may assume $g=t.$  Then using Proposition \ref{prop:HCfunction}\ref{delta:asymp} with $G=\GL_r$ we have
\begin{align*}
     \Xi_{\GL_r}\begin{psmatrix} g & \\ & 1 \end{psmatrix} = \Xi_{\GL_r}\begin{psmatrix} t' & &\\  & 1&\\& & t'' \end{psmatrix}\gg \delta_{B_r}^{1/2}\begin{psmatrix} t' & &\\  & 1&\\& & t'' \end{psmatrix}&=\delta^{1/2}_{B_{r-1}}(g)|\det t'|^{1/2}|\det t''|^{-1/2}\\&=\delta^{1/2}_{B_{r-1}}(g)|\det g|^{-1/2}|\det t'|.
\end{align*}
The lower bound now follows from an application of Proposition \ref{prop:HCfunction}\ref{delta:asymp} with $G=\GL_{r-1}.$ The upper bound can be proved similarly.\end{proof}

\begin{lem} \label{lem:parab} There is a $d>0$ such that for $(g,y) \in \GL_{r-1}(F) \times F^{r-1}$ one has 
\begin{align*}
     \Xi_{\GL_{r}}\begin{psmatrix}
         g & \\
         & 1
     \end{psmatrix}\sigma\begin{psmatrix}
         g & y \\
         & 1
     \end{psmatrix}^{d}\gg\Xi_{\GL_r}\begin{psmatrix} g& y \\ & 1 \end{psmatrix} \gg
     \frac{\Xi_{\GL_{r}}\begin{psmatrix} g&  \\ & 1 \end{psmatrix}}{\sigma\begin{psmatrix}
         g & y \\
         & 1
     \end{psmatrix}^{d} \max(1,\min(\norm{g^{-1}y},\norm{y}))^{r-1}}.
\end{align*}
\end{lem}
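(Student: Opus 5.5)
Write $p=\begin{psmatrix} g & y\\ & 1\end{psmatrix}=\begin{psmatrix} g & \\ & 1\end{psmatrix}\begin{psmatrix} I_{r-1} & g^{-1}y\\ & 1\end{psmatrix}=\begin{psmatrix} I_{r-1} & y\\ & 1\end{psmatrix}\begin{psmatrix} g & \\ & 1\end{psmatrix}$. The plan is to compare $\Xi_{\GL_r}(p)$ with $\Xi_{\GL_r}\begin{psmatrix} g & \\ & 1\end{psmatrix}$ by treating the unipotent factor $u(z):=\begin{psmatrix} I_{r-1} & z\\ & 1\end{psmatrix}$ as a perturbation. The tool is Proposition \ref{prop:HCfunction}\ref{doubling}. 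Together with the standard consequence $\Xi_G(xk)=\Xi_G(x)$, it gives for any $h$ whose Cartan projection is controlled a two-sided comparison
\[
\Xi_G(xh)\asymp \Xi_G(x) \quad \text{up to a factor governed by } \Xi_G(h) \text{ and } \norm{h}.
\]
A cleaner route: write $u(z)=k_1 a k_2$ (Cartan) and use the well-known fact that $\Xi_G(x a)\ll \Xi_G(x)\norm{a}^{c}$ with $c$ depending on the size of $a$.

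\textbf{Upper bound.} Here I would instead use Proposition \ref{prop:HCfunction}\ref{delta:asymp} directly. By the Iwasawa decomposition, $\Xi_{\GL_r}$ is determined by Cartan projections. Write $g=k t k'$ and conjugate $y$ accordingly: $p=\begin{psmatrix} k & \\ & 1\end{psmatrix}\begin{psmatrix} t & k^{-1}y\\ & 1\end{psmatrix}\begin{psmatrix} k' & \\ & 1\end{psmatrix}$, so we may assume $g=t$ diagonal in $T_{r-1}(F)^+$. The singular values of $\begin{psmatrix} t & y\\ & 1\end{psmatrix}$ interlace with those of $\mathrm{diag}(t,1)$, by the Cauchy interlacing theorem applied to deleting the last column, or its non-Archimedean analogue via elementary divisors. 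Interlacing shows the dominant Cartan representative $a(p)$ satisfies $\delta_{B_r}^{1/2}(a(p))\le \delta_{B_r}^{1/2}(a(\mathrm{diag}(t,1)))$. Indeed, adding a column can only push singular values apart in the ordering that decreases $\delta^{1/2}$, because the largest exponents grow and the smallest shrink. Applying \ref{delta:asymp} to both sides then gives the upper bound with the factor $\sigma(p)^d$, and $\sigma(\mathrm{diag}(g,1))\ll\sigma(p)$ since the $\sigma$ of a block-diagonal projection is dominated.

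\textbf{Lower bound.} By the same interlacing, the singular values of $p$ differ from those of $\mathrm{diag}(t,1)$ in total log-size by at most $\log\max(1,\norm{y'})$, where $y'$ is either $y$ or $t^{-1}y$ according to which factorization is used. The two factorizations above give the two options, and we take the better one, producing $\min(\norm{g^{-1}y},\norm{y})$. For the first factorization, $\Xi(xu)\ge \Xi(x)\,\Xi(u)$-type bounds follow from \ref{doubling}: $\Xi(x)\Xi(u)=\int_K\Xi(xku)\,dk$. This must be reversed, so I instead compute via \ref{delta:asymp}. Each of the $r-1$ exponent shifts costs at most a factor $\max(1,\norm{y'})$ in $\delta^{1/2}$, giving the power $r-1$. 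With polylog losses absorbed into $\sigma(p)^{-d}$, this yields the lower bound.

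The main obstacle is the precise singular-value bookkeeping: showing that the $\delta_{B_r}^{1/2}$ of the Cartan projection of $p$ is at least $\max(1,\norm{y'})^{-(r-1)}$ times that of $\mathrm{diag}(g,1)$, uniformly in Archimedean and non-Archimedean cases. I would first handle it via exterior powers. The products of the top $i$ singular values are $\norm{\wedge^i p}$, and $\norm{\wedge^i p}\le \max(1,\norm{y'})\norm{\wedge^i\mathrm{diag}(g,1)}$ and conversely. Then $\delta^{1/2}$ is a product of ratios of these quantities, which gives the estimate with an explicit exponent.
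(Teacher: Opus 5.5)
Your argument is correct, but it takes a different route from the paper's. The step that actually carries it is the exterior-power bookkeeping in your last paragraph; the opening remarks about doubling and $\Xi(xa)\ll\Xi(x)\norm{a}^c$ are never used.

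\textbf{The paper's route.} The paper splits by place, writing $u(z):=\begin{psmatrix} I_{r-1} & z\\ & 1\end{psmatrix}$.
For non-Archimedean $F$:
\begin{itemize}
\item The lower bound comes from Proposition \ref{prop:HCfunction}\ref{doubling} applied to $u(-y)$ and $p$. This is combined with a lower bound $\gg\norm{y}^{-2(r-1)}$ for the measure of $u(y)K_ru(-y)\cap K_r$ and the Cartan decomposition of $u(y)$. The case of $g^{-1}y$ is handled by inverting.
\item The upper bound comes from the descent estimate \ref{descent} integrated over $y$, invariance under a lattice, and Lemma \ref{lem:Levi}.
\end{itemize}
Only in the Archimedean case does the paper use Cauchy interlacing and explicit singular-value bookkeeping, after a reduction to $|y_i|\ge\max(1,|t_i|)$.

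\textbf{Your route.} You use only \ref{delta:asymp} and the identity
\[
\delta_{B_r}^{1/2}(a(x))=|\det x|^{(r-1)/2}\prod_{k=1}^{r-1}\norm{\wedge^k x}^{-1},
\]
which holds at every place (with the box norm on $\wedge^kF^r$ in the non-Archimedean case).
\begin{itemize}
\item The lower bound follows from submultiplicativity in the two factorizations $p=u(y)\,\mathrm{diag}(g,1)=\mathrm{diag}(g,1)\,u(g^{-1}y)$. Add $\norm{\wedge^k u(z)}\ll\max(1,\norm{z})$ for $1\le k\le r-1$, which gives exactly the exponent $r-1$, and $\sigma(\mathrm{diag}(g,1))\ll\sigma(p)$.
\item For the upper bound, your sentence about singular values being ``pushed apart'' is not a proof as it stands. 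What you need is $\norm{\wedge^k p}\gg\norm{\wedge^k\mathrm{diag}(g,1)}$ for each $k$. This holds because every $k\times k$ minor of $\mathrm{diag}(g,1)$ is also a minor of $p$. Alternatively, it follows from interlacing $\varsigma_i\le t_i\le\varsigma_{i+1}$ combined with $\det p=\det g$.
\end{itemize}

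\textbf{Trade-off.} The paper's non-Archimedean argument uses only formal properties of $\Xi$, so it would transfer to groups without a convenient singular-value description. Yours is specific to $\GL_r$, but it is shorter and treats all places uniformly. It needs none of Lemma \ref{lem:Levi}, the descent estimate, or the reduction to large $y_i$.
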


\begin{proof}
Assume that $F$ is non-Archimedean. The lemma is clear if $y,g^{-1}y\in \mathcal{O}_F^{r-1}.$ Assume first $y\not\in \mathcal{O}_F^{r-1}.$   By Proposition \ref{prop:HCfunction}\ref{doubling} we have
\begin{align*}
\Xi_{\GL_r}\begin{psmatrix} I_{r-1} & -y \\ &1\end{psmatrix}\Xi_{\GL_r}\begin{psmatrix} g & y \\ & 1 \end{psmatrix}=\frac{1}{\mathrm{meas}_{dk}(K_r)}\int_{K_r}\Xi_{\GL_r}\left(\begin{psmatrix} I_{r-1} & -y \\& 1\end{psmatrix}k\begin{psmatrix} g & y \\ & 1\end{psmatrix}\right)dk.
\end{align*}
This is bounded below by $\Xi_{\GL_r}\begin{psmatrix} g & \\ & 1 \end{psmatrix}$ times \begin{align*}
&\mathrm{meas}_{dk}\left(\begin{psmatrix} I_{r-1} & y \\ & 1\end{psmatrix} K_r\begin{psmatrix} I_{r-1} & -y \\ & 1\end{psmatrix} \cap K_r\right)\\&\geq \mathrm{meas}_{dk}\left(\begin{psmatrix} I_{r-1} & y \\ & 1\end{psmatrix} \left(I_r+\varpi\mathfrak{gl}_r(\OO_F)\right)\begin{psmatrix} I_{r-1} & -y \\ & 1\end{psmatrix} \cap \left(I_r +\varpi \mathfrak{gl}_r(\OO_F) \right) \right)\\
&\gg \mathrm{meas}_{dX}\left(\begin{psmatrix} I_{r-1} & y \\ & 1\end{psmatrix} \mathfrak{gl}_r(\OO_F)\begin{psmatrix} I_{r-1} & -y \\ & 1\end{psmatrix} \cap \mathfrak{gl}_r(\OO_F) \right).
\end{align*}
Here $dX$ is the Haar measure on the additive group $\mathfrak{gl}_r(F)$ giving $\mathfrak{gl}_r(\OO_F)$ measure $1.$ 

Let $a\in F^\times$ such that $\norm{y}=|a|>1.$ Then
\begin{align} \label{Cartan:comp}
    K_r\begin{psmatrix} I_{r-1} & y \\ & 1 \end{psmatrix}K_r=K_r \begin{psmatrix} 1 & & a \\ & I_{r-2} & \\ & & 1 \end{psmatrix}K_r=K_r \begin{psmatrix} a^{-1} & \\ & I_{r-2} & \\ & & a\end{psmatrix}K_r.
\end{align}
Here we have used the observation that 
$$\begin{psmatrix}  a^{-1} & & -1\\&I_{r-2} & \\ 1 & &  \end{psmatrix} \begin{psmatrix} 1 & & a \\ & I_{r-2} & \\ & & 1 \end{psmatrix}\begin{psmatrix}  1 & & \\ &  I_{r-2} &  \\  -a^{-1} & &  1\end{psmatrix}=\begin{psmatrix} a^{-1} & & \\ & I_{r-2} & \\ & & a \end{psmatrix}.
$$
By \eqref{Cartan:comp}, the measure above is $|a|^{-2(r-1)}=\norm{y}^{-2(r-1)}.$  Thus
\begin{align*}
\Xi_{\GL_r}\begin{psmatrix} g & y \\&1\end{psmatrix} &\gg \frac{\Xi_{\GL_r}\begin{psmatrix} g &  \\ & 1\end{psmatrix}}{\norm{y}^{2(r-1)}\Xi_{\GL_r}\begin{psmatrix} I_{r-1} & -y \\ & 1\end{psmatrix}}=\frac{\Xi_{\GL_r}\begin{psmatrix} g &  \\ & 1\end{psmatrix}}{\norm{y}^{2(r-1)}\Xi_{\GL_r}\begin{psmatrix} a^{-1} & & \\ &I_{r-1} &  \\ & & a\end{psmatrix}}\gg \frac{\Xi_{\GL_r}\begin{psmatrix} g &  \\ & 1\end{psmatrix}}{\sigma(y)^{d}\norm{y}^{r-1}}
\end{align*}
for some $d>0$ by Proposition \ref{prop:HCfunction}\ref{delta:asymp}. 
On the other hand, if $g^{-1}y \not \in \OO_F^{r-1}$ then we use the identities
\begin{align} \label{inverses}
    \Xi_{\GL_r}\begin{psmatrix} g & y \\&1\end{psmatrix}= \Xi_{\GL_r}\begin{psmatrix} g^{-1} & -g^{-1}y \\&1\end{psmatrix}\gg \frac{\Xi_{\GL_r}\begin{psmatrix} g^{-1} &  \\ & 1\end{psmatrix}}{\sigma(g^{-1}y)^{d}\norm{g^{-1}y}^{r-1}}=\frac{\Xi_{\GL_r}\begin{psmatrix} g &  \\ & 1\end{psmatrix}}{\sigma(g^{-1}y)^{d}\norm{g^{-1}y}^{r-1}}.
\end{align}
This completes the proof of the lower bound in the non-Archimedean case.

For the upper bound, we may assume $g=\begin{psmatrix}t' & \\ & t''\end{psmatrix}\in T_{r-1}(F)^+$ as in Lemma \ref{lem:Levi}. By Proposition \ref{prop:HCfunction}\ref{descent} there is a $d>0$ such that
\begin{align*}
    \int_{F^{r-1}}\Xi_{\GL_r}\begin{psmatrix}
        g & y \\
        & 1
    \end{psmatrix}\sigma\begin{psmatrix}
        g  & y \\
         & 1
    \end{psmatrix}^{-d}dy\ll |\det g|^{1/2}\Xi_{\GL_{r-1}}(g).
\end{align*}
Observe that the integral over $y$ is invariant under $\mathcal{O}_F^{j}\oplus \bigoplus_{i=j+1}^{r-1} t_i\mathcal{O}_F.$ Therefore,
\begin{align*}
    \Xi_{\GL_{r}}\begin{psmatrix}
        g & y \\
        & 1
    \end{psmatrix}\ll  \sigma\begin{psmatrix}
        g  & y \\
         & 1
    \end{psmatrix}^{d}|\det g|^{1/2}\Xi_{\GL_{r-1}}(g)|\det t''|^{-1}.
\end{align*}
Thus the desired bound follows from Lemma \ref{lem:Levi}.

Suppose $F$ is Archimedean. We may assume $g=\begin{psmatrix}t'& \\ &t''\end{psmatrix}\in T_{r-1}(F)^+.$  By \cite[Part II, Proposition 8.16(v)]{Varadarajan} it suffices to consider $y$ with  $|y_i|\ge \max(1,|t_i|)$ for all $i$. Let $\varsigma_1\le \ldots\le \varsigma_r$ (viewed as elements in $F$) be the singular values of $\begin{psmatrix}
    g & y\\
    & 1
\end{psmatrix}$. Then using Cauchy interlacing theorem, for $1\le i\le r-1$
\begin{align*}
    |\varsigma_i|\le |t_i|\le |\varsigma_{i+1}|.
\end{align*}
By our assumptions on $y,$ 
$\norm{y}\asymp \norm{\begin{psmatrix}g & y \\ & 1 \end{psmatrix}}_{M_r} \asymp |\varsigma_r|.$ Using Proposition \ref{prop:HCfunction}\ref{delta:asymp} we have
\begin{align*}
    \Xi_{\GL_{r}}\begin{psmatrix}
    g & y\\
    & 1
\end{psmatrix}&\gg \prod_{i=1}^{r} |\varsigma_i|^{(r+1-2i)/2}=|\det g|^{(r-1)/2} \prod_{i=1}^{r} |\varsigma_i|^{1-i}\\
&\gg\frac{|\det g|^{(r-1)/2} }{\norm{y}^{r-1}}\prod_{i=1}^{r-1} |t_i|^{1-i}=\frac{|\det g|^{1/2}}{\norm{y}^{r-1}} \delta_{B_{r-1}}^{1/2}(g).
\end{align*}
Applying Proposition \ref{prop:HCfunction}\ref{delta:asymp} and Lemma \ref{lem:Levi} we deduce that 
$$
\Xi_{\GL_r}\begin{psmatrix} g & y \\ & 1 \end{psmatrix} \gg \frac{\Xi_{\GL_r}\begin{psmatrix} g & \\ & 1 \end{psmatrix}}{\sigma(g)^{d}\norm{y}^{r-1}}
$$
for some $d>0$.  Taking inverses as in \eqref{inverses} we deduce the lower bound.

Recall the definition of $1 \leq j \leq r-1$ from the statement of Lemma \ref{lem:Levi}. To prove the upper bound, we observe that Proposition \ref{prop:HCfunction}\ref{delta:asymp} implies that
there is a $d>0$ such that
\begin{align*}
    \Xi_{\GL_r}\begin{psmatrix}
    g & y\\
    & 1
\end{psmatrix}&\ll \sigma\begin{psmatrix} g & y\\ & 1 \end{psmatrix}^d|\det g|^{(r-1)/2}\prod_{i=2}^{r}|\varsigma_i|^{1-i}\\&=
\sigma\begin{psmatrix}
    g & y\\
    & 1
\end{psmatrix}^{d}|\varsigma_1|^{j}|\det g|^{(r-1)/2-j} \prod_{i=2}^{r} |\varsigma_i|^{1-i+j}\\
&\le \sigma\begin{psmatrix}
    g & y\\
    & 1
\end{psmatrix}^{d}|t_1|^{j}|\det g|^{(r-1)/2-j} \prod_{i=2}^{j} |t_i|^{1-i+j}\prod_{i=1+j}^{r-1} |t_i|^{-i+j}\\
&= \sigma\begin{psmatrix}
    g & y\\
    & 1
\end{psmatrix}^{d}\delta_{B_{r-1}}^{1/2}(g) |\det t'|^{1/2}|\det t''|^{-1/2}.
\end{align*}
Using Proposition \ref{prop:HCfunction}\ref{delta:asymp} and Lemma \ref{lem:Levi} again we deduce the upper bound. 
\end{proof}

\section{The Harish-Chandra Plancherel formulae} \label{sec:HC}

Let $G$ be a connected reductive group over a local field $F.$ For  $f\in C^\infty(G(F))$ and $d\in \RR$, we let
\begin{align*}
    p_d(f):=\sup_{g\in G(F)} |f(g)|\Xi(g)^{-1}\sigma(g)^d.
\end{align*}
Assume $F$ is non-Archimedean.  Let \index{$\mathcal{C}_{d}( G(F))$}
\begin{align*}
\mathcal{C}_{d}( G(F)):=\left\{\textrm{bi-}K\textrm{-finite }f \in C^\infty( G(F)):p_{-d}(f)<\infty\right\}.
\end{align*} 
When $F$ is Archimedean, we set 
\begin{align*}
\mathcal{C}_d(G(F)):=\left\{f \in C^\infty(G(F)): p_{-d}(u*f*v)<\infty \textrm{ for all }u,v \in U(\mathfrak{g})\right\}.
\end{align*}
We then define \index{$\mathcal{C}(G(F))$}
$$
\mathcal{C}^w(G(F)):=\bigcup_{d>0}\mathcal{C}_d(G(F)) \quad\text{and}\quad \mathcal{C}(G(F))=\bigcap_d \mathcal{C}_d(G(F)).
$$
The space $\mathcal{C}^w(G(F))$ is an LF-space. In the non-Archimedean case $\mathcal{C}(G(F))$ is an LF-space and in the Archimedean case it is a Fr\'echet space.  We refer the reader to \cite[\S 1.5]{BP:Ast} and \cite[\S 2.4]{BP:GLn} for details.  The space $\mathcal{C}(G(F))$ is the \textbf{Harish-Chandra space} and $\mathcal{C}^w(G(F))$ is the \textbf{weak Harish-Chandra space}.

For a quasi-projective scheme $X$ of finite type over $F,$ let $\mathcal{S}_{\mathrm{ES}}(X(F)):=C_c^\infty(X(F))$ when $F$ is non-Archimedean.  If $F$ is Archimedean, let $\mathcal{S}_{\mathrm{ES}}(X(F))$ be the Schwartz space of $\mathrm{Res}_{F/\RR} X(\RR)$ introduced in \cite{Elazar:Shaviv}. It is a (nuclear) Fr\'echet space. We refer to $\mathcal{S}_{\mathrm{ES}}(X(F))$ as the \textbf{Elazar-Shaviv Schwartz space}. When $X$ is smooth, we write $\mathcal{S}(X(F)):=\mathcal{S}_{\mathrm{ES}}(X(F)).$

We have natural inclusions
$$
\mathcal{S}(G(F)) \hookrightarrow  \mathcal{C}(G(F)) \hookrightarrow \mathcal{C}^w(G(F)),
$$
where the right map is continuous and the left map is continuous when $F$ is Archimedean.

Let $\mathrm{Irr}(G)$ be the set of equivalence classes of irreducible admissible representations of $G(F).$   
We often abuse notation and identify a representation with its class. Let 
$$
\mathrm{Irr}_2(G) \subseteq \mathrm{Tp}(G) \subseteq \mathrm{Irr}(G)
$$
be the subsets of square-integrable representations and tempered representations of $G(F)$.   
After this section we will write
\begin{align*}
\mathrm{Tp}_r:=\mathrm{Tp}(\GL_r), \quad \mathrm{Tp}_{r_1,r_2}:=\mathrm{Tp}(\GL_{r_1,r_2}), \quad \mathrm{Tp}_{r_1,r_2,r_3}:=\mathrm{Tp}(\GL_{r_1,r_2,r_3})
\end{align*}
to ease notation.

Assume for the moment that $F$ is Archimedean. Let $\mathfrak{t} \leq \mathfrak{g}_{\CC}$ be a Cartan subalgebra.  If $\pi$ is an irreducible admissible representation of $G(F)$ then the infinitisimal character $\chi_{\pi}$ may be identified with an element of  $\mathfrak{t}^*:=\mathrm{Hom}_{\CC}(\mathfrak{t},\CC),$ unique up to the action of the Weyl group $W(\mathfrak{g}_\CC,\mathfrak{t})$ of $\mathfrak{t}$ in $\mathfrak{g}_{\CC}.$  
Choose a $W(\mathfrak{g}_{\CC},\mathfrak{t})$-invariant norm $\norm{\,\cdot\,}$ on $\mathfrak{t}^*$ and define
\begin{align} \label{norm:pi}
    \norm{\pi}:=\norm{\chi_{\pi}}.
\end{align}

For any semistandard parabolic subgroup $P=MN_P$ of $G,$ we have an action of $i\mathfrak{a}_{M}^*$ on $\mathrm{Irr}_2(M)$ given by 
\begin{align*}  \begin{split}
 \mathrm{Irr}_2(M) \times i\mathfrak{a}_{M}^* &\lto\mathrm{Irr}_2(M)\\
(\sigma,\lambda) &\longmapsto  \sigma \otimes e^{\langle H_P(\cdot),\lambda \rangle}. \end{split}
\end{align*}
The action depends only on $M,$ not the choice of semistandard parabolic subgroup containing $M.$
We let $i\mathfrak{a}_{M,\sigma}^\vee$ be the stabilizer of $\sigma$ under this action; thus $
    i\mathfrak{a}_{M,F}^{\vee}  \leq i\mathfrak{a}_{M,\sigma}^\vee.$
    Here we have used notation from \eqref{A:gps}.

We now assume that $G$ is a finite product of general linear groups. This is so that we can make use of the following fact particular to this situation:  Unitary inductions of irreducible square-integrable representations are irreducible and generic.  We refer to \cite[\S 8.4]{Getz:Hahn} for references in the non-Archimedean case.  In the Archimedean setting see  \cite[\S 2]{JacquetPerfectRS} for genericity and \cite[Theorem 1]{Tadic} for irreducibility.

For $F$-algebras $R,$ let
\begin{align*}
\begin{split}
\Psi_r:U_r(R) &\lto R\\
(u_{ij}) &\longmapsto \sum_{i=1}^{r-1}u_{i,i+1}.
\end{split}
\end{align*} 
We continue to write $\psi:U_r(F) \to \CC^\times$ for $\psi \circ \Psi_r.$ It is a generic character of $U_r(F).$

Let $\sigma \in \mathrm{Irr}_2(M).$
Assume $M=\prod_{i=1}^k\GL_{r_i}$
and let $\mathcal{W}(\sigma,\psi)$ be the Whittaker model of $\sigma$ with respect to $\psi \circ \prod_{i=1}^k\Psi_{r_i}.$ 
Let \index{$I(\sigma_\lambda)$}
\begin{align*} \begin{split}
I(\sigma_\lambda):&=\Ind_P^G(\sigma_\lambda):=\left\{\textrm{smooth }\varphi :G(F) \to \mathcal{W}(\sigma,\psi):\varphi(nmg)=e^{\langle H_P(m),\lambda +\rho_P \rangle}\sigma(m)\varphi(g)\right\}. \end{split}
\end{align*}
Thus we realize these inductions as functions to the Whittaker model.  When $\lambda=0$ we omit it from notation, writing $I(\sigma):=I(\sigma_0).$

   By the Iwasawa decomposition, restriction to $K$ induces an isomorphism of $K$-representations
\begin{align*}
    I(\sigma_\lambda) \tilde{\lto} I(\sigma).
\end{align*}
We will often identify the underlying spaces using this isomorphism.  In particular, we regard the underlying space of $I(\sigma_\lambda)$ as independent of $\lambda.$

The representations $I(\sigma_\lambda)$ are all admissible of finite length, and they are irreducible if $\lambda \in i\mathfrak{a}_M^*.$
For each $\sigma \in \mathrm{Irr}_2(M)$ we have a map
\begin{align} \label{into:Temp} \begin{split}
    i\mathfrak{a}_M^* &\lto \mathrm{Tp}(G)\\
    \lambda &\longmapsto I(\sigma_\lambda). \end{split}
\end{align}
We often write an element of $\mathrm{Tp}_{r_1,r_2}$ as $
I(\underline{\sigma_\lambda})=I(\sigma_\lambda) \otimes I(\sigma'_{\lambda'}).$ The map \eqref{into:Temp} factors through the action of $i\mathfrak{a}_{M,\sigma}^\vee,$ and induces an injection
\begin{align} \label{loc:isom}
    i\mathfrak{a}_M^*/i\mathfrak{a}_{M,\sigma}^\vee \lto \mathrm{Tp}(G).
\end{align}
We endow $\mathrm{Tp}(G)$ with the unique topology so that \eqref{loc:isom} is a homeomorphism onto its image for all $\sigma$ and $M.$  The space $\mathrm{Tp}(G)$ comes equipped with a Plancherel measure $d\mu_G,$ defined using the formal degree and intertwining operators.  We refer to \cite[\S 2.4, \S 2.6]{BP:Ast} and the references therein for the definition.  We often write $d\mu$ for $d\mu_G$ to ease notation.

If $V$ is a locally convex Hausdorff topological $\CC$-vector space, 
$C^\infty(\mathrm{Tp}(G),V)$ denotes the space of functions $T:\mathrm{Tp}(G) \to V$ such that the pullback to $i\mathfrak{a}_M^*$ along \eqref{into:Temp} is smooth.  We make the analogous definition if $V$ is replaced by a locally trivial sheaf of locally convex Hausdorff topological $\CC$-vector spaces (in the category of sheaves of locally convex Hausdorff topological vector spaces over topological spaces).  Finally, if $V$ is a locally trivial sheaf of locally convex quasi-complete Haudorff topological $\CC$-vector spaces over $\mathrm{Tp}(G),$ we say that a section $\mathrm{Tp}(G) \to V$
is holomorphic (resp.~meromorphic) if its pullback along \eqref{into:Temp} is the restriction of a holomorphic (resp.~meromorphic) function on $\mathfrak{a}_{M\CC}^*$ for all $\sigma \in \mathrm{Irr}_2(M).$ 

In the special case $V=\CC,$ we define \index{$\mathcal{C}(\mathrm{Tp}(G))$}
\begin{align} \label{CT:spaces}
    \mathcal{C}(\mathrm{Tp}(G)) <C^\infty(\mathrm{Tp}(G)):=C^\infty(\mathrm{Tp}(G),\CC)
\end{align}
as follows.  If $F$ is non-Archimedean, $\mathcal{C}(\mathrm{Tp}(G))$ is the subspace of functions that vanish outside a finite set of connected components of $\mathrm{Tp}(G).$ 
If $F$ is Archimedean, then $\mathcal{C}(\mathrm{Tp}(G))$ is the subspace of functions $T$ such that for every semistandard parabolic subgroup $P=MN_P,$ every differential operator $D$ on $i\mathfrak{a}_M^\ast$ with constant coefficients, and every $d>0$ one has
\begin{align*} 
p_{D,d}(T):=\mathrm{sup}_{\sigma \in \mathrm{Irr}_2(M)}|(DT)_{\sigma}|(1+\norm{\sigma})^d<\infty,
\end{align*}
where 
\begin{align} \label{D:def}
    (DT)_{\sigma}:=D(\lambda\mapsto T_{I(\sigma_\lambda)})|_{\lambda=0}.
\end{align}

Let us discuss topologies on these spaces.  When $F$ is non-Archimedean, choose a sequence of subsets $U_i \subset \mathrm{Tp}(G),$ $i=1,2,\dots$ such that each $U_i$ is a finite union of connected components of $\mathrm{Tp}(G)$ and $\bigcup_{i=1}^\infty U_i=\mathrm{Tp}(G).$  Then the subspace $\mathcal{C}_i \leq \mathcal{C}(\mathrm{Tp}(G))$ consisting of functions with support on $U_i$ is naturally a Fr\'echet space, and thus $\mathcal{C}(\mathrm{Tp}(G))=\bigcup_{i=1}^\infty \mathcal{C}_i$ is an LF-space.  In the Archimedean case, the seminorms $p_{D,d}$ give $\mathcal{C}(\mathrm{Tp}(G))$ the structure of a Fr\'echet space.

The map \eqref{into:Temp} factors through $i\mathfrak{a}_{M,F}^*=i\mathfrak{a}_{M}^\ast/i\mathfrak{a}_{M,F}^\vee.$  When $F$ is non-Archimedean the quotient $\mathfrak{a}_{M\CC}^\ast/i\mathfrak{a}_{M,F}^\vee$ may be canonically identified with the complex points of an algebraic torus $\mathcal{A}$ \cite[\S V.2.4]{Renard}.  Since $V$ is assumed to be locally trivial, the restriction of an element $T \in C^\infty(\mathrm{Tp}(G),V)$ to the $i\mathfrak{a}_{M}^\ast$-orbit of $\pi \in \mathrm{Tp}(G)$ may be regarded as a map
\begin{align} \label{induced}
i\mathfrak{a}_{M,F}^* \lto V(\pi).
\end{align}
We say that $T$ is algebraic or polynomial if the image of \eqref{induced} is contained in a finite dimensional subspace $W$ and the map $i\mathfrak{a}_{M,F}^* \to W$ 
is the restriction to $\mathcal{A}(\RR)$ of the $\CC$-points of a morphism of schemes of finite type over $\CC.$  Rational maps are defined analogously.     

For $f \in \mathcal{S}(G(F))$ and $\pi\in \mathrm{Tp}(G),$ let
$
    f_{\pi}(g):=\mathrm{tr}\,\pi(g)\pi(f^\vee).$
Here $f^\vee(g):=f(g^{-1}).$  By \cite[\S 2.13]{BP:GLn} we have
\begin{align} \label{fpi:weak}
    f_{\pi} \in \mathcal{C}^w(G(F)).
\end{align}

   For a Hilbert space $V,$ write
\begin{align*}
\mathcal{HS}(V):=V^\vee \widehat{\otimes }V
\end{align*}
for the space of Hilbert-Schmidt operators on $V.$
If $I(\sigma_\lambda) \cong I(\sigma'_{\lambda'})$ for some $(\sigma,\lambda), (\sigma',\lambda'),$ then the theory of intertwining operators yields compatible families of isomorphisms
\begin{align*}
\mathcal{HS}(I(\sigma_\lambda)) \tilde{\lto} \mathcal{HS}(I(\sigma'_{\lambda'})).
\end{align*}
The Archimedean case is discussed in \cite[\S II.(2.2)]{Arthur:HP:realreductive} and the non-Archimedean case is discussed in \cite[\S V.3]{Waldspurger:plancherel}.  
Thus we may unambiguously define $\mathcal{HS}(\pi)$ for $\pi \in \mathrm{Tp}(G).$

In the Archimedean case, the space $\mathcal{HS}(\pi)^{\mathrm{sm}}$ of smooth functions inherits a natural topology, and in the non-Archimedean case we endow it with the finest locally convex topology. Let \index{$\mathcal{HS}$} $\mathcal{HS}$ be the locally constant sheaf of locally convex topological vector spaces over $\mathrm{Tp}(G)$ whose fiber over $\pi$ is $\mathrm{End}(\pi).$
We can then consider the space
\begin{align*}
    \mathcal{C}(\mathrm{Tp}(G),\mathcal{HS}) <C^\infty(\mathrm{Tp}(G),\mathcal{HS})
\end{align*}
defined in \cite[\S 2.6]{BP:Ast}; it is denoted $\mathcal{C}(\mathcal{X}_{\mathrm{Temp}}(G),\mathcal{H}(G))$ in loc.~cit.  
In the non-Archimedean case, $\mathcal{C}(\mathrm{Tp}(G),\mathcal{HS})$ is the space of smooth sections of $\mathcal{HS}^{\mathrm{sm}}$ whose support lies in a finite set of connected components of $\mathrm{Tp}(G).$  In the Archimedean case, $\mathcal{C}(\mathrm{Tp}(G),\mathcal{HS})$ consists of sections $T$ such that for every semistandard parabolic subgroup $P=MN_P,$ every differential operator $D$ on $i\mathfrak{a}_M^\ast$ with constant coefficients, every $u,v \in U(\mathrm{Lie}(K)),$ and every $N \in \ZZ_{\geq 0}$ one has
\begin{align*} 
p_{D,u,v,N}(T):=\mathrm{sup}_{\sigma \in \mathrm{Irr}_2(M)}\norm{(DT)_{\sigma}}_{u,v  }(1+\norm{\sigma})^N<\infty,
\end{align*}
where $\norm{T}_{u,v}:=\norm{u\ast T\ast v}$ and $(DT)_\sigma$ is defined as in \eqref{D:def}. In any case, the space $\mathcal{C}(\mathrm{Tp}(G),\mathcal{HS})$ comes equipped with a locally convex topology. We refer to \cite[\S 2.6]{BP:Ast} for details.

The following is the matrical Plancherel theorem of Harish-Chandra \cite[Theorem 2.6.1]{BP:Ast}:

\begin{thm} \label{thm:HC}
One has an isomorphism of topological vector spaces \index{$\mathrm{HP}_G$}
\begin{align*}
 \mathrm{HP}_G: \mathcal{C}(G(F)) &\tilde{\lto}  \mathcal{C}(\mathrm{Tp}(G),\mathcal{HS})\\
 f &\longmapsto (\pi \mapsto \pi(f)).
\end{align*}\qed
\end{thm}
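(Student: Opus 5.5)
This statement is Harish-Chandra's matrical Plancherel theorem, cited from \cite[Theorem 2.6.1]{BP:Ast}. I would therefore not reprove it, but only check that the objects here match those of loc.~cit. If one wanted to reconstruct the argument, it has three parts: the map is well defined and continuous, it is injective, and it is surjective with continuous inverse.

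\emph{Well-definedness and continuity.} For $f \in \mathcal{C}(G(F))$ and $\pi=I(\sigma_\lambda)$, the operator $\pi(f)$ is defined by an absolutely convergent integral. Indeed, matrix coefficients of tempered representations are dominated by $\Xi_G$ times a power of $\sigma$, and Proposition \ref{prop:HCfunction}\ref{convergence} gives convergence against the decay of $f$. The following properties then come from differentiating under the integral and integrating by parts:
\begin{itemize}
\item smoothness in $\lambda \in i\mathfrak{a}_M^*$;
\item smoothness of the operator (bi-$K$-finiteness in the non-Archimedean case, $U(\mathrm{Lie}\,K)$-derivatives in the Archimedean case);
\item in the Archimedean case, rapid decay in $\norm{\sigma}$, obtained by applying the Casimir elements of $G$ and $K$. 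These act on $\pi(f)$ through scalars governed by $\norm{\chi_\pi}$ and by the $K$-types.
\end{itemize}
In the non-Archimedean case one must also check that the support lies in finitely many components. This holds because a $K$-finite $f$ of fixed level only sees representations with fixed-level vectors, and only finitely many components of $\mathrm{Tp}(G)$ contain such representations. Compatibility with the intertwining identifications of $\mathcal{HS}(\pi)$ is automatic. All the resulting bounds are controlled by seminorms $p_d$, which gives continuity.

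\emph{Injectivity and surjectivity.} Injectivity follows from the Plancherel inversion formula
$$f(g)=\int_{\mathrm{Tp}(G)}\mathrm{tr}\,\big(\pi(g)^{-1}\pi(f)\big)\,d\mu(\pi),$$
valid first for $f\in\mathcal{S}(G(F))$ and then, by density and continuity, on all of $\mathcal{C}(G(F))$. For surjectivity, define the inverse map by the wave packet
$$T \longmapsto \Big(g \mapsto \int_{\mathrm{Tp}(G)}\mathrm{tr}\,\big(\pi(g)^{-1}T_\pi\big)\,d\mu(\pi)\Big).$$
One must show that this lands in $\mathcal{C}(G(F))$ continuously. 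This is the main obstacle: it is Harish-Chandra's theory of wave packets, which combines the asymptotics of Eisenstein integrals with the temperedness of the Plancherel density to obtain bounds of the form $\Xi_G\,\sigma^{-d}$ for every $d$. This step is exactly what \cite{BP:Ast} (following Harish-Chandra and Waldspurger \cite{Waldspurger:plancherel}) supplies, so I would simply invoke it.
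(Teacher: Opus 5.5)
Your approach agrees with the paper's: Theorem \ref{thm:HC} is not proved there but quoted from \cite[Theorem 2.6.1]{BP:Ast}, exactly as you propose. One caveat on your optional sketch: a continuous wave-packet map $\mathcal{W}$ into $\mathcal{C}(G(F))$ with $\mathcal{W}\circ\mathrm{HP}_G=\mathrm{id}$ yields only injectivity of $\mathrm{HP}_G$, so surjectivity also needs $\mathrm{HP}_G\circ\mathcal{W}=\mathrm{id}$ (equivalently, injectivity of $\mathcal{W}$), which in Harish-Chandra's theory comes from the Maass--Selberg relations and in the paper's analogous Theorem \ref{thm:PW:Z} is deduced from density of the image.
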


\section{Whittaker functions} \label{sec:gen}

\subsection{Some function spaces on $U_r\backslash \GL_r$}
  Consider 
$$
C^\infty(U_r(F) \backslash \GL_r(F),\psi):=\left\{f \in C^\infty(\GL_r(F)): f(ng)=\psi(n)f(g) \textrm{ for }(n,g) \in U_r(F) \times \GL_r(F)\right\}.
$$
In this subsection we define various subspaces of $C^\infty(U_r(F) \backslash \GL_r(F),\psi)$. Our presentation  is modeled on \cite[\S 2.4]{BP:GLn}.

For $x=ntk$ with $(n,t,k) \in U_r(F) \times T_r(F) \times K_r,$ we define 
\begin{align*} 
    \Xi_{U_r \backslash \GL_r}(x):=\delta_{B_r}^{1/2}(t).
\end{align*}
For $f\in C^\infty(U_r(F) \backslash \GL_r(F),\psi)$ and $d\in \RR$, let \index{$\nu_{d}$}
\begin{align*} 
    \nu_{d}(f):=\sup_{x\in U_r(F)\backslash \GL_r(F)} |f(x)| \Xi_{U_r \backslash \GL_r}(x)^{-1} \sigma_{U_r\backslash \GL_r}(x)^d.
\end{align*}
 For non-Archimedean $F,$ the space $\mathcal{C}_d(U_r(F)\backslash \GL_r(F),\psi)$ consists of right $K_r$-finite functions $f$ such that $\nu_{-d}(f)<\infty$. For a compact open subgroup $K,$ the norm $\nu_{-d}$ makes $\mathcal{C}_d(U_r(F)\backslash \GL_r(F),\psi)^K$ a Banach space, and thus $\mathcal{C}_d(U_r(F)\backslash \GL_r(F),\psi)$ is an LF-space. For Archimedean $F,$ the space $\mathcal{C}_d(U_r(F)\backslash \GL_r(F),\psi)$ consists of functions $f$ such that for $u\in U(\mathfrak{g})$ \index{$\nu_{u,-d}$}
 \begin{align*}
      \nu_{u,-d}(f):=\nu_{-d}(\mathcal{R}(u)f)
 \end{align*}
 is finite.  Here $\mathcal{R}$ is the regular action of $U(\mathfrak{g})$ on $C^\infty(U_r(F) \backslash \GL_r(F),\psi).$ The seminorms $\{\nu_{u,-d}\}$ make $\mathcal{C}_d(U_r(F)\backslash \GL_r(F),\psi)$ a Fr\'echet space. In any case, we let \index{$\mathcal{C}(U_r(F) \backslash \GL_r(F),\psi)$}
\begin{align*} \begin{split}
\mathcal{C}^w(U_r(F) \backslash \GL_r(F),\psi):=\bigcup_{d>0}\,\mathcal{C}_d(U_r(F) \backslash \GL_r(F),\psi),\\
\mathcal{C}(U_r(F) \backslash \GL_r(F),\psi):=\bigcap_{d}\,\mathcal{C}_d(U_r(F) \backslash \GL_r(F),\psi). \end{split}
\end{align*}
  We define
$\mathcal{C}_d(U_{r,r}(F) \backslash \GL_{r,r}(F),\overline{\psi} \otimes \psi),$ etc.~analogously. These are LF-spaces. When $F$ is Archimedean, $\mathcal{C}(U_r(F) \backslash \GL_r(F),\psi)$ is a Fr\'echet space.

\begin{lem} \label{lem:cont}
Let $d>0$. There is a separately continuous map
\begin{align*}
\mathcal{C}_d(U_r(F) \backslash \GL_r(F),\psi) \times \mathcal{C}(\GL_r(F)) &\lto \mathcal{C}_d(U_r(F) \backslash \GL_r(F),\psi)\\
(W,f) &\longmapsto \Big(x \mapsto \int_{\GL_r(F)}f(g)W(xg)dg\Big).
\end{align*}
When $F$ is Archimedean, it is continuous.  If $F$ is non-Archimedean and $K <\GL_r(F)$ is a compact open subgroup, then the restriction of the map to $\mathcal{C}_d(U_r(F) \backslash \GL_r(F),\psi)^K \times \mathcal{C}(\GL_r(F))^{K \times K}$ is continuous.  
\end{lem}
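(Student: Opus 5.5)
The plan is to reduce everything to a single pointwise estimate:
\begin{align} \label{key:conv:est}
\int_{\GL_r(F)} \Xi_{\GL_r}(g)\sigma(g)^{-d'}\,\Xi_{U_r\backslash \GL_r}(xg)\sigma_{U_r \backslash \GL_r}(xg)^{d}\,dg \ll_{d,d'} \Xi_{U_r\backslash \GL_r}(x)\sigma_{U_r\backslash\GL_r}(x)^{d},
\end{align}
valid for all $d'$ sufficiently large relative to $d$. Granting \eqref{key:conv:est}, write $W*f(x):=\int f(g)W(xg)dg$. Using $|W(y)|\le \nu_{-d}(W)\Xi_{U_r\backslash\GL_r}(y)\sigma_{U_r\backslash \GL_r}(y)^{d}$ and $|f(g)|\le p_{d'}(f)\Xi(g)\sigma(g)^{-d'}$, we obtain
$$\nu_{-d}(W*f)\ll \nu_{-d}(W)\,p_{d'}(f).$$
This shows both that the integral converges absolutely and that the map is jointly continuous in these seminorms.

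To prove \eqref{key:conv:est}, I would first use $\sigma_{U_r\backslash\GL_r}(xg)\ll \sigma_{U_r\backslash\GL_r}(x)\sigma(g)$, which follows from \eqref{sig:ineq} and the normalization $\sigma_{U_r\backslash \GL_r}(nak)=\sigma_{T_r}(a)$. This lets us pull $\sigma(x)^d$ out, absorbing $\sigma(g)^d$ into $\sigma(g)^{-d'}$. It then remains to bound $\int \Xi(g)\sigma(g)^{-d''}\Xi_{U_r\backslash \GL_r}(xg)\,dg$ by $\Xi_{U_r\backslash\GL_r}(x)$. Write $x=tk$ and substitute $g\mapsto k^{-1}g$; the factors $\Xi$ and $\sigma$ are bi-$K_r$-invariant, so we may take $x=t\in T_r(F)$. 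The function $\Xi_{U_r\backslash\GL_r}(y)=\delta_{B_r}^{1/2}(t_y)$ is the $K_r$-fixed vector of the (non-unitarily normalized) induced representation $\Ind_{B_r}^{\GL_r}(1)$, i.e.\ it is $e^{\langle\rho,H_{B_r}(y)\rangle}$. Averaging over $K_r$, we get
$$\int \Xi(g)\sigma(g)^{-d''}e^{\langle \rho,H(tg)\rangle}dg=\delta_{B_r}^{1/2}(t)\int \Xi(g)\sigma(g)^{-d''}e^{\langle\rho,H(g)\rangle}\,dg,$$
because $H(tg)=H(t)+H(g)$ for $t\in T_r(F)$. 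The remaining integral is $\int_G \Xi(g)\sigma(g)^{-d''}\cdot\frac{1}{\mathrm{meas}(K)}\int_K e^{\langle\rho,H(kg)\rangle}dk\,dg=\int_G\Xi(g)^2\sigma(g)^{-d''}dg$, using Harish-Chandra's formula $\Xi(g)=\int_K e^{\langle \rho, H(kg)\rangle}dk$ (normalized) and the bi-$K$-invariance of $\Xi$ and $\sigma$. This is finite by Proposition \ref{prop:HCfunction}\ref{convergence}, which gives \eqref{key:conv:est}.

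Next I would check that the image is smooth and, in the non-Archimedean case, right $K_r$-finite; this is automatic. If $f$ is bi-$K$-invariant for a compact open $K$, then $W*f$ is right $K$-invariant. In general $f\in\mathcal{C}(\GL_r(F))$ is bi-$K_r$-finite, so $W*f$ is right $K_r$-finite. For the Archimedean case, derivatives satisfy $\mathcal{R}(u)(W*f)=W*(f*u)$ (differentiating on the right of $f$), and $p_{d'}(f*u)$ is a continuous seminorm on $\mathcal{C}(\GL_r(F))$. Hence $\nu_{u,-d}(W*f)\ll\nu_{-d}(W)p_{d'}(f*u)$, giving continuity of the map of Fr\'echet spaces.

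For the non-Archimedean case, restricted to $K$-invariants, the spaces $\mathcal{C}_d(U_r(F)\backslash\GL_r(F),\psi)^K$ are Banach and $\mathcal{C}(\GL_r(F))^{K\times K}$ is Fr\'echet, so the same bound gives continuity there. Separate continuity on the full LF-spaces then follows: fixing one argument, the other ranges over an inductive limit of such pieces, and the universal property of the inductive limit topology applies. The main obstacle is the identity $\int_K e^{\langle\rho,H(kg)\rangle}dk=\Xi(g)$ with the correct normalization, and the resulting reduction to Proposition \ref{prop:HCfunction}\ref{convergence}. If that normalization is awkward, I would instead use Proposition \ref{prop:HCfunction}\ref{doubling} and a Cartan decomposition together with \ref{delta:asymp}.
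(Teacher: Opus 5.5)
Your argument is correct, and its outer structure matches the paper's. Both proofs majorize $|W|$ and $|f|$ by their defining weights, reduce to $x=t\in T_r(F)$ using left $U_r(F)$-invariance and bi-$K_r$-invariance, and pull out $\delta_{B_r}^{1/2}(t)\sigma_{T_r}(t)^d$.

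The difference is in how you show the leftover integral is finite.
\begin{itemize}
\item The paper keeps the weight $\Xi_{U_r\backslash\GL_r}(g)\sigma_{U_r\backslash\GL_r}(g)^d$ and writes $g=tuk$. It applies the descent bound Proposition~\ref{prop:HCfunction}\ref{descent} to the $U_r(F)$-integral, which leaves $\int_{T_r(F)}\sigma(t)^{d-d''}\,dt<\infty$.
\item You first replace $\sigma_{U_r\backslash\GL_r}(g)$ by $\sigma(g)$. This comparison really uses \eqref{pullback} for $\GL_r\to U_r\backslash\GL_r$, not only \eqref{sig:ineq}. You then use the $K_r$-average identity $\mathrm{meas}(K_r)^{-1}\int_{K_r}\Xi_{U_r\backslash\GL_r}(kg)\,dk=\Xi_{\GL_r}(g)$ to reach $\int\Xi^2\sigma^{-d''}$, and conclude by Proposition~\ref{prop:HCfunction}\ref{convergence}.
\end{itemize}
The identity holds with the normalization $\Xi(1)=1$ forced by \ref{doubling}. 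It is the spherical matrix coefficient of $\Ind_{B_r}^{\GL_r}(1)$, and the paper itself uses its $\overline{U}_r$-version, via Lemma~\ref{lem:decompositioncompare}, at the end of the proof of Proposition~\ref{prop:J}.

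Your route is shorter and gives an exact factorization. The paper's route avoids the integral formula for $\Xi$ and never compares $\sigma_{U_r\backslash\GL_r}$ with $\sigma$.

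You also make explicit several points the paper leaves implicit: $K_r$-finiteness of the output, passage to the LF-spaces via the universal property of inductive limits, and the Archimedean derivatives. Since your bound $\nu_{-d}(W*f)\ll\nu_{-d}(W)p_{d'}(f)$ is already a product of seminorms, joint continuity on the Banach/Fr\'echet pieces is immediate without \cite{Treves}.

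One small slip: $\mathcal{R}(X)(W*f)=W*f_X$ with $f_X(g)=\frac{d}{dt}f(\exp(-tX)g)|_{t=0}$. So the derivative falls on the left of $f$, not the right. This is harmless, because the seminorms $p_{-d}(u*f*v)$ on $\mathcal{C}(\GL_r(F))$ control derivatives on both sides.
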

\begin{proof}
Write $x=u't'k'$ with $(u',t',k') \in U_r(F) \times T_r(F) \times K_r.$  There are continuous seminorms $\nu_d$ on $\mathcal{C}_d(U_r(F) \backslash \GL_r(F),\psi)$ and $\nu_{d'}$ on $\mathcal{C}(\GL_r(F))$ such that 
\begin{align*}
    &\int_{\GL_r(F)}|f(g)W(xg)|dg\\
    &=\int_{\GL_r(F)}|f(k^{\prime-1}g)W(t'g)|dg\\
    &\leq \nu_d(W) \int_{\GL_r(F)}|f(k^{\prime-1}g)|\Xi_{U_r \backslash \GL_r} (t'g)\sigma^d_{U_r\backslash \GL_r}(t'g)dg\\
    &\ll \nu_d(W)\Xi_{U_r \backslash \GL_r} (t')\sigma^d_{U_r\backslash \GL_r}(t')\int_{\GL_r(F)}|f(k^{\prime-1}g)|\Xi_{U_r \backslash \GL_r} (g)\sigma_{U_r\backslash \GL_r}^d(g)dg\\
    &=\nu_d(W)\Xi_{U_r \backslash \GL_r} (x)\sigma_{U_r\backslash \GL_r}^d(x)\int_{\GL_r(F)}|f(k'^{-1}g)|\Xi_{U_r \backslash \GL_r} (g)\sigma_{U_r\backslash \GL_r}^d(g)dg\\
    &\leq \nu_d(W)\nu_{d'}(f) \Xi_{U_r \backslash \GL_r} (x)\sigma_{U_r\backslash \GL_r}^d(x)\int_{\GL_r(F)}\Xi_{\GL_r}(g)\sigma(g)^{-d'}\Xi_{U_r \backslash \GL_r} (g)\sigma_{U_r\backslash \GL_r}^d(g) dg
\end{align*}
for any $d'>0.$ Therefore, to prove separate continuity, it suffices to show for a suitable $d'>0$
\begin{align*}
    \int_{\GL_r(F)}\Xi(g)\sigma(g)^{-d'}\Xi_{U_r \backslash \GL_r} (g)\sigma^d_{U_r\backslash \GL_r}(g)dg<\infty.
\end{align*}
By the Iwasawa decomposition, this is
\begin{align*}
     \int_{T_r(F)}\Big(\int_{U_r(F)}\Xi(tu)\sigma(tu)^{-d'} du\Big)\delta_{B_r}^{1/2} (t)\sigma^d_{U_r\backslash \GL_r}(t) dt.
\end{align*}
By Proposition \ref{prop:HCfunction}\ref{descent}, given $d''>0,$ there is a $d'>0$ such that the above is dominated by
\begin{align*}
     \int_{T_r(F)} \sigma^{-d''}(t)\sigma_{U_r\backslash \GL_r}^d(t) dt.
\end{align*}
This integral is finite for $d''$ large.

Separately continuous bilinear forms on Fr\'echet spaces are jointly continuous \cite[Corollary to Theorem 34.1]{Treves}.  This immediately implies the continuity assertions.
\end{proof}

Let $\widetilde{w}_r=\begin{psmatrix} & & &1\\ & & -1 & \\   & \reflectbox{$\ddots$} & &\\ (-1)^{r-1} & &\end{psmatrix} \in \GL_r(\ZZ).$ 
\begin{lem} \label{lem:tilde:iso}
    For each $d \in \RR,$ one has a continuous automorphism \index{$\widetilde{W}$}
    \begin{align*}
        \widetilde{\cdot}:\mathcal{C}_d(U_r(F) \backslash \GL_r(F),\psi) &\lto \mathcal{C}_d(U_r(F) \backslash \GL_r(F),\psi)\\
        W &\longmapsto \left(g \mapsto \widetilde{W}(g):=W\left( \widetilde{w}_r g^{-t}\right)\right).
    \end{align*}
\end{lem}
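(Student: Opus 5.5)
First I will check that $\widetilde{W}$ is again left $(U_r(F),\psi)$-equivariant. For $n \in U_r(F)$ one has $(ng)^{-t}=n^{-t}g^{-t}$, and $\widetilde{w}_r n^{-t}\widetilde{w}_r^{-1} \in U_r(F)$, since conjugating the lower triangular unipotent $n^{-t}$ by the antidiagonal matrix gives an upper triangular unipotent. A short computation with the signs in $\widetilde{w}_r$ shows that $\Psi_r(\widetilde{w}_r n^{-t}\widetilde{w}_r^{-1})=\Psi_r(n)$. The superdiagonal entries of $n^{-t}$ are the negatives of the entries $n_{i,i+1}$ moved below the diagonal. The alternating signs of $\widetilde{w}_r$ contribute a further factor $-1$ on each superdiagonal entry after conjugation, so the two signs cancel. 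Hence $\widetilde{W}(ng)=\psi(n)\widetilde{W}(g)$. The map is an involution up to a central sign, since $\widetilde{w}_r\widetilde{w}_r^{-t}=\pm I_r$. Central elements act on the weight functions trivially, so once continuity is known, bijectivity is automatic.

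The main step is to show that the weight $\Xi_{U_r\backslash \GL_r}(x)\sigma_{U_r\backslash\GL_r}(x)^{d}$ is essentially invariant under $x \mapsto \widetilde{w}_r x^{-t}$. I write $x=ntk$ and compute
\[
\widetilde{w}_r x^{-t}=(\widetilde{w}_r n^{-t}\widetilde{w}_r^{-1})(\widetilde{w}_r t^{-1}\widetilde{w}_r^{-1})(\widetilde{w}_r k^{-t}).
\]
The first factor lies in $U_r(F)$, and the last factor lies in $K_r$ provided $K_r$ is stable under transpose-inverse and contains $\widetilde{w}_r$. This holds for the standard choices $\GL_r(\OO_F)$, $O(r)$ and $U(r)$, and I fix such a choice. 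The middle factor is the diagonal matrix $t^* := \mathrm{diag}(t_r^{-1},\dots,t_1^{-1})$. Hence $\delta_{B_r}^{1/2}(t^*)=\prod_{i<j}|t_{r+1-j}^{-1}t_{r+1-i}|^{1/2}=\delta_{B_r}^{1/2}(t)$. Similarly $\sigma_{T_r}(t^*)=\sigma_{T_r}(t)$, because $\sigma_{\GL_r}$ is invariant under inversion and permutation of the entries. Therefore $\nu_d(\widetilde{W})=\nu_d(W)$ for every $d$, which gives boundedness and so continuity in the non-Archimedean case. There I also note that $K_r$-finiteness is preserved, because $\widetilde{W}(gk)=W(\widetilde{w}_r g^{-t}k^{-t})$ and $k\mapsto k^{-t}$ is an automorphism of $K_r$.

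In the Archimedean case I also need the seminorms $\nu_{u,-d}$. The differential of right translation transforms as $\mathcal{R}(X)\widetilde{W}=\widetilde{\mathcal{R}(\theta X)W}$, where $\theta(X)=-X^t$ is the Cartan involution. This follows because $(g e^{sX})^{-t}=g^{-t}e^{-sX^t}$. Extending $\theta$ to an automorphism of $U(\mathfrak{g})$, I get $\nu_{u,-d}(\widetilde{W})=\nu_{\theta(u),-d}(W)$, so the map is continuous for the Fréchet topology. The only mildly delicate point is the character computation in the first step, namely checking that the signs in $\widetilde{w}_r$ are exactly those that make $\psi$ preserved rather than replaced by $\overline{\psi}$. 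This is precisely why $\widetilde{w}_r$ carries the alternating signs, so I expect that step to be the main obstacle only in bookkeeping.
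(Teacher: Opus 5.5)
Your proposal is correct and follows the paper's argument: once $\widetilde{w}_r\in K_r$ and $K_r$ is stable under $k\mapsto k^{-t}$, the Iwasawa decomposition shows that $x\mapsto \widetilde{w}_r x^{-t}$ preserves $\Xi_{U_r\backslash \GL_r}$ and $\sigma_{U_r\backslash\GL_r}$, so $\nu_d(\widetilde{W})=\nu_d(W)$. You also verify three points the paper leaves implicit: the $(U_r(F),\psi)$-equivariance, the Archimedean seminorms via $\mathcal{R}(X)\widetilde{W}=\widetilde{\mathcal{R}(-X^t)W}$, and bijectivity via $\widetilde{w}_r^2=(-1)^{r-1}I_r$; one small slip is that the middle product in your computation of $\delta_{B_r}^{1/2}(t^*)$ has each ratio inverted, though the conclusion $\delta_{B_r}(t^*)=\delta_{B_r}(t)$ is right.
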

\begin{proof}
We may assume $\widetilde{w}_r \in K_r.$  
For $x=ntk$ with $(n,t,k) \in U_r(F) \times T_r(F) \times K_r$ we have 
$$
\Xi_{U_r \backslash \GL_r}(\widetilde{w}_rg^{-t})=\Xi_{U_r \backslash \GL_r}(\widetilde{w}_rg^{-t}\widetilde{w}^{-1}_r)=\Xi_{U_r \backslash \GL_r}(\widetilde{w}_r t^{-t}\widetilde{w}^{-1}_r)=\Xi_{U_r \backslash \GL_r}(t)=\Xi_{U_r \backslash \GL_r(F)}(g).
$$
Similarly $\sigma_{U_r \backslash \GL_r}(\widetilde{w}_rg^{-t})=\sigma_{U_r \backslash \GL_r}(g).$  The lemma follows.  
\end{proof}

\subsection{Generic representations} 

Let $\pi$ be a generic irreducible admissible representation of $\GL_r(F)$. Let $\mathcal{W}(\pi,\psi)=\mathcal{W}(\pi,\psi \circ \Psi)$ be the $\psi \circ \Psi$-Whittaker model. When $F$ is non-Archimedean, we endow $\mathcal{W}(\pi,\psi)$ with the finest locally convex topology; this makes it into an LF-space \cite[\S 2.3]{BP:GLn}.
When $\pi$ is unitary and $F$ is Archimedean, $\mathcal{W}(\pi,\psi)$ is the space of smooth vectors in its completion with respect to an invariant inner product.  Hence it is naturally a smooth Fr\'echet representation of moderate growth \cite[Lemma 11.5.1]{Wallach:RGII}.

We require many bounds on Whittaker functions in this work.  Most of them are very similar in structure, but will be applied in settings where different hypotheses and uniformity assertions are required.

\begin{lem}\label{lem:whittakerweak}
For $\pi \in \mathrm{Tp}_r,$ there is a $d>0$ such that 
\begin{align} \label{containment}
\mathcal{W}(\pi,\psi) <\mathcal{C}_d(U_r(F) \backslash \GL_r(F),\psi).
\end{align}
Moreover, if $V$ is any model of $\pi$ with underlying space of smooth vectors $V^{\mathrm{sm}}$ and $V^{\mathrm{sm}} \to \mathcal{W}(\pi,\psi)$ is an intertwining map, then the induced map $V^{\mathrm{sm}} \to \mathcal{C}^w(U_r(F) \backslash \GL_r(F),\psi)$ is continuous.
\end{lem}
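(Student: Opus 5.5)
The plan is to reduce to the case where $\pi$ is induced from square-integrable representations, and to bound Whittaker functions through the Jacquet integral. Since $\pi\in\mathrm{Tp}_r$, write $\pi\cong I(\sigma)=\Ind_P^{\GL_r}(\sigma)$ with $\sigma\in\mathrm{Irr}_2(M)$ and $M=\prod_i\GL_{r_i}$. Every $W\in\mathcal{W}(\pi,\psi)$ is then the Jacquet integral
$$
W(g)=\int_{N_{P}^{w}(F)}\langle \varphi(w n g)\rangle\,\overline{\psi}(n)\,dn
$$
of some $\varphi\in I(\sigma)$. Here $\langle\cdot\rangle$ denotes evaluation at $1$ of the Whittaker model $\mathcal{W}(\sigma,\psi)$ in which $I(\sigma)$ is realized, and $w$ is the appropriate long Weyl element. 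Because $\sigma$ is square-integrable, its matrix coefficients, and hence the elements of $\mathcal{W}(\sigma,\psi)$, satisfy $|W_\sigma(m)|\ll \Xi_{U_M\backslash M}(m)$ on $U_M(F)\backslash M(F)$. This is the standard discrete-series bound; it can also be deduced from Proposition \ref{prop:HCfunction}\ref{delta:asymp} applied to matrix coefficients. The bound is uniform as $\varphi$ ranges over a bounded set in the Fréchet (resp.\ LF) topology.

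The key step is to bound $|W(ntk)|\ll \delta_{B_r}^{1/2}(t)\sigma(t)^d$ on $T_r(F)$. I would cite the standard estimate that tempered Whittaker functions are bounded by $\delta_{B_r}^{1/2}(t)\sigma(t)^d$ times a continuous seminorm of $\varphi$; see \cite[\S 2.4]{BP:GLn}. Alternatively, one can argue via asymptotic expansions: by the Jacquet–Shalika / Bernstein theory, $W(t)$ restricted to the positive chamber is a finite sum of the finite functions $\chi(t)\,(\log|t|)^{k}$ times a Schwartz function in the simple roots. The exponents $\chi$ are exponents of Jacquet modules, and temperedness (Casselman's criterion) gives $|\chi|\le\delta_{B_r}^{1/2}$ on the relevant cone. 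Outside the chamber $|\alpha_i(t)|\le1$, the Whittaker function decays rapidly in the simple roots, by the smoothness of $W$ and the character $\psi$ on $U_r$. This gives \eqref{containment} for the sup-seminorm. In the Archimedean case, we also need the same bound for $\mathcal{R}(u)W$ with $u\in U(\mathfrak{g})$. This follows because $\mathcal{R}(u)W$ is again in $\mathcal{W}(\pi,\psi)$.

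For continuity, I would use the uniformity of these bounds. The map $V^{\mathrm{sm}}\to\mathcal{W}(\pi,\psi)$ is continuous: in the Archimedean case by the automatic continuity of intertwiners between smooth Fréchet representations of moderate growth (Casselman–Wallach), and in the non-Archimedean case trivially for the finest locally convex topology. It then suffices to show that $\mathcal{W}(\pi,\psi)\to\mathcal{C}_d(U_r(F)\backslash\GL_r(F),\psi)$ is continuous. In the non-Archimedean case, each $K$-fixed part is finite dimensional, so continuity is automatic. In the Archimedean case, the map has closed graph, since point evaluations are continuous on both spaces. Both spaces are Fréchet, so the closed graph theorem gives continuity. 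Composing with the continuous inclusion into $\mathcal{C}^w$ finishes the proof.

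The main obstacle is the uniform tempered bound $|W(t)|\ll\delta_{B_r}^{1/2}(t)\sigma(t)^d$ with a fixed $d$ valid for all of $\mathcal{W}(\pi,\psi)$, especially in the Archimedean case. If direct citation fails, I would first try expressing $W(g)$ via the inner product formula $W(g)=\int\langle\pi(g)v,\pi(n)v'\rangle\overline{\psi}(n)\,dn$ (suitably regularized). Then Proposition \ref{prop:HCfunction}\ref{descent} would bound it by $\Xi$ integrated over $U_r$, yielding $\delta_{B_r}^{1/2}\sigma^d$.
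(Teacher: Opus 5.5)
Your strategy is in substance the paper's. Both rest on the known tempered asymptotics of Whittaker functions, and you correctly isolate the only point beyond \cite[Lemma 2.8.1]{BP:GLn}: that one $d$ works for all of $\mathcal{W}(\pi,\psi)$. However, the proposal does not actually establish that point.

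\textbf{The key step is not established.}

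\emph{The citation.} The estimate you attribute to \cite[\S 2.4]{BP:GLn} is not what that paper gives. Its result on Whittaker models, \cite[Lemma 2.8.1]{BP:GLn}, asserts only $\mathcal{W}(\pi,\psi)\subset\mathcal{C}^w(U_r(F)\backslash\GL_r(F),\psi)$, i.e.\ a $d$ depending on $W$. This is exactly why the paper goes back to \cite[\S 15.7.3]{Wallach:RGII} and \cite[Theorem 3.1]{Lapid:Mao:Asymp}.

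\emph{The asymptotic-expansion sketch.} This is in substance what those two references prove; the uniform control of the exponents and of the powers of $\log$ is the whole content. So as a proof it reduces to the same citations the paper uses.

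\emph{The fallback.} This fails. The regularized integral $\int^\ast_{U_r(F)}\langle\pi(ng)v,v'\rangle\overline{\psi}(n)\,dn$ cannot be estimated by the integral of its absolute value. Coefficients of a tempered representation are in general only $O(\Xi)$, without the factor $\sigma^{-d'}$ that Proposition \ref{prop:HCfunction}\ref{descent} requires. Already for $\GL_2$ one has $\Xi\begin{psmatrix}1&x\\&1\end{psmatrix}\gg|x|^{-1}$ for $|x|\geq 1$, which is not integrable. The same problem affects the Jacquet integral at $\lambda=0$, which is not absolutely convergent in general. So your first paragraph (discrete-series bound plus Jacquet integral) cannot be estimated termwise either.

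\textbf{How to close the gap.} The missing uniformity is actually a soft consequence of the weak containment.

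\emph{Non-Archimedean case.} The space $\mathcal{C}_d(U_r(F)\backslash\GL_r(F),\psi)$ is stable under right translation. Indeed, the Iwasawa decomposition gives $\Xi_{U_r\backslash\GL_r}(xg)\asymp_g\Xi_{U_r\backslash\GL_r}(x)$ and $\sigma_{U_r\backslash\GL_r}(xg)\asymp_g\sigma_{U_r\backslash\GL_r}(x)$. Since $\pi$ is irreducible, $\mathcal{W}(\pi,\psi)$ is spanned by the right translates of any single nonzero $W_0$. By the weak containment, $W_0\in\mathcal{C}_{d_0}$ for some $d_0$, hence $\mathcal{W}(\pi,\psi)\subset\mathcal{C}_{d_0}$.

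\emph{Archimedean case.} Consider the sets $\{W\in\mathcal{W}(\pi,\psi):\nu_{-d}(W)\leq c\}$ with $c,d\in\ZZ_{>0}$. They are closed (point evaluations are continuous on $\mathcal{W}(\pi,\psi)$), convex and balanced, and by the weak containment they cover the Fr\'echet space $\mathcal{W}(\pi,\psi)$. By Baire, one of them contains a neighborhood of $0$. Hence $\nu_{-d}(W)\ll q(W)$ for some continuous seminorm $q$. Applying this to $\mathcal{R}(u)W$ gives a continuous inclusion $\mathcal{W}(\pi,\psi)\hookrightarrow\mathcal{C}_d(U_r(F)\backslash\GL_r(F),\psi)$. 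Equivalently, apply Grothendieck's factorization theorem to the continuous map of \cite{BP:GLn} into the LF-space $\mathcal{C}^w$.

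\textbf{Continuity.} Once a single $d$ is available, your closed-graph argument is correct: point evaluations are continuous on both Fr\'echet spaces. This differs from the paper, which simply imports continuity from \cite[Lemma 2.8.1]{BP:GLn}. The Baire argument above yields continuity at the same time, so the separate closed-graph step becomes unnecessary.
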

\noindent Here we endow $V^{\mathrm{sm}}$ with the usual Fr\'echet topology in the Archimedean case and the finest locally convex topology in the non-Archimedean case.

\begin{proof}
This is \cite[Lemma 2.8.1]{BP:GLn}, except that loc.~cit.~asserts \eqref{containment} with \sloppy $\mathcal{C}_d(U_r(F) \backslash \GL_r(F),\psi)$ replaced by $\mathcal{C}^w(U_r(F) \backslash \GL_r(F),\psi).$  Happily the references cited (\cite[\S 15.7.3]{Wallach:RGII} and \cite[Theorem 3.1]{Lapid:Mao:Asymp}) imply the stronger containment asserted in the lemma.
\end{proof}

  It follows from \cite[Proposition 2.2]{JPSSGL3I} and \cite[(8.3)]{JPSSGL3II} that every $W\in \mathcal{W}(\pi,\psi)$ is dominated by a gauge $\mathcal{G} \in C^\infty(\GL_r(F))$. To be more precise, there exists an $\ell\in \RR_{\ge 0}$ such that for each $W\in \mathcal{W}(\pi,\psi)$ there is a $\phi\in \mathcal{S}(F^{r-1})$ satisfying
\begin{align} \label{gauge}
    |W|(nak)\le \mathcal{G}(nak):=\left|\prod_{j=1}^{r-1} \alpha_j(a) \right|^{-\ell}\delta_{B_r}^{1/2}(a)\phi(\alpha_1(a),\cdots, \alpha_{r-1}(a))
\end{align}
for all $(n,a,k)\in U_r(F)\times T_r(F)\times K_r$. Here $\alpha_i$ is the $i$th simple root of $T_r$ in $U_r.$ We call $\ell$ the \textbf{exponent} of the gauge.  We let $\ell(\pi) \in \RR_{\geq 0}$ be the infimum of the set of $\ell$ such that an estimate of the form \eqref{gauge} holds for all $W \in \mathcal{W}(\pi,\psi)$, where $\phi$ is allowed to vary as $W$ and $\ell$ vary.
We refer to $\ell(\pi)$ as the \textbf{exponent} of $\pi.$  When $F$ is non-Archimedean, we say that \textbf{the gauge is unramified} if $\phi=\one_{\OO_F^{r-1}}.$ 

By \cite[Proposition 2.5]{JacquetShalika:Whittaker} and \cite[Proposition 3.5]{JacquetPerfectRS} we have
\begin{thm}  \label{thm:exponent}
If $\pi$ is tempered, then $\ell(\pi)=0.$   \qed
\end{thm}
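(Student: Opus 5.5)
The claim is that tempered representations have exponent zero. This is a statement about the asymptotics of Whittaker functions, so the plan is to derive it from the asymptotic expansion of $W(a)$ along the torus together with the Casselman (resp.\ Jacquet--Shalika) criterion for temperedness.

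First, reduce to $a \in T_r(F)$ with $n,k$ trivial: the gauge \eqref{gauge} is right $K_r$-invariant, and $W$ transforms by $\psi$ on the left. Replacing $W$ by its $K_r$-translates only changes the Schwartz function $\phi$, since $\pi$ is $K_r$-finite (resp.\ smooth and of moderate growth) and a fixed $K_r$-type, or a compact family of $K_r$-translates, can be handled uniformly.

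Second, use that $W(a)$ is rapidly decreasing as any $|\alpha_i(a)| \to \infty$. For $F$ non-Archimedean this holds because $W(a)=0$ once $|\alpha_i(a)|$ is large, by the usual $\psi$-equivariance argument with small unipotent elements fixing $W$. For $F$ Archimedean it follows from the derivative of $\psi$ along $U_r$ (\cite[Proposition 2.2]{JPSSGL3I}). This is the source of the factor $\phi(\alpha_1(a),\dots,\alpha_{r-1}(a))$.

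Third, treat the region where some $|\alpha_i(a)|$ is small. Here the Whittaker function has an asymptotic expansion
\[
W(a)=\sum_{\chi}\phi_\chi(\alpha(a))\,\chi(a)\,\delta_{B_r}^{1/2}(a)\,(\log\text{ terms}),
\]
where $\chi$ ranges over the exponents of the Jacquet modules of $\pi$ along standard parabolics (\cite[Proposition 2.5]{JacquetShalika:Whittaker}, \cite[Proposition 3.5]{JacquetPerfectRS}). Casselman's criterion, or its Archimedean analogue, says that $\pi$ is tempered precisely when every such $\chi$ satisfies $|\chi| \le 1$ on the positive chamber relevant to the expansion, namely $|\alpha_i(a)| \le 1$. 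Hence each term is bounded by $\delta_{B_r}^{1/2}(a)$ times polynomials in $\log|\alpha_i(a)|$. Finally, any power of $\log$ is $\ll_\epsilon |\prod_j \alpha_j(a)|^{-\epsilon}$ in this region, which gives a gauge with exponent $\ell=\epsilon$ for every $\epsilon>0$, so $\ell(\pi)=0$.

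The main obstacle is the third step in the Archimedean case. There, uniformity of the expansion across mixed regimes (some $\alpha_i$ small, others of size about $1$) requires the full Jacquet--Shalika / Wallach theory of asymptotics. I would simply cite it, as the statement already does, rather than reprove it.
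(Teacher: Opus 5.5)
Your outline is correct, but it takes a different route from the paper. The paper gives no argument here. It quotes \cite[Proposition 2.5]{JacquetShalika:Whittaker} and \cite[Proposition 3.5]{JacquetPerfectRS} for the statement itself. So in your third step, where you cite them for an asymptotic expansion, they already give the conclusion, and the Casselman-criterion layer becomes redundant.

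Your mechanism, Jacquet-module asymptotics plus Casselman's criterion, is instead the one behind \cite[\S 15.7.3]{Wallach:RGII} and \cite[Theorem 3.1]{Lapid:Mao:Asymp}, which the paper uses for Lemma \ref{lem:whittakerweak}. With those references your route can be run entirely inside the paper. Lemma \ref{lem:whittakerweak} and Lemma \ref{lem:C:gauge} give, for every $N$,
\[
|W(nak)|\le \nu_N(W)\,\sigma(a)^d\,\delta_{B_r}^{1/2}(a)\prod_{i=1}^{r-1}(1+|\alpha_i(a)|)^{-N}
\]
on all of $U_r(F)\backslash \GL_r(F)$. This handles in one stroke your $K_r$-reduction, the decay as $|\alpha_i(a)|\to\infty$ (your derivative-of-$\psi$ argument, made uniform by Banach--Steinhaus), and the mixed regimes you flag as the main obstacle.

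The one thing you must add is the center, because $\sigma(a)$ also measures the central direction. First use $W(za)=\omega_\pi(z)W(a)$ with $|\omega_\pi|=1$ to normalize the last entry of $a$ to $1$. Then
\[
\sigma(a)^d\ll_\epsilon \prod_i\max(|\alpha_i(a)|,|\alpha_i(a)|^{-1})^{\epsilon}=\Big|\prod_i\alpha_i(a)\Big|^{-\epsilon}\prod_i\max(1,|\alpha_i(a)|)^{2\epsilon}.
\]
The last product is absorbed into the Schwartz function, which gives a gauge of exponent $\epsilon$ for every $\epsilon>0$, hence $\ell(\pi)=0$.

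As for what each approach buys: quoting Jacquet--Shalika and Jacquet is shorter and keeps the hard Archimedean uniformity inside those references. Your route is more conceptual. It isolates temperedness as exactly Casselman's condition on exponents, and it is the same mechanism the paper relies on for the containment $\mathcal{W}(\pi,\psi)<\mathcal{C}_d(U_r(F)\backslash \GL_r(F),\psi)$.
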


\begin{lem} \label{lem:refine:gauge}
Assume $\pi$ is unitary.  When $F$ is Archimedean, we can refine the bound in \eqref{gauge} as follows: for every $N>0$ and $\ell>\ell(\pi)$ there exists a continuous seminorm $\nu_{\ell,N}$ on $\mathcal{W}(\pi,\psi)$ such that 
$$
|W|(nak)\le\nu_{N,\ell}(W)\left|\prod_{j=1}^{r-1} \alpha_j(a) \right|^{-\ell}\delta_{B_r}^{1/2}(a)\prod_{i=1}^{r-1}(1+|\alpha_i(a)|)^{-N}.
$$
\end{lem}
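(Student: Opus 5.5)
The plan is to deduce the refined bound from the Archimedean gauge estimate together with the smoothness of $\mathcal{W}(\pi,\psi)$ as a Fréchet representation of moderate growth. Lie algebra derivatives will convert the cutoff $\phi$ of \eqref{gauge} into polynomial decay in each $|\alpha_i(a)|$. Write $W(nak)=\psi(n)W(ak)$. For $X_i$ the root vector in $\mathfrak{u}_r$ attached to $\alpha_i$, one has
$$
(\mathcal{R}(\mathrm{Ad}(k^{-1})X_i)W)(ak)=\frac{d}{ds}W(a\exp(sX_i)k)\Big|_{s=0}=2\pi i\,c\,\alpha_i(a)\,W(ak)
$$
for a suitable constant $c\neq 0$ coming from $\psi$. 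Iterating gives
$$
\alpha_i(a)^{N}W(ak)=(2\pi i c)^{-N}\big(\mathcal{R}(\mathrm{Ad}(k^{-1})X_i^N)W\big)(ak).
$$
Since $k$ ranges over the compact group $K_r$, we can expand $\mathrm{Ad}(k^{-1})X_i^N=\sum_j c_j(k)u_j$ over a fixed basis $u_j$ of the degree-$\le N$ part of $U(\mathfrak{g})$, with coefficients $c_j(k)$ bounded uniformly in $k$.

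The second step is to make the gauge bound \eqref{gauge} uniform, i.e.\ to replace the $W$-dependent $\phi$ by a continuous seminorm. We want a bound
$$
|W(ak)|\le \nu'_\ell(W)\left|\prod_j\alpha_j(a)\right|^{-\ell}\delta_{B_r}^{1/2}(a)\prod_i(1+|\alpha_i(a)|)^{-M}
$$
valid for some fixed $M\ge 0$ and some continuous seminorm $\nu'_\ell$, for every $\ell>\ell(\pi)$. The proof of \cite[Proposition 3.5]{JacquetPerfectRS}, or equivalently Wallach's asymptotic estimates \cite[\S 15.2]{Wallach:RGII}, should give exactly this: a bound by a continuous seminorm times the gauge with some fixed polynomial tail. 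Alternatively, the family of sets
$$
\{W:\ |W|\le C\cdot\text{gauge with }\phi=(1+|\cdot|)^{-M}\}
$$
is closed and absorbing in $\mathcal{W}(\pi,\psi)$, so a Baire category argument on the Fréchet space $\mathcal{W}(\pi,\psi)$ upgrades the pointwise statement to a seminorm bound.

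Combining the two steps, apply the uniform bound to each $\mathcal{R}(u_j)W$ to get
$$
|\alpha_i(a)|^{N'}|W(ak)|\ll \sum_j\nu'_\ell(\mathcal{R}(u_j)W)\,(\text{gauge}).
$$
The region where $|\alpha_i(a)|\le 1$ is handled by the bound with $N'=0$. On the complementary region, we use the product $\prod_i\alpha_i^{N_i}$ with $N_i\in\{0,N\}$, choosing $N_i=N$ exactly for those $i$ with $|\alpha_i(a)|>1$. This yields $\prod_i(1+|\alpha_i(a)|)^{-N}$. The seminorm
$$
\nu_{N,\ell}(W):=\sum_{u}\nu'_\ell(\mathcal{R}(u)W),
$$
summed over finitely many $u$, is continuous because $\mathcal{R}(u)$ acts continuously on smooth vectors.

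The main obstacle is the uniformity in the second step, namely extracting a continuous seminorm rather than a $W$-dependent $\phi$. I would first try to cite Wallach's uniform asymptotic bounds directly and fall back on the Baire argument if needed. The derivative trick itself is routine.
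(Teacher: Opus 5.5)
Your argument is correct. Its decisive step, the Baire/barrel fallback in your second step, is exactly the paper's proof. The paper notes that, by \eqref{gauge}, the continuous linear functionals $W\mapsto W(nak)$ divided by the target majorant $\left|\prod_{j}\alpha_j(a)\right|^{-\ell}\delta_{B_r}^{1/2}(a)\prod_i(1+|\alpha_i(a)|)^{-N}$ are pointwise bounded as $(n,a,k)$ varies. It then applies the uniform boundedness principle on the Fr\'echet space $\mathcal{W}(\pi,\psi)$.

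The difference is that the paper applies this directly with tail exponent $N$. The cutoff $\phi$ in \eqref{gauge} lies in $\mathcal{S}(F^{r-1})$, so one already has $\phi(x)\ll_{\phi,N}\prod_i(1+|x_i|)^{-N}$ for every $N$. Hence your closed absolutely convex sets are absorbing with the full tail $N$, and the Lie-algebra derivative step is unnecessary. So is the appeal to Jacquet or Wallach.

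Your extra step buys only two things: a bounded cutoff (tail $M=0$) suffices, and the seminorm becomes explicit as $\sum_u\nu'_\ell(\mathcal{R}(u)W)$ over finitely many $u\in U(\mathfrak{g})$. If you keep it, there is one correction for $F=\CC$. Differentiating along $\mathrm{Ad}(k^{-1})X_i$ produces a multiple of $\mathrm{Re}(c\,\alpha_i(a))$, not of $\alpha_i(a)$. You must therefore also differentiate along $\sqrt{-1}X_i$ to control $|\alpha_i(a)|$.
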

\begin{proof}
By the gauge estimates \eqref{gauge}, the family of linear functionals 
$$
W \longmapsto W(nak)\left(\left|\prod_{j=1}^{r-1} \alpha_j(a) \right|^{-\ell}\delta_{B_r}^{1/2}(a)\prod_{i=1}^{r-1}(1+|\alpha_i(a)|)^{-N}\right)^{-1}
$$
is pointwise bounded as $n,a,k$ vary, so we can apply the uniform boundedness principle to deduce the lemma.  
\end{proof}

\begin{lem} \label{lem:C:gauge}
 For all $N>0,$ there is a continuous seminorm $\nu_N$ on $\mathcal{C}_d(U_r(F) \backslash \GL_r(F),\psi)$ such that 
$$
|W|(nak)\le\nu_N(W)\sigma(a)^d\delta_{B_r}^{1/2}(a)\prod_{i=1}^{r-1}(1+|\alpha_i(a)|)^{-N}
$$
for all $W \in \mathcal{C}_d(U_r(F) \backslash \GL_r(F),\psi).$
\end{lem}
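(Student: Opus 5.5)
The plan is to exploit the $\psi$-equivariance on the left together with smoothness on the right. The rapid decay in each $|\alpha_i(a)|$ as $|\alpha_i(a)|\to\infty$ is the usual Whittaker phenomenon. The definition of $\mathcal{C}_d$ already supplies the factor $\sigma(a)^d\delta_{B_r}^{1/2}(a)$, so only this extra decay has to be produced.

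Fix $W\in \mathcal{C}_d(U_r(F)\backslash \GL_r(F),\psi)$ and write $x=nak$.

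\emph{Archimedean case.} Let $X_i\in \mathfrak{u}_r$ be the root vector for the $i$th simple root. By $\psi$-equivariance,
\[
W(nak\exp(t\,\mathrm{Ad}(k^{-1})X_i))=W(n\exp(t\,\alpha_i(a)X_i)ak)=\psi(t\alpha_i(a))W(nak),
\]
so differentiating gives
\[
(\mathcal{R}(\mathrm{Ad}(k^{-1})X_i)W)(nak)=c\,\alpha_i(a)W(nak)
\]
for a nonzero constant $c$ depending on $\psi$. Expand $\mathrm{Ad}(k^{-1})X_i=\sum_j c_j(k)Y_j$ in a fixed basis $\{Y_j\}$ of $\mathfrak{g}$; the coefficients $c_j$ are bounded because $K_r$ is compact. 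Iterating $N$ times (with products of such elements in $U(\mathfrak{g})$, the $k$-dependence again expanding with bounded coefficients) yields
\[
|\alpha_i(a)|^N|W(nak)|\ll \sum_{u} \sup|\mathcal{R}(u)W(nak)|,
\]
where $u$ ranges over a finite set of monomials of degree $N$ in the $Y_j$. Each term on the right is bounded by $\nu_{u,-d}(W)\,\Xi_{U_r\backslash\GL_r}(a)\sigma_{U_r\backslash\GL_r}(a)^d$. Combining the estimates for different $i$ by multiplying the corresponding bounds (that is, taking $u$ to be products over $i$) gives the factor $\prod_i(1+|\alpha_i(a)|)^{-N}$. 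Since $\sigma_{U_r\backslash\GL_r}(a)=\sigma_{T_r}(a)\asymp \sigma(a)$, this proves the claim with $\nu_N$ a finite sum of the seminorms $\nu_{u,-d}$.

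\emph{Non-Archimedean case.} $W$ is right invariant under some compact open subgroup $K'$, which we may take to be normal in $K_r$ because $W$ is $K_r$-finite. Then for $y\in \mathfrak{p}^m$ with $m$ depending on $K'$,
\[
W(nak)=W(nak\,k^{-1}u_i(y)k)=\psi(\alpha_i(a)y)W(nak).
\]
If $|\alpha_i(a)|$ exceeds a constant times the conductor bound, then $\psi(\alpha_i(a)\,\cdot)$ is nontrivial on $\mathfrak{p}^m$, and hence $W(nak)=0$. Consequently $W(nak)$ vanishes unless $|\alpha_i(a)|\le C$ for all $i$, where $C$ depends only on $K'$. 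This gives the bound with $\nu_N=C'\nu_{-d}$.

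There is one subtlety: the constant depends on $K'$, so we get a continuous seminorm on each $\mathcal{C}_d(\ldots)^{K'}$. This matches the LF topology, since a seminorm on an LF-space is continuous exactly when its restrictions to the steps are. I would state the lemma with this understanding, and the inductive-limit formulation makes it legitimate.

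The main technical point is the Archimedean bookkeeping of the $k$-dependence in $\mathrm{Ad}(k^{-1})X_i$. The compactness of $K_r$ handles it as described above.
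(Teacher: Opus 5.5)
Your proof is correct. It differs from the paper's only in how the seminorm is produced.

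The paper works in two steps. It takes the pointwise bound, with a constant depending on $W$, from the argument of \cite[Lemma 2.5.1]{BP:Asai}. It then notes that $\mathcal{C}_d(U_r(F)\backslash\GL_r(F),\psi)$ is an LF-space, so the uniform boundedness principle applies. The functionals $W\mapsto W(nak)\,(\sigma(a)^d\delta_{B_r}^{1/2}(a)\prod_i(1+|\alpha_i(a)|)^{-N})^{-1}$ are pointwise bounded, hence equicontinuous, which gives $\nu_N$. You instead make the pointwise argument quantitative and write $\nu_N$ down directly; in the Archimedean case it is a finite sum of the $\nu_{u,-d}$. The paper's route is shorter, while yours is self-contained and identifies the seminorm.

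Two small corrections. First, for $F=\CC$ the identity $(\mathcal{R}(\mathrm{Ad}(k^{-1})X_i)W)(nak)=c\,\alpha_i(a)W(nak)$ is false as stated. Writing $\psi(z)=e^{\sqrt{-1}\mathrm{tr}_{\CC/\RR}(\alpha z)}$, the derivative gives the factor $2\sqrt{-1}\,\mathrm{Re}(\alpha\,\alpha_i(a))$, which can vanish when $\alpha_i(a)\neq 0$. Fix this by using both real root vectors $X_i$ and $\sqrt{-1}X_i$; the second produces $\mathrm{Im}(\alpha\,\alpha_i(a))$, and together they control the usual modulus of $\alpha_i(a)$. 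Since the paper's $|\cdot|$ on $\CC$ is the square of the usual absolute value, you then need $2N$ derivatives per root rather than $N$.

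Second, in the non-Archimedean case you do not need to reinterpret the lemma. Define $\nu_N(W)$ as the supremum over $nak$ of $|W(nak)|\,\sigma(a)^{-d}\delta_{B_r}^{-1/2}(a)\prod_i(1+|\alpha_i(a)|)^{N}$. This is a seminorm on the whole space and is finite on every $W$. Your support argument shows that on $\mathcal{C}_d(U_r(F)\backslash\GL_r(F),\psi)^{K'}$ it is bounded by a constant times $(1+C_{K'})^{N(r-1)}\nu_{-d}$, so it is continuous on each step. By the inductive-limit criterion you quote, $\nu_N$ is then continuous on the LF-space, which is exactly the statement of the lemma.
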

\begin{proof}
The space $\mathcal{C}_d(U_r(F) \backslash \GL_r(F),\psi)$ is an LF-space \cite[\S 2.4]{BP:GLn}, and hence the uniform boundedness principle is valid for it \cite[\S 33]{Treves}. Thus the lemma follows from the pointwise estimate proved in the same way as \cite[Lemma 2.5.1]{BP:Asai}.
\end{proof}

The map 
 \begin{align} \label{Whitt} \begin{split}
     \mathcal{C}^w(\GL_r(F))  &\lto  \mathcal{C}^w(U_{r,r}(F) \backslash \GL_{r,r}(F),\overline{\psi} \otimes \psi)\\
     f &\longmapsto W_f(g_1,g_2):=\int_{U_r(F)}^\ast f(g_1^{-1}ng_2)\overline{\psi}(n)dn \end{split}
 \end{align}
is a well-defined continuous map by \cite[Lemma 2.14.1]{BP:GLn}.  We refer to  loc.~cit.~for the definition of the integral on the right. For $f \in \mathcal{S}(\GL_r(F))$ and $\pi \in \mathrm{Tp}_r,$ since $f_{\pi}\in  \mathcal{C}^w(\GL_r(F))$ by \eqref{fpi:weak},
one has $W_{f_{\pi}} \in \mathcal{C}^w(U_{r,r}(F) \backslash \GL_{r,r}(F),\overline{\psi} \otimes \psi).$

There is an absolutely convergent Hermitian inner product on $\mathcal{C}^w(U_r(F) \backslash \GL_r(F),\psi)$ given by 
\begin{align}\label{eq:Whitt}
    \left(W_1,W_2 \right)^{\mathrm{Whitt}}:=\zeta(r)\int_{U_r(F) \backslash \mathcal{P}_r(F)}W_1(p)\overline{W}_2(p)d_rp,
\end{align}
where $\mathcal{P}_r$ is the mirabolic subgroup defined in \eqref{mira}
(see the end of \cite[\S 2.8]{BP:GLn}).  The factor $\zeta(r)$ is included for ease of comparison with \cite[Proposition A.2]{FLO}.
The restriction of $(\cdot,\cdot)^{\mathrm{Whitt}}$ to $\mathcal{W}(\pi,\psi)$ is $\GL_n(F)$-invariant by
\cite{Baruch,Bernstein:P,Sahi}.
 Let $\mathcal{B}_{\mathcal{W}(\pi,\psi)}$ be an orthonormal basis of $\mathcal{W}(\pi,\psi)$ with respect to this inner product.  We assume that the orthonormal basis consists of vectors finite under $K_r.$ 
One has
\begin{align} \label{Whitt:exp}
    W_{f_{\pi}}(g_1,g_2)=\sum_{W \in \mathcal{B}_{\mathcal{W}(\pi,\psi)}}\overline{W}(g_1)\pi(f^\vee)W(g_2),
\end{align}
where the sum on the right converges in $\mathcal{C}^w(U_{r,r}(F) \backslash \GL_{r,r}(F),\overline{\psi} \otimes \psi)$
(see the end of \cite[\S 2.14]{BP:GLn}).
The Whittaker Plancherel formula \cite[Proposition 2.14.2]{BP:GLn} asserts that for $f \in \mathcal{S}(\GL_r(F)),$ one has
\begin{align} \label{Whitt:Planch}
    W_f(g_1,g_2)=\int_{\mathrm{Tp}_r}W_{f_\pi}(g_1,g_2) d\mu(\pi).
\end{align}


Let $P=MN_P$ be a standard parabolic subgroup of $\GL_r,$ let $\bar{P}$ be its opposite and let $\overleftarrow{P}$ be the standard parabolic subgroup that is $\GL_r(F)$-conjugate to $\bar{P}.$
Let $\sigma \in \mathrm{Irr}_2(M)$ and let $\varphi \in I(\mathcal{W}(\sigma,\psi))$ be a smooth vector.  Write $M=\prod_{i=1}^k\GL_{r_i}$ and
\begin{align} \label{wP}
w_P:=\begin{psmatrix} & & I_{r_k}\\ & \textrm{\reflectbox{$\ddots$}} & \\ I_{r_1} & & \end{psmatrix}.
\end{align}
Thus $w_P^{-1}\overleftarrow{P}w_P=\overline{P}.$ Let \index{$W_{\varphi_{\lambda}}$}
\begin{align} \begin{split} \label{Whitt:fam}
W_{\varphi_{\lambda}}(g):&=\int_{N_{\overleftarrow{P}}(F)}I(\sigma_\lambda)\left(w_P^{-1}ng\right)\varphi(I_r) \overline{\psi}(n)dn,\\
W_{\varphi_{\lambda}}'(g):&=\int_{N_{\overleftarrow{P}}(F)}I(\sigma_\lambda)\left(w_P^{-1}ng\right)\varphi(I_r) \psi(n)dn \end{split}
\end{align}
whenever these integrals are absolutely convergent or obtained by analytic continuation from a cone of absolute convergence.  When $\lambda=0$ we drop it from notation, writing, e.g.,~$W_{\varphi}(g):=W_{\varphi_0}(g).$
By \cite[Theorem 15.4.1]{Wallach} and
 \cite[Proposition 3.1]{Shahidi:On:certain} we have the following:

\begin{thm} \label{thm:Whitt:analyfam}
The integrals in \eqref{Whitt:fam} are absolutely convergent for $\mathrm{Re}(\lambda)$ in a suitable cone and may be analytically continued to all $\lambda \in \mathfrak{a}_{M\CC}^\ast.$  \qed
\end{thm}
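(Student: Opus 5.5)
The statement is Theorem \ref{thm:Whitt:analyfam}, which the paper attributes to \cite[Theorem 15.4.1]{Wallach} and \cite[Proposition 3.1]{Shahidi:On:certain}. I would reduce it to the standard theory of Jacquet integrals. The plan is to give an argument in the setting at hand that isolates exactly what those references supply.

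First I would establish absolute convergence in a cone. Write $g=pk$ using the Iwasawa decomposition. Because $I(\sigma_\lambda)$ is realized as $\mathcal{W}(\sigma,\psi)$-valued functions, evaluating at $I_r$ gives the scalar function $\varphi_\lambda(h):=\varphi_\lambda(h)(I_r)$. Each such function is bounded by $e^{\langle H_P(h),\mathrm{Re}\lambda+\rho_P\rangle}$ times a gauge for $\sigma$. The gauge is tempered: $\ell(\sigma)=0$ by Theorem \ref{thm:exponent}, and its $K$-part is controlled continuously, by Lemma \ref{lem:refine:gauge} in the Archimedean case and by admissibility in the non-Archimedean case. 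The integrand over $N_{\overleftarrow{P}}(F)$ is then dominated by $e^{\langle H_P(w_P^{-1}n),\mathrm{Re}\lambda+\rho_P\rangle}$ times a bounded factor. The Gindikin--Karpelevich estimate for $\int_{\bar N_P(F)}e^{\langle H_P(\bar n),\mu+\rho_P\rangle}d\bar n$ then gives convergence when $\langle \mathrm{Re}\lambda,\alpha^\vee\rangle>0$ for all roots $\alpha$ of $A_M$ in $N_P$. This is the usual cone for the standard intertwining operator. The same bound holds locally uniformly in $(\lambda,g)$, so on the cone the integral is holomorphic in $\lambda$. Replacing $\overline{\psi}$ by $\psi$ changes nothing in this step, which also covers $W'_{\varphi_\lambda}$.

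Second, I would extend to all of $\mathfrak{a}_{M\CC}^*$. In the non-Archimedean case I would use Bernstein's rationality principle. The Jacquet integral $\varphi\mapsto W_{\varphi_\lambda}(I_r)$ is a $(N_r,\psi)$-equivariant functional on the algebraic family $I(\sigma_\lambda)$. By Rodier's theorem the space of such functionals on $I(\sigma_\lambda)$ is one-dimensional for generic $\lambda$, and the relevant equations form a system that is polynomial in $q^{-\lambda}$. Bernstein's theorem therefore yields a rational continuation. To upgrade rational to entire, I would follow Shahidi's approach \cite[Proposition 3.1]{Shahidi:On:certain}: choose $\varphi$ supported in the open cell $P w_P^{-1} N_{\overleftarrow{P}}$, for which the integral is a finite sum and hence a polynomial in $q^{\pm\lambda}$. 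Then use the fact that the functional is nonvanishing and uniquely determined to exclude poles. In the Archimedean case I would invoke Wallach's holomorphic continuation of Jacquet integrals \cite[Theorem 15.4.1]{Wallach}. That proof proceeds by Bruhat filtration together with the tempered growth estimates obtained in the first step. It realizes the integral as a composition of $\lambda$-holomorphic families of continuous maps on the compact picture of $I(\sigma)$.

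The main obstacle is the second step, specifically the holomorphy (rather than mere meromorphy) at all $\lambda$. In both cases I would rely on uniqueness of Whittaker functionals, and I would not attempt to reprove the cited results. For our applications we only need $\lambda\in i\mathfrak{a}_M^*$, together with holomorphy in a neighborhood of it, and I would check that the cited continuation is compatible with our normalization $w_P$ of \eqref{wP}.
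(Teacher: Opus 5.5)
Your proposal is correct and is in substance the paper's own argument, which consists entirely of citing \cite[Theorem 15.4.1]{Wallach} and \cite[Proposition 3.1]{Shahidi:On:certain}; your convergence step is the standard argument behind those references, namely bounding the Whittaker function of the square-integrable $\sigma$ on $M$ and using $w_P^{-1}N_{\overleftarrow{P}}(F)w_P=N_{\overline{P}}(F)$ to reduce to Harish-Chandra's integral over $N_{\overline{P}}(F)$. The one point to make precise in your non-Archimedean sketch is the pole exclusion: uniqueness for generic $\lambda$ is not enough, and what you need at each $\lambda_0$ is Rodier's Bruhat-filtration fact that a nonzero Whittaker functional on $I(\sigma_{\lambda_0})$ cannot vanish on the functions supported in the open cell $P(F)w_P^{-1}N_{\overleftarrow{P}}(F)$, applied to the leading Laurent coefficient of the family at a putative pole (that coefficient does vanish on such functions, because there the integral is a Laurent polynomial in $q^{-\lambda}$).
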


  Despite the fact that $I(\sigma_\lambda)$ may not be irreducible, by \cite[\S 2]{JacquetPerfectRS} and \cite[Th\'eor\`emes 2 and 4]{Rodier:Whitt} it admits a unique $\psi$-Whittaker functional (assumed to be continuous in the Archimedean case) and hence we have well-defined Whittaker models
$$
\mathcal{W}(I(\sigma_\lambda),\psi) \quad \textrm{and}\quad \mathcal{W}(I(\sigma_\lambda),\overline{\psi}).
$$

\begin{lem} \label{lem:Arch:Whitt:fam:bound}
Assume $F$ is Archimedean.  Let $\varphi \in \mathcal{W}(I(\sigma),\psi)$ and let $\mathcal{K} \subset \mathfrak{a}_{M\CC}^{*}$ be a compact set.  There is an integer $N_1>0$ depending on $\varphi$ and $\mathcal{K}$ such that 
for $(a,k) \in  T_r(F) \times K_r,$ $N_2\in \ZZ_{\geq 0},$  and $D \in U(\mathfrak{gl}_r)$ 
one has 
\begin{align*}
|\mathcal{R}(D)W_{\varphi_\lambda}(ak)| \ll_{\varphi,\mathcal{K},D,N_2} \norm{a}^{N_1}\prod_{i=1}^{r-1}(1+|\alpha_i(a)|)^{-N_2}
\end{align*}
for $\lambda \in \mathcal{K}.$   
\end{lem}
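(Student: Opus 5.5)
The plan is to reduce to a uniform-in-$\lambda$ moderate growth bound for the family $\lambda\mapsto W_{\varphi_\lambda}$ as vectors in the smooth Fr\'echet representations $\mathcal{W}(I(\sigma_\lambda),\psi)$. The rapid decay in the simple roots then comes from the standard argument for Whittaker functions: one integrates by parts against the character $\psi$ using the Lie algebra of $U_r$.

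\emph{Step 1: uniform moderate growth.} Using Theorem~\ref{thm:Whitt:analyfam}, together with the fact that in Wallach's construction the Jacquet integral $\Lambda_\lambda$ on the compact picture $I(\sigma)$ (identified with $I(\sigma_\lambda)$ via restriction to $K$) depends holomorphically on $\lambda$ as a continuous functional, we show the following. For $\lambda\in\mathcal{K}$ there are a continuous seminorm $q$ on the compact picture and an integer $N_1$ such that
\begin{align*}
|\Lambda_\lambda(v)|\ll_{\mathcal{K}} q(v)\quad\textrm{for all } v \textrm{ and all }\lambda\in\mathcal{K}.
\end{align*}
This uses uniform boundedness applied to the family $\{\Lambda_\lambda\}_{\lambda\in\mathcal{K}}$, which is pointwise bounded by holomorphy and compactness. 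Since $W_{\varphi_\lambda}(g)=\Lambda_\lambda(I(\sigma_\lambda)(g)\varphi)$ and $\mathcal{R}(D)W_{\varphi_\lambda}(g)=\Lambda_\lambda(I(\sigma_\lambda)(g)I(\sigma_\lambda)(D)\varphi)$, it suffices to bound $q(I(\sigma_\lambda)(ak)\varphi')$ by $\norm{a}^{N_1}$ times a seminorm of $\varphi'$. This follows from the usual moderate growth of induced representations, which is uniform for $\mathrm{Re}\,\lambda$ bounded: the operator norm of $I(\sigma_\lambda)(g)$ in each Sobolev seminorm is bounded by $\norm{g}^{N_1}$, with $N_1$ depending only on the size of $\mathrm{Re}\,\lambda$ on $\mathcal{K}$ and on the seminorm. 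Note also that $I(\sigma_\lambda)(D)\varphi$ depends polynomially on $\lambda$ in the compact picture, so it is uniformly controlled on $\mathcal{K}$.

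\emph{Step 2: decay in the roots.} For $X$ in the root space of the simple root $\alpha_i$ in $\mathrm{Lie}(U_r)$, the left equivariance $W(ng)=\psi(n)W(g)$ gives
\begin{align*}
\mathcal{R}(\mathrm{Ad}(k^{-1}a^{-1})X)W(ak)=d\psi(X)\,W(ak),
\end{align*}
and $\mathrm{Ad}(a^{-1})X=\alpha_i(a)^{-1}X$. Hence
\begin{align*}
\alpha_i(a)^{m}\,\mathcal{R}(D)W(ak)=c\,\mathcal{R}\big((\mathrm{Ad}(k^{-1})X)^{m}D\big)W(ak),
\end{align*}
where $c$ is a nonzero constant independent of $a$ and $k$ (a power of $d\psi(X)^{-1}$). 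The element $(\mathrm{Ad}(k^{-1})X)^m$ lies in a bounded subset of $U(\mathfrak{gl}_r)$ of degree $m$ as $k$ ranges over the compact group $K_r$, so we expand it in a fixed basis with bounded coefficients. Applying Step~1 to each resulting element of $U(\mathfrak{gl}_r)$ (the integer $N_1$ does not depend on $D$, only the implied constant does) gives
\begin{align*}
(1+|\alpha_i(a)|)^{m}|\mathcal{R}(D)W_{\varphi_\lambda}(ak)|\ll \norm{a}^{N_1}.
\end{align*}
Multiplying these over $i$, or applying the argument to products $X_1^{m}\cdots X_{r-1}^{m}$, yields the stated bound with $N_2=m$.

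\emph{Main obstacle.} The delicate point is Step~1: keeping $N_1$ independent of $D$ and $N_2$, and making the bound uniform over $\lambda\in\mathcal{K}$, including $\lambda$ outside the cone of convergence where $W_{\varphi_\lambda}$ is defined only by continuation. I would first try to extract from \cite[Theorem 15.4.1]{Wallach} that $\lambda\mapsto\Lambda_\lambda$ is a holomorphic map into the continuous dual of the smooth compact picture with a fixed Sobolev order on compacta. If that is not explicit there, I would fall back on Shahidi's functional equation \cite{Shahidi:On:certain}, which relates $\Lambda_\lambda$ to the convergent Jacquet integrals via intertwining operators, and argue uniformity from the meromorphy of normalized intertwining operators.
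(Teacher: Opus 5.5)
\textbf{Correct, but a different route from the paper.} Your argument is correct, but it is not how the paper proves the lemma.

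\emph{The paper's route.} The paper gives a two-line reduction. Over an Archimedean field, $\sigma$ is built from characters and discrete series of $\GL_2(\RR)$, and each of these embeds into a principal series. By induction in stages, $I(\sigma_\lambda)$ then sits inside a principal series $\mathrm{Ind}_{B_r}^{\GL_r}$ whose parameter stays in a compact set as $\lambda$ ranges over $\mathcal{K}$. The bound is then read off from Jacquet's uniform estimate for Whittaker functions of principal series, \cite[Proposition 3.3]{JacquetPerfectRS}.

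\emph{Your route.} You prove the estimate directly on $I(\sigma_\lambda)$, in three steps:
\begin{enumerate}
\item Equicontinuity of $\{\Lambda_\lambda\}_{\lambda\in\mathcal{K}}$ on the Fr\'echet compact picture, from Banach--Steinhaus.
\item Uniform moderate growth of $I(\sigma_\lambda)(g)$ for bounded $\mathrm{Re}\,\lambda$, combined with the polynomial dependence of $I(\sigma_\lambda)(D)\varphi$ on $\lambda$. Together these give $|\mathcal{R}(D)W_{\varphi_\lambda}(g)|\ll \norm{g}^{N_1}$.
\item Conversion of powers of $\alpha_i(a)$ into derivatives, using the left $\psi$-equivariance.
\end{enumerate}

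\emph{What each approach buys.} Your argument needs no embedding of discrete series into principal series, so it avoids the implicit bookkeeping of matching the Jacquet integral of $I(\sigma_\lambda)$ with that of the ambient principal series uniformly in $\lambda$. It also makes transparent why $N_1$ depends only on $\mathcal{K}$ (and $\sigma$), and not on $D$ or $N_2$. The price is that you need \cite[Theorem 15.4.1]{Wallach} in the form of a weakly holomorphic family of \emph{continuous} functionals on the smooth compact picture, whereas the paper pushes all uniformity into Jacquet's explicit principal-series estimate. As far as I recall, the continuation in \cite[Theorem 15.4.1]{Wallach} is obtained as a weakly holomorphic family of continuous functionals on the smooth vectors, so you should not need the Shahidi fallback; but it is worth checking that statement against the source before relying on it.

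\textbf{Two small points.}
\begin{itemize}
\item When $F=\CC$, $\alpha_i(a)\in\CC^\times$. In Step 2, take $X$ to be the holomorphic component of the root vector in $\mathfrak{g}_\CC$, or equivalently use both $E_{i,i+1}$ and $\sqrt{-1}E_{i,i+1}$ in the real Lie algebra. Then $\mathrm{Ad}(a^{-1})X=\alpha_i(a)^{-1}X$ holds with a complex scalar and $d\psi(X)\neq 0$.
\item ``Multiplying over $i$'' is legitimate if you take geometric means of the single-root bounds, since the exponent $m$ is arbitrary. Alternatively, use your products of root vectors.
\end{itemize}
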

\begin{proof}
Every discrete series representation of $\GL_2(\RR)$ may be realized as a subrepresentation of a suitable principal series representation.  Thus the lemma follows from the estimate for principal series representations given in \cite[Proposition 3.3]{JacquetPerfectRS}.   
\end{proof}

\begin{lem} \label{lem:nonArch:Whitt:fam:bound}
Assume $F$ is non-Archimedean.  Let $\varphi \in \mathcal{W}(I(\sigma),\psi)$ and let $\mathcal{K} \subset \mathfrak{a}_{M\RR}^{*}$ be a compact set.  There is an integer $N>0$ depending on $\mathcal{K},$ and a compact set $\Omega \subset F^{r-1}$ depending on  $\psi$ and $\varphi,$ such that 
for $(a,k) \in  T_r(F) \times K_r$ and $\mathrm{Re}(\lambda) \in \mathcal{K}$
one has 
\begin{align*}
|W_{\varphi_\lambda}(ak)| \ll_{\varphi,\mathcal{K},m'} \norm{a}^{N}
\one_{\Omega}(\alpha_1(a),\dots,\alpha_{r-1}(a)).
\end{align*}
\end{lem}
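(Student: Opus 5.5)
The plan is to reduce to the principal series case and then make the standard non-Archimedean support argument uniform in $\lambda$. I will use the same reduction as in the proof of \Cref{lem:Arch:Whitt:fam:bound}.

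\emph{Reduction.} Every $\sigma \in \mathrm{Irr}_2(M)$ embeds into a representation parabolically induced from a (twisted) supercuspidal representation of a smaller Levi $M' \le M$. Inducing in stages, $I(\sigma_\lambda)$ embeds into $\Ind_{P'}^{\GL_r}(\tau_{\lambda'})$ with $\lambda' = \lambda|_{\mathfrak{a}_{M'}}$ plus a fixed real shift. The Whittaker functional defined by the Jacquet integral in \eqref{Whitt:fam} is compatible with this, up to a nonzero holomorphic normalization coming from the inner Jacquet integral on $M$. So it suffices to treat $W_{\varphi_\lambda}$ for inductions of supercuspidals, with $\mathrm{Re}(\lambda)$ in a compact set. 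Since $\varphi$ is smooth, it is fixed by some compact open $K' \le K_r$, and $K_r/K'$ is finite. Hence it suffices to bound $W_{\varphi_\lambda}(a)$ for finitely many translates $\varphi$, uniformly in $\lambda$.

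\emph{Support.} For $u \in U_r(F)$ close to $1$ we have $a^{-1}ua \in K'$ whenever $\alpha_i(a)$ is large. The relation
\[
W_{\varphi_\lambda}(a) = W_{\varphi_\lambda}(a\, a^{-1}ua) = \psi(u)\,W_{\varphi_\lambda}(a)
\]
then forces $W_{\varphi_\lambda}(a)=0$ unless $|\alpha_i(a)| \le C$ for each $i$. Here $C$ depends only on $K'$ and the conductor of $\psi$, not on $\lambda$. This gives $\Omega$ as a box $\varpi^{-c}\OO_F^{r-1}$.

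\emph{Growth.} I will use the Bernstein–Zelevinsky/Jacquet module asymptotics for $W_{\varphi_\lambda}$ on the cone $|\alpha_i(a)|\le C$. Alternatively, one can use rationality of the Jacquet integral: $W_{\varphi_\lambda}(ak)$ is a polynomial in $q^{\pm\lambda}$ (by \cite{Shahidi:On:certain}, after the standard normalization), and $a \mapsto W_{\varphi_\lambda}(a)$ is a finite sum of finite functions given by the exponents of the Jacquet modules of $I(\sigma_\lambda)$. These exponents are $\delta_B^{1/2}$ times characters whose absolute values are $e^{\langle H(a), w(\mathrm{Re}\lambda + \text{fixed})\rangle}$. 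As $\mathrm{Re}(\lambda)$ ranges over $\mathcal{K}$, these are bounded by $\norm{a}^{N}$ with $N$ depending only on $\mathcal{K}$. Multiplying by a polynomial in $\sigma(a)$ from the nilpotent parts can be absorbed by enlarging $N$.

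\emph{Main obstacle.} The hard step is the uniformity of the coefficients in $\lambda$. I expect to handle it by working on $\mathrm{Re}(\lambda)\in\mathcal{K}$ with $\mathrm{Im}(\lambda)$ ranging over the compact torus $i\mathfrak{a}^*_{M,F}$. The Whittaker function is a Laurent polynomial in $q^{-\lambda}$ valued in finitely many functions of $a$ of bounded support (up to translation). It is therefore bounded uniformly on the compact set $\{\lambda : \mathrm{Re}\lambda\in\mathcal{K}\}/i\mathfrak{a}^\vee_{M,F}$. The precise normalization has to be checked: holomorphy requires normalizing by $L$-factors, but for irreducible-inducing data Shahidi's result gives entireness directly.
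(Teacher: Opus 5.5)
Your reduction to supercuspidal inducing data and your support argument are essentially what the paper does. One small wording fix in the support step: you want $u\in U_r(F)$ with $\psi(u)\neq 1$ and yet $a^{-1}ua\in K'$. This is possible precisely because some $|\alpha_i(a)|$ is large; $u$ is not "close to $1$."

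\textbf{The gap.} The gap is in the growth bound, specifically in its uniformity in $\lambda$. You correctly flag this as the crux, but the proposed resolution does not work.

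First, the claim that $\lambda\mapsto W_{\varphi_\lambda}|_{T_r(F)}$ is a Laurent polynomial in $q^{-\lambda}$ valued in a fixed finite-dimensional space of functions of $a$ is false. Already for $r=2$ and $\sigma$ trivial, \eqref{CS} gives $\delta_{B_2}^{-1/2}(a)W_\lambda(a)=\mathbb{S}_{(m,0)}(q^{-\lambda})=\sum_{i=0}^m q^{-i\lambda_1-(m-i)\lambda_2}$ for $a=\mathrm{diag}(\varpi^m,1)$. The set of monomials in $q^{\pm\lambda}$ grows with $m$. So compactness of $\{\mathrm{Re}(\lambda)\in\mathcal{K}\}/i\mathfrak{a}_{M,F}^\vee$ only yields a bound $C(a)$ for each fixed $a$, with no control of $C(a)$ in terms of $\norm{a}$.

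Second, the finite-function (Jacquet module) expansion does not repair this, because its coefficients are not uniformly bounded in $\lambda$. In the same example, $\mathbb{S}_{(m,0)}(x_1,x_2)=\frac{x_1}{x_1-x_2}x_1^{m}-\frac{x_2}{x_1-x_2}x_2^{m}$. The coefficients blow up as the exponents collide, and this happens at the unitary point $\lambda=0$, where the expansion degenerates to $(m+1)x_1^m$. So a term-by-term bound fails on every neighborhood of such points. One would need an extra argument that exploits the cancellation, for instance a maximum principle in $q^{-\lambda}$ around the collision locus combined with precise knowledge of the pole structure of the coefficients. Nothing like this is in the proposal.

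Third, even for fixed $\lambda$, $W_{\varphi_\lambda}$ is a finite sum of finite functions only deep in the cone, not on the whole region $|\alpha_i(a)|\le C$.

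\textbf{What the paper does instead.} The paper avoids asymptotic expansions altogether. After reducing to supercuspidal $\sigma$, it writes $W_{\varphi_\lambda}(a)$ as a stable integral over a compact open subgroup of $N_{\overleftarrow{P}}(F)$ that can be chosen independently of $\lambda$ \cite[\S 3.1]{CPS:derivatives}. It then changes variables $n\mapsto ana^{-1}$. Finally it uses that supercuspidal Whittaker functions are compactly supported on $M(F)^1$ modulo $U_r(F)\cap M(F)$ \cite[Corollary 6.5]{CasselmanShalika}.

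As a result, both the integrand and the effective domain of integration are controlled polynomially in $\norm{a}$. In absolute value, $\lambda$ enters only through factors $|e^{\langle H_P(\cdot),\lambda\rangle}|$, which depend only on $\mathrm{Re}(\lambda)\in\mathcal{K}$. This gives the uniform bound directly, which is the ingredient your argument is missing.
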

\begin{proof}
 The Whittaker functions $W_{\varphi_\lambda}$ are fixed by a compact open subgroup $K$ independent of $\lambda.$  Thus we can assume $k=I_r.$  

  There is a compact set $\Omega \subset F^{r-1},$ depending only on $\psi$ and $U_r(F) \cap K,$  such that if $(\alpha_1(a),\dots,\alpha_{r-1}(a)) \not \in \Omega$ then  $\psi(ana^{-1}) \neq 1$  for some $n \in U_{r}(F) \cap K.$  On the other hand, for $n \in U_r(F) \cap K$ one has 
$W_{\varphi_{\lambda}}(a)=W_{\varphi_{\lambda}}(an)=\psi(ana^{-1})W_{\varphi_{\lambda}}(a).$  These two observations imply the bound on the support of $W_{\varphi_{\lambda}}(ak)$ asserted in the lemma.

We now bound the magnitude.  We may embed $\sigma$ into the induction of a (possibly nonunitary) supercuspidal representation \cite[Theorem 8.3.5]{Getz:Hahn}.  Thus at the expense of enlarging $\mathcal{K}$ we may assume $\sigma$ is supercuspidal.  
 
For all $\lambda$ we have the identity
\begin{align*}
W_{\varphi_{\lambda}}(a)&=\int_{N_{\overleftarrow{P}}(F)}^{\mathrm{st}}I(\sigma_\lambda)\left(w_Pna\right)\varphi(I_r) \overline{\psi}(n)dn,
\end{align*}
where the integral should be understood as a stable integral; in other words, the integral is taken over a sufficiently large compact open subgroup $K_{N_{\overleftarrow{P}}}$ of $N_{\overleftarrow{P}}(F).$  If $K_{N_{\overleftarrow{P}}}$ is large enough, then the integral is independent of the choice of $K_{N_{\overleftarrow{P}}},$ and $K_{N_{\overleftarrow{P}}}$ may be chosen independently of $\lambda$ \cite[\S 3.1]{CPS:derivatives}. We change variables $n \mapsto ana^{-1}$ to see that the above is 
\begin{align*}
\delta_{\overleftarrow{P}}(a)\int_{N_{\overleftarrow{P}}(F)}^{\mathrm{st}}I(\sigma_\lambda)\left(w_Pan\right)\varphi(I_r) \overline{\psi}(ana^{-1})dn.
\end{align*}
Since $\varphi$ is realized in the Whittaker model, it is compactly supported as a function of $M(F)^1$ modulo the action of $U_r(F) \cap M(F)$ \cite[Corollary 6.5]{CasselmanShalika}.  The desired bound follows.  
\end{proof}

Let $
    \mathcal{B}_{\sigma}$
be an orthonormal basis of $I(\sigma)$ with respect to the pairing
\begin{align} \label{Bsiginnerpro}
    \left(\varphi_1,\varphi_2 \right):= \int_{P(F) \backslash \GL_r(F)} (\varphi_1(g),\overline{\varphi}_2(g))^{\mathrm{Whitt}} d\dot{g}.
\end{align}
Here $(\varphi_1(g),\overline{\varphi}_2(g))^{\mathrm{Whitt}}$ is defined by extending \eqref{eq:Whitt} to products of general linear groups in the natural manner.
We assume that the basis consists of $K_r$-finite vectors.  
We point out that 
\begin{align} \label{base:twist}
    \mathcal{B}_{\sigma}(\lambda):=\{\varphi_\lambda:\varphi \in \mathcal{B}_{\sigma}\}
\end{align}
is an orthonormal basis of $I(\sigma_\lambda)$ for all $\lambda \in i \mathfrak{a}_M.$

\begin{prop} \label{prop:Whitt:exp}
For $f \in \mathcal{S}(\GL_r(F)),$  $\sigma \in \mathrm{Irr}_2(M)$ and $\lambda \in i\mathfrak{a}_M^*,$ one has
\begin{align*} 
    W_{f_{I(\sigma_\lambda)}}(g_1,g_2)=\sum_{\varphi \in \mathcal{B}_{\sigma}}\overline{W}_{\varphi_{\lambda}}(g_1)I(\sigma_\lambda)(f^\vee)W_{\varphi_{\lambda}}(g_2).
\end{align*}
The sum on the right is convergent in $\mathcal{C}^w(U_{r,r}(F) \backslash \GL_{r,r}(F),\overline{\psi} \otimes \psi).$
\end{prop}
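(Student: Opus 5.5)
The plan is to deduce the proposition from the general expansion \eqref{Whitt:exp}, applied to the tempered irreducible representation $\pi=I(\sigma_\lambda)$. The task is to show that $\{W_{\varphi_\lambda}:\varphi\in\mathcal{B}_\sigma\}$ is an orthonormal basis of $\mathcal{W}(\pi,\psi)$ for $(\cdot,\cdot)^{\mathrm{Whitt}}$, consisting of $K_r$-finite vectors. Once that is established, the formula and the convergence statement in $\mathcal{C}^w(U_{r,r}(F)\backslash\GL_{r,r}(F),\overline{\psi}\otimes\psi)$ follow verbatim from \eqref{Whitt:exp} and the discussion at the end of \cite[\S 2.14]{BP:GLn}.

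The first step is to check that $\varphi\mapsto W_{\varphi_\lambda}$, defined by \eqref{Whitt:fam} and Theorem \ref{thm:Whitt:analyfam}, is a nonzero $\GL_r(F)$-intertwining map $I(\sigma_\lambda)\to\mathcal{W}(\pi,\psi)$. Equivariance is formal: for $\lambda$ in the cone of absolute convergence it follows from the definition, and for general $\lambda$ it follows by analytic continuation. The map is continuous by the standard theory of Jacquet integrals \cite{Wallach,Shahidi:On:certain}. Since $\lambda\in i\mathfrak{a}_M^*$, the representation $\pi$ is irreducible, and $\pi$ has a unique Whittaker model. Hence this map is an isomorphism onto $\mathcal{W}(\pi,\psi)$, and it carries $K_r$-finite vectors to $K_r$-finite vectors. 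Transporting the inner product \eqref{Bsiginnerpro} and $(\cdot,\cdot)^{\mathrm{Whitt}}$ to the same space gives two $\GL_r(F)$-invariant inner products on an irreducible unitary representation. By Schur's lemma they differ by a positive constant $c(\sigma,\lambda)$:
\begin{align*}
(W_{\varphi_{1,\lambda}},W_{\varphi_{2,\lambda}})^{\mathrm{Whitt}}=c(\sigma,\lambda)\,(\varphi_1,\varphi_2).
\end{align*}

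The main obstacle is to prove $c(\sigma,\lambda)=1$ under the normalizations fixed here: the measure on $N_{\overleftarrow{P}}(F)$, the factor $\zeta(r)$ in \eqref{eq:Whitt}, and the Whittaker-valued realization of $I(\sigma)$ with inner product \eqref{Bsiginnerpro}. This normalization is exactly why $\zeta(r)$ was inserted "for ease of comparison with \cite[Proposition A.2]{FLO}". I would first try to invoke that result directly: it computes the Whittaker (mirabolic) norm of a Jacquet-integral Whittaker function of a unitarily induced representation in terms of the induced inner product built from the Whittaker inner products on the inducing data. I would then check that its hypotheses cover $\sigma\in\mathrm{Irr}_2(M)$ and $\lambda\in i\mathfrak{a}_M^*$ at both Archimedean and non-Archimedean places. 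If the comparison required extra care, the fallback is an inductive argument. One unfolds the mirabolic integral $\int_{U_r\backslash\mathcal{P}_r}|W_{\varphi_\lambda}|^2$ using the Iwasawa decomposition relative to $P$ and the Jacquet integral. Regularizing via $\lambda$ slightly off the unitary axis and appealing to analytic continuation, one reduces to the Whittaker inner products on the blocks $\GL_{r_i}$. The key point is that the factors $\zeta(r)$ and $\prod_i\zeta(r_i)$ absorb the volume discrepancies arising from replacing $\mathcal{P}_r$ by $P(F)\backslash\GL_r(F)$ and mirabolics of the blocks.

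Granting $c=1$, the set $\mathcal{B}_\sigma(\lambda)$ of \eqref{base:twist} maps to an orthonormal $K_r$-finite basis of $\mathcal{W}(\pi,\psi)$. To match \eqref{Whitt:exp} exactly, I also need that $\pi(f^\vee)$ commutes with the intertwining map, so that $\pi(f^\vee)W_{\varphi_\lambda}=I(\sigma_\lambda)(f^\vee)W_{\varphi_\lambda}$ is unambiguous. Finally, \eqref{Whitt:exp} is independent of the choice of orthonormal $K_r$-finite basis, which gives the stated identity together with its convergence.
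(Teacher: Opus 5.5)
Your proposal is correct and takes the same approach as the paper. The paper deduces the proposition from \eqref{Whitt:exp} together with the isometry $(W_{\varphi_{1\lambda}},W_{\varphi_{2\lambda}})^{\mathrm{Whitt}}=(\varphi_1,\varphi_2)$, quoted directly from \cite[Proposition A.2]{FLO}; your intertwining/Schur-lemma reduction and the inductive fallback are extra scaffolding that the paper does not need.
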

\begin{proof}  In view of \eqref{Whitt:exp},
the proposition follows from the fact that 
\begin{align*} 
(W_{\varphi_{1\lambda}},W_{\varphi_{2\lambda}})^{\mathrm{Whitt}}=(\varphi_1,\varphi_2)
\end{align*}
by \cite[Proposition A.2]{FLO}. 
\end{proof}
The difference between the expression in  \eqref{Whitt:exp} and Proposition \ref{prop:Whitt:exp} is that in the latter expression the space $\mathcal{B}_{\sigma}$ is fixed and now the functions $W_{\varphi_{\lambda}}$ vary in a family.  We point out that though we assume that $\mathcal{B}_{\sigma}$  consists of $K_r$-finite vectors, in the Archimedean case the function $f$ is not assumed to be $K_r$-finite on the right or left.

We point out that $w_P^{-1}\widetilde{w}_rw_P\overline{P}=Pw_P^{-1}\widetilde{w}_rw_P.$  Thus for $\varphi \in \mathrm{Ind}_{P}^{\GL_r}(\sigma)$ we can define
$$
\widetilde{\varphi}(g):=\varphi(w_P^{-1} \widetilde{w}_rw_Pg^{-t}) \in \mathrm{Ind}_P^{\GL_r}(m \mapsto \sigma((w_P^{-1} \widetilde{w}_rw_P)m^{-t}(w_P^{-1} \widetilde{w}_rw_P)^{-1}) \cong \mathrm{Ind}_P^G(\sigma^\vee).
$$

\begin{lem} \label{lem:duals}
    The map $\widetilde{\,\cdot\, }:\mathrm{Ind}_{P}^{\GL_r}(\sigma) \to \mathrm{Ind}_P^{\GL_r}(m \mapsto \sigma((w_P^{-1} \widetilde{w}_rw_P)m^{-t}(w_P^{-1} \widetilde{w}_rw_P)^{-1})$ is an isometry with respect to \eqref{Bsiginnerpro}, and 
    $$
    \widetilde{W}_{\varphi}(g)=W_{\widetilde{\varphi}}(g).
    $$
\end{lem}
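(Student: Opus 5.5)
Throughout, write $w:=w_P^{-1}\widetilde{w}_rw_P$. The plan is to prove the identity $\widetilde{W}_{\varphi}=W_{\widetilde{\varphi}}$ first, by a change of variables in \eqref{Whitt:fam}, and then to obtain the isometry by a change of variables on $P(F)\backslash\GL_r(F)$. The one non-formal input, which I expect to be the main obstacle, is that the $M$-level twist is an isometry for $(\cdot,\cdot)^{\mathrm{Whitt}}$ on $\mathcal{W}(\sigma,\psi)$.

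I read $\widetilde{\varphi}$ as $\varphi(wg^{-t})$ with the Whittaker-model tilde of Lemma \ref{lem:tilde:iso} applied blockwise on $M$ to its values. This is what makes $\widetilde{\varphi}$ take values in the $\psi$-Whittaker model of $m\mapsto\sigma(wm^{-t}w^{-1})$. The signs in $\widetilde{w}_r$ ensure $\psi(\widetilde{w}_r n^{-t}\widetilde{w}_r^{-1})=\psi(n)$ for $n\in U_r(F)$, so no character is flipped. The map $g\mapsto wg^{-t}$ descends to a bijection of $P(F)\backslash \GL_r(F)$, because $P(F)^{-t}=\overline{P}(F)$ and $w\overline{P}=Pw$ (noted just before the lemma).

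For the Whittaker identity, I first take $\lambda$ in the cone of absolute convergence from Theorem \ref{thm:Whitt:analyfam}. There
\[
W_{\varphi_\lambda}(\widetilde{w}_rg^{-t})=\int_{N_{\overleftarrow{P}}(F)}\varphi_\lambda(w_P^{-1}n\widetilde{w}_rg^{-t})\overline{\psi}(n)\,dn .
\]
I substitute $n\mapsto \widetilde{w}_rn^{-t}\widetilde{w}_r^{-1}$. This automorphism preserves $U_r$ and $\psi$. It also preserves $N_{\overleftarrow{P}}$: it sends the standard parabolic with blocks $(r_1,\dots,r_k)$ to the one with reversed blocks, and $\overleftarrow{P}$ corresponds to $\overline P$. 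Being unimodular, it preserves $dn$. After the substitution the argument becomes $w_P^{-1}\widetilde{w}_r(w_P^{-1}nw_P)^{-t}\cdots$, and rewriting it as $w\cdot(w_P^{-1}ng)^{-t}$ up to elements of $P$ shows the integral equals $W_{\widetilde{\varphi}_{\lambda'}}(g)$. Here $\lambda'$ is the image of $\lambda$ under $m\mapsto wm^{-t}w^{-1}$, which is again in the convergent cone. The $P$-equivariance factor is absorbed by the $M$-level tilde, which converts $\sigma(m)$ into $\sigma(wm^{-t}w^{-1})$. Both sides are holomorphic in $\lambda$, so analytic continuation gives the identity at $\lambda=0$. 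This part is bookkeeping of block matrices.

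For the isometry, I change variables $g\mapsto wg^{-t}$ in \eqref{Bsiginnerpro}. By the previous paragraph this preserves $d\dot g$, so it reduces the claim to showing that $W\mapsto \widetilde W$ is an isometry for $(\cdot,\cdot)^{\mathrm{Whitt}}$ on $\mathcal{W}(\sigma,\psi)$, blockwise on the square-integrable $\sigma$. The difficulty is that \eqref{eq:Whitt} is an integral over the mirabolic, and tilde does not preserve $\mathcal{P}_r$. I would instead use the Whittaker Plancherel formula \eqref{Whitt:Planch} together with \eqref{Whitt:exp}. For square-integrable $\sigma$ these identify $(\cdot,\cdot)^{\mathrm{Whitt}}$, up to the formal degree $d(\sigma)$, with the $L^2$ inner product of matrix coefficients over $\GL_{r_i}(F)$ modulo the centre. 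Under tilde, a matrix coefficient $c$ of $\sigma$ becomes $g\mapsto c(\widetilde{w}g^{-t})$, a measure-preserving substitution, and $d(\sigma)=d(\sigma^{\mathrm{tilde}})$ by the formal degree formula. This gives the $M$-level isometry, and with it the isometry of the lemma. If the formal degree route proves awkward, a fallback is to combine \cite[Proposition A.2]{FLO} with the identity already proved: both $(\varphi_1,\varphi_2)$ and $(\widetilde\varphi_1,\widetilde\varphi_2)$ are computed as Whittaker inner products on $\GL_r$. One would then compare the two invariant forms by uniqueness up to scalar and fix the scalar on a single convenient vector.
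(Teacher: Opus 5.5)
Your derivation of $\widetilde{W}_{\varphi}=W_{\widetilde{\varphi}}$ is the paper's own computation, namely the substitution $n\mapsto\widetilde{w}_rn^{-t}\widetilde{w}_r^{-1}$ in the Jacquet integral. Doing it in the cone of convergence and then continuing analytically is a sensible addition. (That substitution carries $N_{\overleftarrow{P}}$ onto $N_P$. It preserves $N_{\overleftarrow{P}}$ only when $(r_1,\dots,r_k)$ is palindromic, which is also exactly when the remark $w\overline{P}=Pw$ underlying the statement holds.)

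The real problem is your reading of $\widetilde{\varphi}$. The paper sets $\widetilde{\varphi}(g)=\varphi(wg^{-t})$ and leaves the values in $\mathcal{W}(\sigma,\psi)$; no tilde is applied on $M$. This is already a Whittaker realization of $m\mapsto\sigma(wm^{-t}w^{-1})$. Since $\psi(wu^{-t}w^{-1})=\psi(u)$ for $u\in U_r(F)\cap M(F)$, evaluation at the identity is still a $\psi$-Whittaker functional for the twisted action. Moreover, $V\mapsto(m\mapsto V(wm^{-t}w^{-1}))$ identifies this realization with the Whittaker model of the twisted representation while preserving evaluation at the identity. The blockwise tilde of Lemma \ref{lem:tilde:iso} instead replaces that functional by evaluation at the block-diagonal element $\widetilde{w}_M$ built from the $\widetilde{w}_{r_i}$, and the identity then fails.

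This already happens for $P=\GL_r$. There $\varphi(g)=\sigma(g)V$ with $V=W_\varphi$, and your $\widetilde{\varphi}$ gives $W_{\widetilde{\varphi}}(g)=V(\widetilde{w}_r^{2}g^{-t})=\omega_\sigma((-1)^{r-1})V(g^{-t})$. This is not even $(U_r(F),\psi)$-equivariant, whereas $\widetilde{W}_\varphi(g)=V(\widetilde{w}_rg^{-t})$. Your sentence that the $M$-level tilde ``converts $\sigma(m)$ into $\sigma(wm^{-t}w^{-1})$'' is also inaccurate. It converts right translation by $m$ into right translation by $m^{-t}$, and it cannot account for $w$ permuting the blocks of $M$.

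With the correct definition, the isometry is only a change of variables. The fibre inner products $(\widetilde{\varphi}_1(g),\widetilde{\varphi}_2(g))^{\mathrm{Whitt}}=(\varphi_1(wg^{-t}),\varphi_2(wg^{-t}))^{\mathrm{Whitt}}$ are computed in the same space $\mathcal{W}(\sigma,\psi)$. The map $g\mapsto wg^{-t}$ preserves $d\dot{g}$ on $P(F)\backslash\GL_r(F)$ (reduce to an integral over $K_r$, which is stable under this map). That is why the paper calls the first assertion clear, and your ``main obstacle'' is an artifact of the misreading.

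Even on its own terms, your formal-degree step would not settle it. Schur orthogonality and $d(\sigma)=d(\sigma^\vee)$ are unchanged when the inner product is multiplied by a constant. So they cannot detect a scalar $c$ with $(\widetilde{W}_1,\widetilde{W}_2)^{\mathrm{Whitt}}=c\,(W_1,W_2)^{\mathrm{Whitt}}$. Pinning $c$ down requires a relation that is linear in the inner product but quadratic in the Whittaker functional. An example is the identity expressing the stable integral over $U$ of $u\mapsto(\sigma(u)W_1,W_2)^{\mathrm{Whitt}}\overline{\psi}(u)$ as a fixed multiple of $W_1(I)\overline{W_2(I)}$. Your fallback via Feigon--Lapid--Offen and uniqueness leaves the scalar uncomputed and only moves it to the tilde map on $\mathcal{W}(I(\sigma),\psi)$.
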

\begin{proof}
The first assertion is clear. 
 For the second, we compute
\begin{align*}
    \widetilde{W}_{\varphi}(g)&=\int_{N_{\overleftarrow{P}}(F)}I(\sigma_\lambda)\left(w_P^{-1}n\widetilde{w}_rg^{-t}\right)\varphi(I_r) \overline{\psi}(n)dn\\
    &=\int_{N_{\overline{P}}(F)}I(\sigma_\lambda)\left(w_P^{-1}\widetilde{w}_rn^{-t}g^{-t}\right)\varphi(I_r) \overline{\psi}(n)dn
    =W_{\widetilde{\varphi}}(g).
\end{align*}
\end{proof}

For the remaining assertions of this section, let $M$ be a semistandard Levi subgroup of $\GL_r,$ let $\sigma \in \mathrm{Irr}_2(M),$ and let $\varphi$ be a $K_r$-finite vector in $I(\sigma).$

\begin{thm} \label{thm:Whitt:analy} Let $D$ be a constant coefficient differential operator on $i\mathfrak{a}_M^*.$  
For $d \gg_{D,\varphi} 1$ the following hold:
\begin{enumerate}
    \item[(a)] If $F$ is non-Archimedean, then $\nu_{-d}(DW_{\varphi_\lambda}) \ll_{D,\varphi} 1.$
    \item[(b)] If $F$ is Archimedean and $u \in U(\mathfrak{gl}_r),$  there is a $d'>0$ such that  
    $$
    \nu_{u,-d}(DW_{\varphi_\lambda}) \ll_{u,D,\varphi} (1+|\lambda|)^{d'} .
    $$
\end{enumerate}
In either case for $d \gg_{D,\varphi}1$  one has a continuous map
\begin{align*}
DW_{\varphi_{(\cdot)}}:i\mathfrak{a}_M^\ast \lto \mathcal{C}_d(U_r(F) \backslash \GL_r(F),\psi).
\end{align*}
\end{thm}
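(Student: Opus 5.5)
Fix $M$, $\sigma\in\mathrm{Irr}_2(M)$ and a $K_r$-finite $\varphi\in I(\sigma)$. The plan is to get the bound uniformly for $\lambda$ in a small complex neighborhood of $i\mathfrak{a}_M^*$, and then remove $D$ by Cauchy's integral formula. By Theorem \ref{thm:Whitt:analyfam}, $\lambda\mapsto W_{\varphi_\lambda}(g)$ is holomorphic on $\mathfrak{a}_{M\CC}^*$ for each $g$. Suppose we have a tube $\mathcal{T}_\epsilon=\{\lambda:\norm{\mathrm{Re}\,\lambda}\le\epsilon\}$ and a $d$ with
\begin{align*}
|W_{\varphi_\lambda}(ntk)|\ll_{\varphi}\delta_{B_r}^{1/2}(t)\sigma(t)^{d}\quad\textrm{for }\lambda\in\mathcal{T}_\epsilon
\end{align*}
(with the factor $(1+|\lambda|)^{d'}$ and derivatives $u$ in the Archimedean case). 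Then Cauchy's formula on polydiscs of radius $\epsilon/2$ centered at points of $i\mathfrak{a}_M^*$ gives the same bound for $DW_{\varphi_\lambda}$, $\lambda\in i\mathfrak{a}_M^*$. That is (a) and (b).

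For continuity of $\lambda\mapsto DW_{\varphi_\lambda}$ into $\mathcal{C}_d$, I would apply the same argument to $D'D$ for first-order $D'$. This gives local Lipschitz bounds in $\mathcal{C}_{d}$. For the non-Archimedean case one also notes that all $W_{\varphi_\lambda}$ are fixed by a common compact open subgroup, as in the proof of Lemma \ref{lem:nonArch:Whitt:fam:bound}, so everything happens inside one Banach space $\mathcal{C}_d(\cdots)^K$.

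The main step is the uniform tube estimate. Lemmas \ref{lem:Arch:Whitt:fam:bound} and \ref{lem:nonArch:Whitt:fam:bound} only give $\norm{a}^{N}$ growth times rapid decay (or compact support) in the simple roots. That controls the region where some $|\alpha_i(a)|$ is large, but not the region $|\alpha_i(a)|\to0$, where we need $\delta_{B_r}^{1/2}(a)$ times logarithmic losses.

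For that region I would use the asymptotic expansion of Whittaker functions along the negative chamber, which is the input behind Lemma \ref{lem:whittakerweak} (\cite[Theorem 3.1]{Lapid:Mao:Asymp}, \cite[\S15]{Wallach:RGII}). The exponents appearing are $\delta_{B_r}^{1/2}$ times the central exponents of the Jacquet modules of $I(\sigma_\lambda)$. For $\lambda\in i\mathfrak{a}_M^*$ these satisfy Casselman's temperedness criterion, with real parts of exponents $\le 0$, and for $\lambda\in\mathcal{T}_\epsilon$ they move by $O(\epsilon)$. Polynomial factors in $\log|\alpha_i|$ account for $\sigma(t)^d$.

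To keep the constants uniform in $\lambda$, in the non-Archimedean case I would reduce to the supercuspidal support as in Lemma \ref{lem:nonArch:Whitt:fam:bound}. There the asymptotics are explicit finite sums with coefficients holomorphic in $\lambda$, and they are valid on a fixed neighborhood because the cutoff depends only on the common fixed subgroup. In the Archimedean case I would embed into principal series and use Jacquet's bounds \cite[Proposition 3.3]{JacquetPerfectRS} in their uniform-in-$\lambda$ form, with derivatives $u\in U(\mathfrak{gl}_r)$ handled the same way. Polynomial dependence on $\lambda$ enters through the seminorms of $I(\sigma_\lambda)(u)\varphi$, hence the factor $(1+|\lambda|)^{d'}$.

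Small real shifts $\epsilon$ can spoil the exact $\delta^{1/2}$ bound by a factor $\norm{a}^{O(\epsilon)}$. To avoid that loss, I would prefer to bound derivatives in $\lambda$ directly on $i\mathfrak{a}_M^*$: differentiating the asymptotic expansion produces only extra powers of $\log|\alpha_i(a)|$, which are absorbed into $\sigma(t)^d$. I expect this uniformity of the asymptotic expansion in $\lambda$ to be the main obstacle.
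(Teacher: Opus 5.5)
\textbf{There is a genuine gap in (a) and (b).} Your continuity step is exactly what the paper does: apply the bound to $D'D$ for first-order $D'$, use the mean value theorem, and in the $p$-adic case use a common fixed compact open subgroup. The trouble is upstream. The whole content of (a) and (b) is that the tempered bound holds with $d$ independent of $\lambda\in i\mathfrak{a}_M^*$, with constant bounded (resp.\ polynomial in $|\lambda|$). You name this as ``the main obstacle'' but never supply it. The paper does not prove it either: it quotes \cite[Lemme 5.7]{Delorme:Whitt} for $p$-adic $F$ and \cite[Corollary 16.3]{vandenBan:uniformtempered} for archimedean $F$, the latter being the main result of a paper devoted to this uniformity.

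\textbf{The ingredients you list do not produce the uniform bound.}

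\emph{The Cauchy step.} Your tube estimate is false for every $\epsilon>0$, as you noticed. It can be rescued by taking polydiscs of radius $\asymp\sigma(t)^{-1}$: then $\norm{t}^{O(\epsilon)}=O(1)$, and the Cauchy loss $\sigma(t)^{\deg D}$ is absorbed into $\sigma(t)^d$. But this still needs a bound $\delta_{B_r}^{1/2}(t)\sigma(t)^d\norm{t}^{C\norm{\mathrm{Re}\,\lambda}}$ uniform on a tube, which is the same problem.

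\emph{The termwise asymptotic expansion.} For each fixed $\lambda$ it gives some $d(\lambda)$ and $C(\lambda)$, but it cannot give uniformity, because its coefficients are not holomorphic on $i\mathfrak{a}_M^*$. They have poles exactly where Jacquet-module exponents collide. Take $r=2$, $M=T_2$, $\sigma$ trivial, $\lambda=(it,-it)$, $x=q^{-it}$, $y=q^{it}$, and $a=\mathrm{diag}(\varpi^n,1)$. For the spherical $W_\lambda$ of \S\ref{sec:unramified}, \eqref{CS} gives
\[
\delta_{B_2}^{-1/2}(a)W_\lambda(a)=\tfrac{x}{x-y}x^n+\tfrac{y}{y-x}y^n .
\]
Each coefficient blows up when $q^{2it}=1$, a point of $i\mathfrak{a}_M^*$, while the sum $\sum_{k=0}^n x^ky^{n-k}$ is at most $n+1$. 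So your claim that after reducing to supercuspidal support the asymptotics are ``finite sums with coefficients holomorphic in $\lambda$'' is incorrect. One must control the whole sum. In the $p$-adic case a possible route is to treat the $A_{M_Q}$-action on the relevant finite-dimensional piece of the Jacquet module as a family of operators of spectral radius at most one and bound their powers by triangularization, rather than diagonalizing. Nothing of this kind is in your proposal.

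\textbf{The archimedean case is more serious.} Jacquet's bounds \cite[Proposition 3.3]{JacquetPerfectRS}, even uniform in $\lambda$, only give $\norm{a}^{N_1}$ growth; that is Lemma \ref{lem:Arch:Whitt:fam:bound}. Embedding a discrete series of $\GL_2(\RR)$ into a principal series places it in a non-unitary principal series, whose Whittaker functions have non-tempered exponents. The temperedness of the subrepresentation is a cancellation that bounds for the ambient principal series cannot detect. Likewise, obtaining $(1+|\lambda|)^{d'}$ ``through the seminorms of $I(\sigma_\lambda)(u)\varphi$'' presupposes that the tempered bound is controlled by a continuous seminorm of the vector, with $d$ fixed, uniformly over the non-compact set $i\mathfrak{a}_M^*$. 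That is again the statement to be proved.

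As written, the proposal reduces (a) and (b) to themselves. The missing input is a uniform temperedness theorem for the family $W_{\varphi_\lambda}$, which the paper imports from Delorme and van den Ban.
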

\begin{proof}
In the non-Archimedean case, the bound is \cite[Lemme 5.7]{Delorme:Whitt}. For Archimedean $F,$ the bound is \cite[Corollary 16.3]{vandenBan:uniformtempered}. We remark that in loc. cit. $d$ depends on  $u,$ but the dependence on $u$ can be absorbed into the dependence on $\varphi.$ \quash{\textcolor{red}{Can we make this continuous as a function of $\varphi$ in any sense?}\textcolor{blue}{\cite[Corollary 15.6]{vandenBan:uniformtempered} has a continuous seminorm in $\varphi$}}

By \cite[(4.1)]{Delorme:Whitt} and \cite[Proposition 16.1]{vandenBan:uniformtempered} the map
\begin{align*}
    W_{\varphi_{(\cdot)}}:\mathfrak{a}_{M\CC}^* \lto C^\infty(U_{r}(F) \backslash \GL_r(F),\psi)
\end{align*}
is holomorphic. 
  Our previous bounds imply $DW_{\varphi_{(\cdot)}}|_{i\mathfrak{a}_M^*}$ has image in $\mathcal{C}_d(U_{r}(F)\backslash \GL_r(F),\psi).$ To see the induced map is continuous, let $S$ be a $\CC$-basis of constant coefficient differential operators of first order on $i\mathfrak{a}_M^\ast.$  Let $\mathcal{K} \subseteq i\mathfrak{a}_M^*$ be a compact set. If $\lambda,\lambda' \in \mathcal{K},$ then by the mean value theorem
\begin{align*}
    \nu_{-d}(DW_{\varphi_{\lambda}}-DW_{\varphi_{\lambda'}})\ll \norm{\lambda-\lambda'} \sup_{D'\in S}\sup_{\lambda'' \in \mathcal{K}}\nu_{-d}(D'DW_{\varphi_{\lambda''}}).
\end{align*}
When $F$ is Archimedean, the same argument yields an estimate with $\nu_{-d}$ replaced by $\nu_{u,-d}$ for any $u \in U(\mathfrak{gl}_r).$ 
This implies continuity.
\end{proof}

\begin{prop} \label{prop:unif}
For $d>0$ large, there is a continuous map
\begin{align*}
i\mathfrak{a}_M^* \times \mathcal{C}(\GL_r(F)) &\lto \mathcal{C}_d(U_{r,r}(F) \backslash \GL_{r,r}(F),\overline{\psi} \otimes \psi) \\
(\lambda,f) &\longmapsto \overline{W}_{\varphi_{\lambda}}I(\sigma_\lambda)(f^{\vee})W_{\varphi_{\lambda}}.
\end{align*}
Moreover, if $f$ is right $K_r$-finite, then for any continuous seminorm $\nu$ on $\mathcal{C}_d(U_{r,r}(F) \backslash \GL_{r,r}(F),\overline{\psi} \otimes \psi)$ and any $N \geq 0$,
\begin{align} \label{cont:bound}
\nu(\overline{W}_{\varphi_{\lambda}}I(\sigma_\lambda)(f^{\vee})W_{\varphi_{\lambda}}) \ll_{\varphi,N,f} (1+|\lambda|)^{-N}.
\end{align}
\end{prop}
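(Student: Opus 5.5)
The plan is to reduce everything to Theorem \ref{thm:Whitt:analy} together with the continuity of the action of $\mathcal{C}(\GL_r(F))$ on induced representations. The function in question is the product
\[
(g_1,g_2)\mapsto \overline{W}_{\varphi_\lambda}(g_1)\,W_{I(\sigma_\lambda)(f^\vee)\varphi_\lambda}(g_2),
\]
where we use the identity $I(\sigma_\lambda)(f^\vee)W_{\varphi_\lambda}=W_{(I(\sigma_\lambda)(f^\vee)\varphi)_\lambda}$. This identity holds because the Whittaker functional $\varphi\mapsto W_{\varphi_\lambda}(I_r)$ intertwines, and because the underlying space of $I(\sigma_\lambda)$ is identified with that of $I(\sigma)$ via restriction to $K_r$.

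\textbf{Step 1 (tensor product of weights).} The first factor $\overline{W}_{\varphi_\lambda}$ lies in $\mathcal{C}_d(U_r(F)\backslash\GL_r(F),\overline{\psi})$ and depends continuously on $\lambda$ by Theorem \ref{thm:Whitt:analy}. Seminorms on $\mathcal{C}_d(U_{r,r}\backslash \GL_{r,r})$ are dominated by products of seminorms on the factors, since $\Xi_{U_{r,r}\backslash\GL_{r,r}}$ is a product and $\sigma$ satisfies \eqref{prod:log}, at the cost of enlarging $d$. So it suffices to prove two things about the second factor $F(\lambda,f):=I(\sigma_\lambda)(f^\vee)W_{\varphi_\lambda}$: that $(\lambda,f)\mapsto F(\lambda,f)\in\mathcal{C}_d(U_r\backslash\GL_r,\psi)$ is continuous, and that its seminorms decay like $(1+|\lambda|)^{-N}$ when $f$ is right $K_r$-finite.

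\textbf{Step 2 (continuity).} We write $F(\lambda,f)(x)=\int f(g)W_{\varphi_\lambda}(xg)\,dg$, which is the map of Lemma \ref{lem:cont} applied to $(W_{\varphi_\lambda},f)$. By Theorem \ref{thm:Whitt:analy}, $\lambda\mapsto W_{\varphi_\lambda}\in\mathcal{C}_d$ is continuous, and in the Archimedean case the seminorms are polynomially bounded in $\lambda$. Lemma \ref{lem:cont} gives joint continuity (in the Archimedean case), so the composite $(\lambda,f)\mapsto F(\lambda,f)$ is continuous.

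In the non-Archimedean case Lemma \ref{lem:cont} only gives joint continuity on $K$-invariants, so we argue separately. Since $\varphi$ is $K_r$-finite, $W_{\varphi_\lambda}$ is right invariant under a fixed open $K$, uniformly in $\lambda$. Projecting $f$ onto its $K$-invariant part on the appropriate side does not change $F$, so the restricted continuity statement suffices. An alternative is to expand $f$ over bi-$K$-types and use that, for fixed $K$, $I(\sigma_\lambda)(f^\vee)\varphi$ lies in the finite-dimensional space $I(\sigma)^{K}$.

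\textbf{Step 3 (decay, the main obstacle).} In the non-Archimedean case the relevant parameter space $i\mathfrak{a}_{M,F}^*$ is compact, so the bound is trivial there; the work is Archimedean. Let $f$ be right $K_r$-finite. Then $I(\sigma_\lambda)(f^\vee)\varphi$ lies in the finite sum $V_{\mathfrak F}$ of the $K_r$-types of $I(\sigma)$ occurring in $f$, which is finite dimensional because $I(\sigma)$ is admissible. Choose a basis $\{\varphi_j\}$ of $V_{\mathfrak F}$ and write
\[
I(\sigma_\lambda)(f^\vee)\varphi=\sum_j c_j(\lambda)\varphi_j, \qquad c_j(\lambda)=\big(I(\sigma_\lambda)(f^\vee)\varphi,\varphi_j\big).
\]
Then $F(\lambda,f)=\sum_j c_j(\lambda)W_{\varphi_{j,\lambda}}$. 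Theorem \ref{thm:Whitt:analy}(b) bounds each $W_{\varphi_{j,\lambda}}$ by $(1+|\lambda|)^{d'}$ in every seminorm, so it remains to show $c_j(\lambda)\ll_N(1+|\lambda|)^{-N}$.

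This is where the real content lies, and I would get it from Theorem \ref{thm:HC}. Since $f\in\mathcal{S}\subset\mathcal{C}(\GL_r(F))$, the section $\pi\mapsto\pi(f^\vee)$ lies in $\mathcal{C}(\mathrm{Tp},\mathcal{HS})$, and so $\|I(\sigma_\lambda)(f^\vee)\|(1+\|\sigma_\lambda\|)^N$ is bounded. Because $\|\sigma_\lambda\|\asymp 1+|\lambda|$ for the infinitesimal character, this gives rapid decay of $c_j$.

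The point requiring care is that the constants must be uniform and must depend only on $\varphi$, $N$, and $f$. For this I would use the finiteness of the $K$-types to reduce to finitely many matrix coefficients.
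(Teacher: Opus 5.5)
Your argument is correct. For the continuity assertion it is the paper's argument: Lemma~\ref{lem:cont} combined with Theorem~\ref{thm:Whitt:analy}, plus the tensor-product and non-Archimedean bookkeeping you add.

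For the decay bound \eqref{cont:bound} you take a genuinely different route.

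\textbf{The paper's route.} It never estimates matrix coefficients. It picks $Z$ in the center of $U(\mathfrak{gl}_r)$ and uses
$$\chi_{I(\sigma_\lambda)}(Z)\,I(\sigma_\lambda)(f^\vee)W_{\varphi_\lambda}=I(\sigma_\lambda)(f^\vee*Z)W_{\varphi_\lambda}.$$
It then bounds the right side polynomially in $\lambda$, with an exponent independent of $Z$, via Theorem~\ref{thm:Whitt:analy}. Finally it divides by $\chi_{I(\sigma_\lambda)}(Z)$, which the Harish-Chandra isomorphism lets one make $\gg(1+|\lambda|)^N$.

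\textbf{Your route.} You expand $I(\sigma_\lambda)(f^\vee)\varphi$ in a fixed basis of the finite-dimensional space $V_{\mathfrak F}$. You then read off rapid decay of the coefficients $c_j(\lambda)$ from $\mathrm{HP}_G(f^\vee)\in\mathcal{C}(\mathrm{Tp}(G),\mathcal{HS})$, that is, Theorem~\ref{thm:HC} with $D=1$, together with $1+\norm{\sigma_\lambda}\gg_\sigma 1+|\lambda|$.

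\textbf{Comparison.} Your version is shorter and gives decay of the operators $I(\sigma_\lambda)(f^\vee)$ themselves. However, it draws on the forward half of Harish-Chandra's matrical Plancherel theorem, whose proof is essentially the central-character argument the paper runs directly. The paper's version is more self-contained, needing only the Harish-Chandra isomorphism and the polynomial bounds already in hand. Right $K_r$-finiteness enters both proofs at the same point: it confines the relevant vectors to $V_{\mathfrak F}$. In the paper this makes the exponent independent of $Z$; in your proof it makes the expansion finite.

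\textbf{Two small slips.}
\begin{enumerate}
\item Theorem~\ref{thm:HC} applies to every $f\in\mathcal{C}(\GL_r(F))$, so there is no need to assume $f\in\mathcal{S}$.
\item The convolution in Step 2 should be against $f^\vee$, not $f$. This is harmless, since $f\mapsto f^\vee$ is an automorphism of $\mathcal{C}(\GL_r(F))$.
\end{enumerate}
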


\begin{proof}
The first assertion follows from Lemma \ref{lem:cont} and Theorem \ref{thm:Whitt:analy}. Thus to complete the proof it suffices to prove the bound \eqref{cont:bound}. It is trivial for non-Archimedean $F$ since $i\mathfrak{a}_{M,F}^*$  is compact. Suppose $F$ is Archimedean. We claim for any $N\ge 0$, $\nu_{-d}(I(\sigma_\lambda)(f^{\vee})W_{\varphi_{\lambda}})\ll_{\varphi,f,N} (1+|\lambda|)^{-N}.$  This combined with Theorem \ref{thm:Whitt:analy} implies \eqref{cont:bound}.

Let $Z$ be an element in the center of $U(\mathfrak{gl}_r).$ Let $\chi_{I(\sigma_\lambda)}$ be the infinitesimal character of $I(\sigma_\lambda).$  Then  we have
$$
\chi_{I(\sigma_\lambda)}(Z)I(\sigma_\lambda)(f^\vee)W_{\varphi_{\lambda}}= I(\sigma_\lambda)(f^\vee)\mathcal{R}(Z)W_{\varphi_{\lambda}}= I(\sigma_\lambda)(f^\vee*Z)W_{\varphi_{\lambda}}.
$$
By Theorem \ref{thm:Whitt:analy} $\nu_{-d}(I(\sigma_\lambda)(f^\vee*Z)W_{\varphi_\lambda}) \ll_{\varphi,f,Z}(1+|\lambda|)^{d'}$ where $d'$ is independent of $\varphi,f,Z$ (this is where we use the assumption that $f$ is right $K_r$-finite). Thus it suffices to show for any given $N\ge 0,$ we can choose $Z$ such that $\chi_{I(\sigma_\lambda)}(Z)\gg_{\sigma} (1+|\lambda|)^N$. This follows from the Harish-Chandra isomorphism.
\end{proof}

Arguing as in the previous proof, one obtains the following corollary:
\begin{cor} \label{cor:unif} For any constant 
coefficient differential operator $D$ on $i\mathfrak{a}_M^*$ and $d \gg_{D,\varphi} 1,$ there is a continuous map
   \begin{align*}
   i\mathfrak{a}_M^\ast \times \mathcal{C}(\GL_r(F)) &\lto \mathcal{C}_d(U_r(F) \backslash \GL_r(F),\psi)\\
   (\lambda,f) &\longmapsto I(\sigma_{\lambda})(f)DW_{\varphi_\lambda}.
   \end{align*}
Moreover, for any continuous seminorm $\nu$ on $\mathcal{C}_d(U_r(F) \backslash \GL_r(F),\overline{\psi} \otimes \psi)$ and any $N \geq 0$
$$
\nu(I(\sigma_{\lambda})(f)DW_{\varphi_\lambda}) \ll_{\varphi,N,f}(1+|\lambda|)^{-N}.
$$ \qed
\end{cor}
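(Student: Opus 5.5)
The plan is to follow the proof of Proposition \ref{prop:unif}, replacing the pair $\overline{W}_{\varphi_\lambda}\otimes W_{\varphi_\lambda}$ by the single function $DW_{\varphi_\lambda}$. The claim splits into continuity and decay, and I treat them in that order.

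\textbf{Continuity.} Theorem \ref{thm:Whitt:analy} gives, for $d \gg_{D,\varphi} 1$, a continuous map $\lambda \mapsto DW_{\varphi_\lambda}$ from $i\mathfrak{a}_M^*$ into $\mathcal{C}_d(U_r(F)\backslash \GL_r(F),\psi)$. One small point: $I(\sigma_\lambda)(f)$ acting on a Whittaker function is right convolution by $f$ (up to the harmless change $f \leftrightarrow f^\vee$ already used in Proposition \ref{prop:unif}). Moreover, $D$ commutes with this convolution, because the action is on the $g$-variable while $D$ acts in $\lambda$. So the map factors as
\begin{align*}
(\lambda,f) \longmapsto (DW_{\varphi_\lambda},f) \longmapsto I(\sigma_\lambda)(f)DW_{\varphi_\lambda}.
\end{align*}
Here the second arrow is the convolution map of Lemma \ref{lem:cont}.
\begin{itemize}
\item In the Archimedean case that map is jointly continuous, so the composite is continuous.
\item In the non-Archimedean case, $\varphi$ is fixed by some compact open $K$. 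Hence so is every $DW_{\varphi_\lambda}$, since the $K$-invariance of $W_{\varphi_\lambda}$ is uniform in $\lambda$ and survives differentiation in $\lambda$.
\end{itemize}
In the non-Archimedean case continuity therefore reduces to the restricted statement of Lemma \ref{lem:cont} on the $K$-fixed, respectively $K\times K$-fixed, subspaces. One then passes to the inductive limit over compact open subgroups, as for LF-spaces.

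\textbf{Decay.} In the non-Archimedean case $i\mathfrak{a}_{M,F}^*$ is compact, and the dependence on $\lambda$ factors through it. The bound $\nu(I(\sigma_\lambda)(f)DW_{\varphi_\lambda}) \ll 1$ therefore follows from continuity.

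In the Archimedean case I use the center $\mathcal{Z}$ of $U(\mathfrak{gl}_r)$. For $Z \in \mathcal{Z}$ one has $\mathcal{R}(Z)W_{\varphi_\lambda} = \chi_{I(\sigma_\lambda)}(Z)W_{\varphi_\lambda}$. Apply $D$ and use the Leibniz rule. Since $\lambda\mapsto\chi_{I(\sigma_\lambda)}(Z)$ is a polynomial, this gives
\begin{align*}
\chi_{I(\sigma_\lambda)}(Z)\,DW_{\varphi_\lambda}=\mathcal{R}(Z)DW_{\varphi_\lambda}-\sum_{D'<D}c_{D'}(\lambda)\,D'W_{\varphi_\lambda}.
\end{align*}
Here the $c_{D'}$ are derivatives of $\chi_{I(\sigma_\lambda)}(Z)$, hence polynomials of strictly lower degree. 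Now apply $I(\sigma_\lambda)(f)$ and move $Z$ onto $f$, writing $I(\sigma_\lambda)(f)\mathcal{R}(Z) = I(\sigma_\lambda)(f * Z)$ in the appropriate sense. Then bound every term polynomially in $\lambda$ using Theorem \ref{thm:Whitt:analy}(b) together with Lemma \ref{lem:cont}; this requires a bound with exponent $d'$ independent of $Z$.

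Choose $Z$ with $|\chi_{I(\sigma_\lambda)}(Z)| \gg (1+|\lambda|)^{N'}$, which the Harish-Chandra isomorphism allows for $\sigma$ fixed. Induction on the order of $D$ then handles the lower-order terms, and dividing by $\chi_{I(\sigma_\lambda)}(Z)$ gives the decay $(1+|\lambda|)^{-N}$.

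\textbf{Main obstacle.} I expect the hard step to be obtaining the polynomial bound on $\nu(I(\sigma_\lambda)(f*Z)D'W_{\varphi_\lambda})$ with an exponent $d'$ uniform in $Z$. Proposition \ref{prop:unif} used right $K_r$-finiteness of $f$ at exactly this point. Without it, I would first try to absorb the $K_r$-types by also inserting a Casimir of $K_r$ acting on $f$. Failing that, I would restrict to $K_r$-finite $f$, which is the setting actually needed later.
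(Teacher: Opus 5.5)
Your strategy is the paper's, whose proof simply says to argue as in Proposition \ref{prop:unif}, and your Leibniz-plus-induction handling of the $\lambda$-derivatives $D$ is a fine way to carry it out. The gap is the step you call the main obstacle: you leave it unresolved. Your fallback of restricting to right $K_r$-finite $f$ would prove a strictly weaker statement than the corollary. It is also not the setting needed later. In the proof of Lemma \ref{lem:is:cont:for:HP2}, the corollary is applied to the compactly supported smooth factor of a factorization $\mathcal{R}_u(\cdot)(\cdot)$ of an arbitrary element of $\mathcal{C}(X_\ell^\circ(F),\mathcal{H})$. That factor has no reason to be $K_{r,r-2}$-finite.

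The obstacle disappears once the quantifiers are chosen in the right order. After $Z$ is moved onto $f$, it never touches the Whittaker function. Lemma \ref{lem:cont} is jointly continuous in the Archimedean case. So for the given $\nu$ there are continuous seminorms $\nu_1$ on $\mathcal{C}_d(U_r(F)\backslash \GL_r(F),\psi)$ and $p$ on $\mathcal{C}(\GL_r(F))$, depending only on $\nu$, with $\nu(\mathcal{R}(h)W)\le p(h)\,\nu_1(W)$. Since $\nu_1$ is dominated by finitely many $\nu_{u,-d}$, Theorem \ref{thm:Whitt:analy}(b) gives
\[
\nu\left(\mathcal{R}(f*Z)D'W_{\varphi_\lambda}\right)\le p(f*Z)\,\nu_1(D'W_{\varphi_\lambda})\ll_{f,Z}(1+|\lambda|)^{d'},
\]
where $d'$ depends only on $\nu$, $D$ and $\varphi$. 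The element $Z$ enters only through the constant $p(f*Z)$. This constant is finite because $\mathcal{C}(\GL_r(F))$ is stable under $U(\mathfrak{gl}_r)$ acting on either side.

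There is an even simpler route. One has $\mathcal{R}(u)\mathcal{R}(h)W=\mathcal{R}(h_u)W$, where $h_u$ is a derivative of $h$ from the left. The proof of Lemma \ref{lem:cont} only uses the zeroth-order seminorm $\nu_{-d}(W)$. Hence only the case $u=1$ of Theorem \ref{thm:Whitt:analy}(b) is needed.

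So, given $\nu$ and $N$, first obtain $d'$. Then choose central $Z$ with $|\chi_{I(\sigma_\lambda)}(Z)|\gg(1+|\lambda|)^{N+d'}$ on $i\mathfrak{a}_M^*$. Your induction on the order of $D$ then finishes the argument for every $f\in\mathcal{C}(\GL_r(F))$. Neither $K_r$-finiteness nor a $K_r$-Casimir is needed.
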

\quash{

For the purposes of the following results, for $g \in \GL_r(F)$ write $g=nak$ with $(n,a,k) \in U_r(F) \times T_r(F) \times K_r.$  
\begin{thm}   \label{thm:uniform:bound}
Let $M,N>0$. There exists $d>0$ and a continuous seminorm $\nu_{M,N,d}$ on $\mathcal{C}(\GL_r(F))$ such that one has
\begin{align*}
   &|\overline{W}_{\varphi_{\lambda}}(n_1a_1k_1)I(\sigma_\lambda)(f^\vee)W_{\varphi_{\lambda}}(n_2a_2k_2)| \ll_{\sigma,\varphi} \frac{\nu_{M,N,d}(f) \delta_{B_r}^{1/2}(a_1a_2)\sigma^d(a_1)\sigma^d(a_2)}{(1+|\lambda|)^M\prod_{i=1}^{r-1}(1+|\alpha_i(a_1)|)^{N}(1+|\alpha_i(a_2)|)^{N}}
\end{align*}
for all $\lambda \in i\mathfrak{a}_M^*.$
\end{thm}
\begin{proof}
The space $\mathcal{C}(\GL_r(F))$ is an LF-space (even a Fr\'echet space in the Archimedean case).  Thus the uniform boundness principle is valid for it \cite[\S 33]{Treves}.  It suffices to establish the estimate with $\nu_{M,N,d}(f)$ replaced by an unspecified constant.

In view of Lemma \ref{lem:C:gauge} and using notation as in that lemma, Proposition \ref{prop:unif} implies that there is a $d>0$ such that
\begin{align*}
&|\overline{W}_{\varphi_{\lambda}}(n_1a_1k_1)I(\sigma_\lambda)(f^\vee)W_{\varphi_{\lambda}}(n_2a_2k_2)|\\
& \leq \nu_{N,d}(\overline{W}_{\varphi_{\lambda}}I(\sigma_\lambda)(f)W_{\varphi_{\lambda}}) \frac{\delta_{B_r}^{1/2}(a_1a_2)\sigma(a_1)^d\sigma(a_2)^d}{\prod_{i=1}^{r-1}(1+|\alpha_i(a_1)|)^{N}(1+|\alpha_i(a_2)|)^{N}}\\
&\ll_{M,N,d} \frac{\delta_{B_r}^{1/2}(a_1a_2)\sigma(a_1)^d\sigma(a_2)^d}{(1+|\lambda|)^M\prod_{i=1}^{r-1}(1+|\alpha_i(a_1)|)^{N}(1+|\alpha_i(a_2)|)^{N}}.
\end{align*}
\end{proof}}

\section{The Harish-Chandra space of $(X_\ell(F),\mathcal{H})$}\label{sec:HC-X_ell}

\subsection{Some spherical varieties} Recall that we
defined $X_r^\circ:=N \backslash \GL_r$ in \eqref{XP}.  As in \eqref{Mvee} we denote by $\mathcal{M}^\circ$ the space of rank $1$ matrices in $M_{r-2,2}.$ 
There is a morphism
\begin{align*} 
  \mathrm{pr}: \mathcal{M}^{\circ} \lto \mathbb{P}^1
\end{align*}
that assigns to $(m_1,m_2) \in \mathcal{M}^{\circ}(R)$ the  morphism 
\begin{align*}
R^2 &\lto Rm_1+Rm_2 \\
   (v_1,v_2) &\longmapsto v_1m_1+v_2m_2.
\end{align*}
The image is a projective rank $1$-submodule of $R^{r-2}$ by 
\cite[Lemma 16.17]{Gortz_Wedhorn}.
Over $F,$ we can describe this morphism more concretely as follows: given $(m_1,m_2) \in \mathcal{M}^{\circ}(F)$ there is an $x \in F^{r-2}$ and $(c_1,c_2) \in F^2-\{(0,0)\}$ such that $(m_1,m_2)=(c_1x,c_2x).$  Then $\mathrm{pr}(m_1,m_2)$ is the line spanned by $(c_1,c_2).$  The fiber over $\ell \in \mathbb{P}^1(F)$ is denoted $\mathcal{M}_{\ell}^\circ,$ and we denote by \index{$\mathcal{M}_{\ell}$}
\begin{align}\label{Mell}
\mathcal{M}_{\ell} \cong \GG_a^{r-2}
\end{align}
the closure of $\mathcal{M}_{\ell}^\circ$ in $\mathcal{M}.$  
Let
\begin{align} \label{Plperp}
X^\circ:=X_r^\circ \times \mathcal{M}^\circ \lto \mathbb{P}^1
\end{align}
be the map given by projection to the second factor followed by $\mathrm{pr}.$  For $\ell \in \mathbb{P}^1(F)$ we denote by $X_{\ell}^\circ=X_r^\circ \times \mathcal{M}_{\ell}^\circ$ the fiber over $\ell$.

Fix $\beta \in F^\times.$  For $h\in \GL_2(F)$, let \index{$h^{\iota}$}
\begin{align} \label{iota}
    h^{\iota}:=h^{\iota_{\beta}}:=\begin{psmatrix} 1 & \\& \beta \end{psmatrix}h^{-t}\begin{psmatrix} 1 & \\& \beta \end{psmatrix}^{-1}.
\end{align}
The motivation for this definition comes from \cite{GGHL}.

For $\ell \in F^2-\{0\},$  
choose $h_\ell \in \mathrm{GL}_2(F)$ such that \index{$h_{\ell}$}
\begin{align} \label{hl} 
(1,0)h_{\ell}^\iota=\ell .
\end{align}
Let \index{$m_{\ell}$}
\begin{align}
m_{\ell}:=(e_{r-2},0)h_\ell^{\iota}.
\end{align}\quash{
We note that while $h_\ell$ is only unique up to left multiplication by $\begin{psmatrix}
    1 & \\
     & -1
\end{psmatrix}$, $m_{\ell}$ only depends on $\ell$.}

As in the introduction, we have an action \index{$\mathcal{R}$}
\begin{align*}
 \mathcal{R}:   X^\circ(R) \times \GL_{r,r-2,2}(R) &\lto X^\circ(R)\\
    ((x,m),(g,g',h)) &\longmapsto \left( \begin{psmatrix} g' & \\ & h \end{psmatrix}^{-1}xg,g'^tmh^{\iota}\right).
\end{align*}
Let 
\begin{align*}
    (\GL_2)_{\ell}(R):=\left\{h \in \GL_2(R): m_\ell h^{\iota}=m_\ell\right\}.
\end{align*}
Then the action  $\mathcal{R}$ on $X$ induces an action (still denoted $\mathcal{R}$) of $\GL_{r,r-2}\times (\GL_2)_{\ell}$ on $X_{\ell}^\circ.$
We let \index{$\langle\,,\,\rangle$}
\begin{align*}\begin{split}
\langle\,,\,\rangle:M_{r-2,2}(R) \times M_{r-2,2}(R) &\lto R\\
((u_1,u_2),(u_1',u_2')) &\longmapsto u_1^tu_1'+\beta u_2^tu_2'. \end{split}
\end{align*}
Thus 
\begin{align} \label{M:pair:equiv}
\langle g'^tm_1h^{\iota},m_2\rangle=\langle m_1,g'm_2h^{-1} \rangle
\end{align}
for $(g',h,m_1,m_2) \in \GL_{r-2}(R) \times \GL_2(R) \times M_{r-2,2}(R)^2.$

\subsection{A family of affine $\Psi$-bundles}\label{ssec:fs}

There is a unique Hermitian line bundle $\mathcal{H}:=\mathcal{H}_{\overline{\psi}}$ over $X^\circ(F)$ whose space of smooth sections $C^\infty(X^\circ(F),\mathcal{H})$ is
$$
\left\{f \in C^\infty(\GL_r(F) \times \mathcal{M}^{\circ}(F)): f\left(\begin{psmatrix} I_{r-2} & z\\ & I_2 \end{psmatrix}g,m\right)=\overline{\psi}\left( \langle m,z \rangle \right)f(g,m)\right\}.
$$
This is the space of functions underlying an affine $\Psi$-bundle in the sense of \cite{GGHL}.

When $F$ is non-Archimedean, we define \index{$\mathcal{S}(X^\circ(F),\mathcal{H})$}
$\mathcal{S}(X^\circ(F),\mathcal{H}):=C_c^\infty(X^\circ(F),\mathcal{H}).$ 
For Archimedean $F$, we let $\mathcal{S}(X^\circ(F),\mathcal{H})$ be the subspace of $C^\infty(X^\circ(F),\mathcal{H})$ consisting of functions $f$ such that
\begin{align*}
\left|\mathcal{R}(D)f(x)\right| \ll_{N,D}    \norm{x}_{X}^{-N}
\end{align*}
for all $N\in \RR$ and $D \in U(\mathfrak{gl}_{r,r-2,2}).$  This definition is modeled on one in \cite[\S 2.4]{BP:GLn}.  If $Z \subset \mathcal{M}^\circ$ is any smooth closed subscheme, we continue to write $\mathcal{H}$ for the pullback of $\mathcal{H}$ along $X^\circ_r \times Z \to X^\circ_r \times \mathcal{M}^\circ.$
We let
$$
\mathcal{S}(X^{\circ}_r(F) \times Z(F),\mathcal{H})<C^\infty(X^\circ_r(F) \times Z(F),\mathcal{H})
$$
be the image of $\mathcal{S}(X^\circ(F),\mathcal{H})$ under the restriction map.

By the Iwasawa decomposition, any $x\in X_\ell^\circ(F)$  may be written as
\begin{align}\label{coord}
    x=\left(\begin{psmatrix}k'^{-1}\begin{psmatrix}t_{r-2}g_1 & t_{r-2}y_1&\\ & t_{r-2} \end{psmatrix}& \\ & h_\ell^{-1}\begin{psmatrix} 1 & y_2\\ & t_{r}^{-1}\end{psmatrix} \end{psmatrix}zk,k'^tam_\ell \right),
\end{align}
where
\begin{align*} 
(z,g_1,y_1,t_{r-2},t_r,y_2,k,k',a) \in  F^\times \times \GL_{r-3}(F) \times F^{r-3} \times F^\times \times F^\times \times F \times K_r \times K_{r-2} \times F^\times.
\end{align*} 
Using the coordinates \eqref{coord}, we define \index{$\Xi_{X_\ell^\circ}$}
\begin{align}  \label{XiXP} \begin{split}
\Xi_{X_\ell^\circ}:X^\circ_\ell(F)  &\lto \RR_{>0}\\  x&\longmapsto \frac{|t_{r-2}|^{r-2}|t_r|^{(r-2)/2}|\det g_1|^{5/4}\Xi^{1/2}_{\GL_{r-3}}(g_1)\Xi_{\GL_{r-2}}^{1/2}\begin{psmatrix}
        g_1 &y_1 \\
          & 1
    \end{psmatrix}}{|a|^{(r-2)/2}\max(1,|y_2|)^{1/2}}\end{split}
\end{align}
and \index{$\sigma_{X_\ell^\circ}$}
\begin{align} \label{sigX} \begin{split}
\sigma_{X_\ell^\circ}(x):&=\sigma\Big(z,\begin{psmatrix}t_{r-2}g_1 & t_{r-2}y_1\\ & t_{r-2} \end{psmatrix},\begin{psmatrix} 1 & y_2 \\& t_{r}^{-1}\end{psmatrix},a\Big).
\end{split}
\end{align} 
Note that both functions are well-defined.

We define $L^2(X_{\ell}^\circ(F),\mathcal{H})$ using the norm
\begin{align*} 
\norm{f}_2^2:=\int_{N(F) \backslash \GL_r(F) \times \mathcal{P}_{r-2}(F) \backslash \GL_{r-2}(F)}|f|^2(g,g'^tm_{\ell}) |\det g'|d\dot{g}d\dot{g}'.
\end{align*}
Here $d\dot{g}'$ is defined as in \cite[\S 2.5]{BP:GLn}.  Under the usual $\GL_{r-2}(F)$-equivariant isomorphism $\mathcal{P}_{r-2}(F) \backslash \GL_{r-2}(F)\tilde{\to} F^{r-2}-\{0\}$ the measure $|\det g'|d\dot{g}'$ corresponds to the restriction of a Haar measure on $F^{r-2}.$

\begin{lem} \label{lem:Xi:bound}
There is a $d>0$ such that $\Xi_{X_\ell^\circ}\sigma_{X_\ell^\circ}^{-d} \in L^2(X_\ell^\circ(F),\mathcal{H}).$
\end{lem}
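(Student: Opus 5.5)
Write $\Xi$ and $\sigma$ for $\Xi_{X_\ell^\circ}$ and $\sigma_{X_\ell^\circ}$. The plan is to write $\norm{\Xi\sigma^{-d}}_2^2$ as an explicit iterated integral in the coordinates \eqref{coord}, and then bound each factor by results already available: Proposition \ref{prop:HCfunction}\ref{convergence} for $\GL_{r-3}$, and Lemmas \ref{lem:Levi} and \ref{lem:parab} for the mirabolic-type factor in $\GL_{r-2}$. Note first that $\Xi$ is $K_r$- and $K_{r-2}$-invariant, and independent of $z$ and of the central part, which is why it is well defined.

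Step 1 is to compute the measure. The space is $N(F)\backslash \GL_r(F)\times \mathcal{P}_{r-2}(F)\backslash\GL_{r-2}(F)$, and on the second factor $|\det g'|\,d\dot g'$ is Haar measure on $F^{r-2}-\{0\}$. The orbit map $g'\mapsto g'^tm_\ell$ identifies this factor with $\mathcal{M}_\ell^\circ(F)\cong F^{r-2}-\{0\}$. Polar coordinates $k'^tam_\ell$ turn this into $|a|^{r-2}d^\times a\,dk'$.

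For the first factor I use the Levi decomposition of the parabolic $P$, namely $\GL_r=N\cdot(\GL_{r-2}\times\GL_2)\cdot$, together with the Iwasawa decomposition. This writes $N(F)\backslash\GL_r(F)$ as $\GL_{r-2}(F)\times\GL_2(F)$ modulo nothing, times $K_r$, with modulus factor $\delta_P^{-1}$. Then:
\begin{itemize}
\item On $\GL_2$, the coordinates $h_\ell^{-1}\begin{psmatrix}1&y_2\\&t_r^{-1}\end{psmatrix}$ times the central element $z$ give a measure $|t_r|^{\pm1}\,dy_2\,d^\times t_r\,d^\times z$.
\item On $\GL_{r-2}$, the coordinates $t_{r-2}\begin{psmatrix}g_1&y_1\\&1\end{psmatrix}$ give $|\det g_1|^{-1}\,dg_1\,dy_1\,d^\times t_{r-2}$ times powers of $|t_{r-2}|$.
\end{itemize}
I would track the exact exponents of $|t_{r-2}|$, $|t_r|$, $|a|$ and $|\det g_1|$ coming from $\delta_P$, the torus parts, and the Jacobians. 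The expectation is that they cancel against the squares of the corresponding powers in \eqref{XiXP}, apart from the following three points.
\begin{itemize}
\item The section condition does not matter, since $|f|^2$ is $N$-invariant.
\item The variables $t_{r-2}$, $t_r$, $a$, $z$ each carry only a logarithmic weight after cancellation. These one-dimensional integrals converge against $\sigma^{-d}$, because by \eqref{prod:log} $\sigma$ dominates a positive power of each $\sigma(t_{r-2})$, $\sigma(t_r)$, $\sigma(a)$, $\sigma(z)$.
\item The $y_2$ integral becomes $\int_F \max(1,|y_2|)^{-1}\sigma(y_2)^{-d}\,dy_2$, which is finite for $d$ large in both the Archimedean and non-Archimedean cases.
\end{itemize}
This bookkeeping of exponents is the main obstacle. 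The exponents in \eqref{XiXP} are clearly tuned so that the cancellation happens, and I would verify it carefully, in particular the $|\det g_1|^{5/4}$ against the Jacobian $|\det g_1|^{-1}$ and the $\delta$-factors.

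Step 2 handles the remaining $(g_1,y_1)$ integral,
$$\int_{\GL_{r-3}(F)\times F^{r-3}}|\det g_1|^{c}\,\Xi_{\GL_{r-3}}(g_1)\,\Xi_{\GL_{r-2}}\begin{psmatrix}g_1&y_1\\&1\end{psmatrix}\sigma^{-d}\,dy_1\,dg_1 .$$
I would apply Cauchy--Schwarz to split this into two pieces. The first piece is $\int\Xi_{\GL_{r-3}}(g_1)^2\sigma(g_1)^{-d}\,dg_1$, which is finite by Proposition \ref{prop:HCfunction}\ref{convergence}. The second piece is $\int\Xi_{\GL_{r-2}}(p)^2\sigma^{-d}$ over the mirabolic $U\backslash\mathcal{P}_{r-2}$ with its right Haar measure. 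I would bound this by applying Proposition \ref{prop:HCfunction}\ref{descent} to the parabolic of type $(r-3,1)$, integrating over $y_1$ first, and combining this with Lemma \ref{lem:Levi}, which gives $\Xi_{\GL_{r-2}}\begin{psmatrix}g_1&\\&1\end{psmatrix}\asymp |\det g_1|^{-1/2}|\det t'|\,\Xi_{\GL_{r-3}}(g_1)$ up to log factors. The leftover power $|\det t'|$ times the $|\det g_1|$-powers should then be controlled by the $\delta_{B}$-description in Proposition \ref{prop:HCfunction}\ref{delta:asymp} via the Cartan decomposition.

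If Cauchy--Schwarz loses too much, the fallback is to integrate $y_1$ directly using the upper bound in Lemma \ref{lem:parab} together with the translation invariance noted in its proof. The final step is to choose $d$ large enough that every logarithmic factor is absorbed, using \eqref{prod:log} to compare $\sigma$ with the product of the log norms of the individual coordinates.
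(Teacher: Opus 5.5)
Your Step 1 is the reduction the paper makes, and the bookkeeping you defer does close up. $\delta_P^{-1}$ gives $|t_{r-2}|^{-2(r-2)}|t_r|^{-(r-2)}|\det g_1|^{-2}$. Left Haar measure on $\mathcal{P}_{r-2}$ gives $|\det g_1|^{-1}dg_1\,dy_1$. The Borel of $\GL_2$ carries $d^\times z\,dy_2\,d^\times t_r$ with no power of $|t_r|$, and the polar coordinates give $|a|^{r-2}d^\times a$. Against $\Xi_{X_\ell^\circ}^2$ everything cancels except $|\det g_1|^{5/2-3}$. So your exponent $c$ is $-1/2$, and the other one-variable integrals converge as you say.

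The gap is in Step 2: the Cauchy--Schwarz split does not exist in the form you describe.
\begin{itemize}
\item On $\GL_{r-3}(F)\times F^{r-3}$, a factor containing only $\Xi_{\GL_{r-3}}(g_1)$ still carries the $y_1$-integral. Since $\sigma$ grows only logarithmically, $\int_{F^{r-3}}\sigma\begin{psmatrix}g_1&y_1\\&1\end{psmatrix}^{-d}dy_1=\infty$, so your ``first piece'' is actually infinite.
\item If you integrate out $y_1$ first and apply Cauchy--Schwarz in $g_1$ alone, the second factor is $\int\bigl(|\det g_1|^{-1/2}\int\Xi_{\GL_{r-2}}\sigma^{-d}\,dy_1\bigr)^2dg_1$. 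You cannot replace this by $\int\Xi_{\GL_{r-2}}^2$ over the mirabolic, because the $y_1$-space has infinite measure.
\item Your fallback fails too: the upper bound of Lemma \ref{lem:parab} gives no decay in $y_1$ at all.
\end{itemize}

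The missing step is to apply descent to the whole integrand, with no Cauchy--Schwarz. Substitute $y_1\mapsto g_1y_1$. The mirabolic element becomes $\begin{psmatrix}g_1&\\&1\end{psmatrix}\begin{psmatrix}I_{r-3}&y_1\\&1\end{psmatrix}$. The Jacobian turns $|\det g_1|^{-1/2}$ into $|\det g_1|^{1/2}=\delta_P^{1/2}\begin{psmatrix}g_1&\\&1\end{psmatrix}$ for the $(r-3,1)$ parabolic. Now apply Proposition \ref{prop:HCfunction}\ref{descent}, using $\Xi_M\begin{psmatrix}g_1&\\&1\end{psmatrix}=\Xi_{\GL_{r-3}}(g_1)$. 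This bounds the $(g_1,y_1)$-integral by $\int_{\GL_{r-3}(F)}\Xi_{\GL_{r-3}}(g_1)^2\sigma(g_1)^{-d'}dg_1$, which is finite by Proposition \ref{prop:HCfunction}\ref{convergence}. This is the paper's argument; Lemma \ref{lem:Levi} and the $|\det t'|$ bookkeeping are not needed.
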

\begin{proof}
By the Iwasawa decomposition it suffices to show that for $d$ large
\begin{align*}
&\int_{\GL_{r-3}(F)\times F^{r-3}} |\det g_1|^{-1/2}\Xi_{\GL_{r-3}}(g_1)\Xi_{\GL_{r-2}}\begin{psmatrix}
    g_1 & y_1 \\
    & 1
\end{psmatrix}\sigma\begin{psmatrix} g_1 & y_1\\ & 1 \end{psmatrix}^{-d} dg_1 dy_1\\
&\times \int_{(F^\times)^4\times F} \sigma(z,t_{r-2},a)^{-d}\sigma\begin{psmatrix} 1 & y_2 \\
& t_{r}^{-1}\end{psmatrix}^{-d}\max(1,|y_2|)^{-1} d^\times z d^\times t_{r-2} d^\times a d^\times t_r dy_2<\infty.
\end{align*}
The lower integral converges for $d$ large. By Proposition \ref{prop:HCfunction}\ref{descent} and a change of variables $y_1\mapsto g_1y_1$, for any $d'>0,$ there is a $d>0$ such that the integral over $y_1$ is convergent, and it is dominated by
\begin{align*}
&\int_{\GL_{r-3}(F)} \Xi_{\GL_{r-3}}^2(g_1)\sigma(g_1)^{-d'}dg_1.
\end{align*}
This is finite for $d'$ large by  Proposition \ref{prop:HCfunction}\ref{convergence}. 
\end{proof}

For $f \in C^\infty(X^\circ_\ell(F),\mathcal{H})$ and $d\in \RR,$ let \index{$\mu_d$}
\begin{align*}\begin{split}
    \mu_d(f):&=\mathrm{sup}_{x \in X^\circ(F)}|f(x)|\Xi_{X_\ell^\circ}(x)^{-1}\sigma_{X_\ell^\circ}^d(x).\\
   \quash{\mu_d'(f):&=\mathrm{sup}_{x \in X^\circ(F)}|f(x)|\Xi_{X_\ell^\circ}'(x)^{-1}\sigma_{X_\ell^\circ}'(x)}\end{split}
\end{align*} 
When $F$ is Archimedean, let
\begin{align*} 
\mathcal{C}_{d}(X_\ell^\circ(F),\mathcal{H})=\left\{ f \in C^\infty(X_{\ell}^\circ(F),\mathcal{H}):\mu_{-d}(\mathcal{R}(u)f)<\infty \textrm{ for all }u \in U(\mathfrak{gl}_{r,r-2} \oplus \mathrm{Lie}\, (\GL_{2})_{\ell})\right\}.
\end{align*}
When $F$ is non-Archimedean, let
\begin{align*}
\mathcal{C}_d(X_\ell^\circ(F),\mathcal{H})=\bigcup_{K}\left\{ f \in C^\infty(X_{\ell}^\circ(F),\mathcal{H})^K:\mu_{-d}(f)<\infty\right\},
\end{align*}
where the union is over compact open subgroups $K < \GL_{r,r-2}(F) \times (\GL_2)_\ell(F).$
We then set \index{$\mathcal{C}(X_\ell^\circ(F),\mathcal{H})$}
\begin{align*} 
\mathcal{C}(X_\ell^\circ(F),\mathcal{H}):=\bigcap_{d}\mathcal{C}_d(X_\ell^\circ(F),\mathcal{H}).
\end{align*}
This is an LF-space, and in fact a Fr\'echet space if $F$ is Archimedean.

The following is a corollary of Lemma \ref{lem:Xi:bound}:

\begin{cor}\label{lem:L2}
We have a continuous inclusion $\mathcal{C}( X_\ell^\circ(F),\mathcal{H}) \hookrightarrow L^2(X_\ell^\circ(F),\mathcal{H}).$ \qed
\end{cor}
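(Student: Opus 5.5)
The inclusion follows by bounding pointwise through the seminorm $\mu_{-d}$ and then invoking Lemma \ref{lem:Xi:bound}. Fix the exponent $d>0$ supplied by Lemma \ref{lem:Xi:bound}, so that $\Xi_{X_\ell^\circ}\sigma_{X_\ell^\circ}^{-d}\in L^2(X_\ell^\circ(F),\mathcal{H})$; more precisely, $x\mapsto \Xi_{X_\ell^\circ}(x)\sigma_{X_\ell^\circ}(x)^{-d}$ is square integrable for the measure defining $\norm{\cdot}_2$. For $f\in\mathcal{C}(X_\ell^\circ(F),\mathcal{H})$ we have $f\in\mathcal{C}_{-d}(X_\ell^\circ(F),\mathcal{H})$ by definition of the intersection, hence $\mu_d(f)<\infty$. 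Unwinding the definition of $\mu_d$ gives the pointwise bound
\begin{align*}
|f(x)|\leq \mu_d(f)\,\Xi_{X_\ell^\circ}(x)\sigma_{X_\ell^\circ}(x)^{-d}
\end{align*}
for all $x\in X_\ell^\circ(F)$.

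One point to check is that $|f|$ is a genuine function on the quotient $N(F)\backslash\GL_r(F)\times\mathcal{P}_{r-2}(F)\backslash \GL_{r-2}(F)$ appearing in the definition of $\norm{\cdot}_2$. This holds because $f$ transforms under $N(F)$ by a unitary character, so $|f|$ is $N(F)$-invariant. Likewise $\Xi_{X_\ell^\circ}$ and $\sigma_{X_\ell^\circ}$ are well defined on $X_\ell^\circ(F)$, as noted after \eqref{sigX}.

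Squaring the pointwise bound and integrating gives
\begin{align*}
\norm{f}_2\leq \mu_d(f)\,\norm{\Xi_{X_\ell^\circ}\sigma_{X_\ell^\circ}^{-d}}_2 .
\end{align*}
Therefore $f\in L^2(X_\ell^\circ(F),\mathcal{H})$, and the inclusion is dominated by the single seminorm $\mu_d$. Injectivity is immediate, since elements of $\mathcal{C}$ are continuous functions and $\norm{f}_2=0$ forces $f=0$ on a set of full measure, hence everywhere.

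For continuity, the seminorm $\mu_d$, which corresponds to $u=1$ in the Archimedean case, is continuous on each $\mathcal{C}_{-d}$-type constituent. In the Archimedean case $\mathcal{C}$ is Fréchet with topology defined by these seminorms, so the inequality above gives continuity directly. In the non-Archimedean case $\mathcal{C}$ is an LF-space, the inductive limit over compact open $K$ of the spaces of $K$-invariant vectors, each of which is Fréchet. A linear map out of an inductive limit is continuous once its restriction to each step is continuous, and on each step the same inequality applies. The only nontrivial input is Lemma \ref{lem:Xi:bound} itself; beyond that, the only care needed is this bookkeeping of topologies on the LF-space.
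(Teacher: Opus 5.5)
Your proposal is correct and is exactly the argument the paper intends: the paper records this as an immediate corollary of Lemma \ref{lem:Xi:bound}, via the pointwise bound $|f|\le \mu_d(f)\,\Xi_{X_\ell^\circ}\sigma_{X_\ell^\circ}^{-d}$, which gives $\norm{f}_2\le \mu_d(f)\norm{\Xi_{X_\ell^\circ}\sigma_{X_\ell^\circ}^{-d}}_2$. Your remarks on the $N(F)$-invariance of $|f|$ and on continuity out of the LF-space in the non-Archimedean case are correct and supply the routine details the paper leaves implicit.
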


We define \index{$\mathcal{R}_u$}
\begin{align} \label{Ru} \begin{split}
\mathcal{R}_u:\GL_{r,r-2,2}(F) \times L^2(X_{\ell h^{\iota}}^{\circ}(F),\mathcal{H}) &\lto L^2(X_{\ell}^\circ(F),\mathcal{H})\\
((g,g',h),f) &\longmapsto \frac{|\det g'|^{3/2}}{|\det h|^{(r-2)/2}}\mathcal{R}(g,g',h)f.
\end{split}
\end{align}
One checks that $\mathcal{R}_u(g,g',h)$ sends $\mathcal{C}(X_{\ell h^{\iota}}^\circ(F),\mathcal{H})$ to $\mathcal{C}(X^\circ_{\ell}(F),\mathcal{H}).$  It is not an isometry in general.  However,  $\mathcal{R}_u|_{\GL_{r,r-2}(F) \times (\GL_{2})_\ell(F)}$ is a unitary representation of $\GL_{r,r-2}(F) \times (\GL_{2})_\ell(F).$  

\quash{
\begin{lem} \label{lem:equiv:nA}
Assume $F$ is non-Archimedean and that $f \in C^\infty(X_\ell^{\circ}(F),\mathcal{H}).$  Then $f \in \mathcal{C}(X_\ell^\circ(F),\mathcal{H})$ if and only if $f$ is fixed under $\mathcal{R}_{u}(K)$ for some compact open subgroup $K <\GL_{r,r-2}(F) \times (\GL_2)_{\ell}(F)$ and $f\sigma_{X_\ell^\circ}^d \in L^2(X_\ell^{\circ}(F),\mathcal{H})$ for all $d>0.$  In fact, $\mu_{-d}(f) \ll \norm{f\nu^d}_2$ for all $d>0.$
\end{lem}
\begin{proof}
By the comments before the lemma it is enough to treat the special case $h_\ell=I_2.$

The ``only if" direction is a consequence of Lemma \ref{lem:Xi:bound}. 
Thus assume $f$ is fixed under $\mathcal{R}_u(K)$ for some compact open subgroup $K<\GL_{r,r-2}(F) \times (\GL_2)_{\ell}(F)$ and $f\sigma_{X_\ell^\circ}^d \in L^2(X_\ell^\circ(F),\mathcal{H}).$  By the comments before the lemma it is enough to treat the special case $h_\ell=I_2.$
Let $x \in X_{\ell}^{\circ}(F),$ 
and let $U_x \subset X_\ell^{\circ}(F)$ be an open neighborhood of $x$ on which $f\sigma_{X_\ell^\circ}^d$ is constant.  
For $x \in X_{\ell}^{\circ}(F)$ we have
\begin{align*}
|f\sigma_{X_\ell^\circ}^{d}|^2(x)\mathrm{meas}_{|\det g'|dg'dg}(U_x) \leq \norm{f\sigma_{X_\ell^\circ}^d}^2.
\end{align*}
Thus it suffices to show that we can choose $U_x$ so that 
\begin{align} \label{Ux:bound}
\mathrm{meas}(U_x) \gg \Xi_{X_\ell^\circ}(x)^2.
\end{align}

We have 
$$
x=\left(\begin{psmatrix}\begin{psmatrix}zg_1 & zy_1&\\ & z \end{psmatrix}& \\ & \begin{psmatrix} b_2b_1 & b_2y_2\\ & b_2\end{psmatrix} \end{psmatrix}k,k'^tam_\ell \right)
$$
with coordinates as in \eqref{coord}.  Without loss of generality we may assume that $K=K_1 \times K_2 \times K_3$ where $K_1:=I_r+\varpi^\ell\mathfrak{gl}_r(\OO_F),$ $K_2=I_{r-2}+\varpi^\ell \mathfrak{gl}_{r-2}(\OO_F)$ and $K_3=I_2 +\varpi^\ell \mathfrak{gl}_2(\OO_F)$ for some $\ell>0.$ 
By assumption, we can take $U_x$ to be
\begin{align} \label{Ux}
\left\{\left(\begin{psmatrix}k_2^{-1}\begin{psmatrix}zg_1 & zy_1&\\ & z \end{psmatrix}& \\ & k_3^{-1}h_\ell^{-1}\begin{psmatrix} b_2b_1 & b_2y_2\\ & b_2\end{psmatrix}h_\ell \end{psmatrix}k_1k,k'^tk_2^tam_\ell k_3^{\iota} \right):(k_1,k_2,k_3) \in K_1 \times K_2 \times K_3\right\}.
\end{align}
Here we have used normality to move $k_1$ and $k_2$ past $k$ and $k',$ respectively.  Let
$$
K^\ell_r:=\mathcal{P}_r(\OO_F) \cap I+\varpi^\ell\mathfrak{gl}_r(\OO_F)
$$
Then the set \eqref{Ux} has measure
\begin{align} \label{meas}
  \frac{|z^{r-2}\det g_1|^2}{|b_2^2b_1|^{r-2}|a|^{r-2}} |K^\ell\begin{psmatrix} g_1 & y_1 \\ & 1 \end{psmatrix}K^\ell/K^\ell_{r-2}(\OO_F)||K_{2}^\ell\begin{psmatrix} b_1 & y_2 \\ & 1 \end{psmatrix}K_{2}^\ell /K_2^\ell|
\end{align}
We have a bijection
\begin{align} \label{quots} \begin{split}
K_{r-2}^\ell/\begin{psmatrix} g_1 & y_1 \\ & 1 \end{psmatrix}K_{r-2}^\ell\begin{psmatrix} g_1 & y_1 \\ & 1 \end{psmatrix}^{-1} \cap K_{r-2}^{\ell} &\lto K_{r-2}^\ell\begin{psmatrix} g_1 & y_1 \\ & 1 \end{psmatrix}K_{r-2}^\ell/K_{r-2}^\ell \\
k &\longmapsto k\begin{psmatrix} g_1 & y_1 \\ & 1 \end{psmatrix}K_{r-2}^\ell. \end{split}
\end{align}
The quotient on the left is
\begin{align}
K_{r-2}^{\ell}/\{ \begin{psmatrix} g_1h g_1^{-1} & (g_1hg_1^{-1}-I_{r-2})y_1+g_1x \\ & 1 \end{psmatrix} \in K_{r-2}^\ell: \begin{psmatrix} h & x \\ & 1 \end{psmatrix} \in K_{r-2}^\ell\}
\end{align}
Writing $h=I_{r-2}+\varpi^\ell z_1$ with $z_1 \in \mathfrak{gl}_{r-2}(\OO_F)$ we see that this set is in bijection with the quotient of $ \mathfrak{gl}_{r-3}(\OO_F) \times \OO_F^{r-3}$ by
\begin{align} \begin{split} \label{quot}
&\{(z,x) \in \mathfrak{gl}_{r-3}(\OO_F) \times \OO_F^{r-3}: (g_1zg_1^{-1},g_1zg_1^{-1}y_1 +g_1 x )\in \mathfrak{gl}_{r-3}(\OO_F) \times \OO_F^{r-3}\}\\
&=\{(z,x) \in \mathfrak{gl}_{r-3}(\OO_F) \times \OO_F^{r-3}: (g_1zg_1^{-1},zg_1^{-1}y_1 + x )\in \mathfrak{gl}_{r-3}(\OO_F) \times g_1^{-1}\OO_F^{r-3}\} \end{split}
\end{align}
In the case
$\max(\norm{y_1},\norm{g}_1) \leq 1$ the quotient is in bijection with 
\begin{align} \label{second:order} \begin{split}
&|\mathfrak{gl}_{r-2}(\OO_F)/\mathfrak{gl}_{r-2}(\OO_F) \cap g_1\mathfrak{gl}_{r-2}(\OO_F)g_1^{-1}| \end{split}
\end{align}
This quantity is invariant if we replace $g_1$ by $kg_1k'$ for any $k,k' \in \GL_{r-2}(\OO_F).$  Hence we may assume that $g_1 \in T_{r-2}(F)^+.$  Under this assumption, \eqref{second:order} is equal to $\delta_{B_{r-2}}(g_1)^{-1} \gg \Xi_{\GL_r}(g_1)^{-1}.$  Here we have used Proposition \ref{prop:HCfunction}\ref{delta:asymp}.

\textcolor{red}{Can we get an estimate like }

If $\norm{g_1^{-1}y}_{\mathrm{op}} \geq \max(1,\norm{g_1^{-1}}_{\mathrm{op}})$

If $\norm{g^{-1}}_{\mathrm{op}} \leq 1$ but $\norm{g^{-1}y}_{\mathrm{op}}>0$ then \eqref{first:order} is zero unless $z_1 g_1^{-1}y_1 \in \OO_F^{r-3},$ in which case it is $|\det g_1|^{-1}.$

\textcolor{red}{Based on the rest of the paper it seems like this last inequality is true, but is it?}

On the other hand 

Thus \eqref{order} is at least 
$$
\Xi_{\GL_{r-2}}(g_1)^{-1} \max(1,\norm{g^{-1}y_1})^{r-3}.
$$
Applying this in the $2 \times 2$ case, we see that \eqref{meas} is bounded below by 
\begin{align}
\frac{|z^{r-2}\det g_1|}{|b_2^2b_1|^{r-2}|a|^{r-2}} \Xi_{\GL_{r-2}}(g_1)^{-1}\max(1,\norm{g^{-1}y_1}^{r-3})\max(1,|b_1^{-1}y_2|)=\Xi_{X_\ell^\circ}(x)^2.
\end{align}
This implies \eqref{Ux:bound} and hence the lemma.
\end{proof}

\begin{lem} \label{lem:equiv:A}
Assume $F$ is Archimedean and that $f \in C^\infty(X_\ell^\circ(F),\mathcal{H}).$  Then  $f \in \mathcal{C}(X_\ell^\circ(F),\mathcal{H})$ if and only if for all $X \in U(\mathfrak{gl}_{r,r-2} \times (\mathfrak{gl}_2)_{\ell})$ one has $X.f \sigma_{X_\ell^\circ}^d \in L^2(X_\ell^{\circ}(F),\mathcal{H})$ for all $d>0.$  In fact, $\mu_{-d}(f) \ll \norm{f\nu^d}_2$ for all $d>0.$
\end{lem}
\begin{proof} 
\cite[II.9.4]{Varadarajan}
    \textcolor{red}{In the group case this is in Varadarajan.}
\end{proof}}

For $f \in \mathcal{S}(\GL_{r,r-2}(F) )$ and $x=(g,g'^tm_\ell) \in X_\ell^\circ(F)$  define \index{$I_f$}
\begin{align} \label{If} \begin{split}
I_f(x)&:=I_{f,\overline{\psi}}(x)\\
&:=\int_{\mathcal{P}_{r-2}(F) \times M_{r-2,2}(F)}\overline{\psi}(\langle m_{\ell} ,z \rangle)f^\vee\left(g^{-1}\begin{psmatrix} g' & \\ & I_2 \end{psmatrix}^{-1}\begin{psmatrix}p & z\\ & I_2 \end{psmatrix},g'^{-1}p\right) \frac{d_\ell p dz}{|\det p g'^3|^{1/2}}
.\end{split}
\end{align} 
For $h\in \mathcal{P}_{r-2}(F),$ by changing variables $(p,z)\mapsto (hp,hz)$ we see that
\begin{align*}
    &I_f\left(g,(hg')^tm_{\ell}\right)\\
    &=\int_{\mathcal{P}_{r-2}(F) \times M_{r-2,2}(F)}\overline{\psi}(\langle m_{\ell} ,z \rangle)f^\vee\left(g^{-1}\begin{psmatrix} g' & \\ & I_2 \end{psmatrix}^{-1}\begin{psmatrix}h^{-1}p & h^{-1}z\\ & I_2 \end{psmatrix},(hg')^{-1}p\right) \frac{d_\ell p dz}{|\det p|^{1/2}|\det hg'|^{3/2}}\\
    &=I_f\left(g,g'^tm_{\ell}\right).
\end{align*}
We therefore obtain a well-defined surjection
 \begin{align} \label{I1}
     I_{(\cdot)}:\mathcal{S}(\GL_{r,r-2}(F)) &\lto \mathcal{S}(X^\circ_\ell(F) ,\mathcal{H})
 \end{align}
that is continuous in the Archimedean case.  
\quash{ \textcolor{red}{This isn't correct as the change of variables doesn't make sense.}
Suppose $\ell'=a\ell$ for some $a\in F^\times.$ Then by changing variables $(p,z)\mapsto (a^{-1}p,a^{-1}z)$, we have
\begin{align*}
        &I_f(g,g'^tm_{\ell'})\\
        &=\int_{\mathcal{P}_{r-2}(F) \times M_{r-2,2}(F)}\overline{\psi}(\langle m_{\ell'} ,z \rangle)f^\vee\left(g^{-1}\begin{psmatrix} g'  & \\ & I_2 \end{psmatrix}^{-1}\begin{psmatrix}p & z\\ & I_2 \end{psmatrix},g'^{-1}p\right) \frac{d_\ell p dz}{|\det p|^\frac{1}{2}}|\det g'|^{-\frac{3}{2}}\\
         &=\int_{\mathcal{P}_{r-2}(F) \times M_{r-2,2}(F)}\overline{\psi}(\langle m_{\ell} ,z \rangle)f^\vee\left(g^{-1}\begin{psmatrix} ag' & \\ & I_2 \end{psmatrix}^{-1}\begin{psmatrix}p & z\\ & I_2 \end{psmatrix},(ag')^{-1}p\right) \frac{d_\ell p dz}{|\det p|^\frac{1}{2}}|\det ag'|^{-\frac{3}{2}}\\
        &=I_f(g,(ag')^tm_{\ell}).
\end{align*}
Thus the map $I_f$ only depends on the image of $\ell$ in $\mathbb{P}^1(F)$. }

\begin{lem}\label{lem:extension}
Let $f\in \mathcal{C}(\GL_{r,r-2}(F)).$ Then the integral defining $I_f$ is absolutely convergent and $I_f\in \mathcal{C}(X_\ell^\circ(F),\mathcal{H}).$  The map 
$I_{(\cdot)}:\mathcal{C}(\GL_{r,r-2}(F)) \to\mathcal{C}(X_\ell^\circ(F),\mathcal{H})$ is continuous.
\end{lem}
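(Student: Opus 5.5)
We reduce the statement to a pointwise majorization of $|f|$ by products of Harish-Chandra $\Xi$-functions, and then integrate that majorant over the variables of \eqref{If}, using Proposition \ref{prop:HCfunction}.

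\textbf{Step 1: reduce to a model point and a pointwise bound.} Write $x$ in the coordinates \eqref{coord}. Equivariance of $I_f$ under $\mathcal{R}_u$ of $\GL_{r,r-2}(F)\times(\GL_2)_\ell(F)$ lets us absorb $k$ and $k'$. Concretely, $I_{\mathcal{R}(k,k')f}$ is obtained from $I_f$ by translation, and the seminorms on $\mathcal{C}(\GL_{r,r-2}(F))$ are invariant under $K$-translation. In the Archimedean case derivatives commute with the construction: $\mathcal{R}(u)I_f=I_{f*u'}$ for suitable $u'$, where the $(\GL_2)_\ell$-part acts through the $\GL_r$-variable. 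So it suffices to show the following. For each $d>0$ there are $d'>0$ and a constant such that, whenever $|f(g,g')|\le \Xi_{\GL_r}(g)\Xi_{\GL_{r-2}}(g')\sigma(g)^{-d'}\sigma(g')^{-d'}$, the inequality
\[
|I_f(x)|\ll \Xi_{X_\ell^\circ}(x)\sigma_{X_\ell^\circ}(x)^{-d}
\]
holds. The same argument with $|f|$ in place of $f$ gives absolute convergence, and linearity in $p_{-d'}(f)$ then gives continuity.

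\textbf{Step 2: decompose and estimate.} Put $g'=\begin{psmatrix} t_{r-2}g_1 & t_{r-2}y_1\\ & t_{r-2}\end{psmatrix}$. Use the Iwasawa decomposition of $\mathcal{P}_{r-2}$ to write $p=\begin{psmatrix} q & v\\ & 1\end{psmatrix}$, and split $z\in M_{r-2,2}(F)$ into its two columns. The argument of $f^\vee$ is then an explicit element of $\GL_r(F)\times\GL_{r-2}(F)$. The character $\overline\psi(\langle m_\ell,z\rangle)$ only involves the entries of $z$ paired with $m_\ell=(e_{r-2},0)h_\ell^\iota$, so we bound it trivially by $1$. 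We then integrate in stages.
\begin{enumerate}
\item The $z$-integral is an integral of $\Xi_{\GL_r}$ over the unipotent radical $N(F)$ of $P$. By Proposition \ref{prop:HCfunction}\ref{descent} it is bounded by $\delta_P^{-1/2}$ times $\Xi_{\GL_{r-2}}\Xi_{\GL_2}$ of the Levi components. The $\GL_2$ component is $h_\ell^{-1}\begin{psmatrix}1&y_2\\&t_r^{-1}\end{psmatrix}z$. Via Lemma \ref{lem:parab} with $r=2$ and Proposition \ref{prop:HCfunction}\ref{delta:asymp}, this produces the factors $|t_r|^{(r-2)/2}$ and $\max(1,|y_2|)^{-1/2}$, up to powers of $\sigma$.
\item The remaining $p$-integral is a convolution over the mirabolic subgroup of two $\Xi_{\GL_{r-2}}$ factors, one evaluated at roughly $g'^{-1}p$ and one at $g'^{-1}p$ itself. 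We bound it using Proposition \ref{prop:HCfunction}\ref{doubling} together with Lemmas \ref{lem:Levi} and \ref{lem:parab}. This gives the factors $|\det g_1|^{5/4}\Xi_{\GL_{r-3}}^{1/2}(g_1)\Xi_{\GL_{r-2}}^{1/2}\begin{psmatrix}g_1&y_1\\&1\end{psmatrix}$ and the powers of $|t_{r-2}|$ and $|a|$ appearing in \eqref{XiXP}.
\item We track the $\sigma$-powers throughout using \eqref{sig:ineq}, \eqref{rev:sig:ineq} and \eqref{prod:log}.
\end{enumerate}

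\textbf{Main obstacle.} The hard part is item 2 of Step 2: proving that the mirabolic integral yields exactly the square-root-type factors $\Xi^{1/2}$ in \eqref{XiXP}, rather than something weaker. My first approach is Cauchy--Schwarz in $p$. This splits the integral into two pieces, each of the form $\int\Xi^2\sigma^{-d}$ over a mirabolic coset. Each piece is then controlled by Proposition \ref{prop:HCfunction}\ref{descent} and \ref{convergence}, after the change of variables $y_1\mapsto g_1y_1$ as in the proof of Lemma \ref{lem:Xi:bound}. The uniformity in $\ell$ comes from the compactness of $h_\ell$ for a fixed line $\ell$.
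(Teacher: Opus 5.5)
Your Step 1 reduction and item 1 agree with the paper: the $z$-integral is handled by $z\mapsto pz$ followed by Proposition~\ref{prop:HCfunction}\ref{descent}. The gap is in the step you flag yourself, the mirabolic integral: the Cauchy--Schwarz plan cannot work. In your notation $g'=t_{r-2}p_0$ with $p_0=\begin{psmatrix} g_1 & y_1\\ & 1\end{psmatrix}\in\mathcal{P}_{r-2}(F)$, and after item 1 you must bound
\[
\int_{\mathcal{P}_{r-2}(F)}\Xi_{\GL_{r-2}}(g'^{-1}p)\,\Xi_{\GL_{r-2}}(p)\,|\det p|^{1/2}\,\sigma(\cdots)^{-d}\,d_\ell p .
\]
Note that the second factor is $\Xi_{\GL_{r-2}}(a^{-1}p)=\Xi_{\GL_{r-2}}(p)$; it is not a function of $g'^{-1}p$. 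The measure $d_\ell p$ is invariant under $p\mapsto p_0p$. Hence, up to logarithmic weights, the Cauchy--Schwarz factor $\int\Xi_{\GL_{r-2}}(g'^{-1}p)^2|\det p|^{1/2}\sigma^{-d}\,d_\ell p$ is just $|\det g_1|^{1/2}$ times the other factor. Cauchy--Schwarz therefore gives at best $\ll|\det g_1|^{1/4}$. That reproduces the power $|\det g_1|^{5/4}$ in \eqref{XiXP}, but it loses the factor $\Xi_{\GL_{r-3}}(g_1)^{1/2}\Xi_{\GL_{r-2}}(p_0)^{1/2}$. This factor decays like a power: for $r\ge4$ and $g_1=I_{r-3}$ it behaves like $\|y_1\|^{-(r-3)/2}$. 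No logarithmic weight can make that up. Cauchy--Schwarz ignores the relative position of the two $\Xi$-factors, and that relative position is exactly what produces the decay.

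What is missing is a genuine convolution estimate, which the paper gets from the doubling identity in the following steps.
\begin{enumerate}
\item Extract the logarithms using $\sigma(a^{-1}p)\ge\sigma(a)$ and $\sigma((ag')^{-1}p)\,\sigma(a^{-1}p)\gg\sigma(g')$.
\item Dispose of $|\det p|^{1/2}$ by splitting according to whether $|\det p|\le1$ or $|\det p|>1$.
\item Enlarge the mirabolic integral to an integral over $\GL_{r-2}(F)$. This uses $\sigma(h)^{-d}\ll\int_{F^\times}\sigma(ch)^{-d}\,d^\times c$, the decomposition $\GL_{r-2}=Z\mathcal{P}_{r-2}K_{r-2}$, and bi-$K_{r-2}$-invariance of $\Xi$.
\item Average over $K_{r-2}$ and apply Proposition~\ref{prop:HCfunction}\ref{doubling} and \ref{convergence}, writing $\Xi=\Xi_{\GL_{r-2}}$:
\[
\int_{\GL_{r-2}(F)}\frac{\Xi(h)}{\sigma(h)^{d}}\int_{K_{r-2}}\Xi(g'^{\pm1}kh)\,dk\,dh=\mathrm{meas}(K_{r-2})\,\Xi(g')\int_{\GL_{r-2}(F)}\frac{\Xi(h)^2}{\sigma(h)^{d}}\,dh\ll\Xi_{\GL_{r-2}}(p_0).
\]
\end{enumerate}
You cite \ref{doubling} in item 2, but it has to be used in this form; it cannot be replaced by Cauchy--Schwarz.

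The square-root shape of \eqref{XiXP} is also not something the integral has to produce. One obtains the full $\Xi_{\GL_{r-2}}(p_0)$, and then
\[
\Xi_{\GL_{r-2}}(p_0)\ll|\det g_1|^{1/4}\,\Xi_{\GL_{r-3}}(g_1)^{1/2}\,\Xi_{\GL_{r-2}}(p_0)^{1/2}\,\sigma^{d}
\]
follows by bounding one square-root factor with the upper bounds in Lemmas~\ref{lem:parab} and \ref{lem:Levi}, using $|\det t''|\ge1$.

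Finally, for the $\GL_2$ factor in item 1, Lemma~\ref{lem:parab} gives no decay in $y_2$. In the range $|y_2|\ge2\max(|t_r|,|t_r|^{-1})$, the bound $\max(1,|y_2|)^{-1/2}$ needs the direct Cartan/singular-value computation carried out in the paper.
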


\begin{proof}
Write $x=\left(\begin{psmatrix}
    k'^{-1}g_1 & \\
     & g_2
\end{psmatrix}k, k'^tam_\ell\right).$ Then for any $d_1,d_2>0,$
\begin{align*}
    |a|^{3(r-2)/2}|I_f(x)|&\le \int_{\mathcal{P}_{r-2}(F) \times M_{r-2,2}(F)}|f^\vee|\left(k^{-1}\begin{psmatrix}
        (ag_1)^{-1} &\\
         & g_2^{-1}
    \end{psmatrix}\begin{psmatrix}p & z\\ & I_2 \end{psmatrix},k'^{-1}a^{-1}p\right) \frac{d_\ell p dz}{|\det p|^{1/2}}\\
    &\leq \nu_{d_1,d_2}(f) \int_{\mathcal{P}_{r-2}(F) \times M_{r-2,2}(F)}\Xi_{\GL_{r,r-2}}\left(\begin{psmatrix}
       (ag_1)^{-1} &\\
         & g_2^{-1}
    \end{psmatrix}\begin{psmatrix}p & z\\ & I_2 \end{psmatrix},p\right)\\
    &\hspace{1in} \times \sigma\left(\begin{psmatrix}
        (ag_1)^{-1} &\\
         & g_2^{-1}
    \end{psmatrix}\begin{psmatrix}p & z\\ & I_2 \end{psmatrix}\right)^{-d_1}\sigma(a^{-1}p)^{-d_2} \frac{d_\ell p dz}{|\det p|^{1/2}},
\end{align*}
where $\nu_{d_1,d_2}(f)$ is a continuous seminorm on $\mathcal{C}(\GL_{r,r-2}(F)).$
Changing variables $z\mapsto pz$ and applying Proposition \ref{prop:HCfunction}\ref{descent}, for any $d,d_1'>0$ we can find $d_1$ large so that the above is dominated by 
\begin{align}\label{eq:intP}
   \frac{\Xi_{\GL_{2}}(g_2)|\det g_1||a|^{r-2}}{\sigma(g_2)^d|\det g_2|^{(r-2)/2}}\int_{\mathcal{P}_{r-2}(F)}\frac{\Xi_{\GL_{r-2}}(g_1^{-1}p)\Xi_{\GL_{r-2}}(p)|\det p|^{1/2}}{\sigma
        ((ag_1)^{-1}p)^{d_1'}\sigma(a^{-1}p)^{d_2}}d_\ell p.
\end{align}
Observe that $\sigma(a^{-1}p)\ge \sigma(a)$ and 
\begin{align*}
    \sigma((ag_1)^{-1}p)\sigma(a^{-1}p)=\sigma(a^{-1}p)\sigma(p^{-1}ag_1 )\gg \sigma(g_1)
\end{align*} by \eqref{sig:ineq}.
Therefore, choosing $d_1'$ so that $d_1'':=d_1'-d>0$ and $d_2=2d,$ we have that the integral in
 \eqref{eq:intP} is dominated by $\frac{\Xi_{\GL_{2}}(g_2)|\det g_1||a|^{r-2}}{\sigma(g_2)^d|\det g_2|^{(r-2)/2}\sigma(g_1)^{d}\sigma(a)^{d}}$ times
\begin{align} \label{eq:Pbound}
    \nonumber&\int_{\mathcal{P}_{r-2}(F)}\frac{\Xi_{\GL_{r-2}}(g_1^{-1}p)\Xi_{\GL_{r-2}}(p)}{\sigma
        ((ag_1)^{-1}p)^{d_1''}}|\det p|^{1/2}d_\ell p\\
    \nonumber&=\left(\int_{|\det p|\le 1}+\int_{|\det p|> 1}\right)\frac{\Xi_{\GL_{r-2}}(g_1^{-1}p)\Xi_{\GL_{r-2}}(p)}{\sigma
        ((ag_1)^{-1}p)^{d_1''}}|\det p|^{1/2}d_\ell p\\
    \nonumber&\le \int_{|\det p|\le 1} \left(\frac{\Xi_{\GL_{r-2}}(g_1^{-1}p)}{\sigma
        ((ag_1)^{-1}p)^{d_1''}}+\frac{\Xi_{\GL_{r-2}}(g_1p)}{\sigma
        (ag_1p)^{d_1''}}\right)\Xi_{\GL_{r-2}}(p)|\det p|^{1/2}d_\ell p\\  
    &\le \int_{\mathcal{P}_{r-2}(F)}\left(\frac{\Xi_{\GL_{r-2}}(g_1^{-1}p)}{\sigma
        ((ag_1)^{-1}p)^{d_1''}}+\frac{\Xi_{\GL_{r-2}}(g_1p)}{\sigma
        (ag_1p)^{d_1''}}\right)\Xi_{\GL_{r-2}}(p)|\det p|^{1/2}d_\ell p.
\end{align}
Note that for any $h\in \GL_{r-2}(F)$
\begin{align*}
    \sigma^{-d_1''}(h)\ll \int_{F^\times} \sigma^{-d_1''}(ah) d^\times a.
\end{align*}
Therefore, \eqref{eq:Pbound} is dominated by
\begin{align*}
\sum_{j \in \{\pm 1\}}\int_{F^\times \times \mathcal{P}_{r-2}(F)}\frac{\Xi_{\GL_{r-2}}(ag_1^{j}p)\Xi_{\GL_{r-2}}(ap)}{\sigma(ag_1^{j}p)^{d_1''}}d^\times ad_\ell p
\ll &\sum_{j\in\{\pm 1\}}\int_{\GL_{r-2}(F)}\frac{\Xi_{\GL_{r-2}}(g_1^{j}g)\Xi_{\GL_{r-2}}(g)}{\sigma(g_1^{j}g)^{d_1''}} dg\\
=&\sum_{j\in\{\pm 1\}}\int_{\GL_{r-2}(F)}\frac{\Xi_{\GL_{r-2}}(g)\Xi_{\GL_{r-2}}(g_1^{j}g)}{\sigma(g)^{d_1''}} dg
\end{align*}
by the Iwasawa decomposition. For any $k \in K_{r-2}$ this is 
$$\sum_{j\in\{\pm 1\}}\int_{\GL_{r-2}(F)}\frac{\Xi_{\GL_{r-2}}(g)\Xi_{\GL_{r-2}}(g_1^{j}kg)}{\sigma(g)^{d_1''}} dg.
$$
Integrating over $K_{r-2}$ and applying  Proposition \ref{prop:HCfunction}\ref{convergence} and \ref{doubling} the above is
\begin{align*}
    &\frac{1}{\mathrm{meas}_{dk}(K_{r-2})}\sum_{j \in \{\pm 1\}}\int_{\GL_{r-2}(F)}\frac{\Xi_{\GL_{r-2}}(g)}{\sigma(g)^{d_1''}}\int_{K_{r-2}}\Xi_{\GL_{r-2}}(g_1^jkg)dkdg\\&=\frac{2}{\mathrm{meas}_{dk}(K_{r-2})}\Xi_{\GL_{r-2}}(g_1)\int_{\GL_{r-2}(F)}\frac{\Xi_{\GL_{r-2}}^2(g)}{\sigma(g)^{d_1''}}dg \ll \Xi_{\GL_{r-2}}(g_1)
\end{align*}
 for $d_1''$ sufficiently large.

In conclusion, for any $d>0$
\begin{align} \label{If:bound}
    I_{f}(x)\leq \frac{\nu_{d}(f) \Xi_{\GL_{r-2}}(g_1)\Xi_{\GL_2}(g_2)|\det g_1|}{(\sigma(g_1)\sigma(a)\sigma(g_2))^{d}|\det g_2|^{(r-2)/2}|a|^{(r-2)/2}},
\end{align}
where $\nu_{d}(F)$ is a continuous seminorm on $\mathcal{C}(\GL_{r,r-2}(F)).$
Since $\mathcal{C}(\GL_{r,r-2}(F))$ is a smooth $\GL_{r,r-2}(F)$-module, we deduce that $I_f \in C^\infty(X^\circ_{\ell}(F),\mathcal{H}).$

To show that $I_f\in \mathcal{C}(X_\ell^\circ(F),\mathcal{H}),$ by \eqref{If:bound} it suffices to show there is a $d>0$ such that
\begin{align*}
    \Xi_{\GL_{r-2}}\begin{psmatrix}
       g & y\\
        & 1
    \end{psmatrix}&\ll |\det g|^{1/4}\Xi_{\GL_{r-3}}^{1/2}(g)\Xi_{\GL_{r-2}}^{1/2}\begin{psmatrix}
       g & y\\
        & 1
    \end{psmatrix}\sigma\begin{psmatrix}
       g & y\\
        & 1
    \end{psmatrix}^d, \\
    \Xi_{\GL_2}\begin{psmatrix}
        1 & y_2\\
          & t_{r}^{-1}
    \end{psmatrix}&\ll \max(1,|y_2|)^{-1/2}\sigma\begin{psmatrix}
        1 & y_2\\
          & t_{r}^{-1}
    \end{psmatrix}^d.
\end{align*}
Both inequalities follow from Lemma \ref{lem:Levi} and Lemma \ref{lem:parab} except for the second statement when $|y_2|\ge 2\max(|t_r|,|t_r|^{-1}).$

Assume $|y_2|\geq 2\max(|t_r|,|t_r|^{-1}).$  When $F$ is non-Archimedean,
\begin{align*}
    \begin{psmatrix}
        1 & y_2\\
          & t_r^{-1}
    \end{psmatrix}\in K_{2}\begin{psmatrix}
         y_2^{-1}t_r^{-1}& \\
         & y_2
    \end{psmatrix}K_2.
\end{align*}
When $F$ is Archimedean let $\varsigma_1\le \varsigma_2$ be the singular values (viewed as elements in $F$) of $ \begin{psmatrix}
        1 & y_2\\
          & t_r^{-1}
    \end{psmatrix}$. Then direct computation gives $|\varsigma_2|\asymp |y_2|$ and $|\varsigma_1|\asymp |t_r|^{-1}|y_2|^{-1}$.
Therefore, in any case by Proposition \ref{prop:HCfunction}\ref{delta:asymp} 
\begin{align*}
    \Xi_{\GL_2}\begin{psmatrix}
        1 & y_2\\
          & t_{r}^{-1}
    \end{psmatrix}\ll |y_2|^{-1}|t_r|^{-1/2}\sigma\begin{psmatrix}
        1 & y_2\\
          & t_{r}^{-1}
    \end{psmatrix}^d\ll \max(1,|y_2|)^{-1/2}\sigma\begin{psmatrix}
        1 & y_2\\
          & t_{r}^{-1}
    \end{psmatrix}^d.
\end{align*}
This completes the proof.
\end{proof}
\quash{
Let $d'>0.$  By \eqref{If:bound} and the Iwasawa decomposition we have
\begin{align}
\norm{I_f(x)\sigma^{d'}_{X^\circ_{\ell}}}_2 \leq \nu_{2d'}(f)^2\int_{N(F) \backslash \GL_r(F) \times F^\times }\frac{\sigma_{X_\ell^\circ}^{d'}(\begin{psmatrix}g_1 & \\ & g_2 \end{psmatrix},a'm_\ell)\Xi^2(g_1)\Xi^2(g_2)}{(\sigma(g_1)\sigma(a')\sigma(g_1))^{2d}}dg_1dg_2d^\times a
\end{align}
By \eqref{lem:products} and Proposition \ref{prop:HCfunction}\ref{convergence} the integral converges for $d'$ sufficiently large.  Thus $\norm{I_f(x)\sigma^{d'}_{X^\circ_{\ell}}}_2 \ll \nu_{2d'}(f)^2.$  Since $\mathcal{C}(\GL_{r,r-2}(F))$ is a smooth $\GL_{r,r-2}(F)$-module we we can now apply Lemma \ref{lem:equiv:nA} and Lemma \ref{lem:equiv:A} to deduce that $I_f \in \mathcal{C}(X_\ell^{\circ}(F),\mathcal{H})$ and that the map $I_{(\cdot)}$ is continuous.}

\section{Local zeta integrals}

\label{sec:loc:zeta}

In this section we define and study various local zeta integrals.  The convergence and continuity assertions will be applied later in the paper. The proofs in this section are quite technical.  \textbf{We recommend that the reader skip them and return to them later if necessary.  }

\subsection{Preliminary bounds}

Let
\begin{align}  \label{mr}
    m_r:\GL_r(F)\lto T_r(F)
\end{align}
be defined as follows: If $F$ is Archimedean, let $m_r(g)$ be the unique element of $A_{T_r}$ such that $g \in U_r(F)m_r(g)K_r.$ Here $A_{T_r}$ is the subgroup of $T_r(F)$ whose diagonal entries are positive real numbers. If $F$ is non-Archimedean, choose a uniformizer $\varpi$ of $F$ and let $m_r(g)$ be the unique element of $\{\nu(\varpi): \nu \in X_*(T_r)\}$ such that 
$g \in U_r(F)m_r(g)K_r.$

Let $w_r:=\begin{psmatrix} & & 1 \\ & \textrm{\reflectbox{$\ddots$}} & \\ 1 & & \end{psmatrix} \in \GL_{r}(\ZZ)$. We observe that there is an  automorphism 
 \begin{align*} 
 \jmath :U_r \backslash \GL_r \lto U_r \backslash \GL_r
 \end{align*}
 given on points by $\jmath(g):=w_r g^{-t}w_r^{-1}.$
Let $\overline{U}_r$ be the unipotent radical of the Borel subgroup $\overline{B}_r$ of lower triangular matrices. 

\begin{lem} \label{lem:upper}
    For $\overline{u} \in \overline{U}_{r}(F)$ and $t=\begin{psmatrix}t' & \\ & I_{r-j} \end{psmatrix} \in T_r(F),$ one has $
        |\prod_{i=1}^jm_{r}(t\overline{u})_i| \ll |\det t|.$
\end{lem}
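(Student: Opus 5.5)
The plan is to express $\prod_{i=1}^j m_r(g)_i$ through minors of $g$, and then bound those minors for $g=t\overline{u}$. Throughout, write $g=t\overline{u}$ and suppose $g=u\,m_r(g)\,k$ is an Iwasawa decomposition. For $1\le j\le r$, the product $|\prod_{i=1}^j m_r(g)_i|$ should be read off from the bottom-left rows of $g$; the upper-left entries are the wrong ones to use.

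The first step is the identity. Consider the exterior power representation $\wedge^{j}F^r$ with its standard basis. The row vector $e_{r-j+1}\wedge\dots\wedge e_r$ spans a line that is stable under right multiplication by $B_r$ from the left, i.e. the last $j$ rows of $ug$ are combinations of the last $j$ rows of $g$ with determinant one. Hence $(e_{r-j+1}\wedge\dots\wedge e_r)\, g$ has norm $\norm{(e_{r-j+1}\wedge\dots\wedge e_r)\,m_r(g)k}=|\prod_{i=r-j+1}^{r} m_r(g)_i|$, using $K_r$-invariance of the norm. This gives the product of the \emph{last} $j$ entries. For the first $j$ entries, I will use $|\det g|=|\prod_{i=1}^r m_r(g)_i|$. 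That yields
$$
\Big|\prod_{i=1}^j m_r(g)_i\Big| = \frac{|\det g|}{\norm{(e_{j+1}\wedge\dots\wedge e_r)\,g}},
$$
where the denominator is the norm of the vector of maximal $(r-j)\times(r-j)$ minors of the last $r-j$ rows of $g$. This identity holds up to constants depending only on the choice of norm on $\wedge^{r-j}F^r$, and exactly in the box/Euclidean normalizations.

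The second step is to bound the denominator below by $1$. For $g=t\overline{u}$ with $t=\mathrm{diag}(t',I_{r-j})$, the last $r-j$ rows of $g$ are exactly the last $r-j$ rows of $\overline{u}$, since $t$ acts trivially there. Because $\overline{u}$ is lower unitriangular, the minor of these rows on columns $j+1,\dots,r$ is the determinant of a lower unitriangular block, namely $1$. Therefore $\norm{(e_{j+1}\wedge\dots\wedge e_r)\,g}\ge 1$ in the box norm and $\gg1$ in general. Since $|\det g|=|\det t|$, the inequality $|\prod_{i=1}^j m_r(t\overline{u})_i|\ll|\det t|$ follows. The Archimedean normalization of $m_r$ into $A_{T_r}$ changes nothing, because only absolute values enter.

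The main point needing care is the first step: checking that the last-rows minors are $U_r$-invariant on the left, and that right multiplication by $K_r$ preserves the norm on $\wedge^{r-j}F^r$. The latter holds because $K_r$ acts by isometries on exterior powers for the standard norms, using the operator or box norm with $K_r=\GL_r(\OO_F)$ or the orthogonal/unitary group. Once this is verified, the rest is immediate.
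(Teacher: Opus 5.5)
Your proof is correct. It differs from the paper's in one step: how the product of the \emph{first} $j$ Iwasawa coordinates is turned into a quantity read off from bottom rows.

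The paper conjugates $K_r$ so that it is stable under $\jmath(g)=w_rg^{-t}w_r^{-1}$. It then uses $\jmath(m_r(g))=m_r(\jmath(g))$ to rewrite $|\prod_{i=1}^j m_r(t\overline{u})_i|^{-1}$ as the product of the last $j$ coordinates of $m_r(\jmath(t)\jmath(\overline{u}))$, i.e.\ as the norm of the wedge of the last $j$ rows of $\jmath(t)\jmath(\overline{u})$. Since $\jmath(t)=\mathrm{diag}(I_{r-j},t_j^{-1},\dots,t_1^{-1})$ and $\jmath(\overline{u})$ is again lower unitriangular, the minor on the last $j$ columns is $(\det t)^{-1}$, which gives the bound.

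You instead use $|\det(t\overline{u})|=|\det t|=\prod_i|m_r(t\overline{u})_i|$, and bound the complementary product through the last $r-j$ rows, which $t$ does not touch. The two computations are equivalent by Jacobi's complementary-minor identity: the minors of $g^{-1}$ on its first $j$ columns are, up to sign, the $(r-j)\times(r-j)$ minors of the last $r-j$ rows of $g$ divided by $\det g$.

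Your route is slightly more economical. It needs no compatibility between $\jmath$ and $K_r$, only some $K_r$-invariant norm on $\wedge^{r-j}F^r$. Such a norm is comparable to the box norm for any admissible choice of $K_r$, so the constants depend only on $r$ and $K_r$. The paper's route keeps $t$ inside the wedged rows, so $|\det t|^{-1}$ appears directly rather than via $\det g$.

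Your sentence about the line being stable under $B_r$ ``from the left'' is garbled. The clause following it states exactly what is used, namely $(e_{r-j+1}\wedge\dots\wedge e_r)\,u=e_{r-j+1}\wedge\dots\wedge e_r$ for $u\in U_r(F)$.
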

\begin{proof}
Upon replacing $K_r$ by a conjugate if necessary, we assume without loss of generality that $\jmath$ preserves $K_r.$ 
It suffices to show that 
\begin{align} \label{STS}
\left|\prod_{i=1}^jm_{r}(t\overline{u})_i\right|^{-1}=\left|\prod_{i=1}^{j}\jmath(m_{r}(t\overline{u}))_{r-i+1}\right|
\end{align}
dominates $|\det t|^{-1}.$  Since
$$
\jmath(m_{r}(t\overline{u}))=m_{r}(\jmath(t\overline{u})))=m_r(\jmath(t)\jmath(\overline{u})),
$$ the right hand side of \eqref{STS} is asymptotic to the norm of the wedge product of rows $r-j+1$ to $r$ of $\jmath(t)\jmath(\overline{u})$ with respect to a $K_r$-invariant metric on $\wedge^{j}F^r.$  This norm is $\gg|\det t|^{-1}.$
\end{proof}

\begin{lem}\label{lem:decompositioncompare}  There is a constant $\gamma>0$ such that for any
    $f\in C(B_r(F)\backslash\GL_r(F))$ and $g_1\in \GL_r(F),$  
    \begin{align*}
\int_{K_r}f(kg_1)\Xi_{U_{r} \backslash \GL_r}(kg_1)dk
        =\gamma\int_{\overline{U}_{r}(F)} f(\bar{u}g_1)\Xi_{U_{r} \backslash \GL_r}(\bar{u}g_1)\Xi_{U_{r} \backslash \GL_r}(\bar{u})d\bar{u}.
    \end{align*}
\end{lem}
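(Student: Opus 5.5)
The identity to prove is the standard comparison between integration over $K_r$ and over $\overline{U}_r(F)$ on $B_r(F)\backslash \GL_r(F)$, with the factors of $\Xi_{U_r\backslash \GL_r}=\delta_{B_r}^{1/2}$ arranged so that both sides are integrals of sections of the density bundle. The idea is to reduce to the case $g_1=I_r$ and then use the fact that $B_r(F)\overline{U}_r(F)$ is open dense of full measure in $\GL_r(F)$.

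The key observation is that for $f\in C(B_r(F)\backslash \GL_r(F))$ the function $F(g):=f(g)\Xi_{U_r\backslash \GL_r}(g)^{2}$ satisfies $F(bg)=\delta_{B_r}(b)F(g)$, i.e. $F\in \mathrm{Ind}_{B_r}^{\GL_r}(\delta_{B_r})$ (unnormalized). On this space there is, up to scalar, a unique right $\GL_r(F)$-invariant functional. Two realizations of it are $F\mapsto \int_{K_r}F(k)\,dk$ and $F\mapsto \int_{\overline{U}_r(F)}F(\bar u)\,d\bar u$. The latter is invariant because $B_r\times \overline{U}_r\to \GL_r$, $(b,\bar u)\mapsto b\bar u$, is an open immersion with complement of measure zero, and Haar measure on $\GL_r(F)$ restricts to $d_\ell b\,d\bar u$ on the image. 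Hence there is a constant $\gamma>0$ with
\begin{align*}
\int_{K_r}F(k)\,dk=\gamma\int_{\overline{U}_r(F)}F(\bar u)\,d\bar u
\end{align*}
for all such $F$. This holds first for compactly supported continuous $F$ modulo $B_r$, and then for nonnegative $F$ by monotone convergence, which is all that is needed because $B_r(F)\backslash \GL_r(F)$ is compact.

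Now take $F(g):=f(gg_1)\Xi_{U_r\backslash \GL_r}(gg_1)\Xi_{U_r\backslash \GL_r}(g)$. This is again in $\mathrm{Ind}(\delta_{B_r})$, since each $\Xi$ factor contributes $\delta_{B_r}^{1/2}(b)$. Evaluating at $k$ gives $f(kg_1)\Xi_{U_r\backslash \GL_r}(kg_1)$, because $\Xi_{U_r\backslash \GL_r}(k)=1$. Evaluating at $\bar u$ gives exactly the integrand on the right-hand side of the lemma. Substituting these two evaluations into the identity above yields the statement. The constant $\gamma$ is independent of $f$ and $g_1$, and can be computed by testing with $F=\Xi_{U_r\backslash \GL_r}^2$; the resulting integral is Harish-Chandra's $c$-function integral $\int_{\overline{U}_r}\delta_{B_r}(\bar u)\,d\bar u$, which is finite and nonzero.

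The only delicate point is justifying invariance of the $\overline{U}_r$-integral, i.e. the Haar measure decomposition on the big cell. I would cite the standard formula $dg=d_\ell b\,d\bar u$ on $B_r\overline{U}_r$, together with the fact that $\mathrm{Ind}(\delta_{B_r})$ carries a unique invariant functional, and would not recompute it. Convergence of the $\overline{U}_r$-integral for general continuous $f$ then follows from the nonnegative case applied to $|f|$.
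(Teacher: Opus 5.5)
Your argument is correct and is essentially the paper's proof. The paper lifts $f$ to $\widetilde{f}\in C_c(U_r(F)\backslash \GL_r(F))$ with $\int_{T_r(F)}\widetilde{f}(tg)\,dt=f(g)$, and then computes $\int_{U_r(F)\backslash\GL_r(F)}\widetilde{f}(gg_1)\Xi_{U_r\backslash\GL_r}(gg_1)\Xi_{U_r\backslash\GL_r}(g)\,d\dot g$ once in Iwasawa coordinates and once on the big Bruhat cell; this is exactly your evaluation of the invariant functional on $\mathrm{Ind}_{B_r}^{\GL_r}(\delta_{B_r})$ at $g\mapsto f(gg_1)\Xi_{U_r\backslash\GL_r}(gg_1)\Xi_{U_r\backslash\GL_r}(g)$ via $K_r$ and via $\overline{U}_r(F)$ (and the appeal to uniqueness of that functional is unnecessary, since both realizations come directly from $dg=d_\ell b\,dk$ and $dg=d_\ell b\,d\bar u$).
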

\begin{proof}
Choose $\widetilde{f} \in C_c(U_r(F) \backslash \GL_r(F))$ such that $\int_{T_r(F)}\widetilde{f}(tg)dt=f(g).$  Then by the Iwasawa and Bruhat decompositions of $\GL_r(F),$ up to positive constants both integrals are equal to 
\begin{align*}
\int_{U_{r}(F)\backslash \GL_{r}(F)} \widetilde{f}(gg_1)\Xi_{U_{r} \backslash \GL_r}(gg_1)\Xi_{U_{r} \backslash \GL_r}(g) d\dot{g}.
\end{align*}
\end{proof}

\begin{lem} \label{lem:sig:bound:stuff}
For $(\kappa,h) \in F^\times \times \GL_r(F) $ with $|\kappa| \geq 1$ and $g=\begin{psmatrix} t' & \\ & t'' \end{psmatrix} \in T_{r}(F)^+$
 as in Lemma \ref{lem:Levi},
one has
\begin{align*}
    \sigma_{U_{r} \backslash \GL_{r}}(t'\kappa^{-1}h )\sigma_{U_{r} \backslash \GL_{r}}(t'\kappa^{-1} hw_rg^{-1}) \gg \frac{\sigma(g)\sigma(\kappa)}{\sigma(\det h)^{2}}.
\end{align*}
\end{lem}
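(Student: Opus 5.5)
The plan is to reduce everything to explicit, left-$U_r(F)$-invariant quantities that control $\sigma_{U_r\backslash\GL_r}$ from below. For $y\in\GL_r(F)$ and $1\le k\le r$ let $\phi_k(y)$ be the norm (with respect to a $K_r$-invariant metric on $\wedge^kF^r$) of the wedge of the bottom $k$ rows of $y$. As in the proof of Lemma \ref{lem:upper}, these are left $U_r(F)$-invariant and right $K_r$-invariant. Moreover $|\prod_{i>r-k}m_r(y)_i|\asymp\phi_k(y)$, so
\begin{align*}
\sigma_{U_r\backslash\GL_r}(y)\asymp 1+\max_k|\log\phi_k(y)|.
\end{align*}
Set $x:=t'\kappa^{-1}h$ and $x':=xw_rg^{-1}$, where, as the statement suggests, $t'$ is regarded as $\begin{psmatrix}t'&\\&I_{r-j}\end{psmatrix}$. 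Since $\sigma(g)\sigma(\kappa)\ll(\sigma(g)+\sigma(\kappa))^2$ and $\sigma_{U_r\backslash\GL_r}\ge 1$, it suffices to prove two additive lower bounds:
\begin{align*}
\sigma_{U_r\backslash\GL_r}(x)+\sigma_{U_r\backslash\GL_r}(x')\gg\frac{\sigma(\kappa)}{\sigma(\det h)}
\quad\text{and}\quad
\sigma_{U_r\backslash\GL_r}(x)+\sigma_{U_r\backslash\GL_r}(x')\gg\frac{\sigma(g)}{\sigma(\det h)}.
\end{align*}
Multiplying these and using $\sigma(\kappa)\sigma(g)\ll(\sigma(\kappa)+\sigma(g))^2\ll(\sigma(x)+\sigma(x'))^2\sigma(\det h)^2$ (with $\sigma$ meaning $\sigma_{U_r\backslash\GL_r}$ in the middle) is enough, since $(a+b)^2\ll ab$ fails in general but here both terms are $\geq 1$ and we can use $\sigma(x)\sigma(x')\gg \max(\sigma(x),\sigma(x'))\gg (\sigma(x)+\sigma(x'))/2$; squaring costs nothing after replacing one factor of $\sigma(\kappa)$ or $\sigma(g)$ by its square root via \eqref{prod:log}-type bookkeeping, and if necessary I will simply prove the stronger product bound directly by taking each factor separately.

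The $\kappa$ bound comes from determinants. We have
\begin{align*}
\phi_r(x)=|\det x|=|\det t'|\,|\kappa|^{-r}|\det h|,
\end{align*}
with $|\det t'|\le 1$ because $g\in T_r(F)^+$ and $t'$ collects the entries of absolute value $\le 1$. Together with $|\kappa|\ge1$ this gives $-\log\phi_r(x)\ge r\log|\kappa|-\log|\det h|$. Hence $\sigma_{U_r\backslash\GL_r}(x)\gg\sigma(\kappa)-\sigma(\det h)$, and dividing by $\sigma(\det h)\ge1$ handles the case where $\sigma(\kappa)$ is large relative to $\sigma(\det h)$; otherwise the claim is trivial.

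The $g$ bound uses a comparison of the functions $\phi_k$ at $x$ and at $x'$. By Cauchy--Binet, the bottom $k$ rows of $x'$ wedge to the bottom $k$ rows of $x$, wedged and then multiplied by $\wedge^k(w_rg^{-1})$. Here $w_r$ reverses coordinates and $g^{-1}$ is diagonal with entries $t_i^{-1}$. I would choose $k$ adapted to the split index $j$: the bottom $r-j$ rows of $x$ are $\kappa^{-1}$ times rows of $h$, untouched by $t'$, and their wedge has norm between $|\kappa|^{-(r-j)}\|h\|^{-C}$ and $|\kappa|^{-(r-j)}\|h\|^{C}$. Applying $w_rg^{-1}$ then weights the coordinates by the $t_i^{-1}$, so large $|t_i|$ with $i>j$ shrink, while small $|t_i|$, $i\le j$, enlarge. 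I would aim to show that either $\log\phi_{r-j}(x')$ or $\log\phi_j(x')$ differs from $\log\phi_{r-j}(x)$, resp.\ the analogous quantity, by at least $c\,\sigma(g)$ minus $O(\sigma(\kappa)+\sigma(h))$. This should follow by comparing with the extreme Plücker coordinate, which picks up $\prod_{i>j}|t_i|^{\pm1}$ or $\prod_{i\le j}|t_i|^{\pm1}$. Both products are $\asymp e^{\pm\sigma(g)}$ up to a power, because $g$ is dominant and the $|t_i|$ for $i\le j$, resp.\ $i>j$, are all $\le1$, resp.\ $>1$.

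The main obstacle is this last step: the error terms involve $\sigma(h)$, not merely $\sigma(\det h)$, and the statement only allows a loss of $\sigma(\det h)^2$. To address it I would first use right $K_r$-invariance and left $U_r$-invariance, together with the Iwasawa decomposition $h=u\,a\,k$, to reduce to $h$ lower-triangular times diagonal. The dependence on $h$ is then only through the Iwasawa torus part, whose contribution should cancel between $x$ and $x'$ except for a determinant factor. If this does not fully cancel, I would fall back on Lemma \ref{lem:upper}, which bounds exactly the relevant partial products of $m_r$ for $t\bar u$ by $|\det t|$, to control the remaining off-diagonal contributions.
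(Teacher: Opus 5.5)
There is a genuine gap, in two places. First, the reduction to two additive lower bounds is not valid. Suppose $\sigma_{U_r\backslash\GL_r}(x)+\sigma_{U_r\backslash\GL_r}(x')\gg \max(\sigma(\kappa),\sigma(g))/\sigma(\det h)$, and both factors are $\ge 1$. Then you only get $\sigma_{U_r\backslash\GL_r}(x)\sigma_{U_r\backslash\GL_r}(x')\gg \max(\sigma(\kappa),\sigma(g))/\sigma(\det h)$. That is essentially $(\sigma(g)\sigma(\kappa))^{1/2}/\sigma(\det h)$, not $\sigma(g)\sigma(\kappa)/\sigma(\det h)^2$. Additive information cannot rule out one factor carrying all the growth while the other stays bounded, and no bookkeeping recovers the lost square root. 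To get the product bound you must show that \emph{each} factor separately grows with $\kappa$. Second, your $g$-bound is left unresolved, and the obstacle you flag is real. The intermediate Pl\"ucker quantities $\phi_k$, $0<k<r$, of $x$ and $x'$ depend on all of $h$, so an argument routed through them cannot produce a loss of only $\sigma(\det h)^2$. The Iwasawa/Lemma \ref{lem:upper} fallback does not fix this: if $h=uak$, the $K_r$-part $k$ is sandwiched in $x'=t'\kappa^{-1}uak\,w_rg^{-1}$ and does not disappear.

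The missing observation is that the determinant, which you already use for $x$, also handles $x'$. Since $\det(w_rg^{-1})=\pm(\det t'\det t'')^{-1}$, one has $|\det x'|=|\kappa|^{-r}|\det h|\,|\det t''|^{-1}$. Here $|\det t''|\ge 1$ pushes in the same direction as $|\kappa|\ge 1$. Applying \eqref{pullback} to $\det:U_r\backslash\GL_r\to\GG_m$ and then \eqref{rev:sig:ineq} gives
\begin{align*}
\sigma_{U_r\backslash\GL_r}(x)\gg\frac{1-\log|\det t'|+r\log|\kappa|}{\sigma(\det h)},\qquad \sigma_{U_r\backslash\GL_r}(x')\gg\frac{1+\log|\det t''|+r\log|\kappa|}{\sigma(\det h)}.
\end{align*}
The product of the numerators is at least $(1-\log|\det t'|+\log|\det t''|)(1+\log|\kappa|)$. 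This is $\gg\sigma(g)\sigma(\kappa)$, because the entries of $t'$ have absolute value $\le 1$ and those of $t''$ have absolute value $\ge 1$. This is exactly the paper's proof; no analysis of intermediate minors is needed.
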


\begin{proof}
By \eqref{pullback} and \eqref{rev:sig:ineq} one has
\begin{align*}\sigma_{U_{r-3} \backslash \GL_{r-3}}(t'\kappa^{-1}h )\sigma_{U_{r-3} \backslash \GL_{r-3}}(t'\kappa^{-1} hw_rg^{-1}) &\gg \sigma (\det(t'\kappa^{-1}h))\sigma (\det (t''^{-1}\kappa^{-1} h))\\&\gg \frac{\sigma (\det (t'\kappa^{-1}) )\sigma (\det (t''^{-1}\kappa^{-1} ))}{\sigma(\det h)^2}.
\end{align*}
We point out that 
 $|\det t'|\leq 1$ and $|\det t''| \geq 1.$
As $|\kappa|\ge 1,$ we have
\begin{align*}
    \sigma (\det t'\kappa^{-1} )\sigma (\det t''^{-1}\kappa^{-1} )&=(1-\log |\det t'|+r\log |\kappa|)(1+\log |\det t''|+r\log |\kappa|)\\
     &\ge (1-\log |\det t'|+\log |\det t''| )(1+\log |\kappa|)\\
     &\ge \sigma(g)\sigma(\kappa).
\end{align*}

\end{proof}

For $0 \leq j \leq r,$ let
\begin{align*} \begin{split}
T'(R):&=\{ \begin{psmatrix} t' & \\ & I_{r-j}\end{psmatrix}:t' \in T_{j}(R)\},\\
T''(R):&=\{t \in T_r(R): \alpha_i(t)=1\textrm{ for }1 \leq i \leq j\}. \end{split}
\end{align*}
Thus $T_r=T'T''$ and the product is direct.
\begin{lem} \label{lem:COV}
For $f \in C(T''(F)U_r(F) \backslash \GL_r(F))$ and   $g \in T_r(F),$ 
\begin{align*}
\int_{T'(F) \times  K_r}f(t_1kg^{-1})\Xi_{U_r \backslash \GL_r}(kg^{-1})dt_1dk=\int_{T'(F) \times K_r}f(g^{-1}t_1k)\Xi_{U_r \backslash \GL_r}(kg)dt_1dk
\end{align*}
whenever the integrals converge absolutely.  
\end{lem}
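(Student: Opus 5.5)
The plan is to collapse the $T'$-integral first, so that both sides become integrals over $K_r$ of a single function on $B_r(F)\backslash \GL_r(F)$. Then I will invoke the standard change of variables for functions on $B_r(F)\backslash\GL_r(F)$ that transform under $\delta_{B_r}$.

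\textbf{Step 1: an auxiliary function on $B_r(F)\backslash \GL_r(F)$.} Set $\phi(x):=\int_{T'(F)}f(t_1x)\,dt_1$. Absolute convergence of the integrals in the statement, together with Fubini, makes this legitimate for almost every $x$ of the form $kg^{\pm1}$, and that is all we use. I claim $\phi$ is left invariant under $B_r(F)=T'(F)T''(F)U_r(F)$.
\begin{itemize}
\item Invariance under $T'(F)$ follows from invariance of Haar measure on the abelian group $T'(F)$.
\item Invariance under $T''(F)$ follows because $T''$ commutes with $T'$ and $f$ is left $T''(F)$-invariant.
\item For $u\in U_r(F)$ we have $f(t_1ux)=f((t_1ut_1^{-1})t_1x)=f(t_1x)$, since $f$ is left $U_r(F)$-invariant.
\end{itemize}

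\textbf{Step 2: rewrite the right-hand side.} Write $g=g'g''$ with $g'\in T'(F)$ and $g''\in T''(F)$; everything here commutes. Using left $T''(F)$-invariance of $f$ and then the substitution $t_1\mapsto g' t_1$, we get
\[
\int_{T'(F)} f(g^{-1}t_1k)\,dt_1=\int_{T'(F)} f(g'^{-1}t_1k)\,dt_1=\phi(k).
\]
Hence the right-hand side equals $\int_{K_r}\phi(k)\,\Xi_{U_r\backslash\GL_r}(kg)\,dk$. The left-hand side is directly $\int_{K_r}\phi(kg^{-1})\,\Xi_{U_r\backslash\GL_r}(kg^{-1})\,dk$.

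\textbf{Step 3: the key identity.} Define
\[
F(x):=\phi(x)\,\Xi_{U_r\backslash\GL_r}(x)\,\Xi_{U_r\backslash\GL_r}(xg).
\]
Since $\Xi_{U_r\backslash\GL_r}(bx)=\delta_{B_r}^{1/2}(b)\,\Xi_{U_r\backslash\GL_r}(x)$ for $b\in B_r(F)$, and $\phi$ is left $B_r(F)$-invariant, we have $F(bx)=\delta_{B_r}(b)F(x)$. For such functions the integral $\int_{K_r}F(kh)\,dk$ is independent of $h\in\GL_r(F)$, because it computes the invariant functional on $\delta_{B_r}$-sections of $B_r(F)\backslash\GL_r(F)$. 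Taking $h=g^{-1}$ and using $\Xi_{U_r\backslash\GL_r}(k)=1$ gives
\[
\int_{K_r}\phi(kg^{-1})\,\Xi_{U_r\backslash\GL_r}(kg^{-1})\,dk=\int_{K_r}F(kg^{-1})\,dk=\int_{K_r}F(k)\,dk=\int_{K_r}\phi(k)\,\Xi_{U_r\backslash\GL_r}(kg)\,dk,
\]
which is the lemma.

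\textbf{Main obstacle.} The only real subtlety is justifying the invariance $\int_{K_r}F(kh)\,dk=\int_{K_r}F(k)\,dk$ for a function $F$ that need not be continuous or compactly supported, only absolutely integrable. I would reduce to $|F|$ by monotone convergence, or approximate by continuous $\delta_{B_r}$-sections. One can alternatively reprove it via $\int_{U_r(F)\backslash\GL_r(F)}$ of a lift $\widetilde F$ with $\int_{T_r(F)}\widetilde F(tx)\delta_{B_r}^{-1}(t)\,dt=F(x)$, exactly as in the proof of Lemma \ref{lem:decompositioncompare}. The hypothesis of absolute convergence is what makes the Fubini interchanges in Steps 1 and 2 valid.
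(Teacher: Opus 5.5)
Your argument is correct, but it runs in the opposite direction from the paper's. The paper \emph{lifts} $f$ along $T''$. It picks $\widetilde f\in C(U_r(F)\backslash\GL_r(F))$ with $\int_{T''(F)}\widetilde f(t_2h)\,dt_2=f(h)$ (and likewise with absolute values). It then uses the Iwasawa decomposition to write the left side as $\int_{U_r(F)\backslash\GL_r(F)}\widetilde f(g'g^{-1})\,\Xi_{U_r\backslash\GL_r}(g')\,\Xi_{U_r\backslash\GL_r}(g'g^{-1})\,d\dot g'$. Finally it substitutes $g'\mapsto g^{-1}g'g$, which is legitimate because $g\in T_r(F)$ normalizes $U_r$; the Jacobian of this substitution cancels the modulus factors produced by the two $\Xi$'s. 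Undoing Iwasawa then gives $f(g^{-1}t_1k)\,\Xi_{U_r\backslash\GL_r}(kg)$ directly.

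You instead \emph{push down} along $T'$ to a left $B_r(F)$-invariant $\phi$. You absorb the $g^{-1}$ on the right-hand side via $T''$-invariance and a translation in $T'(F)$. You then need only right-invariance of $F\mapsto\int_{K_r}F(k)\,dk$ on $\delta_{B_r}$-sections, and your $F$ transforms by exactly $\delta_{B_r}$, the correct character. Your route avoids the lift $\widetilde f$, whose existence the paper asserts without detail, and it isolates the single invariance fact in play. The paper's route stays inside the Iwasawa integration formula on $U_r(F)\backslash\GL_r(F)$, as in Lemma \ref{lem:decompositioncompare}, where the absolute-convergence hypothesis directly justifies Fubini.

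For the measurability issue you flag, Tonelli is cleaner than approximation:
\begin{itemize}
\item Define $\phi_{|f|}(x):=\int_{T'(F)}|f(t_1x)|\,dt_1\in[0,\infty]$. It is defined for every $x$ and is left $B_r(F)$-invariant.
\item The section identity holds for $[0,\infty]$-valued measurable $\delta_{B_r}$-sections. Write such a section as $x\mapsto\int_{B_r(F)}\Phi(bx)\,d_lb$ with $\Phi\ge 0$, and use $dg=d_lb\,dk$.
\item Applying it to $|f|$ shows that the two absolute-convergence conditions are equivalent.
\item Applying it to the four nonnegative parts of $f$, each still left $T''(F)U_r(F)$-invariant, gives the lemma.
\end{itemize}
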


\begin{proof}
Choose $\widetilde{f} \in C(U_r(F) \backslash \GL_r(F))$ such that $\int_{T''(F)}\widetilde{f}(t_2h)dt_2=f(h)$ and $\int_{T''(F)}|\widetilde{f}|(t_2h)dt_2=|f|(h)$ for all $h \in \GL_r(F).$   Then by the Iwasawa decomposition
\begin{align*}
    &\int_{T'(F) \times  K_r}f(t_1kg^{-1})\Xi_{U_r \backslash \GL_r}(kg^{-1})dt_1dk\\&=\int_{T'(F) \times T''(F) \times  K_r}\widetilde{f}(t_1t_2kg^{-1})\Xi_{U_r \backslash \GL_r}(k)\Xi_{U_r \backslash \GL_r}(kg^{-1})dt_1dt_2dk\\
    &=\int_{U_r(F) \backslash \GL_r(F)}\widetilde{f}(g'g^{-1})\Xi_{U_r \backslash \GL_r}(g')\Xi_{U_r \backslash \GL_r}(g'g^{-1})d\dot{g}'.
\end{align*}
Changing variables $g' \mapsto g^{-1}g'g$ this is 
\begin{align*}
    &\int_{U_r(F) \backslash \GL_r(F)}\widetilde{f}(g^{-1}g')\Xi_{U_r \backslash \GL_r}(g'g)\Xi_{U_r \backslash \GL_r}(g')d\dot{g}'\\
    &=\int_{T'(F) \times T''(F) \times  K_r}\widetilde{f}(t_1t_2g^{-1}k)\Xi_{U_r \backslash \GL_r}(kg)\Xi_{U_r \backslash \GL_r}(k)dt_1dt_2dk\\
    &=\int_{T'(F) \times K_r}f(g^{-1}t_1k)\Xi_{U_r \backslash \GL_r}(kg)dt_1dk.
\end{align*}
\end{proof}

For $W \in \mathcal{C}_{-d}(U_{r,r-2}(F) \backslash \GL_{r,r-2}(F),\psi \otimes \overline{\psi}),$ $a,t_{r-2},t_r,z \in F^\times$ and $h_1,h_2 \in \GL_{r-3}(F),$ write
\begin{align} \label{fW}
f_{W}(h_1,h_2):=W\left(\begin{psmatrix}at_{r-2} h_1 & & &\\ & at_{r-2}  & \\ & & 1 & \\ & & &t_r^{-1}  \end{psmatrix}z,\begin{psmatrix} ah_2 & \\ & a\end{psmatrix}  \right)\frac{1}{|a|^{3(r-2)/2}}.
\end{align}
\begin{lem} \label{lem:fW}
Let $N\ge 0$. There is a continuous seminorm $\nu_{d,N}$ on $\mathcal{C}_{-d}(U_{r,r-2}(F) \backslash \GL_{r,r-2}(F),\psi \otimes \overline{\psi})$ such that for $g_1,g_2 \in \GL_{r-3}(F),$ one has that
$|f_W(g_2,g_2g_1)|$ is bounded above by 
\begin{align} \label{no:sigs}
\frac{\nu_{d,N}(W)|t_{r-2}|^{r-2}|t_r|^{(r-1)/2}}{|a|^{(r-2)/2}(1+|t_r|)^{N}(1+|at_{r-2}|)^N}\frac{
|\det g_2|^{2}|\det g_1|^{1/2}\Xi_{U_{r-3} \backslash \GL_{r-3}}(g_2)\Xi_{U_{r-3} \backslash \GL_{r-3}}(g_2g_1)}
{\prod_{i=1}^{r-3}(1+|\alpha_i|\begin{psmatrix} m_{r-3}(g_2) & \\ & 1 \end{psmatrix})^{N}(1+|\alpha_i|\begin{psmatrix} m_{r-3}(g_2g_1) & \\ & 1 \end{psmatrix})^{N}}
\end{align}
times 
\begin{align}
\label{log:terms}\begin{cases} \sigma(t_{r-2},z,t_r,a)^{-d/2}\sigma_{U_{r-3} \backslash \GL_{r-3}}(g_2)^{-d/2}\sigma_{U_{r-3} \backslash \GL_{r-3}}(g_2g_1)^{-d/2} & \textrm{ if }d \geq 0,\\
\sigma(t_{r-2},z,t_r,a)^{-2d}\sigma_{U_{r-3} \backslash \GL_{r-3}}(g_2)^{-d}\sigma_{U_{r-3}\backslash \GL_{r-3}}(g_2g_1)^{-d} & \textrm{ if }d \leq 0. \end{cases}
\end{align}
\end{lem}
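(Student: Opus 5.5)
The plan is to reduce everything to the gauge estimate of Lemma \ref{lem:C:gauge}, applied on the product group $\GL_{r,r-2}$, and then to track how each factor of the resulting bound depends on the coordinates. Lemma \ref{lem:C:gauge} holds equally on products of general linear groups. For every $N$ it supplies a continuous seminorm $\nu_N$ on $\mathcal{C}_{-d}(U_{r,r-2}(F)\backslash \GL_{r,r-2}(F),\psi\otimes\overline{\psi})$ with
\begin{align*}
|W(n_1a_1k_1,n_2a_2k_2)| \leq \nu_N(W)\,\sigma(a_1,a_2)^{-d}\,\delta_{B_r}^{1/2}(a_1)\delta_{B_{r-2}}^{1/2}(a_2)\prod_i(1+|\alpha_i(a_1)|)^{-N}\prod_i(1+|\alpha_i(a_2)|)^{-N}.
\end{align*}
Here $\sigma(a_1,a_2)$ means the log norm on $U_{r,r-2}\backslash\GL_{r,r-2}$. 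I take $\nu_{d,N}:=\nu_{N'}$ for a suitable $N'$ depending on $N$.

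First I compute the Iwasawa coordinates of the two arguments of $f_W(g_2,g_2g_1)$. Write $g_2=u_2m_{r-3}(g_2)k_2$ and $g_2g_1=u m_{r-3}(g_2g_1)k$.
\begin{itemize}
\item The $\GL_r$ argument is $\mathrm{diag}(at_{r-2}g_2,at_{r-2},1,t_r^{-1})z$. Its torus part is $\mathrm{diag}(at_{r-2}m_{r-3}(g_2),at_{r-2},1,t_r^{-1})z$, with $k_2$ absorbed into $K_r$.
\item The $\GL_{r-2}$ argument is $\mathrm{diag}(ag_2g_1,a)$. Its torus part is $\mathrm{diag}(a\,m_{r-3}(g_2g_1),a)$.
\end{itemize}

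Next I evaluate the modular characters and the root factors.
\begin{itemize}
\item The character $\delta_{B_r}^{1/2}$ of the first torus element equals $\delta_{B_{r-3}}^{1/2}(m_{r-3}(g_2))\,|\det g_2|^{2}$ times an explicit monomial in $a,t_{r-2},t_r$. The factor $|\det g_2|^{2}$ is $|\det g_2|^{(4-0)/2}$, coming from the three trailing coordinates together with the $(r-3)$ block.
\item Similarly, $\delta_{B_{r-2}}^{1/2}$ of the second element gives $\delta^{1/2}_{B_{r-3}}(m_{r-3}(g_2g_1))\,|\det g_2g_1|^{1/2}$. Combined with the prefactor $|a|^{-3(r-2)/2}$, this produces $|\det g_1|^{1/2}$ and a further power of $|\det g_2|$. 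The bookkeeping must match $|\det g_2|^2|\det g_1|^{1/2}$ exactly, and the total $a$-, $t_{r-2}$- and $t_r$-exponents must match $|t_{r-2}|^{r-2}|t_r|^{(r-1)/2}|a|^{-(r-2)/2}$. I would check this by direct multiplication, and I expect it to be routine.
\item The factors $\delta_{B_{r-3}}^{1/2}(m_{r-3}(\cdot))$ are precisely $\Xi_{U_{r-3}\backslash\GL_{r-3}}(\cdot)$.
\item The simple roots of the first torus element are: the $\alpha_i$ of $\mathrm{diag}(m_{r-3}(g_2),1)$, times the scalar $at_{r-2}$ that cancels in ratios; the root $at_{r-2}/1$; and the root $t_r$. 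These give $(1+|at_{r-2}|)^{-N}(1+|t_r|)^{-N}$ and the $g_2$ product.
\item For the second element the roots are the $\alpha_i$ of $\mathrm{diag}(m_{r-3}(g_2g_1),1)$. Any leftover root factors can be dropped, since they are at most $1$.
\end{itemize}

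For the logarithmic factor \eqref{log:terms}, I bound $\sigma(a_1,a_2)$ using \eqref{prod:log}, the fact that the diagonal block structure is a finite morphism so \eqref{iso:inv} applies, and the normalization $\sigma_{U\backslash\GL}(nak)=\sigma(a)$.
\begin{itemize}
\item If $d\ge0$, I need a lower bound $\sigma(a_1,a_2)\gg \bigl(\sigma(t_{r-2},z,t_r,a)\,\sigma_{U_{r-3}\backslash\GL_{r-3}}(g_2)\,\sigma_{U_{r-3}\backslash\GL_{r-3}}(g_2g_1)\bigr)^{1/2}$. The parameters $a,t_{r-2},z,t_r$ can be recovered from diagonal entries of $a_1,a_2$ via maps with log norms of controlled size, which gives the exponent $d/2$.
\item If $d\le 0$, I need the upper bound $\sigma(a_1,a_2)\ll \sigma(t_{r-2},z,t_r,a)^{2}\,\sigma(g_2)\,\sigma(g_2g_1)$, obtained from \eqref{sig:ineq} and \eqref{prod:log}, which gives exponents $-2d$ and $-d$.
\end{itemize}

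The main obstacle is the $d\ge0$ lower bound. The torus entries mix $a$ and $t_{r-2}$ with $m_{r-3}(g_2)$, so separating $\sigma(a)$ from $\sigma(t_{r-2})$ and from $\sigma(m(g_2))$ needs care. I would first use the last two diagonal entries, $1$ and $t_r^{-1}$, and the entry $at_{r-2}$. Then I would use the last entry $a$ of the second element, together with $z$ detected via the determinant or the scalar. From these I recover each parameter up to a bounded number of multiplications, and conclude with \eqref{rev:sig:ineq}.
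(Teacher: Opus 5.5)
Your route is the paper's: Lemma \ref{lem:C:gauge} on $\GL_{r,r-2}$, then the Iwasawa torus parts of the two arguments, then \eqref{prod:log} and \eqref{iso:inv}. The bookkeeping for \eqref{no:sigs} does close. The first argument contributes $|at_{r-2}|^{r-2}|t_r|^{(r-1)/2}|\det g_2|^{3/2}\Xi_{U_{r-3}\backslash\GL_{r-3}}(g_2)$ and the second contributes $|\det g_2g_1|^{1/2}\Xi_{U_{r-3}\backslash\GL_{r-3}}(g_2g_1)$, so only $|\det g_2|^{3/2}$, not $|\det g_2|^{2}$, comes from the first factor. Your $d\le 0$ case is also fine.

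The gap is in the $d\ge 0$ case. You write the gauge bound with the joint log norm $\sigma(a_1,a_2)$ on $U_{r,r-2}\backslash\GL_{r,r-2}$, and then you need
\[
\sigma(a_1,a_2)\gg\bigl(\sigma(t_{r-2},z,t_r,a)\,\sigma_{U_{r-3}\backslash\GL_{r-3}}(g_2)\,\sigma_{U_{r-3}\backslash\GL_{r-3}}(g_2g_1)\bigr)^{1/2}.
\]
This inequality is false. On a torus the log norm is comparable to $1+\max_i|\log|x_i||$, so $\sigma(a_1,a_2)$ is comparable to the \emph{sum} of the three factors on the right. A sum dominates only the cube root of the product. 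For a concrete counterexample (with $r\ge 4$), take $z=t_r=1$, $t_{r-2}=a^{-1}$ with $\sigma(a)=L$ large, $g_1=I_{r-3}$ and $g_2=\mathrm{diag}(a,1,\dots,1)$. Then all three factors are $\asymp L$ and $\sigma(a_1,a_2)\asymp L$, while your right-hand side is $\asymp L^{3/2}$. So with the joint log norm you only get exponents like $d/3$. No way of ``recovering the parameters'' can repair this, since the inequality itself fails and \eqref{rev:sig:ineq} only loses more.

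The fix, which is exactly what the paper does, is to keep the two factors separate. Use the gauge bound in the form $\sigma_{U_r\backslash\GL_r}(a_1)^{-d}\,\sigma_{U_{r-2}\backslash\GL_{r-2}}(a_2)^{-d}$ and bound each factor on its own.
\begin{enumerate}
\item Apply \eqref{iso:inv} to the torus isomorphisms $(z,t_r,c,m)\mapsto z\,\mathrm{diag}(cm,c,1,t_r^{-1})$ and $(a,m)\mapsto\mathrm{diag}(am,a)$, then \eqref{prod:log} once each. This gives $\sigma(a_1)\gg\sigma(z,t_r,at_{r-2})^{1/2}\sigma_{U_{r-3}\backslash\GL_{r-3}}(g_2)^{1/2}$ and $\sigma(a_2)\gg\sigma(a)^{1/2}\sigma_{U_{r-3}\backslash\GL_{r-3}}(g_2g_1)^{1/2}$.
\item Combine them using $\sigma(a)\,\sigma(z,t_r,at_{r-2})\gg\sigma(t_{r-2},z,t_r,a)$, which follows from \eqref{prod:log} and \eqref{iso:inv} applied to $(t_{r-2},z,t_r,a)\mapsto(a,z,t_r,at_{r-2})$.
\end{enumerate}
This also removes what you called the main obstacle. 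You never have to separate $a$ from $t_{r-2}$ inside $a_1$, where only the product $at_{r-2}$ occurs; $a$ is read off from $a_2$. A weaker exponent would be harmless for Proposition \ref{prop:J}, which only uses $d/e-d_0$, but it does not prove the lemma as stated.
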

\begin{proof}
Using Lemma \ref{lem:C:gauge}, we see that there is a continuous seminorm $\nu_{N,d}$ such that $
|f_{W}(g_2,g_2g_1)|$ is bounded by
$\nu_{N,d}\left(W \right)$ times \eqref{no:sigs}
times
\begin{align} \label{before:t}  \begin{split}
    &\sigma_{U_{r} \backslash \GL_r}\begin{psmatrix}zat_{r-2} g_2 & & &\\ & zat_{r-2}  & \\ & & z & \\ & & &zt_r^{-1}  \end{psmatrix}^{-d}\sigma_{U_{r-2} \backslash \GL_{r-2}}\begin{psmatrix} ag_2g_1 & \\ & a\end{psmatrix}^{-d}\end{split}.
\end{align}
It follows from \eqref{prod:log} and \eqref{iso:inv} that 
\begin{align*}
 \sigma_{U_{r-2} \backslash \GL_{r-2}}\begin{psmatrix} ag_2g_1& \\ & a\end{psmatrix}\gg \sigma(a)^{1/2}\sigma_{U_{r-3} \backslash \GL_{r-3}}(g_2g_1)^{1/2},
\end{align*}
\begin{align*}
    &\sigma_{U_{r} \backslash \GL_r}\begin{psmatrix}zat_{r-2} g_2 & & &\\ & zat_{r-2}  & \\ & & z & \\ & & &zt_r^{-1}  \end{psmatrix}\gg 
    \sigma(z,t_r,at_{r-2})^{1/2}\sigma_{U_{r-3} \backslash \GL_{r-3}}(g_2)^{1/2}.
\end{align*}
Thus, using \eqref{prod:log} and \eqref{iso:inv} again, 
\begin{align*}
    \sigma(a)^{1/2}\sigma(z,t_r,at_{r-2})^{1/2}\gg \sigma(t_{r-2},z,t_r,a)^{1/2}.
\end{align*}
For $d\ge 0,$ this shows \eqref{before:t} is dominated by \eqref{log:terms}. The proof for $d\le 0$ is similar using \eqref{prod:log} and \eqref{iso:inv}. 
\end{proof}

The following is the key technical estimate in this paper: 
\begin{prop}
 \label{prop:J} 
Assume $d\in \RR_{>0}$. The integral \index{$J(W)$}
    \begin{align}\label{eq:JW}
   J(W)(g,g'^tm_\ell):=\int_{U_{r-2}(F) \backslash \mathcal{P}_{r-2}(F)}W\left(\begin{psmatrix}pg' & \\ &  I_2 \end{psmatrix}g,pg'  \right)\frac{d_r p}{|\det pg'|^{3/2}}
    \end{align}
    is absolutely convergent for all $W \in \mathcal{C}_{-d}(U_{r,r-2}(F) \backslash \GL_{r,r-2}(F),\psi \otimes \overline{\psi}).$ There are absolute constants $e,d_0 \in \RR_{>0}$ such that for any $N \in \RR_{>0}$, there is a continuous seminorm $\mu_{d,N}$ on $ \mathcal{C}_{-d}(U_{r,r-2}(F) \backslash \GL_{r,r-2}(F),\psi \otimes \overline{\psi})$ satisfying
    \begin{align}
&|J(W)|\left(\begin{psmatrix}k'^{-1}t_{r-2}\begin{psmatrix} g_1 & y_1 & \\ & 1 \end{psmatrix} & \\ & &  \begin{psmatrix} 1  & y_2 \\ & t_r^{-1}\end{psmatrix}\end{psmatrix}zk, k'^tam_\ell\right) \label{J:eval} \\ \nonumber
    \leq &\mu_{d,N}(W)\frac{|\det g_1|^{5/4}\Xi_{\GL_{r-3}}(g_1)^{1/2}\Xi_{\GL_{r-2}}\begin{psmatrix}
        g_1 &y_1 \\
          & 1
    \end{psmatrix}^{1/2}|t_{r-2}|^{r-2}|t_r|^{(r-1)/2}}{\sigma(z,t_r,a)^{d/e-d_0}\sigma\left(t_{r-2}\begin{psmatrix}
        g_1 &y_1 \\
          & 1
    \end{psmatrix}\right)^{d/e-d_0}|a|^{(r-2)/2}(1+|t_r|)^{N}(1+|at_{r-2}|)^N}.
\end{align}   
Here $(a,z,t_{r-2},t_r,g_1,y_1,y_2,k,k') \in (F^\times)^4 \times \GL_{r-3}(F) \times F^{r-3} \times F \times K_r \times K_{r-2}.$
\end{prop}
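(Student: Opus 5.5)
We reduce the estimate to the gauge bound of Lemma \ref{lem:fW} by writing out the integrand of $J(W)$ in the coordinates \eqref{coord}. Since $W$ is right $U$-equivariant up to characters and $J(W)$ is well defined on $X_\ell^\circ$, we may absorb $k$ into $W$ and $k'$ into the variable $p$; the uniformity in $k,k'$ costs nothing because the seminorms on $\mathcal{C}_{-d}$ are $K$-invariant up to equivalence. Next we use the Iwasawa decomposition $U_{r-2}(F)\backslash \mathcal{P}_{r-2}(F)\ni p = \begin{psmatrix} h & \\ & 1\end{psmatrix}$ with $h \in U_{r-3}(F)\backslash\GL_{r-3}(F)$, after translating $p\mapsto p\,\begin{psmatrix} g_1 & y_1\\ & 1\end{psmatrix}^{-1}$. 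The upper triangular part $y_1$ and the variable $y_2$ are then moved to the right of $W$. At that point the integrand becomes a value $f_W(h_2, h_2 g_1)$ as in \eqref{fW}, where $h_2 = h$ (or a $K_{r-3}$-rotation of it), multiplied by a character factor that we bound trivially by $1$. The $2\times 2$ block $\begin{psmatrix}1 & y_2\\ & t_r^{-1}\end{psmatrix}$ is handled as in the end of the proof of Lemma \ref{lem:extension}, via its Cartan decomposition. This produces the factor $|t_r|^{(r-1)/2}(1+|t_r|)^{-N}$, together with a harmless power of $\sigma$.

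Applying Lemma \ref{lem:fW} with $d\geq 0$, the factors in $z,t_r,a,t_{r-2}$ come out exactly as in \eqref{J:eval}. What remains is to bound
\begin{align*}
\int_{U_{r-3}(F)\backslash \GL_{r-3}(F)} \frac{|\det h|^{2}|\det g_1|^{1/2}\Xi_{U_{r-3}\backslash\GL_{r-3}}(h)\Xi_{U_{r-3}\backslash\GL_{r-3}}(hg_1)}{\prod_i(1+|\alpha_i(m(h))|)^N(1+|\alpha_i(m(hg_1))|)^N\,(\sigma(h)\sigma(hg_1))^{d/2}}\frac{d\dot h}{|\det h|^{3/2}}
\end{align*}
by $|\det g_1|^{5/4}\Xi_{\GL_{r-3}}(g_1)^{1/2}\Xi_{\GL_{r-2}}\begin{psmatrix} g_1 & y_1\\ & 1\end{psmatrix}^{1/2}\sigma(\cdots)^{-d/e+d_0}$.

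The plan for this bound is as follows. First write $h = t k$ and use Lemma \ref{lem:COV} and Lemma \ref{lem:decompositioncompare} to convert the $K$-integral into an integral over $\overline{U}_{r-3}(F)$. Then reduce to $g_1\in T_{r-3}(F)^+$ by the Cartan decomposition, and split $T$ as $T'T''$ according to the index $j(g_1)$ of Lemma \ref{lem:Levi}. On the $T'$ part we use Lemma \ref{lem:upper} to control $\prod m(t\bar u)_i$ by $|\det t|$. The rapid decay in the simple roots, with large $N$, truncates the torus integral to a region where $|\alpha_i|\ll 1$, and there the $\delta^{1/2}$ factors can be compared with $\Xi$ via Proposition \ref{prop:HCfunction}\ref{delta:asymp}. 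Lemma \ref{lem:Levi} and Lemma \ref{lem:parab} then convert the resulting $\delta_{B_{r-3}}^{1/2}(g_1)|\det t'|$ expressions into $\Xi_{\GL_{r-3}}(g_1)^{1/2}\Xi_{\GL_{r-2}}\begin{psmatrix} g_1& y_1\\ &1\end{psmatrix}^{1/2}$. The $y_1$-dependence enters only through the loss $\max(1,\min(\norm{g_1^{-1}y_1},\norm{y_1}))^{r-2}$ in Lemma \ref{lem:parab}. We absorb this loss using the decay of $W$ in $y_1$, which comes from the $\psi$-character and the gauge, and this is why only square roots of the $\Xi$'s appear.

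The main obstacle is extracting the logarithmic decay $\sigma(t_{r-2}\begin{psmatrix} g_1& y_1\\ & 1\end{psmatrix})^{-d/e+d_0}$. The only source of decay is $\sigma(h)^{-d/2}\sigma(hg_1)^{-d/2}$, and this must be transferred to decay in $g_1$ and $\kappa$-type scalars. For this we apply Lemma \ref{lem:sig:bound:stuff}, which gives $\sigma(h)\sigma(hw g_1^{-1})\gg\sigma(g_1)\sigma(\kappa)/\sigma(\det h)^2$, together with \eqref{sig:ineq}, \eqref{rev:sig:ineq} and \eqref{prod:log}. We keep a fixed fraction $d/e$ of the logarithmic decay, and use the remaining $d_0$ worth to make the torus and $\overline{U}$ integrals converge, as in the proof of Lemma \ref{lem:cont}. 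Absolute convergence of $J(W)$ follows from the same bound, and the seminorm $\mu_{d,N}$ is the $\nu_{d,N}$ of Lemma \ref{lem:fW}.
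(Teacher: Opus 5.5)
There is a genuine gap: nothing in your argument produces the decay in $y_1$ that \eqref{J:eval} asserts. Take $g'=ak'$ and $p=\begin{psmatrix} g_2 & \\ & 1\end{psmatrix}$. Then $p\begin{psmatrix} g_1 & y_1\\ & 1\end{psmatrix}=\begin{psmatrix} I_{r-3} & g_2y_1\\ & 1\end{psmatrix}\begin{psmatrix} g_2g_1 & \\ & 1\end{psmatrix}$, and the unipotent factor is absorbed by the left $(U_r,\psi)$-equivariance of $W$. So $y_1$ enters the integrand only through the unimodular phase $\psi((g_2y_1)_{r-3})$. Your translation $p\mapsto p\begin{psmatrix} g_1 & y_1\\ & 1\end{psmatrix}^{-1}$ just moves the same phase into the second variable. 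Moving $y_1$ to the right of $W$ instead would only replace $W$ by right translates by the unbounded unipotent $\begin{psmatrix} I_{r-3} & g_1^{-1}y_1\\ & 1\end{psmatrix}$, which cannot yield decay.

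Once you bound the character factor trivially by $1$, your estimate no longer depends on $y_1$ at all. But for $r\geq 4$ the right-hand side of \eqref{J:eval} tends to $0$ as $\norm{y_1}\to\infty$ with the other coordinates fixed. For instance, for $g_1=I_{r-3}$ the factor $\Xi_{\GL_{r-2}}\begin{psmatrix} I_{r-3} & y_1\\ & 1\end{psmatrix}^{1/2}$ has size $\norm{y_1}^{-(r-3)/2}$ up to powers of $\sigma$, and $\sigma(t_{r-2}\begin{psmatrix} g_1 & y_1\\ & 1\end{psmatrix})^{-(d/e-d_0)}$ decays as well. No later use of Lemma \ref{lem:parab} can recover this. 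There is also no ``decay of $W$ in $y_1$ from the gauge'', since $|W|$ at these points does not see $y_1$. For the same reason, the scalar $\kappa$ you want to feed into Lemma \ref{lem:sig:bound:stuff} has no source in your argument.

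The missing idea, which is the heart of the paper's proof, is to exploit the oscillation of this phase in the $g_2$-integral. Reduce to $g_1\in T_{r-3}(F)^+$ and substitute $g_2\mapsto t'g_2wg_1^{-1}$ with $w=w_{r-3}$. Then right-translate $g_2$ by $I+xe_{r-3,n}$, with $n$ chosen as in \eqref{choose:n}.
\begin{itemize}
\item In the non-Archimedean case, for $|x|\ll_W 1$ this leaves $f_W(t'g_2w,t'g_2wg_1^{-1})$ unchanged, because $g_1\in T_{r-3}(F)^+$ keeps $g_1w^{-1}(I+xe_{r-3,n})wg_1^{-1}$ small. Meanwhile the phase is multiplied by $\psi(x(t'g_2e_{r-3,n}wg_1^{-1}y_1)_{r-3})$. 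Averaging in $x$ then forces, via \eqref{eq:choosenresult}, $|(g_2)_{r-3,r-3}|\min(\norm{y_1},\norm{g_1^{-1}y_1})\ll 1$ on the support.
\item In the Archimedean case one integrates by parts along $e^{t_0e_{r-3,n}}$, with a partition of unity, to gain $(1+|(g_2)_{r-3,r-3}|\min(\norm{y_1},\norm{g_1^{-1}y_1}))^{-N}$. The cost is replacing $W$ by derivatives $W_N$, so $\mu_{d,N}$ must involve $U(\mathfrak{g})$-derivatives and is not simply the $\nu_{d,N}$ of Lemma \ref{lem:fW}.
\end{itemize}
Next, write $g_2=\begin{psmatrix}\overline{b} & \\ * & c\end{psmatrix}$ in Bruhat coordinates and rescale $c\mapsto\kappa^{-1}c$, where $|\kappa|=\max(1,\min(\norm{y_1},\norm{g_1^{-1}y_1}))$. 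This yields the factor $|\kappa|^{-(r-3)/2}$, which is exactly the square root of the loss in Lemma \ref{lem:parab}; that is why square roots of the $\Xi$'s appear. It also places $\kappa$ inside the log norms, so that Lemma \ref{lem:sig:bound:stuff} gives $\sigma(g_1)^{-d/2}\sigma(\kappa)^{-d/2}$, the source of the logarithmic decay.

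Separately, $y_2$ needs no Cartan decomposition. Since $\begin{psmatrix}1 & y_2\\ & t_r^{-1}\end{psmatrix}=\begin{psmatrix}1 & t_ry_2\\ & 1\end{psmatrix}\begin{psmatrix}1 & \\ & t_r^{-1}\end{psmatrix}$, it contributes only the phase $\psi(t_ry_2)$. A Cartan decomposition would instead leave a $K_2$-element in the middle of the argument of $W$, where neither left $U_r$-equivariance nor Lemma \ref{lem:C:gauge} can handle it.
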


\begin{proof} 
The absolute convergence statement is a consequence of Lemma \ref{lem:C:gauge} and the Iwasawa decomposition. 

The quantity \eqref{J:eval} is the norm of $\psi(t_ry_2)$ times
\begin{align} \label{before:bounds}\begin{split}
&\int_{U_{r-3}(F) \backslash \GL_{r-3}(F)}W\left(\begin{psmatrix}at_{r-2} g_2g_1 & & &\\ & at_{r-2}  & \\ & & 1 & \\ & & &t_r^{-1}  \end{psmatrix}zk,\begin{psmatrix} ag_2 & \\ & a\end{psmatrix} k' \right)\frac{\psi((g_2y_1)_{r-3})d\dot{g_2}}{|\det g_2|^{3/2}| a|^{3(r-2)/2}} \end{split}.
\end{align}   
At the expense of replacing $y_1$ by $k_0y_1$ for some $k_0 \in K_{r-3},$ we can assume $g_1\in T_{r-3}(F)^+.$  
By replacing $W$ with $\mathcal{R}(k,k')W$, we may assume $k,k'$ are the identities. 
Write 
\begin{align}\label{eq:g1t't''}
    g_1=t't''=\begin{psmatrix}
        t_1'& & & \\
        & \ddots & & \\
        & & t_j' & \\
        & &  & I_{r-3-j}
    \end{psmatrix}\begin{psmatrix}
        I_{j}& & & \\
        & t_{j+1}'' & & \\
        & & \ddots& \\
        & &  & t_{r-3}''
    \end{psmatrix},
\end{align}
with $j:=j(g_1)$ as in Lemma \ref{lem:Levi}.

We now use the $\psi$-function to deduce bounds of \eqref{before:bounds}.  For ease of notation, we write $w:=w_{r-3}=\begin{psmatrix} & & 1 \\ & \textrm{\reflectbox{$\ddots$}} & \\ 1 & & \end{psmatrix}.$ Using the notation \eqref{fW} and changing variables $g_2\mapsto t'g_2wg_1^{-1},$ we see that \eqref{before:bounds} is 
\begin{align} \label{before:psi:bounds}
\int_{U_{r-3}(F) \backslash \GL_{r-3}(F) }\psi((t'g_2wg_1^{-1}y_1)_{r-3})f_{W}(t'g_2w,t'g_2wg_1^{-1})\frac{d\dot{g_2}}{\delta_{B_{r-3}}(t')|\det g_2t''^{-1}|^{3/2}}.
\end{align}
For $1\le n\le r-3,$ let $e_{r-3,n}:=e_{r-3}e_n^{^t}\in M_{r-3}(F).$ Assume that $F$ is non-Archimedean.
If $x \in F^\times$ 
 satisfies $
    |x| \ll_{W} 1,$ then since $g_1 \in T_{r-3}(F)^+$ we have
that
\begin{align*}
&f_{W}(t'g_2(I+xe_{r-3,n})w,t'g_2(I+xe_{r-3,n})wg_1^{-1})\\&=f_{W}(t'g_2w(w^{-1}(I+xe_{r-3,n})w),t'g_2wg_1^{-1} g_1(w^{-1}(I+xe_{r-3,n})w)g_1^{-1})\\
&=f_W(t'g_2w,t'g_2wg_1^{-1})
\end{align*}
for any $1 \leq n\leq r-3.$
By a change of variables, it follows that in \eqref{before:psi:bounds}, the integral over $g_2$ is supported in 
 the set of $g_2$ such that 
\begin{align*}
 |(t'g_2e_{r-3,n} wg^{-1}_1y_1)_{r-3}|\ll_{W}  1.
\end{align*}
We choose $n$ so that 
\begin{align} \label{choose:n} \begin{split}
\norm{g_1^{-1}y_1} \asymp |(wg_1^{-1}y_1)_{n}| \textrm{ if } |t'_{r-3}| = 1 \quad\textrm{and}\quad
\norm{y_1} \asymp |(wy_1)_{n}| \textrm{ if } |t'_{r-3}| < 1. \end{split}
\end{align}
Then we have 
\begin{align}\label{eq:choosenresult}
 |(t'g_2e_{r-3,n} wg^{-1}_1y_1)_{r-3}| \gg |(g_2)_{r-3,r-3}|\min(\norm{y_1},\norm{g_1^{-1}y_1}).
\end{align}

Now assume $F$ is Archimedean. 
Write $\psi(z)=e^{\sqrt{-1}\mathrm{tr}_{F/\RR}(\alpha z)}$ for some $\alpha \in \CC^\times.$  
Let $\mathcal{V}_0,\mathcal{V}$ be a smooth partition of unity of $\RR$ such that $\mathcal{V}_0$ is supported on $|x|\le 2$ and is $1$ on $|x|\le 1$. 
Choose $n$ as in \eqref{choose:n}. 
Write 
\begin{align}\begin{split}\label{eq:beforebyparts}
    &\int_{ U_{r-3}(F) \backslash \GL_{r-3}(F)}  \psi((t'g_2w g_1^{-1}y_1)_{r-3})f_{W}(t'g_2w,t'g_2wg_1^{-1})\frac{\mathcal{V}\left((t'g_2e_{r-3,n} wg_1^{-1}y_1)_{r-3}\right)}{\delta_{B_{r-3}}(t')|\det g_2t''^{-1}|^{3/2}}d\dot{g_2}\\
    &=\int \frac{d}{dt_0} \frac{\psi((t'g_2e^{t_0e_{r-3,n}}wg_1^{-1}y_1)_{r-3})}{\sqrt{-1}\alpha(t'g_2e_{r-3,n} wg_1^{-1}y_1)_{r-3}}\Bigg|_{t_0=0}f_{W}(t'g_2w,t'g_2wg_1^{-1})\frac{\mathcal{V}\left((t'g_2e_{r-3,n}w g_1^{-1}y_1)_{r-3}\right)}{\delta_{B_{r-3}}(t')|\det g_2t''^{-1}|^{3/2}}d\dot{g_2}. \end{split}
\end{align}
Using Lemma \ref{lem:fW} one checks that it is permissible to apply the 
 Leibniz integral rule to the following integral:
\begin{align*}
     \int \frac{d}{dt_0} &\Bigg(
\frac{\psi((t'g_2e^{t_0e_{r-3,n}}wg_1^{-1}y_1)_{r-3})}{\sqrt{-1}\alpha (t'g_2e^{t_0e_{r-3,n}}e_{r-3,n} wg_1^{-1}y_1)_{r-3}}
     f_{W}(t'g_2 e^{t_0e_{r-3,n}}w,t'g_2e^{t_0e_{r-3,n}}wg_1^{-1})\\& \times \frac{\mathcal{V}\left((t'g_2e^{t_0e_{r-3,n}}e_{r-3,n} wg_1^{-1}y_1)_{r-3}\right)}{\delta_{B_{r-3}}(t')|\det g_2e^{t_0e_{r-3,n}}t''^{-1}|^{3/2}}\Bigg)\Bigg|_{t_0=0}d\dot{g_2}.
\end{align*}
By changing variables $g_2 \mapsto g_2e^{-t_0e_{r-3,n}},$ we see that the above quantity is $0.$  Hence 
 \eqref{eq:beforebyparts} is 
\begin{align*}
&\delta_{r-3,n}\int
\frac{\psi((t'g_2wg_1^{-1}y_1)_{r-3})}{\sqrt{-1}\alpha (t'g_2e_{r-3,n}wg_1^{-1} y_1)_{r-3}}
     f_{W}(t'g_2w ,t'g_2wg_1^{-1})\frac{\mathcal{V}\left((t'g_2e_{r-3,n} wg_1^{-1}y_1)_{r-3}\right)}{\delta_{B_{r-3}}(t')|\det g_2t''^{-1}|^{3/2}}d\dot{g_2}\\
    &-\int\frac{\psi((t'g_2wg_1^{-1}y_1)_{r-3})}{\sqrt{-1}\alpha(t'g_2e_{r-3,n} wg_1^{-1}y_1)_{r-3}}(e_{1,r-2-n}\otimes \mathrm{Id}).f_{W}(t'g_2w,t'g_2wg_1^{-1})\frac{\mathcal{V}\left((t'g_2e_{r-3,n} wg_1^{-1}y_1)_{r-3}\right)}{\delta_{B_{r-3}}(t')|\det g_2t''^{-1}|^{3/2}}d\dot{g_2}\\
    &-\int  \frac{\psi((t'g_2wg_1^{-1}y_1)_{r-3})}{\sqrt{-1}\alpha(t'g_2e_{r-3,n} wg_1^{-1}y_1)_{r-3}}(\mathrm{Id}\otimes g_1e_{1,r-2-n}g_1^{-1}).f_{W}(t'g_2w,t'g_2wg_1^{-1})\frac{\mathcal{V}((t'g_2e_{r-3,n} wg_1^{-1}y_1)_{r-3})}{\delta_{B_{r-3}}(t')|\det g_2t''^{-1}|^{3/2}}d\dot{g_2}\\
    &-\delta_{r-3,n}\int \frac{\psi((t'g_2wg_1^{-1}y_1)_{r-3})}{\sqrt{-1}\alpha}f_{W}(t'g_2w,t'g_2wg_1^{-1})\frac{\mathcal{V}'\left((t'g_2e_{r-3,n} wg_1^{-1}y_1)_{r-3}\right)}{\delta_{B_{r-3}}(t')|\det g_2t''^{-1}|^{3/2}}d\dot{g_2}\\
    &+\frac{3\delta_{r-3,n}}{2}\int  \frac{\psi((t'g_2wg_1^{-1}y_1)_{r-3})}{\sqrt{-1}\alpha(t'g_2e_{r-3,n}wg_1^{-1}y_1)_{r-3}}f_{W}(t'g_2w,t'g_2wg_1^{-1})\frac{\mathcal{V}\left((t'g_2e_{r-3,n} wg_1^{-1}y_1)_{r-3}\right)}{\delta_{B_{r-3}}(t')|\det g_2t''^{-1}|^{3/2}}d\dot{g_2},
\end{align*}
where $\delta_{r-3,n}$ is the Kronecker $\delta$-function and
where $\mathcal{V}'$ is the derivative of $\mathcal{V}.$
 To orient the reader, we point out that the equality of \eqref{eq:beforebyparts} and the expression above amounts to a version of integration by parts.

We bound the derivative occurring in the third term of the previous displayed equation using an analogue of the argument in the non-Archimedean case. Namely, $g_1\in T_{r-3}(F)^+$ so $\{g_1e_{1,r-2-n}g_1^{-1}:g_1\in T_{r-3}(F)^+\}$ lies in a compact subset of $\mathfrak{gl}_{r-3}(F)$.  We also point out that $\mathcal{V}'$ is supported in a compact subset of $\RR.$  Therefore, we can iterate the argument above to see that 
for any $N\in \ZZ_{\ge 0}$ one has
\begin{align*}
&\bigg|\int_{U_{r-3}(F) \backslash \GL_{r-3}(F)}\psi((t'g_2wg_1^{-1}y_1)_{r-3})f_{W}(t'g_2w,t'g_2wg_1^{-1})\frac{d\dot{g_2}}{\delta_{B_{r-3}}(t')|\det g_2t''^{-1}|^{3/2}}\bigg|\\&
\leq 
\left|\int_{U_{r-3}(F) \backslash \GL_{r-3}(F)}\left(1+|(t'g_2e_{r-3,n} wg_1^{-1}y_1)_{r-3}|\right)^{-N}\frac{|f_{W_N}|(t'g_2w,t'g_2wg_1^{-1})d\dot{g_2}}{\delta_{B_{r-3}}(t')|\det g_2t''^{-1}|^{3/2}}\right|
\end{align*}
for some $W_N \in \mathcal{C}_{-d}\left(U_{r,r-2}(F) \backslash \GL_{r,r-2}(F),\psi \otimes \overline{\psi} \right)$ chosen continuously in $W.$

Now let $F$ be arbitrary. By Lemma \ref{lem:fW} and the argument above together with \eqref{eq:choosenresult}, we see that
there is a continuous seminorm $\nu_{N,d}$ such that \eqref{before:psi:bounds} is bounded by $
\nu_{N,d}\left(W \right)$ 
times
the product of 
\begin{align} \label{first:factor1}
\frac{\sigma(t_{r-2},z,t_r,a)^{-d/2}|t_{r-2}|^{r-2}|t_r|^{(r-1)/2}}{|a|^{(r-2)/2}(1+|t_r|)^{N}(1+|at_{r-2}|)^N}
\end{align} 
and
\begin{align*}
\int_{U_{r-3}(F) \backslash \GL_{r-3}(F)}&\frac{|\det g_1|^{3/2}|\det g_2t''^{-1}|^{1/2}\Xi_{U_{r-3} \backslash \GL_{r-3}}(g_2)\Xi_{U_{r-3} \backslash \GL_{r-3}}(g_2wg_1^{-1})}{\sigma_{U_{r-3} \backslash \GL_{r-3}}(t'g_2)^{d/2}\sigma_{U_{r-3} \backslash \GL_{r-3}}(t'g_2wg_1^{-1})^{d/2}\left(1+|(g_2)_{r-3,r-3}|\min(\norm{y_1},\norm{g_1^{-1}y_1})\right)^{N}}\\& \times
  \frac{d\dot{g_2}}{ \prod_{i=1}^{r-3}(1+|\alpha_i\begin{psmatrix} m_{r-3}(t'g_2) & \\ & 1 \end{psmatrix})^{N}(1+|\alpha_i\begin{psmatrix} m_{r-3}(t'g_2wg_1^{-1}) & \\ & 1 \end{psmatrix})^{N}} .
\end{align*}

We use the Bruhat decomposition to write the above integral as (up to a positive constant)
\begin{align*}
\int_{F^\times \times F^{r-4} \times \overline{B}_{r-4}(F) }&\frac{|\det g_1|^{3/2}|\det t''^{-1}|^{1/2}|c|^{1/2}|\det \overline{b}|^{-1/2}\Xi_{U_{r-3} \backslash \GL_{r-3}}\begin{psmatrix} \overline{b} & \\ x & c \end{psmatrix}\Xi_{U_{r-3} \backslash \GL_{r-3}}\left(\begin{psmatrix} \overline{b} & \\ x & c \end{psmatrix}wg_1^{-1}\right)}{\sigma_{U_{r-3} \backslash \GL_{r-3}}(t'\begin{psmatrix} \overline{b} & \\ x & c \end{psmatrix})^{d/2}\sigma_{U_{r-3} \backslash \GL_{r-3}}(t'\begin{psmatrix} \overline{b} & \\ x & c \end{psmatrix}wg_1^{-1})^{d/2}\left(1+|c|\min(\norm{y_1},\norm{g_1^{-1}y_1})\right)^{N}}\\& \times
  \frac{d_r\overline{b}dx d^\times c}{ \prod_{i=1}^{r-3}(1+|\alpha_i\begin{psmatrix} m_{r-3}\left(t'\begin{psmatrix} \overline{b} & \\ x & c \end{psmatrix}\right) & \\ & 1 \end{psmatrix})^{N}(1+|\alpha_i\begin{psmatrix} m_{r-3}\left(t'\begin{psmatrix} \overline{b} & \\ x & c \end{psmatrix}wg_1^{-1}\right) & \\ & 1 \end{psmatrix})^{N}} .
\end{align*}
Choose $\kappa \in F^\times$ such that 
\begin{align*}
|\kappa|=\max(1,\min(\norm{y_1},\norm{g_1^{-1}y_1})).
\end{align*} 
We change variables
$(\overline{b},x) \mapsto (c\overline{b},cx)$ and then $c \mapsto \kappa^{-1}c$ in the integral above to see that it is 
\begin{align} \label{before:sig:lemma} \begin{split}
\int&\frac{|\det g_1|^{3/2}|\det t''^{-1}|^{1/2}|\det \overline{b}|^{-1/2}\Xi_{U_{r-3} \backslash \GL_{r-3}}\begin{psmatrix} \overline{b} & \\ x & 1 \end{psmatrix}\Xi_{U_{r-3} \backslash \GL_{r-3}}\left(\begin{psmatrix} \overline{b} & \\ x & 1\end{psmatrix}wg_1^{-1}\right)}{\sigma_{U_{r-3} \backslash \GL_{r-3}}(t'c\kappa^{-1}\begin{psmatrix} \overline{b} & \\ x & 1 \end{psmatrix})^{d/2}\sigma_{U_{r-3} \backslash \GL_{r-3}}(t'c\kappa^{-1}\begin{psmatrix} \overline{b} & \\ x & 1 \end{psmatrix}wg_1^{-1})^{d/2}\left(1+|c\kappa^{-1}|\min(\norm{y_1},\norm{g_1^{-1}y_1})\right)^{N}}\\& \times
  \frac{d_r\overline{b}dx |\kappa^{-1}c|^{(r-3)/2} d^\times c}{ \prod_{i=1}^{r-3}\left(1+|\alpha_i\begin{psmatrix} m_{r-3}\left(t'c\kappa^{-1}\begin{psmatrix} \overline{b} & \\ x & 1 \end{psmatrix}\right) & \\ & 1 \end{psmatrix}\right)^{N}\left(1+|\alpha_i\begin{psmatrix} m_{r-3}\left(t'c\kappa^{-1}\begin{psmatrix} \overline{b} & \\ x & 1 \end{psmatrix}wg_1^{-1}\right) & \\ & 1 \end{psmatrix}\right)^{N}} .\end{split}
\end{align}

By Lemma \ref{lem:sig:bound:stuff} and \eqref{sig:ineq}, the above is dominated by the product of  
\begin{align}\label{after:sig:lem1}
    \frac{|\det g_1||\det t'|^{1/2}}{|\kappa|^{(r-3)/2}\sigma(g_1)^{d/2}\sigma(\kappa)^{d/2}}
\end{align}
and 
\begin{align*}
\begin{split}
&\int\frac{|\det \overline{b}|^{-1/2}|c|^{(r-3)/2}\sigma(c)^{d}\sigma(\det \overline{b})^{d}\Xi_{U_{r-3} \backslash \GL_{r-3}}\begin{psmatrix} \overline{b} & \\ x & 1\end{psmatrix}\Xi_{U_{r-3} \backslash \GL_{r-3}}\left(\begin{psmatrix} \overline{b} & \\ x & 1 \end{psmatrix}wg_1^{-1}\right)d_r\overline{b}dxd^\times c}{\left(1+\left|\frac{c}{\kappa}\right|\min(\norm{y_1},\norm{g_1^{-1}y_1}\right)^{N}
  \prod_{i=1}^{r-3}(1+|\alpha_i\begin{psmatrix} m_{r-3}(c\kappa^{-1}t'\begin{psmatrix}\overline{b} & \\ x & 1 \end{psmatrix}) & \\ & 1 \end{psmatrix}|)^{N}(1+|\alpha_i\begin{psmatrix} m_{r-3}(c\kappa^{-1}t'\begin{psmatrix}\overline{b}& \\ x & 1 \end{psmatrix}wg_1^{-1}) & \\ & 1 \end{psmatrix}|)^{N}}.  
\end{split}
\end{align*}
Executing the integral over $c,$ the integral above is dominated by 
\begin{align*}
\begin{split}
& \int_{ F^{r-4}\times \overline{B}_{r-4}(F)} \frac{|\det \overline{b}|^{-1/2}\sigma(\det \overline{b})^{d}\Xi_{U_{r-3} \backslash \GL_{r-3}}\begin{psmatrix} \overline{b} & \\ x & 1\end{psmatrix}\Xi_{U_{r-3} \backslash \GL_{r-3}}\left(\begin{psmatrix} \overline{b} & \\ x & 1 \end{psmatrix}wg_1^{-1}\right)d_r\overline{b}dx}{
  \prod_{i=1}^{r-4}(1+|\alpha_i\left(t'm_{r-3}\begin{psmatrix}\overline{b} & \\ x & 1\end{psmatrix}\right)|)^{N}(1+|\alpha_i\left(t' m_{r-3}\left(\begin{psmatrix} \overline{b} & \\ x & 1 \end{psmatrix} wg_1^{-1}\right)\right) |)^{N}}.\end{split}
\end{align*}

For $\tfrac{1}{2}>\epsilon>0,$ temporarily write for $c\in F^\times$
$$
\phi_\epsilon(c):=|c|^{1/2}\max(|c|,|c|^{-1})^{\epsilon}.
$$
By Lemma \ref{lem:decompositioncompare} for any $\tfrac{1}{2}>\epsilon>0,$ the integral above is dominated by
\begin{align*}
    &\int_{ T_{r-4}(F) \times K_{r-3}} \frac{\phi_\epsilon(\det t)\Xi_{U_{r-3} \backslash \GL_{r-3}}\left(kwg_1^{-1}\right)dtdk}{
  \prod_{i=1}^{r-4}(1+|\alpha_i(t'\begin{psmatrix}t & \\ & 1\end{psmatrix})|)^{N}(1+|\alpha_i(t'\begin{psmatrix} t & \\ & 1 \end{psmatrix}m_{r-3}(k wg_1^{-1}))|)^{N}}\\
  & \le \int_{ T_{r-4}(F) \times K_{r-3}} \frac{\phi_\epsilon(\det t)\Xi_{U_{r-3} \backslash \GL_{r-3}}\left(kg_1^{-1}\right)dtdk}{
  \prod_{i=j+1}^{r-4}(1+|\alpha_i\begin{psmatrix}
      t & \\
      & 1
  \end{psmatrix}|)^{N} \prod_{i=1}^{\min(j,r-4)}(1+|\alpha_i(t'\begin{psmatrix} t & \\ & 1 \end{psmatrix}m_{r-3}(kg_1^{-1}))|)^{N}}.
\end{align*}
Here we have changed variables $k \mapsto kw^{-1}.$
We write the integral over $T_{r-4}(F)$ in coordinates as 
$\begin{psmatrix} t_1t_2\dots t_{r-4} & & \\ & \ddots & \\ & & t_{r-4} \end{psmatrix}.$ View $T_{\min(j,r-4)}$ as a subgroup of $T_{r-3}$. Then we can execute the integral over $(t_{j+1},\dots,t_{r-4})$ to see that the above is dominated by 
\begin{align*}
&\int_{T_{\min(j,r-4)}(F) \times K_{r-3}}\frac{\phi_\epsilon\left(\det t\right)\Xi_{U_{r-3} \backslash \GL_{r-3}}(kg_1^{-1})dtdk}{\prod_{i=1}^{\min(j,r-4)}\left(1+\left|\alpha_i\left(t'tm_{r-3}\left(k g_1^{-1}\right)\right)\right|\right)^{N}}\\
&=\int_{T_{\min(j,r-4)}(F) \times K_{r-3}}\frac{\phi_\epsilon\left(\prod_{i=1}^{\min(j,r-4)}m_{r-3}(t(kg_1^{-1})g_1)_i\right)\Xi_{U_{r-3} \backslash \GL_{r-3}}(kg_1^{-1})dtdk}{\prod_{i=1}^{\min(j,r-4)}\left(1+\left|\alpha_i\left(t'tm_{r-3}\left(k g_1^{-1}\right)\right)\right|\right)^{N}}.
\end{align*}

Using Lemma \ref{lem:COV} this is 
\begin{align*}
&\int_{K_{r-3}}\int_{T_{\min(j,r-4)}(F)}\frac{\phi_\epsilon(\prod_{i=1}^{\min(j,r-4)}m_{r-3}(tg_1^{-1}kg_1)_i)\Xi_{U_{r-3} \backslash \GL_{r-3}}(kg_1)}{
   \prod_{i=1}^{\min(j,r-4)}\left(1+\left|\alpha_i\left(t''^{-1}t\right)\right|\right)^{N}} dtdk.\\
   &\le \int_{K_{r-3}}\int_{T_{\min(j,r-4)}(F)}\frac{\phi_\epsilon(\prod_{i=1}^{\min(j,r-4)}m_{r-3}(tg_1^{-1}kg_1)_i)\Xi_{U_{r-3} \backslash \GL_{r-3}}(kg_1)}{
   \prod_{i=1}^{\min(j,r-4)}\left(1+\left|\alpha_i(t)\right|\right)^{N}} dtdk.
\end{align*}
Applying \Cref{lem:decompositioncompare} this is
\begin{align*}
&\gamma \int_{\overline{U}_{r-3}(F)}\int_{T_{\min(j,r-4)}(F)}\frac{\phi_{\epsilon}(\prod_{i=1}^{\min(j,r-4)}m_{r-3}(tg_1^{-1}\overline{u}g_1)_i)\Xi_{U_{r-3} \backslash \GL_{r-3}}(\overline{u})\Xi_{U_{r-3} \backslash \GL_{r-3}}(\overline{u}g_1)}{\prod_{i=1}^{\min(j,r-4)}\left(1+\left|\alpha_i(t)\right|\right)^{N}} dtd\overline{u}.
\end{align*}
Using Lemma \ref{lem:upper} and  \Cref{lem:decompositioncompare} again, the integral above is dominated by 
\begin{align*}
&\gamma \int_{\overline{U}_{r-3}(F)}\int_{T_{\min(j,r-4)}(F)}\frac{\phi_\epsilon(\det t)\Xi_{U_{r-3} \backslash \GL_{r-3}}(\overline{u})\Xi_{U_{r-3} \backslash \GL_{r-3}}(\overline{u}g_1)}{
 \prod_{i=1}^{\min(j,r-4)}\left(1+\left|\alpha_i(t)\right|\right)^{N}} dtd\overline{u}\\
   &\ll \gamma \int_{\overline{U}_{r-3}(F)}\Xi_{U_{r-3} \backslash \GL_{r-3}}(\overline{u})\Xi_{U_{r-3} \backslash \GL_{r-3}}(\overline{u}g_1)d\overline{u}=\Xi_{\GL_{r-3}}(g_1).
\end{align*}

After renormalizing $\nu_{N,d}(W)$ if necessary, and keeping in mind  \eqref{first:factor1} and \eqref{after:sig:lem1}, we see that \eqref{J:eval} is bounded by $\nu_{N,d}(W)$ times
\begin{align} \label{J:eval2}
\begin{split}
&\frac{\sigma(t_{r-2},z,t_r,a)^{-d/2}|t_{r-2}|^{r-2}|t_r|^{(r-1)/2}}{|a|^{(r-2)/2}(1+|t_r|)^{N}(1+|at_{r-2}|)^N}
 \frac{|\det g_1||\det t'|^{1/2}\Xi_{\GL_{r-3}}(g_1)}{|\kappa|^{(r-3)/2}\sigma(g_1)^{d/2}\sigma(\kappa)^{d/2}}.
\end{split}
\end{align}

By Lemma \ref{lem:Levi}, Lemma \ref{lem:parab}, \eqref{prod:log} and \eqref{iso:inv}, there are $e>0$ and $d_0\ge 0$ such that \eqref{J:eval2} is dominated by 
\begin{align*}
    &|\det g_1|^{5/4}\Xi_{\GL_{r-3}}(g_1)^{1/2}\Xi_{\GL_{r-2}}\begin{psmatrix}
        g_1 &y_1 \\
          & 1
    \end{psmatrix}^{1/2}\\
    &\times\sigma(z,t_r,a)^{d_0-d/e}\sigma\left(t_{r-2}\begin{psmatrix}
        g_1 &y_1 \\
          & 1
    \end{psmatrix}\right)^{d_0-d/e}\frac{|t_{r-2}|^{r-2}|t_r|^{(r-1)/2}}{|a|^{(r-2)/2}(1+|t_r|)^{N}(1+|at_{r-2}|)^N}.
\end{align*}
This completes the proof.
\end{proof}

For $s_0,s' \in \CC$ and $W \in C^\infty(U_{r,r-2}(F) \backslash \GL_{r,r-2}(F),\psi \otimes \overline{\psi}),$ let
\begin{align*}
    W_{s_0,s'}(g,g'):=W(g,g')|\det g|^{s_0}|\det g'|^{s'}.
\end{align*}
If $s_0=0,$ we often omit it from notation.

For the next lemma, assume $g_1\in T_{r-3}(F)^+$ and use the notation \eqref{eq:g1t't''}.
\begin{lem} \label{prop:Jd>0} 
    Let $d\ge 0$ and $N' \geq \mathrm{Re}(s') \geq 0.$ For $W\in \mathcal{C}_{d}(U_{r,r-2}(F) \backslash \GL_{r,r-2}(F),\psi \otimes \overline{\psi})$, the integral defining $J(W_{s'})$ is absolutely convergent. Furthermore, there is a $d'>0$ such that for any $N\ge 0,$ there is a continuous seminorm $\mu_{N,N',d}$ on $ \mathcal{C}_{d}(U_{r,r-2}(F) \backslash \GL_{r,r-2}(F),\psi \otimes \overline{\psi})$ such that
        \begin{align*}
    &|J(W_{s'})|\left(\begin{psmatrix}k'^{-1}t_{r-2}\begin{psmatrix} g_1 & y_1 & \\ & 1 \end{psmatrix} & \\ & &  \begin{psmatrix} 1  & y_2 \\ & t_r^{-1}\end{psmatrix}\end{psmatrix}zk, k'^tam_\ell\right)  \\
    \leq & \mu_{N,N',d}(W)|\det g_1|^{5/4}\Xi_{\GL_{r-3}}(g_1)^{1/2}\Xi_{\GL_{r-2}}\begin{psmatrix}
       g_1 &y_1 \\
          &1
    \end{psmatrix}^{1/2}\\
    &\times\frac{\sigma(z,t_r,a)^{d'}\sigma\left(t_{r-2}\begin{psmatrix}
        g_1 &y_1 \\
          & 1
    \end{psmatrix}\right)^{d'}|t_{r-2}|^{r-2}|t_r|^{(r-1)/2}|\det t''^{-1}|^{\mathrm{Re}(s')}}{|a|^{(r-2)(1/2-\mathrm{Re}(s'))}(1+|t_r|)^{N}(1+|at_{r-2}|)^N}.
\end{align*}   
\end{lem}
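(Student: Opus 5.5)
The plan is to rerun the proof of Proposition \ref{prop:J}, with two changes. The weight now carries the factor $|\det g'|^{s'}$, and the weight condition has the opposite sign: $W \in \mathcal{C}_d$ with $d\ge 0$ means growth $\sigma^{d}$ rather than decay $\sigma^{-d}$. Absolute convergence will again follow from Lemma \ref{lem:C:gauge} together with the Iwasawa decomposition. The point is that the extra factor $|\det pg'|^{\mathrm{Re}(s')}$ with $0\le \mathrm{Re}(s')\le N'$ is absorbed by the rapid decay $\prod_i(1+|\alpha_i|)^{-N}$ in the directions where $|\det|$ is large. It contributes only a bounded power in the directions where $|\det|$ is small, because there $|\det|^{\mathrm{Re}(s')}\le 1$.

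\textbf{Step 1: reduce to the same integral.} As in the proof of Proposition \ref{prop:J}, I write \eqref{J:eval} as the integral \eqref{before:bounds}, reduce to $k=k'=1$ and $g_1 \in T_{r-3}(F)^+$, and change variables $g_2 \mapsto t'g_2wg_1^{-1}$ to reach \eqref{before:psi:bounds}. The only new feature is the factor
$$|\det(ag_2)|^{s'} \mapsto |a|^{(r-2)s'}\,|\det t'|^{s'}|\det g_2|^{s'}|\det g_1|^{-s'}=|a|^{(r-2)s'}|\det g_2|^{s'}|\det t''|^{-s'}.$$
Its $a$-part and $t''$-part are exactly the factors $|a|^{(r-2)\mathrm{Re}(s')}$ and $|\det t''^{-1}|^{\mathrm{Re}(s')}$ in the claimed bound. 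The remaining factor $|\det g_2|^{s'}$ stays inside the integral. The non-Archimedean support argument and the Archimedean integration by parts go through verbatim. In the Archimedean case, differentiating $|\det g_2 e^{t_0 e_{r-3,n}}|^{s'}$ produces only an extra term bounded by $|s'|\le N'$ times the same integrand. So, with constants depending on $N'$, we reach the same integral over $U_{r-3}\backslash\GL_{r-3}$ as before, with the extra weight $|\det g_2|^{\mathrm{Re}(s')}$.

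\textbf{Step 2: apply Lemma \ref{lem:fW} with the sign reversed.} I use the case $d\le 0$ of \eqref{log:terms}, that is, Lemma \ref{lem:fW} applied with $-d$ in place of $d$. This gives log factors $\sigma(t_{r-2},z,t_r,a)^{2d}\sigma_{U\backslash \GL}(\cdot)^{d}\sigma_{U\backslash\GL}(\cdot)^{d}$ in the numerator. I then follow the Bruhat decomposition and the substitutions $c\mapsto \kappa^{-1}c$. Where the earlier proof used Lemma \ref{lem:sig:bound:stuff} to gain decay, I instead bound the growing $\sigma$-factors from above using \eqref{sig:ineq}. This costs powers of $\sigma(g_1)$, $\sigma(\kappa)$, $\sigma(c)$ and $\sigma(\det\overline b)$, all of which the $c$-integral and the $\phi_\epsilon$-argument tolerate, since that argument already absorbed $\sigma(c)^d\sigma(\det \overline b)^d$.

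\textbf{Step 3, the main obstacle: the weight $|\det g_2|^{\mathrm{Re}(s')}$ in the Bruhat coordinates.} After the Bruhat decomposition this weight becomes $|c|^{(r-3)\mathrm{Re}(s')}|\det\overline b|^{\mathrm{Re}(s')}$, up to $\kappa$-powers. The $|c|$-power is harmless: the $c$-integral converges for small $|c|$ because of the factor $|c|^{(r-3)/2}$, and for large $|c|$ because of the factor $(1+|c\kappa^{-1}|\min(\cdot))^{-N}$ with $N\gg N'$. The resulting $|\kappa|$-power, together with the $\sigma$-losses from Step 2, is converted via Lemmas \ref{lem:Levi} and \ref{lem:parab} into the factors $\Xi_{\GL_{r-2}}^{1/2}$ and $\sigma(\cdot)^{d'}$. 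The $|\det\overline b|^{\mathrm{Re}(s')}$ factor is absorbed by enlarging $\epsilon$ in $\phi_\epsilon$, replacing it with $|c|^{1/2}\max(|c|,|c|^{-1})^{\epsilon+N'}$. The risk is that this breaks the final step via Lemma \ref{lem:decompositioncompare}. My remedy is to split $T_{\min(j,r-4)}$ into the region where $|\det t|\le 1$ and its complement. On the first region the weight is at most $1$. On the second I use the decay $\prod(1+|\alpha_i(t)|)^{-N}$ with $N\gg N'$, together with Lemma \ref{lem:upper}, which bounds $|\prod m(t\overline u)_i|$ by $|\det t|$ and hence only costs a further power of $|\det t|$. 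This recovers the bound $\Xi_{\GL_{r-3}}(g_1)$ up to polynomial factors in $\sigma$, and collecting everything yields the stated estimate with some $d'$ depending on $d$ and $N'$.
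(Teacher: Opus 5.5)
Your overall route is the same as the paper's. You rerun the proof of Proposition \ref{prop:J} using the $d\le 0$ case of Lemma \ref{lem:fW}, pull out $|a|^{(r-2)s'}|\det t''|^{-s'}$, and bound the now-growing log-norm factors from above.

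\textbf{The gap is in Step~2.} After the Bruhat decomposition the growing factors are $\sigma_{U_{r-3}\backslash\GL_{r-3}}(t'c\kappa^{-1}h)^{d}\,\sigma_{U_{r-3}\backslash\GL_{r-3}}(t'c\kappa^{-1}hwg_1^{-1})^{d}$ with $h=\begin{psmatrix}\overline{b}&\\x&1\end{psmatrix}$. Any upper bound for them must involve $\sigma_{U_{r-3}\backslash\GL_{r-3}}(h)=\sigma(m_{r-3}(h))$, and this is not a function of $\det\overline{b}$. For example, take $\overline{b}=I_{r-4}$ and let $x\to\infty$: the determinant stays $1$ while $\sigma(m_{r-3}(h))\to\infty$. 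Lemma \ref{lem:sig:bound:stuff} could factor through determinants only because it is a \emph{lower} bound. The paper records the correct loss via \eqref{pullback}, namely $\sigma(t')^{2d}\sigma(c)^{2d}\sigma(\kappa)^{2d}\sigma(h)^{d}\sigma(g_1)^{d}$, and this extra $\sigma(h)^d$ is why it must modify the end of the proof of Proposition \ref{prop:J}.

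Your claim that the $\phi_\epsilon$-argument ``already absorbed'' the loss is therefore unfounded, because $\phi_\epsilon$ only absorbs functions of $\det t$. In the coordinates produced by Lemma \ref{lem:decompositioncompare}, the new weight is a power of $\sigma$ of the whole torus variable, not of its determinant. The variables $t_{j+1},\dots,t_{r-4}$ are harmless. For $t_1,\dots,t_j$, however, you must carry the weight through Lemmas \ref{lem:COV}, \ref{lem:upper} and \ref{lem:decompositioncompare}. You then end with $\int_{\overline{U}_{r-3}(F)}\Xi_{U_{r-3}\backslash\GL_{r-3}}(\overline{u})\Xi_{U_{r-3}\backslash\GL_{r-3}}(\overline{u}g_1)$ integrated against a weight polynomial in $\sigma(\overline{u})$ and $\sigma(g_1)$. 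You must show this is $\ll\Xi_{\GL_{r-3}}(g_1)\sigma(g_1)^{e}$, a log-weighted version of the identity that closes the proof of Proposition \ref{prop:J}. A crude H\"older argument loses a positive power of $\norm{g_1}$, which the target bound does not allow. This step is missing from your proposal.

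There are two smaller problems.

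\emph{Step~1.} Only $\mathrm{Re}(s')$ is bounded, not $|s'|$. When $n=r-3$ in \eqref{choose:n}, differentiating $|\det g_2e^{t_0e_{r-3,n}}|^{s'-3/2}$ produces a factor proportional to $s'-\tfrac32$. This is harmless for fixed $s'$. However, $N$ integrations by parts cost $(1+|\mathrm{Im}(s')|)^N$, so the argument does not give a seminorm $\mu_{N,N',d}$ uniform in $s'$, as the notation suggests.

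\emph{Step~3.} Replacing $\phi_\epsilon$ by $|c|^{1/2}\max(|c|,|c|^{-1})^{\epsilon+N'}$ violates $\epsilon<\tfrac12$. The proof of Proposition \ref{prop:J} needs that condition already for the convergence near $0$ of the integrals over $t_{j+1},\dots,t_{r-4}$. It needs it again for the monotonicity of $\phi_\epsilon$ that makes Lemma \ref{lem:upper} usable. So the failure occurs earlier than you say. The clean fix is to keep $\phi_\epsilon(c)|c|^{\mathrm{Re}(s')}$ unchanged. This function is still increasing in $|c|$ and at most $|c|^{1/2-\epsilon}$ for $|c|\le 1$. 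For $|c|\ge 1$ it grows only polynomially, which $N\gg N'$ absorbs. Your split into $|\det t|\le 1$ and $|\det t|>1$ amounts to this, provided you never enlarge $\epsilon$ first.
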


\begin{proof}
Arguing as in the proof of Proposition \ref{prop:J}, there is a continuous seminorm $\nu_{N,d}$ on  $\mathcal{C}_{d}(U_{r,r-2}(F) \backslash \GL_{r,r-2}(F),\psi \otimes \overline{\psi})$ such that the quantity in the lemma is dominated by 
$
\nu_{N,d}\left(W \right)$ 
times
the product of 
\begin{align*}
\frac{\sigma(t_{r-2},z,t_r,a)^{2d}|t_{r-2}|^{r-2}|t_r|^{(r-1)/2}}{|a|^{(r-2)(1/2-\mathrm{Re}(s'))}(1+|t_r|)^{N}(1+|at_{r-2}|)^N}
\end{align*} 
and
\begin{align*}\int&\frac{|\det g_1|^{3/2}|\det t''^{-1}|^{1/2+\mathrm{Re}(s')}|\det \overline{b}|^{-1/2+\mathrm{Re}(s')}\Xi_{U_{r-3} \backslash \GL_{r-3}}\begin{psmatrix} \overline{b} & \\ x & 1 \end{psmatrix}\Xi_{U_{r-3} \backslash \GL_{r-3}}\left(\begin{psmatrix} \overline{b} & \\ x & 1\end{psmatrix}wg_1^{-1}\right)}{\sigma_{U_{r-3} \backslash \GL_{r-3}}(t'c\kappa^{-1}\begin{psmatrix} \overline{b} & \\ x & 1 \end{psmatrix})^{-d}\sigma_{U_{r-3} \backslash \GL_{r-3}}(t'c\kappa^{-1}\begin{psmatrix} \overline{b} & \\ x & 1 \end{psmatrix}wg_1^{-1})^{-d}\left(1+|c\kappa^{-1}|\min(\norm{y_1},\norm{g_1^{-1}y_1})\right)^{N}}\\& \times
  \frac{d_r\overline{b}dx |\kappa^{-1}c|^{(r-3)/2(1+\mathrm{Re}(s'))} d^\times c}{ \prod_{i=1}^{r-3}(1+|\alpha_i\begin{psmatrix} m_{r-3}\left(t'c\kappa^{-1}\begin{psmatrix} \overline{b} & \\ x & 1 \end{psmatrix}\right) & \\ & 1 \end{psmatrix})^{N}(1+|\alpha_i\begin{psmatrix} m_{r-3}\left(t'c\kappa^{-1}\begin{psmatrix} \overline{b} & \\ x & 1 \end{psmatrix}wg_1^{-1}\right) & \\ & 1 \end{psmatrix})^{N}}.
\end{align*}
These are the analogues of \eqref{first:factor1} and \eqref{before:sig:lemma}, respectively.  
By \eqref{pullback} we have
\begin{align*}
&\sigma_{U_{r-3} \backslash \GL_{r-3}}(t'c\kappa^{-1}\begin{psmatrix} \overline{b} & \\ x & 1 \end{psmatrix})^{d}\sigma_{U_{r-3} \backslash \GL_{r-3}}(t'c\kappa^{-1}\begin{psmatrix} \overline{b} & \\ x & 1 \end{psmatrix}wg_1^{-1})^{d} \\&\ll \sigma(t')^{2d}\sigma(c)^{2d}\sigma(\kappa)^{2d}\sigma\begin{psmatrix} \overline{b}& \\ x & 1 \end{psmatrix}^d\sigma(g_1)^{d}.
\end{align*}
A small modification of the end of the proof of 
 Proposition \ref{prop:J} now yields the lemma.
\end{proof}

For the purposes of the following lemma, when $F$ is non-Archimedean, let $\mathfrak{F}_1$ be a finite set of $\OO_F^\times$-types and let $\mathcal{C}(\mathrm{Tp}_1)_{\mathfrak{F}}$ be the space of functions supported in characters having $\OO_F^\times$-types in $\mathfrak{F}_1.$
\begin{lem} \label{lem:map2X}
One has a separately continuous map
\begin{align*} 
\mathcal{C}(\mathrm{Tp}_1) \times \mathcal{C}(U_{r,r-2}(F) \backslash \GL_{r,r-2}(F),\psi \otimes \overline{\psi}) &\lto \mathcal{C}(X_\ell^\circ(F),\mathcal{H})
\end{align*}
given by sending $(\phi,W)$ to 
\begin{align*}
\left((g,g'^tm_\ell) \mapsto \int_{\mathrm{Tp}_1}\phi(\chi)\int_{F^\times}
J(W)\left(\begin{psmatrix}I_{r-2} & \\  & \begin{psmatrix} 1 & \\ & c^{-1}\end{psmatrix}h_\ell  \end{psmatrix}g,g'^tm_\ell \right)
\frac{\chi(c) d^\times c}{|c|^{(r-2)/2}}d\mu( \chi)\right).
\end{align*}
Here the integrals converge absolutely. It is jointly continuous when $F$ is Archimedean.  In the non-Archimedean case, if $K \leq \GL_{r,r-2}(F)$ is a compact open subgroup then the restriction to $\mathcal{C}(\mathrm{Tp}_1)_{\mathfrak{F}_1} \times \mathcal{C}(U_{r,r-2}(F) \backslash \GL_{r,r-2}(F),\psi \otimes \overline{\psi})^K$ is continuous.
\end{lem}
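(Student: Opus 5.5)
The plan is to reduce everything to the pointwise bound of Proposition \ref{prop:J}, and then integrate that bound first in $c$ and then in $\chi$. Fix $x=(g,g'^tm_\ell)\in X_\ell^\circ(F)$ and write the point
$$\left(\begin{psmatrix}I_{r-2} & \\ & \begin{psmatrix}1 & \\ & c^{-1}\end{psmatrix}h_\ell\end{psmatrix}g,g'^tm_\ell\right)$$
in the coordinates \eqref{coord}. Left multiplication by $\begin{psmatrix}1&\\&c^{-1}\end{psmatrix}h_\ell$ on the $\GL_2$-block only moves the parameters $t_r$ and $y_2$: up to the unipotent part, $t_r$ is replaced by $c\,t_r$ (or a fixed multiple of it determined by $h_\ell$). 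The character $\psi(t_ry_2)$ that appears in the proof of Proposition \ref{prop:J} has absolute value $1$, so the remaining coordinates $(z,t_{r-2},g_1,y_1,a,k,k')$ are unchanged. Since $W\in\mathcal{C}(U_{r,r-2}(F)\backslash\GL_{r,r-2}(F),\psi\otimes\overline{\psi})\subset\mathcal{C}_{-d}$ for every $d>0$, Proposition \ref{prop:J} applies with $d$ as large as we like.

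\textbf{Step 1 (the $c$-integral).} The Proposition gives a factor $|ct_r|^{(r-1)/2}(1+|ct_r|)^{-N}\sigma(ct_r)^{-(d/e-d_0)}$. Together with $|c|^{-(r-2)/2}$ and $|\chi(c)|=1$, the integrand in $c$ is dominated by $|c|^{1/2}(1+|ct_r|)^{-N}\sigma(ct_r)^{-D}$ times the $c$-independent factors, where $D:=d/e-d_0$. The factor $|c|^{1/2}$ makes the integral converge near $c=0$, and $(1+|ct_r|)^{-N}$ makes it converge at infinity. Substituting $c\mapsto ct_r^{-1}$ produces $|t_r|^{-1/2}$, so the total power of $|t_r|$ becomes $|t_r|^{(r-2)/2}$. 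This matches the $|t_r|^{(r-2)/2}$ in $\Xi_{X_\ell^\circ}$ from \eqref{XiXP}.

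\textbf{Step 2 (the missing $\Xi$-factor).} The output of Step 1 no longer depends on $y_2$. However, $\Xi_{X_\ell^\circ}$ contains the factor $\max(1,|y_2|)^{-1/2}$, so decay in $y_2$ has to come from somewhere else. I expect this to be the main obstacle. My first attempt is to use oscillation of $\chi(c)$ against $\psi(ct_ry_2)$; it is cleaner to view it through the $c$-integral. After the substitution, the $c$-integral is a Mellin transform in $c$ of a function of the shape $\psi(c\,y_2t_r)\,\Phi(c)$, where $\Phi$ is smooth and rapidly decreasing (this uses Lemma \ref{lem:C:gauge} and the structure of $J(W)$). Then I would integrate against $\phi(\chi)\,d\mu(\chi)$:
\begin{itemize}
\item In the non-Archimedean case, $\phi$ is supported on finitely many components, and Fourier inversion on $F^\times$ shows that the $\chi$-integral recovers $\hat\phi$, a function of $c$ that is compactly supported away from $0$. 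So $|c|$ is pinned to a compact set, and the character sum forces $|y_2t_r|\ll 1$ or gives cancellation.
\item In the Archimedean case, $\phi\in\mathcal{C}(\mathrm{Tp}_1)$ is Schwartz in $\lambda$, so $\int\phi(\chi)\chi(c)\,d\mu$ is Schwartz in $\log|c|$. I would then integrate by parts in $c$ as in the proof of Proposition \ref{prop:J}, using $\frac{d}{dc}\psi(cy_2t_r)$, to gain arbitrary powers of $(1+|y_2t_r|)^{-1}$. This beats $\max(1,|y_2|)^{-1/2}$ after absorbing powers of $|t_r|$ into the $\sigma$ and $(1+|t_r|)^{-N}$ factors.
\end{itemize}

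\textbf{Step 3 (assembly and continuity).} Combining Steps 1–2 with Proposition \ref{prop:J} and comparing with \eqref{XiXP} and \eqref{sigX} gives
$$|\cdot|(x)\ll p(\phi)\,\mu_{d,N}(W)\,\Xi_{X_\ell^\circ}(x)\,\sigma_{X_\ell^\circ}(x)^{-D'}$$
for a seminorm $p$ on $\mathcal{C}(\mathrm{Tp}_1)$, with $D'$ arbitrarily large as $d\to\infty$. Applying the same bound to $\mathcal{R}(u)$-derivatives places the function in every $\mathcal{C}_{d}(X_\ell^\circ(F),\mathcal{H})$. For $u\in U(\mathfrak{gl}_{r,r-2})$ this is just $W\mapsto\mathcal{R}(u)W$. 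For $\mathrm{Lie}(\GL_2)_\ell$ the derivative either acts through $W$ or differentiates in $c$, which is absorbed by $\phi$ via $\phi\mapsto\lambda\phi$. The $\mathcal{H}$-equivariance is inherited from $J(W)$, and absolute convergence follows from the same estimates. These bounds give separate continuity directly. Joint continuity in the Archimedean case follows because both spaces are Fr\'echet (\cite[Corollary to Theorem 34.1]{Treves}). In the non-Archimedean case, after restricting to $\mathfrak{F}_1$ and $K$-invariants, the same argument works on the corresponding Fr\'echet pieces.
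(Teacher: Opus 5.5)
There is a genuine gap: your argument never produces the logarithmic decay in $t_r$.

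\textbf{The missing $t_r$-decay.} In Step 1, once you use $|\chi(c)|=1$ and substitute $c\mapsto ct_r^{-1}$, the factor $\sigma(z,ct_r,a)^{-D}$ from Proposition \ref{prop:J} becomes $\sigma(z,c,a)^{-D}$. The resulting bound is $|t_r|^{(r-2)/2}$ times a quantity with no $\sigma(t_r)$-decay at all, however large $d$ is. In fact, for fixed $\chi$ the $c$-integral equals exactly $\chi(t_r)^{-1}|t_r|^{(r-2)/2}$ times a quantity independent of $t_r$. So no estimate that puts absolute values on the $\chi$-integrand can recover this decay.

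But $\sigma_{X_\ell^\circ}(x)\gg\sigma\begin{psmatrix}1&y_2\\&t_r^{-1}\end{psmatrix}\gg\sigma(t_r)$. Hence your claim in Step 3, that $\sigma_{X_\ell^\circ}^{-D'}$ comes from letting $d\to\infty$, fails in the $t_r$-direction. The decay must come from cancellation in $\chi$, and this is the core of the paper's proof. After the substitution, the factor $\chi(t_r)^{-1}$ sits outside the $c$-integral. Integrating by parts along each component of $\mathrm{Tp}_1$ gains $(\log|t_r|)^{-\ell}$, at the cost of replacing $\phi$ by derivatives and inserting powers of $\log|c|$, which the $\sigma(c)^{-D}$ absorbs. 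Only then does the paper use $\psi(cy_2)$ to restrict to $|cy_2|\ll 1$. It does this via invariance under $c\mapsto c(1+\varpi^n x)$ in the non-Archimedean case, or integration by parts in $c$ in the Archimedean case. The leftover $|c|^{1/2}$ then yields $\max(1,|y_2|)^{-1/2}\sigma(y_2)^{-(d-\ell)}$.

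\textbf{Your Step 2 device.} Integrating in $\chi$ first is the Fourier-dual form of this and can be made to work, but not as you state it. The function $\check\phi(c):=\int_{\mathrm{Tp}_1}\phi(\chi)\chi(c)\,d\mu(\chi)$ is not compactly supported in $F^\times$ for non-Archimedean $F$. On each component, $\phi$ is an arbitrary smooth function on a circle, and the values of $\check\phi$ on $|c|=q^{-m}$ are its Fourier coefficients. These are only rapidly decreasing in $m$.

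So $|c|$ is not pinned. In both cases you only have $|\check\phi(c)|\ll_M\sigma(c)^{-M}$. You must combine this with $\sigma(ct_r)^{-D}$ via $\sigma(t_r)\ll\sigma(c)\sigma(ct_r)$ to get $\sigma(t_r)^{-\min(M,D)}$. In other words, $\check\phi$ has to be used for the $t_r$-decay, not only for $y_2$.

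\textbf{The $y_2$ trade-off.} Exchanging $(1+|y_2t_r|)^{-N}$ for $\max(1,|y_2|)^{-1/2}$ cannot be done by absorbing powers of $|t_r|$ into $\sigma$-factors, which are only logarithmic. It uses the surplus $|ct_r|^{1/2}$: Proposition \ref{prop:J} gives $|ct_r|^{(r-1)/2}$, while $\Xi_{X_\ell^\circ}$ only requires $|t_r|^{(r-2)/2}$.

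\textbf{The $(\GL_2)_\ell$-derivatives.} Your Step 3 treatment of $\mathrm{Lie}(\GL_2)_\ell$ is not right for the unipotent direction. That direction acts by $y_2\mapsto y_2+ut_r^{-1}$, so its derivative multiplies the $c$-integrand by a constant times $c$ (after the substitution, $ct_r^{-1}$). This is neither a derivative of $W$ nor multiplication of $\phi$ by a polynomial in $\lambda$. The paper's proof establishes only the weighted pointwise bound and does not treat these derivatives separately, so you cannot rely on that step as described.
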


\begin{proof}  Using notation as in \eqref{coord}, consider
\begin{align*}  
\int_{\mathrm{Tp}_1}\phi(\chi)\int_{F^\times}
J(W)\left(\begin{psmatrix}k'^{-1}t_{r-2}\begin{psmatrix}  g_1 &  y_1 \\ & 1\end{psmatrix}& \\ &  \begin{psmatrix} 1 & y_2 \\ & c^{-1}t_r^{-1} \end{psmatrix} \end{psmatrix}zk,ak'^tm_\ell \right)
\frac{\chi(c) d^\times c}{|c|^{(r-2)/2}}d\mu( \chi).
\end{align*}
  The inner integral converges absolutely by Proposition \ref{prop:J}, and it is bounded by a constant independent of $\chi.$  Thus the double integral over $\mathrm{Tp}_1 \times F^\times$ converges absolutely by the definition of $\mathcal{C}(\mathrm{Tp}_1).$
By a change of variables $c \mapsto ct_r^{-1},$ the expression is 
\begin{align} \label{to:bound2}
\int_{\mathrm{Tp}_1}\frac{\phi(\chi)|t_r|^{(r-2)/2}}{\chi(t_r)}\int_{F^\times}\psi(cy_2)
J(W)\left(\begin{psmatrix}k'^{-1}t_{r-2}\begin{psmatrix}  g_1 &  y_1 \\ &1\end{psmatrix}& \\ &  \begin{psmatrix} 1 &  \\ & c^{-1} \end{psmatrix} \end{psmatrix}zk,ak'^tm_\ell \right)
\frac{\chi(c) d^\times c}{|c|^{(r-2)/2}}d\mu( \chi).
\end{align}

To obtain a suitable estimate for this expression, we require cancellation in the integral over $\mathrm{Tp}_1$ and in the integral over $c.$
One has that
$$
\mathrm{Tp}_1=\left\{|\cdot|^{it}\chi: (t,\chi) \in \mathfrak{a}_{\GL_1,F}^* \times \mathrm{Tp}_1/i\mathfrak{a}_{\GL_1}^*\right\}.
$$ 
Thus there is a linear invariant differential operator on $\frac{d}{d\chi}$ on $\mathrm{Tp}_1$ that acts by the operator $\frac{d}{dt}$ on each connected component.  Assume $|t_r| \neq 1.$ Applying integration by parts to \eqref{to:bound2}, we see that it is equal to $(\sqrt{-1} \log |t_r|)^{-1}$ times
\begin{align*}
&\int_{\mathrm{Tp}_1}\frac{d}{d \chi}\phi(\chi)\frac{|t_r|^{(r-2)/2}}{\chi(t_r)}\int_{F^\times}\psi(cy_2)
J(W)\left(\begin{psmatrix}k'^{-1}t_{r-2}\begin{psmatrix}  g_1 &  y_1 \\ &1\end{psmatrix}& \\ &  \begin{psmatrix} 1 & \\ & c^{-1} \end{psmatrix} \end{psmatrix}zk,ak'^tm_\ell \right)
\frac{ \chi(c) d^\times c}{|c|^{(r-2)/2}}d\mu( \chi)\\
&+\int_{\mathrm{Tp}_1}\phi(\chi)\frac{|t_r|^{(r-2)/2}}{\chi(t_r)}\int_{F^\times}\psi(cy_2)
J(W)\left(\begin{psmatrix}k'^{-1}t_{r-2}\begin{psmatrix}  g_1 &  y_1 \\ &1\end{psmatrix}& \\ &  \begin{psmatrix} 1 & \\ & c^{-1} \end{psmatrix} \end{psmatrix}zk,ak'^tm_\ell \right)
\frac{ \sqrt{-1}\log|c|\chi(c) d^\times c}{|c|^{(r-2)/2}}d\mu( \chi).
\end{align*}
We conclude that for any $\ell \in \ZZ_{\geq 0}$ there are $\phi_{i,\ell} \in \mathcal{C}(\mathrm{Tp}_1)$ with $1 \leq i \leq \ell,$ continuous in $\phi$, such that \eqref{to:bound2} is a finite sum of the form
\begin{align} \label{to:bound3}
\int_{\mathrm{Tp}_1}\frac{\phi_{i,\ell}(\chi)|t_r|^{(r-2)/2}}{(\log|t_r|)^\ell\chi(t_r)}\int_{F^\times}\psi(cy_2)
J(W)\left(\begin{psmatrix}k'^{-1}t_{r-2}\begin{psmatrix}  g_1 &  y_1 \\ &1\end{psmatrix}& \\ &  \begin{psmatrix} 1 & \\ & c^{-1} \end{psmatrix} \end{psmatrix}zk,ak'^tm_\ell \right)
\frac{(\log|c|)^i\chi(c) d^\times c}{|c|^{(r-2)/2}}d\mu( \chi).
\end{align}

We now use the $\psi$-function to deduce bounds on \eqref{to:bound3} when $F$ is non-Archimedean. Let $K\le \GL_{r,r-2}(F)$ and $K_0\le F^\times$ be compact open subgroups that fix $W$ and $\chi$ respectively.  Then for $n$ sufficiently large in a sense depending on $K \times K_0$, we have 
\begin{align*} \begin{split}
    &\int_{F^\times}\psi(cy_2)
J(W)\left(\begin{psmatrix}k'^{-1}t_{r-2}\begin{psmatrix}  g_1 &  y_1 \\ & 1 \end{psmatrix}& \\ &  \begin{psmatrix} 1 &  \\ & c^{-1} \end{psmatrix} \end{psmatrix}zk,ak'^tm_\ell \right)
\frac{(\log|c|)^\ell\chi(c) d^\times c}{|c|^{(r-2)/2}}\\
&=\int_{F^\times}\psi(c(1+\varpi^n x)y_2)
J(W)\left(\begin{psmatrix}k'^{-1}t_{r-2}\begin{psmatrix}  g_1 &  y_1 \\ & 1\end{psmatrix}& \\ &  \begin{psmatrix} 1 &  \\ & c^{-1} \end{psmatrix} \end{psmatrix}zk,ak'^tm_\ell \right)
\frac{(\log|c|)^\ell\chi(c) d^\times c}{|c|^{(r-2)/2}} \end{split}
\end{align*}
for all $x \in \OO_F.$  It follows that the integral over $c$ in \eqref{to:bound3} is supported in $|cy_2| \ll 1.$

Using Proposition \ref{prop:J} and the considerations above, for any $N,d \geq 0$ large and any $\ell \in \ZZ$ with $d>\ell\geq 0$ there is a continuous seminorm $\nu_{d,\ell,N}$ on $\mathcal{C}(U_{r,r-2}(F) \backslash \GL_{r,r-2}(F),\psi \otimes \overline{\psi})$ such that  the integral \eqref{to:bound2} is bounded by $\mu_{d,\ell,N}(W)$ times 
\begin{align*}
& \frac{|t_{r-2}|^{r-2}|t_r|^{(r-2)/2}|\det g_1|^{5/4}\Xi_{\GL_{r-3}}(g_1)^{1/2}\Xi_{\GL_{r-2}}\begin{psmatrix}
        g_1 &y_1 \\
          & 1
    \end{psmatrix}^{1/2}}{|a|^{(r-2)/2}\sigma(t_r)^\ell \sigma\begin{psmatrix}
        t_{r-2}g_1 &t_{r-2}y_1 \\
          & t_{r-2}
    \end{psmatrix}^{d}\sigma(z,a)^d}\int_{F^\times}\frac{\sigma(c)^{\ell}|c|^{1/2}d^\times c}{\sigma(c)^{d}(1+|cy_2|)^{N}(1+|c|)^{N}}\\
    &\ll \frac{|t_{r-2}|^{r-2}|t_r|^{(r-2)/2}|\det g_1|^{5/4}\Xi^{1/2}_{\GL_{r-3}}(g_1)\Xi_{\GL_{r-2}}^{1/2}\begin{psmatrix}
        g_1 &y_1 \\
          & 1
    \end{psmatrix}}{|a|^{(r-2)/2}\sigma(z,a)^{d}\sigma\begin{psmatrix}
        t_{r-2}g_1 &t_{r-2}y_1 \\
          & t_{r-2}
    \end{psmatrix}^{d}\sigma(t_r)^\ell \sigma(y_2)^{d-\ell}\max(1,|y_2|)^{1/2}}.
\end{align*}
For any $d'>0,$ we can choose $d,\ell,N$ so that this is dominated by  $\Xi_{X_\ell^\circ}(g,g'^tm_\ell)\sigma_{X_\ell^\circ}(g,g'^tm_\ell)^{-d'}.
$
This completes the proof when $F$ is non-Archimedean. For Archimedean $F,$ an integration by parts argument as in the proof of Proposition \ref{prop:J} yields the same bound.  In fact, the integration by parts argument required here is much simpler.

This proves that the map is well-defined and separately continuous.  The remaining assertions follow from the fact that a separately continuous bilinear map from a product of Fr\'echet spaces to a locally convex topological vector space is jointly continuous \cite[Corollary to Theorem 34.1]{Treves}. 
\end{proof}

\quash{
See \cite[Theorem 1 and Theorem 3]{Queiro:Sa} for the following lemma:
\begin{lem}\label{lem:Cartanproduct}
    For $h\in T_r(F)^+$, let $\lambda_1(h)\ge \ldots\ge \lambda_r(h) $ be real numbers such that 
    \begin{align*}
        h\in K_r\begin{psmatrix}
        \varpi^{\lambda_1(h)} & & &\\
        &\varpi^{\lambda_2(h)} & &\\
        & & \ddots &\\
        & & & \varpi^{\lambda_r(h)}
    \end{psmatrix}K_r &\textrm{ if }F\textrm{ is non-Archimedean, and}\\
     h \in K_r\begin{psmatrix}
        e^{-\lambda_1(h)} & & &\\
        &e^{-\lambda_2(h)} & &\\
        & & \ddots &\\
        & & & e^{-\lambda_r(h)}
    \end{psmatrix}K_r &\textrm{ if }F \textrm{ is Archimedean.}
    \end{align*}
    Let $h,h_1,h_2\in T_r(F)^+$. If $h\in K_rh_1K_rh_2K_r$ then
    \begin{align*}
        \max_i(\lambda_i(h_1)+\lambda_{r+1-i}(h_2))\le &\lambda_1(h)\le \lambda_1(h_1)+\lambda_1(h_2),\\
        \lambda_r(h_1)+\lambda_r(h_2)\le &\lambda_r(h)\le \min_i(\lambda_i(h_1)+\lambda_{r+1-i}(h_2)).
   \end{align*}\qed
\end{lem}}

 Let $M$ be a standard Levi subgroup of $\GL_{r,r-2},$ let $\underline{\sigma}\in \mathrm{Irr}_2(M),$ and let $\underline{\varphi}$ be a smooth vector in the space of $\underline{\sigma}.$
\begin{lem} \label{lem:ZWlambda}
 Let $\Omega \subset \mathfrak{a}_{M\CC}^*$ be a compact set and let $N_1 \in \RR_{>0}.$  There is a constant $N'\ge 0$ depending on $\Omega$ such that if $\underline{\lambda}\in \Omega$ and $N_1 \ge \mathrm{Re}(s')\ge N'+r,$ then  $J((W_{\underline{\varphi_\lambda}})_{s'})$ is absolutely convergent, and for any $N\ge 0$
\begin{align*} 
&|J((W_{\underline{\varphi_\lambda}})_{s'})|\left(\begin{psmatrix}k'^{-1}t_{r-2}\begin{psmatrix} g_1 & y_1 & \\ & 1 \end{psmatrix} & \\ & &  \begin{psmatrix} 1  & y_2 \\ & t_r^{-1}\end{psmatrix}\end{psmatrix}zk, k'^tam_\ell\right)  \\ \nonumber
    &\ll_{N_1,N',N,\varphi} \frac{\left(\norm{z}\norm{a}\norm{t_{r-2}}\norm{t_r}\norm{g_1}\right)^{N'}}{|a|^{(3/2-\mathrm{Re}(s'))(r-2)}(1+|at_{r-2}|)^N(1+|t_r|)^N}.
\end{align*}
If in addition $N_1\ge \mathrm{Re}(\chi)\ge N'+(r-2)/2,$ then there is an absolute constant $N_0$ such that
\begin{align*}
    &\int_{F^\times} |J((W_{\underline{\varphi_\lambda}})_{s'})|\left(\begin{psmatrix}k'^{-1}t_{r-2}\begin{psmatrix} g_1 & y_1 & \\ & 1 \end{psmatrix} & \\ & &  \begin{psmatrix} 1  & y_2 \\ & c^{-1}t_r^{-1}\end{psmatrix}\end{psmatrix}zk, k'^tam_\ell \right)
\frac{|\chi|(c) d^\times c}{|c|^{(r-2)/2}}\\
&\ll_{N_1,N',N,\varphi} \frac{\left(\norm{z}\norm{a}\norm{t_{r-2}}\norm{g_1}\right)^{N'}\max(1,|y_2|)^{-\mathrm{Re}(\chi)+N'+(r-2)/2}}{|a|^{(3/2-\mathrm{Re}(s'))(r-2)}|t_r|^{\mathrm{Re}(\chi)-(r-2)/2}(1+|at_{r-2}|)^N}(1+\norm{\chi})^{N_0}.
\end{align*}
\end{lem}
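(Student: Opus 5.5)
We bound $J((W_{\underline{\varphi_\lambda}})_{s'})$ by crude polynomial estimates. For $\underline{\lambda}$ outside $i\mathfrak{a}_M^*$, $W_{\underline{\varphi_\lambda}}$ need not lie in any $\mathcal{C}_d$, so the Harish-Chandra-type bounds of Proposition \ref{prop:J} and Lemma \ref{prop:Jd>0} are unavailable. Instead we use Lemma \ref{lem:Arch:Whitt:fam:bound} in the Archimedean case and Lemma \ref{lem:nonArch:Whitt:fam:bound} in the non-Archimedean case, applied to each factor of $W_{\underline{\varphi_\lambda}}$ on $\GL_r\times\GL_{r-2}$. These give, uniformly for $\underline{\lambda}\in\Omega$, an estimate
\[
|W_{\underline{\varphi_\lambda}}(ak,a'k')|\ll \norm{a}^{N_2}\norm{a'}^{N_2}\prod_i(1+|\alpha_i(a)|)^{-N}\prod_i(1+|\alpha_i(a')|)^{-N},
\]
with $N_2$ depending only on $\Omega$ (and $\varphi$). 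In the non-Archimedean case the rapid decay factor is replaced by the support condition $(\alpha_i(a))\in$ compact. After reducing to $k=k'=I$ as in the proof of Proposition \ref{prop:J}, we write the integral \eqref{eq:JW} in the form \eqref{before:bounds}, with the extra factor $|\det ag_2|^{s'}$. We then use the Iwasawa decomposition on $U_{r-3}\backslash\GL_{r-3}$, writing $g_2=bk_2$, and put the torus coordinates of both arguments into Iwasawa form. The first argument is $\mathrm{diag}(at_{r-2}g_2g_1,at_{r-2},1,t_r^{-1})z$ and the second is $\mathrm{diag}(ag_2,a)$. We do not use any cancellation from $\psi$; only absolute values are needed.

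Write $g_2 g_1$ in Iwasawa form. Its torus part $m_{r-3}(g_2g_1)$ is controlled by $\norm{g_1}$ and $m_{r-3}(g_2)$: each entry differs from that of $m_{r-3}(g_2)$ by a factor bounded by $\norm{g_1}^{O(1)}$, since right multiplication by $g_1$ changes Iwasawa coordinates by at most $\norm{g_1}$ powers. We then bound $\norm{\cdot}$ of the torus parts by products of $\norm{a},\norm{t_{r-2}},\norm{z},\norm{t_r},\norm{g_1}$ and $\norm{m_{r-3}(g_2)}$, using \eqref{sig:ineq}. This leaves an integral over $t\in T_{r-3}(F)$ of
\[
\norm{t}^{2N_2}\,|\det t|^{\mathrm{Re}(s')-3/2}\,\delta_{B_{r-3}}(t)^{-1}\prod_i(1+|\alpha_i(t)|)^{-N},
\]
together with decay in $|at_{r-2}|$ and $|t_r|$, which comes from the simple roots $\alpha_{r-2},\alpha_{r-1}$ of the $\GL_r$ argument. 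The rapid decay in the simple roots makes everything integrable except the direction of the center of $\GL_{r-3}$, i.e.\ $t=c\,I$. There the integrand is $|c|^{(r-3)(\mathrm{Re}(s')-3/2)}$, up to the power $\norm{c}^{O(N_2)}$, times the decay coming from the roots linking $at_{r-2}c$ to $at_{r-2}$ and $ac$ to $a$. Large $|c|$ is killed by this root decay. Small $|c|$ is where the hypothesis $\mathrm{Re}(s')\ge N'+r$ enters: the positive power of $|c|$ dominates the growth $\norm{c}^{O(N_2)}$ once $N'$ is large relative to $N_2$, hence relative to $\Omega$. The restriction $\mathrm{Re}(s')\le N_1$ gives uniformity of the implied constant. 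Collecting the powers of $|a|$ from $|\det pg'|^{-3/2}|\det g'|^{s'}$ yields $|a|^{-(3/2-\mathrm{Re}(s'))(r-2)}$, and the remaining factors are absorbed into $(\norm{z}\norm{a}\norm{t_{r-2}}\norm{t_r}\norm{g_1})^{N'}$.

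For the second estimate we substitute $t_r\mapsto ct_r$ in the first bound and integrate over $c$ against $|\chi|(c)|c|^{-(r-2)/2}$. Large $|ct_r|$ is handled by $(1+|ct_r|)^{-N}$, and small $|ct_r|$ by $\mathrm{Re}(\chi)\ge N'+(r-2)/2$, which beats $\norm{ct_r}^{N'}$. This produces the factor $|t_r|^{-(\mathrm{Re}(\chi)-(r-2)/2)}$. To obtain the factor $\max(1,|y_2|)^{-\mathrm{Re}(\chi)+N'+(r-2)/2}$ we must exploit the character $\psi(ct_ry_2)$ from the $y_2$ entry, as in the proof of Lemma \ref{lem:map2X}. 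In the non-Archimedean case its conductor forces $|ct_ry_2|\ll1$. In the Archimedean case we integrate by parts in $c$, which costs derivatives of $W$ (harmless by Lemma \ref{lem:Arch:Whitt:fam:bound} with $D\in U(\mathfrak{gl}_r)$) and powers of $\chi$, which gives the factor $(1+\norm{\chi})^{N_0}$. Restricting to $|c|\ll |t_ry_2|^{-1}$ then yields the stated $y_2$ power.

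The main obstacle is the bookkeeping in the first step. We must make sure the comparison between the Iwasawa coordinates of $g_2g_1$ and of $g_2$ costs only $\norm{g_1}^{O(1)}$, and that the central direction is the only non-compact direction not controlled by root decay. Everything else is routine.
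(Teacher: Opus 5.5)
Your first estimate follows the paper's argument. You use the polynomial family bounds of Lemmas \ref{lem:Arch:Whitt:fam:bound} and \ref{lem:nonArch:Whitt:fam:bound} with no use of $\psi$. You then apply the Iwasawa decomposition on $U_{r-3}\backslash\GL_{r-3}$ and compare $\norm{m_{r-3}(g_2g_1)}$ with $\norm{m_{r-3}(g_2)}$ at cost $\norm{g_1}^{O(1)}$; the paper gets this from \eqref{prod:log} and \eqref{pullback}, and your direct argument works too. Finally you bound a torus integral with weight $\delta_{B_{r-3}}^{-1}$.

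One piece of your bookkeeping is wrong: root decay only helps as $|\alpha_i|\to\infty$, so it does not make everything except the central direction integrable. For instance (when $r\ge5$), along $t=(t_1,1,\dots,1)$ with $t_1\to0$ no root factor decays, while $\delta_{B_{r-3}}^{-1}(t)=|t_1|^{-(r-4)}$ and $\norm{t}^{2N_2}$ both grow. There it is again $|\det t|^{\mathrm{Re}(s')-3/2}$, with $\mathrm{Re}(s')$ large, that saves you. The correct accounting is this. Every small $|t_i|$ is paid for by $|\det t|^{\mathrm{Re}(s')}$. Every large $|t_i|$ is paid for by the chain of roots of $\mathrm{diag}(t,1)$ ending in $\alpha_{r-3}(\mathrm{diag}(t,1))=t_{r-3}$, and that last root, coming from the $\GL_{r-2}$ argument, is essential. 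This is how the paper's final torus integral converges.

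For the second estimate you have the right ingredients; they are what the paper means by ``as in Lemma \ref{lem:map2X}, integrate by parts in $c$.'' But the argument as written is internally inconsistent. In fact the inequality cannot hold in the literal form, with $|J|$ and $|\chi|(c)$ inside the integral.

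The reason is as follows. One has $\begin{psmatrix}1&y_2\\&c^{-1}t_r^{-1}\end{psmatrix}=\begin{psmatrix}1&ct_ry_2\\&1\end{psmatrix}\begin{psmatrix}1&\\&c^{-1}t_r^{-1}\end{psmatrix}$, and the unipotent factor can be moved into $U_r$. So $J$ at the point with entry $y_2$ equals $\psi(ct_ry_2)$ times $J$ at the point with $y_2=0$. Hence $|J|$ is independent of $y_2$. Once you integrate $|J|$ against $|\chi|(c)$, no oscillation remains from which a decaying factor $\max(1,|y_2|)^{-\mathrm{Re}(\chi)+N'+(r-2)/2}$ could arise.

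What is true, and what Lemma \ref{lem:ZfW2} actually uses, is the bound for $\bigl|\int_{F^\times}J(\cdots)\chi(c)|c|^{-(r-2)/2}d^\times c\bigr|$, that is, for $|Z((W_{\underline{\varphi_\lambda}})_{s'},\chi)|$. The factor $(1+\norm{\chi})^{N_0}$ in the statement is the trace of integrating by parts against $\chi(c)$.

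So reverse the order of your steps. First exploit $\psi(ct_ry_2)$ while the phase is still intact:
\begin{itemize}
\item in the non-Archimedean case, average over $c\mapsto c(1+\varpi^n x)$ to restrict to $|ct_ry_2|\ll1$;
\item in the Archimedean case, use a partition of unity separating $|ct_ry_2|\le 1$ as in Proposition \ref{prop:J}, and integrate by parts in $c$ to gain $(1+|ct_ry_2|)^{-M}$, at the cost of Lie derivatives of $W$ (harmless by Lemma \ref{lem:Arch:Whitt:fam:bound}) and a polynomial in $\norm{\chi}$.
\end{itemize}
Only then take absolute values, insert the first bound with $t_r\mapsto ct_r$, and integrate over $c$. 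That produces $|t_r|^{-(\mathrm{Re}(\chi)-(r-2)/2)}\max(1,|y_2|)^{-\mathrm{Re}(\chi)+N'+(r-2)/2}$, as you computed.
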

Here $\norm{\chi}$ is defined as in \eqref{norm:pi} if $F$ is Archimedean, and $\norm{\chi}:=1$ if $F$ is non-Archimedean.
\begin{proof}
   Without loss of generality we can assume  $s'\ge 0$ is real and $g_1\in T_{r-3}(F)^+$. Using Lemma \ref{lem:Arch:Whitt:fam:bound} and Lemma \ref{lem:nonArch:Whitt:fam:bound}, there is $N''\ge 0$ such that for any $N\ge 0$ the quantity is dominated by
    \begin{align*}
    \frac{\left(\norm{z}\norm{a}\norm{t_{r-2}}\norm{t_r}\right)^{N''}}{|a|^{(3/2-s')(r-2)}(1+|at_{r-2}|)^N(1+|t_r|)^N}
   \int_{U_{r-3}(F) \backslash \GL_{r-3}(F)}\frac{\norm{m_{r-3}(g_2)}^{N''}\norm{m_{r-3}(g_2g_1)}^{N''}|\det g_2|^{s'} d\dot{g_2}}
{\prod_{i=1}^{r-3}(1+|\alpha_i\begin{psmatrix} m_{r-3}(g_2) & \\ & 1 \end{psmatrix}|)^N}.
\end{align*}

Now by \eqref{prod:log} and \eqref{pullback} one has 
$$
\sigma_{U_{r-3} \backslash \GL_{r-3}}(g_2)\sigma_{\GL_{r-3}}(g_1)\gg\sigma_{U_{r-3}\backslash \GL_{r-3} \times \GL_{r-3}}(g_2,g_1)  \gg  \sigma_{U_{r-3} \backslash \GL_{r-3}}(g_2g_1) .
$$
Taking exponentials, we see that there is an $A>1$ such that $\norm{m_{r-3}(g_2)}^A\norm{g_1}^A \gg \norm{m_{r-3}(g_2g_1)}.$  Thus the integral above is dominated by
 \begin{align*} 
   & \norm{g_1}^{AN''}\int_{T_{r-3}(F)}\frac{\norm{t}^{(1+A)N''}|\det t|^{s'}}
{\prod_{i=1}^{r-3}(1+|\alpha_i\begin{psmatrix} t & \\ & 1 \end{psmatrix}|)^{N}}\delta_{B_{r-3}}^{-1}(t)dt \ll \norm{g_1}^{AN''}.
\end{align*}
Taking $N'=(1+A)N'',$ we deduce the first statement.
The second statement can be proved as in Lemma \ref{lem:map2X}. In the Archimedean case, perform integration by parts in $c.$ We leave the details to the reader.
\end{proof}

\begin{rem} We have not optimized the lower bounds of $\mathrm{Re}(s')$ and $\mathrm{Re}(\chi)$ in Lemma \ref{lem:ZWlambda}; they are chosen so that the integrals over $t$ and $c$ converge.   
\end{rem}

\subsection{Zeta integrals}

\begin{lem} \label{lem:for:zeta}
Let $d>0$ and $A>A'>-\tfrac{1}{2}.$ For any  $N \in \ZZ_{ \geq 0},$ there is a continuous seminorm $\nu_{d,N,A,A'}$ on  
$\mathcal{C}_{d}(U_{r,r-2}(F) \backslash \GL_{r,r-2}(F),\psi \otimes \overline{\psi})
$
such that for $W$ in this space and $s_0,s' \in \CC$ with $A\ge \mathrm{Re}(s_0+s')\ge A'$ one has 
\begin{align*}
    \int_{U_{r-2}(F) \backslash \mathcal{P}_{r-2}(F)}|W_{s_0,s'}|\left(\begin{psmatrix}p & &\\ &  1 & \\ & &c^{-1}\end{psmatrix},p  \right)\frac{d_r p}{|\det p|^{3/2}|c|^{(r-2)/2}} \leq \nu_{d,N,A,A'}(W)\frac{|c|^{1/2-\mathrm{Re}(s_0)}\sigma(c)^{d}}{(1+|c|)^{N}}.
\end{align*}
\end{lem}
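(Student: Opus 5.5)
We will bound the integrand pointwise with the gauge estimate of Lemma \ref{lem:C:gauge}, then integrate over the Iwasawa coordinates of $U_{r-2}(F)\backslash \mathcal{P}_{r-2}(F)$.

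\textbf{Step 1: coordinates.} Write $p=u\begin{psmatrix} g_2 & \\ & 1\end{psmatrix}$ with $g_2 \in U_{r-3}(F)\backslash \GL_{r-3}(F)$. Then further decompose $g_2=n t k$ with $t \in T_{r-3}(F)$ and $k\in K_{r-3}$. Up to a harmless constant, $d_rp$ becomes $\delta_{B_{r-3}}(t)^{-1}dt\,dk$. The factors $|\det g|^{s_0}|\det g'|^{s'}$ contribute
$$|c|^{-\mathrm{Re}(s_0)}|\det t|^{\mathrm{Re}(s_0+s')}.$$

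\textbf{Step 2: pointwise bound.} Apply Lemma \ref{lem:C:gauge} on $\GL_{r}\times\GL_{r-2}$. This bounds $|W|$ by $\nu_N(W)$ times a product of four kinds of factors:
\begin{itemize}
\item the half-modulus characters $\delta^{1/2}_{B_r}$ of $\mathrm{diag}(t,1,1,c^{-1})$ and $\delta^{1/2}_{B_{r-2}}$ of $\mathrm{diag}(t,1)$;
\item log-norm factors $\sigma(t)^{2d}\sigma(c)^{d}$, using \eqref{prod:log};
\item decay factors $\prod_i (1+|\alpha_i|)^{-N}$ for the simple roots.
\end{itemize}
The roots involving $c$ give $(1+|c|)^{-N}$. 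The remaining roots, on $\mathrm{diag}(t,1)$, give $\prod_{i=1}^{r-3}(1+|\alpha_i(\mathrm{diag}(t,1))|)^{-N}$.

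Compute the characters directly. The $\GL_r$ character contributes the $c$-dependence $|c|^{(r-1)/2}$ times $\delta_{B_{r-3}}^{1/2}(t)|\det t|$. The $\GL_{r-2}$ character contributes $\delta_{B_{r-3}}^{1/2}(t)|\det t|^{1/2}$. Multiply these by the measure factor $\delta_{B_{r-3}}(t)^{-1}$, by $|\det t|^{-3/2}$, and by $|c|^{-(r-2)/2}$. The $\delta_{B_{r-3}}$ factors cancel exactly, the $|\det t|$ powers cancel, and the $c$-power becomes $|c|^{1/2}$. What remains is
$$|c|^{1/2-\mathrm{Re}(s_0)}\sigma(c)^d(1+|c|)^{-N}$$
times the $t$-integral
$$\int_{T_{r-3}(F)} |\det t|^{\mathrm{Re}(s_0+s')}\sigma(t)^{2d}\prod_{i=1}^{r-3}(1+|\alpha_i(\mathrm{diag}(t,1))|)^{-N}\,dt.$$
The exact normalization of the exponents should be rechecked against \eqref{eq:JW}. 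Any discrepancy only shifts the threshold $-\tfrac12$, which is where the hypothesis $A'>-\tfrac12$ should enter.

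\textbf{Step 3: convergence of the $t$-integral (main obstacle).} Change to the coordinates $x_i=\alpha_i(\mathrm{diag}(t,1))$, so that $t_i=x_ix_{i+1}\cdots x_{r-3}$. Then $|\det t|=\prod_i |x_i|^{i}$, and the integral factors as a product of one-variable integrals
$$\int_{F^\times} |x|^{i\,\mathrm{Re}(s_0+s')}\sigma(x)^{2d}(1+|x|)^{-N}\,d^\times x.$$
Near $\infty$ each converges once $N$ is large relative to $A$. Near $0$ the integrand carries no decay from the root factors, so convergence needs a positive power of $|x|$. A bare $|x|^{i\,\mathrm{Re}(s_0+s')}$ would force $\mathrm{Re}(s_0+s')>0$, not $>-\tfrac12$.

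The delicate point is therefore the bookkeeping in Step 2. The half-modulus characters should leave an extra $|x_i|^{i/2}$-type factor, making the effective exponent $i(\mathrm{Re}(s_0+s')+\tfrac12)>0$. This requires redoing the modulus computation carefully rather than accepting the naive cancellation above. If the extra factor does not appear, I would instead exploit the $(1+|\alpha_i|)^{-N}$ bounds for the $\GL_r$ component to obtain it.

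\textbf{Step 4: uniformity.} Once the exponents near $0$ are positive, the integral is bounded uniformly for $A'\le \mathrm{Re}(s_0+s')\le A$, since $|x|^{\mathrm{Re}(\cdot)}\le |x|^{A'}+|x|^{A}$. Taking $\nu_{d,N,A,A'}$ to be a constant multiple of $\nu_N$ then gives the stated estimate.
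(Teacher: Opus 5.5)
Your approach, an Iwasawa decomposition of $U_{r-2}(F)\backslash\mathcal{P}_{r-2}(F)$ followed by the gauge bound of Lemma~\ref{lem:C:gauge}, is exactly the paper's. As written, however, the proposal does not prove the statement when $-\tfrac12<\mathrm{Re}(s_0+s')\le 0$.

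The problem is an arithmetic error in Step 2, which you flag but never resolve. For $a=\mathrm{diag}(t,1,1,c^{-1})\in T_r(F)$ one has
$$\delta_{B_r}^{1/2}(a)=|c|^{(r-1)/2}\prod_{i=1}^{r-3}|t_i|^{(r+1-2i)/2}.$$
Since $(r+1-2i)/2-(r-2-2i)/2=3/2$, this equals $|c|^{(r-1)/2}\delta_{B_{r-3}}^{1/2}(t)|\det t|^{3/2}$, not $\delta^{1/2}_{B_{r-3}}(t)|\det t|$. Your $\GL_{r-2}$ factor $\delta_{B_{r-3}}^{1/2}(t)|\det t|^{1/2}$ and your power of $|c|$ are correct.

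With the corrected factor, the $|\det t|$ powers do not cancel, and the $t$-integrand becomes
\[
|\det t|^{1/2+\mathrm{Re}(s_0+s')}\,\sigma(t)^{2d}\prod_{i=1}^{r-3}\bigl(1+|\alpha_i(\mathrm{diag}(t,1))|\bigr)^{-2N_1}.
\]
In your coordinates $x_i$, each one-variable integral then carries $|x_i|^{i(1/2+\mathrm{Re}(s_0+s'))}$. Its exponent is at least $i(\tfrac12+A')>0$, and this is exactly where the hypothesis $A'>-\tfrac12$ enters. Steps 3 and 4 then go through. Apply Lemma~\ref{lem:C:gauge} with an exponent $N_1\ge N$ chosen large in terms of $A$ and $d$, and use $(1+|c|)^{-N_1}\le(1+|c|)^{-N}$, since the lemma allows any $N\ge 0$.

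Your fallback would not have rescued the argument. The $\GL_r$ root values $\alpha_i(a)$ for $i\le r-3$ are the same $x_i$ as for $\GL_{r-2}$. The factors $(1+|\alpha_i|)^{-N}$ are $\asymp 1$ as $\alpha_i\to 0$ and only give decay as $|\alpha_i|\to\infty$. So they cannot supply a positive power of $|x_i|$ near $0$. The missing $|\det t|^{1/2}$ must come from the modulus character $\delta_{B_r}^{1/2}$, and once that character is computed correctly, it does.
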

\begin{proof}
One can apply the Iwasawa decomposition and Lemma \ref{lem:C:gauge} to deduce the bound.
\end{proof}

 \quash{let
\begin{align} \label{Z:int3}
    &Z(W_{s_0,s'},\chi):=\int_{U_{r-2}(F) \backslash \mathcal{P}_{r-2}(F) \times F^\times} W_{s_0,s'}\left(\begin{psmatrix}p & & \\ &   1 & \\ && c^{-1} \end{psmatrix},p  \right)\frac{d_r p\chi(c)d^\times c}{|\det p|^{\frac{3}{2}}|c|^{(r-2)/2}}
\end{align}
whenever this is absolutely convergent. }
\quash{
By Lemma \ref{lem:for:zeta} we have
\begin{cor}\label{cor:Z:conv}
    If $\mathrm{Re}(\chi)-\mathrm{Re}(s_0) >-\tfrac{1}{2}$ and $\mathrm{Re}(s_0+s') > -\tfrac{1}{2},$ then 
 \eqref{Z:int3}
 converges absolutely for all $W\in \mathcal{C}^w(U_{r,r-2}(F) \backslash \GL_{r,r-2}(F),\psi \otimes \overline{\psi})$
 and the map
\begin{align*}
   Z((\cdot)_{s_0,s'},\chi): \mathcal{C}^w(U_{r,r-2}(F) \backslash \GL_{r,r-2}(F),\psi \otimes \overline{\psi})  &\lto \CC
  \end{align*}
is continuous.\qed
\end{cor}}

\quash{
\begin{lem} \label{lem:Z:conv}
Let $\pi$ and $\pi'$ be irreducible generic unitary representations of $\GL_{r}(F)$ and $\GL_{r-2}(F)$ respectively. Let $W \in \mathcal{W}(\pi,\psi) \widehat{\otimes} \mathcal{W}(\pi',\overline{\psi}).$ Then \eqref{Z:int3} converges absolutely for $\mathrm{Re}(\chi)-\mathrm{Re}(s_0)>-\frac{1}{2}$ and $\mathrm{Re}(s_0)+\mathrm{Re}(s')>\ell(\pi)+\ell(\pi')-\tfrac{1}{2}.$ The map
  \begin{align*}
 \mathcal{W}(\pi,\psi) \hat{\otimes} \mathcal{W}(\pi',\overline{\psi}) &\lto \CC\\
 W &\longmapsto Z(W_{s_0,s'},\chi)
\end{align*}
is continuous.
\end{lem}

\begin{proof}
The convergence proof is similar to that of Lemma \ref{lem:for:zeta}.  One replaces Lemma \ref{lem:C:gauge} with the gauge estimate \eqref{gauge}.
For continuity, in the non-Archimedean case, we have given $\mathcal{W}(\pi,\psi) \hat{\otimes}\mathcal{W}(\pi',\overline{\psi})$ the finest locally convex topology, so this is automatic; any linear map from this space to a convex topological vector space is continuous.  In the Archimedean case, we can use Lemma \ref{lem:refine:gauge}.
\end{proof}}

  Let $\chi:F^\times \to \CC^\times$. For $W\in C^\infty(U_{r,r-2}(F)\backslash \GL_{r,r-2}(F),\psi \otimes \bar{\psi}),$ define \index{$Z(W,\chi)$}
\begin{align} \label{ZWchi}
Z(W,\chi):=\int_{U_{r-2}(F) \backslash \mathcal{P}_{r-2}(F) \times F^\times} W\left(\begin{psmatrix}p &  \\ &   \begin{psmatrix} 1 & \\ & c^{-1}\end{psmatrix}h_\ell \end{psmatrix},p  \right)\frac{d_r p\chi(c)d^\times c}{|\det p|^{3/2}|c|^{(r-2)/2}}
\end{align}
whenever this is absolute convergent.

  For $(g,g') \in \GL_{r,r-2}(F),$ define
\begin{align} \label{ZWchi:func}
Z(W,\chi)(g,g'^tm_{\ell}):=|\det g'|^{-3/2}Z(\mathcal{R}\left(\begin{psmatrix}
        g' & \\
         & I_2
    \end{psmatrix}g,g'\right)W,\chi). 
\end{align}
This descends to a function in $C^\infty(X^{\circ}_{\ell}(F),\mathcal{H})$ whenever $Z(\mathcal{R}\left(\begin{psmatrix}
        g' & \\
         & I_2
    \end{psmatrix}g,g'\right)W,\chi)$ is absolutely convergent. Similarly, for $f \in  C^\infty(X^\circ_{\ell}(F),\mathcal{H}),$ we define \index{$Z(f,W,\chi)$}
\begin{align} \begin{split} \label{Zint:2}
&Z(f,W,\chi)\\
&:=\int f\left(g,g'^tm_{\ell}\right) \left(\int_{F^\times} J(W)\left(\begin{psmatrix}
        I_{r-2} & \\
           & \begin{psmatrix} 1 & \\ & c^{-1}\end{psmatrix}h_\ell
\end{psmatrix}g,g'^tm_\ell\right)\frac{\chi(c)d^\times c}{|c|^{(r-2)/2}}\right)\frac{|\det g'|d\dot{g}'d\dot{g}}{|\det h_\ell|^{2-r}}
    \end{split}
\end{align}
whenever these integrals are defined.  Here $J(W)$ is defined as in \eqref{eq:JW} and the outer integral is over $\mathcal{P}_{r-2}(F) \backslash \GL_{r-2}(F) \times N(F) \backslash \GL_r(F)$. Note that 
\begin{align} \label{Z:ids} \begin{split}
    Z(W,\chi)(g,g'^tm_\ell)&=\int_{F^\times} J(W)\left(\begin{psmatrix}
        I_{r-2} & \\
           & \begin{psmatrix} 1 & \\ & c^{-1}\end{psmatrix}h_\ell
\end{psmatrix}g,g'^tm_\ell\right)\frac{\chi(c)d^\times c}{|c|^{(r-2)/2}},\\
    Z(f,W,\chi)&=|\det h_\ell|^{r-2}\int  f(g,g'^tm_\ell)Z(W,\chi)(g,g'^tm_{\ell})|\det g'| d\dot{g}'d\dot{g}, \end{split}
\end{align}
where the lower integral is over $\mathcal{P}_{r-2}(F) \backslash \GL_{r-2}(F) \times N(F) \backslash \GL_r(F).$

\begin{lem}
\label{lem:Z:cont0}
Assume $d>0.$  There is a $d'>0$ such that for $\mathrm{Re}(\chi)=0,$ one has a continuous map
\begin{align*}    Z(\cdot,\chi):\mathcal{C}_d(U_{r,r-2}(F) \backslash \GL_{r,r-2}(F),\psi \otimes \overline{\psi}) \lto \mathcal{C}_{d'}(X_\ell^\circ(F),\mathcal{H}).
\end{align*}
Moreover, for any continuous seminorm $\nu$ on the codomain, there is a continuous seminorm $\nu_0$ on the domain such that 
$
\nu(Z(W,\chi)) \leq \nu_0(W)(1+\norm{\chi})^{d_0}
$
for some $d_0>0.$
\end{lem}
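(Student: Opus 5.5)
The natural route is to run the proof of Lemma \ref{lem:map2X} again, with the integral over $\mathrm{Tp}_1$ deleted and the fixed unitary character $\chi$ tracked explicitly. Throughout, $W$ lies in $\mathcal{C}_d$ with $d>0$, which is the growth regime of Lemma \ref{prop:Jd>0} with $s'=0$. By \eqref{Z:ids},
\begin{align*}
Z(W,\chi)(g,g'^tm_\ell)=\int_{F^\times} J(W)\left(\begin{psmatrix} I_{r-2} & \\ & \begin{psmatrix} 1 & \\ & c^{-1}\end{psmatrix}h_\ell\end{psmatrix}g,g'^tm_\ell\right)\frac{\chi(c)d^\times c}{|c|^{(r-2)/2}}.
\end{align*}

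\textbf{Step 1: reduction to a pointwise bound.} Write the point in the coordinates \eqref{coord}, absorbing $h_\ell$. The change of variables $c\mapsto ct_r^{-1}$ turns the expression into $|t_r|^{(r-2)/2}\chi(t_r)^{-1}$ times
\begin{align*}
\int_{F^\times}\psi(cy_2)\,J(W)(\cdots,c^{-1})\,\frac{\chi(c)d^\times c}{|c|^{(r-2)/2}}.
\end{align*}
Since $\chi$ is unitary, the prefactor has absolute value $|t_r|^{(r-2)/2}$. No integration by parts in $\chi$ is available here, but it is not needed: the factor $\sigma(t_r)^{-\ell}$ that the $\mathrm{Tp}_1$-integration produced in Lemma \ref{lem:map2X} is only needed for rapid decay. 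In the weak space $\mathcal{C}_{d'}$ it is enough to allow polynomial growth in $\sigma_{X_\ell^\circ}$.

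\textbf{Step 2: bound on the $c$-integral.} I will insert the bound of Lemma \ref{prop:Jd>0} (with $s'=0$) for $J(W)$. This gives a bound by $\mu_{N,d}(W)$ times $|\det g_1|^{5/4}\Xi_{\GL_{r-3}}(g_1)^{1/2}\Xi_{\GL_{r-2}}(\cdots)^{1/2}|t_{r-2}|^{r-2}|a|^{-(r-2)/2}$, times polynomial factors in $\sigma(z,a,t_{r-2}\cdots)$, times the remaining integral
\begin{align*}
\int_{F^\times}\frac{|c|^{1/2}\sigma(c)^{d'}\,d^\times c}{(1+|c|)^{N}}.
\end{align*}
This last integral converges. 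What is still missing is the decay $\max(1,|y_2|)^{-1/2}$ built into $\Xi_{X_\ell^\circ}$, and this must come from oscillation of $\psi(cy_2)$.

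\textbf{Step 3: extracting the $|y_2|$-decay.}
\begin{itemize}
\item In the non-Archimedean case, the smoothness of $W$ under a compact open subgroup, together with the fact that $\chi$ is fixed by some compact open subgroup of $F^\times$, forces the support into $|cy_2|\ll 1$, exactly as in Lemma \ref{lem:map2X}. The $c$-integral is then $\ll\sigma(y_2)^{d'}\max(1,|y_2|)^{-1/2}$.
\item In the Archimedean case I will integrate by parts in $c$, moving derivatives onto $J(W)$ and $\chi$. Each derivative on $\chi(c)$ costs a factor $\norm{\chi}$. Finitely many steps suffice, which gives the polynomial dependence $(1+\norm{\chi})^{d_0}$. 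Derivatives of $J(W)$ are $J(\mathcal{R}(u)W)$, which are again controlled by Lemma \ref{prop:Jd>0} with a seminorm on $W$.
\end{itemize}
I expect this oscillation step to be the main obstacle. One has to check that the derivatives falling on the $c^{-1}$ entry and on the weight $|c|^{-(r-2)/2}$ preserve the shape of the bound, so that the decay in $|c|$ at $0$ and at $\infty$ survives.

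\textbf{Step 4: derivatives and continuity.} Combining Steps 2 and 3 gives $|Z(W,\chi)(x)|\le\nu_0(W)(1+\norm{\chi})^{d_0}\,\Xi_{X_\ell^\circ}(x)\sigma_{X_\ell^\circ}(x)^{d'}$, with $d'$ depending only on $d$. For the derivative seminorms $\mu_{-d'}(\mathcal{R}(u)\,\cdot)$ I will use that $\mathcal{R}(u)Z(W,\chi)=Z(\mathcal{R}(u')W,\chi)$. For $u$ coming from $(\GL_2)_\ell$ this requires commuting through the torus $\mathrm{diag}(1,c^{-1})$, which introduces at most powers of $\norm{\chi}$. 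Continuity then follows because every bound is by a continuous seminorm of $W$. In the non-Archimedean case one works with each space of $K$-fixed vectors separately, as in the definition of the LF-topology.
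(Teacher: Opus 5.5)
\textbf{Steps 1--3.} These are the paper's proof: it reruns Lemma \ref{lem:map2X} with Proposition \ref{prop:J} replaced by Lemma \ref{prop:Jd>0} (at $s'=0$). You are right that the integration by parts in $\chi$ can be dropped, since only $\sigma_{X_\ell^\circ}^{d'}$-growth is needed. Two technical remarks on Step 3:
\begin{itemize}
\item In the Archimedean case you cannot integrate by parts in $c$ over all of $F^\times$. Each step trades $\psi(cy_2)$ for a factor $(cy_2)^{-1}$, which destroys integrability as $c\to 0$; the decay at $0$ does not survive. Insert the cutoff $\mathcal{V}_0(cy_2)+\mathcal{V}(cy_2)$ from the proof of Proposition \ref{prop:J}, bound the $\mathcal{V}_0$-part trivially, and integrate by parts once on the $\mathcal{V}$-part. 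This already gives $\max(1,|y_2|)^{-1/2}(1+\norm{\chi})$ up to powers of $\sigma_{X_\ell^\circ}$.
\item In the non-Archimedean case the support argument makes the constant depend on the conductor of $\chi$ (through the subgroup fixing $\chi$). The uniformity asserted in the second part, where $\norm{\chi}=1$, therefore needs an extra argument (e.g.\ Gauss-sum cancellation) or a restriction to bounded conductor. Bounded conductor is all that is used later.
\end{itemize}

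\textbf{The genuine gap is in Step 4.} Your identity $\mathcal{R}(u)Z(W,\chi)=Z(\mathcal{R}(u')W,\chi)$ is fine for $\GL_{r,r-2}$. The claim that $(\GL_2)_\ell$ costs only powers of $\norm{\chi}$ holds for its torus $h_\ell^{-1}\{\mathrm{diag}(1,a)\}h_\ell$, on which $Z(W,\chi)$ is an eigenfunction. It is false for the unipotent radical. For $h=h_\ell^{-1}\begin{psmatrix}1&b\\&1\end{psmatrix}h_\ell$ one has
\[
\begin{psmatrix}1&\\&c^{-1}\end{psmatrix}h_\ell h^{-1}=\begin{psmatrix}1&-bc\\&1\end{psmatrix}\begin{psmatrix}1&\\&c^{-1}\end{psmatrix}h_\ell ,
\]
so $\mathcal{R}(h)Z(W,\chi)$ is the same $c$-integral with an extra factor $\psi(-bc)$. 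Consequently:
\begin{itemize}
\item The $b$-derivative multiplies the integrand by a nonconstant linear function of $c$. Effectively it replaces $\chi$ by a quasi-character of positive real part.
\item After your substitution $c\mapsto ct_r^{-1}$ this puts an extra negative power of $|t_r|$ in front (namely $|t_r|^{-1}$ for $F=\RR$).
\item At points with $y_2=0$ the remaining $c$-integral does not depend on $t_r$, and it is nonzero for suitable $W$ (argue as in Lemma \ref{lem:nonvanish}).
\item Hence $|\mathcal{R}(u)Z(W,\chi)|/\Xi_{X_\ell^\circ}$ blows up like a power of $|t_r|^{-1}$ as $t_r\to 0$, and no power of $\sigma_{X_\ell^\circ}$ dominates it.
\end{itemize}
In the non-Archimedean case the same computation shows that $Z(W,\chi)$ is fixed by no compact open subgroup of this unipotent radical. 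There the $c$-support extends to $|c|\asymp |t_r|^{-1}$, where $\psi(-bc)$ is nontrivial.

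So this step cannot be repaired, and the paper's one-line proof (a pointer to Lemma \ref{lem:map2X}) is silent on it as well. What your argument, and the paper's, actually proves is the lemma with $\mathcal{C}_{d'}(X_\ell^\circ(F),\mathcal{H})$ defined using $\GL_{r,r-2}$ together with only the torus of $(\GL_2)_\ell$.
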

\begin{proof} This follows from the
 proof of Lemma \ref{lem:map2X}, with Proposition \ref{prop:J} replaced by Lemma \ref{prop:Jd>0}.
 \end{proof}

Combining \Cref{lem:Xi:bound}, \Cref{lem:Z:cont0}, and the last part of the proof of \Cref{lem:cont}, we deduce the following lemma:

\begin{lem} \label{lem:Z:cont} Assume $d>0.$ For $\mathrm{Re}(\chi) =0,$ one has a separately continuous map
\begin{align*}
\mathcal{C}(X_\ell^\circ(F),\mathcal{H}) \times \mathcal{C}_d(U_{r,r-2}(F) \backslash \GL_{r,r-2}(F),\psi \otimes \overline{\psi})  &\lto \CC\\
(f,W) &\longmapsto Z(f, W,\chi).
\end{align*} 
When $F$ is Archimedean, it is continuous.  If $F$ is non-Archimedean and $K <\GL_{r,r-2}(F)$ is a compact open subgroup, then the restriction of the map to 
$$
\mathcal{C}(X_\ell^\circ(F),\mathcal{H})^K \times \mathcal{C}_d(U_{r,r-2}(F) \backslash \GL_{r,r-2}(F),\psi \otimes \overline{\psi})^K
$$ is continuous.  
\qed
\end{lem}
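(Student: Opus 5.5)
The statement to prove is \Cref{lem:Z:cont}. The proof reduces separate continuity of $(f,W)\mapsto Z(f,W,\chi)$ to a pairing estimate between $\mathcal{C}(X_\ell^\circ(F),\mathcal{H})$ and the weak space $\mathcal{C}_{d'}(X_\ell^\circ(F),\mathcal{H})$. Joint continuity then follows from the abstract result used at the end of the proof of \Cref{lem:cont}.

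By \eqref{Z:ids},
$$Z(f,W,\chi)=|\det h_\ell|^{r-2}\int f(g,g'^tm_\ell)\,Z(W,\chi)(g,g'^tm_\ell)\,|\det g'|\,d\dot g'd\dot g.$$
\Cref{lem:Z:cont0} gives a $d'>0$ and a continuous linear map $W\mapsto Z(W,\chi)$ from $\mathcal{C}_d(U_{r,r-2}(F)\backslash\GL_{r,r-2}(F),\psi\otimes\overline{\psi})$ into $\mathcal{C}_{d'}(X_\ell^\circ(F),\mathcal{H})$. In particular
$$|Z(W,\chi)(x)|\le \mu_{-d'}(Z(W,\chi))\,\Xi_{X_\ell^\circ}(x)\sigma_{X_\ell^\circ}(x)^{d'},$$
and $\mu_{-d'}(Z(W,\chi))\le\nu_0(W)$ for a continuous seminorm $\nu_0$, since $\mathrm{Re}(\chi)=0$ is fixed.

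For $f\in\mathcal{C}(X_\ell^\circ(F),\mathcal{H})$ we have $|f(x)|\le\mu_{D}(f)\,\Xi_{X_\ell^\circ}(x)\sigma_{X_\ell^\circ}(x)^{-D}$ for every $D$. Taking $D=d'+2d_1$, where $d_1$ is the exponent supplied by \Cref{lem:Xi:bound}, gives
$$|Z(f,W,\chi)|\le |\det h_\ell|^{r-2}\,\mu_{d'+2d_1}(f)\,\nu_0(W)\,\bigl\|\Xi_{X_\ell^\circ}\sigma_{X_\ell^\circ}^{-d_1}\bigr\|_2^2.$$
The last factor is finite by \Cref{lem:Xi:bound}. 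Here the $L^2$ norm is exactly the measure $|\det g'|\,d\dot g'd\dot g$ appearing in \eqref{Z:ids}, so no extra Jacobian enters. This estimate shows the integral converges absolutely and the bilinear form is separately continuous; in fact it is bounded by a product of continuous seminorms.

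For the continuity assertions, note that when $F$ is Archimedean both factors are Fréchet, so separate continuity implies joint continuity by \cite[Corollary to Theorem 34.1]{Treves}, exactly as in \Cref{lem:cont}. In the non-Archimedean case, fix a compact open subgroup $K$. The spaces $\mathcal{C}(X_\ell^\circ(F),\mathcal{H})^K$ and $\mathcal{C}_d(\cdots)^K$ are Fréchet (respectively Banach) with the topology given by the seminorms $\mu_D$ and $\nu_{-d}$. The displayed bound already gives joint continuity there, or one can again invoke Treves.

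The only point needing care is the mild bookkeeping of the measure normalizations in \eqref{Z:ids} against the definition of $\norm{\cdot}_2$. I expect no real obstacle, since the analytic content sits in \Cref{lem:Z:cont0}.
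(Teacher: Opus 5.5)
Your proposal is correct and follows the paper's own argument, which is stated only as a combination of \Cref{lem:Xi:bound}, \Cref{lem:Z:cont0}, and the Tr\`eves joint-continuity step at the end of the proof of \Cref{lem:cont}; your pairing estimate $|Z(f,W,\chi)|\le |\det h_\ell|^{r-2}\mu_{d'+2d_1}(f)\,\nu_0(W)\,\norm{\Xi_{X_\ell^\circ}\sigma_{X_\ell^\circ}^{-d_1}}_2^2$ is exactly how those ingredients fit together. As you note, this product-of-seminorms bound already gives joint continuity directly, so the appeal to Tr\`eves is optional.
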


\begin{lem} \label{lem:nonvanish}
For any generic unitary $\pi$ and $\pi'$ and quasi-character $\chi,$ we can choose a $W \in \mathcal{W}(\pi,\psi) \widehat{\otimes} \mathcal{W}(\pi',\overline{\psi})$ such that the integral defining $Z(W,\chi)$ is absolutely convergent and nonzero.  If $F$ is Archimedean, $\pi$ and $\pi'$ are tempered, and $\chi$ is unitary, then we may choose a $K_{r,r-2}$-finite $W$ such that $Z(W,\chi) \neq 0.$   
\end{lem}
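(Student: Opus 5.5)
The plan is to reduce $Z(W,\chi)$ to a product of a Rankin--Selberg type integral for $\GL_r\times\GL_{r-2}$ and a Tate-type integral in $c$, and then to choose $W$ as a pure tensor adapted to both. First I would translate $W$ on the right by $\begin{psmatrix} I_{r-2} & \\ & h_\ell^{-1}\end{psmatrix}$ to remove $h_\ell$. Since $\mathcal{W}(\pi,\psi)\widehat{\otimes}\mathcal{W}(\pi',\overline{\psi})$ is stable under $\GL_{r,r-2}(F)$, this changes nothing. Next I would take $W=W_1\otimes W_2$ and consider
\[
Z(W,\chi)=\int_{F^\times}\Big(\int_{U_{r-2}(F)\backslash\mathcal{P}_{r-2}(F)}W_1\begin{psmatrix} p & & \\ & 1 & \\ & & c^{-1}\end{psmatrix}W_2(p)\frac{d_rp}{|\det p|^{3/2}}\Big)\frac{\chi(c)\,d^\times c}{|c|^{(r-2)/2}}.
\]
Absolute convergence for suitable $\mathrm{Re}(\chi)$ follows from the gauge estimate \eqref{gauge} together with Theorem \ref{thm:exponent}, arguing as in Lemma \ref{lem:for:zeta}. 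In the Archimedean case I would use Lemma \ref{lem:refine:gauge} to obtain continuity in $W$.

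The key step is to exploit the fact that the inner integrand is smooth and compactly supported in $c$ after a suitable choice of $W_1$. I would use the Kirillov-type theory for $\GL_r$ relative to the mirabolic. Restriction of $\mathcal{W}(\pi,\psi)$ to $\mathcal{P}_r(F)$ contains every function $p\mapsto \phi(p)$ with $\phi\in C_c^\infty(U_r(F)\backslash\mathcal{P}_r(F),\psi)$ in the non-Archimedean case, by Bernstein--Zelevinsky / Gelfand--Kazhdan. In the Archimedean case the corresponding space is the Schwartz space of Jacquet. The embedding $\begin{psmatrix} p & & \\ & 1 & \\ & & c^{-1}\end{psmatrix}$ lies in $\mathcal{P}_r$ after multiplying by the central scalar $c^{-1}$ on the left. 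Concretely,
\[
\begin{psmatrix} p & & \\ & 1 & \\ & & c^{-1}\end{psmatrix}=c^{-1}\begin{psmatrix} cp & & \\ & c & \\ & & 1\end{psmatrix},
\]
so the central character $\omega_\pi(c)^{-1}$ comes out. I would choose $W_1$ whose restriction to $\mathcal{P}_r$ is $\phi_1(cp,c)$, with $\phi_1$ factoring as a product of a compactly supported function of $c$ near $1$ and a Whittaker-type function of $p$ compatible with $W_2$. Then the $c$-integral is localized near $1$ and is a nonzero constant times the $p$-integral at $c\approx 1$.

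It then remains to make the $p$-integral $\int W_1\begin{psmatrix} p & \\ & I_2\end{psmatrix}W_2(p)|\det p|^{-3/2}d_rp$ nonzero. I would choose $W_2$ arbitrary nonzero and $W_1|_{\mathcal{P}_{r-2}}$ supported near a point $p_0$ where $W_2(p_0)\neq 0$, with $W_1\approx \overline{W_2}$ locally, up to character compatibility: the $\psi$ versus $\overline{\psi}$ matching on $U_{r-2}$ makes the product $U_{r-2}$-invariant. This gives nonvanishing, by the same mechanism as the standard Rankin--Selberg nonvanishing of \cite{JPSSGL3I} and the Archimedean version of Jacquet.

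For the second assertion, with $F$ Archimedean, $\pi,\pi'$ tempered and $\chi$ unitary, Lemma \ref{lem:Z:cont0} shows that $Z(\cdot,\chi)$ is continuous on $\mathcal{W}(\pi,\psi)\widehat{\otimes}\mathcal{W}(\pi',\overline{\psi})$. Since $K_{r,r-2}$-finite vectors are dense, a nonzero continuous functional cannot vanish on all of them, and it remains only to know that the functional is nonzero. That follows from the first part, provided the $W$ constructed there lies in the domain where $\mathrm{Re}(\chi)=0$ is allowed, which temperedness gives.

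The main obstacle is the Archimedean realization of arbitrary Schwartz functions on the mirabolic inside the Whittaker model. For this I would cite Jacquet's results on the Kirillov model (\cite{JacquetPerfectRS}) rather than reprove them.
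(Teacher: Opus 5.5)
Your proposal is correct and follows essentially the paper's proof. You realize compactly supported (mod $U$) functions on the mirabolic inside the Whittaker model, use $\mathrm{diag}(p,1,c^{-1})=c^{-1}\mathrm{diag}(cp,c,1)$ to pull out $\omega_\pi$, and choose the compactly supported data so that the integral is nonzero; with that choice it is also absolutely convergent for every $\chi$, not just suitable $\mathrm{Re}(\chi)$. You then obtain the $K_{r,r-2}$-finite statement from the continuity in Lemma \ref{lem:Z:cont0} together with density, exactly as the paper does. The only cosmetic difference is that the paper prescribes the mirabolic restrictions of both $W$ and $W'$, whereas you fix $W_2$ and prescribe only $W_1$.
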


\begin{proof}
Let
$C_c^\infty(U_{r,r-2}(F) \backslash \mathcal{P}_{r,r-2}(F),\psi)$ be the set of smooth functions on $\mathcal{P}_{r,r-2}(F)$ that are compactly supported modulo $U_{r,r-2}(F)$ and satisfy $f(n_1g_1,n_2g_2)=\psi(n_1)\overline{\psi}(n_2)f(g_1,g_2)$ for all $(n_1,n_2) \in U_{r,r-2}(F).$  Given any $f=f_1\otimes f_2\in C_c^\infty(U_{r,r-2}(F) \backslash \mathcal{P}_{r,r-2}(F),\psi \otimes \overline{\psi}),$ we can choose $W \otimes W' \in \mathcal{W}(\pi,\psi) \widehat{\otimes} \mathcal{W}(\pi',\overline{\psi})$ such that $W \otimes W'|_{\mathcal{P}_{r}(F) \times \mathcal{P}_{r-2}(F)}=f$ \cite[(2.2)]{JPSS:Conv} \cite[Theorem 1]{Kemarsky} \cite[Proposition 5]{Jacquet:quasi-split}. Then
\begin{align*}
    Z(W,\chi)&=\int_{U_{r-2}(F) \backslash \mathcal{P}_{r-2}(F) \times F^\times} \omega_\pi(c^{-1})W\left(\begin{psmatrix}cp & \\ & \begin{psmatrix} c& \\
&  1\end{psmatrix}h_\ell \end{psmatrix},p  \right)\frac{\chi(c)d_r pd^\times c}{|\det p|^{3/2}|c|^{(r-2)/2}}\\
&=\int_{U_{r-2}(F) \backslash \mathcal{P}_{r-2}(F) \times F^\times} f\left(\begin{psmatrix}cp & &\\ & c& \\
& & 1\end{psmatrix},p  \right)\frac{(\chi\omega_\pi^{-1})(c)d_r pd^\times c}{|\det p|^{3/2}|c|^{(r-2)/2}}.
\end{align*}
Clearly, the integral converges absolutely. 

We can choose $f=f_1\otimes f_2 \in C_c^\infty(U_r(F) \backslash \mathcal{P}_{r}(F),\psi) \otimes  C_c^\infty(U_{r-2}(F) \backslash \mathcal{P}_{r-2}(F),\overline{\psi})$ such that
\begin{align*}
    \int_{F^\times} f_1\begin{psmatrix}cp & &\\ & c& \\
& & 1\end{psmatrix}\frac{(\chi\omega_\pi^{-1})(c)d^\times c}{|c|^{(r-2)/2}} 
\end{align*}
is not identically zero as a function of $p$, so that there exists $f_2$ with $Z(W,\chi)\neq 0.$  This completes the proof in the non-Archimedean case.  

In the Archimedean case, we must show that we can choose $W$ to be $K_{r,r-2}$-finite.  This follows from the continuity assertion of Lemma \ref{lem:Z:cont0} and the fact that $K_{r,r-2}$-finite vectors are dense.
\end{proof}

Recall the definition of $I_f$ from \eqref{If}.

\begin{lem} \label{lem:Zh}
Assume $\pi$ and $\pi'$ are tempered. For $f \in \mathcal{S}(\GL_{r,r-2}(F) )$,  the integral over $F^\times \times N(F) \backslash \GL_{r}(F) \times U_{r-2}(F) \backslash \GL_{r-2}(F)$ in $Z(I_f,W,\chi)$ converges absolutely and 
$$Z(I_{f},W,\chi)=Z(\pi \otimes \pi'(f)W,\chi)$$
for $\mathrm{Re}(\chi)>-\tfrac{1}{2}.$ 
\end{lem}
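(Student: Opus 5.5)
We unfold the definitions, moving the integrals defining $I_f$ inside the zeta integral, and then recognize the convolution $\pi\otimes\pi'(f)W$. Here $W\in\mathcal{W}(\pi,\psi)\widehat{\otimes}\mathcal{W}(\pi',\overline{\psi})$. By \eqref{Z:ids},
\begin{align*}
Z(I_f,W,\chi)=|\det h_\ell|^{r-2}\int I_f(g,g'^tm_\ell)\,Z(W,\chi)(g,g'^tm_\ell)\,|\det g'|\,d\dot g'd\dot g,
\end{align*}
with the outer integral over $\mathcal{P}_{r-2}(F)\backslash \GL_{r-2}(F)\times N(F)\backslash\GL_r(F)$. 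We insert the definition \eqref{If} of $I_f$, an integral over $\mathcal{P}_{r-2}(F)\times M_{r-2,2}(F)$, and the definition \eqref{ZWchi:func} of $Z(W,\chi)$. The integral over $\mathcal{P}_{r-2}(F)$ in $I_f$ combines with the quotient $\mathcal{P}_{r-2}(F)\backslash\GL_{r-2}(F)$ to give an integral over $\GL_{r-2}(F)$. The integral over $z\in M_{r-2,2}(F)$ combines with the character $\overline{\psi}(\langle m_\ell,z\rangle)$ and with $N(F)\backslash\GL_r(F)$ to give an integral over $\GL_r(F)$. The key identity for this step is the $\psi$-equivariance of $W$: the character coming from the $\mathcal{H}$-twist must cancel the Whittaker character of $W\bigl(\begin{psmatrix} p & \\ & \begin{psmatrix}1&\\&c^{-1}\end{psmatrix}h_\ell\end{psmatrix}\,\cdot\,\bigr)$ along the $N$-direction. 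This is where the choice $(1,0)h_\ell^\iota=\ell$ in \eqref{hl} and the relation \eqref{M:pair:equiv} enter. After this regrouping, we change variables $(g,g')\mapsto$ (translate by the Levi element) and use the modular factors $|\det p g'^3|^{-1/2}$ to cancel the factors $|\det g'|^{\pm 3/2}$. The expression should then become
\begin{align*}
\int_{U_{r-2}(F)\backslash\mathcal{P}_{r-2}(F)\times F^\times}\left(\int_{\GL_{r,r-2}(F)} f(g,g')\,W\Bigl(\begin{psmatrix} p & \\ & \begin{psmatrix}1&\\&c^{-1}\end{psmatrix}h_\ell\end{psmatrix}g,\,pg'\Bigr)dg\,dg'\right)\frac{d_rp\,\chi(c)d^\times c}{|\det p|^{3/2}|c|^{(r-2)/2}},
\end{align*}
which is $Z(\pi\otimes\pi'(f)W,\chi)$. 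The factors of $|\det h_\ell|$ are absorbed by conjugating by $h_\ell$.

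These formal manipulations require absolute convergence of the full integral over
\[
F^\times\times N(F)\backslash\GL_r(F)\times U_{r-2}(F)\backslash\GL_{r-2}(F),
\]
with $|W|$ and $|f|$ in place of $W$ and $f$, and this is the main obstacle. We plan to reverse the unfolding with absolute values. By Fubini–Tonelli it then suffices to bound
\begin{align*}
\int_{\GL_{r,r-2}(F)}|f|(g,g')\int_{U_{r-2}(F)\backslash\mathcal{P}_{r-2}(F)\times F^\times}\Bigl|W\Bigl(\begin{psmatrix} p & \\ & \begin{psmatrix}1&\\&c^{-1}\end{psmatrix}h_\ell\end{psmatrix}g,pg'\Bigr)\Bigr|\frac{d_rp\,|c|^{\mathrm{Re}\chi}d^\times c}{|\det p|^{3/2}|c|^{(r-2)/2}}\,dg\,dg'.
\end{align*}
Since $\pi,\pi'$ are tempered, Lemma \ref{lem:whittakerweak} places $\mathcal{R}(g,g')W$ in some $\mathcal{C}_d(U_{r,r-2}(F)\backslash\GL_{r,r-2}(F),\psi\otimes\overline{\psi})$. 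Lemma \ref{lem:for:zeta}, applied with $s_0=s'=0$ and $A'<0$, bounds the $p$-integral by $\nu(\mathcal{R}(g,g')W)\,|c|^{1/2}\sigma(c)^d(1+|c|)^{-N}$. The $c$-integral against $|c|^{\mathrm{Re}\chi}$ then converges exactly when $\mathrm{Re}(\chi)>-\tfrac12$.

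For the outer integral, we need the seminorm $\nu(\mathcal{R}(g,g')W)$ to grow at most polynomially in $\norm{(g,g')}$. This holds because the representation has moderate growth in the Archimedean case, and in the non-Archimedean case by the explicit form of $\nu_{-d}$ together with \eqref{sig:ineq}. Since $f\in\mathcal{S}(\GL_{r,r-2}(F))$ decays faster than any power of the norm, the outer integral converges. This gives absolute convergence and justifies all the interchanges above. The constant $\zeta(r)$ and measure normalizations are matched by the definition of $d_\ell p$ and $d_rp$ as in \cite[\S 2.5]{BP:GLn}.
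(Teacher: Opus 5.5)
Your route is the paper's route. You change variables $z\mapsto g'z$ in $I_f$. You then collapse the $\mathcal{P}_{r-2}(F)$-integral with $\mathcal{P}_{r-2}(F)\backslash\GL_{r-2}(F)$, and the $z$-integral with $N(F)\backslash\GL_r(F)$, using the $\psi$-equivariance of $W$ to cancel the twist. Finally, you get absolute convergence of the resulting integral of $|f||W|$ from Lemma \ref{lem:whittakerweak} and Lemma \ref{lem:for:zeta}. Your treatment of the $c$-integral correctly expands the paper's one-line justification.

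Two small corrections on convergence. First, Lemma \ref{lem:for:zeta} should be applied to $\mathcal{R}(\mathrm{diag}(I_{r-2},h_\ell)g,g')W$, not $\mathcal{R}(g,g')W$. Second, in the non-Archimedean case the seminorm it produces depends on the level of $W$, so an estimate on $\nu_{-d}$ is not the right reason. The outer integral is harmless because $f$ has compact support, so $\mathcal{R}(g,g')W$ runs through finitely many vectors.

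The step that fails is the sentence ``the factors of $|\det h_\ell|$ are absorbed by conjugating by $h_\ell$.'' The character cancellation needs no substitution in $z$. Write $n(z)=\begin{psmatrix} I_{r-2} & z\\ & I_2\end{psmatrix}$ and $B=\mathrm{diag}(1,c^{-1})h_\ell$. Then $\mathrm{diag}(pg',B)\,n(z)=n(pg'zB^{-1})\,\mathrm{diag}(pg',B)$ produces $\psi(e_{r-2}^tg'zh_\ell^{-1}e_1)$. This cancels $\overline{\psi}(\langle g'^tm_\ell,z\rangle)=\overline{\psi}(\langle (e_{r-2},0),g'zh_\ell^{-1}\rangle)$ exactly. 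In any case, no change of variables can remove the constant $|\det h_\ell|^{r-2}$ in front of \eqref{Z:ids}. What the unfolding actually gives is
\[
Z(I_f,W,\chi)=|\det h_\ell|^{r-2}\,Z(\pi\otimes\pi'(f)W,\chi).
\]
This is also what the paper's own display yields: it bounds $Z(I_f,W,\chi)$ by $|\det h_\ell|^{r-2}$ times the absolute version of $Z(\pi\otimes\pi'(f)W,\chi)$.

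A scaling check confirms the constant is real. Replacing $h_\ell$ by $\mathrm{diag}(1,v)h_\ell$ is permitted by \eqref{hl} and leaves $m_\ell$ and $I_f$ unchanged. The substitution $c\mapsto vc$ shows it multiplies $Z(W,\chi)$ by $\chi(v)|v|^{-(r-2)/2}$. Hence $Z(I_f,W,\chi)$ changes by $\chi(v)|v|^{(r-2)/2}$, while $Z(\pi\otimes\pi'(f)W,\chi)$ changes by $\chi(v)|v|^{-(r-2)/2}$.

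So the identity as stated holds only when $|\det h_\ell|=1$, or after dropping the factor $|\det h_\ell|^{r-2}$ from the definition \eqref{Zint:2}. You should carry this constant explicitly rather than assert that it disappears.
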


\begin{proof} 
Taking a change of variables $z \mapsto g'z$ in the definition of $I_f$, we see that
 $Z(I_{f},W,\chi)$ is bounded by $|\det h_\ell|^{r-2}$ times
\begin{align*}
    &\int_{\mathcal{P}_{r-2}(F) \backslash \GL_{r-2}(F) \times   N(F) \backslash \GL_{r}(F)}\Bigg(\int_{\mathcal{P}_{r-2}(F) \times M_{r-2,2}(F)} |f^\vee|\left( g^{-1}\begin{psmatrix} g'^{-1}p & z\\ & I_2 \end{psmatrix}, g'^{-1}p\right)\frac{|\det g'|^{1/2}d_\ell p dz}{|\det p|^{1/2} } 
    \\& \quad \times\int_{U_{r-2}(F) \backslash \mathcal{P}_{r-2}(F) \times F^\times}|W|\left(\begin{psmatrix} p'g' &  \\ & \begin{psmatrix} 1 & \\ & c^{-1}\end{psmatrix} h_\ell \end{psmatrix}g,p'g'\right)\frac{d_r p' }{|\det p'g'|^{3/2}} \frac{|\chi|(c)d^\times c}{|c|^{(r-2)/2}}\Bigg)|\det g'|d\dot{g}'d\dot{g} \\
    &=\int_{\GL_{r-2}(F) \times    \GL_{r}(F)}\Bigg( |f^\vee|\left( g^{-1}\begin{psmatrix} g'^{-1} & \\ & I_2 \end{psmatrix}, g'^{-1}\right) \\& \quad \times\int_{U_{r-2}(F) \backslash \mathcal{P}_{r-2}(F) \times F^\times}|W|\left(\begin{psmatrix} p'g' &  \\ & \begin{psmatrix} 1 & \\ & c^{-1}\end{psmatrix} h_\ell \end{psmatrix}g,p'g'\right)\frac{d_r p' }{|\det p'|^{3/2}} \frac{|\chi|(c)d^\times c}{|c|^{(r-2)/2}}\Bigg)dg'dg\\
    &=\int_{U_{r-2}(F) \backslash \mathcal{P}_{r-2}(F) \times F^\times}\int_{\GL_{r,r-2}(F)}|f|(g,g')|W|\left(\begin{psmatrix} p' &  \\ & \begin{psmatrix} 1 & \\ & c^{-1}\end{psmatrix} h_\ell \end{psmatrix}g,p'g'\right)dgdg'\frac{d_r p' }{|\det p'|^{3/2}} \frac{|\chi|(c)d^\times c}{|c|^{(r-2)/2}}.
\end{align*}
This converges by  Lemma \ref{lem:whittakerweak} and Lemma \ref{lem:for:zeta}.
\quash{

In the non-Archimedean case, the Whittaker function $W$ is uniformly smooth under the action of $\GL_{r,r-2}(F),$ and hence Corollary \ref{cor:Z:conv} implies that the integral above is convergent for $\mathrm{Re}(\chi)>-\tfrac{1}{2}.$  The same is true in the Archimedean case, but one must use the fact that the estimate \eqref{gauge} may be taken to be uniform for $W$ in a compact subset of $\mathcal{W}(\pi,\psi) \widehat{\otimes} \mathcal{W}(\pi',\overline{\psi})$ by \cite[Proposition 3.5]{JacquetPerfectRS}.}

The identity  now follows from a change of variables. In fact it is essentially the change of variables executed in the displayed equation above but with the absolute values signs removed.  
 \end{proof}

\begin{cor}\label{cor:tempered:ZW=ZfW}
    Assume $\pi,\pi'$ and $\chi$ are tempered. For $f\in \mathcal{C}(\GL_{r,r-2}(F)),$ 
$$
Z(I_{f},W,\chi)=Z(\pi \otimes \pi'(f)W,\chi).$$
\end{cor}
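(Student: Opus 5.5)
We will deduce Corollary \ref{cor:tempered:ZW=ZfW} from Lemma \ref{lem:Zh} by density and continuity. Fix tempered $\pi,\pi'$, a unitary $\chi$ (so $\mathrm{Re}(\chi)=0>-\tfrac12$), and $W\in\mathcal{W}(\pi,\psi)\widehat{\otimes}\mathcal{W}(\pi',\overline{\psi})$. By Lemma \ref{lem:whittakerweak} (and Theorem \ref{thm:exponent}) there is a $d>0$ with $W\in\mathcal{C}_d(U_{r,r-2}(F)\backslash\GL_{r,r-2}(F),\psi\otimes\overline{\psi})$. Lemma \ref{lem:Zh} gives the identity for $f\in\mathcal{S}(\GL_{r,r-2}(F))$. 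We show that both sides are continuous linear functionals of $f\in\mathcal{C}(\GL_{r,r-2}(F))$ and that $\mathcal{S}(\GL_{r,r-2}(F))$ is dense in $\mathcal{C}(\GL_{r,r-2}(F))$. The density is standard: in the non-Archimedean case, compactly supported bi-$K$-invariant functions are dense in $\mathcal{C}(G(F))^{K\times K}$; in the Archimedean case, truncate by smooth cutoffs (cf.\ \cite[\S 1.5]{BP:Ast}).

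\textbf{Left side.} By Lemma \ref{lem:extension}, $f\mapsto I_f$ is continuous from $\mathcal{C}(\GL_{r,r-2}(F))$ to $\mathcal{C}(X_\ell^\circ(F),\mathcal{H})$. By Lemma \ref{lem:Z:cont}, $(f_0,W)\mapsto Z(f_0,W,\chi)$ is separately continuous on $\mathcal{C}(X_\ell^\circ(F),\mathcal{H})\times\mathcal{C}_d$. Composing gives continuity of $f\mapsto Z(I_f,W,\chi)$. In the non-Archimedean case we first restrict to $f$ bi-invariant under a compact open $K$. Then $I_f$ is invariant under a compact open subgroup depending only on $K$, so the restricted continuity statements apply. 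Since every $f$ lies in such a subspace, which is closed, density within it suffices.

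\textbf{Right side.} Since $W\in\mathcal{C}_d$, Lemma \ref{lem:cont} shows that $f\mapsto \pi\otimes\pi'(f)W=\int f(g)\mathcal{R}(g)W\,dg$ is continuous from $\mathcal{C}(\GL_{r,r-2}(F))$ into $\mathcal{C}_d$, again with $K$-invariance in the $p$-adic case. Next, $Z(\cdot,\chi)$ is continuous on $\mathcal{C}_d$: it is the evaluation at the identity coset of the function $Z(W,\chi)(\cdot)$. Lemma \ref{lem:Z:cont0} gives continuity of $W\mapsto Z(W,\chi)\in\mathcal{C}_{d'}(X_\ell^\circ(F),\mathcal{H})$, and point evaluation is continuous on $\mathcal{C}_{d'}$ by the definition of $\mu_{-d'}$. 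This identification uses \eqref{Z:ids} at $(g,g')=(I_r,I_{r-2})$. Alternatively, Lemma \ref{lem:for:zeta} with $s_0=s'=0$ and $\mathrm{Re}(\chi)=0$ bounds $\int |W|\cdots|\chi|(c)\,d^\times c$ by a continuous seminorm times a convergent integral over $c$.

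\textbf{Main obstacle.} The delicate point is that Lemma \ref{lem:Zh} is stated for $f\in\mathcal{S}$. The extension needs the left side, defined by the iterated integral \eqref{Zint:2}, to be given by absolutely convergent integrals for $f\in\mathcal{C}$, so that it makes sense and equals the continuous extension. This is exactly what Lemmas \ref{lem:extension} and \ref{lem:Z:cont} provide, since they assert absolute convergence. With both sides continuous and agreeing on a dense subspace, they agree on $\mathcal{C}(\GL_{r,r-2}(F))$.
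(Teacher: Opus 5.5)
Your proposal is correct and follows essentially the same route as the paper. Both sides are continuous in $f$ (via Lemmas \ref{lem:cont}, \ref{lem:whittakerweak}, \ref{lem:extension}, \ref{lem:for:zeta}, \ref{lem:Z:cont}), $\mathcal{S}(\GL_{r,r-2}(F))$ is dense in $\mathcal{C}(\GL_{r,r-2}(F))$, Lemma \ref{lem:Zh} gives the identity on the dense subspace, and the non-Archimedean case is handled by restricting to $K\times K$-invariants; your alternative of getting continuity of $W\mapsto Z(W,\chi)$ from Lemma \ref{lem:Z:cont0} plus point evaluation works equally well.
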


\begin{proof}
    Assume $F$ is Archimedean.  Then both sides are continuous in $f$ by \Cref{lem:cont}, Lemma \ref{lem:whittakerweak}, \Cref{lem:extension}, Lemma \ref{lem:for:zeta} and \Cref{lem:Z:cont}. Therefore, the assertion follows from \Cref{lem:Zh} as $\mathcal{S}(\GL_{r,r-2}(F))$ is dense in $\mathcal{C}(\GL_{r,r-2}(F)).$  The same proof is valid in the non-Archimedean case after we restrict to $K \times K$-invariants for an appropriate compact open subgroup $K \leq \GL_{r,r-2}(F).$
\end{proof}

\subsection{Zeta integrals for nontempered representations} \label{zeta:nontemp}
The Pl\"ucker embedding provides an immersion
\begin{align} \label{embed0}
\det \times \mathrm{Pl}_r:X_r^\circ  &\lto \GG_m \times M_{r-2,r}\wedge \GG_a^r\wedge \GG_a^r\times M_{2,r}
\end{align}
given on points by 
$$
g=\begin{psmatrix}
        v_1\\
        v_2\\
        \vdots \\
         v_{r}
    \end{psmatrix}\longmapsto \bigg(\det g,\begin{psmatrix}
        v_1\\
        v_2\\
        \vdots \\
         v_{r-2}
    \end{psmatrix}\wedge v_{r-1}\wedge v_{r},\begin{psmatrix}
        v_{r-1}\\
         v_{r}
    \end{psmatrix}\bigg).
$$
Let $X_r$ be the closure of $\det \times \mathrm{Pl}_r(X_r^\circ)$ in the codomain.

As in the introduction, set
\begin{align*}
X_\ell:=X_{r} \times \mathcal{M}_\ell \supset X_\ell^\circ.
\end{align*} 
Writing 
$g=n\begin{psmatrix}
    g_1 & \\
    & g_2
\end{psmatrix}k$ for $(n,g_1,g_2,k) \in N(F) \times \GL_{r-2}(F) \times \GL_2(F) \times K_r,$ we define a norm on $X_r^\circ(F)$ via
\quash{\begin{align} \label{gnorm}
    \norm{g}_\ell:=\max\left(|\det g_1\det g_2|,|\det g_1\det g_2|^{-1},\norm{g_1}_{M_{r-2,r-2}}|\det g_2|,\norm{g_2}_{M_{2,2}}\right).
\end{align}}
\begin{align} \label{gnorm}
    \norm{g}_\ell:=\max\left(|\det g_1\det g_2|,|\det g_1\det g_2|^{-1},\norm{g_1}_{M_{r-2,r-2}}|\det g_2|,\norm{g_2}_{M_{2,2}},\frac{\norm{e_2^t h_\ell g_2}_{\GG_a^2}}{|\det g_2|}\right).
\end{align}

\begin{defn} \label{defn:Xi:rapid:loc}
    A function $f \in \mathcal{C}(X^{\circ}_\ell(F),\mathcal{H})$ is \textbf{$(\Xi,X_\ell)$-rapidly decreasing} or simply \textbf{$\Xi$-rapidly decreasing}, if for any $d>0$ and $N>0$ 
    \begin{align} \label{rd:bound}
        |f(g,g'^tm_\ell)|\ll_{d,N} |\Xi_{X_\ell^\circ}(g,g'^tm_\ell)|\max(1,\norm{g}_\ell, \norm{g'^tm_\ell})^{-N}\sigma_{X_\ell^\circ}(g,g'^tm_\ell)^{-d},
    \end{align}   
    and, when $F$ is Archimedean, \eqref{rd:bound} holds with $f$ replaced by $\mathcal{R}(u)f$ for any $u \in U(\mathfrak{gl}_{r,r-2} \oplus \mathrm{Lie}\,(\GL_2)_{\ell}).$
\end{defn}

The motivation for this definition comes from  the immersion $\det \times \mathrm{Pl}$ together with an examination of the support of the basic function $b_\ell,$ defined in \eqref{bell} below. We expect that elements of the Schwartz space of $(X_\ell(F),\mathcal{H})$ will be $\Xi$-rapidly decreasing.

\begin{lem} \label{lem:ZfW} 
Let $d>0$ and $N \ge 0.$ Suppose $f \in \mathcal{C}(X_\ell^\circ(F),\mathcal{H})$ is $\Xi$-rapidly decreasing. There is a continuous seminorm $\nu_{f,N}$ on $\mathcal{C}_{d}(U_{r,r-2}(F) \backslash \GL_{r,r-2}(F),\psi \otimes \overline{\psi})$ such that if
$N\ge \mathrm{Re}(\chi)\ge    0 $ and $N  \geq \mathrm{Re}(s') \ge 0
$
then
$$
\int_{\mathcal{P}_{r-2}(F) \backslash \GL_{r-2}(F) \times N(F) \backslash \GL_r(F)} |f|(g,g'^tm_\ell)|Z(W_{s'},\chi)(g,g'^tm_{\ell})||\det g'| d\dot{g}'d\dot{g} \leq \nu_{f,N}(W).
$$
\end{lem}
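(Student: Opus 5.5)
The plan is to bound $Z(W_{s'},\chi)(g,g'^tm_\ell)$ pointwise in the coordinates \eqref{coord}, with a continuous seminorm of $W$ in front. The rapid decay of $f$ should then absorb the growth of this bound and the polynomial growth of the non-unitary twists in $\mathrm{Re}(s')$ and $\mathrm{Re}(\chi)$.

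By the first identity in \eqref{Z:ids}, $Z(W_{s'},\chi)(g,g'^tm_\ell)$ is the integral over $c\in F^\times$ of $J(W_{s'})$ evaluated at $\begin{psmatrix} I_{r-2} & \\ & \begin{psmatrix} 1 & \\ & c^{-1}\end{psmatrix}h_\ell\end{psmatrix}g$, against $\chi(c)|c|^{-(r-2)/2}d^\times c$. In the coordinates \eqref{coord} this amounts to replacing $t_r$ by $c\,t_r$ and conjugating the $2\times 2$ block by $h_\ell$, which is harmless because $h_\ell$ is fixed. Lemma \ref{prop:Jd>0} applies for $N\geq \mathrm{Re}(s')\geq 0$. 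It bounds $|J(W_{s'})|$ by $\mu_{N,N,d}(W)$ times a product of four pieces:
\begin{itemize}
\item the factor $|\det g_1|^{5/4}\Xi_{\GL_{r-3}}(g_1)^{1/2}\Xi_{\GL_{r-2}}\begin{psmatrix} g_1 & y_1\\ & 1\end{psmatrix}^{1/2}|t_{r-2}|^{r-2}$, which already appears in $\Xi_{X_\ell^\circ}$;
\item the polynomial log factors $\sigma(\cdot)^{d'}$;
\item the twist $|\det t''^{-1}|^{\mathrm{Re}(s')}|a|^{(r-2)\mathrm{Re}(s')}$;
\item the decaying factors $|t_r|^{(r-1)/2}(1+|t_r|)^{-N}(1+|at_{r-2}|)^{-N}$.
\end{itemize}

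Next comes the $c$-integral. I substitute $t_r\mapsto c\,t_r$ and integrate $|c|^{(r-1)/2+\mathrm{Re}(\chi)-(r-2)/2}(1+|ct_r|)^{-N}\sigma(c)^{d'}$ over $c$. For $0\le \mathrm{Re}(\chi)\le N$ and $N$ large this converges, since the integrand is like $|c|^{1/2+\mathrm{Re}(\chi)}$ near $0$ and decays rapidly at infinity. The result is $\ll |t_r|^{-1/2-\mathrm{Re}(\chi)}\sigma(t_r)^{d'}$ times constants. Here I use only absolute values, with no cancellation in $y_2$. The resulting dependence on $t_r$ and $y_2$ is at worst polynomial in $\norm{g}_\ell$ and $\sigma_{X_\ell^\circ}$: the entry $y_2$ and the block $\begin{psmatrix}1&y_2\\&t_r^{-1}\end{psmatrix}$ are controlled by $\norm{g_2}_{M_{2,2}}$, $|\det g_2|^{\pm1}$ and the term $\norm{e_2^th_\ell g_2}/|\det g_2|$ in \eqref{gnorm}. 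Similarly, $|\det t''^{-1}|^{\mathrm{Re}(s')}$ and $|a|^{\pm}$ powers are bounded by a power of $\max(1,\norm{g}_\ell,\norm{g'^tm_\ell})$, using $\norm{g_1}_{M_{r-2,r-2}}|\det g_2|$, the determinant term, and $\norm{g'^tm_\ell}\asymp|a|$. The net outcome is
\begin{align*}
|Z(W_{s'},\chi)(g,g'^tm_\ell)|\ll \mu(W)\,\Xi_{X_\ell^\circ}(g,g'^tm_\ell)\max(1,\norm{g}_\ell,\norm{g'^tm_\ell})^{A}\sigma_{X_\ell^\circ}(g,g'^tm_\ell)^{B},
\end{align*}
with $A,B$ depending only on $N,d,r$ and uniform in the stated ranges of $s'$ and $\chi$.

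Finally, I multiply by $|f|$. By Definition \ref{defn:Xi:rapid:loc} with exponent $N''>A$ and $d''>B+d_1$, the integrand is $\ll \mu(W)\,\Xi_{X_\ell^\circ}^2\sigma_{X_\ell^\circ}^{-d_1}$. This is integrable for the measure $|\det g'|\,d\dot g'\,d\dot g$ by Lemma \ref{lem:Xi:bound} for suitable $d_1$. Setting $\nu_{f,N}$ to be the constant times $\mu_{N,N,d}(W)$ gives the claim.

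The main obstacle is the bookkeeping in the middle step. There I must verify that every non-tempered factor, namely powers of $|a|$, $|\det t''|$, $|t_r|$ and $\max(1,|y_2|)$ produced by the $c$-integration, is genuinely dominated by a power of $\norm{g}_\ell$ and $\norm{g'^tm_\ell}$ as defined in \eqref{gnorm}. The missing $\max(1,|y_2|)^{-1/2}$ in $\Xi_{X_\ell^\circ}$ must also be recovered from the polynomial decay. If the crude absolute-value bound in $c$ loses too much in $y_2$, I would instead repeat the cancellation argument from the proof of Lemma \ref{lem:map2X}: in the non-Archimedean case the $c$-support is forced into $|cy_2|\ll1$, and in the Archimedean case one integrates by parts in $c$. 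This would gain the factor $\max(1,|y_2|)^{-1/2}$.
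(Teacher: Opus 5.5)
Your proposal is correct and follows essentially the same route as the paper. Both arguments bound $Z(W_{s'},\chi)$ pointwise by $\Xi_{X_\ell^\circ}\sigma_{X_\ell^\circ}^{d'}$ times the twist factors $|a|^{(r-2)\mathrm{Re}(s')}|\det t''^{-1}|^{\mathrm{Re}(s')}|t_r|^{-\mathrm{Re}(\chi)}$ (via Lemma \ref{prop:Jd>0} and the $c$-integral), absorb these factors with the rapid decay of $f$, and conclude integrability from Lemma \ref{lem:Xi:bound}. Your crude absolute-value treatment of the $c$-integral suffices without the cancellation fallback, since the lost factors $\max(1,|y_2|)^{1/2}$ and $|t_r|^{-1}$ are $\ll \norm{g}_\ell^{2}$ via $\norm{g_2}_{M_{2,2}}$ and the last term of \eqref{gnorm}; one correction is that the decay exponent in Lemma \ref{prop:Jd>0} must exceed $N+\tfrac{1}{2}$, not equal $N$, or the $c$-integral diverges at $\mathrm{Re}(\chi)=N$.
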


\begin{proof} 
Write $(g,g'^tm_\ell)$ using the coordinates in \eqref{coord}. 
By the proof of  Lemma \ref{lem:map2X}, with 
the application of Proposition \ref{prop:J} replaced by 
 Lemma \ref{prop:Jd>0},
there are $d'>0$ and a continuous seminorm $\nu$ on $\mathcal{C}_d(U_{r,r-2}(F) \backslash \GL_{r,r-2}(F),\psi \otimes \overline{\psi})$ such that 
\begin{align*}
    \left|Z(W_{s'},\chi)(g,g'^tm_{\ell})\right|\le \nu(W)\Xi_{X_\ell}^\circ(g,g'^tm_\ell)\sigma_{X_\ell^\circ}(g,g'^tm_\ell)^{d'}|a|^{\mathrm{Re}(s')(r-2)}|\det t''^{-1}|^{\mathrm{Re}(s')}|t_r|^{-\mathrm{Re}(\chi)}.
\end{align*} 
On the other hand, by definition, for any $d_0\ge 0$ and $N_0,N_1,N_2,N_3\ge 0$ 
\begin{align*}
|f(g,g'^tm_\ell)|&\ll_{d_0,N_0}\frac{\Xi_{X_\ell^\circ}(g,g'^tm_\ell)\sigma_{X_\ell^\circ}(g,g'^tm_\ell)^{-d_0}\max(1,|a|)^{-N_0}\max(|z|,|z|^{-1})^{-N_1}}{ \max(|\det g|,|\det g|^{-1})^{N_2}\max(1,|t_r^{-1}z|,\norm{\begin{psmatrix} zt_{r-2}g_1 &  \\ & zt_{r-2} \end{psmatrix}}_{M_{r-2,r-2}}|z^2t_r^{-1}|)^{N_3}}.
\end{align*}
By Lemma \ref{lem:Xi:bound}, it suffices to show there are $N_i$ such that
\begin{align*}
    \sup_{a,t_{r-2},t_r,z,g_1}\frac{\max(1,|a|)^{-N_0}|a|^{\mathrm{Re}(s')(r-2)}|\det t''^{-1}|^{\mathrm{Re}(s')}|t_r|^{-\mathrm{Re}(\chi)}\max(|z|,|z|^{-1})^{-N_1}}{ \max(|t_{r}^{-1}z^rt_{r-2}^{r-2}\det g_1|,|t_r^{-1}z^rt_{r-2}^{r-2}\det g_1|^{-1})^{N_2}\max(1,|t_r^{-1}z|,\norm{\begin{psmatrix} zt_{r-2}g_1 &  \\ & zt_{r-2} \end{psmatrix}}_{M_{r-2,r-2}}|z^2t_r^{-1}|)^{N_3}}
\end{align*}
is finite. 
We choose $N_0=N(r-2).$  Then for any $N_2,N_3$, we can choose $N_1$ so that the above is dominated by 
   \begin{align*}
    \sup_{t_{r-2},t_r,g_1}\frac{|\det t''^{-1}|^{\mathrm{Re}(s')}|t_r|^{-\mathrm{Re}(\chi)}}{ \max(|t_{r}^{-1}t_{r-2}^{r-2}\det g_1|,|t_r^{-1}t_{r-2}^{r-2}\det g_1|^{-1})^{N_2}\max(1,|t_r^{-1}|,\norm{\begin{psmatrix} t_{r-2}g_1 &  \\ & t_{r-2} \end{psmatrix}}_{M_{r-2,r-2}}|t_r^{-1}|)^{N_3}}.
\end{align*}
We now change variables $t_r \mapsto t_rt_{r-2}^{r-2}\det g_1$ and take $N_2$ large in a sense depending on $N$ and $N_3$ to see that the above is dominated by 
\begin{align*}
    \sup_{t_{r-2},g_1}\frac{|\det t''^{-1}|^{\mathrm{Re}(s')}|t_{r-2}^{r-2}\det g_1|^{-\mathrm{Re}(\chi)}}{\max(1,|t_{r-2}^{r-2}\det g_1|^{-1},\norm{\begin{psmatrix} t_{r-2}g_1 &  \\ & t_{r-2} \end{psmatrix}}_{M_{r-2,r-2}}|t_{r-2}^{r-2}\det g_1|^{-1})^{N_3}}.
\end{align*}
Since $|\det t''^{-1}|^{\mathrm{Re}(s')}\leq 1,$ this is bounded provided $N_3\geq N.$
\end{proof}

Let $M$ be a standard Levi subgroup of $\GL_{r,r-2},$ let $\underline{\sigma}$ be a square-integrable representation of $M(F),$ and let $\underline{\varphi}$ be a smooth vector in the space of $\underline{\sigma}.$

\begin{lem} \label{lem:ZfW2}
 Let $\Omega \subset \mathfrak{a}_{M\CC}^*$ be a compact set. Suppose $f \in \mathcal{C}(X_{\ell}^\circ(F),\mathcal{H})$ is $\Xi$-rapidly decreasing.
  There are constants $N_0,N_1>0$ such that for $\underline{\lambda}\in \Omega,$ $N_2,N_3 \geq N_1,$ and $N_2\ge \mathrm{Re}(s')\ge N_1,$ $N_3\ge \mathrm{Re}(\chi)  \ge N_1,$
\begin{align} \label{to:bound:619}
&\int_{\mathcal{P}_{r-2}(F) \backslash \GL_{r-2}(F) \times N(F) \backslash \GL_r(F)} |f|(g,g'^tm_\ell)|Z((W_{\underline{\varphi_{\lambda}}})_{s'},\chi)(g,g'^tm_{\ell})||\det g'| d\dot{g}'d\dot{g}
\end{align}
is $O_{f,\Omega,N_2,N_3, \varphi}\left((1+\norm{\chi})^{N_0}\right)$
\end{lem}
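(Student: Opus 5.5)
The proof will follow the model of Lemma \ref{lem:ZfW}, with the tempered estimate of Lemma \ref{prop:Jd>0} replaced by the crude polynomial estimates of Lemma \ref{lem:ZWlambda}. The structure is: a pointwise bound on $Z((W_{\underline{\varphi_\lambda}})_{s'},\chi)(g,g'^tm_\ell)$, multiplied by the $\Xi$-rapid decay of $f$, followed by the Iwasawa-coordinate integration used in Lemma \ref{lem:Xi:bound}.

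\textbf{Step 1: pointwise bound on the zeta function.} Write $(g,g'^tm_\ell)$ in the coordinates \eqref{coord}. By \eqref{Z:ids},
\[
Z((W_{\underline{\varphi_\lambda}})_{s'},\chi)(g,g'^tm_\ell)=\int_{F^\times}J((W_{\underline{\varphi_\lambda}})_{s'})(\cdots)\frac{\chi(c)\,d^\times c}{|c|^{(r-2)/2}}.
\]
I take $N_1$ at least $N'+r$, where $N'$ is the constant attached to $\Omega$ in Lemma \ref{lem:ZWlambda}. Then the second estimate of Lemma \ref{lem:ZWlambda}, with $N_1$ there replaced by $\max(N_2,N_3)$, bounds $|Z((W_{\underline{\varphi_\lambda}})_{s'},\chi)(g,g'^tm_\ell)|$ by
\[
\frac{(\norm{z}\norm{a}\norm{t_{r-2}}\norm{g_1})^{N'}\max(1,|y_2|)^{-\mathrm{Re}(\chi)+N'+(r-2)/2}(1+\norm{\chi})^{N_0}}{|a|^{(3/2-\mathrm{Re}(s'))(r-2)}\,|t_r|^{\mathrm{Re}(\chi)-(r-2)/2}\,(1+|at_{r-2}|)^{N}}.
\]
The implied constant depends only on $\Omega$, $N_2$, $N_3$ and $\varphi$, uniformly for $\underline{\lambda}\in\Omega$. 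Because $\mathrm{Re}(\chi)\ge N_1$ is large, the factor in $y_2$ is bounded. Everything else is polynomial in $\norm{z},\norm{a},\norm{t_{r-2}},\norm{g_1}$ and in $|t_r|^{-1}$, with exponents bounded in terms of $N_2,N_3$.

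\textbf{Step 2: absorb the polynomial growth using $f$.} By Definition \ref{defn:Xi:rapid:loc}, $|f|$ is bounded by $\Xi_{X_\ell^\circ}\sigma_{X_\ell^\circ}^{-d}$ times an arbitrary negative power of $\max(1,\norm{g}_\ell,\norm{g'^tm_\ell})$. As in the proof of Lemma \ref{lem:ZfW}, this power dominates negative powers of the following quantities:
\begin{itemize}
\item $\max(1,|a|)$;
\item $\max(|z|,|z|^{-1})$;
\item $\max(|\det g|,|\det g|^{-1})$, where $|\det g|=|t_r^{-1}z^rt_{r-2}^{r-2}\det g_1|$;
\item $\max\bigl(1,|t_r^{-1}z|,\norm{\mathrm{diag}(zt_{r-2}g_1,zt_{r-2})}\,|z^2t_r^{-1}|\bigr)$.
\end{itemize}
Together these control $|t_r|^{-1}$, $\norm{t_{r-2}}$, $\norm{g_1}$, $|a|$ and $\norm{z}$.

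\textbf{Step 3: the main obstacle.} The difficulty is the coordinates in which Step 1 gives no decay, or gives growth via $|a|^{-(3/2-\mathrm{Re}(s'))(r-2)}$ and $|t_r|^{-(\mathrm{Re}(\chi)-(r-2)/2)}$.
\begin{itemize}
\item \emph{Large $|t_r|$.} Here $|t_r|^{-\mathrm{Re}(\chi)+(r-2)/2}$ already decays, since $\mathrm{Re}(\chi)$ is large.
\item \emph{Small $|t_r|$.} The term $|t_r^{-1}z|$ in $\norm{g}_\ell$ provides the decay.
\item \emph{Small $|a|$.} We have $\mathrm{Re}(s')\ge N_1>3/2$, so $|a|^{(\mathrm{Re}(s')-3/2)(r-2)}$ is bounded for $|a|\le1$.
\item \emph{Large $|a|$.} The factor $\max(1,|a|)^{-N_0'}$ coming from $f$ handles this, with $N_0'$ chosen in terms of $N_2$.
\end{itemize}
Following the change of variables $t_r\mapsto t_rt_{r-2}^{r-2}\det g_1$ from Lemma \ref{lem:ZfW}, I choose the exponents coming from $f$ successively large in terms of $N_2,N_3,N'$. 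This reduces the product of all these factors to a bounded function times $\sigma_{X_\ell^\circ}^{-d}$.

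\textbf{Step 4: integrate.} The integrand of \eqref{to:bound:619} is now bounded by
\[
C_{f,\Omega,N_2,N_3,\varphi}\,(1+\norm{\chi})^{N_0}\,\Xi_{X_\ell^\circ}\,\sigma_{X_\ell^\circ}^{-d}\cdot(\text{bounded}).
\]
The integral of $\Xi_{X_\ell^\circ}\sigma_{X_\ell^\circ}^{-d}$ against the measure $|\det g'|\,d\dot g'\,d\dot g$ is finite for large $d$. This follows from the computation in the proof of Lemma \ref{lem:Xi:bound}, which in fact handles $\Xi^2$. If that bookkeeping is awkward, I instead keep one factor $\Xi_{X_\ell^\circ}$ and bound the remaining coordinate integrals directly by the super-polynomial decay obtained in Step 3. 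This yields the claimed $O((1+\norm{\chi})^{N_0})$.
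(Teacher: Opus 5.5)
Your Steps 1 and 2 match the paper's opening moves. Step 4, however, rests on a false integrability claim, and Step 3 is built to feed into it.

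The function $\Xi_{X_\ell^\circ}\sigma_{X_\ell^\circ}^{-d}$ is \emph{not} integrable against $|\det g'|\,d\dot g'\,d\dot g$ for any $d$. In the coordinates \eqref{coord}, the measure is Lebesgue measure $dy_2$ in the variable $y_2$, times factors not involving $y_2$. Meanwhile $\Xi_{X_\ell^\circ}$ depends on $y_2$ only through $\max(1,|y_2|)^{-1/2}$, and $\sigma_{X_\ell^\circ}$ grows only logarithmically in $|y_2|$, so the $y_2$-integral already diverges. Similarly, $|\det g'|\,d\dot g'$ is Haar measure on $F^{r-2}$, so the $a$-dependence of $\Xi_{X_\ell^\circ}$ times the measure is $|a|^{(r-2)/2}\,d^\times a$, which diverges as $|a|\to\infty$.

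Lemma~\ref{lem:Xi:bound} is an $L^2$ statement, and on this infinite-measure space $L^2$-integrability gives no $L^1$-integrability. Indeed $\Xi_{X_\ell^\circ}$ is normalized to have exactly square-integrable size, so one power of it is never enough. Consequently, in Step 3 you cannot spend the rapid decay of $f$ merely on cancelling the polynomial growth from Lemma~\ref{lem:ZWlambda} and reduce to ``bounded times $\sigma_{X_\ell^\circ}^{-d}$''. That decay is itself the source of convergence.

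The paper does something different at this point. It bounds $\Xi_{\GL_{r-3}}$ and $\Xi_{\GL_{r-2}}$ trivially (Corollary~\ref{cor:xi:bounded}). It then turns the factor $\max(1,\norm{g}_\ell,\norm{g'^tm_\ell})^{-N}$ into a Schwartz function $\Phi$ of $\bigl(z^rt_{r-2}^{r-2}t_r^{-1}\det g_1,\ z^3t_r^{-1}t_{r-2}\begin{psmatrix} g_1&y_1\\&1\end{psmatrix},\ z,\ t_r^{-1}z,\ a\bigr)$, that is, of coordinates on the affine closure, and keeps this $\Phi$ as the source of convergence.

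This $\Phi$ gives no decay as $|a|\to 0$. Nor does it give decay as $|t_r|\to\infty$ when $t_{r-2}$ and $g_1$ grow so as to keep those coordinates bounded. These are exactly the directions where $\mathrm{Re}(s')\gg 1$ and $\mathrm{Re}(\chi)\gg 1$ must be used, and they are coupled:
\begin{itemize}
\item After integrating out $z,y_1,y_2$ and then $a$ (using $\mathrm{Re}(s')\ge N_1$), one changes variables $g_1\mapsto t_{r-2}^{-1}g_1$, $t_{r-2}\mapsto t_{r-2}t_r\det g_1^{-1}$, $g_1\mapsto t_rg_1$.
\item One then trades the Schwartz decay in $t_r^{3-r}\det g_1^{-1}$ for a factor $|t_r^{3-r}\det g_1^{-1}|^{-C}$. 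This makes the $\GL_{r-3}(F)$-integral converge near singular $g_1$ despite the $\norm{g_1}^{O(N')}$ growth.
\item The price is that the $t_r$-integral then requires $\mathrm{Re}(\chi)\gg C$.
\end{itemize}
Your one-variable-at-a-time case list in Step 3 does not see this interaction. The fallback sentence at the end of Step 4 points toward this computation but does not carry it out.
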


\begin{proof}   
Assume without loss of generality that $s'$ is real. We proceed as in the proof of Lemma \ref{lem:ZfW}, writing $(g,g'^tm_\ell)$ in the coordinates in \eqref{coord}.
 By Lemma \ref{lem:ZWlambda} 
\begin{align*}
    &|Z((W_{\underline{\varphi_{\lambda}}})_{s'},\chi)(g,g'^tm_{\ell})|\ll_{N_0,N_2,N_3,N} \frac{\left(\norm{z}\norm{a}\norm{t_{r-2}}\norm{g_1}\right)^{N'}\max(1,|y_2|)^{-\mathrm{Re}(\chi)+N'+(r-2)/2}}{|a|^{(3/2-s')(r-2)}|t_r|^{\mathrm{Re}(\chi)-(r-2)/2}(1+|at_{r-2}|)^N}(1+\norm{\chi})^{N_0}
\end{align*}
for some $N'>0$ depending on $\Omega$ and any $N_2\ge s'\ge 2N'+r, N_3\ge \mathrm{Re}(\chi)\ge N'+(r-2)/2.$  By the definition of a $\Xi$-rapidly decreasing function, the integral in the lemma is dominated by $(1+\norm{\chi})^{N_0}$ times
\begin{align*}
    &\int \Phi(z^rt_{r-2}^{r-2}t_r^{-1}\det g_1, z^3t_{r}^{-1}t_{r-2}\begin{psmatrix} g_1 & y_1\\ & 1\end{psmatrix},z,t_{r}^{-1}z,a)\\
    &\times \frac{\left(\norm{z}\norm{a}\norm{t_{r-2}}\norm{g_1}\right)^{N'}}{|a|^{(1-s')(r-2)}(1+|at_{r-2}|)^N\max(1,|y_2|)^{\mathrm{Re}(\chi)-N'+(3-r)/2}} \frac{ dy_1dy_2 d^\times a d^\times z d^\times t_{r-2} d^\times t_{r}dg_1}{|t_r|^{\mathrm{Re}(\chi)}|t_{r-2}|^{r-2}|\det g_1|^{3/4}}
\end{align*}
for some Schwartz function $\Phi \in \mathcal{S}(F^\times \times M_{r-2}(F)\times F^\times \times F^2)$. We have used Corollary \ref{cor:xi:bounded} to bound the $\Xi$ functions trivially.

Taking $N_1 \gg N'$ and executing the integrals over $z,y_1,y_2$ the above is dominated by 
 \begin{align*}
    &\int \Phi^{(0)}\left(\frac{t_{r-2}^{r-2}\det g_1}{t_r}, 
    \frac{t_{r-2}g_1}{t_r},\frac{t_{r-2}}{t_r},t_r^{-1},a\right)\frac{\left(\norm{a}\norm{t_{r-2}}\norm{g_1}\right)^{N'}}{|a|^{(1-s')(r-2)}(1+|at_{r-2}|)^N} \frac{d^\times a  d^\times t_{r-2} d^\times t_{r}dg_1}{|t_r|^{\mathrm{Re}(\chi)-(r-3)}|t_{r-2}|^{2r-5}|\det g_1|^{3/4}}
\end{align*}
for some $\Phi^{(0)} \in \mathcal{S}(F^\times \times M_{r-3}(F) \times F^3).$

Taking $N \gg \max(N',N_2),$ executing the integral over $a$, and using submultiplicativity of norms,  the quantity above is bounded by 
\begin{align*}
    &\int_{(F^\times)^2 \times \GL_{r-3}(F)}\Phi'\left(\frac{t_{r-2}^{r-2}\det g_1}{t_r}, \frac{t_{r-2} g_1}{t_r}, \frac{t_{r-2}}{t_r} ,t_r^{-1}\right) \frac{(\norm{t_{r-2}}^2\norm{g_1})^{N'} d^\times t_{r-2} d^\times t_{r}dg_1}{\max(1,|t_{r-2}|)^{(s'-1)(r-2)}|t_r|^{\mathrm{Re}(\chi)-(r-3)}|t_{r-2}|^{2r-5}|\det g_1|^{3/4}}
\end{align*}
for some $\Phi' \in \mathcal{S}(F^\times \times M_{r-3}(F) \times F^2).$ 

We change variables $g_1 \mapsto t_{r-2}^{-1}g_1$ then $t_{r-2} \mapsto t_{r-2} t_r \det g_1^{-1},$ and execute the integral over $t_{r-2}$ to see that the above is
\begin{align} \nonumber
    &\int_{(F^\times)^2 \times \GL_{r-3}(F)}\Phi''\left(\frac{g_1}{t_r}, \det g_1^{-1},t_r^{-1}\right) \frac{(\norm{t_r \det g_1^{-1}}^2\norm{t_r^{-1}g_1\det g_1})^{N'}  d^\times t_{r}dg_1}{\max(1,|t_r\det g_1^{-1}|)^{(s'-1)(r-2)}|t_r|^{\mathrm{Re}(\chi)}\chi'(t_r,\det g_1)}\\
    &=\int_{F^\times \times \GL_{r-3}(F)}\Phi''\left(g_1, t_r^{3-r}\det g_1^{-1},t_r^{-1}\right) \frac{(\norm{t_r^{4-r} \det g_1^{-1}}^2\norm{t_r^{r-3}g_1\det g_1})^{N'}  d^\times t_{r}dg_1}{\max(1,|t_r^{4-r}\det g_1^{-1}|)^{(s'-1)(r-2)}|t_r|^{\mathrm{Re}(\chi)}\chi'(t_r,t_{r}^{r-3}\det g_1)}  \label{before:split}
\end{align}
for some $\Phi'' \in \mathcal{S}(M_{r-3}(F) \times F^2)$ and some quasi-character $\chi':(F^\times)^2 \to \RR_{>0}.$ 
Here we have changed variables $g_1 \mapsto t_rg_1.$

Now
$$
\left(\norm{t_r^{4-r} \det g_1^{-1}}^2\norm{t_r^{r-3}g_1\det g_1}\right)^{N'} \ll \norm{g_1}^{A_1N'}\norm{t_r}^{A_2N'}
$$
for some $A_1,A_2 \in \RR_{>0}$ by  \eqref{pullback} and \eqref{sig:ineq}. Note that
\begin{align*}
    \norm{g_1}\asymp \max(\norm{g_1^{-1}}_{M_{r-3}},\norm{g_1}_{M_{r-3}})\ll \max(|\det g_1|^{-1}\norm{g_1}_{M_{r-3}}^{r-4},\norm{g_1}_{M_{r-3}}).
\end{align*}
For any $C\ge 0,$
\begin{align*}
    \Phi''( g_1, t_{r}^{3-r}\det g_1^{-1}  ,t_{r}^{-1})\le  \Phi_C( g_1,t_{r}^{-1})|t_r^{3-r}\det g_1^{-1}|^{-C}
\end{align*}
for some $\Phi_{C} \in \mathcal{S}(M_{r-3}(F) \times F)$ depending on $C.$ Combining these observations, we see that the integral above is dominated by 
\begin{align}
\int_{F^\times \times \GL_{r-3}(F)}\Phi_C\left(g_1,t_r^{-1}\right) \frac{ |t_r^{3-r}\det g_1^{-1}|^{-C}\norm{t_r}^{A_2N'}\norm{g_1}^{rA_1N'}\max(1,|\det g_1|^{-1})^{A_1N'}dg_1}{\max(1,|t_r^{4-r}\det g_1^{-1}|)^{(s'-1)(r-2)}|t_r|^{\mathrm{Re}(\chi)}\chi(t_r,t_{r}^{r-3}\det g_1)}
\end{align}
For $C N'$  (as a function of $N'$) the integral over $g_1$ converges, and for $\mathrm{Re}(\chi) \gg C$ the integral over $t_r$ converges. 
\end{proof}

\begin{lem} \label{lem:uniform:ZfW} Assume $F$ is Archimedean. Under the same hypothesis as \Cref{lem:ZfW2}, the integral \eqref{to:bound:619} is $O_{f,\Omega,N_2,N_3,N,\varphi}(\max\left(1,|\mathrm{Im}(s')|,\norm{\chi}\right)^{-N})
$
for any $N >0.$
\end{lem}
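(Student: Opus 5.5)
We will prove the rapid decay by the same device as in Proposition \ref{prop:unif}: trade growth of the spectral parameters for applications of differential operators, using the infinitesimal characters of $\mathrm{GL}_{r,r-2}(F)$ and the homogeneity in $\chi$ and $s'$. The bound of Lemma \ref{lem:ZfW2} then absorbs the resulting differentiated functions.

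First we reduce the decay in $\chi$ to decay in the $\GL_2$-variable. Since $Z(W,\chi)$ is a Mellin transform in $c$ of $J(W)$ evaluated at $\begin{psmatrix} 1 & \\ & c^{-1}\end{psmatrix}h_\ell$, we integrate by parts in $c$ using the invariant vector field $c\frac{d}{dc}$ on $F^\times$ (together with the derivative in the argument of $c$ when $F=\CC$). Each such step produces a factor $\norm{\chi}^{-1}$ and replaces $W$ by $\mathcal{R}(X)W$, where $X\in\mathfrak{gl}_r$ is the corresponding element of the Lie algebra of the $\begin{psmatrix} 1 & \\ & c^{-1}\end{psmatrix}$ torus. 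Conjugation by $h_\ell$ only changes $X$ by a fixed element. Here $\mathcal{R}(X)W_{\underline{\varphi_\lambda}}=W_{\underline{(\sigma(X)\varphi)_\lambda}}$, up to terms polynomial in $\lambda$ that are uniform on the compact set $\Omega$. There is one subtlety: the factor $|c|^{-(r-2)/2}$ and the twist $|\det g|^{s_0}$ commute with $X$ up to scalars bounded in terms of $r$. The boundary terms vanish by the decay in $c$ coming from Lemma \ref{lem:ZWlambda}.

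Next we handle $\mathrm{Im}(s')$. Since $W_{s'}=W\cdot|\det g'|^{s'}$, we use the central element $Y$ of $\mathfrak{gl}_{r-2}$, which acts on $|\det g'|^{s'}$ by $(r-2)s'$. On $W_{\underline{\varphi_\lambda}}$ it acts by a scalar depending linearly on $\lambda$, hence bounded on $\Omega$. To move $Y$ across $J$ and across the pairing, we use the $\mathcal{P}_{r-2}(F)\backslash\GL_{r-2}(F)$-integration in \eqref{to:bound:619} and integrate by parts on the $g'$ variable. The derivative then falls on $f$, giving $\mathcal{R}(Y)f$, which is still $\Xi$-rapidly decreasing by Definition \ref{defn:Xi:rapid:loc}; this is exactly why the definition includes derivatives in the Archimedean case. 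Equivalently, we may apply the Mellin inversion in $\det g'$ directly. Iterating $N$ times gives a factor $(1+|\mathrm{Im}(s')|)^{-N}$ times a finite sum of integrals of the form \eqref{to:bound:619}. In each of these, $f$ is replaced by $\mathcal{R}(u)f$ and $\underline{\varphi}$ by $\underline{\sigma}(u')\underline{\varphi}$, for bounded $u,u'$.

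Then we apply Lemma \ref{lem:ZfW2} to each term. It gives a bound $(1+\norm{\chi})^{N_0}$, with $N_0$ independent of the number of integrations by parts. Performing $N+N_0$ integrations in $c$ and $N$ in $s'$ then yields the claim.

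The main obstacle is justifying the integration by parts in $c$. The integrand of $Z(W,\chi)$ involves $J(W)$ evaluated at $\begin{psmatrix} 1 & \\ & c^{-1}\end{psmatrix}h_\ell g$, and moving the derivative onto $W$ requires commuting $X$ past the $N$- and $\mathcal{P}_{r-2}$-integrals in $J$. Since $X$ lies in the $\GL_2$ block, it normalizes $N$. However, its action on the character $\overline{\psi}(\langle m_\ell,z\rangle)$ produces extra terms. We would first try to show that these extra terms are also of the form \eqref{to:bound:619}, with $W$ replaced by $\mathcal{R}(X')W$ for $X'$ in the Lie algebra of $N$, and then control them uniformly using Lemma \ref{lem:Arch:Whitt:fam:bound}.
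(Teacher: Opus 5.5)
The gap is in your treatment of $\chi$. In $Z(W,\chi)(g,g'^tm_\ell)$ the variable $c$ enters through $\begin{psmatrix} I_{r-2} & \\ & \begin{psmatrix} 1 & \\ & c^{-1}\end{psmatrix}h_\ell\end{psmatrix}g$. Inside $J$, the torus element $\tau_c:=\begin{psmatrix} I_{r-1} & \\ & c^{-1}\end{psmatrix}$ commutes with $\begin{psmatrix} pg' & \\ & I_2\end{psmatrix}$, so the first argument of $W$ is $\tau_c y$ with $y=\begin{psmatrix} pg' & \\ & h_\ell\end{psmatrix}g$. Thus $c\frac{d}{dc}$ differentiates $W$ from the \emph{left}. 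As a right derivative it is $\mathcal{R}(\mathrm{Ad}(y^{-1})X)W$, and $\mathrm{Ad}(y^{-1})X$ is unbounded as $(p,g',g)$ vary; it is not ``$X$ changed by conjugation by $h_\ell$.'' Put differently, this one-parameter group is the $(\GL_2)_\ell$-symmetry of $X_\ell^\circ$, which does not intertwine with right translation of $W$. So the claim that each step ``replaces $W$ by $\mathcal{R}(X)W$'' is false.

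The obstacle you single out is also not the real one: $J$ as defined in \eqref{eq:JW} has no $N$-integral and no character $\overline{\psi}(\langle m_\ell,z\rangle)$; those belong to $I_f$. The missing idea is the paper's. $Z(W,\chi)$ is an eigenfunction of left translation by $\begin{psmatrix} I_{r-2} & \\ & h_\ell^{-1}\begin{psmatrix} 1 & \\ & b\end{psmatrix}h_\ell\end{psmatrix}$, with eigencharacter $\chi(b)|b|^{-(r-2)/2}$ (substitute $c\mapsto cb$, as in the proof of \Cref{lem:FT:scalar}). After pairing with $f$, one integrates by parts along this subgroup of $(\GL_2)_\ell$, and the derivative lands on $f$ as $\mathcal{R}(Y)f$ with $Y\in\mathrm{Lie}\,(\GL_2)_\ell$. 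This is why Definition \ref{defn:Xi:rapid:loc} includes $\mathrm{Lie}\,(\GL_2)_\ell$-derivatives, which your argument never uses. For $s'$ the paper does the opposite of what you do: it differentiates the Whittaker function. It first notes that the constant in \Cref{lem:ZfW2} is uniform over $\{u.\varphi_0:u\in S\}$, because the bound in \Cref{lem:Arch:Whitt:fam:bound} is.

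Your $c$-route can nevertheless be repaired. In the coordinates \eqref{coord} the factor $h_\ell$ cancels, and $\begin{psmatrix} 1 & y_2\\ & c^{-1}t_r^{-1}\end{psmatrix}=\begin{psmatrix} 1 & y_2ct_r\\ & 1\end{psmatrix}\begin{psmatrix} 1 & \\ & c^{-1}t_r^{-1}\end{psmatrix}$. The remaining torus commutes with the Levi block and with $z$. So the derivative becomes $\mathcal{R}(\mathrm{Ad}(k^{-1})X)W$, with $\mathrm{Ad}(k^{-1})X$ in a compact set, plus a term carrying the factor $y_2ct_r$ from $\psi(y_2ct_r)$. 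These polynomial factors must then be absorbed by the decay of $J$ in $ct_r$ (\Cref{lem:ZWlambda}) and by the rapid decay of $f$ in $y_2$ through $\norm{g}_\ell$.

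Your $s'$-step, via the central element of $\mathfrak{gl}_{r-2}$ and the equivariance $Z(\mathcal{R}(h,h')W,\chi)=\mathcal{R}_u(h,h')Z(W,\chi)$, is a legitimate mirror image of the paper's $\chi$-step. One caveat applies both to it and to the paper's $\chi$-step. Moving derivatives onto $f$ across the outer integral controls $Z(f,(W_{\underline{\varphi_\lambda}})_{s'},\chi)$, but not \eqref{to:bound:619}, which has $|f|$ and $|Z|$ inside. For the latter, the oscillation has to be exploited pointwise inside $Z(\cdot,\chi)(x)$, as in the repaired $c$-argument.
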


\begin{proof}
Let $\varphi_0$ be a smooth vector in the space of $\sigma$ and let $S \subset U(\mathfrak{gl}_{r,r-2})$ be any finite subset.
We claim that the implicit bound in \Cref{lem:ZfW2} may be taken to be uniform over $\varphi \in \{u.\varphi_0:u \in S\}.$  To see this we note that the dependence on $\varphi$ comes from \Cref{lem:ZWlambda}, and the dependence on $\varphi$ in \Cref{lem:ZWlambda} relies on \Cref{lem:Arch:Whitt:fam:bound}, which has the desired uniformity.  
With this in mind, the bound in terms of $s'$ follows from  
a standard integration by parts argument.  To obtain the bound in terms of $\norm{\chi},$ one also applies integration by parts, but this time differentiating $f$ instead of the Whittaker function. 
\end{proof}

\section{The Plancherel formula for $\mathcal{S}(X_\ell^\circ(F),\mathcal{H})$} \label{sec:Pl:Xcir}

We prove a Plancherel formula for $K_{r} \times K_{r-2}$-finite functions in $\mathcal{S}(X^{\circ}_{\ell}(F),\mathcal{H})$ in Theorem \ref{thm:Plancherel} below.  Before this we collect some preliminary results.

For $f\in \mathcal{S}(\GL_r(F)^2)$ we let 
\begin{align}
    W_f(g_1',g_2',g_1,g_2):=\int_{U_r(F)^2} f(g_1'^{-1}n_1g_2',g_1^{-1}n_2g_2)\psi(n_1)\overline{\psi}(n_2)dn_1dn_2.
\end{align}
The definition extends continuously to $\mathcal{C}^w(\GL_r(F)^2)$ by \cite[Lemma 2.14.1]{BP:GLn}.

The proof of the following lemma is inspired by the proof of \cite[Proposition 4.3.1]{BP:GLn}:

\begin{lem} \label{lem:RS}
For $f \in \mathcal{S}(\GL_{r}(F)^2)$ one has
\begin{align*}
    \int_{\mathcal{P}_{r}(F)}f(p,p)\frac{d_r p}{|\det p|^{1/2}}=\int_{(U_r(F) \backslash \mathcal{P}_{r}(F))^2}W_{f}(p_1,p_2,p_1,p_2)\frac{d_rp_1d_rp_2}{|\det p_1p_2|^{1/2}}.
\end{align*}
The integrals are absolutely convergent.
\end{lem}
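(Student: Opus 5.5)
We would prove the lemma by Fourier analysis along the abelian pieces of a mirabolic flag, following \cite[Proposition 4.3.1]{BP:GLn}. The starting point is that $\mathcal{P}_r = \GL_{r-1}\ltimes N_{\mathcal{P}_r}$ with $N_{\mathcal{P}_r}\cong F^{r-1}$. Conjugation by $\GL_{r-1}(F)$ acts on the characters of $N_{\mathcal{P}_r}(F)$ with two orbits: the trivial character and the open orbit of $\psi(x_{r-1})$. The stabilizer of the nontrivial character is $\mathcal{P}_{r-1}\ltimes N_{\mathcal{P}_r}$.

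The plan is an induction on $r$. We prove a slightly more general statement, in which the left side is $\int_{\mathcal{P}_r(F)} f(p,p)\,d_rp/|\det p|^{1/2}$ and the right side is obtained by inserting the Whittaker integrals for the partial flags $U_r\cap \mathcal{P}_r$ step by step. For the first step, write $p=\begin{psmatrix} h & x\\ & 1\end{psmatrix}$ with $h\in\GL_{r-1}(F)$. Consider the function
\[
F(h_1,h_2):=\int_{N_{\mathcal{P}_r}(F)^2} f(n_1h_1,n_2h_2)\,dn_1dn_2 .
\]
We apply Fourier inversion on $F^{r-1}$ to $x\mapsto f(\,\cdot\,,\,\cdot\,)$ restricted to the diagonal. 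Concretely, $\int_{F^{r-1}} \phi(x,x)\,dx=\int_{F^{r-1}}\widehat{\phi}(\xi,-\xi)\,d\xi$ for $\phi\in\mathcal{S}(F^{r-1}\times F^{r-1})$. The integral over $\xi\in F^{r-1}$ is then unfolded as $\xi = e_{r-1}^t\gamma$ with $\gamma\in \mathcal{P}_{r-1}(F)\backslash\GL_{r-1}(F)$; the point $\xi=0$ has measure zero. This converts the left side into
\[
\int_{\mathcal{P}_{r-1}(F)\backslash \GL_{r-1}(F)}\int_{\mathcal{P}_{r-1}(F)} f_1(p'\gamma,p'\gamma)\,\frac{d_{r-1}p'\,d\dot\gamma}{|\det p'\gamma|^{1/2-1}\cdots},
\]
where $f_1$ is the partial Whittaker integral of $f$ along $N_{\mathcal{P}_r}\times N_{\mathcal{P}_r}$ with respect to $\psi\otimes\overline{\psi}$. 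The modulus factors change by exactly $|\det|$ at each step. Applying the inductive hypothesis to the inner $\mathcal{P}_{r-1}$-integral, then recombining the $\gamma$ integration with $U_{r-1}\backslash\mathcal{P}_{r-1}$ to get $U_r\backslash \mathcal{P}_r$ in each variable, yields the right side. The key point is that $W_f$ factors as the iteration of these partial Whittaker integrals along $U_r=U_{r-1}\ltimes N_{\mathcal{P}_r}$.

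We would first carry out all these manipulations formally for $f\in C_c^\infty$, or $\mathcal{S}$, of $\GL_r(F)^2$. Here every partial integral is a Schwartz function on the relevant quotient, so the Fourier step is just Plancherel for Schwartz functions on $F^{r-1}$.

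The main obstacle is absolute convergence of the right side and justification of the unfolding, i.e. Fubini at each step. For this we would use Lemma \ref{lem:C:gauge}: $W_f$ lies in $\mathcal{C}^w(U_{r,r}\backslash\GL_{r,r},\overline\psi\otimes\psi)$ by \cite[Lemma 2.14.1]{BP:GLn}, and for $f$ Schwartz it should satisfy gauge-type bounds
\[
\delta_{B_r}^{1/2}(a_1a_2)\,\sigma(a_1a_2)^d\prod_i(1+|\alpha_i(a_1)|)^{-N}(1+|\alpha_i(a_2)|)^{-N}.
\]
Combined with rapid decay in the last torus coordinate coming from $f\in\mathcal{S}$, restricting to $\mathcal{P}_r$ and integrating over $T_{r-1}$ against $|\det|^{-1/2}\delta_{B_r}^{-1}$ converges, as in the estimate for $(\cdot,\cdot)^{\mathrm{Whitt}}$ in \eqref{eq:Whitt}. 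The left side converges trivially, since $f$ is Schwartz and $|\det p|^{-1/2}$ is controlled by the rapid decay of $f$ as $\det p\to 0$.
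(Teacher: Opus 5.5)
Your strategy is the paper's: Fourier inversion on the diagonal along $N_{\mathcal{P}_r}(F)\cong F^{r-1}$, parametrization of the nonzero frequencies by $\mathcal{P}_{r-1}(F)\backslash\GL_{r-1}(F)$, induction on $r$, and convergence of the right side via \eqref{Whitt} and Lemma \ref{lem:C:gauge}. The paper bounds \eqref{convergence:claim} by the square of a $T_{r-1}(F)$-integral, as you suggest.

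\textbf{The gap.} The central reduction is wrong as written. Write $p=n(x)\,\mathrm{diag}(h,1)$, with $n(x)\in N_{\mathcal{P}_r}(F)$ having last column $(x,1)$, and apply Fourier inversion in $x$. You then have two independent integrations: the Levi variable $h\in\GL_{r-1}(F)$, and the frequency $\xi=e_{r-1}^t\gamma$ with $\gamma\in\mathcal{P}_{r-1}(F)\backslash\GL_{r-1}(F)$.
\begin{itemize}
\item Your displayed formula keeps only one of them: a single $\gamma$ plus a diagonal $\mathcal{P}_{r-1}$-integral of a two-variable $f_1$. It therefore integrates over a space of dimension $(r-1)^2$, whereas the left side lives on $\mathcal{P}_r$, of dimension $(r-1)^2+(r-1)$.
\item The substitution $x\mapsto\gamma^{-1}x$, which turns $\psi(\xi\cdot x)$ into $\psi(e_{r-1}^tx)$, puts $\mathrm{diag}(\gamma,1)^{-1}$ to the \emph{left} of $n(x)$. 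In $f_1(p'\gamma,p'\gamma)$ you have it on the right.
\item Consequently your last step cannot work. The inductive hypothesis yields two copies of $U_{r-1}\backslash\mathcal{P}_{r-1}$. To assemble two copies of $U_r\backslash\mathcal{P}_r\cong U_{r-1}\backslash\GL_{r-1}$ you need two independent copies of $\mathcal{P}_{r-1}\backslash\GL_{r-1}$, not a single $\gamma$ shared by both variables.
\end{itemize}

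\textbf{How the paper keeps both variables.} It writes $t=e_{r-1}^tg'$, substitutes $x_i\mapsto g'^{-1}x_i$ and $g\mapsto g'^{-1}g$, and decomposes $g=p\dot g$ with $p\in\mathcal{P}_{r-1}(F)$. This gives \eqref{eq:beforeinduction}, in which $g'$ sits to the left of $n(x_i)$, $\dot g$ to the right, and the diagonal variable $p$ in between.
\begin{itemize}
\item The lemma itself is applied in rank $r-1$; no strengthening is needed. It is applied to the auxiliary function $\widetilde f(p_1,p_2)$ on $\GL_{r-1}(F)^2$ obtained by putting $p_1,p_2$ in place of $p$ in the two arguments.
\item This creates $p_1^{-1}n_1p_2$ in the middle. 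The substitution $x_i\mapsto p_1^{-1}x_i$ moves $p_1^{-1}$ to the left, where it merges with $g'$. It is legitimate because $\psi(e_{r-1}^tx)$ is $\mathcal{P}_{r-1}(F)$-invariant, i.e.\ because of the stabilizer you identified.
\item Meanwhile $p_2$ merges with $\dot g$, and $N_{\mathcal{P}_r}\rtimes U_{r-1}=U_r$ assembles the full Whittaker integral.
\end{itemize}
The result is $W_f(a,b,a,b)$ with $a=\mathrm{diag}(p_1g',1)$ and $b=\mathrm{diag}(p_2\dot g,1)$, integrated over two independent copies of $U_{r-1}\backslash\GL_{r-1}$. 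The unfolding is justified by \eqref{convergence:claim}. With the variables tracked this way, your sketch becomes the paper's proof.
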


\begin{proof}
The convergence claim in the lemma is the assertion that 
\begin{align} \label{convergence:claim}
\int_{\GL_{r-1}(F)^2}\left|W_{f}\left(\begin{psmatrix} g_1 & \\ & 1 \end{psmatrix},\begin{psmatrix} g_2 & \\ & 1 \end{psmatrix},\begin{psmatrix} g_1 & \\ & 1 \end{psmatrix},\begin{psmatrix} g_2 & \\ & 1 \end{psmatrix}\right)\right|\frac{dg_1dg_2}{|\det g_1g_2|^{1/2}}<\infty.
\end{align}
By \eqref{Whitt} and Lemma \ref{lem:C:gauge}, there is a $d>0$ such that for any $N>0,$ the integral \eqref{convergence:claim} is dominated by the square of
\begin{align*}
\int_{T_{r-1}(F)} \frac{\delta_{B_r}\begin{psmatrix} a & \\
 & 1 \end{psmatrix}\delta_{B_{r-1}}^{-1}(a) \sigma(a)^d}{(1+|a_{r-1}|)^{N}\prod_{i=1}^{r-2}(1+|\alpha_i(a)|)^{N}} \frac{d^\times a}{|\det a|^{1/2}}.
\end{align*}
 This integral converges for $N$ sufficiently large.

To prove the identity we proceed by induction on $r.$  The identity reduces to $f(1,1)=f(1,1)$ when $r=1.$  
Assume that the identity is true for $r-1.$  
We write
\begin{align*}
    &\int_{\mathcal{P}_{r}(F)}f(p,p) \frac{d_r p}{|\det p|^{1/2}}\\
    &=\int_{\GL_{r-1}(F)} \int_{F^{r-1}}f\left(\begin{psmatrix}I_{r-1} & x_1\\ & 1 \end{psmatrix} \begin{psmatrix} g & \\ & 1 \end{psmatrix} ,\begin{psmatrix}I_{r-1} & x_1\\ & 1 \end{psmatrix} \begin{psmatrix} g & \\ & 1 \end{psmatrix}\right) dx_1 \frac{dg}{|\det g|^{1/2}}\\
    &=\int_{\GL_{r-1}(F)} \Bigg(\int_{F^{r-1}}\Bigg(\int_{(F^{r-1})^2}\psi(-tx_2)f\left(\begin{psmatrix}I_{r-1} & x_1\\ & 1 \end{psmatrix} \begin{psmatrix} g & \\ & 1 \end{psmatrix} 
,\begin{psmatrix}I_{r-1} & x_1\\ & 1 \end{psmatrix}\begin{psmatrix}I_{r-1} & x_2\\ & 1 \end{psmatrix} \begin{psmatrix} g & \\ & 1 \end{psmatrix}\right)dx_1dx_2\Bigg)dt\Bigg)  \frac{dg}{|\det g|^{1/2}}.
\end{align*}
Here we have employed Fourier inversion.
We change variables $x_2 \mapsto x_2-x_1$ to obtain
\begin{align*}
 \int_{\GL_{r-1}(F)} \Bigg(\int_{F^{r-1}}\Bigg(\int_{(F^{r-1})^2}\psi(tx_1-tx_2)f\left(\begin{psmatrix}I_{r-1} & x_1\\ & 1 \end{psmatrix}\ \begin{psmatrix} g & \\ & 1 \end{psmatrix} 
,\begin{psmatrix}I_{r-1} & x_2\\ & 1 \end{psmatrix} \begin{psmatrix} g & \\ & 1 \end{psmatrix}\right)dx_1dx_2\Bigg)dt\Bigg) \frac{dg}{|\det g|^{1/2}}.
\end{align*}
Via the isomorphism
\begin{align*}
    \mathcal{P}_{r-1}(F) \backslash \GL_{r-1}(F) &\lto F^{r-1}-\{0\}\\
    g' &\longmapsto g^{\prime t}e_{r-1},
\end{align*}
we equip $\mathcal{P}_{r-1}(F) \backslash \GL_{r-1}(F)$ with  the unique quasi-invariant measure $d\dot{g}'$  such that 
$|\det g'|d\dot{g}'$ induces the Haar measure on $F^{r-1}$. Then we can rewrite the integral above as
\begin{align*}
    &\int_{\GL_{r-1}(F) \times \mathcal{P}_{r-1}(F) \backslash \GL_{r-1}(F)}\Bigg(\int_{(F^{r-1})^2}\psi(e_{r-1}^tg'x_1-e_{r-1}^tg'x_2)\\& \times f\left(\begin{psmatrix}I_{r-1} & x_1\\ & 1 \end{psmatrix} \begin{psmatrix} g & \\ & 1 \end{psmatrix} ,\begin{psmatrix}I_{r-1} & x_2\\ & 1 \end{psmatrix} \begin{psmatrix} g & \\ & 1 \end{psmatrix}\right)dx_1dx_2\Bigg)|\det g'|d\dot{g}' \frac{dg}{|\det g|^{1/2}}\\
    &=\int_{\GL_{r-1}(F)\times \mathcal{P}_{r-1}(F) \backslash \GL_{r-1}(F)}\Bigg(\int_{(F^{r-1})^2}\psi(e_{r-1}^tx_1-e_{r-1}^tx_2)\\& \times f\left(\begin{psmatrix} g' & \\ & 1 \end{psmatrix}^{-1}\begin{psmatrix}I_{r-1} & x_1\\ & 1 \end{psmatrix} \begin{psmatrix} g'g & \\ & 1 \end{psmatrix} ,\begin{psmatrix} g' & \\ & 1 \end{psmatrix}^{-1}\begin{psmatrix}I_{r-1} & x_2\\ & 1 \end{psmatrix} \begin{psmatrix} g'g & \\ & 1 \end{psmatrix}\right)dx_1 dx_2 \Bigg) \frac{d\dot{g}'}{|\det g'|}\frac{dg}{|\det g|^{1/2}}\\
    &=\int_{\GL_{r-1}(F)\times \mathcal{P}_{r-1}(F) \backslash \GL_{r-1}(F)}\Bigg(\int_{(F^{r-1})^2}\psi(e_{r-1}^tx_1-e_{r-1}^tx_2)\\& \times f\left(\begin{psmatrix} g' & \\ & 1 \end{psmatrix}^{-1}\begin{psmatrix}I_{r-1} & x_1\\ & 1 \end{psmatrix} \begin{psmatrix} g & \\ & 1 \end{psmatrix} ,\begin{psmatrix}  g'& \\ & 1 \end{psmatrix}^{-1}\begin{psmatrix}I_{r-1} & x_2\\ & 1 \end{psmatrix} \begin{psmatrix} g & \\ & 1 \end{psmatrix}\right)dx_1 dx_2 \Bigg) \frac{d\dot{g}'}{|\det g'|^{1/2}}\frac{dg}{|\det g|^{1/2}}.
\end{align*}

We rewrite the above slightly as
\begin{align}\label{eq:beforeinduction}
\begin{split}
&\int_{(\mathcal{P}_{r-1}(F) \backslash \GL_{r-1}(F))^2} \Bigg(\int_{(F^{r-1})^2}\psi(e_{r-1}^tx_1-e_{r-1}^tx_2)\\& \times \int_{\mathcal{P}_{r-1}(F)}f\left(\begin{psmatrix} g' & \\ & 1 \end{psmatrix}^{-1}\begin{psmatrix}I_{r-1} & x_1\\ & 1 \end{psmatrix} \begin{psmatrix} pg & \\ & 1 \end{psmatrix} ,\begin{psmatrix} g' & \\ & 1 \end{psmatrix}^{-1}\begin{psmatrix}I_{r-1} & x_2\\ & 1 \end{psmatrix} \begin{psmatrix} pg & \\ & 1 \end{psmatrix}\right)\frac{d_\ell p}{|\det p|^{1/2}}dx_1dx_2\Bigg) \frac{d\dot{g}'d\dot{g}}{|\det g'g|^{1/2}}.
\end{split}
\end{align}
Then we apply the induction hypothesis to the function
\begin{align*}
    \widetilde{f}(p_1,p_2):=f\left(\begin{psmatrix} g' & \\ & 1 \end{psmatrix}^{-1}\begin{psmatrix}I_{r-1} & x_1\\ & 1 \end{psmatrix} \begin{psmatrix} p_1g & \\ & 1 \end{psmatrix} ,\begin{psmatrix} g' & \\ & 1 \end{psmatrix}^{-1}\begin{psmatrix}I_{r-1} & x_2\\ & 1 \end{psmatrix} \begin{psmatrix} p_2g & \\ & 1 \end{psmatrix}\right)|\det p_1p_2|^{-1/2}.
\end{align*}
Note that 
\begin{align*}
    W_{\tilde{f}}(p_1,p_2,p_1,p_2)&=\int_{U_{r-1}(F)^2}f\left(\begin{psmatrix} g' & \\ & 1 \end{psmatrix}^{-1}\begin{psmatrix}I_{r-1} & x_1\\ & 1 \end{psmatrix} \begin{psmatrix} p_1^{-1}n_1p_2g & \\ & 1 \end{psmatrix} ,\begin{psmatrix} g' & \\ & 1 \end{psmatrix}^{-1}\begin{psmatrix}I_{r-1} & x_2\\ & 1 \end{psmatrix} \begin{psmatrix} p_1^{-1}n_2p_2g & \\ & 1 \end{psmatrix}\right)\\
    &\quad \quad \quad \quad \times |\det p_1^{-1}p_2|^{-1}\psi(n_1-n_2)dn_1dn_2.
\end{align*}
Therefore, by the induction hypothesis and the change of variables $(x_1,x_2)\mapsto (p_1^{-1}x_1, p_1^{-1}x_2),$ \eqref{eq:beforeinduction} is
\begin{align*}
&\int_{(\mathcal{P}_{r-1}(F) \backslash \GL_{r-1}(F))^2} \Bigg(\int_{(U_{r-1}(F) \backslash \mathcal{P}_{r-1}(F))^2}W_{f}\left(\begin{psmatrix} p_1g' & \\ & 1 \end{psmatrix}, \begin{psmatrix} p_2g & \\ & 1 \end{psmatrix},\begin{psmatrix} p_1g' & \\ & 1 \end{psmatrix}, \begin{psmatrix} p_2g & \\ & 1 \end{psmatrix}\right)\frac{d_rp_1 d_rp_2}{|\det p_1p_2|^{3/2}} \Bigg) \frac{d\dot{g} d\dot{g}'}{|\det gg'|^{1/2}}
\\&=\int_{(U_{r-1}(F) \backslash \GL_{r-1}(F))^2}W_{f}\left(\begin{psmatrix} g_1 & \\ & 1 \end{psmatrix},\begin{psmatrix} g_2 & \\ & 1 \end{psmatrix},\begin{psmatrix} g_1 & \\ & 1 \end{psmatrix},\begin{psmatrix} g_2 & \\ & 1 \end{psmatrix}\right) \frac{d\dot{g}_1d\dot{g}_2}{|\det g_1g_2|^{1/2}}.
\end{align*}
Here we have used \eqref{convergence:claim} to justify unfolding.   The lemma follows.
\end{proof}

\quash{ \textcolor{red}{This is using }
Let me take another stab at this.  Let
\begin{align}
    w:=\begin{psmatrix} & & 1 \\ & \textrm{\reflectbox{$\ddots$}} & \\ 1 & & \end{psmatrix}
\end{align}
Using Fourier inversion we write
\begin{align*}
    &\int_{\mathcal{P}_r(F)}f(p,p)d_rp\\&=\int_{\mathcal{P}_r(F) \backslash \GL_r(F)}\Bigg(\int_{\mathcal{P}_r(F) \backslash \GL_r(F)}\psi\left( \langle e_1wg_2w^{-1}, e_rg_1-e_r\rangle \right)\int_{\mathcal{P}_r(F)} f(pg_1,pg_1)d_rp|\det g_1|d\dot{g}_1\Bigg)|\det g_2|dg_2\\
    &=\int_{\mathcal{P}_r(F) \backslash \GL_r(F)}\int_{\GL_r(F)}\psi\left( \langle e_1wg_2w^{-1}, e_rg_1-e_r\rangle \right) f(g_1,g_1)|\det g_1|dg_1|\det g_2|dg_2\\
    &=\int_{\GL_r(F)}\int_{\mathcal{P}_r(F) \backslash \GL_r(F)}\psi\left( \langle e_1wg_2w^{-1}, e_rg_1-e_r\rangle \right) f(g_1,g_1)|\det g_2|dg_2|\det g_1|dg_1\\
    &=\int_{\GL_r(F)}\int_{\mathcal{P}_r(F) \backslash \GL_r(F)}\psi\left( \langle e_1, e_rg_1\r-e_r\rangle \right) f(g_1wg_2^{-t}w^{-1},g_1wg_2^{-t}w^{-1})dg_2|\det g_1|dg_1
\end{align*}
We employ the split version of \cite[Proposition 4.3.1]{BP:GLn} to see that this is 
\begin{align}
    \int_{\mathcal{P}_r(F) \backslash \GL_r(F)}\int_{U_r(F) \backslash \mathcal{P}_r(F)}\int_{U_r(F) \backslash \GL_r(F)}\psi\left( \langle e_rwg_2w^{-1}, e_rg_1-e_r\rangle \right) W_f(p,g_1)d_rp|\det g_1|d\dot{g}_1|\det g_2|dg_2
\end{align}
}

\begin{lem} \label{lem:unfold}
For $f \in \mathcal{C}(\GL_{r,r-2}(F))$ one has \begin{align*}
    &I_{f}(I_r,m_{\ell})=\int\Bigg(\int_{F^\times}W_{f^\vee}\left(\begin{psmatrix} p_1 & \\ & \begin{psmatrix} 1 & \\ & c_1^{-1}\end{psmatrix}h_\ell \end{psmatrix},\begin{psmatrix}p_2 & \\ & \begin{psmatrix} 1 & \\ & c_1^{-1}\end{psmatrix}h_\ell \end{psmatrix},p_1,p_2\right)\frac{d^\times c_1}{|c_1|^{r-2}\zeta(1)}\Bigg)\frac{|\det h_\ell|^{r-2}d_r p_1d_r p_2}{|\det p_1 p_2|^{3/2}},
\end{align*} 
where the integral is over $(U_{r-2}(F) \backslash \mathcal{P}_{r-2}(F))^2.$
\end{lem}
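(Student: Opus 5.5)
First reduce to $f\in\mathcal{S}(\GL_{r,r-2}(F))$. Both sides are continuous in $f\in\mathcal{C}(\GL_{r,r-2}(F))$: the left side by Lemma \ref{lem:extension}, and the right side by the continuity of $f\mapsto W_{f^\vee}$ from \cite[Lemma 2.14.1]{BP:GLn} together with the gauge bounds of Lemma \ref{lem:C:gauge}, applied as in the convergence part of Lemma \ref{lem:RS}. In the non-Archimedean case one restricts to $K\times K$-invariants. Density of $\mathcal{S}$ in $\mathcal{C}$ then finishes the reduction, so it suffices to prove the identity for Schwartz $f$, where every integral converges absolutely.

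For Schwartz $f$, unwind the definition \eqref{If} at $(g,g')=(I_r,I_{r-2})$:
\[
I_f(I_r,m_\ell)=\int_{\mathcal{P}_{r-2}(F)}\int_{M_{r-2,2}(F)}\overline{\psi}(\langle m_\ell,z\rangle)\,f^\vee\!\left(\begin{psmatrix} p & z\\ & I_2\end{psmatrix},p\right)dz\,\frac{d_\ell p}{|\det p|^{1/2}}.
\]
Write the $\GL_2$-block as $h_\ell^{-1}h_\ell$ and insert a Fourier inversion in one variable, as in the proof of Lemma \ref{lem:RS}, to create the torus variable $c_1$. Concretely, $\langle m_\ell,z\rangle$ only sees one column of $z h_\ell^{-t}$-type coordinates. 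Fourier inversion in the complementary column of $z$ produces an integral over a dual variable $t\in F^{r-2}$. Writing $t=c_1 v$ with $(c_1,v)$ in polar-type coordinates $F^\times\times(\mathcal{P}\backslash\GL)$ gives the factor $\frac{d^\times c_1}{|c_1|^{r-2}\zeta(1)}$; the $\zeta(1)$ comes from the normalization of $d^\times c_1$ versus additive measure. The $|\det h_\ell|^{r-2}$ arises from the change of variables $z\mapsto zh_\ell$ on $M_{r-2,2}$.

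Next, form $F(p_1,p_2):=$ the resulting integrand viewed as a function of two independent copies of $p$, with the $N$-part absorbed into $\begin{psmatrix}p & z\\ & *\end{psmatrix}$. This is a function of $\mathcal{S}(\GL_{r-2}(F)^2)$-type in the mirabolic variables. Apply Lemma \ref{lem:RS} on $\mathcal{P}_{r-2}$, which converts $\int_{\mathcal{P}_{r-2}}F(p,p)|\det p|^{-1/2}d_\ell p$ into an integral of the double Whittaker transform over $(U_{r-2}\backslash\mathcal{P}_{r-2})^2$. The characters $\psi$ on the $U_{r-2}$-parts, combined with the $z$-integration against $\overline{\psi}(\langle m_\ell,z\rangle)$ and the Fourier-inverted variable, should assemble into the full $U_r$- and $U_{r-2}$-Whittaker integrals defining $W_{f^\vee}$. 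Use $m_\ell=(e_{r-2},0)h_\ell^\iota$: the entry $z_{r-2,\ast}$ pairs with $e_{r-2}$, which gives exactly the simple-root character linking the $\GL_{r-2}$ block to the $\GL_2$ block inside $U_r$. The remaining Jacobians, namely $|\det p|^{-1/2}$ from \eqref{If}, $|\det p_i|^{-1/2}$ from Lemma \ref{lem:RS}, and the modulus from moving $z$ past $p$, combine to give $|\det p_1p_2|^{-3/2}$.

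The main obstacle is the bookkeeping in the middle step. One has to verify that after Fourier inversion the $\GL_2$-block becomes exactly $\begin{psmatrix}1&\\&c_1^{-1}\end{psmatrix}h_\ell$ in both arguments, the same $c_1$ on both sides. One also has to check that the characters match $\psi\otimes\overline{\psi}$ with the correct signs, and that the exponent of $|c_1|$ is $r-2$. I would first check the whole computation for $r=3$, where $\mathcal{P}_1$ is trivial and only the Fourier step and $U_3$ appear, and then redo it in general. Justifying the interchanges for $f\in\mathcal{S}$ uses the absolute convergence already established in Lemma \ref{lem:RS} and Lemma \ref{lem:extension}.
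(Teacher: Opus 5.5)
Your reduction to $f\in\mathcal{S}(\GL_{r,r-2}(F))$ and your use of Lemma \ref{lem:RS} on $\mathcal{P}_{r-2}$ match the paper. You also say you will insert a Fourier inversion ``in one variable'', which is the right idea. The gap is in the concrete version of that step: it targets the wrong variable and cannot produce the stated formula.

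After the substitution $z\mapsto zh_\ell$ (via \eqref{M:pair:equiv}, which is indeed where $|\det h_\ell|^{r-2}$ comes from), the character on $z$ becomes $\overline{\psi}(\langle (e_{r-2},0),z\rangle)$. This only involves the $(r-2,r-1)$ entry. It is already what the $U_r$-Whittaker integral needs on the $M_{r-2,2}$-block: the second column of $z$ is supposed to be integrated against the trivial character. Since $I_f$ integrates over that column and evaluates nothing there at a point, there is nothing to Fourier-invert in it.

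What $I_f(I_r,m_\ell)$ is missing relative to $W_{f^\vee}$ is the single entry $t$ in position $(r-1,r)$, the unipotent of the $\GL_2$-block, together with its simple-root character. The paper creates it by a one-variable Fourier inversion. Let $\Phi(t)$ be the integrand with $\begin{psmatrix}1&t\\&1\end{psmatrix}$ inserted in the $\GL_2$-block; then $\Phi(0)=\int_F\int_F\Phi(t)\psi(c_1t)\,dt\,dc_1$. Now conjugate by $\begin{psmatrix}1&\\&c_1^{-1}\end{psmatrix}$, i.e.\ substitute $t\mapsto c_1^{-1}t$ and $z_2\mapsto c_1^{-1}z_2$, where $z_2$ is the second column of $z$. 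This has the following effects:
\begin{itemize}
\item $\psi(c_1t)$ becomes $\psi(t)$;
\item $h_\ell$ is replaced by $\begin{psmatrix}1&\\&c_1^{-1}\end{psmatrix}h_\ell$ in both arguments with the same $c_1$;
\item the Jacobian is $|c_1|^{-1}\cdot|c_1|^{-(r-2)}$, and $dc_1/|c_1|^{r-1}=d^\times c_1/(\zeta(1)|c_1|^{r-2})$.
\end{itemize}
So $c_1$ is itself the scalar dual variable, and the exponent $r-2$ is the dimension of the second column of $z$, not the output of a polar decomposition.

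Your version, with a dual variable $y\in F^{r-2}$ written as $y=c_1v$, fails in three ways. First, it gives $|c_1|^{r-2}\,d^\times c_1$, with the wrong sign in the exponent. Second, it leaves an extra integral over the direction $v$ (over $\mathcal{P}_{r-2}(F)\backslash\GL_{r-2}(F)$). The target has no such integral; its only integrations are over $F^\times$ and $(U_{r-2}(F)\backslash\mathcal{P}_{r-2}(F))^2$. Third, it never introduces the $(r-1,r)$ entry of $U_r$, so the ``assembly'' into $W_{f^\vee}$ you anticipate cannot occur. The $r=3$ check you propose would expose this, since there the missing variable is visibly the $(2,3)$ entry and not a column of $z$.

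A lesser point concerns the order of steps. The paper applies Lemma \ref{lem:RS} first, for fixed $z$, to $(p,p')\mapsto f^\vee\left(\begin{psmatrix}p&z\\&I_2\end{psmatrix},p'\right)$, which genuinely lies in $\mathcal{S}(\GL_{r-2}(F)^2)$. It performs the conjugations and the Fourier inversion only afterwards. In your ordering you would still have to check that the function you feed to Lemma \ref{lem:RS}, after the extra integrations, is of that type.
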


\begin{proof}
By Lemma \ref{lem:extension} and Lemma \ref{lem:for:zeta}, both sides are continuous in $f$, so it suffices to prove the lemma for $f\in \mathcal{S}(\GL_{r,r-2}(F)).$ By Lemma \ref{lem:RS} we have 
\begin{align*}
    &\int_{\mathcal{P}_{r-2}(F)}f^\vee\left(\begin{psmatrix} p & z\\ & I_2 \end{psmatrix},p \right)\frac{d_\ell p}{|\det p|^{1/2}}\\
    &=\int_{(U_{r-2}(F) \backslash \mathcal{P}_{r-2}(F))^2}\left(\int_{U_{r-2}(F)^2}\psi(n_1n_2^{-1})f^\vee\left(\begin{psmatrix}p_1^{-1} n_1 p_2 & z\\ & I_2 \end{psmatrix},p_1^{-1}n_2p_2\right)\frac{dn_1dn_2}{|\det p_1p_2|^{1/2}|\det p_1^{-1}p_2|}\right)d_rp_1d_rp_2.
\end{align*}
Thus using \eqref{If} 
\begin{align*}
    I_f(I_r,m_{\ell})=& \int_{\mathcal{P}_{r-2}(F) \times M_{r-2,2}(F)}\overline{\psi}(\langle m_{\ell} ,z \rangle)f^\vee\left(\begin{psmatrix}p & z\\ & I_2 \end{psmatrix},p\right)\frac{d_\ell p dz}{|\det p|^{1/2}}\\
    =&\int_{(U_{r-2}(F) \backslash \mathcal{P}_{r-2}(F))^2}\Bigg(\int_{U_{r-2}(F)^2 \times M_{r-2,2}(F)}\overline{\psi}(\langle  m_{\ell},z\rangle)\psi(n_1n_2^{-1})\\
    & \times f^\vee\left(\begin{psmatrix}p_1^{-1} n_1 p_2 & z\\ & I_2 \end{psmatrix},p_1^{-1}n_2p_2\right)dzdn_1dn_2\Bigg)\frac{d_r p_1d_r p_2}{|\det p_1|^{-1/2}|\det p_2|^{3/2}}\\
    =&\int_{(U_{r-2}(F) \backslash \mathcal{P}_{r-2}(F))^2}\Bigg(\int_{U_{r-2}(F)^2 \times M_{r-2,2}(F)}\overline{\psi}(\langle (e_{r-2},0),z\rangle)\psi(n_1n_2^{-1})\\
    & \times f^\vee\left(\begin{psmatrix} p_1^{-1} & \\ & h_\ell^{-1} \end{psmatrix}\begin{psmatrix} n_1  & z\\ & I_2 \end{psmatrix}\begin{psmatrix}p_2 & \\ & h_\ell \end{psmatrix},p_1^{-1}n_2p_2\right)dzdn_1dn_2\Bigg)\frac{ 
   |\det h_\ell|^{r-2}d_r p_1d_r p_2}{|\det p_1p_2|^{3/2}}.
\end{align*}
Here we have used \eqref{M:pair:equiv}.
We apply Fourier inversion to write this as 
\begin{align*}
   \int &\Bigg(\int_F\Bigg(\int_{U_{r-2}(F)^2 \times M_{r-2,2}(F) \times F}\overline{\psi}(\langle (e_{r-2},0),z\rangle)\psi(n_1n_2^{-1})\psi(c_1t)\\& \times f^\vee\left(\begin{psmatrix} p_1^{-1} & \\ & h_\ell^{-1} \end{psmatrix}\begin{psmatrix} n_1  & z_1 & z_2\\ & 1 & t\\ & & 1 \end{psmatrix}\begin{psmatrix}p_2 & \\ & h_\ell \end{psmatrix},p_1^{-1}n_2p_2\right)dzdn_1dn_2dt\Bigg)dc_1\Bigg)\frac{|\det h_\ell|^{r-2}d_r p_1d_r p_2}{|\det p_1p_2|^{3/2}}  
\\
   =&\int\Bigg(\int_F\Bigg(\int_{U_{r-2}(F)^2 \times M_{r-2,2}(F) \times F}\overline{\psi}(\langle (e_{r-2},0),z\rangle)\psi(n_1n_2^{-1})\psi(t)\\
   & \times f^\vee\left(\begin{psmatrix} p_1^{-1} & \\ & \left(\begin{psmatrix} 1 & \\ & c_1^{-1}\end{psmatrix}h_\ell\right)^{-1} \end{psmatrix}\begin{psmatrix} n_1  & z_1 & z_2\\ & 1 & t\\ & & 1 \end{psmatrix}\begin{psmatrix}p_2 & \\ & \begin{psmatrix} 1 & \\ & c_1^{-1}\end{psmatrix}h_\ell \end{psmatrix},p_1^{-1}n_2p_2\right)dzdn_1dn_2dt\Bigg)\frac{dc_1}{|c_1|^{r-1}}\Bigg)\frac{|\det h_\ell|^{r-2}d_r p_1d_r p_2}{|\det p_1p_2|^{3/2}}\\
   =&\int\Bigg(\int_{F^\times}W_{f^\vee}\left(\begin{psmatrix} p_1 & \\ & \begin{psmatrix} 1 & \\ & c_1^{-1}\end{psmatrix}h_\ell \end{psmatrix},\begin{psmatrix}p_2 & \\ & \begin{psmatrix} 1 & \\ & c_1^{-1}\end{psmatrix}h_\ell \end{psmatrix},p_1,p_2\right)\frac{d^\times c_1}{|c_1|^{r-2}\zeta(1)}\Bigg)\frac{
   |\det h_\ell|^{r-2}d_r p_1d_r p_2}{|\det p_1p_2|^{3/2}},
\end{align*}
where the outer integrals are over $(U_{r-2}(F) \backslash \mathcal{P}_{r-2}(F))^2.$
\end{proof}

Let $\mathfrak{F}$ be a finite set of $K_{r,r-2}$-types stable under taking contragredients. Let \index{$\mathcal{C}(\GL_{r,r-2}(F))_{\mathfrak{F}}$} \index{$\mathcal{S}(X_\ell^\circ(F),\mathcal{H})_{\mathfrak{F}}$}
\begin{align} \label{types} 
\mathcal{C}(\GL_{r,r-2}(F)){}_{\mathfrak{F}}<\mathcal{C}(\GL_{r,r-2}(F)),\quad 
\mathcal{S}(X_\ell^\circ(F),\mathcal{H}){}_{\mathfrak{F}}\leq \mathcal{S}(X_{\ell}^\circ(F),\mathcal{H}) 
\end{align}
be the subspace spanned by functions that transform according to these $K_{r,r-2}$-types on the right.  
This definition makes sense in the Archimedean or non-Archimedean settings, although it is more usual to work with functions fixed under suitable compact open subgroups in the non-Archimedean setting. We endow $\mathcal{C}(\GL_{r,r-2}(F)){}_{\mathfrak{F}}$ with the subspace topology.
We make use of analogous notation for similar spaces, e.g.,~$C_c^\infty(\GL_{r,r-2}(F))_{\mathfrak{F}} < C_c^\infty(\GL_{r,r-2}(F)).$ 

Let 
$
\underline{\sigma} \in \mathrm{Irr}_2(M)
$
for some semistandard Levi subgroup $M$ of $\GL_{r,r-2}.$
Let \index{$\mathcal{O}(\underline{\sigma})$}
\begin{align*}
    \mathcal{O}(\underline{\sigma}):=\{I(\underline{\sigma_{\lambda}}):\underline{\lambda} \in i \mathfrak{a}_M^\ast\} \subset \mathrm{Tp}_{r,r-2}.
\end{align*}
This is referred to as the \textbf{inertial orbit} of $I(\underline{\sigma}).$ 
Recall from  below \eqref{loc:isom} that $d\mu:=d\mu_{\GL_{r,r-2}}$ denotes the Plancherel measure. We will adopt a helpful notational convention.  
An expression of the form
\begin{align} \label{int}
\int_{\mathrm{Tp}_{r,r-2}}\sum_{\underline{\varphi} \in \mathcal{B}_{\underline{\sigma}}}(*) d\mu(I(\underline{\sigma}))
\end{align}
will have the following meaning.  
We always assume that for each inertial orbit in $\mathrm{Tp}_{r,r-2}$ we have chosen a base point $I(\underline{\sigma_b})
 \in \mathrm{Tp}_{r,r-2}$ and that $
\mathcal{B}_{\underline{\sigma_{b}\lambda}}=\mathcal{B}_{\underline{\sigma_{b}}}(\underline{\lambda})$ in the notation of \eqref{base:twist}.  In an expression of the form \eqref{int}, the terms abbreviated by $(*)$ may depend on the choice of basepoint, but all of our assertions will be independent of this choice.

The following is our Plancherel inversion formula for the Schwartz space of the open orbit:
\begin{thm} \label{thm:Plancherel} 
For $f \in \mathcal{S}(X^\circ_{\ell}(F),\mathcal{H})_{\mathfrak{F}},$ one has  that
\begin{align*}
    f(g,g'^tm_{\ell})
    &=\int_{\mathrm{Tp}_{r,r-2,1}} |\det h_\ell|^{r-2}\sum_{\underline{\varphi} \in \mathcal{B}_{\underline{\sigma}}}
        Z(f,W_{\underline{\varphi}},\chi)\overline{Z(W_{\underline{\varphi}},\chi)(g,g'^tm_{\ell})}\frac{d\mu(I(\underline{\sigma}) \otimes \chi)}{\zeta(1)}.
\end{align*}
\end{thm}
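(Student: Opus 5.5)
The approach is to reduce to the image of $I_{(\cdot)}$ and then combine the Whittaker Plancherel formula with the unfolding of Lemma \ref{lem:unfold}. By the surjectivity of \eqref{I1}, and by projecting onto the $K_{r,r-2}$-types in $\mathfrak{F}$ (which commutes with $I_{(\cdot)}$ since $I_{(\cdot)}$ is equivariant for the right action), we may write $f=I_{f_0}$ with $f_0\in \mathcal{S}(\GL_{r,r-2}(F))_{\mathfrak{F}}$. By equivariance of both sides under $\mathcal{R}_u(\GL_{r,r-2}(F))$ we may also reduce to evaluating at $(g,g'^tm_\ell)=(I_r,m_\ell)$. Indeed, $Z(W,\chi)(g,g'^tm_\ell)$ is by \eqref{ZWchi:func} the value at the base point of a translate of $W$, so replacing $f_0$ by a translate handles the general point. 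Orthonormal bases $\mathcal{B}_{\underline{\sigma}}$ are permuted unitarily, so the sum over $\underline{\varphi}$ is unchanged.

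At the base point, Lemma \ref{lem:unfold} expresses $I_{f_0}(I_r,m_\ell)$ as an integral over $(U_{r-2}(F)\backslash\mathcal{P}_{r-2}(F))^2\times F^\times$ of $W_{f_0^\vee}$. We apply the Whittaker Plancherel formula \eqref{Whitt:Planch} for $\GL_{r,r-2}$ to $f_0^\vee$. In the form of Proposition \ref{prop:Whitt:exp}, this writes $W_{f_0^\vee}(x_1,x_2)$ as
\[
\int_{\mathrm{Tp}_{r,r-2}}\sum_{\underline{\varphi}}\overline{W}_{\underline{\varphi}}(x_1)\,\big(I(\underline{\sigma})(f_0)W_{\underline{\varphi}}\big)(x_2)\,d\mu .
\]
Inserting this, the $p_2$-integral together with the $\mathrm{GL}_2$-coordinate $c_1$ should produce $Z(I(\underline{\sigma})(f_0)W_{\underline{\varphi}},\chi)$, and the $p_1$-integral should produce $\overline{Z(W_{\underline{\varphi}},\chi)(I_r,m_\ell)}$. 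The problem is that only one $c_1$ appears in Lemma \ref{lem:unfold}, shared by both factors. To separate the two copies of $c_1$, we apply Mellin inversion on $F^\times$, i.e.\ the Plancherel formula for $\mathrm{Tp}_1$, with measure normalized so that the factor $\zeta(1)^{-1}$ appears. This turns $\int_{F^\times}A(c_1)\overline{B(c_1)}\,d^\times c_1$ into $\int_{\mathrm{Tp}_1}\widehat{A}(\chi)\overline{\widehat{B}(\chi)}\,d\mu(\chi)$, where the weights $|c_1|^{-(r-2)}$ split as $|c|^{-(r-2)/2}$ on each side, matching \eqref{ZWchi}. Finally, Lemma \ref{lem:Zh} (or Corollary \ref{cor:tempered:ZW=ZfW}) converts $Z(I(\underline{\sigma})(f_0)W_{\underline{\varphi}},\chi)$ into $Z(I_{f_0},W_{\underline{\varphi}},\chi)=Z(f,W_{\underline{\varphi}},\chi)$. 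The leftover factor $|\det h_\ell|^{r-2}$ and the sign conventions for $\psi$ versus $\overline{\psi}$ are then checked against \eqref{Z:ids}.

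The main obstacle is justifying the interchanges: the $\mathrm{Tp}_{r,r-2}$-integral and the basis sum against the $p_1,p_2,c_1$ integrals, and the use of $L^2$ Mellin inversion pointwise. For the first, we would use the $\mathfrak{F}$-finiteness of $f_0$, which makes the basis sum finite on each inertial orbit. We then combine the uniform bounds of Proposition \ref{prop:unif} and Theorem \ref{thm:Whitt:analy}, which give rapid decay in $\lambda$ with values in $\mathcal{C}_d$, with Lemma \ref{lem:for:zeta}, which makes the $p$-integrals converge absolutely for each fixed $c$ with a bound $|c|^{1/2}\sigma(c)^d(1+|c|)^{-N}$. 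This allows Fubini over $\mathrm{Tp}_{r,r-2}\times(U\backslash\mathcal{P})^2$. For the Mellin step, the functions of $c_1$ obtained after the $p$-integrals lie in $L^1\cap L^2(F^\times)$ with the bounds of Lemma \ref{lem:for:zeta}. They are smooth, and in the Archimedean case rapidly decreasing in $\log|c|$ derivatives, so Mellin inversion holds pointwise with absolutely convergent $\chi$-integral. The growth $(1+\norm{\chi})^{d_0}$ from Lemma \ref{lem:Z:cont0} is compensated by the decay of $Z(f,W,\chi)$ in $\chi$, obtained by integrating by parts as in Lemma \ref{lem:map2X}.
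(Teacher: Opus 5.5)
Your proposal is correct and follows the paper's proof essentially step by step: lift $f=I_{\widetilde f}$, unfold with Lemma \ref{lem:unfold}, insert the Whittaker Plancherel formula via Proposition \ref{prop:Whitt:exp}, separate the shared $c_1$ on $F^\times$, convert with Lemma \ref{lem:Zh}, and move from the base point. The differences are cosmetic: the paper separates $c_1$ by forming the convolution $V_{\underline{\varphi}}(c_2)$ and applying Mellin inversion at $c_2=1$, which is the Parseval identity on $F^\times$ that you quote directly, and it does the base-point reduction last, by replacing $K_{r,r-2}$ with a conjugate. (In your formulation the $L^1\cap L^2$ bounds from Lemma \ref{lem:for:zeta} already give absolute convergence of the $\chi$-integral via Cauchy--Schwarz, so you do not need integration by parts there.) One point to state explicitly in your interchange argument is that $\mathfrak{F}$-finiteness confines the $\mathrm{Tp}_{r,r-2}$-integral to finitely many inertial orbits, not just to a finite basis sum on each orbit; this matters because Proposition \ref{prop:unif} only gives bounds one orbit at a time.
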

\begin{proof}
 Choose $\widetilde{f} \in \mathcal{S}(\GL_{r,r-2}(F))_{\mathfrak{F}}$ such that $I_{\widetilde{f}}=f.$  By the Whittaker Plancherel formula \eqref{Whitt:Planch}, we have that
\begin{align*}
    &W_{\widetilde{f}^\vee}\left(\begin{psmatrix} p_1 & \\ & \begin{psmatrix} 1 & \\ & c_1^{-1}\end{psmatrix}h_\ell \end{psmatrix},\begin{psmatrix}p_2 & \\ & \begin{psmatrix} 1 & \\ & c_1^{-1}\end{psmatrix}h_\ell \end{psmatrix},p_1,p_2\right)\\
    =&\int_{\mathrm{Tp}_{r,r-2}}W_{\widetilde{f}^\vee_{ I(\underline{\sigma})}}\left(\begin{psmatrix} p_1 & \\ & \begin{psmatrix} 1 & \\ & c_1^{-1}\end{psmatrix}h_\ell \end{psmatrix},\begin{psmatrix}p_2 & \\ & \begin{psmatrix} 1 & \\ & c_1^{-1}\end{psmatrix}h_\ell \end{psmatrix},p_1,p_2\right)
    d\mu(I(\underline{\sigma})).
\end{align*}  
Using  Proposition \ref{prop:Whitt:exp}, this is 
\begin{align*}
&\int_{\mathrm{Tp}_{r,r-2}}\sum_{\underline{\varphi} \in \mathcal{B}_{\underline{\sigma}}}I(\underline{\sigma})(\widetilde{f})W_{\underline{\varphi}}\left(\begin{psmatrix} p_2 & \\ & \begin{psmatrix} 1 & \\ & c_1^{-1}\end{psmatrix}h_\ell\end{psmatrix},p_2\right)
\overline{W}_{\underline{\varphi}}\left(\begin{psmatrix} p_1 & \\ & \begin{psmatrix} 1 & \\ & c_1^{-1}\end{psmatrix}h_\ell\end{psmatrix},p_1\right)d\mu(I(\underline{\sigma})).
\end{align*}
   Since we assumed $\widetilde{f}$ is right $K$-finite, the integral has support in a finite set of components. Indeed, in the non-Archimedean case this is a consequence of \cite[Theorem VIII.1.2]{Waldspurger:plancherel}, and in the Archimedean case it is a consequence of the classification of discrete series \cite[Theorem 9.20, Chapter XII]{KnappSS}.  Moreover
 the sum over $\varphi$ is finite for each $\underline{\sigma}$ because $I(\underline{\sigma})$ is admissible.
 
 Combining this with Lemma \ref{lem:unfold}, we see that $f(I_r,m_{\ell})=I_{\widetilde{f}}(I_r,m_{\ell})$ is
\begin{align} \label{before:d1} \begin{split}
    \int\bigg(&\int_{F^\times}  \int_{\mathrm{Tp}_{r,r-2}} \sum_{\underline{\varphi} \in \mathcal{B}_{\underline{\sigma}}}  
I(\underline{\sigma})(\widetilde{f})W_{\underline{\varphi}}\left(\begin{psmatrix} p_2 & \\ & \begin{psmatrix} 1 & \\ & c_1^{-1}\end{psmatrix}h_\ell\end{psmatrix},p_2\right)\\& \times 
\overline{W}_{\underline{\varphi}}\left(\begin{psmatrix} p_1 & \\ & \begin{psmatrix} 1 & \\ & c_1^{-1}\end{psmatrix}h_\ell\end{psmatrix},p_1\right)
d\mu(I(\underline{\sigma})) \frac{d^\times c_1}{|c_1|^{r-2}\zeta(1)}\bigg)\frac{|\det h_\ell|^{r-2}d_r p_1d_r p_2}{|\det p_1p_2|^{3/2}},\end{split}
\end{align}
where the outer integral is over ${(U_{r-2}(F) \backslash \mathcal{P}_{r-2}(F))^2}.$ Consider the function 
\begin{align*}
    V_{\underline{\varphi}}(c_2):=\int_{F^\times \times (U_{r-2}(F) \backslash \mathcal{P}_{r-2}(F))^2}&I(\underline{\sigma})(\widetilde{f})W_{\underline{\varphi}}\left(\begin{psmatrix} p_2 & \\ & \begin{psmatrix} 1 & \\ & c_1^{-1}c_2^{-1}\end{psmatrix}h_\ell\end{psmatrix},p_2\right)\\ \times & 
\overline{W}_{\underline{\varphi}}\left(\begin{psmatrix} p_1 & \\ & \begin{psmatrix} 1 & \\ & c_1^{-1}\end{psmatrix}h_\ell\end{psmatrix},p_1\right)
\frac{|\det h_\ell|^{r-2}d_r p_1d_r p_2}{|\det p_1p_2|^{3/2}}\frac{d^\times c_1}{|c_1|^{r-2}\zeta(1)}.
\end{align*}
The integral converges absolutely by Lemma \ref{lem:for:zeta}.
We have
\begin{align} \label{inV}
f(I_r,m_\ell)=\int_{\mathrm{Tp}_{r,r-2}} \sum_{\underline{\varphi}\in \mathcal{B}_{\underline{\sigma}}} V_{\underline{\varphi}}(1) d\mu(I(\underline{\sigma})).
\end{align}
Indeed, this follows from \eqref{before:d1} after rearranging the integral using the Fubini-Tonelli theorem, 
 Proposition \ref{prop:unif}, Lemma \ref{lem:for:zeta}, 
and the fact that the Plancherel measure has at most polynomial growth (see \cite[(2.13.2)]{BP:GLn} for a precise statement).

 Using Lemma \ref{lem:for:zeta}, one checks that $V_{\underline{\varphi}}$ is a smooth function on $F^\times$ such that  $|V_{\underline{\varphi}}(c_2)|\ll_{\epsilon,\underline{\varphi}} \min(|c_2|^{(r-3)/2+\epsilon},|c_2|^{(r-1)/2+\epsilon})$ for any $\epsilon>0$. Therefore, we can apply Mellin inversion to $V_{\underline{\varphi}}$ and write
\begin{align*}
    V_{\underline{\varphi}}(1)&=\int_{\mathrm{Tp}_1}\Big(\int_{F^\times} V_{\underline{\varphi}}(c_2)\frac{\chi(c_2)d^\times c_2}{|c_2|^{(r-2)/2}}\Big)d\mu(\chi).
\end{align*}
To check that this application of Mellin inversion is justified  \cite[(2.2)]{Blomer_Brumley_Ramanujan_Annals} is helpful.  
Changing variables $c_2 \mapsto c_2c_1^{-1}$ we see that 
\begin{align*}
V_{\underline{\varphi}}(1)&=
\int_{\mathrm{Tp}_1}\Bigg(\int_{(F^\times)^2 \times (U_{r-2}(F) \backslash \mathcal{P}_{r-2}(F))^2}I(\underline{\sigma})(\widetilde{f})W_{\underline{\varphi}}\left(\begin{psmatrix} p_2 & \\ & \begin{psmatrix} 1 & \\ & c_2^{-1}\end{psmatrix}h_\ell\end{psmatrix},p_2\right)\\& \times 
\overline{W}_{\underline{\varphi}}\left(\begin{psmatrix} p_1 & \\ & \begin{psmatrix} 1 & \\ & c_1^{-1}\end{psmatrix}h_\ell\end{psmatrix},p_1\right)
\frac{|\det h_\ell|^{r-2}d_r p_1d_r p_2}{|\det p_1p_2|^{3/2}}\frac{\overline{\chi}(c_1)\chi(c_2)d^\times c_1d^\times c_2}{|c_1c_2|^{(r-2)/2}\zeta(1)}\Bigg)d\mu(\chi).
\end{align*}
Here we use Lemma \ref{lem:for:zeta} to justify the change of variables.

Substituting this expression into \eqref{inV}, we have
 \begin{align*}
     f(I_r,m_\ell)&=\int_{\mathrm{Tp}_{r,r-2}}\left(\int_{\mathrm{Tp}_1} \sum_{\underline{\varphi}\in \mathcal{B}_{\underline{\sigma}}} Z(I(\underline{\sigma})(\widetilde{f})W_{\underline{\varphi}},\chi)\overline{Z(W_{\underline{\varphi}},\chi)}\frac{d\mu(I(\underline{\sigma}))}{|\det h_\ell|^{2-r}}\right)\frac{d\mu(\chi)}{\zeta(1)}\\
     &=\int_{\mathrm{Tp}_{r,r-2}}\left(\int_{\mathrm{Tp}_1} \sum_{\underline{\varphi}\in \mathcal{B}_{\underline{\sigma}}}
        Z(f,W_{\underline{\varphi}},\chi)\overline{Z(W_{\underline{\varphi}},\chi)}\frac{d\mu(I(\underline{\sigma}))}{|\det h_\ell|^{2-r}}\right)\frac{d\mu(\chi)}{\zeta(1)}.
 \end{align*}
Here the last identity follows from Lemma \ref{lem:Zh}. 
The integral over $\mathrm{Tp}_{r,r-2,1}$ is absolutely convergent by Proposition \ref{prop:unif}, Lemma \ref{lem:for:zeta}, and an integration by parts argument to control the integral over $\mathrm{Tp}_1$ in the Archimedean case.
  
This is the identity of the theorem in the special case $(g,g')=(I_r,I_{r-2})$ for arbitrary  choice of maximal compact subgroup $K_{r,r-2}$ in the Archimedean case.  To deduce it in general, we replace $K_{r,r-2}$ by $\left(\begin{psmatrix}
        g' & \\
          & I_2
    \end{psmatrix}g\right)^{-1}K_r\begin{psmatrix}
        g' & \\
          & I_2
    \end{psmatrix}g \times g'^{-1}K_{r-2}g',$ 
   $W$ by $\mathcal{R}\left(\begin{psmatrix}
        g' & \\
          & I_2
    \end{psmatrix}g,g'\right)W,$ and $f$ by 
\begin{align*}
   \mathcal{R}(\begin{psmatrix}
        g' & \\
          & I_2
    \end{psmatrix}g,g')f:=f\left(\begin{psmatrix}
        g'^{-1} & \\
         & I_2
    \end{psmatrix}(\cdot )\begin{psmatrix}
        g' & \\
          & I_2
    \end{psmatrix}g,g'^t(\cdot)^t m_{\ell}\right).
\end{align*}  By a change of variables we then have
\begin{align*}
    Z\big(\mathcal{R}\left(\begin{psmatrix}
        g' & \\
          & I_2
    \end{psmatrix}g,g' \right)f,\mathcal{R}\left(\begin{psmatrix}
        g' & \\
          & I_2
    \end{psmatrix}g,g'\right)W,\chi\big)&=|\det g'|^{-3/2}Z(f,W,\chi).
\end{align*}
\end{proof}

\section{A matrical Plancherel theorem for $(X^\circ_{\ell}(F),\mathcal{H})$}
\label{sec:Matric}

For $I(\underline{\sigma}) \otimes \chi=I(\sigma \otimes \sigma') \otimes \chi \in \mathrm{Tp}_{r,r-2,1},$ let
\begin{align} \label{bundle:Z}
    \mathcal{Z}(I(\underline{\sigma}),\chi)^{\mathrm{sm}}:=\bigg\{(g,g'^tm_\ell)\mapsto \overline{Z(W,\chi)(g,g'^tm_\ell)}: W \in \mathcal{W}(I(\sigma),\psi) \widehat{\otimes}\mathcal{W}(I(\sigma'),\overline{\psi}) \bigg\}.
\end{align}
The space $\mathcal{Z}(I(\underline{\sigma}),\chi)^{\mathrm{sm}}$ is naturally a smooth representation of $\GL_{r,r-2}(F)$ isomorphic to $\mathcal{W}(I(\sigma),\psi)^\vee \widehat{\otimes}\mathcal{W}(I(\sigma'),\overline{\psi})^\vee.$  In particular it is unitary.  We let $\mathcal{Z}(I(\underline{\sigma}),\chi)$ be its unitary completion.  There is then a canonical Hermitian vector bundle \index{$\mathcal{Z}$}
\begin{align}
\mathcal{Z} \lto \mathrm{Tp}_{r,r-2,1}
\end{align}
such that the fiber over $I(\underline{\sigma}) \otimes\chi$ is $\mathcal{Z}(I(\underline{\sigma}), \chi).$

\begin{rem}
It would be more canonical to define $\mathcal{Z}$ directly in terms of $(X_\ell(F),\mathcal{H})$ following \cite{Bernstein:Planch}.  However, we take the approach above because we ultimately want to relate our local zeta integrals to Whittaker functions.  It also allows us to avoid proving a local multiplicity one statement.
\end{rem}

As above, let $\mathfrak{F}$ denote a finite set of $K_{r,r-2}$-types stable under taking contragredients.  We define \index{$\mathcal{C}(\mathrm{Tp}_{r,r-2,1},\mathcal{Z})_{\mathfrak{F}}$} $\mathcal{C}(\mathrm{Tp}_{r,r-2,1},\mathcal{Z})_{\mathfrak{F}}$ to be the space of sections of $\mathcal{Z}$ satisfying the following condition:
\begin{enumerate}
\item[] The section is a finite sum of sections $E$ supported in $\mathcal{O}(\underline{\sigma}) \times \mathrm{Tp}_1$ satisfying
\begin{align*}
E(I(\underline{\sigma}) \otimes \chi)= T_0(I(\underline{\sigma}) \otimes \chi)Z(W_{\underline{\varphi}},\chi),
\end{align*}
where $\varphi \in \mathcal{B}_{\underline{\sigma}}$ has $K_{r,r-2}$-type in $\mathfrak{F}$ and $T_0 \in \mathcal{C}(\mathrm{Tp}_{r,r-2,1}).$ 
\end{enumerate}
We point out that $\mathcal{C}(\mathrm{Tp}_{r,r-2,1},\mathcal{Z})_{\mathfrak{F}}$ comes equipped with a locally convex topology inherited from the topology on $\mathcal{C}(\mathrm{Tp}_{r,r-2,1}).$

Recall the Harish-Chandra space $\mathcal{C}(X^\circ_{\ell}(F),\mathcal{H})$ defined in \S \ref{ssec:fs}.  
Let $M \leq \GL_{r,r-2}$ be a semistandard Levi subgroup, let $\underline{\sigma} \in \mathrm{Irr}_2(M),$ and let $\underline{\varphi} \in \mathcal{B}_{\underline{\sigma}}.$  The motivation for the definition of $\mathcal{C}(\mathrm{Tp}_{r,r-2,1},\mathcal{Z})_{\mathfrak{F}}$ comes from the following lemma:

\begin{lem} \label{lem:is:cont:for:HP2}
One has a continuous map
$
\Psi:\mathcal{C}(X^\circ_\ell(F),\mathcal{H}) \to \mathcal{C}(\mathrm{Tp}_{r,r-2,1})$
given by sending $f$ to the function $\Psi(f)$ supported on $\OO_{\underline{\sigma}} \times \mathrm{Tp}_1$ satisfying
$\Psi(f)(I(\underline{\sigma_{\lambda}}) \otimes \chi)=Z(f,W_{\underline{\varphi_\lambda}},\chi).$
\end{lem}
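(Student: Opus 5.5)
The plan is to pull back $\Psi(f)$ to $i\mathfrak{a}_M^*\times i\mathfrak{a}_{\GL_1}^*$ along \eqref{into:Temp} and verify the defining conditions of $\mathcal{C}(\mathrm{Tp}_{r,r-2,1})$: smoothness in $(\underline{\lambda},\chi)$; in the non-Archimedean case, support on finitely many components; and in the Archimedean case, rapid decay of all constant-coefficient derivatives. Each condition comes with seminorm bounds controlled by a continuous seminorm of $f$.

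\textbf{Step 1: continuity of $D W_{\underline{\varphi_\lambda}}$.} Fix a constant-coefficient differential operator $D$ in $(\underline{\lambda},\chi)$. By Theorem \ref{thm:Whitt:analy}, for $d\gg_{D,\varphi}1$ the map
\[
\underline{\lambda}\longmapsto D_{\underline{\lambda}}W_{\underline{\varphi_\lambda}}\in\mathcal{C}_d(U_{r,r-2}(F)\backslash \GL_{r,r-2}(F),\psi\otimes\overline{\psi})
\]
is continuous, with polynomial growth in $|\underline{\lambda}|$ in the Archimedean case.

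\textbf{Step 2: differentiating in $\chi$.} Derivatives in $\chi$ (along $\chi|\cdot|^{it}$) bring down powers of $\log|c|$ in \eqref{Zint:2}. These are absorbed by the $\sigma$-losses already present in the proofs of Lemma \ref{lem:map2X} and Lemma \ref{lem:Z:cont0}, via Lemma \ref{prop:Jd>0}, exactly as the factors $(\log|c|)^i$ were handled there.

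\textbf{Step 3: smoothness and seminorm bounds.} Combining Steps 1 and 2 with Lemma \ref{lem:Z:cont}, which gives continuity of $(f,W)\mapsto Z(f,W,\chi)$ for $W\in\mathcal{C}_d$, shows that $\Psi(f)$ is smooth. Differentiation under the integral sign is justified by the mean value argument from the proof of Theorem \ref{thm:Whitt:analy}. We also obtain
\[
|D\Psi(f)(\underline{\lambda},\chi)|\le \nu(f)\,(1+|\underline{\lambda}|)^{d'}(1+\norm{\chi})^{d_0}
\]
for a continuous seminorm $\nu$ on $\mathcal{C}(X^\circ_\ell(F),\mathcal{H})$.

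\textbf{Step 4: the non-Archimedean case.} Here $i\mathfrak{a}^*_{M,F}$ is compact, so only the $\chi$-support needs attention. Since $f$ is fixed by some compact open subgroup $K$, $Z(f,W,\chi)$ vanishes unless $\chi|_{\OO_F^\times}$ is trivial on a subgroup determined by $K$ and $h_\ell$. To see this, change variables $c\mapsto cu$ for $u\in\OO_F^\times$ near $1$, note that this can be realized by right translation by $\mathrm{diag}(1,u^{-1})$ in the $\GL_2$ block, and use $K$-invariance. This leaves finitely many components, and continuity then follows on each $K$-invariant piece.

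\textbf{Step 5: the Archimedean case, and the main obstacle.} The main obstacle is rapid decay in the Archimedean case, since the bound from Step 3 is only polynomial. For decay in $\underline{\lambda}$, I would imitate Proposition \ref{prop:unif}. Let $Z$ lie in the center of $U(\mathfrak{gl}_{r,r-2})$. By unitarity and the adjointness of $\mathcal{R}$ in \eqref{Zint:2},
\[
\chi_{I(\underline{\sigma_\lambda})}(Z)\,Z(f,W_{\underline{\varphi_\lambda}},\chi)=Z(\mathcal{R}(Z^*)f,W_{\underline{\varphi_\lambda}},\chi),
\]
which follows from the equivariance $Z(\mathcal{R}(g)f,\mathcal{R}(g)W,\chi)=Z(f,W,\chi)$ established at the end of the proof of Theorem \ref{thm:Plancherel}. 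The Harish-Chandra isomorphism then supplies $Z$ with $|\chi_{I(\underline{\sigma_\lambda})}(Z)|\gg(1+|\underline{\lambda}|)^N$. For decay in $\chi$, I would use the Lie algebra element $\mathrm{diag}(0,1)$ of the $\GL_2$-factor acting on $c$. This is an element of $\mathrm{Lie}(\GL_2)_\ell$ (or, if necessary, one can integrate by parts in $c$ against $\chi(c)$), and its action on $f$ produces factors of $\chi$'s parameter. Because $\mathcal{C}(X_\ell^\circ(F),\mathcal{H})$ is stable under $U(\mathfrak{gl}_{r,r-2}\oplus\mathrm{Lie}(\GL_2)_\ell)$ with continuous action, these bounds are again controlled by continuous seminorms of $f$, giving continuity of $\Psi$.
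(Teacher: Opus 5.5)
Your proposal is correct, but it takes a different route from the paper at the two substantive points.

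\textbf{Rapid decay in the Archimedean case.} The paper writes $f$ as a finite sum of convolutions $\mathcal{R}_u(\phi)f_1$ with $\phi\in C_c^\infty(\GL_{r,r-2}(F))$, which is a Dixmier--Malliavin type factorization. It then uses $Z(\mathcal{R}_u(\phi)f_1,W_{\underline{\varphi_\lambda}},\chi)=Z(f_1,I(\underline{\sigma_\lambda})(\phi^\vee)W_{\underline{\varphi_\lambda}},\chi)$ and quotes \Cref{cor:unif} for decay in $\underline{\lambda}$, with integration by parts in $c$ for decay in $\chi$. You instead let central elements of $U(\mathfrak{gl}_{r,r-2})$ and a one-parameter subgroup of $(\GL_2)_\ell$ act directly on $f$.

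\textbf{Continuity.} The paper tracks no seminorms. It invokes the closed graph theorem, using only that each evaluation $f\mapsto Z(f,W_{\underline{\varphi_\lambda}},\chi)$ is continuous by Lemma \ref{lem:Z:cont}. You get continuity from explicit seminorm bounds instead.

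\textbf{What each route buys.} Your route needs only the $U(\mathfrak{g})$-stability built into the definition of $\mathcal{C}(X_\ell^\circ(F),\mathcal{H})$. The paper's factorization presupposes a smooth Fr\'echet $\GL_{r,r-2}(F)$-action, which it asserts without comment. Your route also yields quantitative bounds. The price is that you must justify the transposition identity moving $Z$ from $W$ onto $f$. You must also handle $\underline{\lambda}$-derivatives by a Leibniz induction, since $\chi_{I(\underline{\sigma_\lambda})}(Z)$ depends on $\underline{\lambda}$.

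\textbf{A correction in Step 5.} The pairing $Z(f,W,\chi)$ is bilinear, so $Z^*$ must be the antipode of $Z$, not a unitary adjoint; the appeal to ``unitarity'' is misplaced. Also, the equivariance at the end of the proof of Theorem \ref{thm:Plancherel} carries a factor $|\det g'|^{-3/2}$. This only shifts the eigenvalue polynomial and is harmless.

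\textbf{Step 4.} This supplies something the paper omits. The paper simply asserts $\Psi(f)\in\mathcal{C}(\mathrm{Tp}_{r,r-2,1})$ in the non-Archimedean case without checking that only finitely many components of $\mathrm{Tp}_1$ occur. Your argument is right once you use the conjugated element $h_\ell^{-1}\mathrm{diag}(1,u)h_\ell$ rather than $\mathrm{diag}(1,u)$ itself. The conjugated element lies in $(\GL_2)_\ell$ because $\mathrm{diag}(1,u)^\iota=\mathrm{diag}(1,u^{-1})$ fixes $(e_{r-2},0)$ on the right, whereas $\mathrm{diag}(1,u)$ generally does not. The same conjugated element is the one to differentiate for the $\chi$-decay in Step 5.
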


\begin{proof} 
 The function $\Psi(f)$ is well-defined by  Theorem \ref{thm:Whitt:analy} and Lemma \ref{lem:Z:cont}. 
Differentiating under the integral sign, we deduce smoothness in $\underline{\lambda};$ the required hypotheses can be checked using Theorem \ref{thm:Whitt:analy} and the proof of Lemma \ref{lem:Z:cont}.  
 As for smoothness in $\chi,$ we recall that 
 \begin{align*}
 Z(f,W,\chi)&=
     \int f\left(g,g'^tm_{\ell}\right)\left(\int_{F^\times} J(W)\left(\begin{psmatrix}
        I_{r-2} & \\
           & \begin{psmatrix} 1 & \\ & c^{-1}\end{psmatrix}h_\ell
\end{psmatrix}g,g'^tm_\ell\right)\frac{\chi(c)d^\times c}{|c|^{(r-2)/2}}\right)\frac{|\det g'|d\dot{g}'d\dot{g}}{|\det h_\ell|^{2-r}},
 \end{align*}
 where the outer integral is over $\mathcal{P}_{r-2}(F) \backslash \GL_{r-2}(F) \times N(F) \backslash \GL_r(F).$
Repeated differentiation in $\chi$ multiplies
 the inner integral by polynomials in $\log |c|,$ and the estimates used to prove the convergence of this integral in Lemma \ref{lem:Z:cont} are insensitive to this change. We conclude that $\Psi(f)$ is smooth as a function of $\chi$ as well.
 
 Thus $\Psi(f) \in \mathcal{C}(\mathrm{Tp}_{r,r-2,1})$ in the non-Archimedean case.  To prove this in the Archimedean case, it suffices to prove it for a function of the form
 $
 f=\mathcal{R}_u(f_2)f_1
 $
 for some $(f_1,f_2) \in C_c^\infty(\GL_{r,r-2}(F)) \times \mathcal{C}(X_\ell^{\circ}(F),\mathcal{H}).$  Indeed, every function in $\mathcal{C}(X_\ell^{\circ}(F),\mathcal{H})$ is a finite sum of functions of this form.  We have
 \begin{align*}
Z(\mathcal{R}_u(f_2)f_1,W_{\underline{\varphi}},\chi)=Z(f_1,I(\underline{\sigma})(f^\vee_2)W_{\underline{\varphi}},\chi).
\end{align*} 
We can now use the estimate in Corollary \ref{cor:unif} and the arguments above to conclude that $\Psi(f)$ is rapidly decreasing in $\underline{\lambda},$ and an integration by parts argument to prove that it is rapidly decreasing in $\chi.$ This proves that $\Psi(f) \in \mathcal{C}(\mathrm{Tp}_{r,r-2,1})$ in the Archimedean case as well.  We deduce $\Psi(f) \in \mathcal{C}(\mathrm{Tp}_{r,r-2,1})$ for all $f.$ 
Now the graph of $\Psi$ is
\begin{align*}
    \bigcap_{(\underline{\lambda},\chi) \in i\mathfrak{a}_M \times \mathrm{Tp}_1}\{(f,T):Z(f,W_{\underline{\varphi_\lambda}},\chi)=T(I(\underline{\sigma_{\lambda}}) \otimes \chi)\}
\end{align*}
and each of the sets in the intersection is closed since $f \mapsto Z(f,W_{\underline{\varphi_\lambda}},\chi)$ is continuous by \Cref{lem:Z:cont}.  Hence by the closed graph theorem \cite[Theorem 2]{Robertson:Robertson}  $\Psi$ is continuous.
\end{proof}

Assume we are given $T \in \mathcal{C}(\mathrm{Tp}_{r,r-2},\mathcal{HS})_{\mathfrak{F}}.$  Then  
$
T(I(\underline{\sigma}) \otimes \chi)=\sum_{i} \overline{\varphi}_{i} \otimes \varphi_i'
$
for some $\varphi_i, \varphi_i' \in I(\underline{\sigma}),$ and the sum is finite by our assumptions on $K_{r,r-2}$-types.   
We set \index{$\widehat{I}_{h_\ell}(T)$}
\begin{align} \label{Ih} \begin{split}
&\widehat{I}_{h_\ell}(T)(I(\underline{\sigma}) \otimes \chi)(g,g'^tm_\ell):=|\det h_\ell|^{r-2}\sum_{i} Z(W_{\varphi_i'},\chi)\overline{Z(W_{\varphi_i},\chi)(g,g'^tm_\ell)}. \end{split}
\end{align}
For $f \in \mathcal{C}(\GL_{r,r-2}(F))_{\mathfrak{F}},$ one has by Corollary \ref{cor:tempered:ZW=ZfW}
\begin{align} \label{Ihcirc}
\begin{split}
\widehat{I}_{h_\ell} \circ\mathrm{HP}(f)(I(\underline{\sigma}) \otimes \chi) &=|\det h_\ell|^{r-2} \sum_{\underline{\varphi} \in \mathcal{B}_{\underline{\sigma}}} Z(I(\underline{\sigma})(f)W_{\underline{\varphi}},\chi)\overline{Z(W_{\underline{\varphi}},\chi)(g,g'^tm_\ell)}\\
&=|\det h_\ell|^{r-2}\sum_{\underline{\varphi} \in \mathcal{B}_{\underline{\sigma}}} Z(I_f,W_{\underline{\varphi}},\chi)\overline{Z(W_{\underline{\varphi}},\chi)(g,g'^tm_\ell)}.
\end{split}
\end{align}

\begin{lem} \label{lem:dense:cont}
The operator $\widehat{I}_{h_\ell}$  defines a continuous map
\begin{align*}
\widehat{I}_{h_\ell}:\mathcal{C}(\mathrm{Tp}_{r,r-2},\mathcal{HS})_{\mathfrak{F}} \lto \mathcal{C}(\mathrm{Tp}_{r,r-2,1},\mathcal{Z})_{\mathfrak{F}}.
\end{align*}
\end{lem}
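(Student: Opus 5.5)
The plan is to check the three things the statement packages together: that $\widehat{I}_{h_\ell}(T)$ really is a section of $\mathcal{Z}$ of the required shape, that the coefficients are in $\mathcal{C}(\mathrm{Tp}_{r,r-2,1})$, and that these coefficients depend continuously on $T$. I would reduce to a single inertial orbit. Since $\mathfrak{F}$ is finite, only finitely many inertial orbits $\mathcal{O}(\underline{\sigma})$ of $\mathrm{Tp}_{r,r-2}$ support $K_{r,r-2}$-types in $\mathfrak{F}$. In the non-Archimedean case this follows from \cite[Theorem VIII.1.2]{Waldspurger:plancherel}; in the Archimedean case it follows from the classification of discrete series, exactly as in the proof of Theorem \ref{thm:Plancherel}. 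Moreover, $I(\underline{\sigma})_{\mathfrak{F}}$ is finite dimensional, and the basis $\mathcal{B}_{\underline{\sigma}}(\underline{\lambda})$ of \eqref{base:twist} is a twist of a fixed finite set. So on $\mathcal{O}(\underline{\sigma})$ I will write
$$T(I(\underline{\sigma_\lambda}))=\sum_{\underline{\varphi},\underline{\varphi}'\in \mathcal{B}_{\underline{\sigma}},\ \text{types in }\mathfrak{F}} t_{\underline{\varphi},\underline{\varphi}'}(\underline{\lambda})\,\overline{\underline{\varphi}_\lambda}\otimes \underline{\varphi}'_\lambda,$$
where the matrix coefficients $t_{\underline{\varphi},\underline{\varphi}'}$ lie in $\mathcal{C}(i\mathfrak{a}_M^*)$. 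They are smooth and, in the Archimedean case, rapidly decreasing with all derivatives. The assignment $T\mapsto t_{\underline{\varphi},\underline{\varphi}'}$ is continuous, by the definition of the topology on $\mathcal{C}(\mathrm{Tp}(G),\mathcal{HS})$ through the seminorms $p_{D,u,v,N}$.

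Substituting this expansion into \eqref{Ih} and using linearity of $W\mapsto Z(W,\chi)$ gives
$$\widehat{I}_{h_\ell}(T)(I(\underline{\sigma_\lambda})\otimes\chi)=|\det h_\ell|^{r-2}\sum_{\underline{\varphi}}\Big(\sum_{\underline{\varphi}'} t_{\underline{\varphi},\underline{\varphi}'}(\underline{\lambda})\, Z(W_{\underline{\varphi}'_\lambda},\chi)\Big)\,\overline{Z(W_{\underline{\varphi}_\lambda},\chi)(\cdot)}.$$
This has exactly the form $T_0\cdot \overline{Z(W_{\underline{\varphi}},\chi)}$ demanded in the definition of $\mathcal{C}(\mathrm{Tp}_{r,r-2,1},\mathcal{Z})_{\mathfrak{F}}$, where
$$T_0(I(\underline{\sigma_\lambda})\otimes\chi)=\sum_{\underline{\varphi}'}t_{\underline{\varphi},\underline{\varphi}'}(\underline{\lambda})\,Z(W_{\underline{\varphi}'_\lambda},\chi),$$
up to the harmless complex conjugation convention in \eqref{bundle:Z}. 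Since the space carries the topology inherited from $\mathcal{C}(\mathrm{Tp}_{r,r-2,1})$, it remains to show that $T\mapsto T_0$ is continuous into $\mathcal{C}(\mathrm{Tp}_{r,r-2,1})$. Because $T\mapsto t_{\underline{\varphi},\underline{\varphi}'}$ is continuous and the sum is finite, this reduces to the following claim about multipliers: the function $(\underline{\lambda},\chi)\mapsto Z(W_{\underline{\varphi}'_\lambda},\chi)$ is smooth on $i\mathfrak{a}_M^*\times \mathrm{Tp}_1$. Moreover, every derivative must have at most polynomial growth in $(1+|\underline{\lambda}|)$ and must be rapidly decreasing in $\norm{\chi}$. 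Multiplication by such a function then maps $\mathcal{C}(i\mathfrak{a}_M^*)$ continuously to $\mathcal{C}(i\mathfrak{a}_M^*\times\mathrm{Tp}_1)$.

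\textbf{Main obstacle.} The hard step is this multiplier bound, and in particular the decay in $\chi$. Here $Z(W,\chi)$ means the scalar \eqref{ZWchi}, i.e.\ the value at $(I_r,I_{r-2})$. Smoothness in $\underline{\lambda}$, together with polynomial bounds on $D$-derivatives, I would get from Theorem \ref{thm:Whitt:analy}: it gives $DW_{\underline{\varphi}_\lambda}\in\mathcal{C}_d$ with seminorms $\ll(1+|\lambda|)^{d'}$, and this combines with the continuity in Lemma \ref{lem:Z:cont0} or Lemma \ref{lem:for:zeta}. The same lemmas handle $\chi$-derivatives, since differentiating in $\chi$ only introduces powers of $\log|c|$, as in the proof of Lemma \ref{lem:is:cont:for:HP2}.

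In the non-Archimedean case these bounds already suffice. There $\mathrm{Tp}_1$ has compact components, and one more point is needed: only finitely many components of $\mathrm{Tp}_1$ contribute. This holds because the $K$-finiteness of $W_{\underline{\varphi}}$ forces $Z(W,\chi)=0$ unless the conductor of $\chi$ is bounded.

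In the Archimedean case I would first try integration by parts in $c$. Concretely, $\chi(c)=|c|^{it}\chi_0(c)$, and I would apply the operator $c\,\frac{d}{dc}$ (or the corresponding Lie algebra element acting on the $\mathrm{GL}_2$-block) to $W_{\underline{\varphi}}$. This yields a factor $(1+\norm{\chi})^{-N}$ at the cost of replacing $\underline{\varphi}$ by $u.\underline{\varphi}$ for $u$ in a fixed finite subset of $U(\mathfrak{gl}_r)$. Those vectors are still $K$-finite with types in a slightly enlarged finite set, and Theorem \ref{thm:Whitt:analy} applies to them uniformly, as in the uniformity remark in the proof of Lemma \ref{lem:uniform:ZfW}.

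Finally, absorbing the polynomial growth in $\underline{\lambda}$ into the rapid decay of the $t_{\underline{\varphi},\underline{\varphi}'}$ gives bounds of the form $p_{D,d}(T_0)\ll \sum p_{D',u,v,N}(T)$, which is the desired continuity.
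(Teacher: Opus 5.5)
Your route differs from the paper's, and as written it has a gap in the Archimedean case.

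\textbf{Where the expansion fails.} Your reduction rests on writing $T(I(\underline{\sigma_\lambda}))$ as a finite sum $\sum t_{\underline\varphi,\underline\varphi'}(\underline\lambda)\,\overline{\underline\varphi_\lambda}\otimes\underline\varphi'_\lambda$ in which \emph{both} $\underline\varphi$ and $\underline\varphi'$ have $K_{r,r-2}$-type in $\mathfrak{F}$. The paper's $\mathfrak{F}$-condition is only one-sided. The space $\mathcal{C}(\mathrm{Tp}_{r,r-2},\mathcal{HS})_{\mathfrak{F}}$ is $\mathrm{HP}_{\GL_{r,r-2}}(\mathcal{C}(\GL_{r,r-2}(F))_{\mathfrak{F}})$, and those functions are constrained only on the right. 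This is what makes the top arrow of \eqref{diag:for:proof} a homeomorphism. In the proof of Lemma~\ref{lem:dense} the paper even translates the image side of $T$ by non-compact elements and stays inside the space.

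So in $T=\sum_i\overline{\varphi}_i\otimes\varphi'_i$, only the $\varphi_i$ are $\mathfrak{F}$-typed; these produce the section $\overline{Z(W_{\varphi_i},\chi)(\cdot)}$. The $\varphi'_i$, which feed the scalar coefficient, are arbitrary smooth vectors. Your $T_0$ is therefore $Z(W_{v(\underline\lambda)},\chi)$ with $v(\underline\lambda)=T(I(\underline{\sigma_\lambda}))\underline{\varphi_\lambda}$. Equivalently, it is an infinite series $\sum_{\underline\varphi'}t_{\underline\varphi,\underline\varphi'}(\underline\lambda)Z(W_{\underline\varphi'_\lambda},\chi)$ over all $K$-types.

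Hence your step ``the sum is finite, so it suffices to bound the multiplier for each fixed $\underline\varphi'$'' breaks down. In Theorem~\ref{thm:Whitt:analy}, both $d$ and the implied constants depend on $\underline\varphi'$ in an unspecified way, so the per-vector bounds cannot be summed. You would need a version that is continuous in $\underline\varphi'$ for the smooth topology of $I(\underline\sigma)$, with polynomial dependence on $\underline\lambda$, and the paper does not provide one. In the non-Archimedean case this is harmless: on each step of the LF-space the $\underline\varphi'$ run over a finite basis of vectors fixed by an open compact subgroup, though not of type in $\mathfrak{F}$.

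\textbf{How the paper avoids this.} The paper never decomposes the non-$K$-finite side. It writes $T=\mathrm{HP}(f)$ with $f\in\mathcal{C}(\GL_{r,r-2}(F))_{\mathfrak{F}}$, using Theorem~\ref{thm:HC}. By \eqref{Ihcirc} and Corollary~\ref{cor:tempered:ZW=ZfW}, the coefficient attached to $\underline\varphi$ is
$Z(I(\underline{\sigma_\lambda})(f)W_{\underline{\varphi_\lambda}},\chi)=Z(I_f,W_{\underline{\varphi_\lambda}},\chi)=\Psi(I_f)(I(\underline{\sigma_\lambda})\otimes\chi)$.
Here only the $K$-finite $\underline\varphi$ enters a Whittaker function. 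The operator $I(\underline{\sigma_\lambda})(f)$ plays the role of your coefficients $t_{\underline\varphi,\underline\varphi'}$ and supplies the rapid decay in $\underline\lambda$ via Corollary~\ref{cor:unif}. Membership in $\mathcal{C}(\mathrm{Tp}_{r,r-2,1})$ and continuity then follow from Lemma~\ref{lem:extension} and Lemma~\ref{lem:is:cont:for:HP2}, the latter via the closed graph theorem. One then transports back along the homeomorphism $\mathrm{HP}$.

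Your integration by parts in $c$ for decay in $\chi$, and your conductor argument, are correct and match what is used inside Lemma~\ref{lem:is:cont:for:HP2}. On sections that are $\mathfrak{F}$-finite on both sides, your argument is a valid and more explicit proof. To cover the whole space, you need either the missing uniform Whittaker bound or the detour through $\mathrm{HP}$.
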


\begin{proof}
Let $M$ be a semistandard Levi subgroup of $\GL_{r,r-2},$ let $\underline{\sigma} \in \mathrm{Irr}_2(M),$ and let $\underline{\varphi} \in \mathcal{B}_{\underline{\sigma}}.$  Let $f \in \mathcal{C}(\GL_{r,r-2}(F)).$  By Lemma \ref{lem:is:cont:for:HP2}, the map
\begin{align*}
\Psi(I_f):\mathcal{O}(\underline{\sigma}) \times \mathrm{Tp}_1 &\lto \CC\\
I(\underline{\sigma_{\lambda}}) \otimes \chi &\longmapsto Z(I_f,W_{\underline{\varphi_{\lambda}}},\chi)
\end{align*}
 may be regarded as an element of $\mathcal{C}(\mathrm{Tp}_{r,r-2,1})$ by extending by zero.   
Since  
 $I_{(\cdot)}$ is continuous by \Cref{lem:extension} and $\Psi$ is continuous by \Cref{lem:is:cont:for:HP2}, the composite $\Psi \circ I_{(\cdot)}$
 is continuous.

In view of \eqref{Ihcirc}, we deduce that $\widehat{I}_{h_\ell} \circ \mathrm{HP}$ has image in $\mathcal{C}(\mathrm{Tp}_{r,r-2,1},\mathcal{Z})_{\mathfrak{F}}$ and is continuous.  Since $\mathrm{HP}$ is a homeomorphism by Theorem \ref{thm:HC}, we deduce the lemma. 
\end{proof}

\begin{lem}\label{lem:approx}
Let $T \in \mathcal{C}(\mathrm{Tp}_1).$  The closure of the linear span of $\left\{\chi\mapsto   \chi(a)T(\chi):  a\in F^\times\right\}$ in $\mathcal{C}(\mathrm{Tp}_1)$ contains $\{\phi T:\phi \in C_c^\infty(\mathrm{Tp}_1)\}.$
\end{lem}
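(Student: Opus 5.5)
The plan is to realize multiplication by $\phi$ as convolution against the Fourier dual of $\phi$ on $F^\times$, and then to approximate that convolution integral by Riemann sums. Recall that $\mathrm{Tp}_1$ is the unitary dual of $F^\times$: a disjoint union of the circles or lines $i\mathfrak{a}_{\GL_1,F}^*\cdot\chi_0$, indexed by characters $\chi_0$ of $\OO_F^\times$ (or of $\{\pm1\}$, resp. the unit circle, in the Archimedean case). Given $\phi\in C_c^\infty(\mathrm{Tp}_1)$, let $\check\phi(a):=\int_{\mathrm{Tp}_1}\phi(\chi)\chi(a)^{-1}d\mu(\chi)$. Then $\check\phi$ is a Schwartz function on $F^\times$, and by Fourier inversion on the abelian group $F^\times$ one has
\begin{align*}
\phi(\chi)T(\chi)=\int_{F^\times}\check\phi(a)\,\chi(a)T(\chi)\,d^\times a .
\end{align*}
Since $\phi$ has compact support it meets only finitely many components, so in the non-Archimedean case $\check\phi$ is compactly supported and locally constant. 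In the Archimedean case, repeated integration by parts in $\chi$ shows that $\check\phi$ decays faster than any power of $\sigma(a)$, and similarly for its derivatives in $\log|a|$ and in the angular variable.

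\textbf{Non-Archimedean case.} Here $\check\phi$ is a finite linear combination of indicator functions of cosets $a_0K_0$, with $K_0\le\OO_F^\times$ open. The convolution integral is therefore literally a finite sum
\begin{align*}
\sum \check\phi(a_0)\,\mathrm{meas}(K_0)\int_{K_0}\chi(a_0k)\,dk\; T(\chi).
\end{align*}
The inner integral $\int_{K_0}\chi(k)\,dk$ is itself a finite linear combination of the functions $\chi\mapsto\chi(k_i)$, since $\chi$ ranges over finitely many $K_0'$-types on the support of $T$ and one can average over $K_0/K_0'$ (and one may shrink $K_0$ so that $T$ is supported on characters trivial on $K_0$). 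Hence $\phi T$ lies in the linear span itself, and no closure is needed.

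\textbf{Archimedean case.} Here I would show that the $\mathcal{C}(\mathrm{Tp}_1)$-valued integral $\int_{F^\times}\check\phi(a)\,(\chi\mapsto\chi(a)T(\chi))\,d^\times a$ converges as a Bochner/Riemann integral in the Fréchet topology. Riemann sums are elements of the linear span, so they would then converge to $\phi T$ in $\mathcal{C}(\mathrm{Tp}_1)$, which proves the claim.

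The key estimate is on the seminorms $p_{D,d}$ of $a\mapsto\chi(a)T(\chi)$. Writing $\chi=\chi_0|\cdot|^{it}$, derivatives $D$ in $t$ produce factors $(\log|a|)^k$, so
\begin{align*}
p_{D,d}(\chi(a)T)\ll\sigma(a)^{\deg D}\sup_{D'\le D}p_{D',d}(T).
\end{align*}
This is polynomial growth in $\sigma(a)$, which is absorbed by the rapid decay of $\check\phi$. For the same reason the map $a\mapsto\chi(a)T$ is continuous into $\mathcal{C}(\mathrm{Tp}_1)$.

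I expect this Archimedean convergence to be the main obstacle. Concretely, one must check that Riemann sums over an exhausting family of compact subsets of $F^\times$ converge in every seminorm $p_{D,d}$. I would handle this by combining the decay of $\check\phi$, to truncate to a compact set, with uniform continuity of $a\mapsto\chi(a)T$ on that compact set. Uniform continuity should follow from the mean value theorem: derivatives in $a$ bring down factors of $t$, or of the angular parameter of $\chi_0$ when $F=\CC$, and these are controlled by the weights $(1+\norm{\chi})^d$ in $p_{D,d}$, since $T$ is rapidly decreasing.
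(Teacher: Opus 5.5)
\textbf{Archimedean case.} Your argument is correct and is essentially the paper's own proof. The paper writes $\phi$ by Fourier inversion as an integral of characters against a rapidly decaying function, then approximates by Riemann sums in $\mathcal{S}(\RR)$ one component at a time. Your bound on $p_{D,d}(\chi(a)T)$, together with the uniform-continuity argument using the weights $(1+\norm{\chi})^d$, is a more careful version of the same reasoning.

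\textbf{Non-Archimedean case: a false step.} You claim that $\check\phi$ is compactly supported on $F^\times$, and conclude that $\phi T$ already lies in the span, so no closure is needed. This claim is false.

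Finiteness of the set of components only gives local constancy: $\check\phi$ is invariant under some open $K_0\le\OO_F^\times$. Compactness of the support is a separate matter, governed by the $|a|$-direction. Write $\chi=\chi_0|\cdot|^{it}$ with $\chi_0(\varpi)=1$. Then $\check\phi(\varpi^n u)$ is, up to the factor $\overline{\chi_0}(u)$, the $n$-th Fourier coefficient of the function $t\mapsto\phi(\chi_0|\cdot|^{it})$ on the compact circle $\RR/\tfrac{2\pi}{\log q}\ZZ$.

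For a smooth $\phi$ these coefficients decay rapidly, but they are finitely supported only when $\phi$ is a trigonometric polynomial in $q^{-it}$ on each component. For example, a bump function supported on a proper arc of the circle has infinitely many nonzero coefficients. Correspondingly, on a component where $T$ is constant, the span consists only of trigonometric polynomials times $T$. So $\phi T$ is in general not in the span, and the closure is genuinely needed; it is needed in the application in Lemma \ref{lem:dense} as well.

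\textbf{Repair.} Run your Archimedean argument here too.
\begin{itemize}
\item Every function $\chi\mapsto\chi(a)T(\chi)$ is supported in the finitely many components meeting $\mathrm{supp}\,T$. So it suffices to prove convergence in the Fr\'echet space of smooth functions on those circles, since that space embeds continuously in the LF-space $\mathcal{C}(\mathrm{Tp}_1)$.
\item The expansion $\phi T=\sum_{n}\sum_{u\in\OO_F^\times/K_0}c_{n,u}\,\chi(\varpi^nu)T(\chi)$ has coefficients $c_{n,u}$ decaying faster than any power of $|n|$.
\item Each term satisfies $\partial_t^k\bigl(\chi(\varpi^n u)T(\chi)\bigr)\ll_k(1+|n|)^k$, because $\partial_t$ of $q^{-int}$ brings down $-in\log q$.
\end{itemize}
Hence the partial sums over $|n|\le N$ converge to $\phi T$ in every $C^k$ seminorm.
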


\begin{proof}
Suppose $F$ is Archimedean. It suffices to show that given $f\in \mathcal{S}(\RR)$, the closure of the span of $
        \{ x\mapsto e^{isx}f(x): s\in \RR\}$
    in $\mathcal{S}(\RR)$ contains $\phi f$ for any $\phi\in C^\infty_c(\RR)$. To see this, by Fourier inversion, 
    \begin{align*}
        \phi(x)=\int_{\RR} \widehat{\phi}(s)e^{-isx}ds
    \end{align*}
    where $\widehat{\phi}$ is the Fourier transform of $\phi$. Then
    \begin{align*}
        \phi(x)f(x)=\int_{\RR} \widehat{\phi}(s)e^{-isx}dsf(x).
    \end{align*}
    Note that since $\phi$ is compactly supported, $\widehat{\phi}$ has exponential decay as $|s|\to \infty$. Thus the integral can be approximated by finite Riemann sums. The same discussion applies to  $x^m\frac{d^n(\phi f)}{dx^n}(x)$, so the convergence can be taken in $\mathcal{S}(\RR).$  The proof in the non-Archimedean case is similar.
\end{proof}

We use an idea from \cite{RB:quest} in the proof of the following lemma:
\begin{lem}\label{lem:dense}
The image of $\widehat{I}_{h_\ell}$ is dense in $\mathcal{C}(\mathrm{Tp}_{r,r-2,1},\mathcal{Z})_{\mathfrak{F}}.$
\end{lem}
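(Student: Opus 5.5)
We must show that sections $T_0(I(\underline{\sigma})\otimes\chi)Z(W_{\underline{\varphi}},\chi)$ with $T_0\in\mathcal{C}(\mathrm{Tp}_{r,r-2,1})$ lie in the closure of the image of $\widehat{I}_{h_\ell}$. Since $\mathcal{C}(\mathrm{Tp}_{r,r-2,1},\mathcal{Z})_{\mathfrak{F}}$ is topologized through $\mathcal{C}(\mathrm{Tp}_{r,r-2,1})$, it suffices to approximate $T_0$ by functions $T'$ such that $T'Z(W_{\underline{\varphi}},\chi)$ is in the image. Functions of the form $\phi_1(I(\underline{\sigma}))\phi_2(\chi)$, with $\phi_1\in\mathcal{C}(\mathrm{Tp}_{r,r-2})$ and $\phi_2\in C_c^\infty(\mathrm{Tp}_1)$, have dense span in $\mathcal{C}(\mathrm{Tp}_{r,r-2,1})$, by standard arguments on $i\mathfrak{a}_M^*\times i\mathfrak{a}_{\GL_1}^*$ (compact tori in the non-Archimedean case, Schwartz functions in the Archimedean case). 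So I will treat such products one at a time.

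\textbf{The $\mathrm{Tp}_{r,r-2}$ variable.} Choose $T\in\mathcal{C}(\mathrm{Tp}_{r,r-2},\mathcal{HS})_{\mathfrak{F}}$ supported on $\mathcal{O}(\underline{\sigma})$ with $T(I(\underline{\sigma_\lambda}))=\phi_1(\lambda)\,\overline{\varphi_{0\lambda}}\otimes\varphi_\lambda$ for a fixed $\varphi_0\in\mathcal{B}_{\underline{\sigma}}$. By \eqref{Ih},
\[
\widehat{I}_{h_\ell}(T)=|\det h_\ell|^{r-2}\phi_1\,Z(W_{\underline{\varphi}},\chi)\,\overline{Z(W_{\underline{\varphi_0}},\chi)}.
\]
The unwanted coefficient is $t(\underline{\lambda},\chi):=Z(W_{\underline{\varphi_{\lambda}}},\chi)$, a scalar (not a function of $x$). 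Here I follow the trick of \cite{RB:quest}: twist by translations. Replacing $T$ by $T\circ\pi(h)$, that is $\varphi_0\mapsto\pi(h)\varphi_0$, replaces $t$ by $Z(\mathcal{R}(h)W_{\underline{\varphi_0}},\chi)$. Now take $h=(\begin{psmatrix} I_{r-2}&\\&\begin{psmatrix}1&\\&a\end{psmatrix}\end{psmatrix},I_{r-2})$, which lies in the centralizer structure used in \eqref{ZWchi}. The change of variables $c\mapsto ca$ gives $\chi(a)^{-1}|a|^{(r-2)/2}t(\underline{\lambda},\chi)$ for the moved vector, up to conjugation by $h_\ell$. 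I may need to choose $h_\ell$-conjugated elements so that this torus element commutes appropriately, and projecting back to the type $\mathfrak{F}$ by $K$-averaging may be necessary.

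\textbf{Applying Lemma \ref{lem:approx}.} Linear combinations of these twists produce $\sum_j c_j\chi(a_j)\overline{t}$. By Lemma \ref{lem:approx}, applied to the function $\overline{t}$, these approximate $\phi_2\overline{t}$ for any $\phi_2\in C_c^\infty(\mathrm{Tp}_1)$. Continuity of $\widehat{I}_{h_\ell}$ (Lemma \ref{lem:dense:cont}) ensures that the approximation transfers, uniformly in the $\underline{\lambda}$ parameters with the required seminorms.

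\textbf{Dividing out $\overline{t}$.} This is the main obstacle: it remains to remove the factor $\overline{t}$ itself. The approach is to use Lemma \ref{lem:nonvanish}, which provides, for each point $(\underline{\lambda},\chi)$, some $K$-finite vector $\varphi_0$ (of suitable type) with $t\neq0$. By continuity the nonvanishing persists on a neighborhood. Cover the compact support of $\phi_1\phi_2$ (compact modulo the rapid decay in the Archimedean case; there one must handle noncompactness via a partition of unity with polynomial control) by finitely many such neighborhoods. Then write $\phi_1\phi_2=\sum_k(\phi_1\phi_2\psi_k/\overline{t_k})\,\overline{t_k}$ with a partition of unity $\psi_k$. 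Each $\phi_1\phi_2\psi_k/\overline{t_k}$ is smooth and compactly supported, hence again amenable to the previous step. The delicate points are that the vectors $\varphi_0$ must have types in $\mathfrak{F}$, which requires enlarging $\mathfrak{F}$ or invoking nonvanishing within fixed types, and the Archimedean decay in the $\underline{\lambda}$ and $\chi$ directions, for which compactly supported approximants followed by a density argument should suffice.
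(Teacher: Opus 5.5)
Your strategy is the paper's. The image of $\widehat{I}_{h_\ell}$ is stable under multiplication by $\phi(I(\underline{\sigma}))$ and by $\chi(a)|a|^{-(r-2)/2}$. Lemma \ref{lem:approx} upgrades the latter to multiplication by $C_c^\infty(\mathrm{Tp}_1)$, Lemma \ref{lem:nonvanish} gives surjectivity onto each fibre, and a localization argument finishes. Your division by $\overline{t_k}$ against a partition of unity is an explicit form of the paper's step of choosing finitely many image sections $T_1,\dots,T_n$ that span the fibres over a compact set. As written, though, the execution contains a concrete error, and it leaves you stuck at exactly the point you flag as delicate.

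\textbf{The slots are swapped.} By \eqref{Ih}, in $T=\sum_i\overline{\varphi}_i\otimes\varphi_i'$ the barred vectors produce the sections $\overline{Z(W_{\varphi_i},\chi)(\cdot)}$, and the unbarred vectors produce the scalar coefficients $Z(W_{\varphi_i'},\chi)$.
\begin{itemize}
\item Your $T=\phi_1\overline{\varphi_0}\otimes\varphi$ therefore gives the section attached to $\varphi_0$, not to the target $\varphi$.
\item Your move $T\mapsto T\circ\pi(h)$ changes the barred slot. That right-translates the section instead of rescaling it, and no $K$-averaging repairs this.
\item The correct choice is $T=\phi_1\overline{\varphi}\otimes\varphi_0$, translating only the unbarred vector: $\varphi_0\mapsto\pi(h)\varphi_0$, where $h$ has lower-right block $h_\ell^{-1}\mathrm{diag}(1,a)h_\ell$ (the paper's $\mathcal{R}((\dots),I_{r,r-2})T$). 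The substitution $c\mapsto ca$ in \eqref{ZWchi} then gives exactly the factor $\chi(a)|a|^{-(r-2)/2}$.
\end{itemize}

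\textbf{The type problem disappears.} The $\mathfrak{F}$-condition on $\mathcal{C}(\mathrm{Tp}_{r,r-2},\mathcal{HS})_{\mathfrak{F}}$ comes from right $K_{r,r-2}$-types of $f$ (compare \eqref{Ihcirc}). It constrains only the barred vectors, so the coefficient vector $\varphi_0$ is unrestricted. Lemma \ref{lem:nonvanish}, which supplies only some $K_{r,r-2}$-finite vector, therefore applies directly. The remedies you propose would not work:
\begin{itemize}
\item Enlarging $\mathfrak{F}$ changes the space in which density is asserted.
\item Nonvanishing within fixed types is false in general. Take $F$ non-Archimedean, $h_\ell\in\GL_2(\OO_F)$, $W$ right $K_{r,r-2}$-invariant and $\chi$ ramified. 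The substitution $c\mapsto cu$ with $u\in\OO_F^\times$ gives $Z(W,\chi)=\chi(u)Z(W,\chi)$, so $Z(W,\chi)=0$.
\end{itemize}

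\textbf{Two smaller points.} First, $\phi_1\phi_2\psi_k/\overline{t_k}$ is not a product function, so the product case does not apply directly. You need that the closure of the image is a module over all of $C_c^\infty(\mathrm{Tp}_{r,r-2,1})$, which follows from density of sums of products and continuity of multiplication on the target. Second, what transfers the Lemma \ref{lem:approx} approximation is continuity of multiplication on $\mathcal{C}(\mathrm{Tp}_{r,r-2,1},\mathcal{Z})_{\mathfrak{F}}$, not continuity of $\widehat{I}_{h_\ell}$.
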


\begin{proof}
To prove density, we first claim that the image of $\widehat{I}_{h_\ell}$ is stable under multiplication under a large subspace of functions in $\mathcal{C}(\mathrm{Tp}_{r,r-2,1})$. Since $\mathcal{C}(\mathrm{Tp}_{r,r-2},\mathcal{HS})$ is closed under multiplication by $\mathcal{C}(\mathrm{Tp}_{r,r-2}),$ the image of $\widehat{I}_{h_\ell}$ is stable under multiplication by functions of the form $I(\underline{\sigma}) \otimes \chi \mapsto \phi(I(\underline{\sigma}))$ for $\phi \in \mathcal{C}(\mathrm{Tp}_{r,r-2}).$  On the other hand, we have 
\begin{align*} 
\widehat{I}_{h_\ell}\left(\mathcal{R}\left(\left(\begin{psmatrix} I_{r-2} & \\& h_\ell^{-1}\begin{psmatrix} 1 & \\ & a \end{psmatrix}h_\ell\end{psmatrix},I_{r-2} \right),I_{r,r-2}\right)T\right)(I(\underline{\sigma}) \otimes \chi)=\frac{\chi(a)}{|a|^{(r-2)/2}}\widehat{I}_{h_\ell}(T)(I(\underline{\sigma}) \otimes \chi).
\end{align*}
Thus the image of $\widehat{I}_{h_\ell}$ is stable under multiplication by any linear combination of functions of the form
\begin{align} \label{of:the:form}
I(\underline{\sigma}) \otimes \chi \longmapsto f(I(\underline{\sigma}))\frac{\chi(a)}{|a|^{(r-2)/2}}
\end{align}
for $(f,a) \in \mathcal{C}(\mathrm{Tp}_{r,r-2}) \times F^\times.$

Write $\underline{\sigma}=(\sigma,\sigma').$ By Lemma \ref{lem:nonvanish}, for every $I(\underline{\sigma}) \otimes \chi \in \mathrm{Tp}_{r,r-2,1},$ there is a $K_{r,r-2}$-finite 
$$
W^\vee \otimes  W\in \left(\mathcal{W}(I(\sigma^\vee),\overline{\psi})\otimes\mathcal{W}(I(\sigma'^\vee),\psi)\right) \otimes \left(\mathcal{W}(I(\sigma),\psi)\otimes\mathcal{W}(I(\sigma'),\overline{\psi})\right)
$$
such that $Z(W^\vee,\bar{\chi})Z(W,\chi) \neq 0.$  Since $\mathcal{Z}(I(\underline{\sigma})\otimes \chi)$ is irreducible, this implies that for each $I(\underline{\sigma}) \otimes \chi$ the map on stalks 
$$
\widehat{I}_{h_\ell}:(\mathcal{HS}_{I(\underline{\sigma})})_{\mathfrak{F}} \lto \mathcal{Z}(I(\underline{\sigma}) \otimes \chi)_{\mathfrak{F}}
$$
is surjective.  Thus by a compactness argument, for any compact set $\mathcal{K} \subset \mathrm{Tp}_{r,r-2,1}$ there is a finite set of sections $T_1',\dots,T_n' \in \mathcal{C}(\mathrm{Tp}_{r,r-2},\mathcal{HS})_{\mathfrak{F}}$ such that $T_i:=\widehat{I}_{h_\ell}(T'_i)$ span $Z_{h_\ell}(I(\underline{\sigma}),\chi)_{\mathfrak{F}}$ for all $I(\underline{\sigma}) \otimes \chi \in \mathcal{K}.$

Now the image of $\widehat{I}_{h_\ell}$ contains all sections of the form $\phi_1 T_1,\dots,\phi_nT_n$ where each $\phi_i$ is of the form \eqref{of:the:form}.  The collection of these sets of sections, as $\mathcal{K}$ varies, spans a dense subset in $\mathcal{C}(\mathrm{Tp}_{r,r-2,1},\mathcal{Z})_{\mathfrak{F}}$ by Lemma \ref{lem:approx}.  
\end{proof}

\begin{lem} \label{lem:commute}
Let $\mathfrak{F}$ be a finite set of $K_{r,r-2}$-types.
There is a continuous map \index{$\mathrm{HP}_{h_\ell}$}
\begin{align*}
\mathrm{HP}_{h_\ell}:    \mathcal{C}(X^\circ_{\ell}(F),\mathcal{H})_{\mathfrak{F}} \lto \mathcal{C}(\mathrm{Tp}_{r,r-2,1},\mathcal{Z})_{\mathfrak{F}}
\end{align*}
given by
\begin{align*}
\mathrm{HP}_{h_\ell}(f)(I(\underline{\sigma}),\chi)(g,g'^tm_\ell)&=|\det h_\ell|^{r-2}\sum_{\underline{\varphi} \in \mathcal{B}_{\underline{\sigma}}}
        Z(f,W_{\underline{\varphi}},\chi)\overline{Z(W_{\underline{\varphi}},\chi)(g,g'^tm_{\ell})}.
\end{align*}
It fits into a commutative diagram
\begin{equation} \label{diag:for:proof}
\begin{tikzcd}
\mathcal{C}(\GL_{r,r-2}(F))_{\mathfrak{F}}\arrow[d,"I_{(\cdot)}"] \arrow[rrr,"\mathrm{HP}_{\GL_{r,r-2}}"] & &&\mathcal{C}(\mathrm{Tp}_{r,r-2},\mathcal{HS})_{\mathfrak{F}}\arrow[d,"\widehat{I}_{h_\ell}"]\\
\mathcal{C}(X_{\ell}^\circ(F),\mathcal{H})_{\mathfrak{F}}   \arrow[rrr,"\mathrm{HP}_{h_\ell}"] &&&\mathcal{C}(\mathrm{Tp}_{r,r-2,1},\mathcal{Z})_{\mathfrak{F}}.
\end{tikzcd}
\end{equation}
\end{lem}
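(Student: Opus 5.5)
We need two things: that $\mathrm{HP}_{h_\ell}(f)$ lies in $\mathcal{C}(\mathrm{Tp}_{r,r-2,1},\mathcal{Z})_{\mathfrak{F}}$ and depends continuously on $f$, and that the diagram \eqref{diag:for:proof} commutes. We take both in turn.

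\textbf{Well-definedness and continuity.} Fix an inertial orbit $\mathcal{O}(\underline{\sigma})$ with its base point. Since $\mathfrak{F}$ is finite and $I(\underline{\sigma})$ is admissible, only finitely many $\underline{\varphi}\in\mathcal{B}_{\underline{\sigma}}$ have $K_{r,r-2}$-type in $\mathfrak{F}$. For $f\in\mathcal{C}(X_\ell^\circ(F),\mathcal{H})_{\mathfrak{F}}$, the coefficient $Z(f,W_{\underline{\varphi}},\chi)$ vanishes unless $\underline{\varphi}$ has type in $\mathfrak{F}$; this follows from the equivariance of $Z$ by a change of variables, as in the end of the proof of Theorem \ref{thm:Plancherel}. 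Hence on each orbit $\mathrm{HP}_{h_\ell}(f)$ is a finite sum $\sum_{\underline{\varphi}}\Psi_{\underline{\varphi}}(f)\,\overline{Z(W_{\underline{\varphi}},\chi)}$. By Lemma \ref{lem:is:cont:for:HP2}, each coefficient $\Psi_{\underline{\varphi}}(f)$ lies in $\mathcal{C}(\mathrm{Tp}_{r,r-2,1})$ and depends continuously on $f$. This is exactly the form required in the definition of $\mathcal{C}(\mathrm{Tp}_{r,r-2,1},\mathcal{Z})_{\mathfrak{F}}$.

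In the non-Archimedean case we must also check that only finitely many orbits contribute. If $f$ is fixed by a compact open subgroup $K$, the zeta integral forces $W_{\underline{\varphi}}$ to have a nonzero $K$-isotypic component. By \cite[Theorem VIII.1.2]{Waldspurger:plancherel}, only finitely many components of $\mathrm{Tp}_{r,r-2}$ then contribute, and $\mathfrak{F}$-types impose the same restriction on the $\mathrm{Tp}_1$ factor. In the Archimedean case, the classification of discrete series shows that only finitely many orbits contain $\mathfrak{F}$-types, as used in the proof of Theorem \ref{thm:Plancherel}.

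Continuity then follows because the topology on $\mathcal{C}(\mathrm{Tp}_{r,r-2,1},\mathcal{Z})_{\mathfrak{F}}$ is inherited from that on the coefficient functions $T_0$. In the non-Archimedean case we should restrict to $K$-invariants to work within LF-space steps.

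\textbf{Commutativity.} For $f\in\mathcal{C}(\GL_{r,r-2}(F))_{\mathfrak{F}}$, the identity \eqref{Ihcirc}, which rests on Corollary \ref{cor:tempered:ZW=ZfW}, gives
$$\widehat{I}_{h_\ell}\circ\mathrm{HP}_{\GL_{r,r-2}}(f)=|\det h_\ell|^{r-2}\sum_{\underline{\varphi}}Z(I_f,W_{\underline{\varphi}},\chi)\overline{Z(W_{\underline{\varphi}},\chi)}.$$
The right side is $\mathrm{HP}_{h_\ell}(I_f)$ by definition. We also need $I_f\in\mathcal{C}(X_\ell^\circ(F),\mathcal{H})_{\mathfrak{F}}$. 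Lemma \ref{lem:extension} places $I_f$ in $\mathcal{C}(X_\ell^\circ(F),\mathcal{H})$, and the right $K$-types are preserved because \eqref{If} is equivariant under right translation in $g$ and $g'$.

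\textbf{Main obstacle.} The delicate point is the first step: making sure the finiteness of the sum over $\underline{\varphi}$ and the vanishing outside $\mathfrak{F}$-types hold, so that the decomposition is legitimate. The basis is orthonormal with respect to \eqref{Bsiginnerpro}, and the $K$-type bookkeeping must be compatible with the contragredient-stable $\mathfrak{F}$. If this bookkeeping proves awkward, we would instead deduce membership and continuity by density. Lemma \ref{lem:extension}, Theorem \ref{thm:HC} and Lemma \ref{lem:dense:cont} give the result on the image of $I_{(\cdot)}$, and we would extend to all of $\mathcal{C}(X_\ell^\circ(F),\mathcal{H})_{\mathfrak{F}}$ using the surjectivity \eqref{I1} on Schwartz functions together with the continuity provided by Lemma \ref{lem:is:cont:for:HP2}.
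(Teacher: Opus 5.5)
Your proposal follows the paper's proof: well-definedness and continuity come from Lemma \ref{lem:is:cont:for:HP2}, and commutativity of \eqref{diag:for:proof} is exactly \eqref{Ihcirc}; your extra $K$-type and finiteness bookkeeping only makes explicit what the paper leaves implicit, and the density fallback is not needed. One small correction: in the non-Archimedean case, finiteness of support in the $\mathrm{Tp}_1$ factor does not come from the $K_{r,r-2}$-types in $\mathfrak{F}$, but from the invariance of $f$ under a compact open subgroup of $(\GL_2)_\ell(F)$ that is built into the definition of $\mathcal{C}(X_\ell^\circ(F),\mathcal{H})$ (translating by $h_\ell^{-1}\begin{psmatrix}1&\\&u\end{psmatrix}h_\ell$ gives $Z(f,W,\chi)=\chi(u)Z(f,W,\chi)$); in any case it is already part of the conclusion $\Psi(f)\in\mathcal{C}(\mathrm{Tp}_{r,r-2,1})$ of that lemma.
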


\begin{proof}
The map $\mathrm{HP}_{h_\ell}$ is well-defined and continuous by Lemma \ref{lem:is:cont:for:HP2}. The diagram commutes by \eqref{Ihcirc}.
\end{proof}

The following is our matrical Harish-Chandra Plancherel theorem:

\begin{thm} \label{thm:PW:Z} 
The map $\mathrm{HP}_{h_\ell}$ of Lemma \ref{lem:commute} is a topological isomorphism.  
The inverse is given by 
\begin{align*}
 \mathrm{HP}^{-1}_{h_\ell}(T)(g,g'^tm_{\ell})&=\int_{\mathrm{Tp}_{r,r-2,1}} T(I(\underline{\sigma}) \otimes \chi)(g,g^{\prime t}m_{\ell})\frac{d\mu(I(\underline{\sigma})\otimes \chi)}{\zeta(1)}.
 \end{align*}
\end{thm}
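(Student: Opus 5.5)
We combine the commutative diagram \eqref{diag:for:proof} with Theorem \ref{thm:HC} and the Plancherel inversion formula, Theorem \ref{thm:Plancherel}. Denote by $\mathrm{Inv}(T)$ the integral on the right of the displayed formula for $\mathrm{HP}^{-1}_{h_\ell}$.

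\textbf{Step 1: $\mathrm{Inv}$ is a continuous map into $\mathcal{C}(X^\circ_\ell(F),\mathcal{H})_{\mathfrak{F}}$.} A general section $T \in \mathcal{C}(\mathrm{Tp}_{r,r-2,1},\mathcal{Z})_{\mathfrak{F}}$ is a finite sum of terms $T_0\,\overline{Z(W_{\underline{\varphi}},\chi)}$. For such a term, $\mathrm{Inv}(T)$ is the integral over $\mathcal{O}(\underline{\sigma})\times \mathrm{Tp}_1$ of $T_0$ against $\overline{Z(W_{\underline{\varphi_\lambda}},\chi)}$.
\begin{itemize}
\item By Theorem \ref{thm:Whitt:analy}, $\lambda \mapsto W_{\underline{\varphi_\lambda}}$ is continuous into $\mathcal{C}_d$ with polynomial growth in $\lambda$.
\item By Lemma \ref{lem:Z:cont0}, $W \mapsto Z(W,\chi)$ is continuous into $\mathcal{C}_{d'}(X^\circ_\ell(F),\mathcal{H})$ with polynomial growth in $\norm{\chi}$.
\item The Plancherel density has polynomial growth, and $T_0$ is rapidly decreasing (compactly supported on components in the non-Archimedean case).
\end{itemize}
Together these give absolute convergence in $\mathcal{C}_{d'}$ and continuity of $\mathrm{Inv}$ into $\mathcal{C}_{d'}$. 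Here $d'$ depends on $\underline{\varphi}$, so this only lands in the weak space. To land in $\mathcal{C}$, I would use that $\widehat{I}_{h_\ell}\circ \mathrm{HP}_{\GL_{r,r-2}}$ has dense image (Lemma \ref{lem:dense}). Then $\mathrm{Inv}\circ \widehat{I}_{h_\ell}\circ\mathrm{HP}_{\GL_{r,r-2}}(f)=I_f$ for $f\in\mathcal{S}(\GL_{r,r-2}(F))_{\mathfrak{F}}$ by Theorem \ref{thm:Plancherel} and \eqref{Ihcirc}. This extends to $f\in\mathcal{C}(\GL_{r,r-2}(F))_{\mathfrak{F}}$ by density and the continuity of both sides (Lemma \ref{lem:extension}, Theorem \ref{thm:HC}).

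\textbf{Step 2: $\mathrm{HP}_{h_\ell}$ is injective with left inverse $\mathrm{Inv}$.} Theorem \ref{thm:Plancherel} gives $\mathrm{Inv}\circ \mathrm{HP}_{h_\ell}=\mathrm{id}$ on $\mathcal{S}(X^\circ_\ell(F),\mathcal{H})_{\mathfrak{F}}$. This space is the image of $\mathcal{S}(\GL_{r,r-2}(F))_{\mathfrak{F}}$ under \eqref{I1}. Since $\mathcal{S}(\GL_{r,r-2}(F))$ is dense in $\mathcal{C}(\GL_{r,r-2}(F))$, and $I_{(\cdot)}$ is continuous, its image in $\mathcal{C}(X^\circ_\ell(F),\mathcal{H})_{\mathfrak{F}}$ is dense. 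Density in $\mathcal{C}(X^\circ_\ell)$ itself is a separate point I would argue directly, e.g.\ via cutoffs on the open orbit. Continuity of $\mathrm{HP}_{h_\ell}$ (Lemma \ref{lem:commute}) and of $\mathrm{Inv}$, at least into $L^2$ via Corollary \ref{lem:L2}, then gives $\mathrm{Inv}\circ\mathrm{HP}_{h_\ell}=\mathrm{id}$ on all of $\mathcal{C}(X^\circ_\ell(F),\mathcal{H})_{\mathfrak{F}}$.

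\textbf{Step 3: surjectivity and $\mathrm{HP}_{h_\ell}\circ \mathrm{Inv}=\mathrm{id}$.} The diagram \eqref{diag:for:proof} together with the identity $\mathrm{Inv}\circ \widehat{I}_{h_\ell}\circ\mathrm{HP}_{\GL_{r,r-2}}=I_{(\cdot)}$ from Step 1 gives
\[
\mathrm{HP}_{h_\ell}\circ\mathrm{Inv}=\mathrm{id} \quad \text{on the image of } \widehat{I}_{h_\ell}.
\]
This image is dense by Lemma \ref{lem:dense}. The statement also shows that $\mathrm{Inv}$ maps this dense subspace into $\mathcal{C}(X^\circ_\ell(F),\mathcal{H})_{\mathfrak{F}}$, namely onto $I_{(\cdot)}(\mathcal{C}(\GL_{r,r-2}(F))_{\mathfrak{F}})$.

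\textbf{Main obstacle.} The crux is to extend from this dense subspace to all of $\mathcal{C}(\mathrm{Tp}_{r,r-2,1},\mathcal{Z})_{\mathfrak{F}}$. I would try to show that $\widehat{I}_{h_\ell}$ is surjective, or at least open. Since $T_0$ is an arbitrary element of $\mathcal{C}(\mathrm{Tp}_{r,r-2,1})$, I would realize $T_0\,\overline{Z(W_{\underline{\varphi}},\chi)}$ as $\widehat{I}_{h_\ell}$ of an explicit section of $\mathcal{HS}$. The $\chi$-dependence would be produced by an integral over $a$ of the twists $\mathcal{R}\bigl(\begin{psmatrix} I_{r-2}&\\&h_\ell^{-1}\begin{psmatrix}1&\\&a\end{psmatrix}h_\ell\end{psmatrix}\bigr)$ appearing in the proof of Lemma \ref{lem:dense}, weighted by the inverse Mellin transform of $T_0$. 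The normalization would use a pointwise nonvanishing $Z(W^\vee,\bar\chi)$, supplied by Lemma \ref{lem:nonvanish} and a partition of unity. Failing that, the fallback is to prove that $\mathrm{Inv}$ is continuous into $\mathcal{C}(X^\circ_\ell(F),\mathcal{H})$ directly, via seminorm estimates uniform in $d$ from the $\Xi$-bounds of Proposition \ref{prop:J}. Once surjectivity is known, the open mapping theorem for LF/Fréchet spaces gives that $\mathrm{HP}_{h_\ell}$ is a topological isomorphism.
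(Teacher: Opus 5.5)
Your Steps 2 and 3 are sound as formal arguments, but both rest on something Step 1 never delivers. You need that $\mathrm{Inv}$ maps all of $\mathcal{C}(\mathrm{Tp}_{r,r-2,1},\mathcal{Z})_{\mathfrak{F}}$, continuously, into the Harish-Chandra space $\mathcal{C}(X^\circ_\ell(F),\mathcal{H})_{\mathfrak{F}}$. Your bounds only give continuity into a weak space $\mathcal{C}_{d'}(X^\circ_\ell(F),\mathcal{H})$. Your density argument only shows that the dense subspace $\widehat{I}_{h_\ell}(\mathcal{C}(\mathrm{Tp}_{r,r-2},\mathcal{HS})_{\mathfrak{F}})$ is sent into $\mathcal{C}$. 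Off that subspace $\mathrm{HP}_{h_\ell}\circ\mathrm{Inv}$ is not even defined, so it cannot be extended by continuity, and nothing topological follows. (Relatedly, in Step 2 continuity into $\mathcal{C}_{d'}$ does not give continuity into $L^2$, since elements of $\mathcal{C}_{d'}$ need not be square integrable; use the Hausdorff space $\mathcal{C}_{d'}$ itself there.)

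Your main proposed repair cannot work, because $\widehat{I}_{h_\ell}$ is not surjective. Fix $\underline{\sigma}$. The value $T(I(\underline{\sigma}))=\sum_i\overline{\varphi}_i\otimes\varphi_i'$ is a finite sum that does not depend on $\chi$. Consider the map $v\mapsto\overline{Z(W_v,\chi)}$, which is injective by irreducibility of $I(\underline{\sigma})$ and Lemma \ref{lem:nonvanish}. Under it, $\widehat{I}_{h_\ell}(T)(I(\underline{\sigma})\otimes\chi)$ corresponds to the vector $|\det h_\ell|^{r-2}\sum_i\overline{Z(W_{\varphi_i'},\chi)}\,\varphi_i$. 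Its coefficients extend holomorphically to $\mathrm{Re}(\chi)>-\tfrac{1}{2}$ by Lemma \ref{lem:for:zeta}, so they are real analytic on $\mathrm{Tp}_1$. By contrast, a section $T_0\,\overline{Z(W_{\underline{\varphi}},\chi)}$ corresponds to $\overline{T_0}\,\underline{\varphi}$, and $T_0(I(\underline{\sigma})\otimes\cdot)$ may be a nonzero bump function on $\mathrm{Tp}_1$. Such a section is not in the image. So the averaging over $a$ you propose cannot produce a preimage, and density, which is what Lemma \ref{lem:dense} gives, is the best possible.

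The missing idea is the one behind your fallback, but the estimate must be applied after integrating over the spectrum, not pointwise in $\underline{\lambda}$. Proposition \ref{prop:J} needs its input in $\mathcal{C}_{-d}(U_{r,r-2}(F)\backslash\GL_{r,r-2}(F),\psi\otimes\overline{\psi})$ with $d$ large. Each $W_{\underline{\varphi_\lambda}}$ lies in $\mathcal{C}_{d}$ for some $d>0$ but in no $\mathcal{C}_{-d}$ with $d>0$, since for $\underline{\lambda}\in i\mathfrak{a}_M^*$ its absolute value is invariant under the center. So the decay in $\sigma_{X^\circ_\ell}$ has to come from cancellation in the $\underline{\lambda}$- and $\chi$-integrals. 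The paper proceeds as follows:
\begin{itemize}
\item It reduces to sections supported on $\mathcal{O}(\underline{\sigma})\times\mathrm{Tp}_1$ with coefficient $\phi_1\otimes\phi_2$.
\item It uses Lemma \ref{lem:for:zeta} and Theorem \ref{thm:Whitt:analy} to move the integral over $\mathcal{O}(\underline{\sigma})$ inside the zeta integral. This forms the Whittaker wave packet $W_{\phi_1}=\int_{\mathcal{O}(\underline{\sigma})}\phi_1(I(\underline{\sigma_\lambda}))\,W_{\underline{\varphi_\lambda}}\,\frac{d\mu(I(\underline{\sigma_\lambda}))}{\zeta(1)}$.
\item The key external input is \eqref{Whitt:cont}: $\phi_1\mapsto W_{\phi_1}$ is continuous into $\mathcal{C}(U_{r,r-2}(F)\backslash\GL_{r,r-2}(F),\psi\otimes\overline{\psi})$. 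This is due to van den Ban in the Archimedean case and Delorme in the non-Archimedean case.
\item Then $\mathrm{Inv}(T)$ is the image of $(\phi_2,W_{\phi_1})$ under the continuous map of Lemma \ref{lem:map2X}, which combines Proposition \ref{prop:J} with integration by parts in $\chi$.
\end{itemize}
Once $\mathrm{Inv}$ is known to be continuous into $\mathcal{C}(X^\circ_\ell(F),\mathcal{H})_{\mathfrak{F}}$, your Steps 2 and 3 finish the proof. Extending $\mathrm{HP}_{h_\ell}\circ\mathrm{Inv}=\mathrm{id}$ from the dense image of $\widehat{I}_{h_\ell}$ is an equally valid substitute for the paper's kernel argument, and no open mapping theorem is needed.
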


\begin{proof}
We check that $\mathrm{HP}_{h_\ell}^{-1}$ is  well-defined.  
We may assume that there exists $I(\underline{\sigma}) \in \mathrm{Tp}_{r,r-2}$ and $\underline{\varphi} \in \mathcal{B}_{\underline{\sigma}}$ such that the support of $T$ is contained in $\OO_{\underline{\sigma}} \times \mathrm{Tp}_1$ and
\begin{align*}
T(I(\underline{\sigma_{\lambda}}) \otimes \chi)=\phi(I(\underline{\sigma_\lambda}),\chi)Z(W_{\underline{\varphi_\lambda}},\chi)
\end{align*}
for some $\phi \in \mathcal{C}(\mathrm{Tp}_{r,r-2,1}).$ Assume that $\phi=\phi_1 \otimes \phi_2$ where $(\phi_1,\phi_2) \in \mathcal{C}(\mathrm{Tp}_{r,r-2}) \times \mathcal{C}(\mathrm{Tp}_1).$  
Using Theorem \ref{thm:Whitt:analy}, choose $d>0$ such that $W_{\underline{\varphi_{\lambda}}}$
lies in $\mathcal{C}_d(U_{r,r-2}(F) \backslash \GL_{r,r-2}(F),\psi \otimes \overline{\psi})$
for all $\underline{\lambda}.$  
By Lemma \ref{lem:for:zeta} there is a continuous seminorm $\nu_{d}$ on this space such that 
\begin{align} \label{claim}\begin{split}
   & \int_{\mathcal{O}(\underline{\sigma}) \times \mathrm{Tp}_{1}}|\phi_1(I(\underline{\sigma_{\lambda}}))\phi_2(\chi)|\int\left|W_{\underline{\varphi_{\lambda}}}\left(\begin{psmatrix}p & \\ &  \begin{psmatrix} 1 & \\ & c^{-1} \end{psmatrix}h_\ell\end{psmatrix},p  \right)\right|\frac{d_r pd^\times cd\mu(I(\underline{\sigma})\otimes \chi)}{|\det p|^{3/2}|c|^{(r-2)/2}}\\
    & \leq \int_{\mathcal{O}(\underline{\sigma}) \times \mathrm{Tp}_{1}}|\phi_1(I(\underline{\sigma_{\lambda}}))\phi_2(\chi)|\nu_d(W_{\underline{\varphi_{\lambda}}})d\mu(I(\underline{\sigma_\lambda}) \otimes \chi), \end{split}
\end{align}
where the inner integral is over 
$U_{r-2}(F) \backslash \mathcal{P}_{r-2}(F) \times F^\times.$
Using Theorem \ref{thm:Whitt:analy}, we see that \eqref{claim} is finite.

For $(g_1,g_2)\in\GL_{r,r-2}(F),$ let
\begin{align*}
W_{\phi_1}(g,g'):=\int_{\OO_{\underline{\sigma}}}\phi_1(I(\underline{\sigma_{\lambda}}))W_{\underline{\varphi_\lambda}}\left(g,g'  \right)\frac{d\mu(I(\underline{\sigma_{\lambda}}))}{\zeta(1)}.
\end{align*}
We have a continuous map
\begin{align} \label{Whitt:cont} \begin{split}
\mathcal{C}(\mathrm{Tp}_{r,r-2}) &\lto \mathcal{C}(U_{r,r-2}(F) \backslash \GL_{r,r-2}(F),\psi \otimes \overline{\psi})\\
\phi_1 &\longmapsto W_{\phi_1}. \end{split}
\end{align}
Indeed, this follows from \cite[Theorem 14.3]{vandenBan:MS} in the Archimedean case and  \cite[Proposition 6.1]{Delorme:Whitt} in the non-Archimedean case. We point out that the normalizing factor in each of these references is built into the definition of $d\mu.$ In the Archimedean case, the normalizing factor in \cite{vandenBan:MS} is different from ours; however, one can prove an analogue of \cite[Theorem 14.3]{vandenBan:MS} for our normalization using \cite[Lemma 13.2]{vandenBan:MS} in place of \cite[Lemma 13.5 and Corollary 13.7]{vandenBan:MS}.


Since \eqref{claim} is finite, we can switch the order of integration to see that 
\begin{align} \label{after:unwind}
\mathrm{HP}_{h_\ell}^{-1}(T)(g,g'^tm_\ell)=\int_{\mathrm{Tp}_1}\phi_2(\chi)\int_{F^\times}
J(W_{\phi_1})\left(\begin{psmatrix}I_{r-2} & \\ & \begin{psmatrix} 1 & \\ & c^{-1} \end{psmatrix}h_\ell \end{psmatrix}g,g' \right)
\frac{\chi(c) d^\times c}{|c|^{(r-2)/2}}d\mu( \chi).
\end{align}
 Lemma \ref{lem:map2X} implies that \eqref{after:unwind} is an element of $\mathcal{C}(X_\ell^\circ(F),\mathcal{H}).$  Let $\mathcal{C}(\mathcal{O}_{\underline{\sigma}}) \subset \mathcal{C}(\mathrm{Tp}_{r,r-2})$ be the subset of functions supported on $\mathcal{O}_{\underline{\sigma}}.$
Now $\mathrm{HP}_{h_\ell}^{-1}(T)$ is the image of $(\phi_1, \phi_2)$ under the composite of the following morphisms:
\begin{align} \label{break:2} \begin{split}
\mathcal{C}(\mathcal{O}_{\underline{\sigma}}) \times \mathcal{C}(\mathrm{Tp}_1) &\lto \mathcal{C}(\mathrm{Tp}_1) \times \mathcal{C}(U_{r,r-2}(F) \backslash \GL_{r,r-2}(F),\psi \otimes \overline{\psi})_{\mathfrak{F}}  \lto  \mathcal{C}(X_\ell^\circ(F),\mathcal{H})_{\mathfrak{F}}\\
(\phi_1, \phi_2) &\longmapsto \left(\phi_2,W_{\phi_1}\right), \end{split}
\end{align}
where the right arrow is the (continuous) map of Lemma \ref{lem:map2X}. The left arrow is continuous because \eqref{Whitt:cont} is continuous.  Hence
  \eqref{break:2} is continuous.    We deduce that $\mathrm{HP}_{h_\ell}^{-1}(T)$ is well-defined and continuous.

Now the inversion formula
$\mathrm{HP}_{h_\ell}^{-1} \circ \mathrm{HP}_{h_\ell}(f)=f$ holds if $f \in \mathcal{S}(X_\ell^\circ(F),\mathcal{H})_{\mathfrak{F}}$ by Theorem \ref{thm:Plancherel}.  Since $\mathcal{S}(X_\ell^\circ(F),\mathcal{H})_{\mathfrak{F}}$ is dense in $\mathcal{C}(X_\ell^\circ(F),\mathcal{H})_{\mathfrak{F}}$ and both $\mathrm{HP}_{h_\ell}$ and $\mathrm{HP}_{\ell}^{-1}$ are continuous, it holds for all $f \in \mathcal{C}(X_\ell^\circ(F),\mathcal{H})_{\mathfrak{F}}.$  This implies in particular that $\mathrm{HP}_{h_\ell}^{-1}$ is a left inverse of $\mathrm{HP}_{h_\ell}$ and hence $\mathrm{HP}_{h_\ell}$ is injective.

To complete the proof, it suffices to show that $\mathrm{HP}_{h_\ell}$ is surjective, or equivalently that $\mathrm{HP}_{h_\ell}^{-1}$ is injective.  
By Theorem \ref{thm:HC}, $\mathrm{HP}_{\GL_{r,r-2}}$ is an isomorphism.  Thus Lemma \ref{lem:commute} implies that
$\mathrm{HP}_{h_\ell}(\mathcal{C}(X_{\ell}^\circ(F),\mathcal{H})_{\mathfrak{F}})$ contains $\widehat{I}_{h_\ell}(\mathcal{C}(\mathrm{Tp}_{r,r-2},\mathcal{HS})_{\mathfrak{F}}),$ which is dense in $\mathcal{C}(\mathrm{Tp}_{r,r-2,1},\mathcal{Z})_{\mathfrak{F}}$ by Lemma \ref{lem:dense}.
Now assume that $T \in \ker (\mathrm{HP}_{h_\ell}^{-1}).$
 Choose $f_i \in \mathcal{C}(X_{\ell}^\circ(F),\mathcal{H})_{\mathfrak{F}}$ such that $\mathrm{HP}_{h_\ell}(f_i) \to T.$  Then since $\mathrm{HP}_{h_\ell}^{-1}$ is continuous
 $$
 0=\mathrm{HP}_{h_\ell}^{-1}(T)= \mathrm{HP}_{h_\ell}^{-1} (\lim_{i\to\infty} \mathrm{HP}_{h_\ell}(f_i))=\lim_{i\to\infty} \mathrm{HP}_{h_\ell}^{-1} ( \mathrm{HP}_{h_\ell}(f_i))=\lim_{i\to\infty} f_i.
 $$
 Since $\mathrm{HP}_{h_\ell}$ is continuous, we deduce $T=\lim_{i\to\infty} \mathrm{HP}_{h_\ell}(f_i)= \mathrm{HP}_{h_\ell}(\lim_{i\to\infty} f_i)=0.$
\end{proof}

\section{The Fourier transform on the Harish-Chandra space}\label{sec:fourier}

\begin{lem} \label{lem:gamma:factors}
Let $\pi$ and $\pi'$ be generic irreducible tempered representations of $\GL_{r_1}(F)$ and $\GL_{r_2}(F),$ respectively.  Then 
\begin{align*}
\gamma(\tfrac{1}{2},\pi \times  \pi',\psi)\gamma(\tfrac{1}{2},\pi^\vee \times \pi'^\vee,\psi)&=\omega_{\pi}(-I_{r_1})\omega_{\pi'}(-I_{r_2}),\\
\gamma\left(\tfrac{1}{2},\pi^\vee \times \pi'^\vee,\psi \right)&=\overline{\gamma\left(\tfrac{1}{2},\pi \times \pi',\psi \right)}\omega_{\pi}(-I_{r_1})\omega_{\pi'}(-I_{r_2}).
\end{align*}
\end{lem}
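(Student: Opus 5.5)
The plan is to derive both identities from the local functional equation of Rankin--Selberg $\gamma$-factors \cite{JPSS:Conv}, together with temperedness and unitarity. First, the product identity: applying the Rankin--Selberg functional equation twice gives the standard relation
\begin{align*}
\gamma(s,\pi\times\pi',\psi)\,\gamma(1-s,\pi^\vee\times\pi'^\vee,\overline{\psi})=1 .
\end{align*}
Here one applies $W\mapsto \widetilde{W}$ twice and notes that $\widetilde{\widetilde{W}}=W$. We would quote this rather than reprove it.

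Next, we change $\overline{\psi}$ back to $\psi$. Since $\overline{\psi}(x)=\psi(-x)$, the dependence of $\gamma$-factors on the additive character gives
\begin{align*}
\gamma(s,\pi^\vee\times\pi'^\vee,\psi_{-1})=\omega_{\pi^\vee}(-1)^{r_2}\,\omega_{\pi'^\vee}(-1)^{r_1}\,\gamma(s,\pi^\vee\times\pi'^\vee,\psi).
\end{align*}
The factors $|a|^{r_1r_2(s-1/2)}$ disappear because $|-1|=1$, and $\omega_{\pi^\vee}(-1)=\omega_\pi(-1)$ since this value is $\pm1$. In the normalization intended here, the central character sign $\omega_\pi(-1)^{r_2}\omega_{\pi'}(-1)^{r_1}$ should be read as $\omega_\pi(-I_{r_1})\omega_{\pi'}(-I_{r_2})$. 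I would double-check which convention the authors use for the central character factor; the formula to verify is $\epsilon(s,\pi\times\pi',\psi_a)=\omega_\pi(a)^{r_2}\omega_{\pi'}(a)^{r_1}|a|^{r_1r_2(s-1/2)}\epsilon(s,\pi\times\pi',\psi)$. Setting $s=\tfrac12$ and using $\omega(-1)^2=1$ then gives the first identity.

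For the second identity, we use unitarity. Tempered representations are unitary, so $\pi^\vee\cong\overline{\pi}$ and likewise for $\pi'$. Complex conjugating the zeta integrals, using $\overline{W}\in\mathcal{W}(\overline{\pi},\overline{\psi})$, turns the functional equation into
\begin{align*}
\overline{\gamma(\overline{s},\pi\times\pi',\psi)}=\gamma(s,\overline{\pi}\times\overline{\pi'},\overline{\psi})=\gamma(s,\pi^\vee\times\pi'^\vee,\overline{\psi}).
\end{align*}
At $s=\tfrac12$, changing $\overline{\psi}$ to $\psi$ as in the previous step gives
\begin{align*}
\gamma(\tfrac12,\pi^\vee\times\pi'^\vee,\psi)=\overline{\gamma(\tfrac12,\pi\times\pi',\psi)}\,\omega_\pi(-I_{r_1})\omega_{\pi'}(-I_{r_2}).
\end{align*}
Temperedness guarantees that $L(s,\pi\times\pi')$ is holomorphic and nonvanishing near $\mathrm{Re}(s)=\tfrac12$, so all the $\gamma$-factors are finite and nonzero at $\tfrac12$. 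This justifies the evaluation.

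The main obstacle is bookkeeping rather than any new idea: correctly tracking the sign produced by the $\psi\to\psi_{-1}$ change, including the exponents $r_2$ and $r_1$, and handling the conjugation of the Archimedean zeta integrals. That conjugation needs $\overline{L(\overline{s},\pi\times\pi')}=L(s,\overline{\pi}\times\overline{\pi'})$, which holds for the Langlands $L$-factors.
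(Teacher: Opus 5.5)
Your computation is correct up to the last step. The gap is the sentence where you say $\omega_\pi(-1)^{r_2}\omega_{\pi'}(-1)^{r_1}$ ``should be read as'' $\omega_\pi(-I_{r_1})\omega_{\pi'}(-I_{r_2})$: no normalization makes these equal. Since $-I_{r_1}$ is just the central element $-1$, we have $\omega_\pi(-I_{r_1})=\omega_\pi(-1)$, so the two signs differ by $\omega_\pi(-1)^{r_2-1}\omega_{\pi'}(-1)^{r_1-1}$.

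The sign your argument produces is the right one, so this gap cannot be closed for the statement as printed. Take $r_1=2$, $r_2=1$, $\pi$ the tempered principal series induced from unitary characters $\mu_1\otimes\mu_2$, and $\pi'=\chi$ with $\chi(-1)=-1$. Multiplicativity together with $\gamma(\tfrac{1}{2},\eta,\psi)\gamma(\tfrac{1}{2},\eta^{-1},\psi)=\eta(-1)$ for characters gives a left side in the first identity equal to $\mu_1\chi(-1)\,\mu_2\chi(-1)=\omega_\pi(-1)$. The printed right side is instead $\omega_\pi(-1)\chi(-1)=-\omega_\pi(-1)$, and the second identity fails in the same example. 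The paper reduces via local Langlands to Weil--Deligne representations and then applies Tate's $\epsilon$-factor identities. There the sign is $\det(\rho_\pi\otimes\rho_{\pi'})(-1)=\omega_\pi(-1)^{r_2}\omega_{\pi'}(-1)^{r_1}$, which agrees with yours, so the problem lies in the displayed formula, not in your method.

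What your argument actually proves, and what the lemma should say, is both identities with $\omega_\pi(-I_{r_1})\omega_{\pi'}(-I_{r_2})$ replaced by $\omega_\pi((-1)^{r_2}I_{r_1})\,\omega_{\pi'}((-1)^{r_1}I_{r_2})$. This agrees with the printed version when $r_1$ and $r_2$ are both odd. For the pairs $(r,r-2)$ and $(r,1)$ used in Theorem \ref{thm:FT}, that means $r$ odd; for even $r$ the sign bookkeeping in the computation of $\mathcal{F}\circ\mathcal{F}$ should be rechecked.

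Your route differs from the paper's. You stay on the Rankin--Selberg side, using the local functional equation, the $\psi_a$-dependence, and conjugation of zeta integrals. This avoids appealing to local Langlands and to the compatibility of Rankin--Selberg $\gamma$-factors with Artin $\gamma$-factors, whereas the paper's reduction is shorter once that compatibility is granted. One small simplification: in the conjugation step you do not need $\overline{L(\overline{s},\pi\times\pi')}=L(s,\overline{\pi}\times\overline{\pi'})$, because $\gamma$ is itself the ratio of zeta integrals in the functional equation.
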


\begin{proof}
By the local Langlands correspondence for general linear groups \cite{HT} it suffices to check the corresponding statement for representations of the Weil-Deligne group.  This follows from \cite[(3.6.8) and (3.6.9)]{Tate_NT}.
\end{proof}

Let $I(\underline{\sigma})=I(\sigma) \otimes I(\sigma') \in \mathrm{Tp}_{r,r-2}.$
To ease notation, in the following we will abbreviate
\begin{align} 
\gamma(I(\underline{\sigma})^\vee):=\gamma(\tfrac{1}{2},I(\sigma)^\vee \times I(\sigma')^\vee,\psi),\quad \gamma(I(\sigma) \times \chi):=\gamma\left( \tfrac{1}{2},I(\sigma) \times \chi,\psi\right).
\end{align}
We assume throughout this section that the finite set $\mathfrak{F}$ of $K_{r,r-2}$-types is stable under taking contragredients.

\begin{thm} \label{thm:FT}
There is a topological isomorphism
\begin{align*}
    \mathcal{F}: \mathcal{C}(X^\circ_{\ell}(F),\mathcal{H})_{\mathfrak{F}}\tilde{\lto}  \mathcal{C}(X^\circ_{\ell}(F),\mathcal{H})_{\mathfrak{F}}
\end{align*}
given by \index{$\mathcal{F}$}
\begin{align*}
&\mathcal{F}(f)(g,g'^tm_{\ell})\\
    &=\int_{\mathrm{Tp}_{r,r-2,1}}\gamma( I(\underline{\sigma})^\vee )\gamma(
    I(\sigma) \times \chi^{-1})
    \sum_{\varphi \in \mathcal{B}_{\underline{\sigma}}}
Z(f,\widetilde{W}_{\underline{\varphi}},\chi^{-1})\overline{Z(W_{\underline{\varphi}},\chi)}(g,g'^tm_{\ell})\frac{d\mu(I(\underline{\sigma}) \otimes \chi)}{\zeta(1)|\det h_\ell|^{2-r}}.
\end{align*}
One has that
\begin{align*}
    \mathcal{F} \circ \mathcal{F}(f)(g,g'^tm_{\ell})=
f\left(\begin{psmatrix} -I_{r-2}& \\ & h_\ell^{-1} \begin{psmatrix} (-1)^{r-1} &\\ & (-1)^{r}\end{psmatrix} h_\ell  \end{psmatrix}g,(-1)^{r}g'^tm_{\ell}\right).
\end{align*}
\end{thm}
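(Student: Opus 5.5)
The plan is to define $\mathcal{F}$ on the spectral side and transport it through the matrical Plancherel isomorphism of Theorem \ref{thm:PW:Z}. Concretely, I would first define an operator $\widehat{\mathcal{F}}$ on $\mathcal{C}(\mathrm{Tp}_{r,r-2,1},\mathcal{Z})_{\mathfrak{F}}$ and then set $\mathcal{F}:=\mathrm{HP}_{h_\ell}^{-1}\circ\widehat{\mathcal{F}}\circ\mathrm{HP}_{h_\ell}$. The formula in the statement is exactly what Theorem \ref{thm:PW:Z} produces once $\widehat{\mathcal{F}}$ is identified correctly.

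The first step is to describe $\widehat{\mathcal{F}}$ on generators. Write $\mathrm{HP}_{h_\ell}(f)$ fiberwise as $|\det h_\ell|^{r-2}\sum_{\underline\varphi}Z(f,W_{\underline\varphi},\chi)\overline{Z(W_{\underline\varphi},\chi)}$. Then $\widehat{\mathcal{F}}$ should multiply by $\gamma(I(\underline\sigma)^\vee)\gamma(I(\sigma)\times\chi^{-1})$ and replace $Z(f,W_{\underline\varphi},\chi)$ by $Z(f,\widetilde W_{\underline\varphi},\chi^{-1})$.

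To see this is well defined, I would argue as follows.
\begin{itemize}
\item By Lemma \ref{lem:duals}, $\widetilde W_{\underline\varphi}=W_{\widetilde{\underline\varphi}}$, where $\widetilde{\,\cdot\,}$ is an isometry $I(\underline\sigma)\to I(\underline\sigma^\vee)$. It carries $\mathcal{B}_{\underline\sigma}$ to an orthonormal basis of $I(\underline\sigma^\vee)$, and $\mathfrak{F}$ is stable under contragredients.
\item So the image section is the composite of the involution $(I(\underline\sigma),\chi)\mapsto(I(\underline\sigma)^\vee,\chi^{-1})$ of $\mathrm{Tp}_{r,r-2,1}$ with multiplication by a scalar function. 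This function is a product of $\gamma$-factors and factors like $\omega(-1)$; I would not try to justify directly that $\widehat{\mathcal{F}}$ is basis-independent this way.
\item Instead, I would define $\mathcal{F}$ directly by the displayed formula. This requires checking that $f\mapsto\bigl(I(\underline\sigma)\otimes\chi\mapsto \gamma\gamma\, Z(f,\widetilde W_{\underline\varphi_\lambda},\chi^{-1})\bigr)$ lands in $\mathcal{C}(\mathrm{Tp}_{r,r-2,1})$ continuously. For the $Z$ factor this is Lemma \ref{lem:is:cont:for:HP2}, applied with $\widetilde{\underline\varphi}$ and $\chi^{-1}$ (inversion is a homeomorphism of $\mathrm{Tp}_1$ preserving Plancherel measure).
\item For the $\gamma$ factors: on the tempered spectrum they are smooth, of absolute value $1$ by Lemma \ref{lem:gamma:factors}, and, in the Archimedean case, have derivatives of polynomial growth in $\norm{\sigma}$ and $\norm{\chi}$. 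Hence multiplying by them preserves $\mathcal{C}(\mathrm{Tp}_{r,r-2,1})$ continuously.
\end{itemize}
Feeding the resulting section, which lies in $\mathcal{C}(\mathrm{Tp}_{r,r-2,1},\mathcal{Z})_{\mathfrak{F}}$ by definition of that space, into $\mathrm{HP}_{h_\ell}^{-1}$ of Theorem \ref{thm:PW:Z} then gives a continuous map to $\mathcal{C}(X_\ell^\circ(F),\mathcal{H})_{\mathfrak{F}}$ equal to the stated formula.

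The second and main step is the identity for $\mathcal{F}\circ\mathcal{F}$. Bijectivity and topological isomorphy follow from it, because the right-hand side is the action $\mathcal{R}$ of an explicit element of $\GL_r\times\GL_{r-2}\times(\GL_2)_\ell$, which is an automorphism of the space. Since $\mathcal{F}(f)=\mathrm{HP}^{-1}_{h_\ell}(S)$ with $S$ explicit, I would compute $\mathrm{HP}_{h_\ell}(\mathcal{F}(f))=S$ and apply the construction again. This produces $\gamma(I(\underline\sigma)^\vee)\gamma(I(\underline\sigma))\gamma(I(\sigma)\times\chi^{-1})\gamma(I(\sigma)^\vee\times\chi)$ times $Z(f,\widetilde{\widetilde W}_{\underline\varphi},\chi)$.

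By Lemma \ref{lem:gamma:factors} the $\gamma$-product is $\omega_\sigma(-1)^2\omega_{\sigma'}(-1)\chi(-1)^{r}$, up to bookkeeping of the central characters. Also $\widetilde{\widetilde W}(g)=W(\widetilde w_r g\widetilde w_r^{-t})$, which is a right translate of $W$ by a signed scalar matrix. Moving these signs from the Whittaker function and $\chi$ onto $f$ via the equivariance formula at the end of the proof of Theorem \ref{thm:Plancherel}, and via the identity $\widehat I_{h_\ell}(\mathcal{R}(\cdots)T)=\chi(a)|a|^{-(r-2)/2}\widehat I_{h_\ell}(T)$ with $a=\pm1$ used in Lemma \ref{lem:dense}, should yield the translate of $f$ by $\bigl(\mathrm{diag}(-I_{r-2},h_\ell^{-1}\mathrm{diag}((-1)^{r-1},(-1)^r)h_\ell),(-1)^r\bigr)$.

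The obstacle I expect is precisely this sign bookkeeping. The difficulty is tracking how $\omega_\sigma(-1)$, $\omega_{\sigma'}(-1)$ and $\chi(-1)$ correspond to the central elements acting on the $\GL_r$ factor, the $\GL_2$ block twisted by $h_\ell$, and the $\mathcal{M}_\ell$ coordinate. I would first verify it on $f\in\mathcal{S}(X_\ell^\circ(F),\mathcal{H})_{\mathfrak{F}}$ of the form $I_{\widetilde f}$, then extend by density and continuity.
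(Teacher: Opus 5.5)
Your outline is the paper's own argument. $\mathcal{F}(f)$ is $\mathrm{HP}_{h_\ell}^{-1}$ of the twisted section. Membership of that section in $\mathcal{C}(\mathrm{Tp}_{r,r-2,1},\mathcal{Z})_{\mathfrak{F}}$ comes from Lemmas \ref{lem:tilde:iso} and \ref{lem:duals}, the argument of Lemma \ref{lem:is:cont:for:HP2}, and polynomial growth of the $\gamma$-factors. Theorem \ref{thm:PW:Z} then gives $Z(\mathcal{F}(f),W_{\underline{\varphi}},\chi)=\gamma(I(\underline{\sigma})^\vee)\gamma(I(\sigma)\times\chi^{-1})Z(f,\widetilde{W}_{\underline{\varphi}},\chi^{-1})$; one iterates, and bijectivity follows from the formula for $\mathcal{F}\circ\mathcal{F}$. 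Your detour through $\mathcal{S}(X_\ell^\circ(F),\mathcal{H})$ and density is unnecessary, since Theorem \ref{thm:PW:Z} already inverts on all of $\mathcal{C}(X_\ell^\circ(F),\mathcal{H})_{\mathfrak{F}}$.

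\textbf{The gap.} The one computation that carries the second assertion is left as ``should yield'', and the values you wrote do not close it. Two ingredients are needed.

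First, $\widetilde{w}_r^{-t}=\widetilde{w}_r$ and $\widetilde{w}_r^{2}=(-1)^{r-1}I_r$. So $\widetilde{\widetilde{W}}(g)=W(\widetilde{w}_r^{2}g)$, not $W(\widetilde{w}_rg\widetilde{w}_r^{-t})$, and hence $\widetilde{\widetilde{W}}_{\underline{\varphi}}=\omega_{I(\sigma)}((-1)^{r-1})\,\omega_{I(\sigma')}((-1)^{r-1})W_{\underline{\varphi}}$.

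Second, you do not need Theorem \ref{thm:Plancherel} or Lemma \ref{lem:dense} to move signs onto $f$. Inserting the element into \eqref{ZWchi:func} and substituting $c\mapsto -c$ in \eqref{ZWchi} gives directly
\begin{align*}
&Z(W,\chi)\left(\begin{psmatrix} -I_{r-2}& \\ & h_\ell^{-1} \begin{psmatrix} (-1)^{r-1} &\\ & (-1)^{r}\end{psmatrix} h_\ell  \end{psmatrix}g,(-1)^{r}g'^tm_{\ell}\right)\\
&=\omega_{I(\sigma)}((-1)^{r-1})\,\omega_{I(\sigma')}((-1)^{r})\,\chi(-1)\,Z(W,\chi)(g,g'^tm_\ell).
\end{align*}
By Theorem \ref{thm:PW:Z}, the stated formula therefore amounts to
$$\gamma(I(\underline{\sigma})^\vee)\gamma(I(\underline{\sigma}))\gamma(I(\sigma)\times\chi^{-1})\gamma(I(\sigma)^\vee\times\chi)=\omega_{I(\sigma')}(-1)\chi(-1).$$
This is exactly what Lemma \ref{lem:gamma:factors}, as printed, yields, and it is how the paper finishes. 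Your value $\omega_\sigma(-1)^2\omega_{\sigma'}(-1)\chi(-1)^r$ agrees with it only when $r$ is odd.

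\textbf{A caution about Lemma \ref{lem:gamma:factors}.} Your exponent on $\chi(-1)$ is the correct one. Tate's identity $\gamma(\tfrac12,\rho,\psi)\gamma(\tfrac12,\rho^\vee,\psi)=\det\rho(-1)$, together with $\det(\rho_\pi\otimes\rho_{\pi'})=(\det\rho_\pi)^{r_2}(\det\rho_{\pi'})^{r_1}$, gives $\omega_\pi(-1)^{r_2}\omega_{\pi'}(-1)^{r_1}$ in Lemma \ref{lem:gamma:factors}. This agrees with the printed version for the pairs $(r,r-2)$ and $(r,1)$ only when $r$ is odd. For a concrete check, take $r$ even, $\pi$ an unramified unitary principal series and $\chi(-1)=-1$. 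Then $\gamma(\tfrac12,\pi\times\chi^{-1},\psi)\gamma(\tfrac12,\pi^\vee\times\chi,\psi)=\chi(-1)^r=1$, not $\omega_\pi(-1)\chi(-1)=-1$.

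With the correct exponents, the $\gamma$-product is $\omega_{I(\sigma)}(-1)^{r-1}\omega_{I(\sigma')}(-1)^{r}\chi(-1)^{r}$ and the total multiplier is $\omega_{I(\sigma')}(-1)\chi(-1)^{r}$. The same bookkeeping then lands on
\begin{align*}
\mathcal{F}\circ\mathcal{F}(f)(g,g'^tm_\ell)=f\left(\begin{psmatrix}-I_{r-2}&\\&h_\ell^{-1}\begin{psmatrix}1&\\&(-1)^r\end{psmatrix}h_\ell\end{psmatrix}g,-g'^tm_\ell\right),
\end{align*}
which coincides with the stated element when $r$ is odd but not when $r$ is even.

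So for odd $r$, your outline completed as above is exactly the paper's proof. For even $r$, the sign problem you anticipated is genuine, and the displayed involution (and Lemma \ref{lem:gamma:factors}) need correcting. Bijectivity and the topological isomorphism are unaffected, since $\mathcal{F}\circ\mathcal{F}$ is still translation by an element of $\GL_{r,r-2}(F)\times(\GL_2)_\ell(F)$.
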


\begin{proof} 
We first prove that $\mathcal{F}$ is well-defined.  By Theorem \ref{thm:PW:Z} it suffices to prove that the section given by sending $I(\underline{\sigma}) \otimes \chi$ to
\begin{align*} 
\gamma(I(\underline{\sigma})^\vee)\gamma(I(\sigma)\times \chi^{-1})
    |\det h_\ell|^{r-2}\sum_{\varphi \in \mathcal{B}_{\underline{\sigma}}} Z(f,\widetilde{W}_{\underline{\varphi}},\chi^{-1})\overline{Z\left(W_{\underline{\varphi}},\chi\right)}(\cdot,\cdot)
\end{align*}
    lies in $\mathcal{C}(\mathrm{Tp}_{r,r-2,1},\mathcal{Z})_\mathfrak{F}$. 

We claim that the section sending $I(\underline{\sigma}) \otimes \chi$ to 
\begin{align} \label{section:good}
|\det h_\ell|^{r-2}\sum_{\varphi \in \mathcal{B}_{\underline{\sigma}}} Z(f,\widetilde{W}_{\underline{\varphi}},\chi^{-1})\overline{Z\left(W_{\underline{\varphi}},\chi\right)}(\cdot,\cdot)
\end{align}
lies in $\mathcal{C}(\mathrm{Tp}_{r,r-2,1},\mathcal{Z})_{\mathfrak{F}}.$
This follows from Lemma \ref{lem:tilde:iso}
and a minor modification of the argument proving Lemma \ref{lem:is:cont:for:HP2}.  Thus to prove $\mathcal{F}$ is well-defined, it suffices to check that 
$$
T(I(\underline{\sigma}) \otimes \chi):= \gamma(I(\underline{\sigma})^{\vee})\gamma(I(\sigma) \times \chi)
$$
is in $C^\infty(\mathrm{Tp}_{r,r-2,1})$ and is of polynomial growth in the following sense: for every semistandard parabolic subgroup $P=MN_P$ and every differential operator $D$ on $i\mathfrak{a}_M^\ast$ with constant coefficients, there is an $N>0$ such that 
\begin{align} \label{poly:growth} 
\mathrm{sup}_{\sigma \in \mathrm{Irr}_2(M)}\norm{(DT)_{\sigma}}(1+\norm{\sigma})^{-N}<\infty.
\end{align}
Here we are using notation as in \eqref{D:def}.
  In
the non-Archimedean case this is clear.  In the Archimedean case it follows as in the proof of \cite[Lemma 4.2]{DRS:Schwartz}.
    
    Finally, to prove that $\mathcal{F}$ is an automorphism, observe that by the Plancherel formula (Theorem \ref{thm:PW:Z}) we have by the definition of $\mathcal{F}$
    \begin{align}\label{eq:functionaltempered}
    Z(\mathcal{F}(f),W_{\underline{\varphi}},\chi)=\gamma( I(\underline{\sigma})^\vee )\gamma(
    I(\sigma) \times \chi^{-1})Z(f,\widetilde{W}_{\underline{\varphi}},\chi^{-1}).
\end{align}
By Lemma \ref{lem:tilde:iso}, $\widetilde{\widetilde{W}}_{\underline{\varphi}}=\omega_{I(\underline{\sigma})}((-1)^{r-1}I_{\underline{r},\underline{r-2}})W_{\underline{\varphi}}$.
Therefore, we have
    \begin{align*}
        \mathcal{F} \circ \mathcal{F}(f)(g,g'^tm_{\ell})=&\int_{\mathrm{Tp}_{r,r-2,1}}\gamma( I(\underline{\sigma})^\vee)\gamma(I(\sigma) \times \chi^{-1})\\& \times 
      \sum_{\varphi}
    Z(\mathcal{F}(f),\widetilde{W}_{\underline{\varphi}},\chi^{-1})\overline{Z(W_{\underline{\varphi}},\chi)}(g,g'^tm_{\ell})\frac{d\mu(I(\underline{\sigma}) \otimes \chi)}{\zeta(1)|\det h_\ell|^{2-r}}\\
    =&\int_{\mathrm{Tp}_{r,r-2,1}}\gamma( I(\underline{\sigma})^\vee)\gamma(
    I(\sigma) \times \chi^{-1})\gamma(I(\underline{\sigma}))\gamma(
    I(\sigma) \times \chi)\\& \times 
    \omega_{I(\underline{\sigma})}((-1)^{r-1}I_{\underline{r},\underline{r-2}})\sum_{\varphi}
    Z(f,W_{\underline{\varphi}},\chi)\overline{Z(W_{\underline{\varphi}},\chi)}(g,g'^tm_{\ell})\frac{d\mu(I(\underline{\sigma}) \otimes \chi)}{\zeta(1)|\det h_\ell|^{2-r}}. 
    \end{align*}
    Here we have used \eqref{eq:functionaltempered} by replacing $(\underline{\sigma},\chi,W_{\underline{\varphi}})$ with $(\underline{\sigma}^\lor,\chi^{-1},\widetilde{W}_{\underline{\varphi}})$ and applying Lemma \ref{lem:duals}.
    We apply Lemma \ref{lem:gamma:factors} to conclude that this is
    \begin{align*}
       &\int_{\mathrm{Tp}_{r,r-2,1}}
      \sum_{\varphi \in \mathcal{B}_{\underline{\sigma}}}
    Z(f,W_{\underline{\varphi}},\chi)\overline{Z(W_{\underline{\varphi}},\chi)}(g,g'^tm_{\ell})\frac{\omega_{I(\underline{\sigma})}((-1)^{r-1}I_{\underline{r},\underline{r-2}})d\mu(I(\underline{\sigma}) \otimes \chi)}{\omega_{I(\sigma')}(-I_{r-2})\chi(-1)\zeta(1)|\det h_\ell|^{2-r}}\\
    &=\int_{\mathrm{Tp}_{r,r-2,1}} \sum_{\varphi \in \mathcal{B}_{\underline{\sigma}}}
    Z(f,W_{\underline{\varphi}},\chi)\overline{Z\left(W_{\underline{\varphi}},\chi\right)}\left(\begin{psmatrix} -I_{r-2}& \\ & h_\ell^{-1} \begin{psmatrix} (-1)^{r-1} &\\ & (-1)^{r}\end{psmatrix} h_\ell  \end{psmatrix}g,(-1)^{r}g'^tm_{\ell}\right)
\frac{d\mu(I(\underline{\sigma}) \otimes \chi)}{\zeta(1)|\det h_\ell|^{2-r}}.
\end{align*}
  By Theorem \ref{thm:PW:Z}, this is $
f\left(\begin{psmatrix} -I_{r-2}& \\ & h_\ell^{-1} \begin{psmatrix} (-1)^{r-1} &\\ & (-1)^{r}\end{psmatrix} h_\ell  \end{psmatrix}g,(-1)^{r}g'^tm_{\ell}\right).$
\end{proof}

For later use we restate \eqref{eq:functionaltempered} below.

\begin{cor} \label{cor:temp:func:eqn}
Let $\pi \otimes \pi'\otimes \chi\in \mathrm{Tp}_{r,r-2,1}$ and $W\in \mathcal{W}(\pi,\psi) \otimes \mathcal{W}(\pi',\overline{\psi}).$ For $f \in \mathcal{C}(X_\ell^\circ(F),\mathcal{H})_{\mathfrak{F}},$ one has that
\begin{align*}
    Z(\mathcal{F}(f),W,\chi)=\gamma(\tfrac{1}{2},\pi^\vee \times \pi'^\vee,\psi)\gamma(\tfrac{1}{2},\pi \otimes \chi^{-1},\psi)Z(f,\widetilde{W},\chi^{-1}).
\end{align*}
\qed
\end{cor}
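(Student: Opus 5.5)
The statement is Corollary \ref{cor:temp:func:eqn}. The plan is to extract it from the identity \eqref{eq:functionaltempered} established in the proof of Theorem \ref{thm:FT}, and then pass from basis vectors to an arbitrary $W$ by linearity.

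First, write $\pi\otimes\pi'=I(\underline{\sigma})$ for some semistandard Levi $M$ and some $\underline{\sigma}\in\mathrm{Irr}_2(M)$. This is possible because tempered representations of products of general linear groups are full unitary inductions of square-integrable representations. Then $\mathcal{W}(\pi,\psi)\otimes\mathcal{W}(\pi',\overline{\psi})$ is spanned by the functions $W_{\underline{\varphi}}$ as $\underline{\varphi}$ ranges over $I(\underline{\sigma})$ (finite linear combinations), with $\mathcal{B}_{\underline{\sigma}}$ an orthonormal basis.

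Next, apply Theorem \ref{thm:PW:Z} to $\mathcal{F}(f)$. By the definition of $\mathcal{F}$, the section $\mathrm{HP}_{h_\ell}(\mathcal{F}(f))$ equals
$$\gamma(I(\underline{\sigma})^\vee)\gamma(I(\sigma)\times\chi^{-1})\,|\det h_\ell|^{r-2}\sum_{\underline{\varphi}} Z(f,\widetilde W_{\underline{\varphi}},\chi^{-1})\,\overline{Z(W_{\underline{\varphi}},\chi)}.$$
Here we use injectivity of $\mathrm{HP}_{h_\ell}^{-1}$, which is part of Theorem \ref{thm:PW:Z}. Comparing with the defining formula of Lemma \ref{lem:commute} gives
$$\sum_{\underline{\varphi}}\Bigl(Z(\mathcal{F}(f),W_{\underline{\varphi}},\chi)-\gamma\gamma\, Z(f,\widetilde W_{\underline{\varphi}},\chi^{-1})\Bigr)\,\overline{Z(W_{\underline{\varphi}},\chi)}=0$$
as an element of the fiber $\mathcal{Z}(I(\underline{\sigma}),\chi)$.

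The key point is linear independence of the functions $\overline{Z(W_{\underline{\varphi}},\chi)}$. The space $\mathcal{Z}(I(\underline{\sigma}),\chi)^{\mathrm{sm}}$ is isomorphic to the contragredient of the Whittaker model, and $W\mapsto \overline{Z(W,\chi)}$ is nonzero by Lemma \ref{lem:nonvanish} and equivariant. By irreducibility this map is injective, so the vectors corresponding to the $\underline{\varphi}$ in the basis (restricted to the finitely many relevant $K$-types) are independent. Hence each coefficient vanishes, which gives \eqref{eq:functionaltempered} pointwise on $\mathrm{Tp}_{r,r-2,1}$.

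A subtlety is that an equality of continuous sections is only a priori an equality almost everywhere in $\mu$. Smoothness of both sides in $(\underline{\lambda},\chi)$ upgrades this to equality at every point. This smoothness comes from Lemma \ref{lem:is:cont:for:HP2} and its variant for $\widetilde W$ via Lemma \ref{lem:tilde:iso}.

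Finally, identify the $\gamma$-factors: $\gamma(I(\underline{\sigma})^\vee)=\gamma(\tfrac12,\pi^\vee\times\pi'^\vee,\psi)$ and $\gamma(I(\sigma)\times\chi^{-1})=\gamma(\tfrac12,\pi\otimes\chi^{-1},\psi)$. Both $W\mapsto Z(\mathcal{F}(f),W,\chi)$ and $W\mapsto Z(f,\widetilde W,\chi^{-1})$ are linear in $W$. They are continuous by Lemma \ref{lem:Z:cont} together with Lemma \ref{lem:tilde:iso}, so the identity extends from the $W_{\underline{\varphi}}$ to all $W$ in the span.

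The main obstacle is the independence argument at each individual point. If irreducibility of the fiber is delicate (for instance at reducibility points of $I(\sigma_\lambda)$ on the unitary axis, which do not occur for $\GL_n$), I would instead pair against the dense image from Lemma \ref{lem:dense}.
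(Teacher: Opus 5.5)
Your proposal is correct and is essentially the paper's argument: the corollary restates \eqref{eq:functionaltempered}, which the paper obtains by applying Theorem \ref{thm:PW:Z} to the section defining $\mathcal{F}(f)$. Your coefficient comparison via injectivity of $W\mapsto Z(W,\chi)$ (irreducibility plus Lemma \ref{lem:nonvanish}) and your extension from the $W_{\underline{\varphi}}$ to all $W$ by linearity and continuity only spell out steps the paper leaves implicit; the worry about equality holding only $\mu$-almost everywhere is unnecessary, since $\mathrm{HP}_{h_\ell}\circ\mathrm{HP}_{h_\ell}^{-1}=\mathrm{id}$ is an identity of genuine sections in $\mathcal{C}(\mathrm{Tp}_{r,r-2,1},\mathcal{Z})_{\mathfrak{F}}$.
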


 Recall the action $\mathcal{R}_u$ from \eqref{Ru}. The following lemma asserts, in particular, that the Fourier transform sends the scaling action of $F^\times$ on $\mathcal{M}_{\ell}(F)$ to its inverse:

\begin{lem} \label{lem:FT:scalar}
Let $a,b \in F^\times.$
For any $f \in \mathcal{C}(X^{\circ}_{\ell}(F),\mathcal{H})_{\mathfrak{F}}$ one has  
$$
\mathcal{F}_{}(\mathcal{R}_u(I_{r,r-2},h_\ell^{-1} \begin{psmatrix} a & \\ & b \end{psmatrix} h_\ell)f)=
|a|^{r-2}\mathcal{R}_u(I_{r,r-2},h_\ell^{-1} \begin{psmatrix} a^{-1} & \\ & b^{-1} \end{psmatrix} h_\ell )  \mathcal{F}_{}(f) \in \mathcal{C}(X^{\circ}_{\ell}(F),\mathcal{H})_{\mathfrak{F}}.
$$
\end{lem}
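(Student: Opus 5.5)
The plan is to compare both sides through the zeta integrals $Z(\cdot,W_{\underline{\varphi}},\chi)$, which determine an element of $\mathcal{C}(X^\circ_\ell(F),\mathcal{H})_{\mathfrak{F}}$. By Theorem \ref{thm:PW:Z}, $\mathrm{HP}_{h_\ell}$ is injective, so two functions agree once all these integrals agree.

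Put $h:=h_\ell^{-1}\begin{psmatrix} a & \\ & b\end{psmatrix}h_\ell$. We have $m_\ell h^{\iota}=(e_{r-2},0)\begin{psmatrix} a & \\ & b\end{psmatrix}^{\iota}h_\ell^\iota=a^{-1}m_\ell$, so $\mathcal{R}_u(I_{r,r-2},h)$ preserves $X^\circ_\ell$. It commutes with the right action of $K_{r,r-2}$, hence preserves $\mathfrak{F}$-types, and both sides of the lemma lie in $\mathcal{C}(X^\circ_\ell(F),\mathcal{H})_{\mathfrak{F}}$.

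\textbf{Step 1: a scaling identity for the zeta integral.} For tempered $\pi\otimes\pi'\otimes\chi$ and $W\in\mathcal{W}(\pi,\psi)\otimes\mathcal{W}(\pi',\overline{\psi})$, I expect
\begin{align*}
Z(\mathcal{R}_u(I_{r,r-2},h)f,W,\chi)=c(a,b;\pi,\pi',\chi)\,Z(f,W,\chi),
\end{align*}
where $c$ is an explicit product of $\omega_\pi$, $\omega_{\pi'}$ and $\chi$ evaluated at monomials in $a,b$, times powers of $|a|$ and $|b|$.

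To prove it, start from the expression \eqref{Z:ids} for $Z(f,W,\chi)$ as an integral of $f$ against $Z(W,\chi)(g,g'^tm_\ell)$. Change variables $g\mapsto\begin{psmatrix} I_{r-2}& \\ & h^{-1}\end{psmatrix}g$ and rescale $g'$ to absorb the factor $a^{-1}$ in $m_\ell$. In the inner integral defining $Z(W,\chi)$ this produces
\begin{align*}
\begin{psmatrix}1&\\&c^{-1}\end{psmatrix}h_\ell h^{-1}=\begin{psmatrix}a^{-1}&\\&b^{-1}c^{-1}\end{psmatrix}h_\ell.
\end{align*}
Factor out the scalar $a^{-1}$ using the central characters of $\pi$ and $\pi'$, and compensate by the substitution $p\mapsto a p$ in the $\mathcal{P}_{r-2}$-integral of $J(W)$. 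The remaining $b/a$ is removed by $c\mapsto (a/b)c$, which yields a factor $\chi(a/b)$ together with a power of $|a/b|$. The changes of variables are justified by the absolute convergence in Lemma \ref{lem:Zh} and Lemma \ref{lem:Z:cont}.

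\textbf{Step 2: apply the local functional equation.} By Corollary \ref{cor:temp:func:eqn} and Step 1 applied to $(\pi^\vee,\pi'^\vee,\chi^{-1})$,
\begin{align*}
Z(\mathcal{F}(\mathcal{R}_u(I,h)f),W,\chi)=\gamma\gamma\,Z(\mathcal{R}_u(I,h)f,\widetilde{W},\chi^{-1})=c(a,b;\pi^\vee,\pi'^\vee,\chi^{-1})\,Z(\mathcal{F}(f),W,\chi).
\end{align*}
Here I use that $\widetilde{W}$ lies in the Whittaker model of the contragredient by Lemma \ref{lem:duals}. Separately, Step 1 applied to $\mathcal{F}(f)$ with $(a,b)$ replaced by $(a^{-1},b^{-1})$ gives $c(a^{-1},b^{-1};\pi,\pi',\chi)\,Z(\mathcal{F}(f),W,\chi)$ for the right-hand side of the lemma.

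\textbf{Step 3: compare the constants and conclude.} The character parts agree, since $\omega_{\pi^\vee}(x)=\omega_\pi(x^{-1})$ and $\chi^{-1}(x)=\chi(x^{-1})$. The absolute-value parts differ by exactly $|a|^{r-2}$, which comes from the normalization $|\det h|^{-(r-2)/2}$ in \eqref{Ru} and the $|a|$-power in the $g'$ rescaling. Since the two sides then have equal zeta integrals for all $W_{\underline{\varphi}}$ and $\chi$, they coincide by Theorem \ref{thm:PW:Z}.

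I expect the main obstacle to be the bookkeeping of the modulus factors in Step 1: $|\det g'|^{3/2}$, $|c|^{(r-2)/2}$, $|\det h_\ell|^{r-2}$, and the Jacobian of the rescaling of $\mathcal{P}_{r-2}(F)\backslash\GL_{r-2}(F)$. These must be checked carefully to produce precisely $|a|^{r-2}$ and no residual power of $|b|$.
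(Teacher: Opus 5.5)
Your proof is correct, but it is organized differently from the paper's. The paper never tests against zeta integrals and never invokes injectivity of $\mathrm{HP}_{h_\ell}$. With $h=h_\ell^{-1}\mathrm{diag}(a,b)h_\ell$, it first proves the pointwise identity
\[
Z(W,\chi)\big(\mathrm{diag}(I_{r-2},h)g,\,ag'^tm_\ell\big)=|ba^2|^{-(r-2)/2}\omega_\pi(a)\omega_{\pi'}(a)\overline{\chi}(ab^{-1})\,Z(W,\chi)(g,g'^tm_\ell).
\]
It uses this once to obtain your Step 1 for $(\widetilde W,\chi^{-1})$ and substitutes the result into the defining spectral formula for $\mathcal{F}$ in Theorem \ref{thm:FT}. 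It then uses the same identity backwards, turning the scalar into a translate of $\overline{Z(W_{\underline{\varphi}},\chi)}$, which is visibly $|a|^{r-2}\mathcal{R}_u(\ldots)\mathcal{F}(f)$.

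Your route compares $Z(\cdot,W_{\underline{\varphi}},\chi)$ of the two sides via Corollary \ref{cor:temp:func:eqn} and concludes with Theorem \ref{thm:PW:Z}. It leans on the full Plancherel isomorphism but never manipulates the spectral integral. It also explains the factor $|a|^{r-2}$ structurally. The constant in Step 1 is $c(a,b;\pi,\pi',\chi)=|a|^{(r-2)/2}\omega_\pi(a)\omega_{\pi'}(a)\chi(ab^{-1})^{-1}$. Its unitary part is inverted under $(\pi,\pi',\chi)\mapsto(\pi^\vee,\pi'^\vee,\chi^{-1})$ while its modulus part is not. Hence the two constants in Step 3 differ by exactly $|a|^{(r-2)/2}/|a^{-1}|^{(r-2)/2}=|a|^{r-2}$.

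Two corrections to your bookkeeping in Step 1.

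First, the substitution $p\mapsto ap$ is unnecessary, and it is not a change of variables on $U_{r-2}(F)\backslash\mathcal{P}_{r-2}(F)$, since $ap\notin\mathcal{P}_{r-2}(F)$. The integrand of $Z(W,\chi)(g,g'^tm_\ell)$ depends on $pg'$, so the rescaling of $g'$ already puts the scalar into the upper-left block. You can then pull it out of both arguments of $W$ at once via the central characters.

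Second, your list of modulus factors omits the Jacobian of translating $g$ by $\mathrm{diag}(I_{r-2},h)$ on $N(F)\backslash\GL_r(F)$. This translation does not preserve the measure; it contributes $|\det h|^{\pm(r-2)}=|ab|^{\pm(r-2)}$, the modulus character of $P$. Together with $|ab|^{-(r-2)/2}$ from \eqref{Ru} and the power of $|a/b|$ from the substitution in $c$, it is exactly what cancels every power of $|b|$ and leaves $|a|^{(r-2)/2}$.
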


\begin{proof}
Let $\pi \otimes \pi'\otimes \chi\in \mathrm{Tp}_{r,r-2,1}$ and let $\omega_{\pi}\otimes \omega_{\pi}'$ be the central character of $\pi\otimes \pi'$. Let $W \in \mathcal{W}(\pi,\psi) \otimes \mathcal{W}(\pi',\psi).$ Changing variables $c \mapsto cba^{-1}$ in the integral \eqref{ZWchi} defining $Z(W,\chi),$ and using the definition \eqref{ZWchi:func}, we see that 
\begin{align*}
    Z_{}(W,\chi)\left(\begin{psmatrix} I_{r-2} & \\ & h_\ell^{-1} \begin{psmatrix} a & \\ & b \end{psmatrix} h_\ell \end{psmatrix}g,ag'^tm_{\ell} \right)=|ba^2|^{-(r-2)/2}\omega_\pi(a)\omega_{\pi'}(a)\bar{\chi}(ab^{-1}) Z_{}(W,\chi)\left(g,g'^tm_{\ell} \right).
\end{align*}
Thus, using \eqref{Z:ids} and the definition of $\mathcal{R}_u$ from \eqref{Ru}  we have
\begin{align*}
    &|\det h_\ell|^{-(r-2)}Z(\mathcal{R}_u(I_{r,r-2},h_\ell^{-1}\begin{psmatrix} a & \\ & b \end{psmatrix} h_\ell)f,\widetilde{W},\chi^{-1})\\
    &=|ab|^{-(r-2)/2}\int f\left(\begin{psmatrix}
        I_{r-2} & \\
        & h_\ell^{-1}\begin{psmatrix} a^{-1} & \\ & b^{-1} \end{psmatrix} h_\ell
    \end{psmatrix}g,a^{-1}g'^tm_\ell \right)Z(\widetilde{W},\chi^{-1})(g,g'^tm_\ell) |\det g'|d\dot{g}'d\dot{g}\\
    &=|b|^{(r-2)/2}|a|^{3(r-2)/2}\int f\left(g,g'^tm_\ell \right)Z(\widetilde{W},\chi^{-1})\left(\begin{psmatrix}
        I_{r-2} & \\
        & h_\ell^{-1}\begin{psmatrix} a & \\ & b \end{psmatrix} h_\ell
    \end{psmatrix}g,ag'^tm_\ell\right) |\det g'|d\dot{g}'d\dot{g}\\
    &=|a|^{(r-2)/2}\bar{\omega}_{\pi}(a)\bar{\omega}_{\pi}'(a)\chi(ab^{-1})\int f\left(g,g'^tm_\ell \right)Z(\widetilde{W},\chi^{-1})\left(g,g'^tm_\ell\right) |\det g'|d\dot{g}'d\dot{g}\\
    &=|\det h_\ell|^{-(r-2)}|a|^{(r-2)/2}\bar{\omega}_{\pi}(a)\bar{\omega}_{\pi}'(a)\chi(ab^{-1})Z(f,\widetilde{W},\chi^{-1}).
\end{align*}
Here the integrals are over $\mathcal{P}_{r-2}(F) \backslash \GL_{r-2}(F) \times N(F) \backslash \GL_r(F).$

Using these identities and the definition of $\mathcal{F}$ from Theorem \ref{thm:FT}, we have 
\begin{align*} 
&\mathcal{F}(\mathcal{R}_u(I_{r,r-2},h_\ell^{-1}\begin{psmatrix} a & \\ & b \end{psmatrix} h_\ell)f)(g_1,g'^t_1m_{\ell})\\
    &=|a|^{(r-2)/2}\int_{\mathrm{Tp}_{r,r-2,1}}\gamma(I(\underline{\sigma})^\vee)\gamma(
  I(\sigma)\times\chi^{-1})\\& \quad\times 
    \sum_{\underline{\varphi} \in \mathcal{B}_{\underline{\sigma}}}
    Z(f,\widetilde{W}_{\underline{\varphi}},\chi^{-1})\overline{\omega}_{I(\sigma)}(a)\overline{\omega}_{I(\sigma')}(a)\chi(ab^{-1})\overline{Z(W_{\underline{\varphi}},\chi)}(g,g'^tm_{\ell})\frac{d\mu(I(\underline{\sigma}) \otimes \chi)}{\zeta(1)|\det h_\ell|^{2-r}}\\
    &=|a|^{3(r-2)/2}|b|^{(r-2)/2}\int_{\mathrm{Tp}_{r,r-2,1}}\gamma(I(\underline{\sigma})^\vee)\gamma(
  I(\sigma)\times\chi^{-1})\\& \quad\times 
    \sum_{\underline{\varphi} \in \mathcal{B}_{\underline{\sigma}}}
    Z(f,\widetilde{W}_{\underline{\varphi}},\chi^{-1})\overline{Z(W_{\underline{\varphi}},\chi)}\left(\begin{psmatrix}
        I_{r-2} & \\
        & h_\ell^{-1}\begin{psmatrix} a & \\ & b \end{psmatrix} h_\ell
    \end{psmatrix}g,ag'^tm_{\ell}\right)\frac{d\mu(I(\underline{\sigma}) \otimes \chi)}{\zeta(1)|\det h_\ell|^{2-r}}\\
    &=|a|^{r-2}\mathcal{R}_u(I_{r,r-2},h_{\ell}^{-1}\begin{psmatrix} a^{-1} & \\ & b^{-1} \end{psmatrix}h_\ell)\mathcal{F}(f)(g_1,g_1'^tm_{\ell}).
\end{align*}
\end{proof}

The zeta integrals $Z(W,\chi)$, $Z(f,W,\chi)$ depend on the choice of $h_\ell,$ and hence so does $\mathcal{F}.$  We write $Z_{h_{\ell}}(W,\chi)$, $Z_{h_{\ell}}(f,W,\chi),$ $\mathcal{F}_{h_\ell}$ for the integrals and morphism when we need to indicate the dependence. 

In the special case $(g,g')=(I_r,I_{r-2}),$ the following lemma gives some information about the behavior of the Fourier transform as $h_\ell$ varies.

\begin{lem} \label{lem:FT}
Let $
(g,g',h) \in \GL_{r,r-2}(F) \times \mathrm{GL}_2(F).
$
For any $f \in \mathcal{C}(X^{\circ}_{\ell h^{\iota}}(F),\mathcal{H})_{\mathfrak{F}},$ one has 
$$
\mathcal{F}_{h_\ell }(\mathcal{R}_u(g,g',h)f)=|\det h|^{2-r}
\mathcal{R}_u(g^{-t},g'^{-t},h )  \mathcal{F}_{h_\ell h }(f) \in \mathcal{C}(X^{\circ}_{\ell}(F),\mathcal{H})_{\mathfrak{F}'}
$$
for an appropriate finite set $\mathfrak{F}'$  of $K_{r,r-2}$-types depending on $(g,g')$ and $\mathfrak{F}.$
\end{lem}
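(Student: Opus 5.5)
The argument follows the proof of Lemma \ref{lem:FT:scalar}. First I record how the zeta integrals transform under $\mathcal{R}_u(g,g',h)$, then I substitute into the spectral formula for $\mathcal{F}$ in Theorem \ref{thm:FT}. Since $\mathcal{F}$ is defined through $\mathrm{HP}_{h_\ell}$ (Theorem \ref{thm:PW:Z}), it suffices to compare the spectral coefficients of both sides, i.e.\ to verify the identity after applying $Z_{h_\ell}(\cdot,W,\chi)$ for all tempered $W$ and unitary $\chi$.

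\textbf{Step 1 (equivariance of $Z$).} For $W$ in a tempered Whittaker model, the definition \eqref{ZWchi} together with the choice \eqref{hl} should give
\[
Z_{h_\ell}(W,\chi)\bigl(\mathcal{R}(g,g',h)x\bigr)\cdot(\text{modulus factor}) = Z_{h_\ell h}(\mathcal{R}(g,g')W,\chi)(x).
\]
The reason is that replacing $h_\ell$ by $h_\ell h$ is the same as translating the $\GL_2$-block. The point $m_\ell$ is unchanged because $(1,0)(h_\ell h)^\iota$ spans $\ell h^\iota$. Combining this with \eqref{Z:ids} and a change of variables in the outer integral, I get
\[
Z_{h_\ell}(\mathcal{R}_u(g,g',h)f,W,\chi)=c(h)\,Z_{h_\ell h}(f,\mathcal{R}(g,g')^{-1}W,\chi),
\]
where $c(h)$ is an explicit power of $|\det h|$. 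It comes from the factor $|\det h_\ell|^{r-2}$ in \eqref{Z:ids} and from the normalization in \eqref{Ru}.

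\textbf{Step 2 (apply the functional equation).} By Corollary \ref{cor:temp:func:eqn} applied to $\mathcal{F}_{h_\ell}$, together with Step 1, the left side has coefficient
\[
c(h)\,\gamma\gamma\, Z_{h_\ell h}\bigl(f,\widetilde{\mathcal{R}(g,g')^{-1}W},\chi^{-1}\bigr).
\]
Lemma \ref{lem:tilde:iso} gives $\widetilde{\mathcal{R}(g,g')W}=\mathcal{R}(g^{-t},g'^{-t})\widetilde W$, which explains the transpose-inverses $g^{-t},g'^{-t}$ in the statement. On the other hand, computing the coefficient of $\mathcal{R}_u(g^{-t},g'^{-t},h)\mathcal{F}_{h_\ell h}(f)$ by Step 1 and then Corollary \ref{cor:temp:func:eqn} for $\mathcal{F}_{h_\ell h}$ yields the same expression. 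The only difference is the power of $|\det h|$, and matching those powers produces the factor $|\det h|^{2-r}$.

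\textbf{Step 3 ($K$-types).} Translation by $(g,g')$ sends functions of $K_{r,r-2}$-types in $\mathfrak{F}$ to functions of types in a finite set relative to the conjugate compact group $(g,g')^{-1}K(g,g')$. The resulting function is still finite under the original $K_{r,r-2}$, with types in some finite set $\mathfrak{F}'$. Hence both sides lie in $\mathcal{C}(X_\ell^\circ(F),\mathcal{H})_{\mathfrak{F}'}$, and injectivity of $\mathrm{HP}_{h_\ell}$ on this space (Theorem \ref{thm:PW:Z}) finishes the proof.

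I expect the main obstacle to be the bookkeeping in Step 1. One has to track how $h$ acts simultaneously on $X^\circ_{\ell h^\iota}$ and on the base point $h_\ell h$, and collect the modulus factors from $d\dot g'$, from \eqref{Ru}, and from the $c$-integral. I would first check the case where $h$ is diagonal conjugated by $h_\ell$, which is exactly Lemma \ref{lem:FT:scalar}, to calibrate the exponents.
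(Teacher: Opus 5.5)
Your Step~2 does not produce $|\det h|^{2-r}$. Carried out, it produces no power of $|\det h|$ at all.

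On the left you apply Step~1 with $(g,g',h)$, and on the right with $(g^{-t},g'^{-t},h)$. Both sides therefore acquire the same constant $c(h)=|\det h|^{-(r-2)/2}$. Corollary~\ref{cor:temp:func:eqn} then supplies the same $\gamma$-factors for $\mathcal{F}_{h_\ell}$ and for $\mathcal{F}_{h_\ell h}$, with no constant depending on the choice of $h_\ell$.

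There is also a slip in the order of translation and $\sim$ on the left. The left coefficient is $c(h)\gamma\gamma\,Z_{h_\ell h}(f,\mathcal{R}(g^{-1},g'^{-1})\widetilde{W},\chi^{-1})$, not $Z_{h_\ell h}(f,(\mathcal{R}(g,g')^{-1}W)^{\sim},\chi^{-1})$. Since $\mathcal{R}(g^{-1},g'^{-1})\widetilde{W}=(\mathcal{R}(g^{t},g'^{t})W)^{\sim}$, both coefficients equal
\[
|\det h|^{-(r-2)/2}\,\gamma(\tfrac12,\pi^\vee\times\pi'^\vee,\psi)\,\gamma(\tfrac12,\pi\otimes\chi^{-1},\psi)\,Z_{h_\ell h}\bigl(f,(\mathcal{R}(g^{t},g'^{t})W)^{\sim},\chi^{-1}\bigr).
\]
With injectivity of $\mathrm{HP}_{h_\ell}$, your argument would prove $\mathcal{F}_{h_\ell}(\mathcal{R}_u(g,g',h)f)=\mathcal{R}_u(g^{-t},g'^{-t},h)\mathcal{F}_{h_\ell h}(f)$. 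That is not the stated identity when $|\det h|\neq 1$: ``matching those powers'' gives exponent $0$, not $2-r$.

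Your proposed calibration would not reveal this. Lemma~\ref{lem:FT:scalar} keeps $\mathcal{F}_{h_\ell}$ on both sides, so it is not a special case of this lemma and cannot detect how $\mathcal{F}$ depends on $h_\ell$.

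The paper argues directly with the spectral formula of Theorem~\ref{thm:FT}. It proceeds in four moves:
\begin{enumerate}
\item It rewrites $Z_{h_\ell}(\mathcal{R}_u(g,g',h)f,\widetilde{W}_{\underline{\varphi}},\chi^{-1})$ as $|\det h|^{-(r-2)/2}Z_{h_\ell h}(f,(\mathcal{R}(g^{t},g'^{t})W_{\underline{\varphi}})^{\sim},\chi^{-1})$.
\item It moves $\mathcal{R}(g^{t},g'^{t})$ onto the other factor, using independence of the orthonormal basis.
\item It expresses $\overline{Z_{h_\ell}(\mathcal{R}(g^{-t},g'^{-t})W_{\underline{\varphi}},\chi)}$ as an $\mathcal{R}_u$-translate of $\overline{Z_{h_\ell h}(W_{\underline{\varphi}},\chi)}$.
\item It recognizes the integral defining $\mathcal{F}_{h_\ell h}$.
\end{enumerate}
When this is done carefully, the $|\det h|^{-(r-2)/2}$ from the zeta integrals cancels the one built into $\mathcal{R}_u(\cdot,\cdot,h)$. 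What survives is the ratio $|\det h_\ell|^{r-2}/|\det(h_\ell h)|^{r-2}=|\det h|^{2-r}$ of the normalizing constants of the spectral measures in the formulas for $\mathcal{F}_{h_\ell}$ and $\mathcal{F}_{h_\ell h}$. That constant appears identically in $\mathrm{HP}_{h_\ell}$, so it cancels out of Corollary~\ref{cor:temp:func:eqn}. No coefficient comparison built on that corollary can recover it.

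The mismatch between the two routes is not just bookkeeping on your side. Replace $h_\ell$ by $\mathrm{diag}(1,b)h_\ell$; this is admissible, with the same $\ell$ and the same $m_\ell$. For unitary $\chi$ each product $Z(f,W,\chi)\overline{Z(W,\chi)}$ is unchanged, while the constant $|\det h_\ell|^{r-2}$ is multiplied by $|b|^{r-2}$. So the $h_\ell$-dependent normalization in Theorem~\ref{thm:Plancherel}, and hence in $\mathrm{HP}_{h_\ell}$ and Corollary~\ref{cor:temp:func:eqn}, cannot hold as written for every admissible $h_\ell$.

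To obtain the lemma as stated, you must either work from the explicit formula of Theorem~\ref{thm:FT}, as the paper does, or carry the constant $|\det h_\ell|^{r-2}$ explicitly through the functional equation.

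A smaller point concerns Step~3. For Archimedean $F$, translating by non-compact $(g,g')$ does not preserve $K_{r,r-2}$-finiteness; it gives only finiteness under a conjugate of $K_{r,r-2}$. The injectivity you invoke must therefore be applied relative to that conjugate maximal compact subgroup.
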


\begin{proof} Let $\pi \otimes \pi'\otimes \chi\in \mathrm{Tp}_{r,r-2,1}$ and $W \in \mathcal{W}(\pi,\psi) \otimes \mathcal{W}(\pi',\psi).$ We have
\begin{align*}
    &Z_{h_\ell}(W,\chi)\left(\begin{psmatrix}
        g' & \\
        & h\end{psmatrix}g_1g^{-1},g_1'^tm_\ell\right)\\
        &=\int_{F^\times\times U_{r-2}(F)\backslash \mathcal{P}_{r-2}(F)} W\left(\begin{psmatrix}
        pg_1'g' & \\
           & \begin{psmatrix} 1 & \\ & c^{-1}\end{psmatrix}h_\ell h
\end{psmatrix}g_1g^{-1},pg_1'\right)\frac{\chi(c)d_rpd^\times c}{|c|^{(r-2)/2}|\det pg_1'|^{3/2}}\\
        &=|\det g'|^{3/2}\int_{F^\times\times U_{r-2}(F)\backslash \mathcal{P}_{r-2}(F)} \mathcal{R}(g^{-1},g'^{-1})W\left(\begin{psmatrix}
        pg_1'g' & \\
           & \begin{psmatrix} 1 & \\ & c^{-1}\end{psmatrix}h_\ell h
\end{psmatrix}g_1,pg_1'g'\right)\frac{\chi(c)d_rpd^\times c}{|c|^{(r-2)/2}|\det pg_1'g'|^{3/2}}\\
        &=|\det g'|^{3/2}Z_{h_\ell h}(\mathcal{R}(g^{-1},g'^{-1}) W,\chi)\left(g_1,(g_1'g')^tm_\ell h^\iota\right).
\end{align*}

Therefore, using \eqref{Z:ids}
\begin{align*}
    &Z_{h_\ell}(\mathcal{R}_u(g,g',h)f,\widetilde{W},\chi^{-1})\\
    &=\frac{|\det g'|^{3/2}|\det h_\ell|^{r-2}}{|\det h|^{(r-2)/2}}\int f\left(\begin{psmatrix}
        g'^{-1} & \\
        & h^{-1}\end{psmatrix}g_1g,g'^tg_1'^tm_\ell h^{\iota}\right)Z_{h_\ell}(\widetilde{W},\chi^{-1})(g_1,g_1'^tm_\ell) |\det g_1'|d\dot{g_1}'d\dot{g_1}\\
    &= \frac{|\det g'|^{-1/2}|\det h_\ell|^{r-2}}{|\det h|^{-(r-2)/2}}\int f\left(g_1,g'^tg_1'^tm_\ell h^{\iota}\right)Z_{h_\ell}(\widetilde{W},\chi^{-1})\left(\begin{psmatrix}
        g' & \\
        & h\end{psmatrix}g_1g^{-1},g_1'^tm_\ell\right) |\det g_1'|d\dot{g_1}'d\dot{g_1}\\
        &=\frac{|\det g'||\det h_\ell|^{r-2}}{|\det h|^{-(r-2)/2}}\int f\left(g_1,g'^tg_1'^tm_\ell h^{\iota}\right)Z_{h_\ell h}((\mathcal{R}(g^{t},g'^{t}) W)^{\sim},\chi^{-1})\left(g_1,(g_1'g')^tm_\ell h^\iota\right) |\det g_1'|d\dot{g_1}'d\dot{g_1}\\
        &=|\det h|^{-(r-2)/2}Z_{h_\ell h}(f,(\mathcal{R}(g^{t},g'^{t}) W)^{\sim},\chi^{-1}).
\end{align*}
Here the integrals are over $\mathcal{P}_{r-2}(F) \backslash \GL_{r-2}(F) \times N(F) \backslash \GL_r(F).$
Using this identity and Theorem \ref{thm:FT}, we have 
\begin{align*}
&\mathcal{F}_{h_\ell}(\mathcal{R}_u(g,g',h)f)(g_1,g'^t_1m_{\ell})\\
    &=|\det h|^{-(r-2)/2}\int_{\mathrm{Tp}_{r,r-2,1}}\gamma(I(\underline{\sigma})^\vee)\gamma(
  I(\sigma)\times\chi^{-1})\\& \times 
    \sum_{\underline{\varphi} \in \mathcal{B}_{\underline{\sigma}}}
    Z_{h_\ell h}(f,(\mathcal{R}(g^t,g'^t)W_{\underline{\varphi}})^{\sim},\chi^{-1})\overline{Z_{h_\ell}(W_{\underline{\varphi}},\chi)}(g_1,g_1'^tm_{\ell})\frac{d\mu(I(\underline{\sigma}) \otimes \chi)}{\zeta(1)|\det h_\ell|^{2-r}}.
\end{align*}
This expression is independent of the choice of orthonormal basis.  Hence it is equal to
\begin{align} \label{Fh}\begin{split}
|&\det h|^{-(r-2)/2}\int_{\mathrm{Tp}_{r,r-2,1}}\gamma(I(\underline{\sigma})^\vee)\gamma(
   I(\sigma)\times\chi^{-1})\\ \times & 
    \sum
    Z_{h_\ell h}(f,\widetilde{W}_{\underline{\varphi}},\chi^{-1})\overline{Z_{h_\ell}(\mathcal{R}(g^{-t},g'^{-t})W_{\underline{\varphi}},\chi)}(g_1,g_1'^tm_{\ell})\frac{d\mu(I(\underline{\sigma}) \otimes \chi)}{\zeta(1)|\det h_\ell|^{2-r}}.
    \end{split}
\end{align}
We have
\begin{align*}
   & \overline{Z_{h_\ell}(\mathcal{R}(g^{-t},g'^{-t})W_{\underline{\varphi}},\chi)}(g_1,g_1'^tm_{\ell})\\
    &=\int_{U_{r-2}(F) \backslash \mathcal{P}_{r-2}(F) \times F^\times}\overline{W}_{\underline{\varphi}}\left(\begin{psmatrix}pg'_1 & \\ &  \begin{psmatrix} 1 & \\ & c^{-1} \end{psmatrix}h_\ell\end{psmatrix}g_1g^{-t},pg'_1g'^{-t}  \right)\frac{d_r p\overline{\chi}(c)d^\times c}{|\det pg_1'|^{3/2}|c|^{(r-2)/2}}\\
    &=\mathcal{R}_u(g^{-t},g'^{-t},I_{2})\overline{Z_{h_\ell h}(W_{\underline{\varphi}},\chi)}(\begin{psmatrix}
        I_{r-2} & \\
        & h^{-1}
    \end{psmatrix}g_1,g_1'^tm_{\ell} h^\iota).
\end{align*}
 Combining this computation with \eqref{Fh} we deduce that 
\begin{align*}
    &\mathcal{F}_{h_\ell}(\mathcal{R}_u(g,g',h)f)(g_1,g_1'^tm_{\ell})\\
    &=|\det h|^{(2-r)/2}\mathcal{R}_u(g^{-t},g'^{-t},I_2)\mathcal{F}_{h_{\ell}h}(f)(\begin{psmatrix}
        I_{r-2}&\\
            &h^{-1}
    \end{psmatrix}g_1,g_1'^tm_{\ell}h^\iota)\\
    &=|\det h|^{2-r}\mathcal{R}_u(g^{-t},g'^{-t},h)\mathcal{F}_{h_{\ell}h}(f)(g_1,g_1'^tm_{\ell})
\end{align*}
as claimed.
\end{proof}

\begin{thm}\label{thm:L2ext}
    The map $\mathcal{F}$ extends to an isometry of $L^2\left(X_{\ell}(F), |\det g'|d\dot{g}d\dot{g}'\right)$. For $f_1,f_2 \in L^2\left(X_{\ell}(F), |\det g'|d\dot{g}d\dot{g}'\right)$ one has that
    \begin{align*}
        &\int_{N(F)\backslash\GL_r(F)\times \mathcal{P}_{r-2}\backslash \GL_{r-2}(F)} \mathcal{F}(f_1)(g,g'^tm_{\ell})\overline{f_2}(g,g'^tm_{\ell})|\det g'|d\dot{g}d\dot{g}'\\
        &= \int_{N(F)\backslash\GL_r(F)\times \mathcal{P}_{r-2}\backslash \GL_{r-2}(F)} f_1(g,g'^tm_{\ell})\overline{\mathcal{F}(f_2)}\left(\begin{psmatrix} -I_{r-2}& \\ & h_\ell^{-1} \begin{psmatrix} (-1)^{r-1} &\\ & (-1)^{r}\end{psmatrix} h_\ell  \end{psmatrix}g,(-1)^{r}g'^tm_{\ell}\right)|\det g'|d\dot{g}d\dot{g}'.
    \end{align*}
\end{thm}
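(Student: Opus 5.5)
The plan is to show that $\mathcal{F}$ preserves $L^2$ norms on a dense subspace and then extend by continuity. The adjointness identity will then follow from isometry combined with the involution formula $\mathcal{F}\circ\mathcal{F}(f)=f\circ\epsilon$ of Theorem \ref{thm:FT}. Here $\epsilon$ denotes the sign twist appearing there.

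For the dense subspace I take $\bigcup_{\mathfrak{F}}\mathcal{S}(X_\ell^\circ(F),\mathcal{H})_{\mathfrak{F}}$, which is dense in $L^2$. It sits inside $\bigcup_{\mathfrak{F}}\mathcal{C}(X_\ell^\circ(F),\mathcal{H})_{\mathfrak{F}}$, and this in turn lies in $L^2$ by Corollary \ref{lem:L2}. On this space I first establish a Parseval identity from the Plancherel theorem (Theorems \ref{thm:Plancherel} and \ref{thm:PW:Z}):
\[
(f_1,f_2)_{L^2}=\int_{\mathrm{Tp}_{r,r-2,1}}|\det h_\ell|^{r-2}\sum_{\underline{\varphi}\in\mathcal{B}_{\underline{\sigma}}}Z(f_1,W_{\underline{\varphi}},\chi)\overline{Z(f_2,W_{\underline{\varphi}},\chi)}\frac{d\mu(I(\underline{\sigma})\otimes\chi)}{\zeta(1)}.
\]
To get this, insert the inversion formula for $f_1$ into $\int f_1\overline{f_2}\,|\det g'|$. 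Then interchange the spectral and spatial integrals, using that $\mathrm{HP}_{h_\ell}^{-1}$ of a section in $\mathcal{C}(\mathrm{Tp}_{r,r-2,1},\mathcal{Z})_{\mathfrak{F}}$ converges absolutely, together with Lemma \ref{lem:Z:cont}. Finally recognize $\int \overline{f_2}\,\overline{Z(W,\chi)}|\det g'|$ as $|\det h_\ell|^{2-r}\overline{Z(f_2,W,\chi)}$ by \eqref{Z:ids}.

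Next I apply Parseval to $\mathcal{F}(f_1),\mathcal{F}(f_2)$. Corollary \ref{cor:temp:func:eqn} gives
\[
Z(\mathcal{F}(f),W_{\underline{\varphi}},\chi)=\gamma(I(\underline{\sigma})^\vee)\gamma(I(\sigma)\times\chi^{-1})Z(f,\widetilde W_{\underline{\varphi}},\chi^{-1}).
\]
By Lemma \ref{lem:gamma:factors} with tempered data, each $\gamma$-factor at $\tfrac12$ has absolute value $1$: the two identities there give $\gamma\overline{\gamma}=1$. So the product of one factor with the conjugate of the other is $1$. Hence the spectral integrand becomes $\sum_{\underline\varphi}Z(f_1,\widetilde W_{\underline\varphi},\chi^{-1})\overline{Z(f_2,\widetilde W_{\underline\varphi},\chi^{-1})}$.

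By Lemma \ref{lem:duals}, $\underline\varphi\mapsto\widetilde{\underline\varphi}$ is an isometry carrying $\mathcal{B}_{\underline\sigma}$ to an orthonormal basis of the induction of $\underline\sigma^\vee$. The sum over a basis is independent of the basis. Thus the change of variables $(I(\underline\sigma),\chi)\mapsto(I(\underline\sigma)^\vee,\chi^{-1})$ turns the expression back into the Parseval integrand for $(f_1,f_2)$, provided this map preserves Plancherel measure. I expect it does, because $\mu$ is invariant under contragredient: the formal degrees and $\mu$-functions are unchanged. This gives $(\mathcal{F}f_1,\mathcal{F}f_2)=(f_1,f_2)$. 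I would also check the orbit basepoint convention \eqref{int} and the smooth dependence in $\underline{\lambda}$, so that the substitution is legitimate orbit by orbit. Hence $\mathcal{F}$ extends to an isometry of $L^2$. It is surjective, since $\mathcal{F}^2$ is a unitary sign twist.

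For the stated identity, apply the isometry to the pair $(f_1,\mathcal{F}(f_2))$:
\[
(\mathcal{F}f_1,\overline{\cdot}\ \mathcal{F}\mathcal{F}f_2)=(f_1,\mathcal{F}f_2)\quad\text{i.e.}\quad(\mathcal{F}f_1,f_2\circ\epsilon)=(f_1,\mathcal{F}f_2).
\]
Now substitute $f_2\mapsto f_2\circ\epsilon^{-1}$, using that $\epsilon$ preserves the measure. One also needs the relation $\mathcal{F}(f\circ\epsilon)=\mathcal{F}(f)\circ\epsilon$. This holds by Lemma \ref{lem:FT}: $\epsilon$ is given by central sign matrices, which commute with transpose-inverse, and the $h$-component is handled by Lemma \ref{lem:FT:scalar}, whose scalar factors are trivial for signs. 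The resulting identity is the formula in the statement for the dense subspace, and it extends by continuity to all of $L^2$.

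The main obstacle is the rigorous Parseval step: justifying the Fubini interchange and the absolute convergence of the spectral integral for general $f$ in the $\mathfrak{F}$-isotypic Schwartz space. The plan is to use the rapid decay of $\mathrm{HP}_{h_\ell}(f)$ in $\mathcal{C}(\mathrm{Tp}_{r,r-2,1})$ from Lemma \ref{lem:is:cont:for:HP2}, together with the polynomial bound of Lemma \ref{lem:Z:cont0}.
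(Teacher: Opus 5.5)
Your argument is correct, but it runs in the opposite order from the paper's.

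\textbf{The paper's route.} The paper never states a Parseval identity. It expands $\mathcal{F}(f_1)$ spectrally inside $\int \mathcal{F}(f_1)\overline{f_2}\,|\det g'|$ and interchanges the integrals. Using Lemma \ref{lem:gamma:factors} and \eqref{eq:functionaltempered} (applied to $(\underline{\sigma}^\vee,\widetilde{W}_{\underline{\varphi}},\chi^{-1})$), it rewrites $\gamma(I(\underline{\sigma})^\vee)\gamma(I(\sigma)\times\chi^{-1})\overline{Z(f_2,W_{\underline{\varphi}},\chi)}$ as $\overline{Z(\mathcal{F}(f_2),\widetilde{W}_{\underline{\varphi}},\chi^{-1})}$ times a sign built from $\omega_{I(\sigma)}$, $\omega_{I(\sigma')}$ and $\chi(-1)$. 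That sign is absorbed into the translation by your $\epsilon$. After interchanging back and substituting $(\underline{\sigma},\chi)\mapsto(\underline{\sigma}^\vee,\chi^{-1})$, Plancherel inversion gives the twisted adjointness identity in one computation. The isometry is then deduced from it, using $\mathcal{F}\circ\mathcal{F}(f)=f\circ\epsilon$ and Lemmas \ref{lem:FT:scalar} and \ref{lem:FT}.

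\textbf{Your route.} You prove the polarized Parseval identity first. Unitarity then follows at once from $|\gamma|=1$, and no central-character signs appear. The signs enter only formally, through $\mathcal{F}^2$, when you derive adjointness.

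\textbf{Comparison.} Your route shows the structure more clearly: on the spectral side, $\mathcal{F}$ is a unimodular multiplier composed with the $d\mu$-preserving involution $(\underline{\sigma},\chi)\mapsto(\underline{\sigma}^\vee,\chi^{-1})$. The paper's route gets the adjointness formula without a separate Parseval statement. The analytic inputs are the same in both:
\begin{itemize}
\item the Fubini justification, via the rapid decay of $Z(f,W,\chi)$ together with Lemma \ref{lem:Z:cont0};
\item invariance of the Plancherel measure under contragredients;
\item Lemma \ref{lem:duals}.
\end{itemize}

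\textbf{Two minor points.}
\begin{itemize}
\item In your Parseval formula, the factor $|\det h_\ell|^{r-2}$ from Theorem \ref{thm:Plancherel} cancels against the $|\det h_\ell|^{2-r}$ produced by \eqref{Z:ids}. So the correct identity carries no power of $|\det h_\ell|$. The same constant would appear on both sides, so this does not affect your conclusion.
\item The commutation $\mathcal{F}(f\circ\epsilon)=\mathcal{F}(f)\circ\epsilon$ does not need Lemmas \ref{lem:FT} and \ref{lem:FT:scalar}. By Theorem \ref{thm:FT}, $f\circ\epsilon=\mathcal{F}^2(f)$, so both sides equal $\mathcal{F}^3(f)$.
\end{itemize}
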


\begin{proof}
Let us first verify the identity on $\mathcal{C}(X^\circ_{\ell}(F),\mathcal{H})_{\mathfrak{F}}$. By definition
\begin{align*}
        &\int_{N(F)\backslash\GL_r(F)\times \mathcal{P}_{r-2}(F)\backslash \GL_{r-2}(F)} \mathcal{F}(f_1)(g,g'^tm_{\ell})\overline{f_2}(g,g'^tm_{\ell})|\det g'|d\dot{g}d\dot{g}'\\
        &=\int_{N(F)\backslash\GL_r(F)\times \mathcal{P}_{r-2}(F)\backslash \GL_{r-2}(F)}\int_{\mathrm{Tp}_{r,r-2,1}}\gamma(I(\underline{\sigma})^\vee)\gamma(I(\sigma) \times \chi^{-1})\\& \quad\times 
    \sum_{\varphi \in \mathcal{B}_{\underline{\sigma}}}
    Z(f_1,\widetilde{W}_{\underline{\varphi}},\chi^{-1})\overline{Z(W_{\underline{\varphi}},\chi)}(g,g'^tm_{\ell})\overline{f_2}(g,g'^tm_{\ell})\frac{d\mu(I(\underline{\sigma})\otimes \chi)}{\zeta(1)|\det h_\ell|^{2-r}}|\det g'|d\dot{g}d\dot{g}'.
\end{align*}
We claim that this expression converges absolutely and hence we can switch the order of integration.  First, the integral over $\mathrm{Tp}_{r,r-2}$ has support in a finite set of components and the the sum over $\varphi$ is finite on each component as explained in the proof of Theorem \ref{thm:Plancherel}.  The $\gamma$-factors have polynomial growth as explained in \eqref{poly:growth}.  The integral over $N(F)\backslash\GL_r(F)\times \mathcal{P}_{r-2}(F)\backslash \GL_{r-2}(F)$ is absolutely convergent and is dominated by $(1+|\lambda|)^{d'}(1+\norm{\chi})^{d'}$ for some $d'>0$ by Theorem \ref{thm:Whitt:analy}, Lemma \ref{lem:Xi:bound}, and Lemma \ref{lem:Z:cont0}.  Finally, $Z(f_1,\widetilde{W}_{\underline{\varphi}},\chi^{-1}) \in \mathcal{C}(\mathrm{Tp}_{r,r-2,1})$ as explained below \eqref{section:good}.  This implies the desired absolute convergence.

Thus the integral above is equal to 
\begin{align*}
    &\int_{\mathrm{Tp}_{r,r-2,1}} \sum_{\varphi \in \mathcal{B}_{\underline{\sigma}}}\int_{N(F)\backslash\GL_r(F)\times \mathcal{P}_{r-2}(F)\backslash \GL_{r-2}(F)}Z(\widetilde{W}_{\underline{\varphi}},\chi^{-1})(g,g'^tm_{\ell})f_1(g,g'^tm_{\ell})|\det g'|d\dot{g}d\dot{g}'\\& \quad\quad\times \gamma(I(\underline{\sigma})^\vee)\gamma(I(\sigma) \times \chi^{-1})\overline{Z(f_2,W_{\underline{\varphi}},\chi)}\frac{d\mu(I(\underline{\sigma})\otimes\chi)}{\zeta(1)|\det h_\ell|^{2-r}}.
\end{align*}
By Lemma \ref{lem:gamma:factors} and a change of variables $(\underline{\sigma},W_{\underline{\varphi}},\chi) \mapsto (\underline{\sigma}^\vee,\widetilde{W}_{\underline{\varphi}},\chi^{-1})$ in    \eqref{eq:functionaltempered}, we have
\begin{align*}
     &\gamma( I(\underline{\sigma})^\vee )\gamma(
    I(\sigma) \times \chi^{-1})\overline{Z(f_2,W_{\underline{\varphi}},\chi)}\\
    &=\omega_{I(\sigma')}(-I_{r-2})\chi(-1)\overline{\gamma( I(\underline{\sigma}) )\gamma(
    I(\sigma)^\vee \times \chi)Z(f_2,W_{\underline{\varphi}},\chi)}\\
    &=\omega_{I(\sigma)}((-1)^{r-1}I_{r})\omega_{I(\sigma')}((-1)^{r}I_{r-2})\chi(-1)\overline{Z(\mathcal{F}(f_2),\widetilde{W}_{\underline{\varphi}},\chi^{-1})}.
\end{align*}
Thus the integral becomes 
\begin{align*}
    &\int_{\mathrm{Tp}_{r,r-2,1}} \sum_{\varphi \in \mathcal{B}_{\underline{\sigma}}}\int_{N(F)\backslash\GL_r(F)\times \mathcal{P}_{r-2}(F)\backslash \GL_{r-2}(F)}Z(\widetilde{W}_{\underline{\varphi}},\chi^{-1})(g,g'^tm_{\ell})f_1(g,g'^tm_{\ell})|\det g'|d\dot{g}d\dot{g}'\\& \quad\quad\times \omega_{I(\sigma)}((-1)^{r-1}I_{r})\omega_{I(\sigma')}((-1)^{r}I_{r-2})\chi(-1)\overline{Z(\mathcal{F}(f_2),\widetilde{W}_{\underline{\varphi}},\chi^{-1})}\frac{d\mu(I(\underline{\sigma})\otimes\chi)}{\zeta(1)|\det h_\ell|^{2-r}}.
\end{align*}    
By another change of order of integration, which is justified by an argument symmetric to that used before, this is
\begin{align*}
     &\int_{}\int_{\mathrm{Tp}_{r,r-2,1}} \sum_{\varphi \in \mathcal{B}_{\underline{\sigma}}}Z(\widetilde{W}_{\underline{\varphi}},\chi^{-1})\left(\begin{psmatrix} -I_{r-2}& \\ & h_\ell^{-1} \begin{psmatrix} (-1)^{r-1} &\\ & (-1)^{r}\end{psmatrix} h_\ell  \end{psmatrix}g,(-1)^{r}g'^tm_{\ell}\right)\overline{Z(\mathcal{F}(f_2),\widetilde{W}_{\underline{\varphi}},\chi^{-1})}\\
     &\times \frac{d\mu(I(\underline{\sigma})\otimes\chi)}{\zeta(1)|\det h_\ell|^{2-r}}f_1(g,g'^tm_{\ell})|\det g'|d\dot{g}d\dot{g}',
\end{align*}
where the outer integral is over $N(F)\backslash\GL_r(F)\times \mathcal{P}_{r-2}(F)\backslash \GL_{r-2}(F)$. By changing variables, $(\underline{\sigma},\chi)\mapsto (\underline{\sigma}^\lor,\chi^{-1})$ and using \Cref{lem:duals}, one deduces the claimed identity  by Theorem \ref{thm:PW:Z}.

 Now for $f\in \mathcal{C}(X^\circ_{\ell}(F),\mathcal{H})_{\mathfrak{F}},$ using Theorem \ref{thm:FT}, Lemma \ref{lem:FT:scalar} and Lemma \ref{lem:FT}, we have 
\begin{align*}
    \norm{f}_2^2&=\int_{X^\circ_{\ell}(F)} f(g,g'^tm_{\ell})\overline{f}(g,g'^tm_{\ell}) |\det g'|d\dot{g}d\dot{g'}\\
    &=\int_{X^\circ_{\ell}(F)} \mathcal{F}^2(f)\left(\begin{psmatrix} -I_{r-2}& \\ & h_\ell^{-1} \begin{psmatrix} (-1)^{r-1} &\\ & (-1)^{r}\end{psmatrix} h_\ell  \end{psmatrix}g,(-1)^{r}g'^tm_{\ell}\right)\overline{f}(g,g'^tm_{\ell}) |\det g'|d\dot{g}d\dot{g'}\\
    &=\int_{X^\circ_{\ell}(F)} \mathcal{F}(f)\left(\begin{psmatrix} -I_{r-2}& \\ & h_\ell^{-1} \begin{psmatrix} (-1)^{r-1} &\\ & (-1)^{r}\end{psmatrix} h_\ell  \end{psmatrix}g,(-1)^{r}g'^tm_{\ell}\right)\\
    &\quad\quad\quad\times\overline{\mathcal{F}(f)}\left(\begin{psmatrix} -I_{r-2}& \\ & h_\ell^{-1} \begin{psmatrix} (-1)^{r-1} &\\ & (-1)^{r}\end{psmatrix} h_\ell  \end{psmatrix}g,(-1)^{r} g'^tm_{\ell}\right) |\det g'|d\dot{g}d\dot{g'}\\
    &=\norm{\mathcal{F}(f)}_2^2.
\end{align*}
Since $\bigcup_{\mathfrak{F}}\mathcal{C}(X_\ell^\circ(F),\mathcal{H})_{\mathfrak{F}}$ is dense in $L^2(X_\ell^\circ(F),\mathcal{H}),$ this proves $\mathcal{F}$ extends to an isometry.  We deduce moreover that the identity in the theorem holds for general $f_1,f_2 \in L^2(X_\ell^\circ(F),\mathcal{H}).$
\end{proof}

\section{The asymptotic Schwartz space} \label{sec:As}
We define an asymptotic Schwartz space \index{$\mathcal{S}^{\mathrm{as}}(X_\ell(F),\mathcal{H})$} $\mathcal{S}^{\mathrm{as}}(X_\ell(F),\mathcal{H}):=\mathcal{S}_{h_\ell}^{\mathrm{as}}(X_{\ell}(F),\mathcal{H})$ following \cite{DRS:Schwartz}. We consider the following data: 
\begin{enumerate}
\item \label{ss} A standard Levi subgroup $M=M_1\times M_2\leq \GL_{r,r-2},$
\item \label{underline:sig} $\underline{\sigma}=\sigma \otimes \sigma' \in \mathrm{Irr}_2(M),$
\item \label{varphi} A $K_\infty$-finite vector $\varphi \in I(\underline{\sigma}).$
\end{enumerate}
We let $\mathcal{O}(\underline{\sigma})$ be the inertial orbit of $I(\underline{\sigma}).$ For $s \in \CC$ and quasi-characters $\chi:F^\times \to \CC^\times,$ let $\chi_s:=\chi|\cdot|^s.$
We let 
\begin{align*}
\mathcal{O}(\underline{\sigma})_{\CC}:=\{I(\underline{\sigma_{\lambda}}):\underline{\lambda}=(\lambda,\lambda') \in \mathfrak{a}_{M\CC}^*\}\quad\textrm{and}\quad
\mathrm{Tp}_{1\CC}:=\{\chi_s:(\chi,s) \in \mathrm{Tp}_{1} \times \CC\}.
\end{align*}
The spaces $\OO(\underline{\sigma})_{\CC}$ and $\mathrm{Tp}_{1\CC}$ admit a canonical structure of a complex analytic space.  In the non-Archimedean case, they are in fact $\CC$-points of algebraic tori.  Thus it makes sense to speak of holomorphic and meromorphic functions on $\OO(\underline{\sigma})_{\CC}$ and $\mathrm{Tp}_{1\CC},$ and polynomial and rational functions in the non-Archimedean case.

 Suppose $F$ is non-Archimedean. We define $\mathcal{S}^{\mathrm{as}}(X_{\ell}(F),\mathcal{H}) <\mathcal{C}(X^\circ_\ell(F),\mathcal{H})$ to be the subspace consisting of $f$  such that 
 for all data \eqref{ss}, \eqref{underline:sig}, and \eqref{varphi}, 
 the function on $\mathcal{O}(\underline{\sigma})_{\CC} \times \mathrm{Tp}_{1\CC}$ given by 
\begin{align*}
(I(\underline{\sigma_\lambda}) , \chi) \longmapsto \frac{Z(f,W_{\underline{\varphi_{\lambda}}},\chi)}{L(\tfrac{1}{2},I(\sigma_\lambda) \times I(\sigma'_{\lambda'}))L(\tfrac{1}{2},I(\sigma_\lambda)^{\vee} \times \chi)}
\end{align*}
is a polynomial. 
If $\mathfrak{F}$ is a finite set of $K_{r,r-2}$-types, we write
$\mathcal{S}^{\mathrm{as}}(X_\ell(F),\mathcal{H})_{\mathfrak{F}}$ for the subspace of functions with $K_{r,r-2}$-type in $\mathfrak{F}.$

Now assume $F$ is Archimedean. For subsets $\Omega \subset \mathfrak{a}_M^* \times \RR$ let $$
V_{\Omega}:=\left\{(\lambda,s) \in \mathfrak{a}_{M \CC}^* \times \CC: (\mathrm{Re}(\lambda),\mathrm{Re}(s)) \in \Omega\right\}
$$
be the cylinder over $\Omega.$
We let $\mathcal{S}^{\mathrm{as}}(X_{\ell}(F),\mathcal{H})_{\mathfrak{F}}$ consist of functions $f\in \mathcal{C}(X^{\circ}_{\ell}(F),\mathcal{H})_{\mathfrak{F}}$ such that for all data \eqref{ss}, \eqref{underline:sig}, and \eqref{varphi}, any $D\in U(\mathfrak{gl}_{r,r-2}  \oplus \mathrm{Lie}\,(\GL_2)_\ell),$  all compact subsets $\mathcal{K} \subset \mathfrak{a}_M^* \times \RR,$ and all polynomials $p$ on $\mathfrak{a}_{M\CC}^* \times \CC$ such that
\begin{align*}p(\underline{\lambda},s)L(\tfrac{1}{2},I(\sigma_\lambda) \times I(\sigma'_{\lambda'}))L(\tfrac{1}{2},I(\sigma_\lambda)^{\vee} \times \chi_s)
    \end{align*}
    is holomorphic on $V_{\mathcal{K}},$
the seminorm
    \begin{align*}
       \nu_{D,p,\mathcal{K}}(f):= \sum_{\chi\in \widehat{K}_{\GG_m}}\mathrm{sup}_{(\underline{\lambda},s) \in V_{\mathcal{K}}}\left|p(\underline{\lambda},s)Z_{h_\ell}(Df,W_{\underline{\varphi_{\lambda}}},\chi_s)\right|
\end{align*}
is finite.  
We endow $\mathcal{S}^{\mathrm{as}}(X_{\ell}(F),\mathcal{H})_{\mathfrak{F}}$ with a topology using the seminorms $\nu_{D,p,\mathcal{K}}.$  This gives $\mathcal{S}^{\mathrm{as}}(X_\ell(F),\mathcal{H})_{\mathfrak{F}}$ the structure of a Fr\'echet space.  We let $\mathcal{S}^{\mathrm{as}}(X_\ell(F),\mathcal{H}):=\bigcup_{\mathfrak{F}}\mathcal{S}^{\mathrm{as}}(X_\ell(F),\mathcal{H})_{\mathfrak{F}}.$  It is naturally an LF space. 

A priori, the spaces $\mathcal{S}^{\mathrm{as}}(X_\ell(F),\mathcal{H})$ depend on the choice of $h_\ell.$  We write $\mathcal{S}_{h_\ell}^{\mathrm{as}}(X_\ell(F),\mathcal{H})$ for the space when we need to indicate the dependence.

For each $h \in \GL_2(F)$ we have a map
\begin{align} \label{Rh}
\begin{split}
\mathcal{R}(h):\mathcal{C}(X_{\ell h^{\iota}}^\circ(F),\mathcal{H}) &\lto \mathcal{C}(X_{\ell}^\circ(F),\mathcal{H})\\
f &\longmapsto \left((g,g') \mapsto f\left( \begin{psmatrix} I_{r-2} & \\ & h\end{psmatrix}^{-1}g,g'^tm_{\ell}h^{\iota} \right)\right).
\end{split}
\end{align}
One checks the following lemma:
\begin{lem} \label{lem:Rh}
The operator $\mathcal{R}(h)$ restricts to an isomorphism $$\mathcal{S}_{h_\ell h}^{\mathrm{as}}(X_{\ell h^{\iota}}(F),\mathcal{H}) \lto\mathcal{S}_{h_\ell }^{\mathrm{as}}(X_{\ell}(F),\mathcal{H}).$$ It is a homeomorphism when $F$ is Archimedean. \qed
\end{lem}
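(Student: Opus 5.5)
The plan is to reduce everything to a transformation law for the zeta integrals $Z(f,W,\chi)$ under $\mathcal{R}(h)$, as in the proof of Lemma \ref{lem:FT}. In the computation there, the $Z_{h_\ell}(W,\chi)$ with $g=I_r$, $g'=I_{r-2}$ shows that
\begin{align*}
Z_{h_\ell}(W,\chi)\left(\begin{psmatrix} I_{r-2} & \\ & h\end{psmatrix}g_1,g_1'^tm_\ell\right)=Z_{h_\ell h}(W,\chi)\left(g_1,g_1'^tm_\ell h^{\iota}\right).
\end{align*}
Combining this with \eqref{Z:ids} and the change of variables $g_1\mapsto \begin{psmatrix} I_{r-2} & \\ & h\end{psmatrix}g_1$ on $N(F)\backslash \GL_r(F)$, I expect an identity of the form
\begin{align*}
Z_{h_\ell}(\mathcal{R}(h)f,W,\chi)=c(h)\,Z_{h_\ell h}(f,W,\chi)
\end{align*}
for $f\in \mathcal{C}(X^\circ_{\ell h^\iota}(F),\mathcal{H})$. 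Here $c(h)$ is an explicit nonzero constant, a power of $|\det h|$, independent of $W$ and $\chi$. The factor $|\det h_\ell|^{r-2}$ versus $|\det h_\ell h|^{r-2}$ and the modulus from the change of variables both feed into $c(h)$.

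Before this identity is usable, several side conditions need checking. First, the change of variables must not disturb the measure $|\det g'|d\dot g'$ on $\mathcal{P}_{r-2}(F)\backslash\GL_{r-2}(F)$. Second, $\mathcal{R}(h)$ must map $\mathcal{C}(X^\circ_{\ell h^\iota}(F),\mathcal{H})$ continuously onto $\mathcal{C}(X^\circ_\ell(F),\mathcal{H})$. This should follow from the remark after \eqref{Ru} that $\mathcal{R}_u(g,g',h)$ does so. Third, $\mathcal{R}(h)$ preserves $K_{r,r-2}$-types, because it commutes with the right $\GL_{r,r-2}(F)$-action. Finally, the identity holds for all $(\underline{\lambda},\chi)$ in the complex region, since both sides are defined there by the same meromorphic continuation or convergent integral.

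With the identity in hand, the definition of $\mathcal{S}^{\mathrm{as}}$ transfers directly.
\begin{itemize}
\item In the non-Archimedean case, the ratio of $Z_{h_\ell}(\mathcal{R}(h)f,W_{\underline{\varphi_\lambda}},\chi)$ to the $L$-factors equals $c(h)$ times the corresponding ratio for $Z_{h_\ell h}(f,\cdot,\cdot)$. Hence one ratio is polynomial if and only if the other is.
\item In the Archimedean case, $D\in U(\mathfrak{gl}_{r,r-2}\oplus \mathrm{Lie}(\GL_2)_\ell)$ satisfies $D\mathcal{R}(h)f=\mathcal{R}(h)D'f$, where $D'$ is the conjugate of $D$ under $h$, an element of $U(\mathfrak{gl}_{r,r-2}\oplus\mathrm{Lie}(\GL_2)_{\ell h^\iota})$. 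The reason is that $(\GL_2)_{\ell h^\iota}=h^{-1}(\GL_2)_\ell h$ up to the $\iota$-twist. It follows that $\nu_{D,p,\mathcal{K}}(\mathcal{R}(h)f)=|c(h)|\,\nu_{D',p,\mathcal{K}}(f)$, which gives continuity.
\end{itemize}
The inverse map is $\mathcal{R}(h^{-1})$, which carries the same estimates in the other direction. So $\mathcal{R}(h)$ is a homeomorphism.

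The main obstacle is the compatibility between $h$ and the stabilizers $(\GL_2)_\ell$. Concretely, I need to verify that conjugation by $h$ maps $\mathrm{Lie}(\GL_2)_{\ell h^\iota}$ onto $\mathrm{Lie}(\GL_2)_\ell$, using $m_{\ell h^\iota}$ computed with $h_\ell h$. I also need to confirm that the factor $c(h)$ does not depend on $\chi$. In the $c$-variable, $h$ acts only through $h_\ell\mapsto h_\ell h$ and not through the torus $\begin{psmatrix}1&\\&c^{-1}\end{psmatrix}$, so no $\chi$-twist should appear, unlike in Lemma \ref{lem:FT:scalar}. I would confirm this by redoing that change of variables carefully.
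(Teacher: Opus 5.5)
Your proposal is correct and is exactly the verification the paper leaves to the reader (the lemma is stated with ``one checks''). The computation in the proof of Lemma~\ref{lem:FT} with $(g,g')=(I_r,I_{r-2})$ gives $Z_{h_\ell}(\mathcal{R}(h)f,W,\chi)=c(h)Z_{h_\ell h}(f,W,\chi)$. Together with $\mathcal{R}(h')\mathcal{R}(h)=\mathcal{R}(h)\mathcal{R}(h^{-1}h'h)$, this transfers both the polynomiality condition and the seminorms $\nu_{D,p,\mathcal{K}}$. In fact $c(h)=1$: the modulus $|\det h|^{r-2}$ produced by $g\mapsto \begin{psmatrix} I_{r-2} & \\ & h\end{psmatrix}g$ on $N(F)\backslash\GL_r(F)$ cancels the ratio $|\det h_\ell|^{r-2}/|\det h_\ell h|^{r-2}$ coming from \eqref{Z:ids}.
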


\begin{thm} \label{thm:As:FT}
The Fourier transform $\mathcal{F}_{h_\ell}$ induces an automorphism of $\mathcal{S}^{\mathrm{as}}(X_{\ell}(F),\mathcal{H})_{\mathfrak{F}}.$ It is continuous in the Archimedean case.
\end{thm}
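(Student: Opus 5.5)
The approach is to reduce everything to the functional equation of Corollary \ref{cor:temp:func:eqn}, continued analytically off the tempered locus, and then to compare $L$-factors. Fix $f \in \mathcal{S}^{\mathrm{as}}(X_\ell(F),\mathcal{H})_{\mathfrak{F}}$. By Theorem \ref{thm:FT} we already know $\mathcal{F}(f) \in \mathcal{C}(X^\circ_\ell(F),\mathcal{H})_{\mathfrak{F}}$. It remains to check the defining condition of $\mathcal{S}^{\mathrm{as}}$ for $\mathcal{F}(f)$ for every datum $(M,\underline{\sigma},\underline{\varphi})$.

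By Corollary \ref{cor:temp:func:eqn}, for $\underline{\lambda}\in i\mathfrak{a}_M^*$ and unitary $\chi$ we have
\begin{align*}
Z(\mathcal{F}(f),W_{\underline{\varphi_\lambda}},\chi)=\gamma(\tfrac12,I(\sigma_\lambda)^\vee\times I(\sigma'_{\lambda'})^\vee,\psi)\,\gamma(\tfrac12,I(\sigma_\lambda)\otimes\chi^{-1},\psi)\,Z(f,\widetilde{W}_{\underline{\varphi_\lambda}},\chi^{-1}).
\end{align*}
By Lemma \ref{lem:duals}, $\widetilde{W}_{\underline{\varphi_\lambda}}=W_{\underline{\widetilde{\varphi}}_{\lambda}}$ is again a Whittaker function of the same type, attached to the dual datum $(\underline{\sigma}^\vee,\underline{\widetilde\varphi})$ with parameter $-\underline{\lambda}$ (up to the Weyl conjugation $w_P$). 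Hence the hypothesis $f \in \mathcal{S}^{\mathrm{as}}$, applied to this dual datum, gives
\begin{align*}
Z(f,\widetilde W_{\underline{\varphi_\lambda}},\chi^{-1})=P(\underline\lambda,\chi)\,L(\tfrac12,I(\sigma_\lambda)^\vee\times I(\sigma'_{\lambda'})^\vee)\,L(\tfrac12,I(\sigma_\lambda)\times\chi^{-1}),
\end{align*}
with $P$ a polynomial in the non-Archimedean case and holomorphic with the required bounds in the Archimedean case. The right-hand side is meromorphic on $\mathcal{O}(\underline{\sigma})_\CC\times\mathrm{Tp}_{1\CC}$, so the identity extends to all parameters by uniqueness of continuation. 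Here one needs $Z(f,W_{\underline{\varphi_\lambda}},\chi)$ to be holomorphic in a region; for $f$ in $\mathcal{S}^{\mathrm{as}}$ this holds by definition, namely as the meromorphic function given by the polynomial times the $L$-factors.

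Next write $\gamma=\epsilon\cdot L(\tfrac12,\pi^\vee)/L(\tfrac12,\pi)$. Then $\gamma(I^\vee\times I'^\vee)\,L(I^\vee\times I'^\vee)=\epsilon\,L(I\times I')$, and likewise $\gamma(I\otimes\chi^{-1})\,L(I\otimes\chi^{-1})=\epsilon\,L(I^\vee\otimes\chi)$. Consequently
\begin{align*}
\frac{Z(\mathcal{F}(f),W_{\underline{\varphi_\lambda}},\chi)}{L(\tfrac12,I(\sigma_\lambda)\times I(\sigma'_{\lambda'}))\,L(\tfrac12,I(\sigma_\lambda)^\vee\times\chi)}=\epsilon\epsilon'\,P(\underline\lambda,\chi).
\end{align*}
In the non-Archimedean case the $\epsilon$-factors are monomials, so this quotient is a polynomial, which is exactly the membership condition. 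Since $\mathcal{F}^2$ is given by a translation preserving $\mathcal{S}^{\mathrm{as}}$ (Theorem \ref{thm:FT}), $\mathcal{F}$ is then bijective on $\mathcal{S}^{\mathrm{as}}(X_\ell(F),\mathcal{H})_{\mathfrak{F}}$.

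In the Archimedean case the $\epsilon$-factors are constants (or of polynomial growth as in \eqref{poly:growth}), so we still get the holomorphy condition on $V_{\mathcal{K}}$ for the same polynomials $p$. To handle $D\in U(\mathfrak{gl}_{r,r-2}\oplus\mathrm{Lie}(\GL_2)_\ell)$, we use Lemma \ref{lem:FT} and Lemma \ref{lem:FT:scalar}: $\mathcal{F}$ intertwines $\mathcal{R}(D)$ with $\mathcal{R}(D')$ for a transformed $D'$, after possibly enlarging $\mathfrak{F}$. The estimate $\nu_{D,p,\mathcal{K}}(\mathcal{F}f)\ll \nu_{D',p',\mathcal{K}'}(f)$ then follows. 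Here $\mathcal{K}'=-\mathcal{K}$, and $p'$ absorbs the polynomial growth of the ratios $L(\tfrac12,\pi^\vee)\epsilon/L(\tfrac12,\pi)$ on vertical strips away from poles, by Stirling, as in \cite[Lemma 4.2]{DRS:Schwartz}. This seminorm estimate gives continuity.

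The main obstacle is the Archimedean bookkeeping. One must show that $p'$ can be chosen so that $p'L(\text{dual})$ is holomorphic on $V_{-\mathcal{K}}$ exactly when $pL$ is holomorphic on $V_{\mathcal{K}}$, with uniform polynomial control of the Gamma ratios. Matching the Weyl/dual parametrization of $\widetilde{W}$ with the data is comparatively routine.
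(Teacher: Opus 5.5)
Your proposal is essentially the paper's proof, with one incorrect step in the Archimedean part.

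\textbf{Where you agree with the paper.} The paper's argument consists of the following ingredients, and you have all of them:
\begin{itemize}
\item Corollary~\ref{cor:temp:func:eqn}.
\item The identity $\gamma(\tfrac12,\pi,\psi)L(\tfrac12,\pi)=\varepsilon(\tfrac12,\pi,\psi)L(\tfrac12,\pi^\vee)$, which turns the quotient for $(\mathcal{F}(f),W,\chi)$ into $\varepsilon\varepsilon'$ times the quotient for $(f,\widetilde W,\chi^{-1})$.
\item Monomiality of $\varepsilon$-factors in the non-Archimedean case.
\item A citation of \cite[Proposition 3.23]{DRS:Schwartz} for the Archimedean $\gamma$-factor bookkeeping.
\end{itemize}
Your use of $\mathcal{F}^2$ to get bijectivity is a reasonable way to make ``automorphism'' explicit. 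You do need to add that this translation multiplies every zeta integral by a sign constant on connected components, so it preserves $\mathcal{S}^{\mathrm{as}}$.

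\textbf{The step that fails.} You claim $\mathcal{F}$ intertwines every $\mathcal{R}(D)$ with some $\mathcal{R}(D')$. This is fine for $\mathfrak{gl}_{r,r-2}$, by differentiating Lemma~\ref{lem:FT} with $h=I_2$. It is also fine for the torus $h_\ell^{-1}\begin{psmatrix}1&\\&b\end{psmatrix}h_\ell$, by Lemma~\ref{lem:FT:scalar} with $a=1$. But $(\GL_2)_\ell=h_\ell^{-1}\{\begin{psmatrix}1&*\\&*\end{psmatrix}\}h_\ell$ also contains $u_x=h_\ell^{-1}\begin{psmatrix}1&x\\&1\end{psmatrix}h_\ell$. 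For this element Lemma~\ref{lem:FT} only gives $\mathcal{F}_{h_\ell}\circ\mathcal{R}(u_x)=\mathcal{R}(u_x)\circ\mathcal{F}_{h_\ell u_x}$, and $h_\ell u_x\neq h_\ell$.

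No intertwining of the kind you want exists here:
\begin{itemize}
\item By \eqref{ZWchi}, $\mathcal{R}(u_x)$ inserts $\psi(xc)$ into the $c$-integral.
\item So its derivative replaces $\chi_s$ by $\chi_s\eta$ with $\eta(c)=c$ (or $\overline{c}$), which is a raising shift.
\item After Corollary~\ref{cor:temp:func:eqn}, this becomes the lowering shift $\chi_s^{-1}\mapsto(\chi_s\eta)^{-1}$ on the $f$-side.
\item No $\mathcal{R}(D')$ with $D'\in U(\mathfrak{gl}_{r,r-2}\oplus\mathrm{Lie}\,(\GL_2)_\ell)$ produces a lowering shift: $\mathfrak{gl}_{r,r-2}$ moves onto $W$, the torus multiplies by polynomials in $s$, and the nilpotent direction raises.
\end{itemize}
The paper's one-line Archimedean argument does not address $D$ at all.

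\textbf{The repair.} Treat $D$ spectrally rather than geometrically.
\begin{itemize}
\item Write $Z(\mathcal{R}(D)\mathcal{F}(f),W_{\underline{\varphi_\lambda}},\chi_s)$ as a finite combination, with coefficients polynomial in $(\underline\lambda,s)$, of values $Z(\mathcal{F}(f),W_{\underline{\varphi'_\lambda}},\chi_s\eta)$. Here $\eta$ runs over characters of the form $c\mapsto c^k\overline{c}^{k'}$ with $k,k'\ge 0$.
\item Apply the functional equation to each term.
\item Bound the result by $D=1$ seminorms of $f$ on a reflected and shifted cylinder.
\end{itemize}
For the last step, use that the poles of $L(\tfrac12,I(\sigma_\lambda)^\vee\otimes\chi_s\eta)$ lie among those of $L(\tfrac12,I(\sigma_\lambda)^\vee\otimes\chi_s)$, which can be checked on $\Gamma_{\RR},\Gamma_{\CC}$-factors. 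Combine this with the DRS estimate for the $\gamma$-ratios. In particular, your $\mathcal{K}'$ must be $-\mathcal{K}$ shifted, not $-\mathcal{K}$.
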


\begin{proof}
By Corollary \ref{cor:temp:func:eqn}, we have 
\begin{align*}
    \frac{Z(\mathcal{F}(f),W,\chi)}{L(\tfrac{1}{2},\pi \times \pi')L(\tfrac{1}{2},\pi^{\vee} \times \chi)}&=\frac{\gamma\left(\tfrac{1}{2},\pi^\lor\times\pi'^\lor,\psi\right)\gamma\left(\tfrac{1}{2},\pi\times\chi^{-1},\psi\right)Z(f,\widetilde{W},\chi^{-1})}{L(\tfrac{1}{2},\pi \times \pi')L(\tfrac{1}{2},\pi^{\vee} \times \chi)}\\
    &=\varepsilon(\tfrac{1}{2},\pi^\vee \times \pi'^\vee,\psi)\varepsilon(\tfrac{1}{2},\pi \times \chi,\psi)\frac{ Z(f,\widetilde{W},\chi^{-1})}{L(\tfrac{1}{2},\pi^\lor \times \pi'^\lor)L(\tfrac{1}{2},\pi \times \chi^{-1})}.
\end{align*}
Using well-known properties of $\varepsilon$-factors \cite[Proposition 3.5]{CogdellPCMI}, this implies directly that $\mathcal{F}$ is an automorphism in the non-Archimedean case.  In the Archimedian case, one can apply the estimate in \cite[Proposition 3.23]{DRS:Schwartz} to deduce that $\mathcal{F}$ is a continuous automorphism with continuous inverse.
\end{proof}

\subsection{The local functional equation}

We begin by clarifying that the various possible definitions of the local zeta integrals all agree when they are defined.  
Let $\underline{\sigma} \in \mathrm{Irr}_2(M)$ and let $\underline{\varphi} \in \mathcal{B}_{\underline{\sigma}}.$
Assume $f \in \mathcal{S}^{\mathrm{as}}(X_\ell(F),\mathcal{H})$ is $\Xi$-rapidly decreasing. 

For $\lambda \in i\mathfrak{a}_M^*,$ the integral $Z(f,(W_{\underline{\varphi_{\lambda}}})_{s'},\chi_s)$ converges absolutely for  $\mathrm{Re}(s),\mathrm{Re}(s') \geq 0$ by Theorem \ref{thm:Whitt:analy} and Lemma \ref{lem:ZfW}.  
By the definition of the asymptotic Schwartz space, $Z(f,(W_{\underline{\varphi_{\lambda}}})_{s'},\chi_s)$ admits a meromorphic continuation to all $(\underline{\lambda},s',s) \in \mathfrak{a}_{M\CC}^* \times \CC \times \CC.$  This provides one definition of the local zeta integral.

On the other hand, by Lemma \ref{lem:ZfW2} for any compact set $\Omega \subset \mathfrak{a}_{M\CC}^*,$ there is a $N_1$ such that the integral defining 
 $Z(f,(W_{\underline{\varphi_{\lambda}}})_{s'},\chi_s)$ converges for $\lambda \in \Omega$ and for  $\mathrm{Re}(s') \geq N_1,$ $\mathrm{Re}(s) \geq N_1.$  This provides another definition of the $Z(f,(W_{\underline{\varphi_{\lambda}}})_{s'},\chi_s).$  Since we may take $\Omega$ to intersect $i\mathfrak{a}_M^*,$ the identity principle implies that these two possible definitions agree when both are defined.

\begin{prop}\label{zetafuncequa}
Assume that $f \in \mathcal{S}^{\mathrm{as}}(X_\ell^{\circ}(F),\mathcal{H})$ and $\mathcal{F}_{h_\ell}(f)$ are $\Xi$-rapidly decreasing.  Then for all generic unitary representations $\pi \otimes \pi'$ of $\GL_{r,r-2}(F),$ $\chi \in \mathrm{Tp}_1,$ and $W \in \mathcal{W}(\pi,\psi) \widehat{\otimes} \mathcal{W}(\pi',\overline{\psi}),$ one has that
\begin{align} \label{fe}
    &Z(\mathcal{F}_{h_\ell}(f),W,\chi_s)=
    \gamma(\tfrac{1}{2},\pi^\vee \times \pi'^\vee,\psi)\gamma(\tfrac{1}{2},
    \pi \otimes (\chi_s)^{-1},\psi)Z(f,\widetilde{W},\chi^{-1}_s)
\end{align}
as meromorphic functions in $s.$ 
\end{prop}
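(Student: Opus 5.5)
The plan is to deduce \eqref{fe} from the tempered functional equation (Corollary \ref{cor:temp:func:eqn}) by analytic continuation. The variables that can be continued are the twist $s$ and the Levi parameter $\underline{\lambda}$ of a standard module. The passage from Whittaker functions of induced representations to an arbitrary $W$ of a generic unitary $\pi\otimes\pi'$ will go through Langlands classification and the density of the families $W_{\underline{\varphi_\lambda}}$.

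First, write $\pi\otimes\pi'$ as a subquotient, in fact a quotient, of a standard module $I(\underline{\sigma_{\lambda_0}})$. Here $\underline{\sigma}\in\mathrm{Irr}_2(M)$ and $\underline{\lambda_0}\in\mathfrak{a}_{M\CC}^*$ has small real part, since generic unitary representations of $\GL_n$ are fully induced from twists of essentially tempered ones with exponents in $(-\tfrac12,\tfrac12)$. The Whittaker model of $\pi\otimes\pi'$ then equals $\mathcal{W}(I(\underline{\sigma_{\lambda_0}}),\psi)$, and every $K$-finite $W$ has the form $W_{\underline{\varphi_{\lambda_0}}}$ for some $\underline{\varphi}$ (using Theorem \ref{thm:Whitt:analyfam}). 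By the continuity in $W$ (Lemma \ref{lem:ZfW} and its analogue for $\mathcal{F}(f)$), it suffices to treat $K$-finite $W$, and then $W=W_{\underline{\varphi_{\lambda_0}}}$.

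Second, consider the two sides of \eqref{fe} with $W$ replaced by $W_{\underline{\varphi_\lambda}}$ and $\chi$ by $\chi_s$, as functions of $(\underline{\lambda},s)$. The relation $\widetilde{W}_{\underline{\varphi_\lambda}}=W_{\widetilde{\underline{\varphi}}_{\lambda}}$ from Lemma \ref{lem:duals} keeps the right side inside the same type of family for the contragredient data.

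For $\underline{\lambda}\in i\mathfrak{a}_M^*$ and $\mathrm{Re}(s)=0$, the identity is Corollary \ref{cor:temp:func:eqn}, since $f\in\mathcal{C}(X^\circ_\ell(F),\mathcal{H})$. Both sides extend meromorphically. On one hand, the discussion preceding the proposition identifies the meromorphic continuation coming from $\mathcal{S}^{\mathrm{as}}$ (applied to $f$ and, by Theorem \ref{thm:As:FT}, to $\mathcal{F}(f)$) with the convergent integrals for large real parts (Lemma \ref{lem:ZfW2}, applied to $f$ and to $\mathcal{F}(f)$ using the $\Xi$-rapid decay hypothesis on both). On the other hand, the $\gamma$-factors are meromorphic in $(\underline{\lambda},s)$.

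By the identity principle on the connected complex manifold $\mathfrak{a}_{M\CC}^*\times\CC$, equality on the totally real set $i\mathfrak{a}_M^*\times i\RR$ propagates everywhere. Specializing to $\underline{\lambda}=\underline{\lambda_0}$ then gives \eqref{fe} as meromorphic functions of $s$. The specialization requires that the continued zeta integrals at $\underline{\lambda_0}$ coincide with the integrals defining $Z(\mathcal{F}(f),W,\chi_s)$. For this I would use convergence at $\underline{\lambda_0}$ for suitable $s$, which follows from Lemma \ref{lem:ZfW2} with $\Omega\ni\underline{\lambda_0}$ together with $\mathrm{Re}(s)$ large, applying the identity principle in $s$ alone.

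The main obstacle is this last justification. The meromorphic continuation in $\underline{\lambda}$ may acquire poles along hyperplanes through $\underline{\lambda_0}$ where the $L$-factors are singular, so one cannot simply specialize. I would handle this by first fixing $s$ with large real part. There, Lemma \ref{lem:ZfW2} gives holomorphy in $\underline{\lambda}$ near $\underline{\lambda_0}$, but only after the twist $s'$ is large. I would therefore also carry the auxiliary variable $s'$ (the $|\det g'|^{s'}$ twist) through the argument, prove the three-variable identity, and set $s'=0$ at the end by meromorphy in $s'$. The $\gamma$-factor for $\pi\times\pi'$ shifts accordingly.
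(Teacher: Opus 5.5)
Your proposal is the paper's argument: write $\pi\otimes\pi'\cong I(\underline{\sigma_{\lambda_0}})$ with $\underline{\sigma}$ square-integrable, obtain \eqref{fe} on the tempered locus from Corollary \ref{cor:temp:func:eqn} (i.e.\ from the definition of $\mathcal{F}$), and extend by the identity principle in $(\underline{\lambda},s)$; your auxiliary twist $s'$ only re-derives the paper's remark before the proposition that the $\mathcal{S}^{\mathrm{as}}$-continuation agrees with the convergent integrals of Lemma \ref{lem:ZfW2}. Two small caveats: setting $s'=0$ at the end cannot avoid a polar hyperplane of $L(\tfrac12,\pi\times\pi')$ through $\underline{\lambda_0}$, but there \eqref{fe} itself need not make sense (for the paper as well); and Lemma \ref{lem:ZfW} concerns only tempered-growth $W\in\mathcal{C}_d$, so it does not justify your reduction to $K$-finite $W$ when $\pi\otimes\pi'$ is nontempered.
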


\begin{proof}
It is well-known that $\pi \cong I(\sigma_\lambda)$  and $\pi' \cong I(\sigma'_{\lambda'})$ for some $(\sigma,\sigma') \in \mathrm{Irr}_2(M_1) \times \mathrm{Irr}_2(M_2),$  $(\lambda,\lambda') \in \mathfrak{a}_{M_1\CC}^* \times \mathfrak{a}_{M_2\CC}^*.$ See \cite[Theorem 8.4.4]{Getz:Hahn} for the non-Archimedean setting and \cite[Lemma 2.5]{JacquetPerfectRS} for the Archimedean setting.  
The functional equation \eqref{fe}
is true by the definition of the Fourier transform for $(\lambda,\lambda',s) \in i\mathfrak{a}_M^* \times i\mathfrak{a}_{M'}^* \times i\RR.$ 
We deduce it in general by the identity principle.
\end{proof}

\quash{
\section{Dependence on $h_\ell$} \label{sec:dep}

As far as we know, the Fourier transform $\mathcal{F}_{h_\ell}$ depends on the choice of $h_\ell \in \GL_2(F)$ such that 
$m_\ell=(1,0)h_\ell^{\iota}.$  However, we can rigidify the situation if we restrict attention to a Zariski-open subset of $\GG_a^2.$  More precisely, let 
\begin{align}
(\GG_a^2)^\circ \subset \GG_a^2
\end{align}
be the complement of the vanishing locus of $x^2+\beta y^2=0$ and let $\mathbb{P}((\GG_a^2)^\circ)$ be the corresponding projective space.
For the remainder of this section we assume $h_\ell \in \mathrm{GO}_2(F)$ is chosen so that $\ell$ is the line spanned by 
\begin{align} \label{constraint}
(1,0)h_\ell^{\iota}.
\end{align}
We point out that if $x^2+\beta y^2$ is isotropic, then for any $\ell$ there exists an $h_\ell$ as above.

\begin{lem}\label{lem:indep} The Fourier transform $\mathcal{F}_{h_\ell}$ is independent of the choice of $h_\ell \in \mathrm{O}_{2}(F)$ satisfying \eqref{constraint}.
\end{lem}

\begin{proof}
 It suffices to check that the expression is invariant under $h_\ell \mapsto \begin{psmatrix} a & \\ &  \pm a\end{psmatrix} h_\ell$ where $a=\pm 1.$

Let $\pi \otimes \pi'$ be a generic unitary irreducible representation of $\GL_{r,r-2}(F),$ let $W\in \mathcal{W}(\pi,\bar{\psi})\widehat{\otimes}\mathcal{W}(\pi',\overline{\psi}),$ and let $\omega_{\pi}$ and $\omega_{\pi'}$ be the central characters of $\pi$ and $\pi'.$
 We change variables
$$
(g,g',c)\longmapsto \left(\begin{psmatrix} aI_{r-2} & \\ &  I_2\end{psmatrix} g,ag',\pm c\right)
$$
to obtain
\begin{align}  \label{a:for:later}\begin{split}
&Z_{\begin{psmatrix} a & \\ & \pm a \end{psmatrix}h_\ell}(f,\widetilde{W},\chi^{-1})\\
&=\int f\left(\begin{psmatrix}
    g'^{-1} &\\
     & I_2
\end{psmatrix}g,g'^t a^{-1}m_{\ell} \right)
\widetilde{W}\left(\begin{psmatrix} I_{r-2} &  \\ & \begin{psmatrix} a & \\ & \pm c^{-1}a\end{psmatrix}h_\ell \end{psmatrix}g,g'\right) \frac{\chi^{-1}( c)|\det g'|^{3/2}d\dot{g}'d\dot{g}d^\times c}{|ac|^{(r-2)/2}}\\
&=\int f\left(\begin{psmatrix}
    g'^{-1} &\\
     & I_2
\end{psmatrix}g,g'^t m_{\ell} \right)
\widetilde{W}\left(\begin{psmatrix} aI_{r-2} &  \\ & \begin{psmatrix} a & \\ & ac^{-1}\end{psmatrix}h_\ell \end{psmatrix}g,ag'\right) \frac{\chi^{-1}(\pm c)|\det g'|^{3/2}d\dot{g}'d\dot{g}d^\times c}{|a|^{r-2}|c|^{(r-2)/2}}\\
&=\overline{\omega}_{\pi}(a)\overline{\omega}_{\pi'}(a)\chi(\pm 1)
Z_{h_\ell }(f,\widetilde{W},\chi^{-1}). \end{split}
\end{align}
On the other hand, changing variales $c \mapsto \pm c$ we have
\begin{align*}
    &\overline{\omega}_{\pi}(a)\overline{\omega}_{\pi'}(a)\chi(\pm 1)\overline{Z_{\begin{psmatrix}
        a & \\
          & \pm a    \end{psmatrix} h_\ell}(W,\chi)(g,g^tm_{\ell})}\\
    &=\frac{\overline{\omega}_{\pi}(a)\overline{\omega}_{\pi'}(a)\chi(\pm 1)}{|\det g'|^{3/2}}\int_{U_{r-2}(F) \backslash \mathcal{P}_{r-2}(F) \times F^\times} \overline{W}\left(\begin{psmatrix}pg'a & \\ &  \begin{psmatrix} a & \\ & \pm ac^{-1} \end{psmatrix}h_\ell\end{psmatrix}g,pg'a \right)\frac{d_r p\overline{\chi}(c)d^\times c}{|\det p|^{\frac{3}{2}}|c|^{(r-2)/2}}\\
    &=\frac{1}{|\det g'|^{3/2}}\int_{U_{r-2}(F) \backslash \mathcal{P}_{r-2}(F) \times F^\times} \overline{W}\left(\begin{psmatrix}pg' & \\ &  \begin{psmatrix}  1 & \\ & c^{-1} \end{psmatrix}h_\ell\end{psmatrix}g,pg' \right)\frac{d_r p\overline{\chi}(c)d^\times c}{|\det p|^{\frac{3}{2}}|c|^{(r-2)/2}}\\
    &= \overline{Z_{h_\ell}(W,\chi)(g,g'^tm_{\ell})}.
\end{align*}
We now conclude using the definition of the Fourier transform in Theorem \ref{thm:FT}.
\end{proof}

\begin{lem}The Schwartz space $\mathcal{S}_{h_\ell}(X_\ell(F),\mathcal{H})$ is independent of the choice of $h_\ell \in \mathrm{GO}_2(F)$ satisfying \eqref{constraint}.
\end{lem}

\begin{proof}
The expression \eqref{constraint} determines $h_\ell$ up to left multiplication by an element of the form $\begin{psmatrix} a & \\ & \pm a \end{psmatrix}$ for some $a \in F^\times.$  On the other hand the computation \eqref{a:for:later} is valid for any $a \in F^\times$ (not just $a =\pm 1$).  One deduces the lemma directly from \eqref{a:for:later} and the definition of the Schwartz space.
\end{proof}}

\section{The unramified computation}\label{sec:unramified}
Let $F$ be a non-Archimedean local field unramified over its prime field.  We assume that the additive character $\psi:F \to \CC^\times$  is unramified.  
For $$(\lambda=(\lambda_1,\dots,\lambda_r),\lambda'=(\lambda_1',\dots,\lambda_{r-2}')) \in \mathfrak{a}_{T_r\CC}^* \times \mathfrak{a}_{T_{r-2}\CC}^*,$$ let
$$
I(\lambda):=I(\one_{T_r(F)},\lambda) \quad\textrm{and}\quad I(\lambda'):=I(\one_{T_{r-2}(F)},\lambda').
$$
Here $\one_{T_r(F)}$ (resp.~$\one_{T_{r-2}(F)}$) denotes the trivial representation of $T_r(F)$ (resp.~$T_{r-2}(F)$).   Let $$(W_\lambda,W'_{\lambda'}) \in \mathcal{W}(I(\lambda),\psi)^{\GL_{r}(\OO_F)} \times \mathcal{W}(I(\lambda'),\bar{\psi})^{\GL_{r-2}(\OO_F)}$$be the unique pair satisfying $W_{\lambda}(I_{r})=W'_{\lambda'}(I_{r-2})=1.$  

 Let $v_{r}:=(r-1,r-2,\ldots, 0) \in \ZZ^r$. For $\nu=(\nu_i)\in \ZZ_{\ge 0}^r$ with $\nu_1 \geq \cdots \geq \nu_r$, define the Schur polynomial
\begin{align*} 
    \mathbb{S}_{\nu_1, \dots, \nu_r}(x_1,\dots,x_r) \in \ZZ[x_1,\dots,x_r]
\end{align*}
as usual.
 The definition directly extends to all $\nu_1 \geq \cdots \geq \nu_r$, in which case $\mathbb{S}_{\nu_1, \dots, \nu_r}$ is only a polynomial in $x_1^{\pm 1},\ldots, x_r^{\pm 1}$. By convention $\mathbb{S}_{\nu_1, \dots, \nu_r}=0$ unless $\nu_1 \geq \cdots \geq \nu_r$. We have
\begin{align} \label{CS}
\delta^{-1/2}_{B_{r}}\begin{psmatrix}\varpi^{\nu_1} & & \\ & \ddots & \\ & & \varpi^{\nu_r} \end{psmatrix}W_{\lambda}\begin{psmatrix}\varpi^{\nu_1} & & \\ & \ddots & \\ & & \varpi^{\nu_{r}} \end{psmatrix}=\mathbb{S}_{\nu_1,\dots,\nu_{r}}(q^{-\lambda})
\end{align}
by Shintani's formula (a special case of the Casselman-Shalika formula) \cite{ShintaniWhittaker} (see also \cite[\S 3.1.3]{CogdellPCMI}).  By keeping track of domains of convergence in \cite[Lemma 6.1]{GGHL}, one obtains the following lemma:

\begin{lem}\label{lem:rankin} 
 For $\mathrm{Re}(s')-\max_{i,j}(|\mathrm{Re}(\lambda_i)|+|\mathrm{Re}(\lambda_j')|)>-\tfrac{1}{2},$ one has that
\begin{align*}
 &\int_{T_{r-2}(F)}  W_{\lambda}\begin{psmatrix} t & &\\ & 1& \\ & & \varpi^{-\ell}\end{psmatrix} W'_{\lambda'}(t) \frac{|\det t|^{s'-1/2}dt}{\delta_{B_{r-2}}(t)}\\
 =&\frac{L(\tfrac{1}{2}+s',I(\lambda) \times I(\lambda'))}{q^{\ell(r-1)/2}}\sum_{n=0}^{r-2} (-1)^{n}\mathbb{S}_{(0,\dots,0, n-\ell)}(q^{-\lambda})\mathrm{Tr}(\wedge^n q^{-\lambda'})q^{-n(s'+1/2)},
\end{align*}
where the integral converges absolutely.
The identity extends as a meromorphic function of $(\lambda,\lambda',s)$.\qed
\end{lem}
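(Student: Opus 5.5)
The plan is to reduce everything to Shintani's formula \eqref{CS} and a generating-function identity for Schur polynomials. First write $t=\mathrm{diag}(\varpi^{\mu_1},\dots,\varpi^{\mu_{r-2}})$ and use that $W_\lambda$, $W'_{\lambda'}$ are right $K$-invariant and $\psi$ unramified. By \eqref{CS}, the integrand is supported on $\mu_1\ge\cdots\ge\mu_{r-2}\ge 0\ge -\ell$, because the $W_\lambda$ factor vanishes unless its argument is dominant. On that support $W'_{\lambda'}(t)=\delta_{B_{r-2}}^{1/2}(t)\mathbb{S}_\mu(q^{-\lambda'})$ and
\[
W_\lambda\begin{psmatrix} t & &\\ & 1& \\ & & \varpi^{-\ell}\end{psmatrix}=\delta_{B_r}^{1/2}\begin{psmatrix} t & &\\ & 1& \\ & & \varpi^{-\ell}\end{psmatrix}\mathbb{S}_{(\mu,0,-\ell)}(q^{-\lambda}).
\]
Next compute the modular characters. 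Take $\delta_{B_r}^{1/2}$ of $\mathrm{diag}(t,1,\varpi^{-\ell})$ to be $\delta_{B_{r-2}}^{1/2}(t)|\det t|\,q^{-\ell(r-1)/2}$ (up to the sign conventions to be checked). The integral then becomes
\[
q^{-\ell(r-1)/2}\sum_{\mu}\mathbb{S}_{(\mu,0,-\ell)}(q^{-\lambda})\mathbb{S}_\mu(q^{-\lambda'})q^{-|\mu|(s'+1/2)}.
\]
This accounts for the factor $q^{-\ell(r-1)/2}$ in the statement.

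The core step is the Cauchy-type identity with one ``defective'' row. Multiply by $\mathbb{S}_{(0,\dots,0,-\ell)}$ twisted through $\det^{\ell}$, i.e.\ shift all indices by $\ell$ so everything becomes an honest polynomial. One then needs
\[
\sum_{\mu}\mathbb{S}_{(\mu,0,-\ell)}(x)\mathbb{S}_\mu(y)X^{|\mu|}=\prod_{i,j}(1-x_iy_jX)^{-1}\sum_{n=0}^{r-2}(-1)^n\mathbb{S}_{(0,\dots,0,n-\ell)}(x)e_n(y)X^n .
\]
This is exactly the identity underlying \cite[Lemma 6.1]{GGHL}, which the statement invites us to cite. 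The argument there multiplies by $\prod(1-x_iy_jX)$ and uses the dual Cauchy identity and Pieri's rule: multiplying the Cauchy kernel $\sum\mathbb{S}_\mu(x)\mathbb{S}_\mu(y)X^{|\mu|}$ for $r$ versus $r-2$ variables, together with the branching/Pieri rule $\mathbb{S}_{(0,\dots,0,-\ell)}$ times $e_n$, telescopes the sum. I would verify this by comparing coefficients via the Jacobi--Trudi or bialternant formula for $\mathbb{S}_{(\mu,0,-\ell)}$, expanding along the last column. With $X=q^{-(s'+1/2)}$, the product is $L(\tfrac12+s',I(\lambda)\times I(\lambda'))$.

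The step needing the most care, and the only genuinely new content relative to the cited lemma, is the convergence range. Bound $|\mathbb{S}_{(\mu,0,-\ell)}(q^{-\lambda})|\ll_\ell \mathrm{poly}(\mu)\,q^{\sum_i\mu_i\max_k|\mathrm{Re}\lambda_k|}$, and similarly $|\mathbb{S}_\mu(q^{-\lambda'})|\ll\mathrm{poly}(\mu)\,q^{\sum_i\mu_i\max_k|\mathrm{Re}\lambda'_k|}$. These bounds follow by counting monomials (semistandard tableaux) in each Schur polynomial. The series then converges absolutely once
\[
\mathrm{Re}(s')+\tfrac12-\max_i|\mathrm{Re}\lambda_i|-\max_j|\mathrm{Re}\lambda_j'|>0,
\]
which is the stated hypothesis. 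Within this region the formal identity holds as an identity of absolutely convergent series. Finally, the right-hand side is visibly meromorphic (a rational function in $q^{-\lambda},q^{-\lambda'},q^{-s'}$), which gives the meromorphic extension.
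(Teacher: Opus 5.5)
Your proposal is correct and takes essentially the paper's route: the paper gives no separate argument and obtains the lemma from \cite[Lemma 6.1]{GGHL}, which is Shintani's formula followed by the Schur-polynomial identity you wrote down, while keeping track of where the sum converges. Your monomial-counting bound, which recovers the hypothesis $\mathrm{Re}(s')+\tfrac12-\max_i|\mathrm{Re}(\lambda_i)|-\max_j|\mathrm{Re}(\lambda_j')|>0$, is exactly that convergence bookkeeping, and your computation of the factor $q^{-\ell(r-1)/2}$ from the modular characters checks out.
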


\noindent We point out that if $I(\lambda)$ is unitary and irreducible (hence generic) then $|\mathrm{Re}(\lambda_i)|<\tfrac{1}{2}$ for all $i$ \cite[Corollary 2.5]{JacquetShalikaEPI}.
Applying a trivial estimate to \cite[(6.1.5)]{GGHL}, we obtain the following corollary:

\begin{cor} \label{cor:unram:bound}
Assume
 $\max_{i,j}(|\mathrm{Re}(\lambda_i)|,|\mathrm{Re}(\lambda_j')|)<\tfrac{1}{2}$  and 
 $$\mathrm{Re}(s')-\max_i(|\mathrm{Re}(\lambda_i)|)-\max_j(|\mathrm{Re}(\lambda_j')|)>-\tfrac{1}{2}.$$  
 Then for any $\epsilon>0$,
\begin{align*}
\int_{T_{r-2}(F) }  \left|W_{\lambda}\begin{psmatrix} t & &\\ & 1& \\ & & \varpi^{-\ell}\end{psmatrix} W'_{\lambda'}(t)\right| \frac{|\det t|^{\mathrm{Re}(s')-1/2}dt}{\delta_{B_{r-2}}(t)} \ll_{\epsilon} (1+q^{\epsilon})q^{-\ell((r-1)/2-1/2-\epsilon)}.
\end{align*}
The implied constant can be taken to be $1$ if the 
residual characteristic is large enough in a sense depending on $\epsilon.$ \qed
\end{cor}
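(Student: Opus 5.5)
The plan is to bound the integrand pointwise using the explicit formula \eqref{CS}, and then sum over the dominant cone. Both $W_\lambda$ and $W'_{\lambda'}$ are spherical, so the integral over $T_{r-2}(F)$ reduces to a sum over cocharacters $t=\varpi^{\nu'}$ with $\nu'\in\ZZ^{r-2}$. Since $\psi$ is unramified, $W'_{\lambda'}(\varpi^{\nu'})$ vanishes unless $\nu'_1\ge\cdots\ge\nu'_{r-2}$. Likewise, $W_\lambda(\mathrm{diag}(\varpi^{\nu'},1,\varpi^{-\ell}))$ vanishes unless $\nu'_{r-2}\ge 0\ge -\ell$. So the sum runs over dominant $\nu'$ with $\nu'_{r-2}\ge 0$, and it is empty unless $\ell\ge 0$.

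On this support, \eqref{CS} gives
\[
|W_\lambda| = \delta_{B_r}^{1/2}\,|\mathbb{S}_{(\nu',0,-\ell)}(q^{-\lambda})|, \qquad |W'_{\lambda'}| = \delta_{B_{r-2}}^{1/2}\,|\mathbb{S}_{\nu'}(q^{-\lambda'})|.
\]
To estimate the Schur polynomials I will use the trivial bound $|\mathbb{S}_\nu(x)|\le \mathbb{S}_\nu(|x|)\le \dim(V_\nu)\max_w|x^{w\nu}|$. With $\theta:=\max_i|\mathrm{Re}\lambda_i|<\tfrac12$ and $\theta':=\max_j|\mathrm{Re}\lambda'_j|$, this yields
\[
|\mathbb{S}_{(\nu',0,-\ell)}(q^{-\lambda})|\le \dim\cdot q^{\theta(|\nu'|+\ell)}, \qquad |\mathbb{S}_{\nu'}(q^{-\lambda'})|\le \dim\cdot q^{\theta'|\nu'|}.
\]
The Weyl dimensions grow only polynomially in $\nu'$ and $\ell$, so they can be absorbed into $q^{\epsilon(|\nu'|+\ell)}$ at the cost of a constant.

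Next I compute the modulus characters. One has $\delta_{B_r}^{1/2}(\mathrm{diag}(\varpi^{\nu'},1,\varpi^{-\ell}))=q^{-\langle\rho_{r-2},\nu'\rangle-|\nu'|}\,q^{-\ell(r-1)/2}$. Combining this with $\delta_{B_{r-2}}^{1/2}$ and the factor $\delta_{B_{r-2}}^{-1}|\det t|^{\mathrm{Re}(s')-1/2}$, the $\rho_{r-2}$ contributions cancel. The summand is therefore bounded by
\[
q^{-\ell((r-1)/2-\theta-\epsilon)}\; q^{-|\nu'|(\mathrm{Re}(s')+1/2-\theta-\theta'-\epsilon)}.
\]
The hypothesis $\mathrm{Re}(s')-\theta-\theta'>-\tfrac12$ makes the sum over $\nu'$ a convergent geometric series, whose value is bounded independently of $\ell$. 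Using $\theta<\tfrac12$ in the $\ell$-exponent gives the decay $q^{-\ell((r-1)/2-1/2-\epsilon)}$.

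The main obstacle is obtaining the constant in the form $(1+q^\epsilon)$, and equal to $1$ for large residual characteristic. The geometric series is dominated by its $\nu'=0$ term, equal to $1$, plus a tail of size $O(q^{-\delta})$ with $\delta>0$ whenever there is slack in the hypothesis. The dimension factors, however, must be handled through the explicit formula \cite[(6.1.5)]{GGHL} rather than through the crude bound on $\dim V_\nu$. I would read off from that formula, via the finite sum in Lemma \ref{lem:rankin}, that the coefficients are bounded by binomials times $q^{\epsilon\ell}$. For $q$ large the tail is then less than $q^{\epsilon}-1$, which gives the claimed constant.
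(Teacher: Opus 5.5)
Your proposal is correct and is essentially the paper's argument. The paper's proof is the one-line remark that a trivial estimate applied to the explicit Shintani-formula computation \cite[(6.1.5)]{GGHL} underlying Lemma \ref{lem:rankin} gives the bound, and that is exactly your term-by-term bound on the Schur polynomials followed by summation over the dominant cone.

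The detour through \cite[(6.1.5)]{GGHL} for the constant is not actually needed. Since $\dim V_0=1$ and the Weyl dimensions are at most $(1+|\nu'|+\ell)^{N}$ with $N$ depending only on $r$, they are at most $q^{\epsilon(|\nu'|+\ell)}$ with constant $1$ once $q^{\epsilon}\ge 2^{N}$. As you note, the threshold for the tail also depends on the slack in $\mathrm{Re}(s')-\theta-\theta'>-\tfrac12$, and this dependence is unavoidable.
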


Assume that $h_\ell\in \mathrm{GL}_2(\OO_F)$. Let
\begin{align*}
    \one_{\ell}(g,m):=\int_{M_{r-2,2}(F)}\one_{\GL_r(\OO_F) \times \mathcal{M}_{\ell}(\OO_F)}\left(\begin{psmatrix} I_{r-2} & z\\ & I_2 \end{psmatrix}g,m\right)\psi(\langle m,z \rangle)dz
\end{align*}
and \index{$b_{\ell}$}
\begin{align} \label{bell}
b_{\ell}(g,m):=\int_{\GL_{r-2}(F)}\one_{M_{r-2}(\OO_F)}(h)\one_{\ell}\left(\begin{psmatrix}h & \\ & \begin{psmatrix}1 & \\  & \det h^{-1}\end{psmatrix}h_\ell \end{psmatrix}g,h^{-t}m\right)\frac{dh}{|\det h|}.
\end{align}
We refer to $b_\ell$ as the \textbf{basic function}.

The following lemma implies that the basic function is $\Xi$-rapidly decreasing:

\begin{lem}\label{lem:basic:bound}
As a function of $(g,m),$ the integral 
\begin{align} \label{bound:basic}
\int_{\GL_{r-2}(F)}\one_{M_{r-2}(\OO_F)}(h)|\one_{\ell}|\left(\begin{psmatrix}h & \\ & \begin{psmatrix}1 & \\  & \det h^{-1}\end{psmatrix}h_\ell
\end{psmatrix}g,h^{-t}m\right)\frac{dh}{|\det h|}
\end{align}
has support in 
$
\{(g,m) \in X_\ell(\mathcal{O}_F) \cap X_{\ell}^{\circ}(F):\norm{e_{r}^tg}=\norm{e_{r-1}^tg\wedge e_{r}^tg}\}$
and it is bounded by $\norm{e_r^tg}^{-1}.$  Moreover, it satisfies the bound \eqref{rd:bound}. Hence the same is true of the basic function $b_\ell.$
\end{lem}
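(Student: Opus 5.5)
The plan is to compute $|\one_\ell|$ explicitly and then analyze the support condition in the $h$-integral. First, $\one_\ell(g,m)$ is a partial Fourier transform in $z$. Write $g$ through its rows. The condition $\begin{psmatrix} I_{r-2} & z\\ & I_2\end{psmatrix}g\in \GL_r(\OO_F)$ forces the bottom two rows $e_{r-1}^tg,e_r^tg$ to be primitive: they must span a rank-two direct summand of $\OO_F^r$. Given this, the set of admissible $z$ is a coset of $M_{r-2,2}(\OO_F)$, possibly empty. Since $m\in\mathcal{M}_\ell(\OO_F)$ and $\psi$ is unramified, the character $\psi(\langle m,z\rangle)$ is trivial on $M_{r-2,2}(\OO_F)$. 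Hence $|\one_\ell(g,m)|$ equals the indicator of the set where
\begin{itemize}
\item the bottom $2\times r$ block of $g$ spans a primitive lattice,
\item the full determinant condition holds, namely $|\det g|=1$ after completing by the top rows, i.e.\ $g\in N(F)\GL_r(\OO_F)$,
\item $m\in\mathcal{M}_\ell(\OO_F)$.
\end{itemize}
I would verify this using the Iwasawa coordinates \eqref{coord}: $g\in N(F)\GL_r(\OO_F)$ if and only if the Levi components lie in $\GL_{r-2}(\OO_F)\times\GL_2(\OO_F)$ modulo the appropriate cosets.

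Next, substitute $\begin{psmatrix}h & \\ & \begin{psmatrix}1&\\&\det h^{-1}\end{psmatrix}h_\ell\end{psmatrix}g$ and $h^{-t}m$. The bottom block becomes $\begin{psmatrix}1&\\&\det h^{-1}\end{psmatrix}h_\ell$ times the bottom rows of $g$. Since $h_\ell\in\GL_2(\OO_F)$, the primitivity condition reads as follows: $h_\ell\binom{e_{r-1}^tg}{e_r^tg}$ has first row $u$ and second row $\det h^{-1}v$, and together they span a primitive lattice. The top condition, integrality of the $N$-Levi part, says that $h$ times the top block of $g$ modulo the bottom rows is in $\GL_{r-2}(\OO_F)$. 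Taking determinants, $|\det h|=|\det g_1|^{-1}$ in the notation $g=n\,\mathrm{diag}(g_1,g_2)k$. Combined with $h\in M_{r-2}(\OO_F)$ and $h^{-t}m\in\mathcal{M}_\ell(\OO_F)$, the $h$-integral then reduces to a set of measure at most $\mathrm{meas}(\GL_{r-2}(\OO_F))$ times a factor accounting for $|\det h|^{-1}$.

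To extract the support statement $\norm{e_r^tg}=\norm{e_{r-1}^tg\wedge e_r^tg}$, I would use that the wedge of the two bottom rows scales by $\det h^{-1}\det h_\ell$ and must be a unit. This gives $|\det h|=\norm{e_{r-1}^tg\wedge e_r^tg}$. For $\det h^{-1}v$ to be primitive, one needs $\norm{v}=|\det h|$, which combined with the previous relation gives the claimed equality, up to the unit $h_\ell$ mixing the rows. Belonging to $X_\ell(\OO_F)$ follows from integrality of the Plücker coordinates and of $m$.

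The bound by $\norm{e_r^tg}^{-1}$ would come from the Jacobian $dh/|\det h|$ over the set $\{h: h\in M_{r-2}(\OO_F)h_0\GL_{r-2}(\OO_F)\}$, where $h_0$ is the element determined by $g$. One complication is that the support of $h$ is a single double coset, fixed by the top block, whose measure relative to $dh$ equals $1$; the integrand $|\det h|^{-1}=\norm{e_r^tg}^{-1}$ then gives the bound directly.

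The main obstacle is the final claim: verifying \eqref{rd:bound}, i.e.\ comparing $\norm{e_r^tg}^{-1}$ on the support against $\Xi_{X_\ell^\circ}\max(1,\norm{g}_\ell,\norm{m})^{-N}\sigma_{X_\ell^\circ}^{-d}$. On the support, integrality bounds $\norm{g}_\ell$ and $\norm{m}$, so these factors are harmless. What remains is a lower bound for $\Xi_{X_\ell^\circ}$ in the coordinates \eqref{coord}. For this I would use Lemma \ref{lem:Levi}, Lemma \ref{lem:parab}, and Proposition \ref{prop:HCfunction}\ref{delta:asymp} to bound $\Xi_{\GL_{r-3}}(g_1)$ and $\Xi_{\GL_{r-2}}$ from below by $\delta^{1/2}$-type quantities times log powers. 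The hope is that on the integral support each exponent of $|t_{r-2}|,|t_r|,|a|,|\det g_1|$ appearing in $\Xi_{X_\ell^\circ}$ is dominated by $\norm{e_r^tg}^{-1}$, up to polynomial factors in $\sigma$. Since integrality forces those coordinates into a cone, the compensating powers of $\sigma$ should be absorbed by the rapid decay of the indicator in $\norm{g}_\ell$.
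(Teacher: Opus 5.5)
Your treatment of the support and of the bound by $\norm{e_r^tg}^{-1}$ matches the paper's in substance. $|\one_\ell|$ is the indicator of $N(F)\GL_r(\OO_F)\times\mathcal{M}_\ell(\OO_F)$. The $h$-integrand is therefore supported on the single coset $\GL_{r-2}(\OO_F)g_0^{-1}$; this is a one-sided coset, not a double coset, which is why its measure is $1$. On it $|\det h|^{-1}=|\det g_0|=\norm{e_r^tg}^{-1}$, and integrality of $g_0^{-1}$ and $g_0^tm$ gives the support statement. The gap is in the step you flag as the main obstacle, and the mechanism you propose there does not work.

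\textbf{Why the proposed remedy fails.} What must be shown is $|t_r|\ll_d \Xi_{X_\ell^\circ}(x)\,\sigma_{X_\ell^\circ}(x)^{-d}$ on the support, for \emph{every} $d>0$. The lower bounds for $\Xi_{X_\ell^\circ}$ from Proposition \ref{prop:HCfunction}\ref{delta:asymp} and Lemmas \ref{lem:Levi} and \ref{lem:parab} already cost a factor $\sigma_{X_\ell^\circ}^{-d_0}$. So showing only that the resulting monomial dominates $|t_r|$ gives $|b_\ell|\ll\Xi_{X_\ell^\circ}\sigma_{X_\ell^\circ}^{d_0}$, which is the wrong direction. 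You propose to absorb the leftover powers of $\sigma$ into ``rapid decay of the indicator in $\norm{g}_\ell$'', but this is not available:
\begin{itemize}
\item As you observe yourself, $\norm{g}_\ell$ and $\norm{m}$ are bounded on the support, so $\max(1,\norm{g}_\ell,\norm{m})^{-N}\asymp 1$ there and gives nothing.
\item Meanwhile $\sigma_{X_\ell^\circ}$ is unbounded on the support, since $X_\ell(\OO_F)\cap X_\ell^\circ(F)$ accumulates at $X_\ell-X_\ell^\circ$. For example, $(I_r,am_\ell)$ lies in the support for all $|a|\le 1$, and $\sigma_{X_\ell^\circ}\to\infty$ as $|a|\to0$.
\end{itemize}

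\textbf{What is missing.} You need a strict power saving in $\Xi_{X_\ell^\circ}/|t_r|$, together with logarithmic control of $\sigma_{X_\ell^\circ}$ in the same quantity. In the paper, write $g_1=t't''\in T_{r-3}(F)^+$ with diagonal entries $t_1,\dots,t_{r-3}$ and $j=j(g_1)$ as in Lemma \ref{lem:Levi}.
\begin{itemize}
\item The integrality constraints give $|t_{r-2}^{-1}|\le|t_1|$, $|at_{r-2}|\le1$, $|t_rt_{r-2}^{-1}|\ge1$ and $\norm{g_1^{-1}y_1}\le|t_{r-2}|$.
\item Hence $\delta_{B_{r-3}}^{1/2}(g_1)\prod_{i\le j}|t_i|^{1/2}\ge|t_r|^{-(r-3)/2}$ and $\max(1,\min(\norm{g_1^{-1}y_1},\norm{y_1}))\le|a|^{-1}$.
\item Together these yield $\Xi_{X_\ell^\circ}\sigma_{X_\ell^\circ}^{-d}\gg|t_r|\,|t_r/a|^{1/2}\sigma_{X_\ell^\circ}^{-d-d_0}$.
\item The further constraints $|a|\le1$, $|t_{r-2}t_{r-3}|\le|t_r|$ and $|t_{r-2}y_1|\le|t_r|$ bound every coordinate polynomially in $|t_r/a|$.
\item So $\sigma_{X_\ell^\circ}\ll1+\log|t_r/a|$, and the gain $|t_r/a|^{1/2}$ beats any power of $\sigma_{X_\ell^\circ}$.
\end{itemize}
Without this exponent bookkeeping and the logarithmic bound on $\sigma_{X_\ell^\circ}$, \eqref{rd:bound} does not follow from your outline.
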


\begin{proof}
Using the Iwasawa decomposition, we may assume that 
$x=(g,m)=\left(\begin{psmatrix} g_0 &  \\ & h_\ell^{-1}\begin{psmatrix} z & y_2 \\ & t_r^{-1}\end{psmatrix}\end{psmatrix},m\right)$ for some $g_0 \in \GL_{r-2}(F),$ $z,t_r^{-1} \in F^\times,$ and $y_2 \in F.$ 
Consider 
\begin{align*}
\int_{\GL_{r-2}(F)}\one_{M_{r-2}(\mathcal{O}_F)}(h)|\one_\ell|\left( \begin{psmatrix} hg_0 & & \\ & z & y_2\\ & & (\det h)^{-1}t_{r}^{-1} \end{psmatrix},h^{-t}m\right)|\det h|^{-1}dh.
\end{align*}
This integral is nonzero only if $|y_2|\le 1, |z|=1$ and $|t_{r}^{-1}|=|\det g_0|^{-1}=|\det h|\le 1,$ so $|\det g|=1.$  This implies that the integral is supported in the set of $h$ such that 
\begin{align*}
\det \begin{psmatrix} z & y_2 \\ & t_{r}^{-1} \end{psmatrix} g_0 \in (\det h) h^{-1}\GL_{r-2}(\OO_F) \subset M_{r-2}(\OO_F).
\end{align*}
Moreover, the integral vanishes unless $m \in h^{t}\mathcal{M}_{\ell}(\OO_F)$ for some $h \in M_{r-2,r-2}(\OO_F).$ 
Using these observations, we deduce the bound on the support claimed in the lemma. Since the integral in \eqref{bound:basic} is supported on
\begin{align*}
    \{h \in \GL_{r-2}(F) : h\in \GL_{r-2}(\OO_F)g_0^{-1}\cap M_{r-2}(\OO_F)\},
\end{align*}
it is bounded by $|\det g_0|=|t_r|=\norm{e_r^tg}^{-1}$. We note that $\GL_{r-2}(\OO_F)g_0^{-1}\cap M_{r-2}(\OO_F)$ is nonempty only if $g_0^{-1}\in M_{r-2}(\OO_F).$  Moreover, for $h$ in the support of the integral $g_0^tm\in \GL_{r-2}(\OO_F)h^{-t}m\subseteq M_{r-2,2}(\mathcal{O}_F).$ Thus the integral vanishes unless
\begin{align} \label{int:cond}
g_0^{-1} \in M_{r-2}(\OO_F) \quad\textrm{and}\quad g_0^tm\in M_{r-2,2}(\mathcal{O}_F).
\end{align}

To see this implies \eqref{rd:bound}, write $x=(g,m)\in X_\ell(\mathcal{O}_F)\cap X_\ell^\circ(F)$ using the coordinates \eqref{coord}. Since the integral has support in $X_\ell(\OO_F)$ with $\norm{e_{r}^tg}=\norm{e_{r-1}^tg\wedge e_{r}^tg},$ the integral is nonvanishing only if the following holds
\begin{align} \label{int:constraints} \begin{split}
    &z,t_r^{-1}t_{r-2}^{r-2}\det g_1\in \OO_F^\times,\\
    &t_r^{-1},t_{r-2}^{-1}, t_{r-2}t_r^{-1},t_{r-2}a,y_2\in \mathcal{O}_F,\\
    &t_{r-2}^{-1}g_1^{-1}, t_{r-2}t_r^{-1}g_1\in M_{r-3}(\mathcal{O}_F),\\
    &t_{r}^{-1}t_{r-2}y_1,t_{r-2}^{-1}g_1^{-1}y_1\in \mathcal{O}_F^{r-3}. \end{split}
\end{align}
 We need to show that if $\norm{e_{r}^tg}=\norm{e_{r-1}^tg\wedge e_{r}^tg},$ then for any $d\ge 0$,
\begin{align*}
|t_r|\one_{X_\ell(\OO_F)}(g,m)\ll_d  \Xi_{X_\ell^\circ}(g,m)\sigma_{X_\ell^\circ}(g,m)^{-d}.
\end{align*}
We further assume $g_1=t't''\in T_{r-3}(F)^+$ as in Lemma \ref{lem:Levi}. By definition (see \eqref{XiXP})
    \begin{align*}  
\Xi_{X_\ell^\circ}(x)&=\frac{|t_{r-2}|^{r-2}|t_r|^{(r-2)/2}|\det g_1|^{5/4}\Xi^{1/2}_{\GL_{r-3}}(g_1)\Xi_{\GL_{r-2}}^{1/2}\begin{psmatrix}
        g_1 &y_1 \\
          & 1
    \end{psmatrix}}{|a|^{(r-2)/2}\max\left(1,|y_2|\right)^{1/2}}\\
&=|t_r|^{r/2}|\det g_1|^{1/4}\Xi^{1/2}_{\GL_{r-3}}(g_1)\Xi_{\GL_{r-2}}^{1/2}\begin{psmatrix}
        g_1 &y_1 \\
          & 1
    \end{psmatrix}|a|^{-(r-2)/2}.
\end{align*}
By Proposition \ref{prop:HCfunction}\ref{delta:asymp}, Lemma \ref{lem:Levi} and Lemma \ref{lem:parab}, there is a $d_0>0$ such that
\begin{align*}
    \Xi_{X_\ell^\circ}(g,m)\gg \frac{|t_r|^{r/2}\delta_{B_{r-3}}^{1/2}(g_1)\prod_{i=1}^j |t_i|^{1/2}}{|a|^{(r-2)/2}\max\left(1,\min\left(\norm{g_1^{-1}y_1},\norm{y_1}\right)\right)^{(r-3)/2}}\sigma_{X_\ell^\circ}(g,m)^{-d_0}.
\end{align*}
Thus it suffices to show for any $d\ge 0$
\begin{align}\label{eq:goal}
     \frac{|t_r|^{(r-2)/2}\delta_{B_{r-3}}^{1/2}(g_1)\prod_{i=1}^j |t_i|^{1/2}}{|a|^{(r-2)/2}\max\left(1,\min\left(\norm{g_1^{-1}y_1},\norm{y_1}\right)\right)^{(r-3)/2}}\sigma_{X_\ell^\circ}(g,m)^{-d}
\end{align}
is uniformly bounded below by a positive constant for $x\in X_\ell(\OO_F)$ with $\norm{e_{r}^tg}=\norm{e_{r-1}^tg\wedge e_{r}^tg}$.

The constraints \eqref{int:constraints} imply
\begin{align*}
   &|t_{r-2}^{-1}|\le |t_1|, \quad |at_{r-2}|\le 1, \quad |t_rt_{r-2}^{-1}|\ge 1, \quad \norm{g_1^{-1}y_1}\le |t_{r-2}|.
\end{align*}
We have
$
    |a|\min\left(\norm{g_1^{-1}y_1},\norm{y_1}\right)\le |at_{r-2}|\le 1$
and 
\begin{align*}
    \delta_{B_{r-3}}^{1/2}(g_1)\prod_{i=1}^j |t_i|^{1/2}&= \left(\prod_{i=1}^{r-3} |t_i|^{(r-2)/2-i}\right)\prod_{i=1}^j |t_i|^{1/2}=|\det g_1|^{-(r-4)/2}\left(\prod_{i=1}^{r-3} |t_i|^{r-3-i}\right)\prod_{i=1}^j |t_i|^{1/2}\\
    &\ge |\det g_1|^{-(r-4)/2}\prod_{i=1}^{j} |t_1|^{r-3+1/2-i}\ge |\det g_1|^{-(r-4)/2}|t_{r-2}|^{-(2r-6-j)j/2}\\
    &=|t_r|^{-(r-4)/2}|t_{r-2}|^{(r-4)(r-2)/2-(2r-6-j)j/2}\ge |t_r|^{-(r-4)/2}|t_{r-2}|^{-1/2}\ge |t_r|^{-(r-3)/2}.
\end{align*}
Thus \eqref{eq:goal} is bounded below by 
\begin{align*}
    |t_r/a|^{1/2} \sigma_{X_\ell^\circ}(g,m)^{-d}.
\end{align*}
Additionally, we have 
$
    |t_{r-2}t_{r-3}|\le |t_r|,$ $|t_{r-2}y_1|\le |t_r|,$ and $|a| \leq 1$ by \eqref{int:constraints}.
Therefore for any $1/2>\epsilon>0,$
\begin{align*}
    |t_r/a|^{1/2} \sigma_{X_\ell^\circ}(g,m)^{-d}\gg_{\epsilon,d} |t_r/a|^{1/2-\epsilon}\ge 1.
\end{align*}
\end{proof}

\begin{lem} \label{unramifiedcomp}
Assume $\max_{i,j}(|\mathrm{Re}(\lambda_i)|,|\mathrm{Re}(\lambda_j')|)<\tfrac{1}{2}.$ The integral defining $
Z(b_{\ell},W_{\lambda}\otimes (W'_{\lambda'})_{s'},\chi)$
converges absolutely for $ \mathrm{Re}(\chi) \gg 1,  \mathrm{Re}(s') \gg 1,$ where the implied constants are absolute.
One has that
$$Z(b_{\ell},W_{\lambda}\otimes (W'_{\lambda'})_{s'},\chi)=L(\tfrac{1}{2}+s',I(\lambda) \times I(\lambda'))L(\tfrac{1}{2},I(\lambda)^{\vee} \otimes \chi).$$
\end{lem}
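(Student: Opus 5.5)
We compute $Z(b_\ell,W,\chi)$ by unfolding the definitions (\eqref{Z:ids}, \eqref{ZWchi:func}, \eqref{bell}) with $W=W_\lambda\otimes(W'_{\lambda'})_{s'}$. Absolute convergence for $\mathrm{Re}(\chi),\mathrm{Re}(s')\gg 1$ should follow from Lemma \ref{lem:basic:bound} (the basic function is $\Xi$-rapidly decreasing) together with Lemma \ref{lem:ZfW2} applied to the unramified family $W_\lambda$. Once convergence is known, we may freely rearrange.

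\textbf{Step 1: unfold the $z$-integral.} Since $h_\ell\in\GL_2(\OO_F)$ and both $W_\lambda$ and $W'_{\lambda'}$ are spherical, we substitute the definition of $b_\ell$. We first integrate $\one_\ell$ against $Z(W,\chi)(g,g'^tm_\ell)$ over $N(F)\backslash\GL_r(F)$. The $\psi(\langle m,z\rangle)$-twist in $\one_\ell$ cancels the $\overline\psi$-equivariance, which turns the $N(F)\backslash\GL_r(F)$ integral into an integral over $\GL_r(F)$ against $\one_{\GL_r(\OO_F)}$. This evaluates $Z(W,\chi)$ at $g\in\GL_r(\OO_F)$, up to the $h$-translation. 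Similarly, $\one_{\mathcal{M}_\ell(\OO_F)}(h^{-t}g'^tm_\ell)$ restricts $g'$ (modulo $\mathcal{P}_{r-2}$) to $g'^te_{r-2}\in h^t\OO_F^{r-2}$.

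\textbf{Step 2: absorb the $h$-integral.} Changing variables moves the $h$-translate onto $W$. The factor $\begin{psmatrix}1&\\&\det h^{-1}\end{psmatrix}h_\ell$ in the $\GL_2$-block is absorbed into the $c$-variable of \eqref{ZWchi}, producing a factor $\chi(\det h)|\det h|^{\ast}$. The $\GL_{r-2}$-part combines with $g'$ and the $\mathcal{P}_{r-2}$-integral. We then use the Iwasawa decomposition $p g'= u t k$ and sphericity to reduce everything to an integral over $t\in T_{r-2}(F)$, $c\in F^\times$, and the variable $\det h$ (after integrating $h$ over $\GL_{r-2}(\OO_F)$-double cosets).

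The expected shape is
\[
\sum_{\ell\ge 0}\chi(\varpi^{\ell})q^{-\ell\kappa}\int_{T_{r-2}(F)}W_\lambda\begin{psmatrix} t&&\\&1&\\&&\varpi^{-\ell}\end{psmatrix}W'_{\lambda'}(t)\frac{|\det t|^{s'-1/2}dt}{\delta_{B_{r-2}}(t)}
\]
for an explicit exponent $\kappa$. Here the sign of $\ell$, the normalizations, and the contribution of the $M_{r-2}(\OO_F)$-cutoff in $h$ must be tracked carefully.

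\textbf{Step 3: evaluate the inner integral and sum.} Apply Lemma \ref{lem:rankin} to each inner integral. This pulls out $L(\tfrac12+s',I(\lambda)\times I(\lambda'))$ and leaves
\[
\sum_{\ell}\sum_{n=0}^{r-2}(-1)^n\,\mathbb{S}_{(0,\dots,0,n-\ell)}(q^{-\lambda})\,\mathrm{Tr}(\wedge^nq^{-\lambda'})\,q^{-n(s'+1/2)}\,X^{\ell},
\]
where $X$ is a power of $q$ times $\chi(\varpi)$. Interchanging the sums is justified by Corollary \ref{cor:unram:bound} for $\mathrm{Re}(\chi)$ large.

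Now $\mathbb{S}_{(0,\dots,0,-k)}(x)=\mathbb{S}_{(k,0,\dots,0)}(x^{-1})$ is the complete symmetric polynomial $h_k(x^{-1})$. Summing $\sum_k h_k(q^{\lambda})X^k$ gives $\prod_i(1-q^{\lambda_i}X)^{-1}=L(\tfrac12,I(\lambda)^\vee\otimes\chi)$. The terms with $n>0$ shift the index $k$; because $\mathbb{S}$ vanishes for non-dominant indices, one must check that these shifted terms telescope away. I would try to show this using the fact that the $M_{r-2}(\OO_F)$-integral over $h$ contributes exactly the inverse factor $\sum_n(-1)^n\mathrm{Tr}(\wedge^n q^{-\lambda'})(\cdots)^n$, that is, a partial $L$-factor cancellation.

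\textbf{Main obstacle.} The hardest part is this last bookkeeping: keeping the exact exponents through Steps 1–2, so that the $h$-integral (a Hecke-type sum over $\det h$) exactly cancels the $n$-sum from Lemma \ref{lem:rankin}. If it does not cancel directly, I would instead redo the $h$-integral as a Hecke operator acting on $W'_{\lambda'}$, whose eigenvalue by \eqref{CS} is a generating function in $\mathrm{Tr}(\wedge^n q^{-\lambda'})$. The product of the two generating series should then collapse to the claimed $L$-factor.
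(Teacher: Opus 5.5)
Your plan is essentially the paper's proof, and the fallback in your last paragraph is exactly what the paper does. After moving $h$ onto $W$ and absorbing $\det h$ into $c$, it evaluates the $h$-integral first, before the $g'$-integral and without collapsing $h$ to $\det h$, via Godement--Jacquet: $\int_{\GL_{r-2}(F)}\one_{M_{r-2}(\OO_F)}(h)\chi(\det h)|\det h|^{(r-1)/2}\mathcal{R}(h)(W'_{\lambda'})_{s'}\,dh=L(1+s',I(\lambda')\otimes\chi)(W'_{\lambda'})_{s'}$.

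Separately, the shift $k\mapsto k+n$ in the output of Lemma~\ref{lem:rankin} factors the double sum as $L(\tfrac12,I(\lambda)^\vee\otimes\chi)\,L(1+s',I(\lambda')\otimes\chi)^{-1}$. So the $n>0$ terms do not telescope away by themselves; they cancel against this Hecke eigenvalue.

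For the ``absolute constants'' in the convergence claim, you also need the paper's remark about uniformity. By Shintani's formula \eqref{CS}, the bound of Lemma~\ref{lem:nonArch:Whitt:fam:bound} holds with $\Omega=\OO_F^{r-1}$ and a place-independent constant, so the threshold coming from Lemma~\ref{lem:ZfW2} is uniform in the residual characteristic.
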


\begin{proof}
It suffices to treat the case $\ell=(1,0)$ and $h_\ell=I_2.$ 
We claim that 
\begin{align*} 
\begin{split}
    \int_{\mathcal{P}_{r-2}(F) \backslash \GL_{r-2}(F) \times N(F) \backslash \GL_r(F)} &\int_{\GL_{r-2}(F)}\one_{M_{r-2}(\OO_F)}(h)|\one_{\ell}|\left(\begin{psmatrix}h & \\ & 1 & \\  & & \det h^{-1}
\end{psmatrix}g,h^{-t}m\right)\frac{dh}{|\det h|}\\
&\quad\times
    |Z(W_\lambda\otimes (W'_{\lambda'})_{s'},\chi)(g,g'^tm_\ell)|
    |\det g'|d\dot{g}'d\dot{g}
    \end{split}
\end{align*}
converges under the assumptions of the lemma. The claim is a consequence of \Cref{lem:ZfW2} and \Cref{lem:basic:bound} as we now explain.  In  \Cref{lem:ZfW2} we take 
$$
\Omega=\left\{(\lambda,\lambda'):\max\left(|\mathrm{Re}(\lambda_i)|,|\mathrm{Re}(\lambda_i')|\right) \leq \tfrac{1}{2}\right\}.
$$
The bound in Lemma \ref{lem:ZfW2} ultimately relies on the gauge estimate in \Cref{lem:nonArch:Whitt:fam:bound}. By \eqref{CS}, in the notation of \Cref{lem:nonArch:Whitt:fam:bound} we can take $C$ be be an absolute constant and $\Omega=\OO_F^{r-1}.$  It follows
that we can take $N_1$ and the implicit constant to be independent of the residual characteristic.

By \eqref{Z:ids} we have that
\begin{align*}
Z(b_{\ell},W_\lambda\otimes (W'_{\lambda'})_{s'},\chi)&= \int b_\ell\left(g,g'^tm_{\ell}\right) 
    Z(W_\lambda\otimes (W'_{\lambda'})_{s'},\chi)(g,g'^tm_\ell)
    |\det g'|d\dot{g}'d\dot{g}\\
    &=\int\int_{\GL_{r-2}(F)}\one_{M_{r-2}(\OO_F)}(h)\one_{\ell}\left(\begin{psmatrix}h & \\ & \begin{psmatrix}1 & \\  & \det h^{-1}\end{psmatrix}
\end{psmatrix}g,h^{-t}g'^tm\right)\frac{dh}{|\det h|}\\
&\hspace{0.5 in}\times Z(W_\lambda\otimes (W'_{\lambda'})_{s'},\chi)(g,g'^tm_\ell)
    |\det g'|d\dot{g}'d\dot{g}.
\end{align*}
Here the outer integrals are over $\mathcal{P}_{r-2}(F) \backslash \GL_{r-2}(F) \times N(F) \backslash \GL_r(F).$  By the absolute convergence statement above, we can change variables $(g,g') \mapsto \left(\begin{psmatrix}h & \\ & \begin{psmatrix}1 & \\  & \det h^{-1}\end{psmatrix}\end{psmatrix}^{-1}g,g'h\right) $
to see that the above is 
\begin{align*}
\begin{split}
 \int_{\GL_{r-2}(F)}&\int_{\mathcal{P}_{r-2}(F) \backslash \GL_{r-2}(F) \times N(F) \backslash \GL_r(F)}\one_{\ell}\left(
g,g'^tm_\ell\right)\one_{M_{r-2}(\OO_F)}(h)\\& \times Z(W_\lambda\otimes (W'_{\lambda'})_{s'},\chi)\left(\begin{psmatrix} h & & \\ &1 & \\  & &\det h^{-1}\end{psmatrix}^{-1}g,h^tg'^tm_\ell\right) |\det h|^{r}
    |\det g'| dhd\dot{g}'d\dot{g}.
\end{split}
\end{align*} By the definition of $\one_\ell,$ this is
\begin{align} \label{before:comp}
\begin{split}
   & \int_{\mathcal{P}_{r-2}(F)\backslash\GL_{r-2}(F)} \int_{\GL_{r-2}(F)}\one_{\OO_F^{r-2}}\left(g'^te_{r-2}\right)\one_{M_{r-2}(\OO_F)}(h)\\
&\quad\quad\quad\quad\times Z(W_\lambda\otimes (W'_{\lambda'})_{s'},\chi)\left(\begin{psmatrix} h & & \\ &1 & \\  & &\det h^{-1}\end{psmatrix}^{-1},h^tg'^tm_\ell\right)|\det h|^{r}
    |\det g'| dhd\dot{g}'.
\end{split}
\end{align}

Now using \eqref{ZWchi:func} and changing variables $c\mapsto c(\det h)$ in the integral \eqref{ZWchi}, the inner integral is
\begin{align*}
    \int_{\GL_{r-2}(F)}&\one_{M_{r-2}(\OO_F)}(h)\chi(\det h)|\det h|^{(r-1)/2}Z(W_\lambda\otimes \mathcal{R}(h)(W'_{\lambda'})_{s'},\chi)(I_{r},g'^tm_\ell)dh\\
    &=L(1+s',I(\lambda') \otimes \chi)Z(W_\lambda\otimes(W'_{\lambda'})_{s'},\chi)(I_r,g'^tm_\ell).
\end{align*}
Here we have used some basic facts on Godement-Jacquet integrals \cite[\S 1.6]{GodementJacquetBook}.  

Substituting this into \eqref{before:comp}, we arrive at $L(1+s',I(\lambda')\otimes\chi)$ times
\begin{align*} 
\int_{\mathcal{P}_{r-1}(F) \backslash \GL_{r-2}(F)}
\one_{\OO_F^{r-2}}\left(g'^te_{r-2}\right)Z(W_\lambda\otimes (W'_{\lambda'})_{s'},\chi)(I_r,g'^tm_\ell)|\det g'|d\dot{g}'.
\end{align*}
Using Corollary \ref{cor:unram:bound} to justify interchanges of integrals, by \eqref{ZWchi} and \eqref{ZWchi:func} this is
\begin{align*}
&\int_{U_{r-2}(F) \backslash \GL_{r-2}(F) \times F^\times} \one_{\OO_F^{r-2}}\left(g'^te_{r-2}\right)W_\lambda\begin{psmatrix}g' & & \\ &   1 & \\ && c^{-1} \end{psmatrix}W_{\lambda'}'(g')  \frac{|\det g'|^{s'-1/2}dg' \chi(c)d^\times c}{|c|^{(r-2)/2}}\\&=
    \int_{T_{r-2}(F) \times F^\times}W_\lambda\begin{psmatrix}t & & \\ &   1 & \\ && c^{-1} \end{psmatrix}W_{\lambda'}'(t)  \frac{|\det t|^{s'-1/2}dg' \chi(c)d^\times c}{\delta_{B_{r-2}}(t)|c|^{(r-2)/2}}.
\end{align*}
By Lemma \ref{lem:rankin} the above is 
\begin{align*}
    & L\left(\tfrac{1}{2}+s',I(\lambda) \times I(\lambda')\right) \sum_{k=0}^{\infty} q^{-k/2}\chi(\varpi^{k})\sum_{n=0}^{r-2} (-1)^n\mathbb{S}_{(0,\dots,0, n-k)}(q^{-\lambda})\mathrm{Tr}(\wedge^n q^{-\lambda'})q^{-n(s'+1/2)}.  
\end{align*}
The sum over $n$ is supported in $n \leq k.$  We change variables $k \mapsto k+n$ to see that this is 
\begin{align*}
   & L\left(\tfrac{1}{2}+s',I(\lambda) \times I(\lambda')\right) \sum_{k=0}^{\infty} q^{-k/2}\chi(\varpi^{k})\mathbb{S}_{(0,\dots,0, -k)}(q^{-\lambda})\sum_{n=0}^{r-2} (-1)^n\chi(\varpi^n)\mathrm{Tr}(\wedge^n q^{-\lambda'})q^{-n(s'+1)}  
  \\
    & = L\left(\tfrac{1}{2}+s',I(\lambda) \times I(\lambda')\right) L\left(\tfrac{1}{2}, I(\lambda)^{\vee}\otimes \chi\right)L\left(1+s',I(\lambda') \otimes \chi\right)^{-1}.
\end{align*}
\end{proof}

\begin{cor} \label{cor:basic:fixed}
One has that $b_\ell \in \mathcal{S}^{\mathrm{as}}(X_\ell(F),\mathcal{H})$ and $\mathcal{F}(b_\ell)=b_\ell.$
\end{cor}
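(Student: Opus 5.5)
The plan is to derive both assertions from Lemma \ref{unramifiedcomp}, Lemma \ref{lem:basic:bound}, and the functional equation in Corollary \ref{cor:temp:func:eqn}. Membership in $\mathcal{S}^{\mathrm{as}}$ is established first, and then the two sides of $\mathcal{F}(b_\ell)=b_\ell$ are compared through the matrical Plancherel isomorphism of Theorem \ref{thm:PW:Z}.

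\textbf{Step 1: $b_\ell$ lies in $\mathcal{C}(X_\ell^\circ(F),\mathcal{H})$.} By Lemma \ref{lem:basic:bound}, $b_\ell$ is $\Xi$-rapidly decreasing, so in particular $\mu_{-d}(b_\ell)<\infty$ for every $d$. It is smooth, being invariant under $\GL_{r,r-2}(\OO_F)$ and a compact open subgroup of $(\GL_2)_\ell(F)$. Hence $b_\ell\in\mathcal{C}(X_\ell^\circ(F),\mathcal{H})$, and it has trivial $K_{r,r-2}$-type.

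\textbf{Step 2: the normalized zeta integrals are polynomial.} Fix data $M,\underline{\sigma},\underline{\varphi}$. Since $b_\ell$ is $\GL_{r,r-2}(\OO_F)$-invariant, $Z(b_\ell,W_{\underline{\varphi_\lambda}},\chi)$ equals $Z(b_\ell,e_KW_{\underline{\varphi_\lambda}},\chi)$, where $e_K$ averages over $K$. This vanishes unless $I(\underline{\sigma_\lambda})$ is unramified and $\chi$ is unramified. In the unramified case, $e_KW_{\underline{\varphi_\lambda}}$ is a multiple $c(\underline{\lambda})$ of $W_\lambda\otimes W'_{\lambda'}$, where $c$ is a polynomial in $q^{\pm\lambda}$.

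Lemma \ref{unramifiedcomp}, taken at $s'=0$ after meromorphic continuation, gives
\[
Z(b_\ell,W_\lambda\otimes W'_{\lambda'},\chi)=L(\tfrac12,I(\lambda)\times I(\lambda'))\,L(\tfrac12,I(\lambda)^\vee\otimes\chi).
\]
This is needed on the unitary axis. The lemma's convergence region is $\mathrm{Re}(s'),\mathrm{Re}(\chi)\gg1$, so I would use the identity principle as in the discussion preceding Proposition \ref{zetafuncequa}. Both sides are holomorphic near $i\mathfrak{a}^*\times i\RR$, where the integral converges by Lemma \ref{lem:ZfW}. Dividing by the $L$-factors therefore leaves the polynomial $c(\underline{\lambda})$, and so $b_\ell\in\mathcal{S}^{\mathrm{as}}$.

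\textbf{Step 3: $\mathcal{F}(b_\ell)$ and $b_\ell$ have the same Plancherel transform.} By Corollary \ref{cor:temp:func:eqn},
\[
Z(\mathcal{F}(b_\ell),W,\chi)=\gamma(\tfrac12,\pi^\vee\times\pi'^\vee,\psi)\,\gamma(\tfrac12,\pi\otimes\chi^{-1},\psi)\,Z(b_\ell,\widetilde{W},\chi^{-1}).
\]
I would take $W$ to be the normalized spherical vector, so that $\widetilde{W}$ is the spherical vector for $\pi^\vee\otimes\pi'^\vee$ with value $1$ at the identity (Lemma \ref{lem:tilde:iso}). By Step 2,
\[
Z(b_\ell,\widetilde{W},\chi^{-1})=L(\tfrac12,\pi^\vee\times\pi'^\vee)\,L(\tfrac12,\pi\otimes\chi^{-1}).
\]
Multiplying by the $\gamma$-factors gives
\[
\varepsilon\,\varepsilon'\,L(\tfrac12,\pi\times\pi')\,L(\tfrac12,\pi^\vee\otimes\chi).
\]
Here the two $\varepsilon$-factors equal $1$ because everything is unramified and $\psi$ is unramified. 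The result is exactly $Z(b_\ell,W,\chi)$.

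For ramified $\pi\otimes\pi'$, or ramified $\chi$, both sides vanish on $K$-invariant data. This uses that $\mathcal{F}$ preserves trivial $K$-type, i.e.\ $\mathfrak{F}$ is stable. Since $\mathrm{HP}_{h_\ell}$ is determined by the values $Z(f,W_{\underline\varphi},\chi)$ and is injective by Theorem \ref{thm:PW:Z}, we conclude $\mathcal{F}(b_\ell)=b_\ell$.

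The main obstacle is Step 2's continuation from the convergence region of Lemma \ref{unramifiedcomp} down to $s'=0$ and unitary $\chi$. It requires checking that $\varphi\mapsto W_{\underline{\varphi}}$ is compatible with the spherical normalization, so that $c(\underline{\lambda})$ is genuinely polynomial. It also requires matching the two definitions of the zeta integral, which rests on the $\Xi$-rapid decay from Lemma \ref{lem:basic:bound}.
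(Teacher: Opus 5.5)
Your proposal is correct and follows essentially the paper's route. Membership in $\mathcal{S}^{\mathrm{as}}$ comes from Lemma~\ref{lem:basic:bound}, reduction to spherical vectors via $\GL_{r,r-2}(\OO_F)$-invariance, and Lemma~\ref{unramifiedcomp}; $\mathcal{F}(b_\ell)=b_\ell$ comes from feeding Lemma~\ref{unramifiedcomp} into the defining identity of $\mathcal{F}$ (Corollary~\ref{cor:temp:func:eqn}, with unramified $\varepsilon$-factors equal to $1$) and then invoking Theorem~\ref{thm:PW:Z}. Your identity-principle and injectivity remarks make explicit what the paper's two-line proof leaves implicit.

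One small point: the vanishing of $Z(b_\ell,W,\chi)$ for ramified $\chi$ does not follow from $\GL_{r,r-2}(\OO_F)$-invariance alone. It does follow from the invariance of $b_\ell$ under $\mathcal{R}_u(I_{r,r-2},h_\ell^{-1}\begin{psmatrix}1&\\&u\end{psmatrix}h_\ell)$ for $u\in\OO_F^\times$, which you get by substituting $h\mapsto \mathrm{diag}(u,1,\dots,1)h$ in \eqref{bell}. Combine this with the transformation law of $Z(W,\chi)$ computed in the proof of Lemma~\ref{lem:FT:scalar}.
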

\begin{proof} 
Since $b_\ell$ is $\GL_{r,r-2}(\OO_F)$-invariant, the first assertion follows from Lemma \ref{lem:basic:bound},  Lemma \ref{unramifiedcomp} and the definition of the asymptotic Schwartz space in \S \ref{sec:As}.  The second assertion follows from Lemma \ref{unramifiedcomp} and the definition of the Fourier transform in \Cref{thm:FT}.
\end{proof}

\section{The summation formula} \label{sec:the:sum}

Assume $F$ is a number field, and let $S$ be a set of places of $F$ large enough that $\beta \in \OO_F^{S \times}$ and $\ell \in \mathbb{P}^1(F)$ has image in $\mathbb{P}^1(\OO_F^S).$   Then all of the schemes defined above have natural models over  $\OO_F^S.$   We will denote these models by the same letters.

Let $\infty$ be the set of infinite places of $F.$ For an algebraic group $G$ over $F,$ let 
\begin{align}  \label{AG}
    A_G \leq  \mathrm{Res}_{F/\QQ}G(\RR)=G(F_\infty)
\end{align}
be the neutral component of the real points of the greatest $\QQ$-split torus in the center of $\mathrm{Res}_{F/\QQ}G.$ Let 
\begin{align} \begin{split}
    G(\A_F)^1:&=\bigcap_{\chi \in X^*(G)}\ker(|\cdot| \circ \chi:G(\A_F) \lto \RR),\\
[G]:&=G(F) \backslash G(\A_F) \quad\textrm{and}\quad [G]^1:=G(F) \backslash G(\A_F)^1. \end{split}
\end{align}
For an irreducible admissible representation $\pi$ of $\GL_r(\A_F)$ and $s \in \CC,$ we let $\pi_s:=\pi\otimes |\det|^s.$  We let $\mathrm{Re}(\pi)$ be the unique real number such that $\pi_{-\mathrm{Re}(\pi)}$ has unitary central character.  

\subsection{Unfolding the zeta integral}

Let $\pi$ be a cuspidal automorphic representation of $A_{\GL_r} \backslash \GL_r(\A_F).$ For any $\varphi$ in the $\pi$-isotypic subspace of $L^2(A_{\GL_r} \backslash [\GL_r])$ and $s \in \CC,$ let $\varphi_{s}(g):=|\det g|^{s}\varphi(g).$  For functions $\Phi \in \mathcal{S}(\GL_r(\A_F))$ that are finite under a maximal compact subgroup $K\leq \GL_r(\A_F),$ set
\begin{align*}
    K_{\pi_s(\Phi)}(g,h):=\sum_{\varphi \in \mathcal{B}_{\pi}}\mathcal{R}(\Phi)\varphi_s(g)\overline{\varphi}_{-s}(h),
\end{align*}
where $\mathcal{B}_{\pi}$ is an orthonormal basis of the $\pi$-isotypic subspace $V_\pi$ of $L^2(A_{\GL_r} \backslash \GL_r(\A_F))$ consisting of $K$-finite vectors.  
Because of the assumption that $\Phi$ is $K$-finite, the sum in fact has finite support.   
We assume without loss of generality that the orthonormal basis $\mathcal{B}_{\pi}$ consists of pure tensors.  

Whenever $\varphi$ is a smooth vector in $V_\pi,$ let
\begin{align} \label{global:Whitt}
    W^{\varphi}(g):=\int_{[U_r]}\varphi(ng)\overline{\psi}(n)dn \quad\textrm{and} \quad W'^{\varphi}(g):=\int_{[U_r]}\varphi(ng)\psi(n)dn
\end{align}
be the usual global Whittaker functions.  If $\varphi=\otimes_v\varphi_v$ is a pure tensor, then 
$$
W^{\varphi}(g)=\prod_vW^{\varphi_v}(g) \quad\textrm{and}\quad W'^{\varphi}(g)=\prod_vW'^{\varphi_v}(g)
$$
for suitable $W^{\varphi_v}\in \mathcal{W}(\pi_v,\psi)$ and $W'^{\varphi_v} \in \mathcal{W}(\pi_v,\overline{\psi}).$  Upon renormalizing if necessary, we can  assume that $W^{\varphi_v}(I_r)=W'^{\varphi_v}(I_r)=1$ for all but finitely many $v.$   

\begin{lem} \label{lem:rap:dec}
 If $\Phi$ has cuspidal image, then
\begin{align*}
    \int_{\mathrm{Re}(s)=t}\sum_{\pi}K_{\pi_s(\Phi)}(g,h)\frac{ds}{2\pi i}=\sum_{\gamma \in \GL_{r}(F)}\Phi(g^{-1}\gamma h)
\end{align*}
for any $t\in \RR.$  Moreover, if $\Phi$ is finite under a maximal compact subgroup on the left and right, then for any $N>0,A>0$,
$$
(1+|\mathrm{Im}(s)|)^{-N}\sum_{\pi}\sum_{\varphi}|\mathcal{R}(\Phi)\varphi_s(g)\overline{\varphi}_{-s}(h)|
$$
is bounded by a rapidly decreasing function of $(g,h) \in A_{\GL_r} \backslash [\GL_{r}],$ independent of $s $ provided $-A<\mathrm{Re}(s)<A.$
\end{lem}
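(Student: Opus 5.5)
The plan is to start from the spectral expansion of the automorphic kernel of $\mathcal{R}(\Phi)$ on $L^2(A_{\GL_r}\backslash[\GL_r])$ and then undo the $A_{\GL_r}$-quotient by Mellin inversion. Because $\Phi$ has cuspidal image, $\mathcal{R}(\Phi)$ annihilates the orthogonal complement of the cuspidal spectrum. The function $g\mapsto\sum_{\gamma}\Phi(g^{-1}\gamma h)$ is the kernel of $\mathcal{R}(\Phi)$ acting on functions on $[\GL_r]$. Integrate over $A_{\GL_r}\cong\RR_{>0}$ to get
\begin{align*}
\sum_{\gamma \in \GL_r(F)}\int_{A_{\GL_r}}\Phi(g^{-1}\gamma a h)\,|\det a|^{2s}\,da .
\end{align*}
This is the kernel of the operator $\mathcal{R}(\Phi_s)$ on $L^2(A_{\GL_r}\backslash[\GL_r])$, where $\Phi_s(x):=\int_{A_{\GL_r}}\Phi(xa)|\det xa|^{s}da$, suitably normalized so that conjugating by $|\det|^{s}$ matches $\varphi_s,\overline{\varphi}_{-s}$.

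The averaged function $\Phi_s$ still has cuspidal image, and it is $K$-finite. Its kernel therefore expands as a finite sum over $\pi$ and over $\varphi\in\mathcal{B}_\pi$, and this sum equals $\sum_\pi K_{\pi_s(\Phi)}(g,h)$. Finiteness comes from $K$-finiteness together with the fact that only finitely many cuspidal $\pi$ of bounded level and infinitesimal character contribute nontrivially; I would check this against the finite-support assertion made just before the lemma. Applying Mellin inversion in the variable $a\in A_{\GL_r}$ along the line $\mathrm{Re}(s)=t$ then recovers $\sum_\gamma\Phi(g^{-1}\gamma h)$. Two facts make the contour shift to any $t$ legitimate. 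First, $a\mapsto\Phi(g^{-1}\gamma a h)$ is Schwartz on $A_{\GL_r}$, so its Mellin transform is entire and rapidly decreasing in $\mathrm{Im}(s)$ on vertical strips. Second, the sum over $\gamma$ converges absolutely and locally uniformly.

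For the bound, write $\mathcal{R}(\Phi)\varphi_s=\mathcal{R}(\Phi\cdot|\det|^{s})\varphi$. The rapid decrease of the sum in $g$ and $h$, modulo $A_{\GL_r}$, comes from the rapid decay of cusp forms on Siegel sets. More precisely, I would take the finite collection of vectors $\mathcal{R}(\Phi|\det|^s)\varphi$ and $\varphi$ and bound each one on a Siegel set by a rapidly decreasing function, uniformly for $\mathrm{Re}(s)\in(-A,A)$. The factor $|\det g|^{\mathrm{Re}(s)}$ is harmless here because $|\det|$ is bounded on $A_{\GL_r}\backslash\GL_r(\A_F)^1$ times a compact set.

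I expect the main obstacle to be uniformity in $\mathrm{Im}(s)$. The operator norm of $\mathcal{R}(\Phi|\det|^s)$ can grow at most polynomially in $|\mathrm{Im}(s)|$, since differentiating $|\det|^{s}$ at the archimedean places produces powers of $s$. This growth is absorbed by the factor $(1+|\mathrm{Im}(s)|)^{-N}$. To control it, I would apply the uniform cusp-form decay estimates (for instance those used in \cite{BPLZZ}) to the finitely many $K$-types involved, together with Sobolev norms.
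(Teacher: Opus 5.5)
Your overall strategy is the standard one that the paper defers to (Getz--Hahn, Chapter~16): average over $A_{\GL_r}$, expand the kernel of the cuspidal-image operator on $L^2(A_{\GL_r}\backslash[\GL_r])$ spectrally, then Mellin invert in $a\in A_{\GL_r}$. However, the step on which both halves of your argument rest is false: the sum over $\pi$ is not finite. $K$-finiteness of $\Phi$ fixes the level and a finite set of $K_\infty$-types. That makes the sum over $\varphi\in\mathcal{B}_\pi$ finite for each fixed $\pi$, which is all the remark preceding the lemma says. It gives no bound on the archimedean infinitesimal character, so in general $\pi_s(\Phi)\neq 0$ for infinitely many cuspidal $\pi$; already for $\GL_2$ over $\QQ$, infinitely many Maass forms contribute. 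Consequently, equating $\sum_{\gamma}\int_{A_{\GL_r}}\Phi(g^{-1}\gamma ah)|\det a|^{s}\,da$ with $\sum_\pi K_{\pi_s(\Phi)}(g,h)$ involves an infinite spectral series whose locally uniform convergence must be proved. Likewise, the second assertion cannot be obtained by bounding a ``finite collection of vectors'' on a Siegel set. The real obstacle is summability over $\pi$, not uniformity in $\mathrm{Im}(s)$.

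What is missing is a bound uniform in $\pi$ with enough decay to sum, and one standard way goes as follows. Write $\mathcal{R}(\Phi)\varphi_s=|\det|^{s}\mathcal{R}(\Phi|\det|^{s})\varphi$. Take a Laplacian-type $\Delta\in U(\mathfrak{g}_\infty)$ acting on each relevant $K_\infty$-isotypic part of $\pi$ by a scalar $\lambda_\pi(\Delta)$ with $1+|\lambda_\pi(\Delta)|\asymp(1+\|\pi_\infty\|)^2$. Then $\lambda_\pi(\Delta)^k\mathcal{R}(\Phi|\det|^{s})\varphi=\mathcal{R}\bigl((\Phi|\det|^{s})\ast\Delta^k\bigr)\varphi$, and the Schwartz seminorms of $(\Phi|\det|^{s})\ast\Delta^k$ are $\ll_k(1+|s|)^{c_k}$ for $-A<\mathrm{Re}(s)<A$. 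This, not an operator-norm estimate, is where the factor $(1+|\mathrm{Im}(s)|)^{-N}$ is needed. Combine it with Sobolev bounds $|\varphi(g)|\ll_N(1+\|\pi_\infty\|)^{c_N}\|g\|^{-N}$ on Siegel sets (modulo $A_{\GL_r}$) for $L^2$-normalized cusp forms of bounded $K$-type. Add a polynomial bound on the number of cuspidal $\pi$ of fixed level and $K_\infty$-types with $\|\pi_\infty\|\le T$. Alternatively, use Dixmier--Malliavin, Cauchy--Schwarz and Bessel's inequality against the rapidly decreasing cuspidal kernel.

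Two smaller points:
\begin{enumerate}
\item With your own definition of $\Phi_s$, the exponent in your displayed kernel must be $|\det a|^{s}$, not $|\det a|^{2s}$. Otherwise Mellin inversion returns $\tfrac12\sum_\gamma\Phi(g^{-1}\gamma h)$.
\item The hypothesis only yields cuspidal image of $\Phi_s$ for $s\in i\RR$, via the decomposition of $L^2([\GL_r])$ over characters of $A_{\GL_r}$. For $t\neq 0$ you need an analytic-continuation step, for example holomorphy in $s$ of $I(\sigma_\lambda)(\Phi|\det|^{s})$ on the Eisenstein and residual constituents.
\end{enumerate}
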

\begin{proof}
This is standard.  We refer to \cite[Chapter 16]{Getz:Hahn} for the analogue when $\Phi \in \mathcal{S}(\GL_r(\A_F))$ is replaced by $\Phi \in C^\infty_c(A_{\GL_r} \backslash \GL_r(\A_F)).$
\end{proof}

We let 
$$
\mathcal{S}^{\mathrm{as}}(X_{\ell}(\A_F),\mathcal{H}):=\otimes_{v \nmid \infty}\mathcal{S}^{\mathrm{as}}(X_{\ell}(F_v),\mathcal{H}) \otimes \widehat{\otimes}_{v|\infty}\mathcal{S}^{\mathrm{as}}(X_{\ell}(F_v),\mathcal{H}),
$$
where the restricted tensor product is taken with respect to $b_{\ell,v}.$ Here and below $\widehat{\otimes}$ denotes the  completed projective tensor product. Note that $b_{\ell,v}$ is $\Xi$-rapidly decreasing by \Cref{lem:basic:bound}. We say $f=\otimes_v f_v\in \mathcal{S}^{\mathrm{as}}(X_{\ell}(\A_F),\mathcal{H})$ is $\Xi$-rapidly decreasing if each $f_v$ is.   We write $\norm{\,\cdot\,}:=\prod_{v}\norm{\,\cdot\,}_v$ for any of the various norms that have appeared earlier in the paper.  If $Z$ is any affine scheme of finite type over $F,$ we set
\begin{align*}
\mathcal{S}_{\mathrm{ES}}(Z(\A_F)):=\mathcal{S}_{\mathrm{ES}}(Z(F_\infty))\otimes C_c^\infty(Z(\A_F^\infty)) 
\end{align*}
(see \S \ref{sec:HC} for notation).

\begin{lem}\label{lem:adelicbound}
    Suppose $f=\otimes_v f_v\in \mathcal{S}^{\mathrm{as}}(X_{\ell}(\A_F),\mathcal{H})$ is $\Xi$-rapidly decreasing.  There is a nonnegative $\widetilde{f} \in \mathcal{S}_{\mathrm{ES}}(X_\ell(\A_F))$ such that for any $N\in \RR$
     \begin{align*}
        |f\left(g, g'^tm_\ell\right)|\ll_N \frac{\widetilde{f}\left(g, g'^tm\right)}{\norm{e_r^t g}^{r/2}\norm{g'^tm_\ell}^{(r-2)/2}}\left(\frac{\norm{e_r^t g\wedge e_{r-1}^tg}}{\norm{e_r^tg}}\right)^{-N}.
    \end{align*}
\end{lem}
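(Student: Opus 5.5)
The bound is local in nature, so the plan is to produce a nonnegative $\widetilde{f}_v$ at each place and set $\widetilde{f}=\otimes_v\widetilde{f}_v$. At almost all places $f_v=b_{\ell,v}$, and there we take $\widetilde{f}_v=\one_{X_\ell(\OO_{F_v})}$. This is justified by Lemma \ref{lem:basic:bound}. It gives support in $X_\ell(\OO_{F_v})\cap X_\ell^\circ(F_v)$ with $\norm{e_r^tg}_v=\norm{e_{r-1}^tg\wedge e_r^tg}_v$, and the bound $|b_{\ell,v}|\le \norm{e_r^tg}_v^{-1}$. On this support, integrality of $(g,g'^tm_\ell)$ forces $\norm{e_r^tg}_v\ge 1$ and $\norm{g'^tm_\ell}_v\le 1$. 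We also need the ratio of norms to be exactly $1$ with implied constant $1$, so that the product over $v$ converges.

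The key point at the unramified places is to compare $\norm{e_r^tg}_v^{-1}$ with $\norm{e_r^tg}_v^{-r/2}\norm{g'^tm_\ell}_v^{-(r-2)/2}$. This does not follow from the support statement alone. I would instead recycle the finer constraints \eqref{int:constraints} and \eqref{int:cond} from the proof of Lemma \ref{lem:basic:bound}. In particular I would use the condition $|\det g_0|=\norm{e_r^tg}^{-1}$ together with $g_0^tm\in M_{r-2,2}(\OO_F)$, and aim to show that the integral \eqref{bound:basic} is actually bounded by $\norm{e_r^tg}_v^{-r/2}\norm{g'^tm_\ell}_v^{-(r-2)/2}$. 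Concretely, the volume of $\{h\in \GL_{r-2}(\OO_F)g_0^{-1}\cap M_{r-2}(\OO_F): h^{-t}m \text{ integral}\}$ should contribute the extra factors of $\norm{m}$. If this sharper count fails, I would enlarge $\widetilde f_v$ by a harmless weight supported on $X_\ell(\OO_{F_v})$, since only nonnegativity and being Schwartz are required.

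At the finitely many remaining places, both Archimedean and non-Archimedean, the $\Xi$-rapid decay \eqref{rd:bound} gives
\[
|f_v|\ll \Xi_{X_\ell^\circ}\max(1,\norm{g}_\ell,\norm{g'^tm_\ell})^{-N}\sigma_{X_\ell^\circ}^{-d}.
\]
Bounding the $\Xi$-factors trivially by Corollary \ref{cor:xi:bounded} and writing everything in the coordinates \eqref{coord}, I expect $\Xi_{X_\ell^\circ}$ to be dominated by $|t_r|^{(r-2)/2}|a|^{-(r-2)/2}$ times a polynomial in the coordinates. One then checks that $\norm{e_r^tg}\asymp |z t_r^{-1}|$ and that $\norm{e_r^tg\wedge e_{r-1}^tg}/\norm{e_r^tg}$ are controlled by the entries appearing in $\norm{g}_\ell$ from \eqref{gnorm}. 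The arbitrary power $N$ absorbs all polynomial losses and the negative powers of these two quantities. We then define $\widetilde f_v$ to be a nonnegative Elazar--Shaviv Schwartz function on $X_\ell(F_v)$ dominating $\max(1,\norm{g}_\ell,\norm{g'^tm_\ell})^{-N'}$ pulled back through $\det\times\mathrm{Pl}_r$. This is possible because that function is rapidly decreasing in the coordinates of the closed embedding \eqref{embed0}.

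The main obstacle is the unramified step. There we need the sharp exponents $r/2$ and $(r-2)/2$ with constant exactly $1$, and not merely up to an implied constant, so that the product over all places makes sense. Once the local bounds hold, multiplying them and using $\norm{\cdot}=\prod_v\norm{\cdot}_v$ gives the lemma.
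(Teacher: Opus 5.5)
Your architecture matches the paper's: $\widetilde f=\otimes_v\widetilde f_v$, \Cref{lem:basic:bound} where $f_v=b_{\ell,v}$, and $\Xi$-rapid decay at the finitely many other places. As written, however, both local steps fail.

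\emph{Unramified places.} Your ``main obstacle'' comes from a sign error. On $X_\ell(\OO_{F_v})$ the row $e_r^tg$ is integral, so $\norm{e_r^tg}_v\le 1$, not $\ge 1$. Indeed, in the proof of \Cref{lem:basic:bound} one has $\norm{e_r^tg}_v=|t_r|_v^{-1}\le 1$, so the bound $\norm{e_r^tg}_v^{-1}$ there is $\ge 1$. Combined with $\norm{g'^tm_\ell}_v\le 1$, this gives directly, with constant $1$,
\[
\norm{e_r^tg}_v^{-1}=\frac{\bigl(\norm{e_r^tg}_v\norm{g'^tm_\ell}_v\bigr)^{(r-2)/2}}{\norm{e_r^tg}_v^{r/2}\norm{g'^tm_\ell}_v^{(r-2)/2}}\le \frac{1}{\norm{e_r^tg}_v^{r/2}\norm{g'^tm_\ell}_v^{(r-2)/2}}.
\]
In the coordinates \eqref{coord} this is exactly the paper's remark that $|t_r/a|_v\ge 1$ on the support of $b_{\ell,v}$ (see \eqref{int:constraints}). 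No sharper volume count is needed. Your fallback was not available anyway: $\widetilde f$ is bounded and its local factors are $\one_{X_\ell(\OO_{F_v})}$ at almost all places, so it cannot absorb a weight that is unbounded on $X_\ell(\OO_{F_v})$.

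\emph{Places in $S$.} Bounding the $\Xi$-factors in \eqref{XiXP} trivially by \Cref{cor:xi:bounded} loses too much when $r\ge 4$. The decay in $\norm{g}_\ell$ controls $|t_r^{-1}t_{r-2}^{r-2}\det g_1|$, which equals $|\det g|\,|z|^{-r}$ up to $|\det h_\ell|$. The trivial bound instead leaves $|t_{r-2}|^{r-2}|\det g_1|^{5/4}$, i.e.\ an extra factor $|\det g_1|^{1/4}$. No power of $\norm{g}_\ell$, of $\norm{g'^tm_\ell}$, or of $\norm{e_r^tg\wedge e_{r-1}^tg}/\norm{e_r^tg}$ controls this factor.

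For a concrete failure, take $h_\ell=I_2$, $z=a=1$, $y_1=0$, $y_2=0$, $|t_r|=T$, $|t_{r-2}|=T^{4-r}$, and $g_1$ scalar with $|\det g_1|=T^{(r-3)^2}$. Then all of those quantities, and the image of the point in $X_\ell(F_v)$, stay bounded, while $\norm{e_r^tg}_v\asymp T^{-1}$. So the lemma requires $|f_v|\ll T^{r/2}$ there, but the trivial bound only yields $T^{r/2+(r-3)^2/4}$.

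The missing input is \Cref{lem:Levi} together with \Cref{lem:parab}. They give $\Xi_{\GL_{r-2}}\begin{psmatrix} g_1 & y_1\\ & 1\end{psmatrix}^{1/2}\ll \Xi_{\GL_{r-3}}(g_1)^{1/2}|\det g_1|^{-1/4}\sigma\begin{psmatrix} g_1 & y_1\\ & 1\end{psmatrix}^{d}$, and hence
\[
\Xi_{X_\ell^\circ}\ll \frac{|t_r|^{r/2}\,|t_r^{-1}t_{r-2}^{r-2}\det g_1|\,\sigma^{d_0}}{|a|^{(r-2)/2}},
\]
which is the bound the paper uses. Taking $d'=d_0$ and using the decay in $\norm{g}_\ell$, which controls $|\det g|^{\pm1}$ and $|z|^{\pm1}$, then finishes the argument as you intend.

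One further correction: no Schwartz function dominates $\max(1,\norm{g}_\ell,\norm{g'^tm_\ell})^{-N'}$ for a fixed $N'$. So $\widetilde f_v$ has to be built from the faster-than-any-power decay of $|f_v|$, chosen independently of $N$, not from one fixed power.
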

\begin{proof}
Let $S$ be a finite set of places including $\infty$ such that $f_v=b_{\ell,v}$ for $v\not\in S$. By \Cref{lem:basic:bound} and the definition of $\Xi$-rapidly decreasing function, we have for any $N\in \RR,$ $N'\ge 0$ and $d'\ge 0$ 
    \begin{align*}
        |f\left(g, g'^tm\right)|\ll_{N',d'}     \frac{\one_{X_\ell(\widehat{\OO}_F^S)}\left(g, g'^tm\right)}{\norm{e_r^tg}^{S}} \left(\frac{\norm{e_r^t g\wedge e_{r-1}^tg}^S}{\norm{e_r^tg}^S}\right)^{-N}\prod_{v\in S} \frac{\Xi_{X_\ell^\circ,v}(g,g'^tm)\sigma_{X_\ell^\circ,v}^{-d'}(g,g'^tm)}{\max\left(1,\norm{g}_{\ell,v},\norm{g'^{t}m}_v\right)^{N'}}.
    \end{align*}
    Write $(g,g'^tm_\ell)$ using the coordinates \eqref{coord}. Then by \Cref{cor:xi:bounded}, \Cref{lem:Levi} and \Cref{lem:parab}, there is a $d_0>0$ such that
    \begin{align*}
        \Xi_{X_\ell^\circ,v}(g,g'^tm)\ll\frac{|t_r|_v^{r/2}|t_{r}^{-1}t_{r-2}^{r-2}\det g_1|_v \sigma_v\begin{psmatrix}
            g_1 & y_1\\
            & 1
        \end{psmatrix}}{|a|^{(r-2)/2}_v}^{d_0}.
    \end{align*}
    Therefore, for any $N''\ge 0,$ by choosing $N'$ large and $d'=d_0$, we have
    \begin{align*}
        f\left(g, g'^tm\right)\ll_{N'} \frac{\one_{X_\ell(\widehat{\OO}_F^S)}\left(g, g'^tm\right)|t_r|^{r/2}_S|t_r|^S}{|a|^{(r-2)/2}_S\prod_{v\in S}\max\left(1,\norm{g}_{\ell,v},\norm{g'^{t}m}_v\right)^{N''}}\left(\frac{\norm{e_r^t g\wedge e_{r-1}^tg}^S}{\norm{e_r^tg}^S}\right)^{-N}.
    \end{align*}
    Since $|t_r/a|_v \geq 1$ in the support of $b_{\ell,v}$ as explained in the proof of Lemma \ref{lem:basic:bound}, we deduce the bound in the lemma.
\end{proof}

Let $f \in \mathcal{S}^{\mathrm{as}}(X_{\ell}(\A_F),\mathcal{H})$ be $\Xi$-rapidly decreasing. For $(g,g') \in \GL_{r,r-2}(\A_F),$ let \index{$\Theta_f$}
\begin{align} \label{Thetaf2} \begin{split}
    \Theta_f(g,g'):&=\sum_{(x,m) \in X_\ell^\circ(F)}\mathcal{R}_u(g,g')f(x,m)\\&=\sum_{(x,m) \in X_\ell^\circ(F) }|\det g'|^{3/2}f\left( \begin{psmatrix} g' & \\ & I_2\end{psmatrix}^{-1}xg,g'^{t}m\right). \end{split}
\end{align}
We define $\Theta_{|f|}(g,g')$ analogously.

\begin{prop} \label{prop:mod:growth}
The function
\begin{align*}
[\GL_{r,r-2}]^1 &\lto \CC\\
    (g,g') &\longmapsto \int_{A_{\GG_m} \times A_{\GG_m}}\Theta_{|f|}(ag,a'g')|a'|^{s'}d^\times a' d^\times a 
\end{align*}
is of moderate growth provided $\mathrm{Re}(s') \gg 1.$  Hence the same is true if we replace $\Theta_{|f|}$ by $\Theta_{f}.$
\end{prop}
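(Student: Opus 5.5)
The plan is to majorize $|f|$ by a product of an Elazar-Shaviv Schwartz function and explicit height factors, using Lemma \ref{lem:adelicbound}, and then bound the resulting lattice sum by a standard geometry-of-numbers argument. Concretely, Lemma \ref{lem:adelicbound} gives
\begin{align*}
|f(x,m)|\ll_N \frac{\widetilde{f}(x,m)}{\norm{e_r^tx}^{r/2}\norm{m}^{(r-2)/2}}\left(\frac{\norm{e_r^tx\wedge e_{r-1}^tx}}{\norm{e_r^tx}}\right)^{-N}.
\end{align*}
Substituting the translate $\left(\begin{psmatrix} a'g' & \\ & I_2\end{psmatrix}^{-1}x\,ag,\ a'g'^tm\right)$ into this bound turns $\Theta_{|f|}(ag,a'g')|a'|^{s'}$ into a sum over $(x,m)\in X_\ell^\circ(F)$ of a Schwartz function evaluated at the translated point. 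That Schwartz function is then multiplied by powers of $|a|$ and $|a'|$ and by the height factors at the translated point.

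First I will handle the $m$-variable. Since $\mathcal{M}_\ell\cong\GG_a^{r-2}$, the sum over $m\in\mathcal{M}_\ell^\circ(F)$ of a Schwartz function of $a'g'^tm$ is a lattice sum in $F^{r-2}-\{0\}$. After integrating $a'$ over $A_{\GG_m}$ with the weight $|a'|^{s'}|a'|^{3(r-2)/2}\norm{a'g'^tm}^{-(r-2)/2}$, this becomes an Epstein-type series. I expect this series to be bounded by a power of $\norm{g'}$ once $\mathrm{Re}(s')$ is large, using the usual estimate $\sum_{0\ne m}\min(1,\norm{g'^tm}^{-A})\ll \norm{g'}^{B}$.

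Next I will handle $x\in N(F)\backslash\GL_r(F)$. Through the Plücker embedding \eqref{embed0}, $x$ is determined by $\det x$, the bottom two rows $(v_{r-1},v_r)$, and the wedge $v_1\wedge\cdots\wedge v_r$. Because $\widetilde f\in\mathcal{S}_{\mathrm{ES}}(X_\ell(\A_F))$ controls these coordinates, the sum over $x$ is dominated by a sum over pairs of rows $(v_{r-1},v_r)\in (F^r)^2$ spanning a rank-$2$ subspace. The summand is a Schwartz function of $(v_{r-1},v_r)ag$, weighted by $|a|^{r}\norm{e_r^tx\,ag}^{-r/2}$ and the wedge factor. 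Integrating $a$ over $A_{\GG_m}$ against the Schwartz decay absorbs the scaling. What remains is a sum over the rank-$2$ lattice vectors, which is bounded by $\norm{g}^{B'}$ via the same standard lattice-counting estimate.

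I expect the main obstacle to be the negative powers $\norm{e_r^tx}^{-r/2}$ and $\norm{m}^{-(r-2)/2}$ near zero: a Schwartz bound alone does not control them. The $a$- and $a'$-integrals over $A_{\GG_m}$ must be used to supply the compensating positive powers of $|a|$ and $|a'|$. The factor $\left(\norm{e_r^tx\wedge e_{r-1}^tx}/\norm{e_r^tx}\right)^{-N}$, with $N$ taken negative, serves to bound $\norm{e_{r-1}^tx}$. I would first check that the exponent count works, namely $\mathrm{Re}(s')\gg1$ for $a'$ and the fixed $\det$-scaling for $a$. Moderate growth of $\Theta_f$ then follows immediately from $|\Theta_f|\le\Theta_{|f|}$.
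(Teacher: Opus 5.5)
Your first step, majorizing $|f|$ via Lemma \ref{lem:adelicbound}, matches the paper. The treatment of the $x$-sum, however, has a genuine gap.

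\textbf{The fibers of the $x$-sum are infinite.} The map $x\mapsto(e_{r-1}^tx,e_r^tx)$ on $X_r^\circ(F)=N(F)\backslash\GL_r(F)$ has infinite fibers. Each fiber is a $\GL_{r-2}(F)$-torsor: the top $r-2$ rows modulo the span of the bottom two. In \eqref{embed0} these fibers are seen only through the coordinates $e_i^tx\wedge e_{r-1}^tx\wedge e_r^tx$ and $\det x$. So your claim that ``the sum over $x$ is dominated by a sum over pairs of rows'' presupposes a bound on an infinite inner sum. That inner sum is essentially a Godement--Jacquet-type sum over $\GL_{r-2}(F)$, which converges only because $\widetilde f$ decays in the $\det$-coordinate. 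You never estimate it.

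\textbf{The $a$- and $a'$-integrals are coupled.} The fiber coordinates are exactly the ones moved by the left translation by $\mathrm{diag}(a'g',I_2)^{-1}$. The top rows are hit by $(a'g')^{-1}$. For $(g,g')\in[\GL_{r,r-2}]^1$, the $\det$-coordinate of the translated point has absolute value $|a|^r|a'|^{2-r}$, so it is not a ``fixed $\det$-scaling for $a$.'' Hence the $x$-summand depends on $a'$ and $g'$, and you cannot integrate $a'$ against the $m$-sum and $a$ against the $x$-sum separately.

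\textbf{The separation actually fails.} Schwartz decay only controls $|a|\to\infty$. As $|a|\to0$, the number of rank-two lattice pairs with $\norm{(e_{r-1}^tx,e_r^tx)ag}\ll1$ grows like $|a|^{-2r}$, and the weight $\norm{e_r^txag}^{-r/2}$ is $\gg1$ on that range. So your decoupled $a$-integral diverges. Only the $\det$-decay, which ties $|a|^r$ to $|a'|^{r-2}$, prevents this, so the exponent count must be done jointly in $(a,a',g,g')$ together with the $\GL_{r-2}(F)$-fiber sum. The factor $(\norm{e_r^tx\wedge e_{r-1}^tx}/\norm{e_r^tx})^{-N}$ does not help here. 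In the proof of Lemma \ref{lem:adelicbound} it is a product over places outside $S$, so it is insensitive to $a$. Moreover, $\widetilde f$ already controls $e_{r-1}^tx$, and the paper simply drops this factor.

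\textbf{How the paper handles these sums.} It avoids counting altogether. Using Finis--Lapid (Lemma~3.3), it bounds the sums over $\GL_{r-2,2}(F)$-orbits in $X_r^\circ(F)$ and over $F^\times$-orbits in $\mathcal{M}_\ell^\circ(F)$ by adelic integrals. Together with the $a$- and $a'$-integrals these become integrals over $\GL_{r-2,2}(\A_F)\times\A_F^\times$, with an auxiliary weight $|\det g''|^t|\det h|^{-t(r-2)/2}$. The substitution $g''\mapsto g''(a'g')^{-1}$ then absorbs the $a'g'$-dependence of the top rows. What remains is a product of two Eisenstein series:
\begin{itemize}
\item a mirabolic Eisenstein series in $g'$, absolutely convergent for $\mathrm{Re}(s')-(r-2)t\gg1$;
\item an Eisenstein series on $\GL_r$ induced from $P$ in $g$, which has moderate growth by Moeglin--Waldspurger once its inducing section converges.
\end{itemize}
That convergence is checked through \eqref{embed0} by dominating the section by
$$\int_{\GL_{r-2,2}(\A_F)}f_2(g'',h)\,|\det g''|^{t'-t}\,|\det h|^{3t(r-2)/2-(r-3)t'}\,dg''\,dh.$$
This Godement--Jacquet-type integral is exactly where the $\GL_{r-2}$-fibers and the $\det$-decay, left unaddressed in your plan, are handled. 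A direct geometry-of-numbers argument along your lines might be made to work, but it would have to carry out these coupled estimates, which are the substance of the proof.
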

\begin{proof}
 Without loss of generality we may assume $s' \in \RR.$   By \Cref{lem:adelicbound} 
\begin{align} \label{theta:int}
\int_{A_{\GG_m} \times A_{\GG_m}}\Theta_{|f|}(ag,a'g')|a'|^{s'}d^\times a' d^\times a
\end{align}
is dominated by  
\begin{align*}
\int_{A_{\GG_m} \times A_{\GG_m}} \sum_{(x,m)\in X_\ell^\circ(F)}&\widetilde{f}\left( \begin{psmatrix} a'g' & \\ & I_2\end{psmatrix}^{-1}xag,a'g'^{t}m\right)\frac{|a'|^{s'+3(r-2)/2}|\det g'|^{3/2}d^\times a'd^\times a}{\norm{e_r^txag}^{r/2}\norm{ a'g'^tm}^{(r-2)/2}} 
\end{align*}
for some non-negative $\widetilde{f} \in \mathcal{S}_{\mathrm{ES}}(X_\ell(\A_F)).$

We bound the sum over $\GL_{r-2,2}(F)$-orbits in $X_r^\circ(F)$ by an integral over 
$$
 \left\{(g,h) \in \GL_{r-2,2}(\A_F): |\det g\det h|=1\right\}
$$ 
and $F^\times$-orbits in $\mathcal{M}_{\ell}^\circ(F)$ by an integral over $\A_F^\times$ using \cite[Lemma 3.3]{FinisLapid2011}.  Then there is a nonnegative $\Phi \in \mathcal{S}_{\mathrm{ES}}(X_r(\A_F) \times \mathcal{M}_{\ell}(\A_F))$ such that the integral \eqref{theta:int} is dominated by 
\begin{align*}
    \sum_{(x,m)}\int_{\GL_{r-2,2}(\mathbb{A}_F)\times \mathbb{A}_F^\times}&\Phi
    \left( \begin{psmatrix} g''a'g' & \\ & h\end{psmatrix}^{-1}xg,a'g'^{t}m\right)\frac{|\det g''|^t}{|\det h|^{t(r-2)/2}} \frac{|a'|^{s'+3(r-2)/2}|\det g'|^{3/2}dg''dhd^\times a'}{\norm{e_r^t\begin{psmatrix} g''a'g' & \\ & h\end{psmatrix}^{-1}xg}^{r/2}\norm{ a'g'^tm}^{(r-2)/2}} 
\end{align*}
for any $t \in \RR.$ 
Here the sum is over $(x,m) \in P(F) \backslash \GL_r(F) \times \mathbb{P}(\mathcal{M}_{\ell})(F)$ and $P$ is the standard parabolic subgroup with Levi subgroup $\GL_{r-2,2}.$ Changing variables $g'' \mapsto g''(a'g')^{-1}$ gives 
\begin{align*}
    \sum_{(x,m)}\int_{\GL_{r-2,2}(\mathbb{A}_F)\times \mathbb{A}_F^\times}\Phi\left( \begin{psmatrix} g'' & \\ & h\end{psmatrix}^{-1}xg,a'g'^{t}m\right)\frac{|\det g''|^{t}}{|\det h|^{t(r-2)/2}}\frac{|a'|^{s'+(r-2)(3/2-t)}|\det g'|^{-t+3/2}dg''dhd^\times a'}{\norm{e_r^t\begin{psmatrix} g'' & \\ & h\end{psmatrix}^{-1}xg}^{r/2}\norm{ a'g'^tm}^{(r-2)/2}} .
\end{align*}
This is a product of two Eisenstein series.  When they are absolutely convergent, they are known to be of moderate growth. As a function of $g',$ it is a mirabolic Eisenstein series.  It converges absolutely for $s'-(r-2)t \gg 1.$  By \cite[\S II.1.5]{MW:Spectral:Decomp:ES},  as a function of $g$ it is convergent and of moderate growth if $t \gg 1$ provided that the section
\begin{align*}
g \longmapsto \int_{\GL_{r-2,2}(\A_F)} \norm{e_r^t\begin{psmatrix} g'' & \\ & h\end{psmatrix}^{-1}g}^{-(r/2)}\Phi\left(\begin{psmatrix} g'' & \\ & h\end{psmatrix}^{-1} g,m\right)\frac{|\det g''|^t}{|\det h|^{t(r-2)/2}}dg''dh
\end{align*}
converges  for any $m \in \mathcal{M}_\ell^\circ(\A_F).$

Changing variables $(g'',h) \mapsto (g''^{-1},h^{-1})$ and using \eqref{embed0}, we see that the section above is bounded by 
\begin{align*}
&\int_{\GL_{r-2,2}(\A_F)} f_1\left( (\det g'')(\det h), g''\det h,h\right)\frac{|\det h|^{t(r-2)/2}}{|\det g''|^t}dg''dh\\
&=\int_{\GL_{r-2,2}(\A_F)} f_1\left( (\det g'')(\det h)^{3-r}, g'',h\right)\frac{|\det h|^{3t(r-2)/2}}{|\det g''|^t}dg''dh
\end{align*} 
for some $f_1\in \mathcal{S}(\A_F^\times \times M_{r-2}(\A_F) \times M_{2}(\A_F)).$ 
For any $t',$ we can find $f_2 \in \mathcal{S}(M_{r-2}(\A_F) \times M_{2}(\A_F))$ such that the above is bounded by 
$$
\int_{\GL_{r-2,2}(\A_F)} f_2\left( g'',h\right)|\det g''|^{t'-t}|\det h|^{3t(r-2)/2-(r-3)t'}dg''dh.
$$
This converges if we choose $t$ and $t'$ so that $t'-t \gg 1$ and $3t(r-2)/2-(r-3)t' \gg 1.$ 
\end{proof}

Set \index{$Z(f,\varphi,\varphi'_{s'})$}
\begin{align}
    Z(f,\varphi,\varphi'_{s'}):=\int_{[\GL_{r-2} \times \GL_r]} \Theta_f(g,g')\varphi(g)\varphi'_{s'}(g')dg'.
\end{align}
This converges for $\mathrm{Re}(s') \gg 1$ by Proposition \ref{prop:mod:growth}.  

\begin{lem} \label{sumexpansion}
Under assumptions \eqref{a} and \eqref{b}, for $t' \gg 1$  one has
\begin{align*}
    &\sum_{x \in X^\circ_{\ell}(F)}f\left(x\right)=\sum_{\pi,\pi'}\sum_{(\varphi,\varphi') \in \mathcal{B}_{\pi} \times \mathcal{B}_{\pi'}}\int_{(\mathrm{Re}(s_0),\mathrm{Re}(s'))=(0,t')}\overline{\varphi}(I_r)\overline{\varphi}'(I_{r-2})Z(f,\varphi_{s_0},\varphi'_{s'})\frac{ds'ds_0}{(2\pi i)^2}.
\end{align*}
Here the sum in $\pi$ (resp.~$\pi'$) is over the set of isomorphism classes of cuspidal automorphic representations of $A_{\GL_r} \backslash \GL_r(\A_F)$ (resp.~$A_{\GL_{r-2}} \backslash \GL_{r-2}(\A_F)$).
\end{lem}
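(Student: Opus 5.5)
The plan is to express $\sum_{x\in X^\circ_\ell(F)} f(x)=\Theta_f(I_r,I_{r-2})$ spectrally. Assumption \eqref{a} makes $\Theta_f$ a cuspidal kernel integrated against $\Theta_{f_0}$, and Lemma \ref{lem:rap:dec} then supplies the spectral expansion.

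First, by \eqref{a} write $f=\mathcal{R}_u(\Phi\otimes\Phi')f_0$. Unfold the adelic integral defining $\mathcal{R}_u$ against the rational points, collapsing $\GL_{r,r-2}(\A_F)$ to $[\GL_{r,r-2}]$ via the sums over $\GL_r(F)$ and $\GL_{r-2}(F)$:
\begin{align*}
\Theta_f(I_r,I_{r-2})=\int_{[\GL_{r}]\times[\GL_{r-2}]}\Theta_{f_0}(g,g')\Big(\sum_{\gamma\in\GL_r(F)}\Phi'(\gamma g)\Big)\Big(\sum_{\gamma'\in\GL_{r-2}(F)}\Phi(\gamma' g')\Big)dg\,dg'.
\end{align*}
Here the factor $|\det g'|^{3/2}$ is absorbed by the definition of $\Theta_{f_0}$. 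Absolute convergence of this unfolding should follow from Proposition \ref{prop:mod:growth}, which gives moderate growth of $\Theta_{|f_0|}$ after averaging over $A_{\GG_m}$, together with the rapid decay of the cuspidal kernels. To place the $A_{\GG_m}$-averages there, split $[\GL_n]=A_{\GL_n}\times[\GL_n]^1$.

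Next, apply Lemma \ref{lem:rap:dec} to each kernel, with $(g,h)=(g,I)$. The kernel $\sum_{\gamma}\Phi'(g^{-1}\gamma)$ is replaced by $\int_{\mathrm{Re}(s_0)=0}\sum_{\pi}K_{\pi_{s_0}(\Phi')}(I,g)\,ds_0/2\pi i$. For the $\GL_{r-2}$ factor the contour is shifted to $\mathrm{Re}(s')=t'$; this shift is free, since Lemma \ref{lem:rap:dec} holds for any $t$. Expanding $K_{\pi_s}$ as $\sum_\varphi \mathcal{R}(\Phi)\varphi_s(I)\overline{\varphi}_{-s}(g)$ and interchanging, the integral over $[\GL_{r,r-2}]$ becomes $Z(f_0,\cdot,\cdot)$ against the basis vectors. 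Self-adjointness moves $\mathcal{R}(\Phi)$ onto $f_0$, turning $Z(f_0,\ldots)$ into $Z(f,\varphi_{s_0},\varphi'_{s'})$ paired with $\overline{\varphi}(I_r)\overline{\varphi}'(I_{r-2})$. This bookkeeping, with complex conjugations, $s\leftrightarrow -s$ and $\Phi^\vee$, only needs care. Since the sums over $\varphi$ range over an orthonormal basis and are finite by $K$-finiteness, re-indexing gives the stated form.

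The main obstacle is justifying Fubini for the exchange of the $[\GL_{r,r-2}]$ integral with the $s_0,s'$ integrals and the sums over $\pi,\pi'$. I would first use the second part of Lemma \ref{lem:rap:dec}: for $\mathrm{Re}(s)$ in a compact range, $(1+|\mathrm{Im}\, s|)^{-N}\sum_\pi\sum_\varphi|\cdots|$ is bounded by a rapidly decreasing function. Combined with the moderate growth of Proposition \ref{prop:mod:growth}, valid for $\mathrm{Re}(s')=t'\gg1$, this bounds the integrand. The remaining issue is the polynomial growth factor $(1+|\mathrm{Im}\, s|)^{N}$, which must be made integrable. To handle it, replace $\Phi$ by a convolution with a central element, using the elliptic-regularity trick from Proposition \ref{prop:unif} so that the $\mathcal{R}(\Phi)\varphi_s$ terms decay in $|\mathrm{Im}\,s|$. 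Alternatively, exploit that the $A_{\GG_m}$-integral of $\Theta_{|f_0|}$ against $|a|^{s}$ is a Mellin transform, which decays by \eqref{b}.
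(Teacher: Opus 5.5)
Your route is the paper's: write $f=\mathcal{R}_u(\Phi\otimes\Phi')f_0$, fold against $\Theta_{f_0}$, expand both cuspidal kernels by Lemma \ref{lem:rap:dec} on $(\mathrm{Re}(s_0),\mathrm{Re}(s'))=(0,t')$, interchange using Lemma \ref{lem:rap:dec} and Proposition \ref{prop:mod:growth}, and move the convolutions back onto $f_0$. As written, however, the interchange fails for the $\GL_{r-2}$ factor. The reason is that the choice of kernel slot is not mere bookkeeping. You expand $\sum_{\gamma'}\Phi(\gamma' g')=K_{\Phi}(I,g')$ as $\sum_{\varphi'}\mathcal{R}(\Phi)\varphi'_{s'}(I)\,\overline{\varphi'}_{-s'}(g')$, so the $g'$-dependence is $\overline{\varphi'}(g')|\det g'|^{-s'}$. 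On $\mathrm{Re}(s')=t'\gg 1$ this puts the weight $|\det a'|^{-t'}$ on $A_{\GL_{r-2}}$. What you obtain is then $Z(f_0,\cdot,\cdot)$ at $-s'$, which lies outside the range $\mathrm{Re}(s')\gg1$ where Proposition \ref{prop:mod:growth} (and convergence of $Z(f,\varphi,\varphi'_{s'})$) is available.

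The fix is to do what you announced, $(g,h)=(g,I)$, as the paper does. Write $\sum_{\gamma'}\Phi(\gamma'g')=\sum_{\gamma'}\Phi^\vee(g'^{-1}\gamma')$ and expand $K_{\pi'_{s'}(\Phi^\vee)}(g',I_{r-2})=\sum_{\varphi'}\mathcal{R}(\Phi^\vee)\varphi'_{s'}(g')\overline{\varphi'}(I_{r-2})$, and likewise with $\Phi'^\vee$ on $\GL_r$. The weight is then $|\det a'|^{+t'}$. The changes of variables $g\mapsto gy^{-1}$, $g'\mapsto g'y^{-1}$ turn $\Theta_{f_0}$ into $\Theta_{\mathcal{R}_u(\Phi\otimes\Phi')f_0}=\Theta_f$. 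This gives $\overline{\varphi}(I_r)\overline{\varphi}'(I_{r-2})Z(f,\varphi_{s_0},\varphi'_{s'})$ directly, with no re-indexing. Alternatively, keep your form with the contour at $\mathrm{Re}(s')=-t'$ and re-index by $s'\mapsto -s'$, $\varphi'\mapsto\overline{\varphi'}$, $\pi'\mapsto\pi'^\vee$.

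On decay in $\mathrm{Im}(s)$: the paper simply invokes Lemma \ref{lem:rap:dec} for integrability along the contours, reading it as giving rapid decay, so your concern about its literal wording is fair. Your first remedy works if the operator is the generator $D$ of $\mathrm{Lie}(A_{\GL_r})$ (resp.\ $A_{\GL_{r-2}}$). Since the cusp forms are $A$-invariant, $D$ acts on every $\varphi_s$ by the same scalar $cs$, with $c$ independent of $\pi$. So for $|\mathrm{Im}(s)|\ge1$ one has $\mathcal{R}(\Phi)\varphi_s=(cs)^{-N}\mathcal{R}(\Phi\ast D^N)\varphi_s$, with $\Phi\ast D^N$ still Schwartz and $K$-finite. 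The second part of Lemma \ref{lem:rap:dec} applied to $\Phi\ast D^N$ then gives decay uniformly over the infinitely many $\pi$; equivalently, the Mellin transform of $\Phi$ along the center decays rapidly. A $\sigma$-dependent central element chosen as in Proposition \ref{prop:unif} would not give this uniformity.

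Drop the alternative via \eqref{b}. Fubini requires absolute values, which remove the oscillation of $|a|^{i\,\mathrm{Im}(s)}$, and \eqref{b} is a hypothesis on $f$ and $\mathcal{F}(f)$, not on $f_0$.
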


\begin{proof}
By \eqref{a}, $f=\mathcal{R}_u(\Phi',\Phi)f_0.$ We have
\begin{align*}
&\sum_{(x,m) \in X_\ell^\circ(F)}\mathcal{R}_u(\Phi',\Phi)f_0\left(x,m\right)\\
&=\sum_{(x,m)\in X_\ell^\circ(F) }\int_{\GL_{r-2}(\A_F) \times \GL_r(\A_F)}f_0\left(\begin{psmatrix} g' & \\ & I_2 \end{psmatrix}^{-1}xg,g'^{t}m \right)\Phi'^\vee(g'^{-1})\Phi^\vee(g^{-1})|\det g'|^{3/2}dg'dg\\
&=\int_{[\GL_{r-2} \times \GL_r]}\Theta_{f_0}(g,g')\sum_{(\gamma',\gamma) \in \GL_{r-2}(F) \times \GL_{r}(F)}\Phi'^\vee(g'^{-1}\gamma')\Phi^\vee(g^{-1}\gamma )dg'dg.
\end{align*}
Expanding the two automorphic kernel functions using Lemma \ref{lem:rap:dec}, this is 
\begin{align*}
&\int_{[\GL_{r,r-2}]}
\Theta_{f_0}(g,g')
\sum_{\pi,\pi'}\int_{(\mathrm{Re}(s_0),\mathrm{Re}(s'))=(0,t')}K_{\pi_{s_0}(\Phi^\vee)}(g,I_r)K_{\pi'_{s'}(\Phi'^\vee)}(g',I_{r-2}) \frac{ds_0 ds'}{(2\pi i)^2}dg'dg.
\end{align*}
Provided $t' \gg 1,$ by Lemma \ref{lem:rap:dec} and Proposition \ref{prop:mod:growth} we can apply Fubini-Tonelli Theorem to see that this is 
\begin{align*}
&\int_{(\mathrm{Re}(s_0),\mathrm{Re}(s'))=(0,t')}\sum_{\pi,\pi'}\int_{[\GL_{r,r-2}] }\Theta_{f_0}(g,g') K_{\pi_{s_0}(\Phi^\vee)}(g,I_r) K_{\pi'_{s'}(\Phi'^{\vee})}(g',I_{r-2})dg'dg\frac{ds_0 ds'}{(2\pi i)^2}.
\end{align*}
 We have
\begin{align*}
   & \int_{[\GL_{r,r-2}]}\Theta_{f_0}(g,g') K_{\pi_{s_0}(\Phi^\vee)}(g,I_r) K_{\pi'_{s'}(\Phi'^{\vee})}(g',I_{r-2})dg'dg\\
    &=\int_{[\GL_{r,r-2}]}\sum_{(x,m) \in X_\ell^\circ(F)}f_0\left(\begin{psmatrix} g' & \\ & I_2 \end{psmatrix}^{-1}xg,g'^{t}m \right)\\& \times \sum_{(\varphi,\varphi') \in \mathcal{B}_{\pi} \times \mathcal{B}_{\pi'}}\overline{\varphi}(I_r)\overline{\varphi}(I_{r-2})\mathcal{R}(\Phi^\vee)\varphi_{s_0}(g)
    \mathcal{R}(\Phi'^\vee)\varphi'_{s'}(g')|\det g'|^{3/2}
   dg'dg\\
   &=\sum_{(\varphi,\varphi') \in \mathcal{B}_{\pi} \times \mathcal{B}_{\pi'}}\overline{\varphi}(I_r)\overline{\varphi}(I_{r-2}) Z(f,\varphi_{s_0},\varphi_{s'}).
\end{align*} 
Here one can again use 
 Lemma \ref{lem:rap:dec} and Proposition \ref{prop:mod:growth} to justify the rearrangement of the integrals.
\end{proof}
\begin{lem} \label{lem:unfold:prep} Assume $\varphi=\otimes_v\varphi_v$ and $\varphi'=\otimes_v\varphi_v'$ are pure tensors and that $\mathrm{Re}(s_0)=0.$ For $\Xi$-rapidly decreasing $f=\otimes_v f_v\in \mathcal{S}^{\mathrm{as}}(X_\ell(\A_F),\mathcal{H}),$ the integral
\begin{align*}
&\int|f|\left(g,g'^{t}m_\ell\right)
\left|\sum_{\alpha \in F^\times}\int_{U_{r-2}(F)\backslash \mathcal{P}_{r-2}(\A_F)}  W^\varphi_{s_0}\left(\begin{psmatrix} pg' & \\ & \begin{psmatrix}
    1 & \\
    & \alpha
\end{psmatrix}h_\ell  \end{psmatrix}g \right) \varphi'_{s'}(pg')\frac{d_r p}{|\det pg'|^{3/2}}\right| |\det g'|dg'dg
\end{align*}
is finite if $\mathrm{Re}(s')\gg 1.$ Furthermore, if  $\mathrm{Re}(s')\gg 1$ and $t \gg 1,$ then
\begin{align*}
&\int f\left(g,g'^{t}m_\ell\right)\sum_{\alpha \in F^\times}\int_{U_{r-2}(F)\backslash \mathcal{P}_{r-2}(\A_F)}  W^\varphi_{s_0}\left(\begin{psmatrix} pg' & \\ & \begin{psmatrix}
    1 & \\
    & \alpha
\end{psmatrix}h_\ell  \end{psmatrix}g \right)\varphi'_{s'}(pg') \frac{|\det g'|d_r pdg'dg}{|\det pg'|^{3/2}} \\
   &= |\det h_\ell|^{2-r}\sum_{\chi}\int_{\mathrm{Re}(s)=t}\prod_{v}Z(f_v,W^{\varphi_v}_{s_0}\otimes W'^{\varphi'_v}_{s'},\chi_s)\frac{c_F ds}{2\pi i}.
\end{align*}
In both assertions, the outer integrals are over $\mathcal{P}_{r-2}(\A_F) \backslash \GL_{r-2}(\A_F) \times N(\A_F) \backslash \GL_r(\A_F).$ Moreover, the sum is over characters $\chi:A_{\GG_m} \backslash [\GG_m] \to \CC^\times,$ and $c_F \in \RR_{>0}$ depends only on $F.$
\end{lem}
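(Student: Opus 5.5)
The plan is to reduce both assertions to local estimates: first the absolute convergence, then a Mellin inversion in the $\GL_1$-variable $\alpha$ that produces the Euler product of local zeta integrals.

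\textbf{Step 1: absolute convergence.} Since $\varphi'$ is cuspidal, it is rapidly decreasing on $A_{\GL_{r-2}}\backslash[\GL_{r-2}]$; in particular it is bounded, and $|\varphi'_{s'}(pg')|\ll |\det pg'|^{\mathrm{Re}(s')}$. I would therefore dominate the inner sum by replacing $\varphi'$ with a constant and work adelically.
\begin{itemize}
\item[(i)] Bound $W^{\varphi}=\prod_v W^{\varphi_v}$ by gauges: Lemma \ref{lem:C:gauge} at the places of $S$, and the unramified gauge at the places outside $S$.
\item[(ii)] Reduce the $U_{r-2}(F)\backslash\mathcal{P}_{r-2}(\A_F)$-integral against these gauges to the adelic analogue of the $J$ and $c$-integrals of Lemma \ref{lem:ZWlambda}. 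The sum over $\alpha\in F^\times$ is handled by replacing it with an integral over $\A_F^\times$, via the same Finis--Lapid type bound used in Proposition \ref{prop:mod:growth}.
\item[(iii)] Pair the result against the bound for $|f|$ from Lemma \ref{lem:adelicbound}.
\end{itemize}
The required convergence is then the adelic version of Lemma \ref{lem:ZfW2}, run place by place. At almost all places $f_v=b_{\ell,v}$, and there the uniformity in the residual characteristic is supplied by Lemma \ref{lem:basic:bound} and Corollary \ref{cor:unram:bound}, as in the proof of Lemma \ref{unramifiedcomp}. Taking $\mathrm{Re}(s')\gg 1$ makes both the $a$- and $t_{r-2}$-integrals converge, and it also makes the product over $v\notin S$ of the local bounds converge.

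\textbf{Step 2: the identity.} Consider the function
\[
\alpha\longmapsto \int_{U_{r-2}(F)\backslash\mathcal{P}_{r-2}(\A_F)} W^\varphi_{s_0}(\cdots)\,\varphi'_{s'}(pg')\,\frac{d_rp}{|\det pg'|^{3/2}}
\]
on $\A_F^\times$. Summing it over $\alpha\in F^\times$ gives a function on $F^\times\backslash\A_F^\times$. I would expand this function by Mellin inversion over characters $\chi_s$ of $A_{\GG_m}\backslash[\GG_m]$, on the line $\mathrm{Re}(s)=t\gg 1$; this is where the constant $c_F$ enters. The inversion is justified by the growth in $|\alpha|$ coming from the gauge bounds in Step 1. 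This yields the factor $\chi(c)|c|^{-(r-2)/2}$ integrated over $c\in\A_F^\times$.

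Next I would unfold $\varphi'$ along $U_{r-2}(F)\backslash\mathcal{P}_{r-2}(\A_F)$ using the Whittaker expansion of a cuspidal form on the mirabolic,
\[
\varphi'(p)=\sum_{\gamma\in U_{r-2}(F)\backslash\mathcal{P}_{r-2}(F)}W'^{\varphi'}(\gamma p),
\]
together with the left $U(F)$-equivariance of $W^\varphi$. This collapses the quotient to $U_{r-2}(\A_F)\backslash\mathcal{P}_{r-2}(\A_F)$. Both $W^\varphi$ and $W'^{\varphi'}$ then factor over places. Comparing with \eqref{Zint:2} and \eqref{Z:ids}, with $W=W^{\varphi_v}_{s_0}\otimes W'^{\varphi'_v}_{s'}$, the integrand becomes $\prod_v f_v\,Z(W_v,\chi_{s,v})$ up to the factor $|\det h_\ell|^{r-2}$; this produces the prefactor $|\det h_\ell|^{2-r}$. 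Integrating against $f$ then gives the Euler product $\prod_v Z(f_v,\ldots,\chi_s)$.

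\textbf{Justification and main obstacle.} Every interchange of sums and integrals is licensed by the absolute bound of Step 1. The main obstacle is the uniformity in that step: producing an adelic majorant whose local factors equal $1+O(q_v^{-1-\delta})$ at unramified places, so that the product converges. For this I would rely on Lemma \ref{unramifiedcomp}, noting that its absolute convergence region is independent of $q$; this identifies the local unramified majorant with $L$-values in their region of absolute convergence when $\mathrm{Re}(s),\mathrm{Re}(s')\gg 1$.
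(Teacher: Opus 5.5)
Your algebraic skeleton is the paper's. You collapse $U_{r-2}(F)\backslash\mathcal{P}_{r-2}(\A_F)$ to $U_{r-2}(\A_F)\backslash\mathcal{P}_{r-2}(\A_F)$, Mellin-invert the $\alpha$-sum on $\mathrm{Re}(s)=t$, and factor into $\prod_v Z(W^{\varphi_v}_{s_0}\otimes W'^{\varphi'_v}_{s'},\chi_s)(g,g'^tm_\ell)$. One small slip: the collapse is just the definition \eqref{global:Whitt} of $W'^{\varphi'}$ together with the left $(U_{r-2}(\A_F),\psi)$-equivariance of $W^{\varphi}$. Equivariance under $U(F)$ gives nothing, and the Whittaker expansion of $\varphi'$ is not the mechanism.

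\textbf{The gap.} It is your claim that every interchange is licensed by the bound of Step 1. That bound lives on the geometric side, with absolute values inside the $\alpha$-sum. To pass from $\int f\cdot\sum_\chi\int_{\mathrm{Re}(s)=t}\prod_v Z(\ldots,\chi_s)(g,g'^tm_\ell)\,ds$ to $\sum_\chi\int_{\mathrm{Re}(s)=t}\prod_v Z(f_v,\ldots,\chi_s)\,ds$, you must exchange the $(g,g')$-integral with the spectral sum and integral. Fubini then requires
\[
\sum_{\chi}\int_{\mathrm{Re}(s)=t}\int |f|(g,g'^tm_\ell)\,\bigl|Z(W_{s_0}\otimes W'_{s'},\chi_s)(g,g'^tm_\ell)\bigr|\,|\det g'|\,dg'\,dg\,|ds|<\infty .
\]
This does not follow from absolute convergence on the geometric side, since the Mellin transform of an integrable function need not be integrable.

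Similarly, growth in $|\alpha|$ only makes the forward Mellin transform converge on $\mathrm{Re}(s)=t$. The inverse transform needs decay in $\norm{\chi_v}$ for $v|\infty$ and in $|\mathrm{Im}(s)|$. The missing ingredient is \Cref{lem:uniform:ZfW}: rapid decay of the local integrals in $\max(1,|\mathrm{Im}(s)|,\norm{\chi})$, obtained by integrating by parts at the archimedean places.

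The paper reduces the whole lemma to the displayed bound, which it proves via \Cref{lem:ZfW2}, \Cref{lem:uniform:ZfW}, and the residue-characteristic uniformity in the proof of \Cref{unramifiedcomp}. The first assertion then follows immediately. Its absolute value sits outside the $\alpha$-sum, and $|\sum_\alpha\cdots|\le\sum_\chi\int|Z(\ldots,\chi_s)(g,g'^tm_\ell)|\,|ds|$. So the stronger statement with absolute values inside is neither needed nor proved.

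\textbf{A secondary issue.} Even as a route to that stronger statement, Step 1 is not ``the adelic version of \Cref{lem:ZfW2}''. That lemma integrates over $y_2$ using the factor $\max(1,|y_2|)^{-\mathrm{Re}(\chi)+N'+(r-2)/2}$ supplied by \Cref{lem:ZWlambda}. This decay comes from the oscillation $\psi(cy_2)$ in the $c$-integral, as in the proof of \Cref{lem:map2X}. Once the absolute value is moved inside the $c$-integral (or the $\alpha$-sum), $|J(W)|$ is independent of $y_2$. You would then have to extract the $y_2$-decay from $f$ instead, for example from the last term of $\norm{g}_\ell$ in \eqref{gnorm}. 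That is a different argument from the one you cite, and you would need to supply it.
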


\begin{proof}
Since $\varphi'$ is rapidly decreasing, by an adelic analogue of gauge estimates \eqref{gauge} and \eqref{CS} for $W,$ for $\mathrm{Re}(s')\gg 1$
\begin{align*} 
    \sum_{\alpha \in F^\times}\int_{U_{r-2}(F)\backslash \mathcal{P}_{r-2}(\A_F)}|W^\varphi_{s_0}|\left(\begin{psmatrix} pg' & \\ & \begin{psmatrix}
    1 & \\
    & \alpha
\end{psmatrix}h_\ell  \end{psmatrix}g \right)  |\varphi'_{s'}|(pg') \frac{d_r p}{|\det pg'|^{3/2}}<\infty.
\end{align*}
Using the definition \eqref{global:Whitt}, for $\mathrm{Re}(s')$ large we have
\begin{align*}
    &\sum_{\alpha \in F^\times}\int_{U_{r-2}(F)\backslash \mathcal{P}_{r-2}(\A_F)}  W^{\varphi}_{s_0}\left(\begin{psmatrix} pg' & &\\ & \begin{psmatrix}
    1 & \\
    & \alpha
\end{psmatrix}h_\ell \end{psmatrix}g \right) \varphi'_{s'}(pg')\frac{d_r p}{|\det pg'|^{3/2}}\\
    &=\sum_{\alpha\in F^\times}\int_{U_{r-2}(\A_F)\backslash \mathcal{P}_{r-2}(\A_F)}W^{\varphi}_{s_0}\left(\begin{psmatrix} pg' & &\\ &\begin{psmatrix}
    1 & \\
    & \alpha
\end{psmatrix}h_\ell \end{psmatrix}g \right)W'^{\varphi'}_{s'}(p'g')\frac{d_r p}{|\det pg'|^{3/2}}.
\end{align*}
By applying Mellin inversion to the sum over $\alpha,$ if $ \mathrm{Re}(s') \gg 1$ and $t \gg 1$ this is
\begin{align*}
&\sum_{\chi}\int_{\mathrm{Re}(s)=t} \int_{\A_F^\times} \left(\prod_v J(W^{\varphi_v}_{s_0}\otimes W'^{\varphi'_v}_{s'})\right)\left(\begin{psmatrix}
    I_{r-2}& \\
    & \begin{psmatrix}
    1 & \\
    & c^{-1}
\end{psmatrix}h_\ell 
\end{psmatrix}g,g'^tm_\ell\right)\frac{\chi_s(c)}{|c|^{(r-2)/2}} d^\times c \frac{c_F ds}{2\pi i}\\
&=\sum_{\chi}\int_{\mathrm{Re}(s)=t} \left(\prod_{v}Z(W^{\varphi_v}_{s_0}\otimes W'^{\varphi_v}_{s'},\chi_s)\right)(g,g'^tm_\ell)\frac{c_F ds}{2\pi i}.
\end{align*}
By \cite[\S 2]{Blomer_Brumley_Ramanujan_Annals} this is justified by the natural adelic analogue of \Cref{lem:ZWlambda}, which follows from \Cref{lem:ZWlambda} and \Cref{cor:unram:bound}.  We point out that one has to use an integration by parts argument at the infinite places to control the sum over $\chi$ and integral over $\mathrm{Re}(s)=t.$

Thus to prove the lemma, it suffices to show that
\begin{align*}
    \sum_{\chi}\int_{\mathrm{Re}(s)=t} \int|f|\left(g,g'^{t}m_\ell\right)|Z(W_{s_0}\otimes W'_{s'},\chi_s)(g,g'^tm_\ell)||\det g'|dg'dgds
\end{align*}
converges absolutely for $\mathrm{Re}(s') \gg 1$ and $t \gg 1.$ Here the integral is over $\mathcal{P}_{r-2}(\A_F) \backslash \GL_{r-2}(\A_F) \times N(\A_F) \backslash \GL_r(\A_F).$ This follows from \Cref{lem:ZfW2}, Lemma \ref{lem:uniform:ZfW} and the proof of \Cref{unramifiedcomp}.
\end{proof}

\begin{prop} \label{prop:unfolding}
Under the same assumptions as \Cref{lem:unfold:prep}, one has
\begin{align*}
    Z(f,\varphi_{s_0},\varphi'_{s'})=\sum_{\chi}\int_{\mathrm{Re}(s)=t}\prod_{v}Z(f_v,W^{\varphi_v}_{s_0} \otimes W^{\varphi'_v}_{s'},\chi_s)\frac{c_F ds}{2\pi i},
\end{align*}
where the sum is over characters $\chi:A_{\GG_m} \backslash [\GG_m] \to \CC^\times$ and $c_F \in \RR_{>0}$ depends only on $F.$
\end{prop}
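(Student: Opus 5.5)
The plan is a standard global unfolding, reducing to the expression in Lemma \ref{lem:unfold:prep}. I would begin from the definition
\[
Z(f,\varphi_{s_0},\varphi'_{s'})=\int_{[\GL_{r-2}\times\GL_r]}\Theta_f(g,g')\varphi_{s_0}(g)\varphi'_{s'}(g')\,dg'dg,
\]
which converges absolutely for $\mathrm{Re}(s')\gg1$ by Proposition \ref{prop:mod:growth}, since $\varphi,\varphi'$ are cuspidal and hence rapidly decreasing. The set $X^\circ_\ell(F)$ is a single orbit under $\GL_{r,r-2}(F)$ via $\mathcal{R}$. The stabilizer of the base point $(I_r,m_\ell)$, modulo the $N$-action that is absorbed into the $\mathcal{H}$-twist, is $\{(\begin{psmatrix} p & z\\ & I_2\end{psmatrix},p): p\in\mathcal{P}_{r-2}(F)\}$ up to the normalization of $m_\ell$. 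I would therefore write $\Theta_f$ as a sum over $\mathcal{P}_{r-2}(F)N(F)\backslash\GL_{r,r-2}(F)$ and unfold. This produces an integral of $f$ over $\mathcal{P}_{r-2}(\A_F)\backslash\GL_{r-2}(\A_F)\times N(\A_F)\backslash\GL_r(\A_F)$ against an inner integral of $\varphi_{s_0}\varphi'_{s'}$ over $[N]$ and $\mathcal{P}_{r-2}(F)\backslash\mathcal{P}_{r-2}(\A_F)$. On $[N]$ the integrand carries the character $\overline{\psi}(\langle m_\ell,z\rangle)$, which comes from the equivariance defining $\mathcal{H}$. The absolute convergence of $\Theta_{|f|}$ integrated against $|\varphi_{s_0}\varphi'_{s'}|$, again from Proposition \ref{prop:mod:growth}, justifies the unfolding by Fubini--Tonelli.

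Next comes the Fourier expansion of $\varphi$. The integral over $[N]$ against $\overline{\psi}(\langle m_\ell,z\rangle)$, after conjugating by $h_\ell$ as in the proof of Lemma \ref{lem:unfold}, picks out a Fourier coefficient of $\varphi$ along the unipotent radical of $P$. I would expand this coefficient further along $U_{r-2}$ and along the last $2\times2$ block, in the usual Piatetski-Shapiro--Shalika way. The remaining sum over $U_{r-2}(F)\backslash\mathcal{P}_{r-2}(F)$ combines with the integral over $\mathcal{P}_{r-2}(F)\backslash\mathcal{P}_{r-2}(\A_F)$, leaving $\int_{U_{r-2}(F)\backslash\mathcal{P}_{r-2}(\A_F)}$. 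The leftover torus in the $\GL_2$-block yields the sum over $\alpha\in F^\times$ of $W^\varphi_{s_0}$ evaluated at $\begin{psmatrix}pg'&\\&\begin{psmatrix}1&\\&\alpha\end{psmatrix}h_\ell\end{psmatrix}g$. Degenerate terms, those with trivial character on some unipotent piece, vanish by cuspidality of $\varphi$. The output is exactly the left-hand integral in the second assertion of Lemma \ref{lem:unfold:prep}, possibly after a change of variables $z\mapsto g'z$ that accounts for the factor $|\det g'|$.

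With that identification, Lemma \ref{lem:unfold:prep} gives the right-hand side, with the factor $|\det h_\ell|^{2-r}$ absorbed into the normalization of the local integrals $Z(f_v,\cdot,\cdot)$ in \eqref{Z:ids} and into $c_F$. Both sides are holomorphic in $s'$. So once the identity holds for $\mathrm{Re}(s')\gg1$ and $t\gg1$, the stated range follows from that lemma's hypotheses.

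The main obstacle is justifying the Fourier expansion step, where $\varphi$ must be expanded while the outer $f$-integral is still present. I would control this using the absolute convergence from the first assertion of Lemma \ref{lem:unfold:prep} together with the rapid decay of $\varphi'$. The bookkeeping of the characters and measures, in particular the $\alpha$-twist coming from $h_\ell$, I would check against Lemma \ref{lem:unfold}, which is its local analogue.
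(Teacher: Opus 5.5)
Your proposal is correct and follows essentially the same route as the paper. You unfold the single $\GL_{r,r-2}(F)$-orbit with stabilizer $\{(\begin{psmatrix}p&z\\&I_2\end{psmatrix},p)\}$, insert the Whittaker expansion of $\varphi$, fold the resulting sum over $\gamma\in U_{r-2}(F)\backslash\mathcal{P}_{r-2}(F)$ into $[\mathcal{P}_{r-2}]$, and conclude with Lemma~\ref{lem:unfold:prep}, justifying the interchanges exactly as you indicate. The paper reaches the sum over $\alpha$ and $\gamma$ more directly: it plugs in the full expansion over $U_{r-1}(F)\backslash\GL_{r-1}(F)$, evaluates the $[M_{r-2,2}]$-integral term by term, and uses left $\GL_r(F)$-invariance under the center to move $\alpha$ to the last diagonal entry.

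Two small corrections to your bookkeeping. The factor $|\det h_\ell|^{2-r}$ cannot be absorbed into $c_F$, which depends only on $F$; instead it cancels against the $|\det h_\ell|^{r-2}$ produced by the unfolding, and it equals $1$ adelically anyway by the product formula. Also, no continuation in $s'$ is needed, since the proposition is only asserted in the range $\mathrm{Re}(s'),t\gg1$ of that lemma.
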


\begin{proof}
Every element of $X_\ell^\circ(F)$ is in the $\GL_{r,r-2}(F)$-orbit of
$
\left(\begin{psmatrix}I_{r-2} & \\ & h_{\ell}^{-1}\end{psmatrix},m_\ell\right).
$
The set of $R$-points of the stabilizer of this element  in $\GL_{r,r-2}$
is
\begin{align*}
    \left\{ \left(\begin{psmatrix} p & z\\ & I_2 \end{psmatrix},p\right):(p,z) \in \mathcal{P}_{r-2}(R) \times M_{r-2,2}(R)\right\}.
\end{align*}
Thus $Z(f,\varphi_{s_0},\varphi'_{s'})$ unfolds to 
\begin{align} \label{before:Whitt} \begin{split}
    &\int_{N(\A_F) \backslash \GL_r(\A_F)}\bigg(\int_{\mathcal{P}_{r-2}(F) \backslash \GL_{r-2}(\A_F)} 
 f\left(\begin{psmatrix} g' & \\ & h_{\ell} \end{psmatrix}^{-1}g,g'^{t}m_\ell\right) \\ &\hspace{0.4in}\times  \int_{[M_{r-2,2}]}\overline{\psi}\left(\langle (e_{r-2},0),z \rangle \right)\varphi_{s_0}\left(\begin{psmatrix} I_{r-2} & z\\ & I_2 \end{psmatrix}g\right)dz \varphi'_{s'}(g')|\det g'|^{3/2}dg'\bigg)dg.
 \end{split}
\end{align}
This unfolding may be justified using Proposition \ref{prop:mod:growth} and the fact that cusp forms are rapidly decreasing.  

The Whittaker expansion of $\varphi$ is absolutely convergent, uniformly on compact subsets \cite[Theorem 1.1]{CogdellPCMI}.  Applying this expansion we obtain that
\begin{align*}
&\int_{[M_{r-2,2}]}\overline{\psi}\left(\langle (e_{r-2},0),z \rangle \right)\varphi_{s_0}\left(\begin{psmatrix} I_{r-2} & z\\ & I_2 \end{psmatrix}g\right)dz\\
   &= \int_{[M_{r-2,2}]} \overline{\psi}\left(\langle (e_{r-2},0),z \rangle \right)\sum_{\gamma \in U_{r-1}(F) \backslash \GL_{r-1}(F)}W^{\varphi}_{s_0}\left(\begin{psmatrix} \gamma & \\ & 1 \end{psmatrix}\begin{psmatrix} I_{r-2} & z\\ & I_2 \end{psmatrix}g\right)dn\\
   &= \sum_{\alpha \in F^\times}\sum_{\gamma \in U_{r-2}(F) \backslash \mathcal{P}_{r-2}(F)}W^\varphi_{s_0}\left(\begin{psmatrix} \alpha \gamma & & \\ & \alpha & \\ && 1 \end{psmatrix}g \right)\\
   &= \sum_{\alpha \in F^\times}\sum_{\gamma \in U_{r-2}(F) \backslash \mathcal{P}_{r-2}(F)}W^\varphi_{s_0}\left(\begin{psmatrix} \gamma & & \\ & 1 & \\ && \alpha \end{psmatrix}g \right).
\end{align*}
Substituting this into \eqref{before:Whitt} and unfolding, we obtain
\begin{align*}
    & \int_{N(\A_F) \backslash \GL_r(\A_F)\times \mathcal{P}_{r-2}(\A_F) \backslash \GL_{r-2}(\A_F)} \bigg( \int_{[\mathcal{P}_{r-2}]} 
 f\left(\begin{psmatrix} p'g' & \\ & h_{\ell} \end{psmatrix}^{-1}g,g'^{t}m_\ell\right) \\&\hspace{0.3in} \times  \sum_{\alpha \in F^\times}\sum_{\gamma \in U_{r-2}(F) \backslash \mathcal{P}_{r-2}(F)} W^\varphi_{s_0}\left(\begin{psmatrix} \gamma & & \\ & 1 & \\ && \alpha \end{psmatrix}g \right)\varphi'_{s'}(p'g')|\det p'g'|^{1/2}d_rp'|\det g'|dg'\bigg)dg\\
 &= |\det h_\ell|^{r-2}\int_{N(\A_F) \backslash \GL_r(\A_F)\times \mathcal{P}_{r-2}(\A_F) \backslash \GL_{r-2}(\A_F)}  f\left(g,g'^{t}m_\ell\right) \\& \hspace{0.3in}\times \bigg(\sum_{\alpha \in F^\times} \int_{U_{r-2}(F)\backslash \mathcal{P}_{r-2}(\A_F)}W^\varphi_{s_0}\left(\begin{psmatrix} p'g' &  \\ & \begin{psmatrix}
     1 & \\ & \alpha 
 \end{psmatrix}h_\ell\end{psmatrix}g \right)\varphi'_{s'}(p'g')\frac{d_rp'}{|\det p'g'|^{3/2}}|\det g'|dg'\bigg)dg.
\end{align*}
Here the unfolding is justified by the bound in \Cref{lem:unfold:prep}. The identity follows from the identity in 
\Cref{lem:unfold:prep}.
\end{proof}

\subsection{The summation formula}

\begin{thm}
    For $f\in \mathcal{S}^{\mathrm{as}}(X_{\ell}(\A_F),\mathcal{H})$ satisfying assumptions \eqref{a} and \eqref{b}, we have
    \[
    \sum_{x \in X^\circ_{\ell}(F)}f\left(x\right) = \sum_{x \in X^\circ_{\ell}(F)}\mathcal{F}(f)\left(x\right).
    \]
\end{thm}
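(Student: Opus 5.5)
Our plan is to run the spectral expansion of Lemma \ref{sumexpansion} for both $f$ and $\mathcal{F}(f)$ and compare the two term by term using the global functional equation of Rankin--Selberg and standard $L$-functions. First we check that $\mathcal{F}(f)$ again satisfies \eqref{a}. Write $f=\mathcal{R}_u(\Phi\otimes\Phi')f_0$. By Lemma \ref{lem:FT} applied at each place, together with Corollary \ref{cor:basic:fixed} at almost all places, we get
\[
\mathcal{F}(f)=\mathcal{R}_u(\Phi^{\iota}\otimes\Phi'^{\iota})\mathcal{F}(f_0),
\]
where $\Phi^{\iota}(g):=\Phi(g^{-t})$ (and similarly for $\Phi'$). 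Transpose-inverse preserves cuspidal image and $K$-finiteness. Condition \eqref{b} for $\mathcal{F}(f)$ is part of the hypothesis, since $\mathcal{F}\circ\mathcal{F}(f)$ is $f$ twisted by a sign element by Theorem \ref{thm:FT}. So Lemma \ref{sumexpansion} applies to both functions, with $t'\gg1$.

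Next we express each term of these expansions via Proposition \ref{prop:unfolding}. For pure tensors $\varphi,\varphi'$ the global zeta integral $Z(f,\varphi_{s_0},\varphi'_{s'})$ equals a sum over $\chi$ of contour integrals of $\prod_v Z(f_v,W^{\varphi_v}_{s_0}\otimes W'^{\varphi'_v}_{s'},\chi_s)$. At unramified places Lemma \ref{unramifiedcomp} gives
\[
L(\tfrac12+s',\pi_v\times\pi'_v)\,L(\tfrac12,\pi_v^\vee\otimes\chi_s).
\]
At the finitely many remaining places we have entire-times-$L$ expressions, by the definition of $\mathcal{S}^{\mathrm{as}}$. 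Hence the Euler product is a finite product of local entire factors times $L^S(\tfrac12+s',\pi\times\pi')L^S(\tfrac12,\pi^\vee\otimes\chi_s)$. Since $\pi,\pi'$ are cuspidal with $r\neq r-2$, and $r\ge3$, both completed $L$-functions are entire and of finite order, bounded in vertical strips. We then shift the contours $\mathrm{Re}(s')=t'\to-t'$ and $\mathrm{Re}(s)=t\to-t$ with no poles crossed. The needed vertical decay comes from Lemma \ref{lem:uniform:ZfW} at the archimedean places, combined with Phragmén--Lindelöf for the $L$-functions. This step also requires meromorphic continuation of the whole $(\lambda,s',s)$ family, which is supplied by \S\ref{sec:As}.

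On the shifted lines we apply the local functional equation, Proposition \ref{zetafuncequa}, at every place. For the global form of the identity we use that $\prod_v\gamma_v=1$ for the global standard and Rankin--Selberg $\gamma$-factors, by the global functional equations of Jacquet--Piatetski-Shapiro--Shalika and Godement--Jacquet. This converts
\[
\prod_v Z(f_v,W^{\varphi_v},\chi_s)\quad\text{into}\quad\prod_v Z(\mathcal{F}(f)_v,\widetilde{W}^{\varphi_v},\chi_s^{-1}),
\]
with $\pi,\pi',\chi$ replaced by their contragredients. The map $\varphi\mapsto\widetilde{\varphi}$, with $\widetilde{\varphi}(g)=\varphi(g^{-t})$, matches orthonormal bases of $\pi$ and $\pi^\vee$, compatibly with $\widetilde{W}$ (Lemma \ref{lem:duals} globally). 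It sends $\overline{\varphi}(I_r)$ to $\overline{\widetilde\varphi}(I_r)$, and the substitution $s'\mapsto -s'$, $s\mapsto -s$ returns the contours to the region of absolute convergence. Summing over $\pi,\pi',\chi$ and re-collapsing via Lemma \ref{sumexpansion} for $\mathcal{F}(f)$ yields the right-hand side. The central character signs from Theorem \ref{thm:FT} cancel globally, since $\omega_\pi(-1)=1$ on $F^\times$.

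The main obstacle is justifying the contour shifts uniformly in the infinite sums over $\pi,\pi'$ and $\chi$. We need bounds on the local zeta integrals that are polynomial in the analytic conductors, so that the rapid decay from the cuspidal-image and $K$-finiteness assumption \eqref{a} dominates. We plan to get these from \eqref{a}: only finitely many $K$-types occur, so the Hecke operator $\mathcal{R}(\Phi)$ supplies rapid decay in the infinitesimal character (as in Proposition \ref{prop:unif}). For the $\chi$-sum we would use integration by parts in $f$, as in Lemma \ref{lem:uniform:ZfW}. Convexity bounds for the $L$-functions then suffice. Exchanging the $s,s'$ integrals with the $\pi$-sum is justified by Lemma \ref{lem:rap:dec}.
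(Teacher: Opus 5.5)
Your proposal is correct and is essentially the paper's argument: spectral expansion via Lemma~\ref{sumexpansion}, unfolding via Proposition~\ref{prop:unfolding}, the unramified computation, the local and global functional equations, and a shift of $(\mathrm{Re}(s'),\mathrm{Re}(s))$ from $(t',t)$ to $(-t',-t)$ followed by re-collapsing. The only differences are that you run it from $f$ toward $\mathcal{F}(f)$ rather than the reverse, and that your check via Lemma~\ref{lem:FT} that $\mathcal{F}(f)$ again satisfies \eqref{a} makes explicit a step the paper uses tacitly; one small correction is that the decay needed to move the contours across the strip comes from the seminorms defining $\mathcal{S}^{\mathrm{as}}$, as the paper says, not from Lemma~\ref{lem:uniform:ZfW}, which only applies in the region of absolute convergence.
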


\begin{proof} 
    Combining \Cref{sumexpansion} and \Cref{prop:unfolding}, for $t\gg 1$ and  $t'\gg 1$ we have 
    \begin{align*}
        \sum_{x \in X_{\ell}^{\circ}(F)}\mathcal{F}(f)\left(x\right)         &=\sum_{\pi,\pi'}\sum_{(\varphi,\varphi') \in \mathcal{B}_{\pi} \times \mathcal{B}_{\pi'}}\int_{\mathrm{Re}(s_0,s')=(0,t')}\overline{\varphi}(I_r)\overline{\varphi}'(I_{r-2})Z(\mathcal{F}(f),\varphi_{s_0},\varphi'_{s'}) \frac{ ds'ds_0}{(2\pi i)^2}\\
        &=\sum_{\pi,\pi'}\sum_{(\varphi,\varphi') \in \mathcal{B}_{\pi} \times \mathcal{B}_{\pi'}}\int_{\mathrm{Re}(s_0,t')=(0,t')}\overline{\varphi}(I_r)\overline{\varphi}'(I_{r-2})\\
        &\hspace{0.3in} \times \sum_{\chi}\int_{\mathrm{Re}(s)=t}\prod_{v}Z\big(\mathcal{F}(f_v),W^{\varphi_v}_{s_0}\otimes W^{\varphi'_v}_{s'},\chi_{s}\big) \frac{c_F ds ds'ds_0}{(2\pi i)^3}. 
    \end{align*}
By \Cref{unramifiedcomp}, the above is 
\begin{align} \label{before:fix:pi} \begin{split}
\sum_{\pi,\pi'}\sum_{(\varphi,\varphi') \in \mathcal{B}_{\pi} \times \mathcal{B}_{\pi'}}&\int_{\mathrm{Re}(s_0,s')=(0,t')}\overline{\varphi}(I_r)\overline{\varphi}'(I_{r-2})\\
   \times & \sum_{\chi}\int_{\mathrm{Re}(s)=t}\prod_{v\in S}Z\big(\mathcal{F}(f_v),W^{\varphi_v}_{s_0} \otimes W^{\varphi'_v}_{s'},\chi_{s}\big)\\
       \times &  L^S(\tfrac{1}{2}+s_0+s',\pi\times \pi') L^S(\tfrac{1}{2}-s_0+s,\pi^{\vee}\otimes \chi )\frac{c_F ds ds'ds_0}{(2\pi i)^3}, \end{split}
\end{align}
where $S$ is a finite set of places of $F$ including the infinite places.  By Theorem \ref{thm:FT} and \Cref{zetafuncequa}, 
\begin{align*}
&L^S(\tfrac{1}{2}+s_0+s',\pi\times \pi') L^S(\tfrac{1}{2}-s_0+s,\pi^{\vee}\otimes \chi)\prod_{v\in S}Z\big(\mathcal{F}(f_v),(W^{\varphi_v}_{s_0} \otimes W^{\varphi'_v}_{s'}),\chi_{s}\big)\\
&=L^S(\tfrac{1}{2}+s_0+s',\pi \times \pi') 
        L^S(\tfrac{1}{2}-s_0+s,\pi^{\vee}\otimes \chi )\\
        &\quad\times\prod_{v\in S}Z\big(f_v,\widetilde{W}^{\varphi_v}_{-s_0} \otimes \widetilde{W}^{\varphi'_v}_{-s'},(\chi_{s})^{-1}\big) \gamma(\tfrac{1}{2}-s_0-s',\pi^{\vee}_v\times \pi'^{\vee}_v,\psi_v)\gamma(\tfrac{1}{2}+s_0-s,\pi_{v}\otimes\chi^{-1}_v,\psi_v).
\end{align*}
Using the functional equation of global Rankin-Selberg $L$-functions \cite[Theorem 4.1]{CogdellPCMI},  this is 
\begin{align*}
L^S(\tfrac{1}{2}-s_0-s',\pi^\vee \times \pi'^\vee) 
        L^S(\tfrac{1}{2}+s_0-s,\pi\otimes \chi^{-1} )\prod_{v\in S}Z\big(f_v,\widetilde{W}^{\varphi_v}_{-s_0} \otimes \widetilde{W}^{\varphi'_v}_{-s'},(\chi_{s})^{-1}\big).
\end{align*}
Substituting into \eqref{before:fix:pi}, we see that $\sum_{x\in X_\ell^\circ(F)} \mathcal{F}(f)(x)$ is
\begin{align*}
\sum_{\pi,\pi'}&\sum_{(\varphi,\varphi') \in \mathcal{B}_{\pi} \times \mathcal{B}_{\pi'}}\int_{\mathrm{Re}(s_0,s')=(0,t')}\overline{\varphi}(I_r)\overline{\varphi}'(I_{r-2}) \sum_{\chi}\int_{\mathrm{Re}(s)=t}\Bigg(\prod_{v\in S}Z\big(f_v,\widetilde{W}^{\varphi_v}_{-s_0} \otimes \widetilde{W}^{\varphi'_v}_{-s'},(\chi_{s})^{-1}\big) \Bigg)\\
        & \times L^S(\tfrac{1}{2}-s_0-s',\pi^\vee \times \pi'^\vee) 
        L^S(\tfrac{1}{2}+s_0-s,\pi\otimes \chi^{-1} )\frac{c_F ds ds'ds_0}{(2\pi i)^3}.
\end{align*}
We now shift the contour to $\mathrm{Re}(s',s)=(-t',-t)$ and then change variables to see that the above is  
\begin{align*}
\sum_{\pi,\pi'}&\sum_{(\varphi,\varphi') \in \mathcal{B}_{\pi} \times \mathcal{B}_{\pi'}}\int_{\mathrm{Re}(s_0,s')=(0,t')}\overline{\varphi}(I_r)\overline{\varphi}'(I_{r-2}) \sum_{\chi}\int_{\mathrm{Re}(s)=t}\Bigg(\prod_{v\in S}Z\big(f_v,\widetilde{W}^{\varphi_v}_{s_0} \otimes \widetilde{W}^{\varphi'_v}_{s'},(\chi^{-1})_s)\big) \Bigg)\\
        & \times L^S(\tfrac{1}{2}+s_0+s',\pi^\vee \times \pi'^\vee) 
        L^S(\tfrac{1}{2}-s_0+s,\pi\otimes \chi^{-1} )\frac{c_F ds ds'ds_0}{(2\pi i)^3}.
\end{align*}
The contour shift is permissible by the definition of $\mathcal{S}^{\mathrm{as}}(X_\ell(\A_F),\mathcal{H})$ and the fact that the partial $L$-function $L^S(\tfrac{1}{2}+s_0+s',\pi^\vee \times \pi'^\vee) 
        L^S(\tfrac{1}{2}-s_0+s,\pi\otimes \chi^{-1} )$ is holomorphic on the plane and admits a standard (pre)convexity bound \cite[\S 1]{BrumleyNarrow}.
Arguing as above, but in reverse, this is 
\begin{align*}
\sum_{\pi,\pi'}\sum_{(\varphi,\varphi') \in \mathcal{B}_{\pi} \times \mathcal{B}_{\pi'}}\int_{\mathrm{Re}(s_0,s')=(0,t')}\overline{\varphi}(I_r)\overline{\varphi}'(I_{r-2}) Z(f,\varphi^\vee_{s_0},\varphi'^\vee_{s'})\frac{ds'ds_0}{(2\pi i)^2}.
\end{align*}
Changing variables $(\pi,\pi',\varphi,\varphi') \mapsto (\pi^\vee,\pi'^\vee,\varphi^\vee,\varphi'^\vee)$ and using the fact that $\varphi^\vee(I_r)=\varphi(I_r),$ $\varphi'^\vee(I_{r-2})=\varphi'(I_{r-2}),$ we have shown that 
\begin{align*}
\sum_{x \in X_\ell^\circ(F)}\mathcal{F}(f)(x)
&=\sum_{\pi,\pi'}\sum_{(\varphi,\varphi') \in \mathcal{B}_{\pi} \times \mathcal{B}_{\pi'}}\int_{\mathrm{Re}(s_0,s')=(t_0,t')}\overline{\varphi}(I_r)\overline{\varphi}'(I_{r-2}) Z(f,\varphi_{s_0},\varphi'_{s'})\frac{ds'ds_0}{(2\pi i)^2}.
\end{align*}
This is 
$    \sum_{x \in X^\circ_{\ell}(F)}f\left(x\right) $ by  \Cref{sumexpansion}.
\end{proof}

\bibliography{refs}{}

@article {Blomer_Brumley_Ramanujan_Annals,
    AUTHOR = {Blomer, Valentin and Brumley, Farrell},
     TITLE = {On the {R}amanujan conjecture over number fields},
   JOURNAL = {Ann. of Math. (2)},
  FJOURNAL = {Annals of Mathematics. Second Series},
    VOLUME = {174},
      YEAR = {2011},
    NUMBER = {1},
     PAGES = {581--605},
      ISSN = {0003-486X},
     CODEN = {ANMAAH},
   MRCLASS = {11F70 (22E55)},
  MRNUMBER = {2811610},
MRREVIEWER = {Luis Alberto Lomel{\'{\i}}},
       DOI = {10.4007/annals.2011.174.1.18},
       URL = {http://dx.doi.org/10.4007/annals.2011.174.1.18},
}

@article {BK-lifting,
    AUTHOR = {Braverman, A. and Kazhdan, D.},
     TITLE = {{$\gamma$}-functions of representations and lifting},
      NOTE = {With an appendix by V. Vologodsky,
              GAFA 2000 (Tel Aviv, 1999)},
   JOURNAL = {Geom. Funct. Anal.},
  FJOURNAL = {Geometric and Functional Analysis},
      YEAR = {2000},
    NUMBER = {Special Volume, Part I},
     PAGES = {237--278},
      ISSN = {1016-443X},
     CODEN = {GFANFB},
   MRCLASS = {11F70 (11R39 11S37 11S40 22E55)},
  MRNUMBER = {1826255 (2002g:11064)},
MRREVIEWER = {Volker J. Heiermann},
       DOI = {10.1007/978-3-0346-0422-2_9},
       URL = {http://dx.doi.org/10.1007/978-3-0346-0422-2_9},
}

@article {BK:normalized,
    AUTHOR = {Braverman, Alexander and Kazhdan, David},
     TITLE = {Normalized intertwining operators and nilpotent elements in
              the {L}anglands dual group},
      NOTE = {Dedicated to Yuri I. Manin on the occasion of his 65th
              birthday},
   JOURNAL = {Mosc. Math. J.},
  FJOURNAL = {Moscow Mathematical Journal},
    VOLUME = {2},
      YEAR = {2002},
    NUMBER = {3},
     PAGES = {533--553},
      ISSN = {1609-3321},
   MRCLASS = {22E50 (11F70 22E55)},
  MRNUMBER = {1988971},
MRREVIEWER = {Goran Mui\"A},
}

@article {BrumleyNarrow,
    AUTHOR = {Brumley, Farrell},
     TITLE = {Effective multiplicity one on {${\rm GL}\sb N$} and narrow
              zero-free regions for {R}ankin-{S}elberg {$L$}-functions},
   JOURNAL = {Amer. J. Math.},
  FJOURNAL = {American Journal of Mathematics},
    VOLUME = {128},
      YEAR = {2006},
    NUMBER = {6},
     PAGES = {1455--1474},
      ISSN = {0002-9327},
     CODEN = {AJMAAN},
   MRCLASS = {11F70 (11F67 11M20 11M36 22E55)},
  MRNUMBER = {2275908 (2007h:11062)},
MRREVIEWER = {Daniel Bump},
       URL =
              {http://muse.jhu.edu/journals/american_journal_of_mathematics/v128/128.6brumley.pdf},
}

@article {CasselmanShalika,
    AUTHOR = {Casselman, W. and Shalika, J.},
     TITLE = {The unramified principal series of {$p$}-adic groups. {II}.
              {T}he {W}hittaker function},
   JOURNAL = {Compositio Math.},
  FJOURNAL = {Compositio Mathematica},
    VOLUME = {41},
      YEAR = {1980},
    NUMBER = {2},
     PAGES = {207--231},
      ISSN = {0010-437X},
     CODEN = {CMPMAF},
   MRCLASS = {22E50},
  MRNUMBER = {581582 (83i:22027)},
MRREVIEWER = {J. Szmidt},
       URL = {http://www.numdam.org/item?id=CM_1980__41_2_207_0},
}

@incollection {CogdellPCMI,
    AUTHOR = {Cogdell, James W.},
     TITLE = {{$L$}-functions and converse theorems for {${\rm GL}\sb n$}},
 BOOKTITLE = {Automorphic forms and applications},
    SERIES = {IAS/Park City Math. Ser.},
    VOLUME = {12},
     PAGES = {97--177},
 PUBLISHER = {Amer. Math. Soc., Providence, RI},
      YEAR = {2007},
   MRCLASS = {11F70 (22E50 22E55)},
  MRNUMBER = {2331345 (2008e:11060)},
MRREVIEWER = {Ravi Raghunathan},
}

@preamble{
   "\def\polhk#1{\setbox0=\hbox{#1}{\ooalign{\hidewidth
    \lower1.5ex\hbox{`}\hidewidth\crcr\unhbox0}}} "
}

@article {Elazar:Shaviv,
    AUTHOR = {Elazar, Boaz and Shaviv, Ary},
     TITLE = {Schwartz functions on real algebraic varieties},
   JOURNAL = {Canad. J. Math.},
  FJOURNAL = {Canadian Journal of Mathematics. Journal Canadien de
              Math\'{e}matiques},
    VOLUME = {70},
      YEAR = {2018},
    NUMBER = {5},
     PAGES = {1008--1037},
      ISSN = {0008-414X},
   MRCLASS = {14P05 (14P20 22E45 46A11 46F05)},
  MRNUMBER = {3831913},
MRREVIEWER = {Zbigniew Szafraniec},
       DOI = {10.4153/CJM-2017-042-6},
       URL = {https://doi.org/10.4153/CJM-2017-042-6},
}

@article {FinisLapid2011,
    AUTHOR = {Finis, Tobias and Lapid, Erez},
     TITLE = {On the continuity of {A}rthur's trace formula: the semisimple
              terms},
   JOURNAL = {Compos. Math.},
  FJOURNAL = {Compositio Mathematica},
    VOLUME = {147},
      YEAR = {2011},
    NUMBER = {3},
     PAGES = {784--802},
      ISSN = {0010-437X},
   MRCLASS = {11F72},
  MRNUMBER = {2801400},
MRREVIEWER = {Wen-Wei Li},
       DOI = {10.1112/S0010437X11004891},
       URL = {http://dx.doi.org/10.1112/S0010437X11004891},
}

@book {Getz:Hahn,
    AUTHOR = {Getz, Jayce R. and Hahn, Heekyoung},
     TITLE = {An introduction to automorphic representations---with a view
              toward trace formulae},
    SERIES = {Graduate Texts in Mathematics},
    VOLUME = {300},
 PUBLISHER = {Springer, Cham},
      YEAR = {[2024] \copyright 2024},
     PAGES = {xviii+609},
      ISBN = {978-3-031-41151-9; 978-3-031-41153-3},
   MRCLASS = {11Fxx (11-02 14G35 14Lxx 22-02 22Exx)},
  MRNUMBER = {4738301},
MRREVIEWER = {F\'{e}licien\ Comtat},
       DOI = {10.1007/978-3-031-41153-3},
       URL = {https://doi.org/10.1007/978-3-031-41153-3},
}

@book {GodementJacquetBook,
    AUTHOR = {Godement, Roger and Jacquet, Herv{\'e}},
     TITLE = {Zeta functions of simple algebras},
    SERIES = {Lecture Notes in Mathematics, Vol. 260},
 PUBLISHER = {Springer-Verlag, Berlin-New York},
      YEAR = {1972},
     PAGES = {ix+188},
   MRCLASS = {12A80 (12A65 12A70 12B35 22E55)},
  MRNUMBER = {0342495},
MRREVIEWER = {L. Corwin},
}

@book {Gortz_Wedhorn,
    AUTHOR = {G{\"o}rtz, U. and Wedhorn, T.},
     TITLE = {Algebraic geometry {I}},
    SERIES = {Advanced Lectures in Mathematics},
      NOTE = {Schemes with examples and exercises},
 PUBLISHER = {Vieweg + Teubner, Wiesbaden},
      YEAR = {2010},
     PAGES = {viii+615},
      ISBN = {978-3-8348-0676-5},
   MRCLASS = {14-01},
  MRNUMBER = {2675155 (2011f:14001)},
MRREVIEWER = {C{\'{\i}}cero Carvalho},
       DOI = {10.1007/978-3-8348-9722-0},
       URL = {http://dx.doi.org/10.1007/978-3-8348-9722-0},
}

@incollection {JacquetPerfectRS,
    AUTHOR = {Jacquet, Herv{\'e}},
     TITLE = {Archimedean {R}ankin-{S}elberg integrals},
 BOOKTITLE = {Automorphic forms and {$L$}-functions {II}. {L}ocal aspects},
    SERIES = {Contemp. Math.},
    VOLUME = {489},
     PAGES = {57--172},
 PUBLISHER = {Amer. Math. Soc., Providence, RI},
      YEAR = {2009},
   MRCLASS = {11F70 (11F66 22E46)},
  MRNUMBER = {2533003 (2011a:11103)},
MRREVIEWER = {Wee Teck Gan},
       DOI = {10.1090/conm/489/09547},
       URL = {http://dx.doi.org/10.1090/conm/489/09547},
}

@article {JPSSGL3I,
    AUTHOR = {Jacquet, Herv{\'e} and Piatetski-Shapiro, Ilja Iosifovitch and
              Shalika, Joseph},
     TITLE = {Automorphic forms on {${\rm GL}(3)$}. {I}},
   JOURNAL = {Ann. of Math. (2)},
  FJOURNAL = {Annals of Mathematics. Second Series},
    VOLUME = {109},
      YEAR = {1979},
    NUMBER = {1},
     PAGES = {169--212},
      ISSN = {0003-486X},
     CODEN = {ANMAAH},
   MRCLASS = {10D40 (22E50 22E55)},
  MRNUMBER = {519356},
MRREVIEWER = {Stephen Gelbart},
       DOI = {10.2307/1971270},
       URL = {http://dx.doi.org/10.2307/1971270},
}

@article {JPSSGL3II,
    AUTHOR = {Jacquet, Herv{\'e} and Piatetski-Shapiro, Ilja Iosifovitch and
              Shalika, Joseph},
     TITLE = {Automorphic forms on {${\rm GL}(3)$}. {II}},
   JOURNAL = {Ann. of Math. (2)},
  FJOURNAL = {Annals of Mathematics. Second Series},
    VOLUME = {109},
      YEAR = {1979},
    NUMBER = {2},
     PAGES = {213--258},
      ISSN = {0003-486X},
   MRCLASS = {10D40 (22E50 22E55)},
  MRNUMBER = {528964},
MRREVIEWER = {Stephen Gelbart},
       DOI = {10.2307/1971112},
       URL = {http://dx.doi.org/10.2307/1971112},
}

@article {JacquetShalikaEPI,
    AUTHOR = {Jacquet, H. and Shalika, J. A.},
     TITLE = {On {E}uler products and the classification of automorphic
              representations. {I}},
   JOURNAL = {Amer. J. Math.},
  FJOURNAL = {American Journal of Mathematics},
    VOLUME = {103},
      YEAR = {1981},
    NUMBER = {3},
     PAGES = {499--558},
      ISSN = {0002-9327},
     CODEN = {AJMAAN},
   MRCLASS = {10D40 (12A67 22E55)},
  MRNUMBER = {618323},
MRREVIEWER = {Freydoon Shahidi},
       DOI = {10.2307/2374103},
       URL = {http://dx.doi.org/10.2307/2374103},
}

@article {JPSS:Conv,
    AUTHOR = {Jacquet, H. and Piatetskii-Shapiro, I. I. and Shalika, J. A.},
     TITLE = {Rankin-{S}elberg convolutions},
   JOURNAL = {Amer. J. Math.},
  FJOURNAL = {American Journal of Mathematics},
    VOLUME = {105},
      YEAR = {1983},
    NUMBER = {2},
     PAGES = {367--464},
      ISSN = {0002-9327},
   MRCLASS = {11F67 (11F70 11R39 22E55)},
  MRNUMBER = {701565},
MRREVIEWER = {Freydoon Shahidi},
       DOI = {10.2307/2374264},
       URL = {https://doi.org/10.2307/2374264},
}

@article {JacquetShalika:Whittaker,
    AUTHOR = {Jacquet, Herv\'e and Shalika, Joseph},
     TITLE = {The {W}hittaker models of induced representations},
   JOURNAL = {Pacific J. Math.},
  FJOURNAL = {Pacific Journal of Mathematics},
    VOLUME = {109},
      YEAR = {1983},
    NUMBER = {1},
     PAGES = {107--120},
      ISSN = {0030-8730},
   MRCLASS = {22E50},
  MRNUMBER = {716292},
MRREVIEWER = {Stephen Gelbart},
       URL = {http://projecteuclid.org/euclid.pjm/1102720206},
}

@article {Jacquet:quasi-split,
    AUTHOR = {Jacquet, Herv\'{e}},
     TITLE = {Distinction by the quasi-split unitary group},
   JOURNAL = {Israel J. Math.},
  FJOURNAL = {Israel Journal of Mathematics},
    VOLUME = {178},
      YEAR = {2010},
     PAGES = {269--324},
      ISSN = {0021-2172},
   MRCLASS = {11F70 (11F55 22E50 22E55)},
  MRNUMBER = {2733072},
MRREVIEWER = {Solomon Friedberg},
       DOI = {10.1007/s11856-010-0066-1},
       URL = {https://doi.org/10.1007/s11856-010-0066-1},
}

@article {Kemarsky,
    AUTHOR = {Kemarsky, Alexander},
     TITLE = {A note on the {K}irillov model for representations of {${\rm
              GL}_n(\Bbb{C})$}},
   JOURNAL = {C. R. Math. Acad. Sci. Paris},
  FJOURNAL = {Comptes Rendus Math\'{e}matique. Acad\'{e}mie des Sciences. Paris},
    VOLUME = {353},
      YEAR = {2015},
    NUMBER = {7},
     PAGES = {579--582},
      ISSN = {1631-073X},
   MRCLASS = {22E46},
  MRNUMBER = {3352025},
       DOI = {10.1016/j.crma.2015.04.002},
       URL = {https://doi.org/10.1016/j.crma.2015.04.002},
}

@book {KnappSS,
    AUTHOR = {Knapp, Anthony W.},
     TITLE = {Representation theory of semisimple groups},
    SERIES = {Princeton Landmarks in Mathematics},
      NOTE = {An overview based on examples,
              Reprint of the 1986 original},
 PUBLISHER = {Princeton University Press, Princeton, NJ},
      YEAR = {2001},
     PAGES = {xx+773},
      ISBN = {0-691-09089-0},
   MRCLASS = {22E46 (22-01 22E30)},
  MRNUMBER = {1880691},
}

@incollection {Kottwitz:Clay,
    AUTHOR = {Kottwitz, Robert E.},
     TITLE = {Harmonic analysis on reductive {$p$}-adic groups and {L}ie
              algebras},
 BOOKTITLE = {Harmonic analysis, the trace formula, and {S}himura varieties},
    SERIES = {Clay Math. Proc.},
    VOLUME = {4},
     PAGES = {393--522},
 PUBLISHER = {Amer. Math. Soc., Providence, RI},
      YEAR = {2005},
   MRCLASS = {22E35 (17B99)},
  MRNUMBER = {2192014},
MRREVIEWER = {David A. Renard},
}

@article {LafforgueJJM,
    AUTHOR = {Lafforgue, Laurent},
     TITLE = {Noyaux du transfert automorphe de {L}anglands et formules de
              {P}oisson non lin\'eaires},
   JOURNAL = {Jpn. J. Math.},
  FJOURNAL = {Japanese Journal of Mathematics},
    VOLUME = {9},
      YEAR = {2014},
    NUMBER = {1},
     PAGES = {1--68},
      ISSN = {0289-2316},
   MRCLASS = {11F66 (11F03 11F70)},
  MRNUMBER = {3173438},
MRREVIEWER = {Ravi Raghunathan},
       DOI = {10.1007/s11537-014-1274-y},
       URL = {http://dx.doi.org/10.1007/s11537-014-1274-y},
}

@book {MW:Spectral:Decomp:ES,
    AUTHOR = {M{\oe}glin, C. and Waldspurger, J.-L.},
     TITLE = {Spectral decomposition and {E}isenstein series},
    SERIES = {Cambridge Tracts in Mathematics},
    VOLUME = {113},
      NOTE = {Une paraphrase de l'{\'E}criture [A paraphrase of Scripture]},
 PUBLISHER = {Cambridge University Press, Cambridge},
      YEAR = {1995},
     PAGES = {xxviii+338},
      ISBN = {0-521-41893-3},
   MRCLASS = {11F70 (22E55)},
  MRNUMBER = {1361168},
MRREVIEWER = {Joe Repka},
       DOI = {10.1017/CBO9780511470905},
       URL = {http://dx.doi.org/10.1017/CBO9780511470905},
}

@incollection {NgoSums,
    AUTHOR = {Ng{\^o}, Bao Ch{\^a}u},
     TITLE = {On a certain sum of automorphic {$L$}-functions},
 BOOKTITLE = {Automorphic forms and related geometry: assessing the legacy
              of {I}. {I}. {P}iatetski-{S}hapiro},
    SERIES = {Contemp. Math.},
    VOLUME = {614},
     PAGES = {337--343},
 PUBLISHER = {Amer. Math. Soc., Providence, RI},
      YEAR = {2014},
   MRCLASS = {11F70},
  MRNUMBER = {3220933},
MRREVIEWER = {Fan Gao},
       DOI = {10.1090/conm/614/12270},
       URL = {http://dx.doi.org/10.1090/conm/614/12270},
}

@Book{Renard,
 Author = {David {Renard}},
 Title = {{Repr\'esentations des groupes r\'eductifs \(p\)-adiques}},
 FJournal = {{Cours Sp\'ecialis\'es (Paris)}},
 Journal = {{Cours Sp\'ec. (Paris)}},
 ISSN = {1284-6090},
 Volume = {17},
 ISBN = {978-2-85629-278-5/hbk},
 Pages = {vi + 332},
 Year = {2010},
 Publisher = {Paris: Soci\'et\'e Math\'ematique de France},
 Language = {French},
 MSC2010 = {22E50 22-02 11F70},
 Zbl = {1186.22020}
}

@article {SakellaridisSph,
    AUTHOR = {Sakellaridis, Yiannis},
     TITLE = {Spherical varieties and integral representations of
              {$L$}-functions},
   JOURNAL = {Algebra Number Theory},
  FJOURNAL = {Algebra \& Number Theory},
    VOLUME = {6},
      YEAR = {2012},
    NUMBER = {4},
     PAGES = {611--667},
      ISSN = {1937-0652},
   MRCLASS = {11F67 (11F66 11F70 14M27 22E55)},
  MRNUMBER = {2966713},
MRREVIEWER = {Farrell Brumley},
       DOI = {10.2140/ant.2012.6.611},
       URL = {http://dx.doi.org/10.2140/ant.2012.6.611},
}

@article {SV,
    AUTHOR = {Sakellaridis, Yiannis and Venkatesh, Akshay},
     TITLE = {Periods and harmonic analysis on spherical varieties},
   JOURNAL = {Ast\'{e}risque},
  FJOURNAL = {Ast\'{e}risque},
    NUMBER = {396},
      YEAR = {2017},
     PAGES = {viii+360},
      ISSN = {0303-1179,2492-5926},
      ISBN = {978-2-85629-871-8},
   MRCLASS = {22E50 (11F67)},
  MRNUMBER = {3764130},
MRREVIEWER = {Arnab\ Mitra},
}

@article {ShintaniWhittaker,
    AUTHOR = {Shintani, Takuro},
     TITLE = {On an explicit formula for class-{$1$} ``{W}hittaker
              functions'' on {$\mathrm{GL}\sb{n}$} over {$P$}-adic fields},
   JOURNAL = {Proc. Japan Acad.},
  FJOURNAL = {Proceedings of the Japan Academy},
    VOLUME = {52},
      YEAR = {1976},
    NUMBER = {4},
     PAGES = {180--182},
      ISSN = {0021-4280},
   MRCLASS = {22E50 (10D20)},
  MRNUMBER = {0407208 (53 \#10991)},
MRREVIEWER = {Stephen Gelbart},
}

@incollection {Tate_NT,
    AUTHOR = {Tate, J.},
     TITLE = {Number theoretic background},
 BOOKTITLE = {Automorphic forms, representations and {$L$}-functions
              ({P}roc. {S}ympos. {P}ure {M}ath., {O}regon {S}tate {U}niv.,
              {C}orvallis, {O}re., 1977), {P}art 2},
    SERIES = {Proc. Sympos. Pure Math., XXXIII},
     PAGES = {3--26},
 PUBLISHER = {Amer. Math. Soc., Providence, R.I.},
      YEAR = {1979},
   MRCLASS = {12A67},
  MRNUMBER = {546607 (80m:12009)},
MRREVIEWER = {A. I. Vinogradov},
}

@book {Wallach:RGII,
    AUTHOR = {Wallach, Nolan R.},
     TITLE = {Real reductive groups. {II}},
    SERIES = {Pure and Applied Mathematics},
    VOLUME = {132},
 PUBLISHER = {Academic Press, Inc., Boston, MA},
      YEAR = {1992},
     PAGES = {xiv+454},
      ISBN = {0-12-732961-7},
   MRCLASS = {22E46 (22D25 22E30 46L99)},
  MRNUMBER = {1170566},
MRREVIEWER = {Jorge A. Vargas},
}

@article {Ngo:Hankel,
    AUTHOR = {Ng{\^{o}}, B. C.},
     TITLE = {Hankel transform, {L}anglands functoriality and functional
              equation of automorphic {$L$}-functions},
   JOURNAL = {Jpn. J. Math.},
  FJOURNAL = {Japanese Journal of Mathematics},
    VOLUME = {15},
      YEAR = {2020},
    NUMBER = {1},
     PAGES = {121--167},
      ISSN = {0289-2316},
   MRCLASS = {22E35 (11F66)},
  MRNUMBER = {4068833},
       DOI = {10.1007/s11537-019-1650-8},
       URL = {https://doi.org/10.1007/s11537-019-1650-8},
}

@article {BP:GLn,
    AUTHOR = {Beuzart-Plessis, R.~},
     TITLE = {Plancherel formula for {${\rm GL}_n(F)\backslash {\rm
              GL}_n(E)$} and applications to the {I}chino-{I}keda and formal
              degree conjectures for unitary groups},
   JOURNAL = {Invent. Math.},
  FJOURNAL = {Inventiones Mathematicae},
    VOLUME = {225},
      YEAR = {2021},
    NUMBER = {1},
     PAGES = {159--297},
      ISSN = {0020-9910},
   MRCLASS = {22E50 (11F70)},
  MRNUMBER = {4270666},
MRREVIEWER = {Alexandre Afgoustidis},
       DOI = {10.1007/s00222-021-01032-6},
       URL = {https://doi.org/10.1007/s00222-021-01032-6},
}

@article {BP:Ast,
    AUTHOR = {Beuzart-Plessis, R.~},
     TITLE = {A local trace formula for the {G}an-{G}ross-{P}rasad
              conjecture for unitary groups: the {A}rchimedean case},
   JOURNAL = {Ast\'{e}risque},
  FJOURNAL = {Ast\'{e}risque},
    NUMBER = {418},
      YEAR = {2020},
     PAGES = {ix+305},
      ISSN = {0303-1179},
      ISBN = {978-2-85629-919-7},
   MRCLASS = {22E50 (11F72 11F85 20G05)},
  MRNUMBER = {4146145},
MRREVIEWER = {Wen-Wei Li},
       DOI = {10.24033/ast},
       URL = {https://doi.org/10.24033/ast},
}

@article {Baruch,
    AUTHOR = {Baruch, Ehud Moshe},
     TITLE = {A proof of {K}irillov's conjecture},
   JOURNAL = {Ann. of Math. (2)},
  FJOURNAL = {Annals of Mathematics. Second Series},
    VOLUME = {158},
      YEAR = {2003},
    NUMBER = {1},
     PAGES = {207--252},
      ISSN = {0003-486X},
   MRCLASS = {22E46},
  MRNUMBER = {1999922},
MRREVIEWER = {Zhengyu Mao},
       DOI = {10.4007/annals.2003.158.207},
       URL = {https://doi.org/10.4007/annals.2003.158.207},
}

@article {Waldspurger:plancherel,
    AUTHOR = {Waldspurger, J.-L.},
     TITLE = {La formule de {P}lancherel pour les groupes {$p$}-adiques
              (d'apr\`es {H}arish-{C}handra)},
   JOURNAL = {J. Inst. Math. Jussieu},
  FJOURNAL = {Journal of the Institute of Mathematics of Jussieu. JIMJ.
              Journal de l'Institut de Math\'{e}matiques de Jussieu},
    VOLUME = {2},
      YEAR = {2003},
    NUMBER = {2},
     PAGES = {235--333},
      ISSN = {1474-7480,1475-3030},
   MRCLASS = {22E35 (22E50)},
  MRNUMBER = {1989693},
MRREVIEWER = {Rebecca\ Herb},
       DOI = {10.1017/S1474748003000082},
       URL = {https://doi.org/10.1017/S1474748003000082},
}

@book {Wallach,
    AUTHOR = {Wallach, Nolan R.},
     TITLE = {Real reductive groups. {II}},
    SERIES = {Pure and Applied Mathematics},
    VOLUME = {132-II},
 PUBLISHER = {Academic Press, Inc., Boston, MA},
      YEAR = {1992},
     PAGES = {xiv+454},
      ISBN = {0-12-732961-7},
   MRCLASS = {22E46 (22D25 22E30 46L99)},
  MRNUMBER = {1170566},
MRREVIEWER = {Jorge\ A.\ Vargas},
}

@incollection {Bernstein:P,
    AUTHOR = {Bernstein, Joseph N.},
     TITLE = {{$P$}-invariant distributions on {${\rm GL}(N)$} and the
              classification of unitary representations of {${\rm GL}(N)$}
              (non-{A}rchimedean case)},
 BOOKTITLE = {Lie group representations, {II} ({C}ollege {P}ark, {M}d.,
              1982/1983)},
    SERIES = {Lecture Notes in Math.},
    VOLUME = {1041},
     PAGES = {50--102},
 PUBLISHER = {Springer, Berlin},
      YEAR = {1984},
   MRCLASS = {22E50},
  MRNUMBER = {748505},
MRREVIEWER = {Stephen Gelbart},
       DOI = {10.1007/BFb0073145},
       URL = {https://doi.org/10.1007/BFb0073145},
}

@article {Sahi,
    AUTHOR = {Sahi, Siddhartha},
     TITLE = {On {K}irillov's conjecture for {A}rchimedean fields},
   JOURNAL = {Compositio Math.},
  FJOURNAL = {Compositio Mathematica},
    VOLUME = {72},
      YEAR = {1989},
    NUMBER = {1},
     PAGES = {67--86},
      ISSN = {0010-437X},
   MRCLASS = {22E45 (22E50)},
  MRNUMBER = {1026329},
MRREVIEWER = {Marko Tadi\'{c}},
       URL = {http://www.numdam.org/item?id=CM_1989__72_1_67_0},
}

@article {BPLZZ,
    AUTHOR = {Beuzart-Plessis, Rapha\"{e}l and Liu, Yifeng and Zhang, Wei and
              Zhu, Xinwen},
     TITLE = {Isolation of cuspidal spectrum, with application to the
              {G}an-{G}ross-{P}rasad conjecture},
   JOURNAL = {Ann. of Math. (2)},
  FJOURNAL = {Annals of Mathematics. Second Series},
    VOLUME = {194},
      YEAR = {2021},
    NUMBER = {2},
     PAGES = {519--584},
      ISSN = {0003-486X},
   MRCLASS = {11F67 (11F70 11F72)},
  MRNUMBER = {4298750},
MRREVIEWER = {Bin Xu},
       DOI = {10.4007/annals.2021.194.2.5},
       URL = {https://doi.org/10.4007/annals.2021.194.2.5},
}

@misc{Rodier:Whitt,
author = {Rodier, F.},
title = {Mod\`eles de Whittaker des repr\'esentations admissibles des groupes r\'eductifs $p$-adiques quasi-d\'eploy\'es},
url ={https://normalesup.org/~rodier/Whittaker.pdf}}

@article {Shahidi:On:certain,
    AUTHOR = {Shahidi, Freydoon},
     TITLE = {On certain {$L$}-functions},
   JOURNAL = {Amer. J. Math.},
  FJOURNAL = {American Journal of Mathematics},
    VOLUME = {103},
      YEAR = {1981},
    NUMBER = {2},
     PAGES = {297--355},
      ISSN = {0002-9327},
   MRCLASS = {10D15 (10D40 22E45 22E55)},
  MRNUMBER = {610479},
MRREVIEWER = {Stephen Gelbart},
       DOI = {10.2307/2374219},
       URL = {https://doi.org/10.2307/2374219},
}

@article {FLO,
    AUTHOR = {Feigon, Brooke and Lapid, Erez and Offen, Omer},
     TITLE = {On representations distinguished by unitary groups},
   JOURNAL = {Publ. Math. Inst. Hautes \'{E}tudes Sci.},
  FJOURNAL = {Publications Math\'{e}matiques. Institut de Hautes \'{E}tudes
              Scientifiques},
    VOLUME = {115},
      YEAR = {2012},
     PAGES = {185--323},
      ISSN = {0073-8301},
   MRCLASS = {22E55 (11F03)},
  MRNUMBER = {2930996},
MRREVIEWER = {Neven Grbac},
       DOI = {10.1007/s10240-012-0040-z},
       URL = {https://doi.org/10.1007/s10240-012-0040-z},
}

@book {HT,
    AUTHOR = {Harris, Michael and Taylor, Richard},
     TITLE = {The geometry and cohomology of some simple {S}himura
              varieties},
    SERIES = {Annals of Mathematics Studies},
    VOLUME = {151},
      NOTE = {With an appendix by Vladimir G. Berkovich},
 PUBLISHER = {Princeton University Press, Princeton, NJ},
      YEAR = {2001},
     PAGES = {viii+276},
      ISBN = {0-691-09090-4},
   MRCLASS = {11G18 (11F70 11S37 14G35 22E45)},
  MRNUMBER = {1876802},
MRREVIEWER = {James Milne},
}

@ARTICLE{GGHL,
       author = {{Getz}, Jayce R. and {Gu}, Miao Pam and {Hsu}, Chun-Hsien and {Leslie}, Spencer},
        title = "{On triple product $L$-functions and the fiber bundle method}",
      journal = {arXiv e-prints},
         year = 2025,
        month = mar,
          eid = {arXiv:2503.21648},
        pages = {arXiv:2503.21648},
          doi = {10.48550/arXiv.2503.21648},
archivePrefix = {arXiv},
       eprint = {2503.21648},
 primaryClass = {math.NT},
       adsurl = {https://ui.adsabs.harvard.edu/abs/2025arXiv250321648G}
}

@article {Lapid:Mao:Asymp,
    AUTHOR = {Lapid, Erez and Mao, Zhengyu},
     TITLE = {On the asymptotics of {W}hittaker functions},
   JOURNAL = {Represent. Theory},
  FJOURNAL = {Representation Theory. An Electronic Journal of the American
              Mathematical Society},
    VOLUME = {13},
      YEAR = {2009},
     PAGES = {63--81},
   MRCLASS = {22E50 (11F70)},
  MRNUMBER = {2495561},
MRREVIEWER = {Wee Teck Gan},
       DOI = {10.1090/S1088-4165-09-00343-4},
       URL = {https://doi.org/10.1090/S1088-4165-09-00343-4},
}

@book {Treves,
    AUTHOR = {Tr{\`{e}}ves, Fran\c{c}ois},
     TITLE = {Topological vector spaces, distributions and kernels},
      NOTE = {Unabridged republication of the 1967 original},
 PUBLISHER = {Dover Publications, Inc., Mineola, NY},
      YEAR = {2006},
     PAGES = {xvi+565},
      ISBN = {0-486-45352-9},
   MRCLASS = {46-01 (46-02 46Axx 46Fxx)},
  MRNUMBER = {2296978},
}

@ARTICLE{vandenBan:uniformtempered,
       author = {{van den Ban}, E.~P.},
        title = "{Uniform temperedness of Whittaker integrals for a real reductive group}",
      journal = {arXiv e-prints},
         year = 2023,
        month = apr,
          eid = {arXiv:2304.11044},
        pages = {arXiv:2304.11044},
          doi = {10.48550/arXiv.2304.11044},
archivePrefix = {arXiv},
       eprint = {2304.11044},
 primaryClass = {math.RT},
       adsurl = {https://ui.adsabs.harvard.edu/abs/2023arXiv230411044V}
}

@incollection {CPS:derivatives,
    AUTHOR = {Cogdell, J. W. and Piatetski-Shapiro, I. I.},
     TITLE = {Derivatives and {L}-functions for {$\mathrm{GL}_n$}},
 BOOKTITLE = {Representation theory, number theory, and invariant theory},
    SERIES = {Progr. Math.},
    VOLUME = {323},
     PAGES = {115--173},
 PUBLISHER = {Birkh\"{a}user/Springer, Cham},
      YEAR = {2017},
   MRCLASS = {11F70 (11F55 22E55)},
  MRNUMBER = {3753910},
MRREVIEWER = {Hang Xue},
       DOI = {10.1007/978-3-319-59728-7\_5},
       URL = {https://doi.org/10.1007/978-3-319-59728-7_5},
}

@article{Arthur:HP:realreductive,
  title={Harmonic analysis of the {S}chwartz space on a reductive {L}ie group {I} \& {II}},
  author={Arthur, J.},
  url={https://www.math.toronto.edu/arthur/pdf/20160115093856221.pdf},}

@article {Delorme:Whitt,
    AUTHOR = {Delorme, Patrick},
     TITLE = {Formule de {P}lancherel pour les fonctions de {W}hittaker sur
              un groupe r\'{e}ductif {$p$}-adique},
   JOURNAL = {Ann. Inst. Fourier (Grenoble)},
  FJOURNAL = {Universit\'{e} de Grenoble. Annales de l'Institut Fourier},
    VOLUME = {63},
      YEAR = {2013},
    NUMBER = {1},
     PAGES = {155--217},
      ISSN = {0373-0956,1777-5310},
   MRCLASS = {22E35 (22E50)},
  MRNUMBER = {3097945},
MRREVIEWER = {Jonathan\ M.\ Rosenberg},
       DOI = {10.5802/aif.2758},
       URL = {https://doi.org/10.5802/aif.2758},
}

@ARTICLE{vandenBan:MS,
       author = {{van den Ban}, Erik P.},
        title = "{Maass-Selberg relations for Whittaker functions on a real reductive group}",
      journal = {arXiv e-prints},
         year = 2025,
        month = nov,
          eid = {arXiv:2511.19224},
        pages = {arXiv:2511.19224},
          doi = {10.48550/arXiv.2511.19224},
archivePrefix = {arXiv},
       eprint = {2511.19224},
 primaryClass = {math.RT},
       adsurl = {https://ui.adsabs.harvard.edu/abs/2025arXiv251119224V}
}

@article {Bernstein:Planch,
    AUTHOR = {Bernstein, Joseph N.},
     TITLE = {On the support of {P}lancherel measure},
   JOURNAL = {J. Geom. Phys.},
  FJOURNAL = {Journal of Geometry and Physics},
    VOLUME = {5},
      YEAR = {1988},
    NUMBER = {4},
     PAGES = {663--710 (1989)},
      ISSN = {0393-0440},
   MRCLASS = {22E45 (22E55 43A85)},
  MRNUMBER = {1075727},
MRREVIEWER = {Gestur\ \'{O}lafsson},
       DOI = {10.1016/0393-0440(88)90024-1},
       URL = {https://doi.org/10.1016/0393-0440(88)90024-1},
}

@MISC {RB:quest,
    TITLE = {Vector-Valued Stone-Weierstrass Theorem?},
    AUTHOR = {R. Bryant},
    HOWPUBLISHED = {MathOverflow},
    NOTE = {URL:https://mathoverflow.net/q/371618 (version: 2020-09-14)},
    EPRINT = {https://mathoverflow.net/q/371618},
    URL = {https://mathoverflow.net/q/371618}
}

@article {Delorme:Harinck:Sakellaridis,
    AUTHOR = {Delorme, Patrick and Harinck, Pascale and Sakellaridis,
              Yiannis},
     TITLE = {Paley-{W}iener theorems for a {$p$}-adic spherical variety},
   JOURNAL = {Mem. Amer. Math. Soc.},
  FJOURNAL = {Memoirs of the American Mathematical Society},
    VOLUME = {269},
      YEAR = {2021},
    NUMBER = {1312},
     PAGES = {v+102},
      ISSN = {0065-9266,1947-6221},
      ISBN = {978-1-4704-4402-0; 978-1-4704-6462-2},
   MRCLASS = {22E35 (43A85)},
  MRNUMBER = {4226181},
MRREVIEWER = {Volker\ J.\ Heiermann},
       DOI = {10.1090/memo/1312},
       URL = {https://doi.org/10.1090/memo/1312},
}

@book {Varadarajan,
    AUTHOR = {Varadarajan, V. S.},
     TITLE = {Harmonic analysis on real reductive groups},
    SERIES = {Lecture Notes in Mathematics, Vol. 576},
 PUBLISHER = {Springer-Verlag, Berlin-New York},
      YEAR = {1977},
     PAGES = {v+521},
   MRCLASS = {22E45},
  MRNUMBER = {473111},
MRREVIEWER = {A.\ U.\ Klimyk},
}

@article {Queiro:Sa,
    AUTHOR = {Queir\'{o}, Jo\~{a}o F. and S\'{a}, Eduardo M.},
     TITLE = {Singular values and invariant factors of matrix sums and
              products},
   JOURNAL = {Linear Algebra Appl.},
  FJOURNAL = {Linear Algebra and its Applications},
    VOLUME = {225},
      YEAR = {1995},
     PAGES = {43--56},
      ISSN = {0024-3795,1873-1856},
   MRCLASS = {15A18},
  MRNUMBER = {1341069},
MRREVIEWER = {A.\ R.\ Amir-Mo\'{e}z},
       DOI = {10.1016/0024-3795(93)00317-S},
       URL = {https://doi.org/10.1016/0024-3795(93)00317-S},
}

@article {Robertson:Robertson,
    AUTHOR = {Robertson, Alex P. and Robertson, Wendy},
     TITLE = {On the closed graph theorem},
   JOURNAL = {Proc. Glasgow Math. Assoc.},
  FJOURNAL = {Proceedings of the Glasgow Mathematical Association},
    VOLUME = {3},
      YEAR = {1956},
     PAGES = {9--12},
      ISSN = {2040-6185,2051-2104},
   MRCLASS = {46.1X},
  MRNUMBER = {84108},
MRREVIEWER = {H.\ D.\ Block},
}

@incollection {Tadic,
    AUTHOR = {Tadi\'{c}, Marko},
     TITLE = {{${\mathrm{GL}}(n,\mathbb{C})$\^{}} and {${\mathrm{GL}}(n,\mathbb{R})$}\^{}},
 BOOKTITLE = {Automorphic forms and {$L$}-functions {II}. {L}ocal aspects},
    SERIES = {Contemp. Math.},
    VOLUME = {489},
     PAGES = {285--313},
 PUBLISHER = {Amer. Math. Soc., Providence, RI},
      YEAR = {2009},
      ISBN = {978-0-8218-4708-4},
   MRCLASS = {22E50 (22E46)},
  MRNUMBER = {2537046},
MRREVIEWER = {Hadi\ Salmasian},
       DOI = {10.1090/conm/489/09551},
       URL = {https://doi.org/10.1090/conm/489/09551},
}

@incollection {BP:Asai,
    AUTHOR = {Beuzart-Plessis, Rapha\"{e}l},
     TITLE = {Archimedean theory and {$\epsilon$}-factors for the {A}sai
              {R}ankin-{S}elberg integrals},
 BOOKTITLE = {Relative trace formulas},
    SERIES = {Simons Symp.},
     PAGES = {1--50},
 PUBLISHER = {Springer, Cham},
      YEAR = {[2021] \copyright 2021},
      ISBN = {978-3-030-68505-8; 978-3-030-68506-5},
   MRCLASS = {11F70 (11F66 22E50 22E55)},
  MRNUMBER = {4611942},
       DOI = {10.1007/978-3-030-68506-5\{_}1}

@ARTICLE{DRS:Schwartz,
       author = {{Getz}, J.~R. and {Guti{\'e}rrez Terradillos}, A. and {Hosseinijafari}, F. and {Slipper}, A. and {Xi}, G. and {Yao}, H-Y. and {Zhao}, A.},
        title = "{The $\rho$-Fourier transform}",
      journal = {arXiv e-prints},
         year = 2025,
        month = nov,
          eid = {arXiv:2512.00182},
        pages = {arXiv:2512.00182},
          doi = {10.48550/arXiv.2512.00182},
archivePrefix = {arXiv},
       eprint = {2512.00182},
 primaryClass = {math.NT},
       adsurl = {https://ui.adsabs.harvard.edu/abs/2025arXiv251200182G}
}
\bibliographystyle{alpha-inits}
\printindex
\end{document}